\numberwithin{equation}{section}
\newcommand{\bC}{{\mathbb C}}
\newcommand{\bI}{{\mathbb I}}
\newcommand{\bP}{{\mathbb P}}
\newcommand{\bR}{{\mathbb R}}
\newcommand{\bZ}{{\mathbb Z}}
\newcommand{\cC}{{\mathcal C}}
\newcommand{\cE}{{\mathcal E}}
\newcommand{\cF}{{\mathcal F}}
\newcommand{\cL}{{\mathcal L}}
\newcommand{\cM}{{\mathcal M}}
\newcommand{\cN}{{\mathcal N}}
\newcommand{\cO}{{\mathcal O}}
\newcommand{\cP}{{\mathcal P}}
\newcommand{\cR}{{\mathcal R}}
\newcommand{\cS}{{\mathcal S}}
\newcommand{\cX}{{\mathcal X}}
\newcommand{\cY}{{\mathcal Y}}
\newcommand{\cZ}{{\mathcal Z}}
\newcommand{\sF}{{\EuScript F}}
\newcommand{\sJ}{{\EuScript J}}
\newcommand{\fn}{{\mathfrak n}}
\newcommand{\into}{\hookrightarrow}
\newcommand{\End}{\operatorname{End}}
\newcommand{\Aut}{\operatorname{Aut}}
\newcommand{\id}{\operatorname{id}}
\newcommand{\ev}{\operatorname{ev}}
\newcommand{\rk}{\operatorname{rk}}
\renewcommand{\dbar}{\bar{\partial}}
\newcommand{\alg}{\mathrm{alg}}
\newcommand{\I}[2]{\bI_{#1}^{#2}}
\newcommand{\gh}{\operatorname{gh}}
\def\co{\colon\thinspace}
\newcommand{\Maps}{\operatorname{Maps}}
\newcommand{\CP}{{\mathbb{CP}}}
\newcommand{\pt}{\operatorname{pt}}
\newcommand{\Pbar}{\overline{\cP}}
\newcommand{\Phat}{\hat{\cP}}
\newcommand{\Cbar}{\overline{\cC}}
\newcommand{\Chat}{\hat{\cC}}
\newcommand{\preG}{\operatorname{preG}}
\newcommand{\preGext}{\mathrm{pre\mathring{G}}}
\newcommand{\predGext}{\mathrm{pred\mathring{G}}}
\newcommand{\G}{\operatorname{G}}
\newcommand{\Gext}{{\operatorname{\mathring{G}}}}
\newcommand{\Gcod}[2]{\operatorname{G}^{#1 \to #2}}
\newcommand{\B}{\operatorname{B}}
\newcommand{\sol}{\operatorname{sol}}
\newcommand{\neck}{\mathrm{n}}
\renewcommand{\Im}{\mathrm{Im}}
\newcommand{\bfepsilon}{\boldsymbol \epsilon}
\newcommand{\bfJ}{\mathbf J}
\newcommand{\bfK}{\mathbf K}
\newcommand{\rO}{\mathrm{O}}
\newcommand{\semidirect}{\rtimes}
\newcommand{\dist}{dist}
\newcommand{\inte}{\operatorname{int}}
\newcommand{\aut}{\mathfrak{aut}}
\newcommand{\codC}[2]{\partial^{#1} \Cbar\left(#2\right)}
\newcommand{\codtC}[2]{\tilde{\partial}^{#1} \Cbar\left(#2\right)}
\newcommand{\codbC}[2]{\bar{\partial}^{#1} \Chat \left(#2 \right)}
\newcommand{\coduC}[2]{\partial^{#1}_{u} \Cbar \left(#2 \right)}
\newcommand{\codtuC}[2]{\tilde{\partial}^{#1}_{u} \Cbar \left(#2 \right)}
\newcommand{\codbuC}[2]{\bar{\partial}^{#1}_{u} \Chat \left(#2 \right)}
\newcommand{\cod}[2]{\partial^{#1} \Pbar\left(#2;0\right)}
\newcommand{\codu}[2]{\partial^{#1}_{u} \Pbar \left(#2;0 \right)}
\newcommand{\codb}[2]{\bar{\partial}^{#1} \Phat \left(#2;0 \right)}
\newcommand{\codt}[2]{\tilde{\partial}^{#1} \Pbar \left(#2;0 \right)}
\newcommand{\codbu}[2]{\bar{\partial}^{#1}_{u} \Phat \left(#2;0 \right)}
\newcommand{\Uthree}{U_3}
\newcommand{\N}{N}
\newcommand{\n}{\fn}
\newcommand{\Hopf}{\mathrm{Hopf}}
\newtheorem{thm}{Theorem}[section]
\newtheorem{addendum}[thm]{Addendum}
\newtheorem{cor}[thm]{Corollary}
\newtheorem{lem}[thm]{Lemma}
\newtheorem{prop}[thm]{Proposition}
\newtheorem{defin}[thm]{Definition}
\newtheorem{rem}[thm]{Remark}
\newcommand{\noproof}{\qed}
\newcommand{\noproofe}{ \vspace{-40pt}
\begin{flushright}
\qedsymbol
\end{flushright}}
\title[Exotic Spheres]{Framed bordism and Lagrangian embeddings \\ of exotic spheres}
\author[M.~Abouzaid]{Mohammed Abouzaid} 
\thanks{This research was conducted during the period the author served as a Clay Research Fellow. }
\begin{document}

\maketitle
\setcounter{tocdepth}{1}
\tableofcontents
\parskip1em

\section{Introduction}
Let $L$ be a  compact exact Lagrangian submanifold of $T^{*} S^{m}$ whose Maslov class vanishes.  By \cite{seidel-kronecker}, $L$ is a rational homology sphere which represents a generator of $H_m(T^* S^{m}, \bZ)$.  The classical nearby Lagrangian conjecture of Arnol'd would imply that $L$ is diffeomorphic to the standard sphere.

One approach to proving the diffeomorphism statement implied by Arnol'd's conjecture would be to first prove that $L$ is a homotopy sphere by establishing the vanishing of its fundamental group, then to use Kervaire and Milnor's classification of exotic spheres to exclude the remaining possibilities, see \cite{KM}.  In this paper, we begin the second part of the program and prove
\begin{thm} \label{main}
Every homotopy sphere which embeds as a Lagrangian in $T^* S^{4k+1}$ must bound a compact parallelisable manifold.
\end{thm}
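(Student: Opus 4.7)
The strategy is to reduce the statement to a framed bordism computation via Kervaire-Milnor's classification, then to carry out that computation using pseudoholomorphic curve theory in $T^{*}S^{4k+1}$. By \cite{KM}, a homotopy $(4k+1)$-sphere bounds a compact parallelisable manifold precisely when, equipped with any stable framing, its class in the cokernel $\pi_{4k+1}^{s}/\Im(J)$ of the stable $J$-homomorphism is trivial. It therefore suffices to exhibit a stable framing of $L$ whose $J$-image vanishes.

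A canonical such framing comes from the Lagrangian embedding itself: a compatible almost complex structure on $T^{*}S^{4k+1}$ identifies $T(T^{*}S^{4k+1})|_{L}$ with $TL\otimes\bC$, and since $L$ is stably parallelisable (being a homotopy sphere, by \cite{seidel-kronecker} combined with the standard topology of homotopy spheres) this complex vector bundle is trivial. A trivialisation of $TL\otimes\bC$ stabilises to a preferred framing of $L$, and it is this ``Lagrangian framing'' whose class in $\pi_{4k+1}^{s}/\Im(J)$ must be shown to vanish.

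To realise this vanishing geometrically, I would construct a parametrised moduli space of pseudoholomorphic strips in $T^{*}S^{4k+1}$ with one boundary on $L$ and the other on a cotangent fibre $T^{*}_{x}S^{4k+1}$, as $x$ varies over the zero section $S^{4k+1}$. The Floer cohomology $HF^{*}(L,T^{*}_{x}S^{4k+1})\cong\bZ$, as established by Fukaya-Seidel-Smith and Nadler, guarantees that the underlying Floer problem is genuinely rigid and not pathological. After Gromov compactification and a coherent choice of orientations and framings, this parametrised moduli space should furnish a framed null-cobordism of $L$ modulo $\Im(J)$: one boundary stratum reproduces $L$ with its Lagrangian framing (arising from strips collapsing onto constant trajectories), while the remaining strata consist of configurations supported over the zero section $S^{4k+1}$, whose framings come from stable framings of $TS^{4k+1}$ and hence contribute classes in the image of the $J$-homomorphism by construction.

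The principal obstacle will be the differential-geometric construction of these moduli spaces and the coherent analysis of their boundaries: achieving transversality for the relevant Cauchy-Riemann operators (likely via abstract perturbations), equipping the parametrised moduli with a coherent stable framing refining the Lagrangian framing of $L$, excluding unwanted strata from the Gromov compactification, and verifying that the surviving strata indeed represent elements of $\Im(J)$ rather than more general classes. The dimension hypothesis $n=4k+1$ will enter decisively at the last step: the interplay between the framing of $S^{4k+1}$ (which is odd-dimensional and admits nowhere-vanishing vector fields) and the image of $J$ in degree $4k+1$ must be made precise, and it is here that the argument is expected to be tightest.
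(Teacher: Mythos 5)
Your opening reduction is fine: by \cite{KM} a homotopy $(4k+1)$-sphere bounds a compact parallelisable manifold iff its class in $\pi_{4k+1}^{s}/\Im(J)$ vanishes, and the paper is in effect verifying this by directly exhibiting the parallelisable filling. The gap is in the geometric construction you propose to carry this out. A parametrised moduli space of strips between $L$ and the cotangent fibres $T^{*}_{x}S^{4k+1}$ has no boundary stratum diffeomorphic to $L$: there is no parameter in that problem which degenerates strips to constant trajectories on $L$. The mechanism that produces a cobordism with $L$ as one boundary component is a one-parameter family of perturbed Cauchy--Riemann equations attached to a Hamiltonian isotopy that \emph{displaces} $L$ from itself --- at parameter $R=0$ the solutions in the trivial homotopy class are exactly the constant discs, i.e.\ a copy of $L$, and for $R$ large there are no solutions at all. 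But $L$ is not displaceable inside $T^{*}S^{4k+1}$ (its self-Floer cohomology is nonzero, as your own appeal to $HF^{*}(L,T^{*}_{x}S^{4k+1})\cong\bZ$ already reflects), so no such family exists there. This is precisely why the argument first re-embeds $L$ into $\bC^{2k+1}\times\bC\bP^{2k}$ via the graph of the Hopf fibration and Weinstein's neighbourhood theorem, where the $\bC^{n}$ factor makes $L$ displaceable. That step is not an optional convenience; without it your moduli space is not a null-cobordism of anything.

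Relatedly, in the exact manifold $T^{*}S^{4k+1}$ there is no sphere or disc bubbling, so the Gromov compactification of your moduli space consists only of broken strips, and you give no argument that these strata carry framings in $\Im(J)$. In the non-exact model the extra boundary components are concrete: they are $S^{2n-3}$-bundles over $S^{2}$ built from holomorphic spheres in class $\alpha\in\pi_{2}(\bC\bP^{n-1})$, and the congruence condition on the dimension enters through the parity of $c_{1}(\bC\bP^{n-1})=nH$ --- the bundle $(c_{u}\cup u)^{*}T\bC\bP^{n-1}\otimes\cO(1)$ has trivial sphere bundle exactly when $n=2k+1$ is odd, which is what allows the boundary to be capped by $D^{3}\times S^{2n-3}$ after a framed connect-sum correction. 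Your proposed source of the mod $4$ restriction (odd-dimensionality of $S^{4k+1}$ and the existence of nowhere-vanishing vector fields) does not distinguish $4k+1$ from $4k+3$ and therefore cannot be the operative mechanism.
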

Unfortunately, this result says nothing about the 28 exotic $7$-spheres originally constructed by Milnor.  In dimension $9$ (the first dimension for which exotic spheres exist and the theorem applies), Kervaire and Milnor proved that there are $8$ different smooth structures on the sphere, with only $2$ arising as boundaries of parallelisable manifolds. In particular, all but one of the exotic smooth spheres in dimension $9$ fail to embed as Lagrangian submanifolds of  the cotangent bundle of the standard sphere.  

By the Whitney trick, there are no obstructions to smooth embeddings of exotic spheres in the cotangent bundle of the standard sphere in dimensions greater than $4$.  In fact, the $h$-cobordism theorem implies the stronger statement that the unit disc bundles in the cotangent bundles of all homotopy spheres of the same dimension are diffeomorphic as manifolds with boundary.  Theorem \ref{main} implies that the symplectic structure on a cotangent bundle remembers some information about the smooth structure of the base that is forgotten by the mere diffeomorphism type of the unit disc bundle:
\begin{cor}
If $\Sigma^{4k+1}$ is an exotic sphere which does not bound a parallelisable manifold, then the cotangent bundles $T^* \Sigma^{4k+1}$ and $T^* S^{4k+1}$ are not symplectomorphic. \noproof
\end{cor}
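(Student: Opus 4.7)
The plan is to argue by contrapositive: assume a symplectomorphism $\phi \co T^{*} \Sigma^{4k+1} \to T^{*} S^{4k+1}$ exists and use it to transport the zero section to a Lagrangian embedding of $\Sigma^{4k+1}$ satisfying the hypotheses of Theorem \ref{main}, which will force $\Sigma^{4k+1}$ to bound a parallelisable manifold.

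First I would set $L = \phi(\Sigma^{4k+1})$, where $\Sigma^{4k+1}$ denotes the zero section of $T^{*} \Sigma^{4k+1}$. This $L$ is tautologically a Lagrangian submanifold of $T^{*} S^{4k+1}$ diffeomorphic to $\Sigma^{4k+1}$. The next step is to upgrade this to an \emph{exact} Lagrangian with vanishing Maslov class. Letting $\lambda_{\Sigma}$ and $\lambda_{S}$ denote the canonical Liouville one-forms, the identity $d(\phi^{*}\lambda_{S} - \lambda_{\Sigma}) = 0$ together with $H^{1}(T^{*} \Sigma^{4k+1}; \bR) = H^{1}(\Sigma^{4k+1}; \bR) = 0$ (since $\Sigma^{4k+1}$ is a homotopy sphere) yields a primitive $h$ with $\phi^{*} \lambda_{S} - \lambda_{\Sigma} = dh$. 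Restricting to the zero section, where $\lambda_{\Sigma}$ vanishes, shows that $\lambda_{S}|_{L}$ is exact, so $L$ is an exact Lagrangian. The Maslov class of $L$ lives in $H^{1}(L; \bZ) = H^{1}(\Sigma^{4k+1}; \bZ) = 0$ and hence vanishes automatically.

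With these properties in hand, Theorem \ref{main} applies directly to $L \subset T^{*} S^{4k+1}$ and implies that $L$, and therefore $\Sigma^{4k+1}$, bounds a compact parallelisable manifold. This contradicts the hypothesis on $\Sigma^{4k+1}$, completing the argument.

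I do not expect a serious obstacle here; the only mildly delicate point is verifying exactness of the transported Lagrangian, which is handled cleanly by the vanishing of $H^{1}$ for homotopy spheres. Everything else is an immediate application of Theorem \ref{main}.
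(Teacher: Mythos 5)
Your argument is correct and is exactly the intended one: the paper marks this corollary with no proof precisely because transporting the zero section under a hypothetical symplectomorphism immediately produces a Lagrangian embedding of $\Sigma^{4k+1}$ in $T^*S^{4k+1}$, contradicting Theorem \ref{main}. Note only that Theorem \ref{main} as stated requires nothing beyond a Lagrangian embedding of a homotopy sphere, so your verification of exactness and Maslov class vanishing, while correct, is not actually needed.
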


The starting point in the proof of Theorem \ref{main} is the fact that the graph of the Hopf fibration embeds $S^{2n-1}$ as a Lagrangian in $\bC^{n} \times \bC \bP^{n-1}$; this fact was observed by Audin, Lalonde and Polterovich in \cite{ALP}, and used by Buhovsky in \cite{buh} to constrain the homology groups of Lagrangians embedding in the cotangent bundle of the sphere.   By Weinstein's neighbourhood theorem,  $L$ embeds in $\bC^{n} \times \bC \bP^{n-1}$ as well.  Because of the $\bC^n$ factor, there is a compactly supported Hamiltonian isotopy which displaces $L$ from itself.  Following a strategy which goes back to Gromov's introduction of pseudo-holomorphic curves to symplectic topology in \cite{gromov}, and whose applications to Lagrangian embeddings have been developed among others by Polterovich, Oh, Biran-Cieliebak, and Fukaya \cites{polterovich-monotone,oh-disjunction,BC,fuk}, the choice of such a displacing isotopy determines a one parameter family deformation of the Cauchy-Riemann equation.  Under generic conditions, the corresponding moduli space of solutions in the constant homotopy class becomes a manifold of dimension $2n$ with boundary diffeomorphic to $L$.

This moduli space is not compact, but admits a compactification which corresponds geometrically to considering exceptional solutions to the parametrised deformation of the  Cauchy-Riemann equation, along with a holomorphic disc or sphere bubble.  The strata corresponding to disc bubbles can be ``capped off'' by a different moduli space of holomorphic discs to obtain a compact manifold one of whose boundary components is diffeomorphic to $L$.  The remaining boundary components correspond to sphere bubbles, and  have explicit descriptions as bundles over $S^2$, which allows us to prove that they bound parallelisable manifolds.

To prove the parallelisability of the result of gluing these manifolds along their common boundaries, we rely on the fact that the $K$-theory class of the tangent space of the moduli space of $J$-holomorphic curves  agrees with the restriction of a class coming from the space of all smooth maps. An analysis of the homotopy type of this space (together with the fact that $\bC \bP^{n-1}$ has even first Chern class precisely when $n$ is even) yields the desired parallelisability of the bounding manifold we construct.  It is only in this very last step that the mod $4$ value of the dimension, rather than simply its parity, enters the argument.

\subsection*{Acknowledgments}
I would like to thank Paul Seidel both for the initial discussion that started my work on this project, as well as for many subsequent ones concerning the compatibility of Gromov-Floer compactifications with smooth structures.  I am also grateful to Katrin Wehrheim for answering my many questions about technical aspects of gluing pseudo-holomorphic curves, to Shmuel Weinberger for suggesting the argument of Lemma \ref{lem:framings_product_spheres}, and to Yasha Eliashberg for conversations which led to the explicit computations summarized in Addendum \ref{add:explicit_computation}.
\section{Construction of the bounding manifold}
\subsection{Deformation of the Cauchy-Riemann equation}
In \cite{oh-disjunction}, Oh studied displaceable Lagrangians by means of a family of inhomogeneous Cauchy-Riemann equations associated to a Hamiltonian function.  Instead of counting solutions passing through a generic point, we shall study the full moduli space.

As remarked in the introduction, the starting point is a Lagrangian embedding
\begin{equation} L \into \bC^{n} \times \bC \bP^{n-1} ,\end{equation}
where $ \bC \bP^{n-1} $ is equipped with the symplectic form coming from hamiltonian reduction with respect to the diagonal circle action on $\bC^{n} $ with weight $1$, and $ \bC^{n} $ with the negative of the standard symplectic form.  We shall moreover assume that the embedding of $L$  is real analytic (see Lemma \ref{eq:hamilt_perturb_L_regularity}).  Choose a ball in $\bC^n$ containing the projection of $L$ to $\bC^{n}$ whose diameter is thrice the diameter of this projection, and let $M$ denote the product of this ball with $\bC \bP^{n-1}$.  Given a compactly supported function
\begin{equation} H \co M \times [-1,1] \to \bR \end{equation}
we will write $H_{t}$ for the restriction of this function to $M \times \{ t \}$, $X_{H}$ for the corresponding time-dependent Hamiltonian vector field on $M$, and $\phi$ for the resulting Hamiltonian diffeomorphism obtained by integrating $X_{H}$ over the interval $[-1,1]$.  We shall assume that $H$ satisfies the following condition:
\begin{equation}
\parbox{35em}{The symplectomorphism $\phi$ displaces $L$ from itself.  Moreover,  we assume that $H_{t}$ vanishes in a neighbourhood of $t=\pm 1$.}
\end{equation}

The existence of such a function $H_{t}$ follows trivially from our assumption that one factor of $M$ is a ball of sufficiently large radius and the fact that any translation in $\bC^{n}$ is generated by a Hamiltonian flow; an appropriate cutoff of this Hamiltonian flow is supported on $M$ and satisfies the desired conditions.
\begin{rem}
The requirement that $\phi$ displace $L$ is essential, whereas the vanishing assumption on $H_t$ is only needed in order to simplify the proof of the transversality results in Section \ref{sec:transversality} and the gluing theorems in Section \ref{sec:codim1_gluing}.
\end{rem}

We shall write $\I{-\infty}{+\infty} = \bR \times [-1,1]$ for the bi-infinite strip equipped with coordinates $(s,t)$, and we fix the identification
 \begin{equation} \label{eq:identify_disc_strip} \xi \co \I{-\infty}{+\infty} \to D^{2}  - \{ \pm 1 \} \end{equation}
which takes the origin to itself.  Consider a smooth family $\gamma_{R}$ of  compactly supported $1$-forms on $D^{2}$  parametrized by $R \in [0,\infty)$, and satisfying the following properties 
\begin{itemize}
\item If $0 \leq R \leq 1$, then $\gamma_{R}= R \gamma_1$, and
\item If $1 \leq R$, then  $\xi^{*} \gamma_R$ is a multiple of $dt$ which depends only on $s$, and is constrained as follows
\begin{equation} \xi^{*} \gamma_R(s,t) =  \begin{cases} dt & \textrm{if }|s| \leq R \\ 0 & \textrm{if }|s|  \geq R+1 \end{cases} \end{equation}
\item the norm $| \xi^{*} \gamma_R(s,t)|$ is monotonically decreasing in the variable $s$ whenever $s \geq 0$, and monotonically increasing if $s \leq 0$
\end{itemize}

Let $\sJ$ denote the space of almost complex structures on $M$ which are compatible with the standard symplectic form and which agree with the complex structure $J_{\alg}$ on $\bC^{n} \times \bC \bP^{n-1} $ to infinite order on the boundary of $M$.   Here, $J_{\alg}$  is the direct sum of the complex conjugate of the standard complex structure on $\bC^{n}$ with the standard complex structure on $ \bC \bP^{n-1}$; we have to conjugate the complex structure on the first factor in order to maintain positivity with respect to the symplectic form.

Further, consider a family  $\bfJ = \{ \bfJ_{R} \}_{R \in [0,+\infty)} = \{ J_{z, R} \}_{(z,R) \in D^{2} \times [0,+\infty)}$ of almost complex structures parametrized by $(z,R) \in D^{2} \times [0,+\infty)$ such that $J_{z, R} = J_{\alg}$ whenever $R=0$ or $z \in D^{2}$ is sufficiently close to the boundary.

Having made the above choices, we define, for each $R \geq 0$, a moduli space which we denote $\cN(L;0,\bfJ_R,H, \gamma_{R})$, of finite energy maps
\[ u \co (D^{2},S^{1}) \to (M,L)  \]
in the homotopy class of the constant map, satisfying the equation
\begin{equation} \label{deformedCR} \left(du - \gamma_{R} \otimes X_{H}(z) \right)^{0,1} = 0 .\end{equation}
The $(0,1)$ part of the above $TM$-valued $1$-form is taken with respect to the family $\bfJ$ of almost complex structures.  Moreover, if $z \neq \pm 1$, then the vector field $X_{H}$ in the previous equation is the Hamiltonian vector field on $M$ of the function $H_{t}$ where $t \in [-1,1] $ is determined by the property that there is a value of $s$ such that $\xi(s,t)  = z$.  Note that $\gamma_{R}$ vanishes at $z = \pm 1$, so there is no ambiguity in Equation \eqref{deformedCR}.

  Away from $\pm 1$, the above equation can be explicitly written in $(s,t)$ coordinates as
\begin{equation} \label{deformedCR-explicity} \partial_s u + J_{\xi(s,t),R} \partial_t u = \xi^{*}(\gamma_{R})(\partial_{t}) \cdot  J_{\xi(s,t),R} X_H\left(u\circ \xi(s,t),t \right) \end{equation}
for each $s \in \bR$ and $t \in [-1,1]$ with the boundary conditions
\[ u(s,-1), u(s,1) \in L \]
for all $s \in \bR$.    The removal of singularities theorem proven in  \cite{oh-removal} implies that any finite energy solution of \eqref{deformedCR-explicity} extends smoothly to $D^{2}$.  In particular, there is a bijection between the solutions of \eqref{deformedCR} and  \eqref{deformedCR-explicity}.

Unless we explicitly need to discuss them, we will omit the complex structure $\bfJ$ and the Hamiltonian $H$ from the notation, and write $\cP(L;0)$ for the space
\begin{equation} \label{eq:define_parametrized_space} \cP(L;0,\bfJ,H) = \coprod_{R \in [0,+\infty)} \cN(L;0,\bfJ_{R},H, \gamma_{R}) .\end{equation}

To understand the topology of $\cP(L;0)$, we consider the product of $[0,+\infty)$ with $\cF(L)$, the space of smooth maps from $(D^2, S^1)$  to $(M,L)$:
\begin{equation}  \label{eq:parametrized_smooth_space} \cF_{\cP}(L) \equiv   [0,+\infty) \times  \cF(L) \equiv [0,+\infty) \times C^{\infty}\left((D^2,S^1),(M,L)\right). \end{equation}
We shall write $\cF_{\cP}(L; \beta_0)$ for the component of  $\cF_{\cP}(L)$ consisting of maps in a homotopy class $\beta_0$.  For the moment, we're interested in the homotopy class of the constant map.  Let $\cE_{\cP}$ be the bundle over $\cF_{\cP}(L)$ with fibres
\[ C^{\infty}\left(u^*(TM) \otimes_{\bC} \Omega^{0,1} D^{2}\right)\]
at a point $(R,u)$, where $\Omega^{0,1}$ is the bundle of complex anti-linear $1$-forms on $D^2$, and the tensor product is taken with respect to the $z$ and $R$ dependent almost complex structure $\bfJ$ on $TM$.   Note that the map
\begin{equation} \label{dbar-operator} \dbar_{\cP} \co (R,u) \mapsto \left(du - \gamma_{R} \otimes X_{H} \right)^{0,1} \end{equation}
defines a section of $\cE_{\cP}$ whose intersection with  the inclusion of $\cF_{\cP}(L;0)$ as the zero section  is equal to $\cP(L;0)$.  Our next assumption concerns the regularity of this moduli space:
\begin{equation} \label{req:0_moduli_reg}
\parbox{35em}{Choose the family $\bfJ$ such that the moduli space $\cP(L;0)$  is {\bf regular}, i.e. the graph of $\dbar_{\cP}$ is transverse to the zero section.}
\end{equation}

The existence of such a family is a standard transversality result appearing in various forms in \cite{polterovich-monotone} and \cite{oh-disjunction}, and regularity holds generically in an appropriate sense. We'll briefly discuss the proof in  Section \ref{sec:transversality}, culminating in Corollary \ref{cor:parametrized_moduli_manifold_boundary}, which specializes to the following result if we consider only the constant homotopy class:
\begin{cor} $\cP(L;0)$ is a smooth manifold with boundary 
\[ \partial \cP(L;0) =  \cN(L;0,\bfJ_0,H, \gamma_{0})  \cong L ,\]
consisting of constant holomorphic discs for the constant family $\bfJ_{0} \equiv J_{\alg}$.
\end{cor}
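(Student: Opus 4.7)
The plan is to deduce this corollary from the transversality hypothesis \eqref{req:0_moduli_reg} via the implicit function theorem in Banach manifolds with boundary, after pinning down the $R=0$ stratum by an energy argument.

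First I would complete $\cF(L;0)$ to the Banach manifold $\cF^{1,p}(L;0)$ of $W^{1,p}$ maps for some $p > 2$, so that $[0,\infty) \times \cF^{1,p}(L;0)$ is a Banach manifold with boundary $\{0\} \times \cF^{1,p}(L;0)$, and the section $\dbar_{\cP}$ of \eqref{dbar-operator} extends to a smooth Fredholm section of the $L^p$-completion of $\cE_{\cP}$. Elliptic regularity applied to \eqref{deformedCR} identifies the $W^{1,p}$ zero set with $\cP(L;0)$. In the interior $\{R > 0\}$, the hypothesis \eqref{req:0_moduli_reg} together with the implicit function theorem then produces a smooth manifold structure of the expected Fredholm dimension.

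Next I would identify the boundary stratum. When $R = 0$ the constraints $\gamma_0 \equiv 0$ and $\bfJ_0 \equiv J_\alg$ reduce \eqref{deformedCR} to the ordinary equation $\dbar_{J_\alg} u = 0$ with $u(\partial D^2) \subset L$. For any such $u$ in the constant homotopy class, the symplectic area $\int_{D^2} u^* \omega$ depends only on the relative class in $\pi_2(M,L)$ and thus vanishes; since $u$ is $J_\alg$-holomorphic, this area equals the Dirichlet energy, so $u$ must itself be constant. Sending a constant disc to its unique value yields the diffeomorphism $\cN(L;0,J_\alg,H,0) \cong L$. To upgrade this bijection to a smooth collar, one checks that the linearization at a constant $u \equiv p$ is the standard Dolbeault operator on the trivial bundle $D^2 \times T_p M$ with totally real boundary condition $T_p L$; because the boundary loop is constant, its Maslov index vanishes, the Fredholm index equals $\dim_{\bR} L$, and a direct computation shows it is surjective with kernel $T_p L$. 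Thus boundary regularity is automatic.

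Combining interior transversality from \eqref{req:0_moduli_reg} with this automatic surjectivity at $R = 0$, the implicit function theorem for Banach manifolds with boundary equips $\cP(L;0)$ with the structure of a smooth manifold with boundary equal to $\cN(L;0,\bfJ_0,H,\gamma_0) \cong L$. The main technical point is ensuring that the $R = 0$ stratum is attached as an honest smooth collar rather than as a stratified wedge; this relies on the fact that the family $\gamma_R = R\gamma_1$ vanishes to order exactly one at $R = 0$, so that the $R$-parameter furnishes the unique normal direction to $L$ inside $\cP(L;0)$, and on the automatic transversality at constant discs noted above. Everything else is routine, and indeed is subsumed by the more general Corollary \ref{cor:parametrized_moduli_manifold_boundary} announced later.
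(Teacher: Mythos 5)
Your proof is correct and follows essentially the same route as the paper: transversality (requirement \eqref{req:0_moduli_reg}) plus the implicit function theorem for Banach manifolds with boundary, with the $R=0$ stratum identified as the constant discs via the energy/homotopy-class argument and their automatic regularity — exactly the specialization of Corollary \ref{cor:parametrized_moduli_manifold_boundary} that the paper invokes, together with the observations in Section \ref{sec:transversality} that trivial-class $J_{\alg}$-discs are constant and constant discs are regular. No gaps.
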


To recover tangential information, recall that, upon choosing a complex linear connection, the linearisation of $\dbar_{\cP}$ at its zeroes  extends to an operator
\begin{equation} \label{CR-operator} D_{\cP} \co T\cF_{\cP}(L)  \to \cE_{\cP},\end{equation}
which is  Fredholm in appropriate Sobolev space completions;  this essentially follows from the fact that Equation \eqref{deformedCR} is a compact perturbation of the usual Cauchy-Riemann operator (see Section \ref{sec:transversality}).    To compute the class of this operator in $K$-theory, we  shall use the fact that the homotopy type of $\cF_{\cP}(L;0)$ is extremely simple.

\begin{lem} \label{lem:homotopy_theory}
The evaluation map at $1 \in S^1$ and the relative homotopy class define a weak homotopy equivalence 
\begin{equation} \cF(L) \to L \times \bZ .\end{equation}
\end{lem}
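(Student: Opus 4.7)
The plan is to show that $\mathrm{ev}_1 : \cF(L) \to L$ is a Serre fibration whose fiber $F_x$ has $\pi_0 = \bZ$ (indexing the components of $\cF(L)$) and vanishing higher homotopy groups.  The long exact sequence of $\mathrm{ev}_1$ will then yield the desired weak equivalence component-by-component, with the $\bZ$ factor recorded by the relative homotopy class.

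First, $\mathrm{ev}_1$ is a Serre fibration because the inclusion of pairs $(\{1\},\{1\}) \hookrightarrow (D^2, S^1)$ is a cofibration.  To compute its fiber $F_x = \mathrm{Maps}((D^2,S^1,1),(M,L,x))$, I would restrict further to $S^1$: the resulting map $F_x \to \Omega_x L$ is again a Serre fibration, and its fiber over the constant loop is $\mathrm{Maps}_*(D^2/S^1, M) = \Omega^2_x M$.  Since $M \simeq \bC\bP^{n-1}$ is simply connected, every loop in $L$ bounds in $M$, so every fiber is $\Omega^2_x M$.  Comparing the long exact sequence of this fibration with the long exact sequence of the pair $(M,L)$ term by term produces natural isomorphisms
\[ \pi_k(F_x) \cong \pi_{k+2}(M,L) \text{ for } k\geq 1, \qquad \pi_0(F_x) \cong \pi_2(M,L). \]

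Next I would compute $\pi_*(M,L)$.  By Weinstein's neighbourhood theorem, the Lagrangian embedding of $L$ factors through a tubular neighbourhood of the Hopf graph $S^{2n-1} \hookrightarrow \bC^n \times \bC\bP^{n-1}$; and since $L$ is a homotopy sphere whose projection to $S^{2n-1}$ has degree $\pm 1$ (by Seidel), that projection is a homotopy equivalence.  Hence $L \hookrightarrow M$ is homotopic to $L \xrightarrow{\simeq} S^{2n-1} \xrightarrow{\mathrm{Hopf}} \bC\bP^{n-1}$, and the Hopf fibration $S^1 \to S^{2n-1} \to \bC\bP^{n-1}$ immediately shows this composite induces isomorphisms on $\pi_j$ for all $j \geq 3$.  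Thus $\pi_j(M,L) = 0$ for $j\geq 3$ and $\pi_2(M,L) \cong \bZ$.

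Combining the two steps, $F_x$ is weakly equivalent to the discrete space $\bZ$: its components are indexed by $\pi_2(M,L) = \bZ$ and each is weakly contractible.  The long exact sequence of $\mathrm{ev}_1$ then gives a weak equivalence on each component of $\cF(L)$ to $L$, and assembling these yields the weak equivalence $\cF(L) \to L \times \bZ$.  The main technical obstacle is the identification of $L \hookrightarrow M$ with the Hopf composition — this is where the geometric input (Weinstein plus Seidel) enters; the mapping-space fibration steps are otherwise formal, modulo the standard comparison between smooth and continuous mapping spaces.
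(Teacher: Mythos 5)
Your argument is correct, but it is organized differently from the paper's. After reducing to the graph of the Hopf fibration (a step both arguments take, and which is the only geometric input), the paper does not use the evaluation fibration $\cF(L)\to L$ as the backbone of the proof. Instead it fibers $\cF(S^{2n-1})$ over $\Maps\left((D^2,S^1),(\CP^{n-1},\CP^{n-1})\right)$ via the projection of $M$ onto its second factor, identifies the fiber explicitly as $\Maps\left((D^2,S^1),(\bC^{n},S_p)\right)$ with $S_p$ the Hopf circle over $p$, shows that degree and evaluation at $1$ give an equivalence of this fiber with $S^1\times\bZ$, and then concludes by mapping the whole fibration of mapping spaces onto the Hopf fibration $S^1\to S^{2n-1}\to\CP^{n-1}$ via evaluation at $1$. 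You instead run the formal machine: evaluation at $1$ is a Serre fibration over $L$ whose fiber is the relative mapping space with homotopy groups $\pi_{*+2}(M,L)$, and these vanish above degree two by the long exact sequence of the pair combined with the Hopf fibration. Your route is more modular --- it applies to any pair once $\pi_{*}(M,L)$ is computed, and it cleanly isolates where the geometry enters --- while the paper's route yields a concrete description of the fibers of the projection-induced fibration, in the spirit of its later explicit computations. The only point deserving a little care in your write-up is the identification $\pi_k(F_x)\cong\pi_{k+2}(M,L)$ at basepoints lying in the non-constant components of $F_x$; the long exact sequence of $F_x\to\Omega_x L$ at such a basepoint still yields the vanishing you need, because the maps $\pi_j(L)\to\pi_j(M)$ are isomorphisms for $j\geq 3$ and $\pi_2(L)=0$, so no essential gap results.
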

\begin{proof}
Since $L$ and $S^{2n-1}$ are homeomorphic, and their inclusions in $M$ are homotopic, it suffices to prove the result whenever $L$ is the standard sphere embedded as the graph of the Hopf fibration.

Consider the projection to the second component of $M$, and the induced fibration
\begin{equation} \xymatrix{ \cF(S^{2n-1})  \ar[d] \\ \Maps\left((D^2,S^1), (\CP^{n-1}, \CP^{n-1})\right) .} \end{equation}
The base of this fibration is clearly homotopy equivalent to $\CP^{n-1}$ which includes as the set of constant maps.  To study the fiber, we fix a point $p$ on $\CP^{n-1}$, and consider the space of maps $(D^2,S^1) \to (\bC^{n}, S_p)$, where $S_p$ is the circle in $S^{2n-1}$ which lies over $p$.  The degree of the restriction $S^1 \to S_p$ and the image of $1$ in $S_p$ defines a fibration 
\begin{equation}  \Maps \left((D^2,S^1), (\bC^{n}, S_p ) \right) \to S^1 \times \bZ \end{equation}
which is easily seen to be a homotopy equivalence.  Note that we can identify the degree of the evaluation map to the circle $S_p$ with the relative homotopy class in $\pi_{2}\left(\bC^{n} \times \CP^{n-1}, S^{2n-1} \right)$.  Fixing such a class $\beta_0$,  the evaluation at the basepoint $1 \in S^1$ defines  a map of fibrations
\[ \xymatrix{\Maps \left((D^2,S^1), (\bC^{n}, S_p );\beta_0 \right) \ar[d] \ar[r] & S^1 \ar[d] \\
\cF(S^{2n-1}; \beta_0) \ar[r] \ar[d] & S^{2n-1} \ar[d] \\
  \Maps \left((D^2,S^1), (\CP^{n-1}, \CP^{n-1}) \right) \ar[r] & \CP^{n-1} .}\]
Since the top and bottom horizontal maps are homotopy equivalences, so is the middle one.
\end{proof}

In particular, the inclusion of $L$ into $\cF(L;0)$ as constant discs is a weak homotopy equivalence.  We use this to conclude:

\begin{lem} \label{lem:trivial_tangent_bundle_parametrized}
The tangent bundle of $\cP(L;0)$ is stably trivial, and its restriction to the component containing constant discs is trivial.
\end{lem}
\begin{proof}
It is well-known, see for example \cite{MS} for the case with no Hamiltonian perturbation, that the operator \eqref{CR-operator}  extends to a Fredholm map between Banach bundles over $\cF_{\cP}(L)$. Moreover, regularity implies that the class of the tangent bundle of $\cP(L;0)$ in $KO(\cP(L;0))$ agrees with the restriction of the class of the Cauchy-Riemann operator on  $\cF_{\cP}(L)$.  By the previous lemma, the inclusion of $L$ in $\cF_{\cP}(L)$ as the boundary of $\cP(L;0)$ is a homotopy equivalence.  In particular, the stable triviality of the tangent bundles of homotopy spheres (see \cite{KM}) implies that the class of the Cauchy-Riemann operator in  $KO(\cF_{\cP}(L) )$ is trivial.

Since the component of $\cP(L;0)$ containing $L$ is a manifold with boundary, it has the homotopy type of a $2n-1$ complex.  In particular, its tangent bundle is trivial if and only if it is trivial in reduced $K$-theory (we're in the stable range; see Lemma 3.5 of \cite{KM}).
\end{proof}

If $\cP(L;0)$ were compact, we would immediately be able to conclude that existence of exotic spheres which cannot embed as a Lagrangian in the cotangent bundle of the standard sphere.  Indeed, we would know that $L$ bounds a compact parallelisable manifold, and Kervaire and Milnor's results in \cite{KM} imply  the existence of exotic spheres which cannot bound such a manifold.  However, $\cP(L;0)$ admits an evaluation map to $L$ extending the identity on the boundary of $\cP(L;0)$, so that the compactness of $\cP(L;0)$ would imply that the fundamental class of $L$ vanishes in homology.

We must therefore study the Gromov-Floer compactification $\Pbar(L;0)$ with the hope of being able to construct a compact parallelisable bounding manifold for $L$.

\subsection{Compactification of the moduli spaces}  
The standard  Gromov-Floer compactification of  $\cP(L;0)$ is obtained by including cusp curves, see  \cite{gromov}.   These consist of a solution $u$ to Equation \eqref{deformedCR} in some (relative) homotopy class $\beta_{0}$, together with a collection of holomorphic discs or spheres $v_i$ (considered modulo their automorphisms) in homotopy classes $\beta_{i}$ such that
\[ \sum_{i=0}^{\nu} \beta_{i} = 0,\] 
which are arranged along a tree and satisfy a stability condition.   In order to  conclude that $\Pbar(L;0)$ is compact we shall require
\begin{lem}[Lemma 2.2 of \cite{oh-disjunction}]
There is no solution to Equation \eqref{deformedCR} for $R$ sufficiently large. \noproof
\end{lem}

We will presently see that there are in fact only three possible configurations to consider.

The simple connectivity of $L$ implies that we have an isomorphism
\begin{equation} \pi_{2}(M) \cong \pi_{2}(M,L) .\end{equation}
We will write $\alpha$ for the generator of the left hand side corresponding to the copy of $\bC \bP^{1} \subset \bC \bP^{n-1}$, and $\beta$ for its image in the right hand side.

Since the first Chern class  of $\bC \bP^{n-1}$ evaluates to $n$ on $\alpha$, the  moduli space of holomorphic maps from $\bC \bP^{n-1}$ to $M$ in class $k \alpha$ has virtual real dimension
\[  2(2n -1) + 2 k n \]
by a standard application of the Riemann-Roch formula.  There is an analogous formula for the virtual dimension of the space of holomorphic discs in class $k \beta$ before taking the quotient by the group of automorphisms, which is
\begin{equation} \label{eq:virtual_dimension_discs}  2n-1 + 2 k n .\end{equation}

The dimension of the moduli space of solutions to Equation \eqref{deformedCR} in $\cF_{\cP}(L;k\beta)$ is one higher because of the extra choice of the parameter $R$.  Writing $\cP(L;k \beta)$ for this moduli space, we conclude:
\begin{lem}
The expected dimension of $\cP(L;k \beta)$ is $ 2n(1+k)$.  \noproof
\end{lem}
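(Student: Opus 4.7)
The plan is to compute the Fredholm index of the linearized operator at a solution lying in $\cN(L; k\beta, \bfJ_R, H, \gamma_R)$ via Riemann-Roch, and then add $1$ to account for the extra parameter $R$.

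First, I would observe that for fixed $R \geq 0$, the linearization of the deformed $\dbar$ operator of Equation \eqref{deformedCR} at a solution $u$ differs from the standard Cauchy-Riemann operator on sections of $u^{*}TM$ with Lagrangian boundary conditions in $u^{*}TL$ by a zeroth-order term involving $\gamma_{R} \otimes X_{H}$. Since this term has compact support on $D^{2}$, it defines a compact operator between the appropriate Sobolev completions, so the Fredholm index of the linearization of \eqref{deformedCR} coincides with that of the standard $\dbar$ operator for the same bundle pair $(u^{*}TM, u^{*}TL)$.

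Next, I would invoke Riemann-Roch for a disc with Lagrangian boundary conditions, which yields an index of $\dim_{\bR} L + \mu(u)$, where $\mu(u)$ is the Maslov index of $(u^{*}TM, u^{*}TL)$. Since $M = \bC^{n} \times \bC \bP^{n-1}$ has complex dimension $2n-1$ and $L$ is Lagrangian, $\dim_{\bR} L = 2n-1$. For the Maslov index, the simple connectivity of $L$ yields $\pi_{2}(M) \cong \pi_{2}(M,L)$, identifying $\beta$ with $\alpha = [\bC \bP^{1}]$; the Maslov index of a disc equals twice the first Chern number of the associated sphere, and $c_{1}(M)(\alpha) = c_{1}(\bC \bP^{n-1})(\alpha) = n$, because the $\bC^{n}$ factor has trivial tangent bundle (the conjugation of its complex structure and the sign of its symplectic form are irrelevant here, as neither affects triviality). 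Therefore $\mu(k\beta) = 2kn$, and the expected dimension of $\cN(L; k\beta, \bfJ_{R}, H, \gamma_{R})$ is $(2n-1) + 2kn$, recovering Equation \eqref{eq:virtual_dimension_discs}.

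Finally, since $\cP(L; k\beta)$ is the disjoint union of these moduli spaces over the one-dimensional parameter $R \in [0, +\infty)$, its expected dimension exceeds the fiberwise one by $1$, giving $(2n - 1 + 2kn) + 1 = 2n(1+k)$. There is no substantive obstacle; the only point that merits a moment of care is checking that the conjugation of the complex structure on the $\bC^{n}$ factor does not alter the Chern number computation, which is automatic since the tangent bundle of $\bC^{n}$ is trivial regardless of the chosen compatible complex structure.
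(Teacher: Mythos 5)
Your computation is correct and follows exactly the route the paper intends: the index $2n-1+2kn$ of Equation \eqref{eq:virtual_dimension_discs} (Riemann--Roch with $\mu(k\beta)=2kc_1(\alpha)=2kn$ and $\dim_{\bR}L = 2n-1$, unaffected by the compactly supported Hamiltonian perturbation term), plus $1$ for the parameter $R$. The paper leaves this as an immediate consequence of the preceding discussion, and your write-up supplies precisely the standard details it omits.
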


In order to find the desired bounding manifold, we shall choose Equation \eqref{deformedCR} so that requirement \eqref{req:0_moduli_reg} extends to homotopy classes of maps which have non-positive energy:
\begin{equation} \label{ass:exceptional_discs_regular}
\parbox{35em}{All moduli spaces $\cP(L;k \beta)$ for $k \leq 0$ are regular.}\end{equation}
This implies in particular that all such moduli spaces for $k < -1$ are empty,  so that, a priori, the cusp curves in the compactification of  $\cP(L;0)$ have, as one of their components, a curve $u$ in $\cP(L;k \beta)$ where $k \geq -1$.  The proof that this requirement holds for a generic path of almost complex structures $\bfJ$ is done in Section \ref{sec:transversality}, with a precise statement given in Corollary \ref{cor:parametrized_moduli_manifold_boundary}.

Since we're considering the Gromov-Floer compactification of a moduli space of curves in the constant homotopy class, the sum of the homotopy classes of all irreducible components in our tree must vanish.  Positivity of energy implies that all (non-constant) holomorphic discs (and spheres) occur in classes $k \beta$  (respectively $k \alpha$) where $k$ is strictly positive.  The moduli spaces $\cP(L;k \beta)$ for $k \neq -1$ do not therefore contribute to the compactification of  $\cP(L;0)$.

Note that  $ \cP(L;- \beta)$, the only remaining moduli space that could appear in the compactification of $\cP(L;0)$, has dimension $0$.  In particular, $\cN(L;-\beta, \bfJ_{R}, H, \gamma_{R})$ is empty for generic values of $R$;  so we shall call the elements of $ \cP(L;- \beta) $  {\bf exceptional solutions}.  Having excluded all other possibilities, we obtain the following description of the corner strata of  $\Pbar(L;0)$:
\begin{lem}
The strata of $\Pbar(L;0) - \cP(L;0)$ consist of an exceptional solution $u$ solving Equation \eqref{dbar-operator} for some $R$ together with either \begin{itemize}
\item[(i)] a $J_{ z, R}$-holomorphic sphere in class $\alpha$  intersecting the image of $u$ at $u(z)$ with $z$ an interior point, or
\item[(ii)] a $J_{\alg}$-holomorphic disc in class $\beta$ with boundary on $L$ passing through a point on the boundary of $u$, or 
\item[(iii)] a $J_{\alg}$-holomorphic sphere in class $\alpha$ passing through a point of the boundary of $u$. 
\end{itemize}
\noproof
\end{lem}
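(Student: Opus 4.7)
My plan is to apply Gromov compactness to a sequence of solutions that fails to converge in $\cP(L;0)$, and then to use the homotopy-class bookkeeping dictated by the regularity hypothesis \eqref{ass:exceptional_discs_regular} to pin down the combinatorial type of the limit.

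First, by standard Gromov compactness applied to the parametrized equation \eqref{deformedCR}, any element of $\Pbar(L;0) - \cP(L;0)$ is represented by a stable nodal configuration consisting of a principal component $u$ solving \eqref{deformedCR} at some $R_\infty \in [0,+\infty)$ in a relative class $k_0 \beta$, together with a finite tree of holomorphic bubbles: boundary disc bubbles in classes $k_i \beta$ (which, because $\bfJ$ agrees with $J_{\alg}$ near $\partial D^2$, are $J_{\alg}$-holomorphic); interior sphere bubbles at interior nodes $z$ in classes $k_j \alpha$ (which are $J_{z,R_\infty}$-holomorphic); and boundary sphere bubbles in classes $k_l \alpha$ (which are again $J_{\alg}$-holomorphic for the same reason). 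Using the isomorphism $\pi_2(M) \cong \pi_2(M,L)$ to identify $\alpha$ with $\beta$, positivity of the symplectic pairing on non-constant holomorphic curves forces $k_i, k_j, k_l \geq 1$, and conservation of the total class yields
\[
k_0 + \sum_i k_i + \sum_j k_j + \sum_l k_l = 0,
\]
so $k_0 \leq 0$.

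Next, I would invoke \eqref{ass:exceptional_discs_regular}: the moduli spaces $\cP(L;k\beta)$ for $k \leq 0$ are smooth of their virtual dimension $2n(1+k)$, which is negative whenever $k \leq -2$, so those moduli spaces are empty. Hence $k_0 \in \{-1,0\}$. The case $k_0 = 0$ leaves no room for any bubble and would then place the limit back in $\cP(L;0)$, contradicting the choice of sequence. Therefore $k_0 = -1$, so $u$ lies in the $0$-dimensional moduli space $\cP(L;-\beta)$ of exceptional solutions, and the bubble tree consists of exactly one bubble of unit class, either a disc in class $\beta$ or a sphere in class $\alpha$.

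Finally, I would partition by bubble type and attachment point: a disc bubble can only attach at a boundary node of $u$, giving case (ii); a sphere bubble attached at an interior node $z$ of $u$ gives case (i) with almost complex structure $J_{z,R_\infty}$; a sphere bubble attached at a boundary node gives case (iii) with almost complex structure $J_{\alg}$. Non-constancy of the single bubble makes the stability condition automatic, which is why no further bubbles can be hidden on the tree. The main subtlety I would take for granted here is that $R_\infty$ remains finite; this uses the displacement hypothesis on $\phi$, which rules out the Floer-type strip limits between $L$ and $\phi(L)$ that would accompany $R_\infty = +\infty$, and is presumably the most delicate of the compactness ingredients entering the lemma.
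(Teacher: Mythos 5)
Your argument is correct and is essentially the paper's own: the text immediately preceding the lemma derives it from the vanishing of the total homotopy class, positivity of energy for the bubbles, and the regularity assumption \eqref{ass:exceptional_discs_regular} forcing $\cP(L;k\beta)=\emptyset$ for $k\le -2$, leaving $k_0=-1$ and a single unit-class bubble. Your closing remark about $R$ staying bounded (via the displacement hypothesis) is a point the paper leaves implicit, but it does not change the structure of the proof.
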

Observe that a holomorphic disc in class $\beta$ can degenerate to a holomorphic sphere in class $\alpha$ passing through a point of $L$.  The Fredholm theory of such a problem is well understood, and is modeled by considering a constant (ghost) holomorphic disc mapping to the intersection of the sphere with $L$.  This, together with a standard gluing argument, implies that case (iii) in the above lemma has expected codimension $2$  within the Gromov-Floer compactification of case (ii).   Case (iii) has expected codimension $1$ in the closure of the stratum corresponding to case (i), as a sphere bubble passing through the interior of the disc can escape to the boundary.

To describe the topology of these strata, we introduce the notation 
\begin{equation} \cS_{i,j} (L; k\beta , J) \end{equation}
for the moduli space of $J$-holomorphic discs in class $k \beta$ with $i$ labeled interior marked points, and $j$ ordered boundary marked points, modulo automorphisms. 
We also have  the spaces
\begin{equation}\cN_{i,j}(L;k\beta,\bfJ_{R}, H, \gamma_{R}) \textrm{ and } \cP_{i,j}(L;k \beta) \end{equation}
whose meaning should be evident to the reader.  Slightly more generally, if $\bfJ$ is a family of almost complex structures parametrized by $D^2 \times [0,+\infty)$ as in the definition \eqref{eq:define_parametrized_space} of the moduli spaces $\cP(L;0)$, then we shall write
\begin{equation} \cM_{i}(M; k\alpha, \bfJ) = \coprod_{(R,z) \in D^2 \times [0,+\infty)} \cM_{i}(M; k\alpha, J_{ z, R})\end{equation}
for the moduli space, modulo automorphism, of spheres in homotopy class $k \alpha$ with $i$ marked points, which are holomorphic with respect to one of the almost complex structures $J_{z, R}$.  This disjoint union is topologised, in the same way as $\cP(L;0)$, by exhibiting it as the zero set of a Fredholm section of a Banach bundle. In particular, whenever this defining section is transverse to the zero section, the natural map from $\cM_{i}(M; k\alpha, \bfJ)$ to $D^2 \times [0,+\infty)$ is smooth.

In this language, the strata of $\Pbar(L;0) - \cP(L;0)$ can be described as fibre products
\begin{align} \cod{1}{L} & = \cS_{0,1}(L;\beta, J_{\alg}) \times_{L}  \cP_{0,1}(L;- \beta)  \\
\cod{2}{L} & =  \cM_{1}(M;\alpha, \bfJ) \times_{M \times [0,+\infty) \times D^2}  \cP_{1,0}(L;- \beta) \\
\cod{3}{L} & = \cM_{1}(M;\alpha, J_{\alg}) \times_{M} \cS_{1,1}(L; 0, J_{\alg}) \times_{L} \cP_{0,1}(L;- \beta).
 \end{align}

Here, the superscript on $ \cod{i}{L}$ is the expected codimension of the stratum, i.e. the difference between the dimension of $\cP(L;0)$ and the virtual dimension of the stratum.    We will now ensure that all boundary strata are smooth manifolds of the expected dimension, which is the first step towards constructing a manifold with corners.   Oh proved in \cite{oh-perturb-boundary}, that the next requirement holds after a small perturbation of $L$.
\begin{equation} \label{ass:moduli_discs_regular}
\parbox{35em}{The moduli space  $\cS(L; \beta,J_\alg )$ of $J_\alg$ holomorphic discs in class $\beta$ with boundary on $L$ is regular.}  \end{equation}

In particular, the moduli space of holomorphic discs with one boundary marked point is a smooth manifold equipped with an evaluation map
\begin{equation}  \cS_{0,1}(L; \beta) \to L .\end{equation}
We shall impose the following condition on the moduli space of exceptional solutions:
\begin{equation} \label{ass:transverse_evaluation}
\parbox{35em}{The evaluation map 
\begin{equation*} \cP_{0,1}(L; -\beta) \to L \end{equation*}  is transverse to the evaluation map from $\cS_{0,1}(L; \beta)$ . }\end{equation}
In  Lemma  \ref{lem:transverse_evaluation_parametrized}, we prove that this property holds for a generic family $\bfJ$.  From Equation \eqref{eq:virtual_dimension_discs}, we know that the moduli space $\cS_{0,1}(L; \beta)$  has real dimension $4n-3$, since the group of automorphisms of $D^2$ fixing a boundary point has real dimension $2$. Subtracting $2n-2$ for the codimension of the evaluation map \eqref{ass:transverse_evaluation}, we conclude:
\begin{cor}
The stratum $\cod{1}{L}$ is a smooth manifold of dimension $2n - 1$. \noproof
\end{cor}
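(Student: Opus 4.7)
The corollary is a direct consequence of combining the regularity assumptions \eqref{req:0_moduli_reg}, \eqref{ass:exceptional_discs_regular}, \eqref{ass:moduli_discs_regular}, and \eqref{ass:transverse_evaluation} with a dimension count, so my plan is essentially to unpack each factor of the fibre product $\cod{1}{L} = \cS_{0,1}(L;\beta, J_{\alg}) \times_{L}  \cP_{0,1}(L;- \beta)$ and verify smoothness and dimension of each piece before invoking transversality.

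First I would record that assumption \eqref{ass:moduli_discs_regular} guarantees the moduli space $\cS(L;\beta,J_{\alg})$ is cut out transversely, hence is a smooth manifold. Its dimension, as recalled just before the statement of the corollary, is $4n-3$: starting from the formula \eqref{eq:virtual_dimension_discs} with $k=1$, the virtual dimension of the space of parametrised $J_{\alg}$-holomorphic discs in class $\beta$ is $4n-1$, and adding a boundary marked point while quotienting by the two-dimensional group $\Aut(D^2,1)$ of biholomorphisms of the disc fixing one boundary point produces $\cS_{0,1}(L;\beta,J_{\alg})$ as a smooth manifold of dimension $4n-3$.

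Next I would observe that assumption \eqref{ass:exceptional_discs_regular} ensures the moduli space $\cP(L;-\beta)$ is regular; since its expected dimension is $2n(1+(-1))=0$ it is a smooth $0$-manifold. Adding one boundary marked point (parametrised by $S^{1}$) produces $\cP_{0,1}(L;-\beta)$ as a smooth $1$-manifold, equipped with a smooth evaluation map to $L$.

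Finally, I would apply assumption \eqref{ass:transverse_evaluation}: the evaluation maps $\cS_{0,1}(L;\beta,J_{\alg})\to L$ and $\cP_{0,1}(L;-\beta)\to L$ are transverse, so the fibre product $\cod{1}{L}$ is a smooth manifold. Its dimension is
\[
\dim \cS_{0,1}(L;\beta,J_{\alg}) + \dim \cP_{0,1}(L;-\beta) - \dim L = (4n-3) + 1 - (2n-1) = 2n-1,
\]
which matches the codimension $1$ interpretation of the superscript in $\cod{1}{L}$. No step here should be a genuine obstacle, since the entire content has already been offloaded to the regularity and transversality assumptions and to the index computations of Section \ref{sec:transversality}; the only thing to be careful about is that the dimension of $\cP_{0,1}(L;-\beta)$ accounts for the extra $S^{1}$-parameter of the boundary marked point rather than an interior marked point.
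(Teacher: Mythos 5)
Your proposal is correct and follows essentially the same route as the paper: regularity of $\cS_{0,1}(L;\beta)$ and $\cP_{0,1}(L;-\beta)$ from assumptions \eqref{ass:moduli_discs_regular} and \eqref{ass:exceptional_discs_regular}, transversality of the two evaluation maps from \eqref{ass:transverse_evaluation}, and the count $(4n-3)+1-(2n-1)=2n-1$. One bookkeeping quibble: to land on $4n-3$ for $\cS_{0,1}(L;\beta)$ you should either add a free boundary marked point and quotient by the full three-dimensional $\Aut(D^2)$, or fix the marked point and quotient by the two-dimensional stabilizer $\Aut(D^2,1)$ — as literally phrased ($+1$ for the marked point but only $-2$ for the stabilizer) the arithmetic would give $4n-2$.
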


Note that the moduli space of holomorphic spheres in class $\alpha$ with respect to the standard complex structure $J_\alg$ is regular and that the evaluation map from  $\cM_{1}(M ; \alpha ,J_{\alg})$ to  $M$ is a submersion.  Since the family $\bfJ$ has been assumed to agree with $J_\alg$ when $z$ lies on the boundary, we conclude 
\begin{lem}
The stratum $\cod{3}{L}$ is a smooth compact manifold of dimension $2n-3$ with components labeled by elements of $\cP(L;-\beta)$.  Each such component is  diffeomorphic to
\begin{equation} \CP^{n-2} \times S^1. \end{equation} \noproofe
\end{lem}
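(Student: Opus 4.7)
The strategy is to recognise $\cod{3}{L}$ as a bundle over a disjoint union of circles, one for each exceptional solution, with fibre $\CP^{n-2}$, and then to exploit the simple-connectivity of $\CP^{n-1}$ to trivialise the bundle.

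The first step is to describe $\cM_1(M;\alpha,J_\alg)$ and its evaluation map. Since $J_\alg$ decomposes as the conjugate standard structure on $\bC^n$ and the standard structure on $\CP^{n-1}$, and since $\pi_2(\bC^n)=0$, every $J_\alg$-holomorphic sphere in class $\alpha$ is the product of a constant in the $\bC^n$ factor with a complex projective line in $\CP^{n-1}$. The evaluation at the marked point,
\[
\ev\colon \cM_1(M;\alpha,J_\alg) \to M,
\]
is then the natural projection of the bundle whose fibre over $(p_1,p_2)\in M$ is the space of projective lines through $p_2$, canonically identified with $\bP(T_{p_2}\CP^{n-1}) \cong \CP^{n-2}$. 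In particular $\ev$ is a submersion, so $\cM_1(M;\alpha,J_\alg)\times_M L$ is a smooth $\CP^{n-2}$-bundle over $L$, namely the pullback of $\bP(T\CP^{n-1})$ along the projection $L\to\CP^{n-1}$.

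Next, I would simplify the triple fibre product. The moduli space $\cS_{1,1}(L;0,J_\alg)$ of constant discs with one interior and one boundary marked point is rigidified by those marked points, since $PSL(2,\bR)$ acts simply transitively on such configurations; it is therefore diffeomorphic to $L$ via the image point, and the fibre product collapses to
\[
\cod{3}{L} \;\cong\; \cM_1(M;\alpha,J_\alg)\times_M L \times_L \cP_{0,1}(L;-\beta).
\]
Assumption \eqref{ass:exceptional_discs_regular} implies that $\cP(L;-\beta)$ is a compact smooth $0$-manifold, hence a finite set, so $\cP_{0,1}(L;-\beta)$ is a finite disjoint union of circles, one for each exceptional solution $u$, evaluating to $L$ by $u|_{\partial D^2}$. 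The component of $\cod{3}{L}$ indexed by $u$ is therefore the pullback of $\bP(T\CP^{n-1})\to\CP^{n-1}$ along the loop obtained by composing $u|_{\partial D^2}$ with the projection $L\to\CP^{n-1}$.

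Finally, since $\CP^{n-1}$ is simply connected, this loop is null-homotopic and the pullback is trivial, so each component is diffeomorphic to $\CP^{n-2}\times S^1$, of real dimension $2(n-2)+1=2n-3$. Smoothness and compactness of the fibre products follow from the submersion property of $\ev$ together with the compactness of $L$ and of $\cP(L;-\beta)$. The only point requiring any care is the interpretation of the unstable ghost disc $\cS_{1,1}(L;0,J_\alg)$, but since its sole role is to identify the point of $L$ through which both the sphere and the boundary of the exceptional solution pass, this amounts to a purely formal verification.
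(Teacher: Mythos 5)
Your proof is correct and follows the same route the paper intends: the paper states this lemma with no proof, relying on the observation (Lemma \ref{lem:moduli_space_alg_discs_submersion}) that the evaluation map $\cM_{1}(M;\alpha,J_{\alg})\to M$ is a submersion with fibres $\CP^{n-2}$, which is exactly your identification of it with $\bP(T\CP^{n-1})$ pulled back to $M$. You correctly supply the two details the paper leaves implicit — that the ghost-disc factor $\cS_{1,1}(L;0,J_{\alg})$ collapses to $L$, and that the resulting $\CP^{n-2}$-bundle over each circle is trivial because the loop $\theta\mapsto\pi_{\CP^{n-1}}(u(\theta))$ is null-homotopic in the simply connected $\CP^{n-1}$.
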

In Section \ref{sec:transversality}, see in particular Lemma \ref{lem:codim_2_strata_diffeo_type}, we shall explain how to use the stability of submersions under perturbations to extend this explicit description to the moduli space of spheres passing through the interior of an exceptional solution:
\begin{equation}  \begin{array}{c} 
\parbox{35em}{The stratum $\cod{2}{L}$ is a smooth manifold of dimension $2n-2$ with components labeled by elements of $\cP(L;-\beta)$.  The closure of every component is diffeomorphic to}  \\
 \label{eq:diffeo_type_codim_2}   \CP^{n-2} \times D^{2}. \end{array}\end{equation}

We have now ensured that all  strata of $\Pbar(L;0)$ are smooth manifolds.  Instead of proving that $\Pbar(L;0)$ is a manifold with boundary, we shall construct a manifold with corners $\Phat(L;0)$ one of whose boundary components is diffeomorphic to $L$, and whose remaining corner strata are in bijective correspondence to the strata of $\Pbar(L;0) - \cP(L;0)$.  To give a precise relationship between $\Phat(L;0)$ and $\Pbar(L;0)$, consider the trivial bundle over the parametrized moduli space of holomorphic spheres whose fibre is the tangent space of $\bC \bP^{1}$ at $\infty$ which we identify with the marked point.   Taking the quotient by $\Aut(\bC \bP^1,\infty) \cong \bC^{*} \semidirect \bC$ we obtain a possibly non-trivial complex line bundle which we denote $\cL \cM_{1}(M; \alpha) $. 
 This is the bundle over the moduli space of unparametrized spheres with one marked point $\cM_{1}(M; \alpha) $ whose fibre is the tangent line of $\bC \bP^1$ at the marked point.

Over $\cP_{1,0}(L;-\beta)$, we also have a complex line bundle $\cL \cP_{1,0}(L;-\beta)$ whose fiber at an element $(u,z)$ is the tangent space $T_{z} D^{2}$. Let $ \cL \cod{2}{L}$ denote the tensor product of the pullback of these two line bundles to $\cod{2}{L}$,  
\begin{equation} \label{eq:normal_bundle_codimension_2} \cL \cod{2}{L} = \cL \cM_{1}(M; \alpha) \boxtimes_{\bC} \cL \cP_{1,0}(L;-\beta).  \end{equation}
We will consider the unit circle bundle associated to $ \cL \cod{2}{L} $ which we denote by \begin{equation}\codt{2}{L}. \end{equation}
\begin{rem}
Were we to prove that $\Pbar(L;0)$ is a manifold with boundary, $\cL \cP_{1,0}(L;-\beta)$ would have been the normal bundle to $\cod{2}{L}$.  From the point of view of algebraic geometry, this follows from the well known fact that the infinitesimal smoothings of a nodal singularity are parametrized by the tensor product of the tangent spaces of the two branches.  From the point of view of gluing theory and hence closer to the discussion at hand, a gluing construction from $\cod{2}{L}$ to $\cP(L;0)$ requires choosing a gluing parameter in $[0,+\infty)$ as well as cylindrical ends on each branch, i.e. maps from $S^1 \times [0,+\infty)$ to each punctured surface.  The choice of such ends is equivalent up to homotopy  to the choice of a unit tangent vector at each branch.  Moreover, up to a decaying error term, the gluing construction is invariant under the simultaneous rotation of both cylindrical ends.
\end{rem}

The same construction performed on $\cM_{1}(M; \alpha, \bfJ)$ yields a circle bundle  $\codt{3}{L} $ over $\cod{3}{L}$.  We can now state the following result which will be proved in Section \ref{sec:construction_manifold_corner}.
\begin{lem} \label{lem:existence_manifold_corner}
The union
\begin{equation} \cP(L;0) \cup \codt{2}{L}\end{equation}
admits the structure of a smooth manifold with boundary.  Moreover, there exists a compact manifold with corners $\Phat(L;0)$ embedded in $ \cP(L;0) \cup \codt{2}{L} $ as a codimension $0$ submanifold,  one of whose boundary components is diffeomorphic to $L$, and whose remaining corner strata are diffeomorphic to the components of $\cod{1}{L}$, $\codt{2}{L}$, and $\codt{3}{L}$.
\end{lem}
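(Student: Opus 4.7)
The plan is to build $\Phat(L;0)$ in two stages: first, extend $\cP(L;0)$ smoothly across the codimension two stratum by an oriented real blow-up, producing the smooth manifold with boundary $\cP(L;0) \cup \codt{2}{L}$; second, cut off suitable collar neighborhoods of the ends corresponding to $\cod{1}{L}$ and $\codt{2}{L}$ to obtain a compact manifold with corners whose corner strata match those listed.

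For the first stage, the key input is a smooth gluing theorem for interior sphere bubbles. Given a point $(v,(u,z)) \in \cod{2}{L}$, standard gluing analysis (in the style of McDuff--Salamon) produces, for each sufficiently small $\lambda$ in the fibre of $\cL \cod{2}{L}$ at $(v,(u,z))$, an element of $\cP(L;0)$ by pre-gluing and Newton iteration. Writing $\lambda = \rho e^{i\theta}$, the radial parameter $\rho$ is the usual gluing length, while the angular parameter $\theta$ represents the relative rotation of the two cylindrical ends: this is precisely the geometric content of the identification \eqref{eq:normal_bundle_codimension_2} of the normal direction with the tensor product of tangent lines. One must verify (i) that this gluing map is a smooth open embedding into $\cP(L;0)$, and (ii) that every sequence in $\cP(L;0)$ Gromov-converging to a point of $\cod{2}{L}$ eventually lies in its image; together these give a smooth local chart near $\cod{2}{L}$ parametrized by a neighborhood of the zero section of $\cL \cod{2}{L}$. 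Passing to polar coordinates on the fibres replaces the open disc bundle by the half-open cylinder $[0,\epsilon) \times \codt{2}{L}$, and the resulting space $\cP(L;0) \cup \codt{2}{L}$ inherits the structure of a smooth manifold with boundary.

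For the second stage, I would apply two further gluing constructions. Along the codimension one stratum $\cod{1}{L}$ (boundary disc bubbling), the standard gluing parameter $\rho \in [0,\epsilon)$ yields a collar $[0,\epsilon) \times \cod{1}{L}$ in $\cP(L;0)$; the angular freedom is absorbed by the residual automorphisms of the disc. Along $\cod{3}{L}$, which lies in the closure of both $\cod{1}{L}$ and $\codt{2}{L}$, I would invoke a bicollar (product) gluing theorem: the simultaneous gluing of a boundary disc bubble and an interior sphere bubble in class $\alpha$ passing through a boundary point is governed by two independent parameters in $[0,\epsilon)$, yielding a model $[0,\epsilon)^2 \times \codt{3}{L}$ in which the two faces are identified with the collars of $\cod{1}{L}$ and $\codt{2}{L}$ respectively. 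The remaining boundary face at $R=0$ is $L$ by the corollary preceding Lemma \ref{lem:homotopy_theory}. Truncating each end by choosing sufficiently small collar thicknesses produces the compact manifold with corners $\Phat(L;0)$.

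The main obstacle is the second gluing step at the codimension two corner $\codt{3}{L}$: one needs a product gluing theorem asserting that the two gluing parameters (boundary disc bubble and oriented interior sphere bubble) combine to give a smooth chart modelled on a quadrant, compatibly with the collars chosen in the first stage. This is a standard but nontrivial refinement of the gluing analysis, requiring uniform quadratic estimates as both parameters degenerate simultaneously; the transversality of the evaluation maps ensured by assumption \eqref{ass:transverse_evaluation} and the submersivity of the sphere evaluation at $J_\alg$ are what make the estimates decouple. Everything else is a packaging of well-established pseudoholomorphic gluing theory.
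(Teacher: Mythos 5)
Your overall architecture --- gluing charts near each boundary stratum, a compatible quadrant chart at the corner $\codt{3}{L}$, then truncation --- does match the paper's, and your first stage (the oriented blow-up along $\cod{2}{L}$, i.e.\ polar coordinates on the line bundle $\cL\cod{2}{L}$, realized by a gluing map $\codt{2}{L}\times[S,+\infty)\to\cP(L;0)$ that is a diffeomorphism onto a neighbourhood) is essentially what the paper does. But there is a genuine gap in your second stage, and it sits exactly where you wave it away. You assert that along $\cod{1}{L}$ ``the standard gluing parameter $\rho\in[0,\epsilon)$ yields a collar $[0,\epsilon)\times\cod{1}{L}$'' and that ``the angular freedom is absorbed by the residual automorphisms of the disc.'' The disc bubble here is attached at a \emph{varying} point of the boundary circle of the exceptional solution $u$, and $u$, being a solution of the inhomogeneous equation, has no automorphisms to absorb anything: the attaching angle is a genuine modulus. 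The paper is forced to run the implicit function theorem in $\cF_{\cP_{0,1}}(L;0)$ (the space with an extra $S^1$ of rotations) rather than in $\cF_{\cP}(L;0)$, and the resulting gluing map $\G$ is only proved to be \emph{continuous} --- not smooth --- in the gluing parameter $S$; the paper explicitly flags this as possibly a fundamental difficulty with gluing at varying marked points. Without smoothness in $S$ there is no smooth collar to truncate, so your final step collapses.

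The paper's workaround, which your proposal would need to replicate or replace, is: (i) show each fixed-parameter slice $\G_S$ is a smooth codimension-one embedding for $S$ large (this requires the injectivity and immersion arguments of Sections 5 and 8, including the comparison of $d\G_S$ with the pre-gluing of tangent vectors and the transversality to the $S^1$-fibre direction); (ii) prove $\G$ has degree one at infinity, hence is surjective onto a neighbourhood of $\cod{1}{L}$ --- an argument you omit entirely for this stratum, though you state its analogue for $\cod{2}{L}$; and (iii) in place of a collar, construct a proper smooth function $F$ on $\cP(L;0)\cup\codt{2}{L}$ whose level sets at a discrete sequence $S^i\to\infty$ are the images $\Im\G_{S^i}$, using the auxiliary hypersurface condition \eqref{eq:right_inverse_constraint_basepoints} to control the normal derivative, and then set $\Phat(L;0)=F^{-1}(-\infty,S^i]$. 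Your identification of the quadrant chart at $\codt{3}{L}$ as ``the main obstacle'' is therefore misplaced: that gluing is comparatively benign (the sphere evaluation is submersive, so the attaching point can be pinned by intersecting with a hypersurface, and the paper obtains honest diffeomorphisms there); the real obstruction is the regularity in the gluing parameter of the boundary-disc gluing, which your proof assumes rather than establishes.
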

By pushing the boundary strata of this manifold towards the interior, we obtain an embedding of $\Phat(L;0)$ into $ \cP(L;0) $.    This slightly different point of view shall be of use in Section \ref{sec:parallel_cobordism}.
\begin{rem}
We shall consistently use the notation $\tilde{\partial}^{i} \overline{\cR} $ when we consider circle bundles over codimension $i=2$ or $i=3$ corner strata of a moduli space $\cR$ corresponding to breakings of holomorphic spheres.  In all cases, the Gromov compactification which is denoted $\overline{\cR}$ can be replaced by a manifold with corners symbolised by $\hat{\cR}$ admitting $ \tilde{\partial}^{i}  \overline{\cR}$ as a corner stratum of codimension $i-1$. 
\end{rem}
Our goal in the next sections will be to  ``cap off'' the boundary components of $\Phat(L;0)$  (other than $L$ itself) in a controlled manner to ensure that the component containing constant discs is still parallelisable.  We will only succeed if the dimension is congruent to $1$ modulo $4$.

\begin{addendum} \label{add:explicit_computation}
In the case of  the standard inclusion of $S^{2n-1}$ in $\bC^{n} \times \bC \bP^{n-1}$ as the graph of the Hopf fibration, we claim that the complement of a small neighbourhood of
\begin{equation} \cod{1}{\Hopf\left( S^{2n-1} \right)} \cup \cod{2}{\Hopf\left( S^{2n-1} \right)}  \end{equation}
in $\Pbar\left(\Hopf\left( S^{2n-1} \right);0\right)$ has boundary which is diffeomorphic to a disjoint union of copies of $S^{2n-1}$.  In particular, we can smooth the corner strata of the boundary of $\Phat\left(\Hopf\left( S^{2n-1} \right);0\right)$ to obtain a cobordism between $S^{2n-1}$, and other copies of $S^{2n-1}$ indexed by the finite set $\cP\left(\Hopf\left( S^{2n-1} \right);-\beta\right)$.  We give the geometric arguments behind these statements, ignoring all analytic problems which can be resolved without great difficulty in this case.

Fixing an element $u \in \cP(\Hopf\left( S^{2n-1} \right);-\beta)$, our goal is to give an explicit description of the component of $\cod{1}{L}$ which consists of holomorphic discs passing through the boundary of $u$.  We shall write $\codu{1}{\Hopf\left( S^{2n-1} \right)}$ for this manifold.  The invariance of the Hopf map and the complex structure $J_{\alg}$ under the appropriate action of $U(n)$ implies that the projection map 
\begin{equation} \codu{1}{\Hopf\left( S^{2n-1} \right)}\to S^1 \end{equation}
is a fibration.  It suffices therefore to describe the moduli space of holomorphic discs passing though a point $(\pt,\pi_{\Hopf}(\pt))$ on the graph of the Hopf fibration.  We claim that this moduli space is diffeomorphic to a ball, and that its compactification is diffeomorphic to $\bC \bP^{n-1}$.    The precise statement is
\begin{equation}  \label{eq:explicit_description_holomorphic_discs} \parbox{35em}{The moduli space 
\begin{equation*} \cS_{0,1}\left(\Hopf \left( S^{2n-1} \right),\pt;\beta\right) \end{equation*}
of holomorphic discs in $  \bC^{n} \times \bC \bP^{n-1}$, with boundary on the graph of the Hopf fibration, with $1$ mapping to $(\pt,\pi_{\Hopf}(\pt))$, is naturally diffeomorphic to the space of complex lines in $\bC^{n}$ which are not tangent to $S^{2n-1}$ at $\pt$.  Moreover, its Gromov-Floer compactification is diffeomorphic to the space of all complex lines in $\bC^{n}$.}   \end{equation}
To describe the map in one direction, we observe that any complex line passing through $\pt$ and which is transverse to $S^{2n-1}$ can be parametrized so that the unit circle maps to $S^{2n-1}$.  The image of the unit disc under this map gives the first factor of an element of $ \cS_{0,1}(\Hopf \left( S^{2n-1} \right) ,\pt;\beta)$.  To obtain the second factor, we map $D^{2} - \{0 \}$ to $\bC$ by inverting about the unit circle, apply the previously fixed parametrization of our line in $\bC^{n}$, then project to $\bC \bP^{n-1}$ by the $\bC^{\star}$ action on $\bC^{n}$.  The resulting map extends continuously to the origin.  Since the complex structure on $\bC^{n}$ is conjugate to the usual one, while the one on $\bC \bP^{n-1}$ is standard, the resulting map is holomorphic, and it is trivial to check that the product map 
\begin{equation*} D^{2} \to \bC^{n} \times \bC \bP^{n-1} \end{equation*} 
has boundary lying on the graph of the Hopf map.  Note that applying essentially the same construction to the case where the plane is tangent to $ S^{2n-1}$ at $\pt$ yields a ``ghost bubble" on $S^{2n-1}$, together with a holomorphic sphere in $\bC \bP^{n-1}$ passing through $\pi_{\Hopf}(\pt)$, as we expect to find in the Gromov-Floer compactification of the moduli space of holomorphic discs.  We shall not prove that the smooth structure at this boundary stratum is compatible with the one induced from gluing pseudo-holomorphic curves, though this can be done along the lines of the arguments used in Section \ref{sec:more_gluing}.  

We give the barest sketch of the proof that the map we just described is surjective. Start by using the two factors of a holomorphic disc with boundary on $\Hopf(S^{2n-1})$ to construct  a holomorphic sphere in $\bC \bP^{n-1}$ which on the lower hemisphere agrees with the second factor, and on the upper hemisphere with the composition of conjugating the domain, applying the first factor, then projecting along the $\bC^{\star}$ action.  This first implies that the image of such a disc is contained in the product of a complex plane in $\bC^{n}$ with the corresponding line in $\bC \bP^{n-1}$, which reduces the problem to the case $n=2$.  In this case, we can interpret this doubled map as a bi-holomorphism of $\bC \bP^{1}$, and the image of the boundary of the disc under such a map must agree with the unit circle in $\bC$ under stereographic projection after composition with an appropriate translation and dilation.  Geometrically, fixing the image of the boundary to agree with such a circle implies that the boundary of the holomorphic disc in the product $\bC^{2} \times \bC \bP^{1}$ projects to a totally real torus $T^{2} \to \bC^{2}$ which is isotopic, through complex linear maps,  to the product of a circle in each factor of $\bC$ (this torus is usually called the Clifford torus).  Up to re-parametrisation, there is exactly one holomorphic disc (in the appropriate homotopy class) passing through a given point of the Clifford torus, which proves surjectivity.

We conclude that we have a diffeomorphism
\begin{equation} \codu{1}{L} \cong S^1 \times D^{2n-2} \end{equation}
Moreover, while the codimension $2$ stratum is diffeomorphic to $  \CP^{n-2} \times D^{2}$ as in equation \eqref{eq:diffeo_type_codim_2}, a neighbourhood thereof is diffeomorphic to 
\begin{equation} S^{2n-3}\times D^{2} \end{equation}
as we shall see in Equation \eqref{eq:diffeo_type_circle_bundle_codim_2}.  This implies that the boundary of a small neighbourhood of the union of these two strata is obtained by gluing
\begin{equation} S^1 \times D^{2n-2} \cup  S^{2n-3} \times D^{2} \end{equation}
along their common boundary.  The description we have given is sufficiently explicit that the reader is invited to show that this is the standard decomposition of $S^{2n-1}$ into neighbourhoods of an equatorial circle and the orthogonal $S^{2n-3}$.
\end{addendum} 
\subsection{Capping moduli spaces}
The  moduli spaces defined in this section will be constructed starting with moduli spaces of holomorphic discs with boundary on $L$.  Since the family of almost complex structures $\bfJ$ is constantly equal to $J_{\alg}$ near the boundary of $D^2$, we shall drop the almost complex structure from the notation.

Given that $L$ is simply connected, we may choose,  for each exceptional solution $u$, a smooth map 
\begin{equation} \label{eq:choose_capping_disc} c_{u} \co D^2 \to L \end{equation}
such that $c_{u}|S^1 = u|S^1$.  Requirement \eqref{ass:transverse_evaluation} is equivalent to the statement that for each such map, $u|S^1$ is transverse to the evaluation map from $\cS_{0,1}(L; \beta)$, so we may ensure that the same is true for $c_{u}$.   In particular, we can define a smooth manifold whose boundary is diffeomorphic to $\cod{1}{L}$ by considering the fibre product
\begin{equation} \label{eq:definition_capping_moduli_space} \cC(L) = \cS_{0,1}(L; \beta) \times_{L} \left(\cP(L;-\beta) \times D^2\right)   \end{equation}
where the evaluation map on the second factor is
\begin{equation} (u,z) \mapsto c_{u}(z) .\end{equation}

The space $\cC(L)$ is again not compact, but it admits a Gromov-Floer compactification $\Cbar(L)$.  The analysis performed in the previous section implies that the only stratum that needs to be added to $\cC(L)$ is the fibre product
\begin{equation} \label{eq:definition_boundary_stratum_capping_moduli} \codC{2}{L} = \cM_{1}(M;\alpha) \times_{L} (\cP(L;-\beta) \times D^2). \end{equation}

As a slight variation on the construction performed in the previous section, we introduce a line bundle
\begin{equation} \label{eq:normal_bundle_virtual_codim_2} \cL \codC{2}{L} \end{equation}
which is the pullback of $ \cL \cM_{1}(M;\alpha)$ to $ \codC{2}{L}$.  This allows us to define a
circle bundle
\begin{equation} \codtC{2}{L} \end{equation}
whose fibre at a given equivalence class of maps is the unit tangent space to $\bC \bP^{1}$ at the marked point.    We shall presently introduce, in Lemma \ref{lem:compactification_capping_moduli_spaces_corners}, a manifold with corners in which this appears as a codimension $1$ stratum. Note that the boundary of $ \codtC{2}{L}$ is $\codt{3}{L}$.  
\begin{rem} \label{rem:gluing_sphere_at_virtual_disc}
The only difference between  \eqref{eq:normal_bundle_virtual_codim_2} and \eqref{eq:normal_bundle_codimension_2}, is that the former does not involve the tangent space of the disc.  The reason for this is the fact that the Fredholm description of a gluing theorem on $\codC{2}{L}$ corresponds to gluing of a sphere to a ghost disc bubble with one interior marked point and one boundary marked point.  In analogy with \eqref{eq:normal_bundle_codimension_2}, we should strictly speaking define \eqref{eq:normal_bundle_virtual_codim_2}  to be the tensor product of $ \cL \cM_{1}(M;\alpha)$ with the tangent space to this ghost bubble at its interior marked point.  But this second line bundle is canonically trivial.
\end{rem}

In Section \ref{sec:compactify_capping_mod_space}, we shall sketch the proof of
\begin{lem} \label{lem:compactification_capping_moduli_spaces_corners}
There exists a compact manifold with corners $\Chat(L)$ containing  $\cC(L)$ as a complement of a union of strata contained in the boundary, and whose corner strata are respectively diffeomorphic to the components of $\cod{1}{L}$, $\codtC{2}{L}$, and $\codt{3}{L}$.
\end{lem}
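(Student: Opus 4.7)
The plan is to follow the architecture of Lemma \ref{lem:existence_manifold_corner} line by line, replacing the parametrized moduli space $\cP(L;0)$ by the capping moduli space $\cC(L)$ throughout. First, I would verify that $\cC(L)$ is itself a smooth manifold with boundary. The fiber product \eqref{eq:definition_capping_moduli_space} is cut out transversely by combining the regularity \eqref{ass:moduli_discs_regular} of $\cS_{0,1}(L;\beta)$ with the transversality hypothesis \eqref{ass:transverse_evaluation}, the latter being an open condition that extends from $u|_{S^1}$ to the chosen capping map $c_u$. Restricting $z$ to $\partial D^2 = S^1$ and using $c_u|_{S^1} = u|_{S^1}$ then identifies the resulting boundary face of $\cC(L)$ with $\cS_{0,1}(L;\beta) \times_L \cP_{0,1}(L;-\beta) = \cod{1}{L}$, the first of the three advertised corner strata.

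The remaining codimension $1$ boundary arises from Gromov compactification of the $\cS_{0,1}(L;\beta)$ factor: a holomorphic disc in class $\beta$ can degenerate to a sphere in class $\alpha$ attached to a ghost disc carrying one interior and one boundary marked point, yielding the non-compact stratum $\codC{2}{L}$. Here I would run a gluing construction in exact analogy with the one used at $\cod{2}{L}$ in Lemma \ref{lem:existence_manifold_corner}, modified as flagged by Remark \ref{rem:gluing_sphere_at_virtual_disc}: the ghost disc contributes a canonically trivial tangent factor, so the relevant line bundle is \eqref{eq:normal_bundle_virtual_codim_2} rather than \eqref{eq:normal_bundle_codimension_2}, and the parameter space of gluings becomes an open cone on the circle bundle $\codtC{2}{L}$. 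Standard gluing estimates then endow $\cC(L) \cup \codtC{2}{L}$ with the structure of a smooth manifold with boundary, whose codimension $2$ corner corresponds to configurations in which $z \in \partial D^2$ simultaneously with sphere bubbling; the underlying fiber product is $\cM_1(M;\alpha,J_{\alg}) \times_M \cS_{1,1}(L;0,J_{\alg}) \times_L \cP_{0,1}(L;-\beta) = \cod{3}{L}$, decorated with the sphere's unit tangent direction at its marked point, which reproduces $\codt{3}{L}$. The compact manifold with corners $\Chat(L)$ is then extracted by a smooth truncation along the cylindrical ends introduced by the gluing charts.

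The main obstacle is the gluing analysis itself, namely showing that a neighborhood of $\codC{2}{L}$ in $\cC(L) \cup \codtC{2}{L}$ is smoothly modelled on $[0,\epsilon) \times \codtC{2}{L}$ in a way compatible with both the ambient Fredholm theory and the corner structure along $\codt{3}{L}$. This is essentially the same analytic content developed for Lemma \ref{lem:existence_manifold_corner} in Section \ref{sec:codim1_gluing}, transported to the capping setting via the ghost-disc bookkeeping of Remark \ref{rem:gluing_sphere_at_virtual_disc}; the remainder of the argument reduces to transverse fiber products and smooth truncation of cylindrical ends, both of which are standard.
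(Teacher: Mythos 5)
Your proposal follows the paper's own route: the paper first establishes Lemma \ref{lem:gluing_capping_spheres_to_discs}, a gluing diffeomorphism $\G^{\cC} \co \codtC{2}{L} \times [S,+\infty) \to \cC(L)$ onto a neighbourhood of $\codC{2}{L}$ (constructed, as in the other sphere-gluing arguments, by rigidifying the bubbles with a family of hypersurfaces $\N_{(c_u,z)}$ and using a $z$-independent cylindrical end on the ghost disc, per Remark \ref{rem:gluing_sphere_at_virtual_disc}), and then defines $\Chat(L)$ by adjoining $\codtC{2}{L} \times (S,+\infty]$ along this map. The only cosmetic difference is that you truncate the cylindrical end while the paper attaches the stratum at infinity; both produce the advertised corner strata $\cod{1}{L}$, $\codtC{2}{L}$, and $\codt{3}{L}$.
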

We denote the closure of $\cod{1}{L}$ in $\Chat(L)$ by
\begin{equation} \codbC{1}{L} = \cod{1}{L} \cup \codt{3}{L} ,\end{equation} which is a manifold with boundary equipped with a smooth structure by virtue of  its inclusion in $\Chat(L)$.   This space is also the closure $\codb{1}{L}$ of $  \cod{1}{L}  $ in $\Phat(L;0)$.  In Lemma \ref{lem:boundary_smooth_structure_diffeo}, we prove that the two induced smooth structures are diffeomorphic via a diffeomorphism which is the identity on the boundary $\codt{3}{L}$.  We write $\hat{W}(L)$ for the union of $\Chat(L)$ and $\Phat(L;0)$  along this common codimension $1$ boundary stratum.  

As an immediate consequence of Lemmas \ref{lem:existence_manifold_corner} and \ref{lem:compactification_capping_moduli_spaces_corners}  we conclude:
\begin{lem}
$\hat{W}(L)$ is a compact smooth manifold whose boundary is the disjoint union of $L$ with the smooth manifold \begin{equation} \codtC{2}{L}\cup_{\codt{3}{L}} \codt{2}{L}. \end{equation} \noproofe
\end{lem}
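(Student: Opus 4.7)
The plan is to realize $\hat{W}(L)$ as the gluing of the two compact manifolds with corners $\Phat(L;0)$ and $\Chat(L)$ along a shared codimension one boundary stratum, and to verify by a local analysis that this gluing produces a smooth manifold (no longer with corners).

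First, I would assemble the input from the two previous lemmas. By Lemma \ref{lem:existence_manifold_corner}, $\Phat(L;0)$ is a compact manifold with corners whose codimension one boundary consists of the disjoint union of $L$ with the closed stratum $\codb{1}{L} \cup \codt{2}{L}$, these two pieces meeting along the codimension two stratum $\codt{3}{L}$. By Lemma \ref{lem:compactification_capping_moduli_spaces_corners}, $\Chat(L)$ is similarly a compact manifold with corners whose codimension one boundary is $\codbC{1}{L} \cup \codtC{2}{L}$, meeting along the same codimension two stratum $\codt{3}{L}$. Lemma \ref{lem:boundary_smooth_structure_diffeo} then furnishes a diffeomorphism between the two smooth structures induced on the closure of $\cod{1}{L}$, which is the identity on $\codt{3}{L}$. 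I would form $\hat{W}(L)$ by gluing $\Phat(L;0)$ to $\Chat(L)$ along the identification of $\codb{1}{L}$ with $\codbC{1}{L}$ provided by this diffeomorphism.

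Next, I would check smoothness of the result pointwise. Away from $\codt{3}{L}$ and $\cod{1}{L}$ there is nothing to check. At an interior point of $\cod{1}{L}$, each side is locally modeled on a half space $\mathbb{R}^{2n-1} \times [0,\infty)$ with $\cod{1}{L}$ as the bounding hyperplane, and gluing two half spaces along their common boundary under a diffeomorphism yields a smooth open set in $\mathbb{R}^{2n}$. At a point of $\codt{3}{L}$, each side is locally modeled on $\mathbb{R}^{2n-2} \times [0,\infty)^{2}$, where one $[0,\infty)$ factor parametrizes the normal direction into $\cod{1}{L}$ and the other the normal direction into $\codt{2}{L}$ (respectively $\codtC{2}{L}$). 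Since the gluing diffeomorphism is the identity on $\codt{3}{L}$, the two quadrants glue along their $\cod{1}{L}$ faces to a half space $\mathbb{R}^{2n-2} \times \mathbb{R} \times [0,\infty)$, with $\codt{3}{L}$ now appearing as a smooth hypersurface \emph{inside} the boundary rather than as a corner.

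Finally, reading off the boundary of $\hat{W}(L)$ from this local analysis: the face $L$ survives untouched from $\Phat(L;0)$, while the remaining codimension one strata $\codt{2}{L}$ and $\codtC{2}{L}$ are joined smoothly along $\codt{3}{L}$ to form the second component $\codtC{2}{L} \cup_{\codt{3}{L}} \codt{2}{L}$. The only delicate point in this whole argument is the compatibility of smooth structures at $\codt{3}{L}$, which is precisely the content of Lemma \ref{lem:boundary_smooth_structure_diffeo}; granting that, the verification above is routine, in keeping with the claim in the statement that the lemma is an immediate consequence of the two preceding ones.
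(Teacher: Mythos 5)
Your proposal is correct and follows exactly the route the paper intends: the paper gives no proof, marking the lemma as an immediate consequence of Lemmas \ref{lem:existence_manifold_corner} and \ref{lem:compactification_capping_moduli_spaces_corners} together with the identification of smooth structures on $\codbC{1}{L} \cong \codb{1}{L}$ from Lemma \ref{lem:boundary_smooth_structure_diffeo}, and your local-model analysis at interior points of $\cod{1}{L}$ and at the corner $\codt{3}{L}$ is the standard verification being suppressed.
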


We shall now focus on explicitly describing the boundary components of $\hat{W}(L)$.  As in the previous section, the fact that the evaluation map from $\cM_{1}(M;\alpha)$ to $M$ is a submersion with fibres $\bC \bP^{n-2}$ implies that $\codC{2}{L}$ is a disjoint union of copies of 
\begin{equation} D^{2} \times \bC \bP^{n-2} \end{equation}
indexed by exceptional solutions to \eqref{deformedCR}; i.e. by elements of $\cP(L;-\beta)$.  In particular, the components of $\partial \hat{W}(L) - L$  are $S^1$ bundles over $\bC \bP^{n-2}$ bundles over $S^2$.  As we shall see in Proposition \ref{prop:parallel_cobordism}, the parallelisability of the component of  $\hat{W}(L)$ containing constant discs is equivalent to the parallelisability of the total spaces of these bundles.

\subsection{The neighbourhood of the space of sphere bubbles}

In this section, we prove
\begin{prop} \label{parity-product}
The manifold
 \begin{equation} \label{eq:union_spheres_bubbles} \codtC{2}{L}\cup_{\codt{3}{L}} \codt{2}{L}. \end{equation}
is a disjoint union of $S^{2n-3}$ bundles over $S^2$.  If $n$ is odd, its components are diffeomorphic to $S^2 \times S^{2n-3}$, and they are otherwise diffeomorphic to the non-trivial $S^{2n-3}$ bundle over $S^2$.   In particular, if $n$ is odd, the diffeomorphism type of the boundary of $\hat{W}(L)$ is given by 
\begin{equation}  \partial \hat{W}(L) \cong L \amalg (S^2 \times S^{2n-3} \times \cP(L;-\beta)) \end{equation}
\end{prop}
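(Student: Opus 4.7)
My plan is to analyze each component of $\codtC{2}{L}\cup_{\codt{3}{L}} \codt{2}{L}$ separately—they are indexed by elements $u \in \cP(L;-\beta)$—and realise each as an $S^{2n-3}$-bundle over $S^2$, whose isomorphism class is then determined by a clutching calculation.

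Fix $u \in \cP(L;-\beta)$. Because the evaluation map $\cM_{1}(M;\alpha,J_{\alg}) \to M$ is a $\bC \bP^{n-2}$-bundle (the fibre over a point being the projective space of lines through it in $\bC \bP^{n-1}$), the description \eqref{eq:diffeo_type_codim_2} together with the analogous statement for $\codC{2}{L}$ gives both $\cod{2}{L}|_u$ and $\codC{2}{L}|_u$ as copies of $\bC \bP^{n-2} \times D^{2}$; the two $D^2$ factors are respectively the domain of $u$ and of the capping disc $c_u$, and they share a common boundary circle along which the gluing occurs. The identification of a line through $q$ with its tangent direction in $T_q \bC \bP^{n-1}$ shows that $\cL \cM_{1}(M;\alpha)$ restricts on each such $\bC \bP^{n-2}$-fibre to the tautological line bundle $\cO(-1)$. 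Hence $\cL \codC{2}{L}|_u$ is the pullback of $\cO(-1)$ to $\bC \bP^{n-2} \times D^{2}$, while $\cL \cod{2}{L}|_u \cong \cO(-1) \boxtimes TD^{2}$. Since $TD^{2}$ is trivialisable, both unit circle bundles $\codtC{2}{L}|_u$ and $\codt{2}{L}|_u$ are abstractly diffeomorphic to $S^{2n-3} \times D^{2}$, where $S^{2n-3}$ is the total space of the Hopf fibration, i.e.\ the unit circle bundle of $\cO(-1) \to \bC \bP^{n-2}$.

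The crucial step is to compute the clutching function of the $S^{2n-3}$-bundle over $S^{2} = D^{2} \cup_{S^{1}} D^{2}$ that results from the gluing. On the $\codC{2}{L}$ side the identification of the boundary with $\codt{3}{L}|_u$ is tautological, but on the $\cod{2}{L}$ side it requires trivialising $TD^{2}$ along $\partial D^{2}$. The trivialisation dictated by the Fredholm description of gluing (the one for which the normal bundle formula \eqref{eq:normal_bundle_codimension_2} patches to \eqref{eq:normal_bundle_virtual_codim_2}) is by the tangent-to-boundary direction $ie^{i\theta}$ at $z=e^{i\theta}$, since this is the asymptotic direction of the cylindrical end parameterising a boundary puncture. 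This differs from the ambient $\partial_z$-trivialisation by the degree-one map $\theta \mapsto ie^{i\theta}$, so the two boundary identifications with $\codt{3}{L}|_u$ disagree by one full rotation of the $\cO(-1)$-fibres. Therefore the glued total space is the $S^{2n-3}$-bundle over $S^{2}$ whose clutching function is the diagonal rotation $\theta \mapsto e^{i\theta} \cdot \operatorname{id}_{\bC^{n-1}} \in U(n-1)$, acting on the unit sphere in $\bC^{n-1}$.

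Classifying this bundle is a short homotopy-theoretic calculation. The determinant shows that the diagonal loop represents $(n-1)$ times the generator of $\pi_1(U(n-1)) \cong \bZ$; for $n \geq 3$ the stabilisation map $\pi_1(U(n-1)) \to \pi_1(SO(2n-2)) \cong \bZ/2$ sends generator to generator, so the clutching class is $(n-1) \bmod 2$. The bundle is therefore trivial when $n$ is odd and is the unique non-trivial $S^{2n-3}$-bundle over $S^{2}$ when $n$ is even (for $n=2$ the same calculation yields instead the Hopf $S^{1}$-bundle $S^{3} \to S^{2}$). The formula for $\partial \hat{W}(L)$ in the odd case follows by assembling these identifications over the finite index set $\cP(L;-\beta)$. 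The main obstacle is the clutching step: one has to verify directly from the Fredholm-theoretic gluing construction that the natural boundary identifications of $\cL \cod{2}{L}$ and $\cL \codC{2}{L}$ with $\cL \cM_{1}(M;\alpha)$ do in fact differ by exactly one full rotation, rather than by some other integer or, worse, an ambiguous amount depending on conventions.
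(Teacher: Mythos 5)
Your overall strategy---realise each half of \eqref{eq:union_spheres_bubbles} as an $S^{2n-3}$-bundle over a disc and classify the glued bundle over $S^2$ by a clutching loop in $U(n-1)\to SO(2n-2)$---is the same as the paper's, and your justification of the $\partial_{\theta}$-twist via the asymptotic direction of the cylindrical end is the same in spirit as the ghost-bubble argument the paper uses to prove that the identification \eqref{eq:glue_boundary_complement_zero_section} is $x\mapsto x\otimes\partial_{\theta}$. The gap is in the clutching computation itself. You pass immediately to ``abstract'' product structures $S^{2n-3}\times D^2$ on both halves and then declare the clutching function to be the diagonal rotation $e^{i\theta}$ alone. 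But the fibres of $\codtuC{2}{L}$ and of $\codbu{2}{L}$ are the unit spheres in $c_u^{*}T\bC\bP^{n-1}$ and in $u^{*}T\bC\bP^{n-1}\otimes_{\bC}TD^2$ respectively, and the geometric identification over the equator is the \emph{identity} of $u^{*}T\bC\bP^{n-1}|_{S^1}=c_u^{*}T\bC\bP^{n-1}|_{S^1}$ tensored with $\partial_{\theta}$. To read off a clutching function you must first choose trivialisations of $c_u^{*}T\bC\bP^{n-1}$ over one hemisphere and of $u^{*}T\bC\bP^{n-1}$ over the other; the discrepancy between these two trivialisations along the equator is precisely a clutching loop for $\left(c_u\cup u\right)^{*}T\bC\bP^{n-1}$, whose determinant winds $\langle c_1(T\bC\bP^{n-1}), -\alpha\rangle=-n$ times. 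Your formula records only the degree-one twist coming from $TD^2$ and drops this factor entirely.

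The omission is not harmless: it shifts the class in $\pi_1(SO(2n-2))\cong\bZ/2$ by $n\bmod 2$, so it changes the answer for one of the two parities of $n$. A symptom of the problem is that your final computation uses only the rank $n-1$ of $T\bC\bP^{n-1}$ and never its first Chern class, whereas the parity of $c_1(\bC\bP^{n-1})=n$ is exactly the input the introduction identifies as essential to the argument. The paper's proof keeps both contributions: the component labelled by $u$ is identified with the unit sphere bundle of $\left(c_u\cup u\right)^{*}T\bC\bP^{n-1}\otimes_{\bC}\cO(1)$, and the triviality statement is deduced from the interplay between the class $-\alpha$ of $c_u\cup u$ and the $\cO(1)$-twist, not from the twist alone. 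To repair your argument you would need to insert the clutching loop of $\left(c_u\cup u\right)^{*}T\bC\bP^{n-1}$ into your composite and then redo the parity count.
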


This result will follow easily once we explicitly identify $ \cL \cod{2}{L}$,  $\cL \codC{2}{L}$, and the isomorphism which is used to glue them along $ \cL \cod{3}{L} $.  For simplicity, fix an exceptional curve $u$, and write $ \cL \codu{2}{L}$ and $\cL \coduC{2}{L}$ respectively for the components labeled by $u$.  The complement of the zero-section in $\cL \coduC{2}{L}$  can be naturally identified, as a bundle over $D^2$ with the complement of the zero section of the pullback of the tangent space of $\bC \bP^{n-1}$ under the map $c_u$
\begin{equation} \cL \coduC{2}{L} -  \coduC{2}{L} \cong c_{u}^{*} T \bC \bP^{n-1} - D^{2}. \end{equation} 
Indeed, it is easiest to construct a map from the right hand side to the left hand side, taking a non-zero vector in $ c_{u}^{*} T \bC \bP^{n-1}$ at $c_u(z)$ to the pair consisting of the unique complex line passing through $z$ that it spans, together with the corresponding tangent vector to this complex line at $z$.  In particular, we have an identification between the unit circle bundle on one side, and the unit sphere bundle on the other
\begin{equation} \codtuC{2}{L}  \cong c_{u}^{*} S T \bC \bP^{n-1}. \end{equation}

Similarly, we have an identification between the complements of the zero sections
\begin{equation} \cL \codu{2}{L} - \codu{2}{L} \cong  \left( u^{*} T \bC \bP^{n-1} \otimes_{\bC} TD^{2}  \right) - D^{2},
\end{equation}
inducing a diffeomorphism of sphere bundles
\begin{equation} \label{eq:diffeo_type_circle_bundle_codim_2} \codbu{2}{L}  \cong S \left( u^{*} T \bC \bP^{n-1} \otimes_{\bC} TD^{2}  \right)  \cong S^{2n-3} \times D^{2} \end{equation}
In particular, using the fact that $u$ and $c_u$ agree on the boundary circle, the geometric identification of $ \codt{3}{L} $ as the common boundary of $ \codbu{2}{L}$ and $ \codbuC{2}{L}$ gives a diffeomorphism of sphere bundles over the circle
\begin{equation} \label{eq:glue_boundary_complement_zero_section}c_{u}^{*} S T \bC \bP^{n-1}|_{S^1}  \cong S \left(u^* T \bC \bP^{n-1} \otimes_{\bC} TD^{2}  \right)|_{S^{1}}.
\end{equation}

\begin{figure} 
   \centering
   \includegraphics[width=2in]{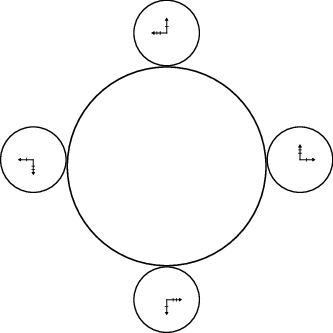} 
   \caption{}
   \label{fig:ghost_bubble_trivialise}
\end{figure}

\begin{lem}
The diffeomorphism \eqref{eq:glue_boundary_complement_zero_section} is induced by an  isomorphism of vector bundles over $S^1$:
\begin{equation} c_{u}^{*} T\CP^{n-1}|_{S^1}  \to u^* T\CP^{n-1}|_{S^1} \otimes_{\bC} TD^2|_{S^1} \end{equation}
which, for a vector $x$ lying over a point $\theta \in S^1$ is given by
\[ x \mapsto x \otimes \partial_{\theta} \]
where $\partial_\theta$ is the tangent vector to the circle.
\end{lem}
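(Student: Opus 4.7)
The plan is to identify both sides of \eqref{eq:glue_boundary_complement_zero_section} as two descriptions of the same circle bundle $\codt{3}{L}$, realized as a common boundary stratum of $\codbu{2}{L}$ and $\codbuC{2}{L}$, and then to extract the underlying identification of complex rank-$(n-1)$ vector bundles on $S^1$. Geometrically, over the component indexed by $u$, a point of $\codt{3}{L}$ represents a sphere bubble in class $\alpha$ meeting $u(z)$ at a boundary point $z \in S^1$, decorated by a unit vector in the sphere's tangent line $\ell \subset T_{u(z)}\CP^{n-1}$, with a ghost disc bubble linking the sphere to the main disc.

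First I would unwind each side separately. On the capping side, the line bundle $\cL \coduC{2}{L}$ carries no $TD^2$ factor by Remark \ref{rem:gluing_sphere_at_virtual_disc}, because the intervening ghost disc (with one boundary and one interior marked point) has trivial automorphism group and canonically trivialized tangent lines. Restricting to $z \in S^1$ therefore identifies the relevant sphere bundle with $u^{*} ST\CP^{n-1}|_{S^1}$. On the main side, the line bundle is $u^{*} T\CP^{n-1} \otimes_\bC TD^2$, and a nonzero element $\xi \in \ell \otimes T_z D^2$ records both the sphere direction $\ell$ and a gluing parameter living in the main disc's tangent at the attaching point.

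Next, to reconcile the two descriptions of $\codt{3}{L}$, I would introduce the ghost disc bubble explicitly on the main side as well, and read off the resulting trivialization of $T_z D^2|_{S^1}$. The standard model ghost disc $(D^2, 1, 0)$ has its tangent at the boundary marked point $1$ canonically spanned over $\bR$ by $\partial_\theta$; the boundary--boundary pregluing to the main disc at $z \in S^1$ carries the tangent direction $\partial_\theta$ at $1$ to the tangent direction $\partial_\theta$ at $z$. This is precisely the content of Figure \ref{fig:ghost_bubble_trivialise}. Consequently, the trivialization of $T_z D^2|_{S^1}$ implicitly used when matching the main side to the ghost-free capping side is $\partial_\theta$, and the induced map of complex line bundles is $x \mapsto x \otimes \partial_\theta$.

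The main obstacle is the pregluing step: rigorously verifying that the ghost disc's tangential trivialization propagates to $\partial_\theta$ rather than some other real direction in $T_z D^2$. This follows from the naturality of the pregluing under the simultaneous $S^1$ symmetry of the sphere's automorphisms and the rotational symmetry of the ghost disc, which forces the sphere's phase circle to be identified with the angular circle $S^1 \subset \partial D^2$ rather than with a normal circle. The matching is enforced by the requirement that the smoothed curve produced by pregluing remain a disc (lying on the inward side of $\partial D^2$), so that the radial direction $\partial_r$ is absorbed into the real positive gluing size parameter and only $\partial_\theta$ survives as the complex trivialization direction.
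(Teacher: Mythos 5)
Your argument follows the paper's own proof essentially verbatim: both rest on Remark \ref{rem:gluing_sphere_at_virtual_disc} (the capping-side fibre is really $\cL \cM_{1}(M;\alpha)$ tensored with the ghost disc's tangent line at its \emph{interior} marked point, the second factor being canonically trivial), and both then read off the trivialization of $TD^2|_{S^1}$ produced by absorbing the ghost bubble into $u$, as in Figure \ref{fig:ghost_bubble_trivialise}.

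One imprecision worth correcting: the canonically trivial line is the ghost's tangent space at the interior marked point where the sphere attaches, not at its boundary marked point; the direction $\partial_{\theta}$ at the boundary marked point of the ghost is a real tangent line to $\partial D^2$ and is not the line that gets tensored with $\cL \cM_{1}(M;\alpha)$. What must be tracked is where the complex line $T_{0}D^{2}$ of the ghost lands inside $T_{z}D^2$ once the boundary gluing carries the interior marked point to a point near $e^{i\theta}$. Your equivariance argument in the final paragraph is the right mechanism for this: rotational equivariance of the whole construction forces the induced section $t$ of $TD^2|_{S^1}$ to satisfy $t(e^{i\alpha}z)=e^{i\alpha}t(z)$ and hence to wind once around the circle. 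That winding -- rather than the precise phase, $\partial_{\theta}$ versus the outward radial direction, which are homotopic through nonvanishing sections -- is the only feature used later, since it is what produces the $\cO(1)$ twist in Proposition \ref{parity-product}.
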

\begin{proof}
As discussed in Remark \ref{rem:gluing_sphere_at_virtual_disc}, we should really think of the fibres of $ \cL \codC{2}{L}$  as the tensor product of the tangent space of the holomorphic sphere bubble at the marked point, with the tangent space of a ghost disc bubble at the origin; this second factor is canonically trivial.  The operation of gluing the ghost disc bubble to the exceptional solution $u$ gives a trivialization of the tangent space of $D^2$ near its boundary.  It is evident that this is precisely the trivialization described above (see Figure \ref{fig:ghost_bubble_trivialise}, which shows the trivialization of the tangent space of a ghost bubble attached at various points). 
\end{proof}
Introducing the notation 
\begin{equation} c_u \cup u \co S^{2} \to M \end{equation}
for the map on $S^2$ which is respectively equal to $c_u$ and $u$ on the top and bottom hemisphere, we can now prove the main result of this section

\begin{proof}[Proof of Proposition \ref{parity-product}]
 By the previous Lemma, the component of \eqref{eq:union_spheres_bubbles}  labeled by $u$ is diffeomorphic to the unit sphere bundle in
\begin{equation} \left( c_u \cup u \right)^{*}  T \bC \bP^{n-1} \otimes_{\bC} \cO(1) ,\end{equation}
where $\cO(1)$ is the tautological bundle on $\bC \bP^{1}$, thought of as a complex linear vector bundle (the holomorphic structure is irrelevant).  By construction, $c_u \cup u$ represents the class $- \alpha \in \pi_2(M)$, so that $ \left( c_u \cup u \right)^{*} T\CP^{n-1}$ is the non-trivial bundle precisely if $n$ is odd.  In particular, the unit sphere bundle of $\left( c_u \cup u \right)^{*}  T \bC \bP^{n-1} \otimes \cO(1) $ is the trivial $S^{2n-3}$ bundle over $S^2$ if $n$ is odd, and the non-trivial bundle if $n$ is even.
\end{proof}

\subsection{Parallelizability of the cobordism} \label{sec:parallel_cobordism}
In the previous section, we established that  all the boundary components of  $\hat{W}(L)$ are stably parallelisable whenever $n$ is odd.  We now prove that this is the only obstruction to the stable parallelisability of $\hat{W}(L)$.  

  Following the proof of Lemma \ref{lem:trivial_tangent_bundle_parametrized} it is easy to show the triviality of the tangent space of any component of $\cC(L)$ whose boundary is not empty. Indeed,  let $\cF_{\cC}(L)$ denote the fibred product
\begin{equation}\cF_{\cC}(L) = \cF(L;\beta) \times_{L}  (\cP(L;-\beta) \times D^2)  \end{equation} 
where the first factor maps to $L$ by evaluation at $1$, and the second map is given by
\begin{equation*} (u,z) \mapsto c_{u}(z) .\end{equation*} 

Since $\cC(L)$ consists of holomorphic discs modulo parametrizations, it does  not naturally embed in $\cF_{\cC}(L)$.  Nonetheless, the natural $\Aut(D^2,-1)$ bundle over $\cC(L)$ embeds in $\cF_{\cC}(L)$ as the vanishing set of the $\dbar$ operator
\begin{equation} \dbar_{\cC} \co  \cF_{\cC}(L) \to \cE_{\cC}  \end{equation}
where  $\cE_{\cC}$ is the pullback of $\cE$ from $\cF(L;\beta)$. Since $\Aut(D^2,-1)$ is contractible, we can lift $\cC(L)$ to this space.    As with Lemma \ref{lem:trivial_tangent_bundle_parametrized}, there exists an elliptic operator on $\cF_{\cC}(L)$ 
\begin{equation} \label{eq:dbar_complex_capping} D_{\cC} \co T  \cF_{\cC}(L) \to \cE_{\cC}. \end{equation}
whose index in reduced $K$-theory agrees with the class of tangent space of $\cC(L)$.   Note that the description of $ \cF_{\cC}(L)  $ as a fibre product implies that its tangent space consists of all vector fields along a disc with boundary on $L$ which, when evaluated at $1$, point in a direction tangent to the capping disc.

Note that Lemma \ref{lem:homotopy_theory} implies that the evaluation map
\begin{equation} \cF(L;\beta) \to L \end{equation}
is a fibration with contractible fibres.  Since $D^2$ is contractible, we conclude that $\cF_{\cC}(L)$ is homotopy equivalent to $\cP(L;-\beta)$, i.e. to a finite union of points.  It follows that the tangent space of $\cC(L)$ must vanish in reduced $K$-theory, and hence that any component of $\Chat(L)$ which is not closed is in fact parallelisable.

The main result of this section is the following proposition whose proof we will explain after setting up the notation, and stating some preliminary lemmas:
\begin{prop} \label{prop:parallel_cobordism}
If $n$ is odd, then $\hat{W}(L)$ has stably trivial tangent bundle.  In particular, the component whose boundary contains $L$ is parallelisable.
\end{prop}
\begin{rem}
The idea behind the proof is that $\cF_{\cC}(L)$ is an infinite dimensional manifold with boundary
\begin{equation}
   \partial \cF_{\cC}(L) =  \cF(L;\beta) \times_{L}  (\cP(L;-\beta) \times \partial D^2) 
\end{equation}
and that the map $\cod{1}{L} \to \Phat(L;0)$ given by the inclusion of the boundary can be extended to a map
\begin{equation} \partial \cF_{\cC}(L) \to \cF_{\cP}(L;0) .\end{equation}
allowing us to form a union
\begin{equation} \label{eq:union_along_boundary_banach_spaces} \cF_{\cC}(L) \cup_{\partial \cF_{\cC}(L)}  \cF_{\cP}(L;0) \end{equation}
in which we can embed $\inte \hat{W}(L)$.  Elementary algebraic topology implies that this space has the homotopy type of the wedge of $L$ with a finite collection of $2$-spheres labeled by the exceptional solutions.  If there were a Fredholm complex over this space whose restriction to $\inte \hat{W}(L)$ agreed with the class of the tangent space, then stable triviality of the tangent space would follow from the stable triviality of the tangent space of $L$, together with the triviality of the restriction of the index class of this Fredholm complex to the $2$-spheres.  On the other hand, a choice of section of the $S^2$ bundles described in Lemma \ref{parity-product} represents these generating $2$-spheres, so the triviality of their normal bundle whenever $n$ is odd implies the desired result.

The problem with this naive approach is that the natural Banach bundles over $\cF_{\cC}(L)$ studied for the purpose of the gluing theorem  are ``much larger" than their analogues over $\cF_{\cP}(L;0)$, and hence it seems unlikely that a Banach bundle could be defined on the union \eqref{eq:union_along_boundary_banach_spaces} which would restrict to the original object on both sides.  This problem is already familiar from Morse theory, where the Banach bundles over boundary strata of the moduli space of gradient trajectories are modeled after two copies of maps from $\bR$ to $\bR^{n}$, whereas the interior strata are modeled after only one such copy.  It is reasonable to expect that the theory of polyfolds (see \cite{HZW}), specifically the notion of a filler which is concretely given by anti-gluing in this situation,  would provide a resolution to this problem that would turn the hypothetical discussion of the previous remark into an honest proof.  We prefer to give an alternative proof, less general though more elementary by adding a (trivial) finite dimensional vector bundle.  Such an idea has a long history, going back at least to Atiyah and Singer's proof of the family index theorem (see Proposition 2.2 of \cite{ASIV}), and can be thought of as an infinitesimal version of the obstruction bundle used in the construction of Kuranishi spaces.

\end{rem}

We introduce a triple of (compact finite-dimensional) CW-complexes $ \cX(L)$, $\partial \cX(L) $, and $\cY(L) $  which fit in a diagram
\begin{equation} \label{eq:big_diagram_spaces} \xymatrix{
\Chat(L) \ar[d] & \codbC{1}{L} \cong \codb{1}{L} \ar[d] \ar[r] \ar[l]   & \Phat(L;0) \ar[d]  \ar[dr] & \\
\cX(L) \ar[d] & \partial \cX(L) \ar[l] \ar[d] \ar[r]   &  \cY(L) \ar[d] & \cP(L;0) \ar[ld] \\
\cF_{\cC}(L) & \partial \cF_{\cC}(L) \ar[l]  \ar[r] & \cF_{\cP}(L;0). & }  \end{equation}
Note that we're working with the conventions of the paragraph following Lemma \ref{lem:existence_manifold_corner}, so that the inclusion of $ \Phat(L;0)  $ into $\cF_{\cP}(L;0)$ factors through the inclusion of $\cP(L;0)$ into the space of maps defining it.  Similarly, by Lemma \ref{lem:compactification_capping_moduli_spaces_corners}, we may fix an embedding $\Chat(L)  \to \cC(L)$ which is the identity away from a collar of $\codtC{2}{L}$; this uses the fact that a manifold with corners can be embedded in the complement of some of its corner strata such that the embedding is the identity away from the removed strata (this mildly generalises the well-known fact that a manifold with boundary can be embedded in its interior).

In Sections \ref{sec:pre-gluing_smooth} and \ref{sec:construct_Y},   we shall prove that our construction satisfies the following property (see, in particular, Lemma \ref{lem:construct_Z}): 
\begin{equation} \label{eq:triple_CW_complexes_exists} \parbox{35em}{We can choose the triple of CW complexes above such that 
\begin{equation*} \cZ(L) \equiv \cX(L) \cup_{ \partial \cX(L)}  \cY(L)\end{equation*}
is homotopy equivalent to $ \vee_{u} S^{2} \vee L$.}
\end{equation}

For each exceptional disc $u$, the linearisation of $\dbar_{\cP}$ gives an isomorphism
\begin{equation} \label{eq:linearised_dbar_P} D_{\cP}|u \co T  \cF_{\cP}(L;-\beta)| u \to \cE_{\cP}|u.  \end{equation}
Over the corresponding component of $\partial \cF_{\cC}(L) $, we consider the direct sum
\begin{equation}
 T  \cF_{\cP}(L;-\beta) \oplus   T  \cF_{\cC}(L)|u
\end{equation}
which we call the extended tangent space and denote
\begin{equation}
T^{ext} \cF_{\cC}(L). 
\end{equation}
With this in mind, we define the \emph{extended $\dbar$ complex} of $\cF_{\cC}(L)$ 
\begin{equation} \label{eq:extended_CR} D_{\cC} \co T^{ext} \cF_{\cC}(L)\to  \cE^{ext}_{\cC}. \end{equation}
to be the direct sum of the above complex with \eqref{eq:dbar_complex_capping}, where $   \cE^{ext}_{\cC}$, on the component labelled by $u$, is the direct sum
\begin{equation}
 \cE_{\cC} \oplus   \cE_{\cP}|u .
\end{equation}

If we restrict this complex to the tangent space of $\partial \cF_{\cC}(L)$, we obtain the extended $\dbar$ complex of $\partial  \cF_{\cC}(L)$
\begin{equation} \label{eq:extended_CR-boundary} D_{\partial \cC} \co T^{ext} \partial \cF_{\cC}(L)\to  \cE^{ext}_{\cC}|\partial  \cF_{\cC}(L) . \end{equation}

The index of this operator  gives a class  in reduced $K$-theory that restricts to the tangent space of $ \partial \cC(L)$.  More precisely, since the inclusion
\begin{equation} \codbC{1}{L} \to \partial \cF_{\cC}(L) \end{equation}
is defined by composing a section of an $\Aut(D^2,-1)$ bundle with the inclusion of the zero locus of $\dbar_{\cP}$,  the restriction of the kernel of the operator $D_{\partial \cC}$ to $\codbC{1}{L}$ is isomorphic to a direct sum
\begin{equation} \label{eq:decomposition_kernel} \aut(D^2,-1) \oplus T\codbC{1}{L}  \tilde{\to} \ker D_{\partial \cC}| \codbC{1}{L}\end{equation}
where $\aut(D^2,-1)$ is the rank $2$ vector space of holomorphic vector fields on $D^2$ which vanish at $-1$.  The above map takes such a vector field to its image under the differential of the holomorphic map.  We will consider the decomposition
\begin{equation} \aut(D^2,-1) \cong \langle \partial_{s} \rangle \oplus \langle \partial_{p} \rangle \end{equation}
where the first factor is the $\bR$-summand in $\aut(D^2,-1)$ coming from translation of the strip $\I{-\infty}{+\infty}$ after identification with $D^{2} - {\pm 1}$ by the map $\xi$, and the second is generated by the unique vector field with a zero at $-1$, whose value is $\partial_{\theta}$ at $1$.

On the other side,  $\cF_{\cP}(L;0)$ is also equipped with a linearisation of its $\dbar$ operator, which we shall extend in Equation \eqref{eq:add_rotation_to_linearisation_dbar_operator_parametrized}  to an operator
\begin{equation} \label{eq:add_rotations} D_{\cP}^{\langle \partial_{\theta} \rangle} \co \langle \partial_{\theta} \rangle \oplus T \cF_{\cP}(L;0) \to \cE_{\cP}, \end{equation}
where $\langle \partial_{\theta} \rangle$ is a rank $1$ vector space.  This vector space is to be thought of as generated by infinitesimal rotations of the disc.  The appearance of $\langle \partial_{\theta} \rangle$ is an indication of a difficulty we shall face proving the gluing theorem for  $\cod{1}{L}$.   Our proof relies on applying the implicit function theorem in $\cF_{\cP_{0,1}}(L;0)$ (see Corollary \ref{cor:surjectivity_extended_map}), rather than working directly with $\cF_{\cP}(L;0)$ as one might expect.   The vector space $\langle \partial_{\theta} \rangle$ is a small remnant of this fact.  We state a key property which shall be obvious from the construction, and which is needed for the statements below to make sense (see Lemma \ref{lem:add_rotation_direction_to_P})
\begin{equation}
\parbox{35em}{The restriction of $D_{\cP}^{\langle \partial_{\theta} \rangle}$ to $\cP(L;0)$ vanishes on the factor $ \langle \partial_{\theta} \rangle$.  In particular, we have an isomorphism
\begin{equation*}  \ker D_{\cP}^{\langle \partial_{\theta} \rangle} \cong   \langle \partial_{\theta} \rangle \oplus  T \cP(L;0) . \end{equation*} }
\end{equation}

Since the tangent bundles of $\Phat(L;0)$ and $  \Chat(L)  $ are both stably trivial, the isomorphism
\begin{equation}\label{eq:iso_tangent_spaces_gluing} T \codbC{1}{L} \to T \codb{1}{L} .\end{equation}
together with the identification of the normal bundles of these boundaries determines whether the union of $\Phat(L;0)$ and $\Chat(L)$ along $\codbC{1}{L} \cong \codb{1}{L}$ has stably trivial tangent bundle.  

\begin{lem} \label{lem:iso_stabilizations_bundles}
There exists a finite dimensional vector space $V_{\cZ}$ and an extension of the operators $D_{\partial \cC} $ of Equation \eqref{eq:extended_CR-boundary} and $D_{\cP}^{\langle \partial_{\theta} \rangle} $ of Equation \eqref{eq:add_rotations} to surjections
\begin{align} \label{eq:dbar_capping_surjective} D_{\cX} \co V_{\cZ} \oplus T^{ext} \cF_{\cC}(L)  | \cX(L) & \to \cE^{ext}_{\cC} | \cX(L)   \\ 
D_{\partial \cX} \co V_{\cZ} \oplus T^{ext} \partial \cF_{\cC}(L)  | \partial \cX(L) & \to \cE^{ext}_{\cC} | \partial \cX(L)    \\ 
\label{eq:dbar_para_surjective} D_{\cY} \co V_{\cZ} \oplus  \langle \partial_{\theta} \rangle \oplus T \cF_{\cP}(L;0)| \cY(L) & \to \cE_{\cP} | \cY(L).
\end{align}
Moreover, there exists an isomorphism \begin{equation} \label{eq:map_kernels} \ker(D_{\partial \cX} )|\partial   \cX(L)  \to \ker(D_{\cY})|\partial   \cX(L) \end{equation}
between the kernels of the Cauchy-Riemann operators  over $\partial   \cX(L)$.  If we restrict this isomorphism further to $\codbC{1}{L}$, then  \eqref{eq:map_kernels} decomposes as a direct sum of the identity on $V_{\cZ}$ with a map
\begin{equation}  \aut(D^2,-1) \oplus T\codbC{1}{L}  \to  \langle \partial_{\theta} \rangle \oplus  T \cP(L;0) \end{equation} 
which is isotopic through isomorphisms to a map satisfying the following three properties:
\begin{equation} \label{eq:tangent_spaces_boundary_agree} \parbox{35em}{The image of $T\codbC{1}{L}$ lies in $T \cP(L;0)$, and agrees with the map induced  by the inclusions \begin{equation*} \codbC{1}{L}\to \Phat(L;0) \to  \cP(L;0) .\end{equation*}} \end{equation}
\begin{equation} \label{eq:superfluous_directions_agree} \parbox{35em}{The vector field $\partial_{p}$ maps to $  \partial_{\theta}$.}   \end{equation}
\begin{equation} \label{eq:normal_directions_agree} \parbox{35em}{The image of the vector field $\partial_{s}$ lies in $T \cP(L;0)$, and is an outwards pointing vector on the boundary of  $ \Phat(L;0)$.}   \end{equation}
 \end{lem}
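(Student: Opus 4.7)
The proof has three ingredients: construction of the stabilizing space $V_{\cZ}$; identification of kernels over $\partial \cX(L)$ via the gluing map; and the geometric verification of properties \eqref{eq:tangent_spaces_boundary_agree}--\eqref{eq:normal_directions_agree}, up to isotopy. For the stabilization, the Cauchy-Riemann operators in question are Fredholm and $\cX(L), \partial\cX(L), \cY(L)$ are compact CW complexes, so their cokernels can be simultaneously killed by finitely many smooth sections. I would take $V_{\cZ}$ as a direct sum of three blocks: sections of $\cE^{ext}_{\cC}$ supported on $\cX(L)$ away from $\partial \cX(L)$; sections of $\cE_{\cP}$ supported on $\cY(L)$ away from $\partial \cX(L)$; and a third block whose sections are defined simultaneously on both sides and matched via the gluing identification of function spaces near $\codbC{1}{L}$. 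The third block is precisely what forces the isomorphism \eqref{eq:map_kernels} to act as the identity on the $V_{\cZ}$ summand.

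For the kernel identification over $\partial \cX(L)$, a point of $\codbC{1}{L}$ is a pair $(v,u)$ with $v \in \cS_{0,1}(L;\beta)$ and $u \in \cP(L;-\beta)$ agreeing at a common boundary point, and the gluing construction of Section \ref{sec:codim1_gluing} produces, for each small gluing parameter, an honest element of $\cP(L;0)$. Differentiating this construction at the gluing parameter $0$ and decomposing via \eqref{eq:decomposition_kernel} and \eqref{eq:add_rotations} yields the natural kernel comparison. The summand $T\codbC{1}{L}$ maps into $T\cP(L;0)$ through the inclusion $\codbC{1}{L} \to \Phat(L;0) \hookrightarrow \cP(L;0)$, giving \eqref{eq:tangent_spaces_boundary_agree}; the vector field $\partial_{p}$ generates rotation of the domain disc fixing its boundary marked point, so under gluing it infinitesimally rotates the glued disc and lands in $\langle \partial_\theta \rangle$, giving \eqref{eq:superfluous_directions_agree}; finally, $\partial_{s}$ generates strip translation, which corresponds to increasing the neck length, so its image is transverse to $\codbC{1}{L}$ and by our sign conventions points into the interior of $\cP(L;0)$, giving \eqref{eq:normal_directions_agree}.

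The gluing comparison satisfies these three properties only up to small error terms coming from the pregluing approximation; but being an isomorphism is an open condition on $\operatorname{Hom}$ between finite-dimensional vector spaces, so a straight-line homotopy from the approximate map to the idealized one whose matrix exactly satisfies \eqref{eq:tangent_spaces_boundary_agree}--\eqref{eq:normal_directions_agree} stays inside the isomorphisms, furnishing the required isotopy. The main obstacle, and the only nontrivial analytic input, is the compatibility of the stabilization $V_{\cZ}$ with the gluing: the sections in the third block must be constructed so that under the pregluing/anti-gluing decomposition they pull back to matching sections on both the $\cX$ side and the $\cY$ side. This is exactly the role played by the filler in the polyfold picture referenced in the remark before the lemma, and it can be handled concretely using the explicit pregluing machinery developed in Sections \ref{sec:pre-gluing_smooth} and \ref{sec:construct_Y}, which is why the statement is positioned at this point in the paper.
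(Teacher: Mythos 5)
Your overall architecture---kill the cokernels with finitely many sections, compare kernels over $\partial\cX(L)$ via gluing, verify the three properties geometrically and then isotope using openness of isomorphisms---matches the paper's, but two of your key mechanisms are not the ones used, and one of them would genuinely fail as stated. First, the ``third block'' of $V_{\cZ}$ matched across the two sides by anti-gluing/fillers is exactly what the paper is at pains to avoid (see the remark preceding diagram \eqref{eq:big_diagram_spaces}): no Banach bundle is ever constructed on the union \eqref{eq:union_along_boundary_banach_spaces}. Instead, the cokernel-killing sections $V_{\cX}$ over $\cX(L)$ are chosen to consist of smooth $(0,1)$-forms compactly supported away from the node (arranged in the proof of the lemma producing \eqref{eq:preturbations_to_surject_CW} by cutting off at large $s$ on the strip-like end), so that their transport to the $\cY$ side under pre-gluing is literally extension by zero and the approximate inverse is an honest inverse on their image; a second summand $V_{\cY}$, supported away from $\partial\cX(L)$ and extended by zero to $\cX(L)$, handles surjectivity over the rest of $\cY(L)$, and $V_{\cZ}=V_{\cX}\oplus V_{\cY}$. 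What you call ``the main obstacle'' is dissolved by a choice of support, not solved by a filler; if matched sections in the polyfold sense were really needed, the elementary argument would not go through.

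Second, ``differentiating the gluing construction at the gluing parameter'' is not available: the paper establishes only continuity, not differentiability, of the gluing map in the $S$-direction (stated explicitly at the start of Section \ref{sec:codim1_gluing}), and the boundary stratum sits at $S=+\infty$, not at a parameter value $0$. The kernel isomorphism \eqref{eq:map_kernels} is instead defined at one fixed large $S$ as the composition $(\id - Q_{\Gext^{\partial\cX}}\circ D^{\partial\cX}_{\cP_{0,1}})\circ\Pi\circ\predGext^{\partial\cX}$ of Corollary \ref{cor:injective_gluing_kernel}, an isomorphism because it is close to the uniformly bi-lipschitz pre-gluing of vector fields. In particular the claim that $\partial_{s}$ maps to an outward-pointing vector on $\partial\Phat(L;0)$ cannot be obtained by differentiating in $S$; it is proved via the auxiliary hypersurface $\N_\star$ of condition \eqref{eq:right_inverse_constraint_basepoints}, the function $F$ of Lemma \ref{lem:good_approximate_inverse_gluing}, and the approximate-derivative statement \eqref{eq:derivative_gluing_at_a_point}. (Note also that \eqref{eq:normal_directions_agree} demands an \emph{outward}-pointing vector, whereas you assert the image ``points into the interior''; the boundary of $\Phat(L;0)$ sits at a finite value of $F$, and increasing the neck length exits $\Phat(L;0)$.) Finally, the isomorphism must be constructed over all of $\partial\cX(L)$, not only over $\codbC{1}{L}$: the cells of $\partial\cX(L)$ away from $\codbC{1}{L}$ consist of non-holomorphic maps, so pre-gluing, the cut-off vector field $\sol^{\partial\cX}_{S}$, and the right inverse must all be extended to this larger CW complex as in Section \ref{sec:pre-gluing_smooth}; your proposal does not address this.
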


The proof is postponed until Section \ref{sec:triviality_tangent_space}, but we can now proceed with the proof of the main result of this section.
\begin{proof}[Proof of Proposition \ref{prop:parallel_cobordism}]
The kernel of \eqref{eq:dbar_capping_surjective} is a vector bundle over $ \cX(L)$ whose restriction to $\partial \cX(L)$ is isomorphic to the direct sum of $ \ker(D_{\partial \cX}) $ with the normal bundle of the inclusion $S^1 \to D^2$, which is a trivial bundle.  Fixing such a trivialization, the isomorphism of Equation \eqref{eq:map_kernels} extends to an isomorphism
\begin{equation} \label{eq:isomorphism_kernels_boundary_with_normal} \ker(D_{\cX})|\partial   \cX(L)  \cong \ker(D_{\cY})|\partial   \cX(L)  \oplus \bR. \end{equation}
Extending $\bR$ trivially to $\cY(L)$, this isomorphism defines a vector bundle $E_{\cZ}$ on $\cZ(L)$ which restricts on $\cX(L)$ and $\cY(L)$ to the two vector bundles in the above equation.

We claim that the restriction of this vector bundle to $\hat{W}(L)$ is isomorphic to the direct sum of the tangent space of $\hat{W}(L)$ with a trivial vector bundle.   To see this, decompose $\ker(D_{\cX})|\codbC{1}{L}$ as
\begin{equation}  V_{\cZ} \oplus \langle \partial_{s} \rangle \oplus \langle \partial_{p} \rangle    \oplus T\codbC{1}{L} \oplus N_{\Chat(L)} \codbC{1}{L} \end{equation}
and consider the isomorphism which acts trivially on all the factors except the rank $2$ vector bundle $\langle \partial_{s} \rangle  \oplus  N_{\Chat(L)} \codbC{1}{L}$ on which it acts by rotation.  Note that this isomorphism is isotopic to the identity, so the result of gluing $\ker(D_{\cX})| \codbC{1}{L}$ and $\ker(D_{\cY}) \oplus \bR| \codbC{1}{L}$ by composing \eqref{eq:isomorphism_kernels_boundary_with_normal} with this rotation is a vector bundle $E_{\hat{W}(L)}$ which is isomorphic to the restriction of $E_{\cZ}$.

The bundle $E_{\hat{W}(L)}$ contains a sub-bundle of rank $2 + \rk V_{\cZ} $ which, upon restriction to $\Chat(L)$, decomposes as a direct sum
\begin{equation} V_{\cZ} \oplus \aut(D^2,-1)  \end{equation}
and on $\Phat(L;0)$ as a direct sum
\begin{equation} V_{\cZ} \oplus  \langle \partial_{\theta} \rangle \oplus \bR .\end{equation}
By Equations \eqref{eq:superfluous_directions_agree}, \eqref{eq:normal_directions_agree} and \eqref{eq:isomorphism_kernels_boundary_with_normal}, this bundle is trivial.  Moreover, the quotient is isomorphic to the tangent space of $\hat{W} (L) $ by Equation  \eqref{eq:tangent_spaces_boundary_agree}.

Since $\cZ(L)$ is homotopy equivalent to ${\bigvee}_{u} S^{2} \vee L $ by property \eqref{eq:triple_CW_complexes_exists}, the stable triviality of $T\hat{W}(L)$ therefore follows from the triviality of its restriction to the spheres representing $S^2$.  This was already proved  in Proposition \ref{parity-product}.
\end{proof}

\subsection{Constructing a compact parallelisable manifold}

In this section, we prove that if $n$ is odd, there exists a compact parallelisable manifold $W(L)$ with boundary $L$.  By considering the component of $\hat{W}(L)$ containing the constant discs, we obtain from the previous section a parallelisable cobordism from $L$ to a disjoint union of copies of $S^2 \times S^{2n-3}$.  In this section, we prove that up to a connect sum with a framed standard sphere, every stable framing on $S^2 \times S^{2n-3}$ arises as the induced framing on the boundary of a parallelisable $2n$-dimensional manifold.  By choosing such a filling, we shall prove the existence of a manifold $W(L)$ satisfying the desired conditions.

The first step is a computation in homotopy theory.
\begin{lem} \label{lem:framings_product_spheres}
The space of trivializations of the direct sum of the tangent bundle of $S^2 \times S^{2n-3}$ with a trivial bundle of rank $1$ can be identified as a set with the product
\begin{equation} \label{eq:direct_sum_decomposition_trivialisation}  \pi_{2n-3}(O(2n)) \times \pi_{2n-1}(O(2n)).\end{equation}
\end{lem}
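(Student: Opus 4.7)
The plan is to identify trivializations of a trivial rank-$2n$ bundle over $S^{2}\times S^{2n-3}$ with the set of (based) homotopy classes of maps into $O(2n)$, and then to compute this set via the cofiber sequence for a product of spheres.

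First I would check that the bundle is actually trivial. Since $TS^{2n-3}$ is stably parallelisable and $TS^{2}\oplus\mathbb R$ is trivial, the Whitney sum $T(S^{2}\times S^{2n-3})\oplus \mathbb R \cong (TS^{2}\oplus \mathbb R)\oplus TS^{2n-3}$ is stably trivial, hence trivial in the rank given (we are in the stable range since $2n \geq \dim(S^{2}\times S^{2n-3})+2$). Once a reference trivialization is fixed, any other differs by a map $S^{2}\times S^{2n-3}\to O(2n)$, so the set of trivializations is a torsor over $[S^{2}\times S^{2n-3},O(2n)]$.

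Next I would compute this set using the cofiber sequence
\begin{equation*}
S^{2n-2}\xrightarrow{\alpha} S^{2}\vee S^{2n-3}\to S^{2}\times S^{2n-3}\to S^{2n-1}\to \Sigma(S^{2}\vee S^{2n-3})\to\cdots,
\end{equation*}
where $\alpha$ is the Whitehead product $[\iota_{2},\iota_{2n-3}]$. Applying $[-,O(2n)]$ gives a Puppe sequence, and the crucial point is that $O(2n)$ is a topological group, hence an $H$-space, so Whitehead products vanish after post-composition with any map to $O(2n)$. Thus $\alpha^{*}$ and the analogous suspended map are both zero, which yields a short exact sequence of based sets
\begin{equation*}
0\to \pi_{2n-1}(O(2n))\to [S^{2}\times S^{2n-3},O(2n)]\to \pi_{2}(O(2n))\oplus \pi_{2n-3}(O(2n))\to 0.
\end{equation*}
Bott periodicity (or direct stabilization) gives $\pi_{2}(O(2n))=0$ in the stable range $n\geq 2$, so the right-hand term reduces to $\pi_{2n-3}(O(2n))$.

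Finally, to identify this extension with the direct product as a set, I would exhibit a section using the projection $p\co S^{2}\times S^{2n-3}\to S^{2n-3}$: any map $f\co S^{2n-3}\to O(2n)$ extends to $f\circ p$, and its restriction to the wedge recovers $f$ on the $S^{2n-3}$-summand while being constant on $S^{2}$. This splits the surjection, giving the set-theoretic identification with $\pi_{2n-3}(O(2n))\times \pi_{2n-1}(O(2n))$. I expect the only subtle point to be justifying the vanishing of the Whitehead product after post-composition into $O(2n)$; this is the hidden input (due to the $H$-space structure on $O(2n)$) that makes the product decomposition work, rather than an a priori nontrivial extension.
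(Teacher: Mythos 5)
Your argument is correct and is essentially the paper's: the paper stabilizes via the $(2n{+}1)$-connected inclusion $O(2n)\to O$ and then cites Whitehead's classical computation exhibiting $[S^2\times S^{2n-3},O]$ as an extension of $\pi_2(O)\oplus\pi_{2n-3}(O)$ by $\pi_{2n-1}(O)$, which is exactly the Puppe-sequence/H-space computation (Whitehead products dying in $O(2n)$, $\pi_2$ vanishing) that you carry out by hand. One small slip in your parenthetical: the inequality justifying that the stabilized tangent bundle is trivial should be $\mathrm{rank}=2n>2n-1=\dim(S^2\times S^{2n-3})$ — a stably trivial bundle whose rank exceeds the dimension of the base is trivial — rather than $2n\geq\dim+2$, which is false as written; the conclusion is unaffected.
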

\begin{proof}
Recall that the inclusion of $O(m)$  into $O(m+1)$ is $m+1$-connected, so there is an identification between the sets of homotopy classes of maps
\begin{equation}[S^2 \times S^{2n-3}, O(2n)]  \cong   [S^2 \times S^{2n-3}, O] ,\end{equation}
where $O$ is the direct limit of the orthogonal groups $O(m)$.  Using the fact that $O$ is an infinite loop space, a classical computation (see Theorem 4.4 of Chapter X in \cite{whitehead}) shows that $[S^2 \times S^{2n-3}, O]$ admits a natural group structure with respect to which it is isomorphic to an extension of the direct sum $\pi_{2} ( O )\oplus \pi_{2n-3} (O)$ by $\pi_{2n-1} (O)$.  Since $\pi_2 (O)$ vanishes, we obtain the desired result.
\end{proof}
If we fix the stable framing of $S^2 \times S^{2n-3}$ coming from the product inclusion in $\bR^{3} \times \bR^{2n+2}$ as a basepoint,  we may write every trivialization of the tangent bundle of $S^2 \times S^{2n-3}$ as $(a, b)$, with $a \in \pi_{2n-3}(O)$ and $b \in \pi_{2n-1}(O)$. Since the group of stable framings of a sphere is non-trivial, taking the connect sum with a framed sphere acts on the set of framings on any stably parallelisable manifold. In general, this action is far from transitive, and the next result describes the cosets of the action when the manifold is $S^2 \times S^{2n-3}  $:
\begin{lem} \label{connect-sum}
The set of trivializations of the stable tangent bundle of $S^{2n-1}$ acts on the set of trivializations of the stable tangent bundle of $S^2 \times S^{2n-3}$ by the connect sum operation. Choosing the standard trivializations coming from the inclusion in $\bR^{2n}$ as a basepoint on the first set, this action is given by 
\begin{align}\pi_{2n-1}(O) \times   \pi_{2n-3}(O) \times \pi_{2n-1}(O) & \to  \pi_{2n-3}(O) \times \pi_{2n-1}(O) \\
(c,(a,b)) & \mapsto (a,b+c)\end{align}
in the notation of Equation \eqref{eq:direct_sum_decomposition_trivialisation}. \noproof
\end{lem}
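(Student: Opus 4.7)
The plan is to identify the change in framing under the connect sum with an element of the group $[S^2 \times S^{2n-3}, O]$ from Lemma \ref{lem:framings_product_spheres}, and then to locate this element inside the set-theoretic decomposition $\pi_{2n-3}(O) \times \pi_{2n-1}(O)$.

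The first step is geometric.  Write $X = S^2 \times S^{2n-3}$ and use the standard model of the connect sum: choose small embedded balls $B \subset X$ and $B' \subset S^{2n-1}$, remove their interiors, and glue along their boundary spheres.  The underlying smooth manifold is again $X$, and the new framing agrees with $f_X$ outside $B$; on $B$ it equals the restriction of $f_c$ to $S^{2n-1} \setminus B'$.  These two framings on $B$ agree on $\partial B$ up to a canonical homotopy, so their difference defines a class in $[B/\partial B, O] \cong \pi_{2n-1}(O)$, which by direct inspection is the original $c$.  Extending by the identity outside $B$, this produces a class $q^{*} c \in [X,O]$ pulled back along the Pontryagin--Thom collapse map $q \co X \to S^{2n-1}$ which crushes $X \setminus B$ to a point.

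The second step is to place $q^{*} c$ in the splitting of Lemma \ref{lem:framings_product_spheres}.  The cofiber sequence
\[ S^2 \vee S^{2n-3} \to S^2 \times S^{2n-3} \xrightarrow{q} S^{2n-1}, \]
combined with $\pi_{2}(O) = 0$ and the vanishing of Whitehead products in the infinite loop space $O$, yields a short exact sequence
\[ 0 \to \pi_{2n-1}(O) \xrightarrow{q^{*}} [X, O] \to \pi_{2n-3}(O) \to 0, \]
in which the quotient map is precisely the $a$-coordinate of Lemma \ref{lem:framings_product_spheres} while the inclusion of the kernel recovers the $b$-coordinate.  Since $q^{*} c$ lies in the image of $q^{*}$, its $a$-coordinate vanishes, and its $b$-coordinate equals $c$ tautologically.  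Translation by $q^{*} c$ therefore sends a framing with coordinates $(a,b)$ to one with coordinates $(a, b+c)$, which proves the lemma.

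The main obstacle lies in the first step: carefully tracking framings across the connect-sum gluing.  One must choose the null-homotopy matching the two framings on $\partial B$ consistently so that their difference is genuinely supported in $B$ and no spurious contribution leaks out of the gluing region, and verify compatibility with the basepoint conventions used to identify $[X, O]$ with $\pi_{2n-3}(O) \times \pi_{2n-1}(O)$.  Once this localisation is established, the identification with $q^{*} c$ is forced by naturality of the Pontryagin--Thom collapse.
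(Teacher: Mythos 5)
Your argument is correct, and since the paper marks this lemma \noproof{} it supplies exactly the standard justification being taken for granted: the difference of framings after connect sum is supported in the gluing ball, hence factors through the collapse map $q\co S^2\times S^{2n-3}\to S^{2n-1}$, and translation by $q^{*}c$ acts on the extension of $\pi_{2n-3}(O)$ by $\pi_{2n-1}(O)$ from Lemma \ref{lem:framings_product_spheres} by $(a,b)\mapsto(a,b+c)$ for any set-theoretic splitting. The appeal to vanishing Whitehead products is superfluous for the exactness you need; the injectivity of $q^{*}$ already follows from the stable splitting of $\Sigma(S^2\times S^{2n-3})$, and the rest is bookkeeping.
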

Among the framings of $S^2 \times S^{2n-3}$, those of the form $(a,0)$ are pulled back from $S^{2n-3}$.  
\begin{lem}  \label{lem:extend_to_disc}
Every trivialization of the form $(a,0)$ arises as the restriction to the boundary of a framing of the tangent space of $D^3 \times S^{2n-3}$. \noproof
\end{lem}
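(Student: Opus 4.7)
The plan is to construct the extension as a direct sum of framings on the two factors of $D^3 \times S^{2n-3}$. First, I will use the splitting
\[
T(D^3 \times S^{2n-3}) \cong \pi_1^* TD^3 \oplus \pi_2^* TS^{2n-3}
\]
induced by the two projections, and observe that the first summand is canonically trivialized by the inclusion $D^3 \subset \bR^3$. This trivialization restricts on $\partial D^3 = S^2$ to the stable framing $TS^2 \oplus \nu \cong \bR^3|_{S^2}$ arising from $S^2 \subset \bR^3$, which is precisely the $S^2$-factor of the basepoint framing fixed before Lemma \ref{connect-sum}.

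Given a class $a \in \pi_{2n-3}(O)$, I will then fix a stable framing $F_a$ of $TS^{2n-3}$ representing $a$ relative to the standard framing from $S^{2n-3} \subset \bR^{2n-2}$, pull it back along $\pi_2$ to a stable framing of $\pi_2^* TS^{2n-3}$, and combine it with the canonical framing on $\pi_1^* TD^3$. The result is a stable framing $\tilde F$ of $T(D^3 \times S^{2n-3})$ using the same rank-$1$ stabilization appearing in Lemma \ref{lem:framings_product_spheres}.

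The one step requiring care — and the place I expect the main (modest) obstacle to lie — is verifying that $\tilde F|_{S^2 \times S^{2n-3}}$ represents the class $(a, 0)$, rather than $(a, b)$ for some nonzero $b \in \pi_{2n-1}(O)$. For this, I will note that $\tilde F|_{S^2 \times S^{2n-3}}$ differs from the basepoint product framing by a map $S^2 \times S^{2n-3} \to O$ that factors through the projection $\pi_2$, hence through $S^{2n-3}$. Under the direct sum decomposition $[S^2 \times S^{2n-3}, O] \cong \pi_{2n-3}(O) \oplus \pi_{2n-1}(O)$ supplied by Lemma \ref{lem:framings_product_spheres}, the $\pi_{2n-1}(O)$-summand is detected by restriction to the top $(2n-1)$-cell, whose attaching map is the Whitehead product $[\iota_2, \iota_{2n-3}]$; this Whitehead product becomes null-homotopic upon composition with $\pi_2$, so any map factoring through $\pi_2$ has trivial $\pi_{2n-1}(O)$-component. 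The $\pi_{2n-3}(O)$-component is manifestly $a$, completing the proof.
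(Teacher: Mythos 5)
Your argument is correct and is exactly the route the paper intends: the lemma is stated without proof, but the preceding sentence ("those of the form $(a,0)$ are pulled back from $S^{2n-3}$") is precisely the key point you verify, after which one extends by pulling back along $D^3 \times S^{2n-3} \to S^{2n-3}$ and using the canonical trivialization of $TD^3$. Your check that the $\pi_{2n-1}(O)$-component vanishes for maps factoring through $\pi_2$ is the only substantive step, and it is handled correctly (it also follows directly from the infinite-loop-space splitting of $[S^2\times S^{2n-3},O]$ already invoked in Lemma \ref{lem:framings_product_spheres}).
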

This suggests that we consider the manifold
\begin{equation}  D^3 \times S^{2n-3} \times \cP(L;-\beta) \end{equation}
whose boundary is  $S^2 \times S^{2n-3} \times \cP(L;-\beta)$. Let us write $W(L)$  for the component of  
\begin{equation}\hat{W}(L) \cup_{ S^2 \times S^{2n-3} \times \cP(L;-\beta)} \left(   D^3 \times S^{2n-3}  \times \cP(L;-\beta) \right)\end{equation}
containing constant loops, and  prove the main Theorem of this paper:
\begin{thm}
If $n$ is odd, there exists a choice of trivialization of the tangent space of the component of $\hat{W}(L)$ containing $L$ which extends to $W(L)$.
\end{thm}
\begin{proof}
By Lemma \ref{lem:extend_to_disc}, it suffices to produce a trivialization of the tangent space of the desired component of $\hat{W}(L)$ which, upon restriction to each boundary component diffeomorphic $S^2 \times S^{2n-3}$ corresponds to a trivialization of the form $(a,0)$.  In order to achieve this, we pick an arbitrary trivialization, and a collections of paths $\gamma$ in $\hat{W}(L) $ starting at  $L$ and ending on each different boundary component.   The connect sum of  $\hat{W}(L)$  and  copies of $[0,1] \times S^{2n-1}$  along the paths $\gamma$ and $(t,1)$ is again diffeomorphic to  $\hat{W}(L)$.

By assumption, all boundary components other than $L$ are diffeomorphic to $ S^2 \times S^{2n-3} $.  If the induced trivialization on a boundary component $S^2 \times S^{2n-3}$ is given by a homotopy class $(a,b)$, then we equip the corresponding copy of $[0,1] \times S^{2n-1}$ with a trivialization of its stable tangent bundle induced by $-b \in \pi_{2n-1}(O)$.   After connect sum, Lemma \ref{connect-sum} implies that the relevant boundary component $S^2 \times S^{2n-3}$ will have a trivialization of the form $(a,0)$, which can be extended to $D^3 \times S^{2n-3}$ by Lemma \ref{lem:extend_to_disc}.
\end{proof}

\section{Transversality} \label{sec:transversality}

In this section, we discuss the proof of some of the transversality results used in the construction of the moduli space $\cP(L;0)$.   Following the usual technical approach, this entails replacing all spaces of smooth maps or sections with appropriate Banach spaces, which will be Sobolev space completions except for the spaces of perturbations of the almost complex structure for which we shall use the Banach spaces of smooth functions introduced by Floer in Section 5 of  \cite{floer}.  We begin by checking that our requirements on the Lagrangian $L$ can be realized after appropriate perturbations.

\subsection{Detail of setup}
We fix the embedding of  $L$ as follows:
\begin{lem}  \label{eq:hamilt_perturb_L_regularity}
After an arbitrarily small perturbation of the Lagrangian embedding $L \subset M$, the following properties can be assumed to hold:
\begin{enumerate}
\item  all the moduli spaces of $J_{\alg}$-holomorphic discs with boundary on $L$ in class $k \beta$ are regular if $k \leq 1$, and
\item $L$ is a real analytic submanifold. \noproof
\end{enumerate}
\end{lem}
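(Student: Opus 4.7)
The plan is to establish the two properties sequentially, choosing the second perturbation within a class small enough not to disturb the first.

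First I would address the real analytic condition. Every smooth manifold admits a real analytic structure compatible with its smooth structure (Whitney), so I fix once and for all a real analytic structure on $L$ and work in a Weinstein tubular neighbourhood $U \subset M$ identified symplectomorphically with a neighbourhood of the zero section of $T^{*}L$. Since the cotangent bundle of a real analytic manifold is itself real analytic and the symplectic form is compatible with this structure, the zero section is already a real analytic Lagrangian. Any $C^{1}$-small Lagrangian perturbation of $L$ inside $U$ is the graph of a closed $1$-form $\alpha$ on $L$; real analytic closed $1$-forms are $C^{k}$-dense in smooth closed $1$-forms (via the Hodge decomposition and density of real analytic functions among smooth ones), so we can realize real analyticity by an arbitrarily small Hamiltonian isotopy.

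Next I would arrange regularity of the moduli spaces of $J_{\alg}$-holomorphic discs in class $k\beta$ for $k \leq 1$. The only cases with content are $k = 0$ and $k = 1$: for $k < 0$ the symplectic area is negative, so by positivity of energy the moduli spaces are empty; for $k = 0$ the moduli space consists of constant discs and is automatically regular, identified with $L$ itself. For $k = 1$ I would follow the standard Oh-type universal moduli space argument: fix $J_{\alg}$ and consider the space of pairs $(u, L')$ with $L'$ a Hamiltonian deformation of $L$ and $u$ a $J_{\alg}$-holomorphic disc in class $\beta$ with boundary on $L'$. Surjectivity of the linearised operator on this universal space reduces, as is standard, to the claim that any element of the cokernel of the usual Cauchy-Riemann operator must vanish on some boundary arc where the Hamiltonian perturbation is free. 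This is guaranteed by unique continuation for the Cauchy-Riemann operator together with the fact that non-constant discs in class $\beta$ have non-empty boundary not contained in the critical locus. The Sard-Smale theorem then yields regularity for a generic Hamiltonian deformation.

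To combine the two conclusions I would carry out the Sard-Smale step within the Banach space of real analytic Hamiltonians (or more concretely, work in a Floer-type space of real analytic functions decaying sufficiently fast) rather than in smooth Hamiltonians. The universal moduli space argument above goes through verbatim provided real analytic perturbations are dense enough to separate cokernel elements, which follows since real analytic functions on $L$ are $C^{\infty}$-dense in smooth functions. In this way the second, regularity-producing perturbation can be taken real analytic and arbitrarily small, so the real analyticity secured in the first step is preserved.

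The main obstacle I expect is precisely this last point: the usual transversality machinery is written for smooth Banach spaces of perturbations, and one must verify that restricting to real analytic Hamiltonians still produces enough infinitesimal perturbations to kill an arbitrary element of the cokernel of the linearised Cauchy-Riemann operator. Once density of real analytic functions in smooth functions is combined with the standard unique continuation argument, this is routine, but it is the only place where the interplay between the two conditions is nontrivial.
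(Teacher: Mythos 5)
Your outline for the disc moduli spaces follows the same skeleton as the paper ($k<0$ empty by positivity of energy, $k=0$ constant hence automatically regular, $k=1$ by a generic boundary perturbation in the style of Oh), but the $k=1$ step has a genuine gap. The boundary-perturbation transversality argument requires the discs in class $\beta$ to be \emph{somewhere injective}: to annihilate a cokernel element $\eta$ one needs a boundary point $z$ with $du(z)\neq 0$ and $u^{-1}(u(z))\cap \partial D^2 = \{z\}$, so that an infinitesimal deformation of $L$ supported near $u(z)$ pairs nontrivially with $\eta$ without contributions from other preimages. Your substitute hypothesis --- that the boundary of the disc is not contained in the critical locus --- is strictly weaker and does not exclude multiply covered discs, for which no injective boundary point exists and the Sard--Smale argument breaks down. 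For discs (unlike spheres) the implication ``primitive homotopy class $\Rightarrow$ simple'' is a nontrivial structure theorem, due to Kwon--Oh and Lazzarini; the paper's proof consists precisely of invoking this (using that $\beta$ generates $\pi_2(M,L)\cong\bZ$) and then citing Oh's theorem that the moduli space of \emph{simple} discs is regular for generic $L$. Without the simplicity input your argument is not justified.

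A secondary point concerns how you combine the two properties. Running Sard--Smale inside a Banach space of real analytic Hamiltonians is exactly the delicate step you flag, and it is nonstandard (Floer's $\| \_ \|_{\bfepsilon}$ spaces consist of smooth, not real analytic, functions, and there is no off-the-shelf Banach manifold of real analytic perturbations with the required density and completeness). The paper avoids this entirely: it quotes the density of real analytic embeddings among smooth embeddings of $L$ in $M$ and observes that the two conditions can be achieved simultaneously, regularity being a generic (and, on the relevant compact parts of the moduli space, open) condition. Note also that what is actually needed later (Corollary \ref{cor:totally_geodesic_hermitian_metric}) is real analyticity of $L$ as a submanifold of $M$ with its natural real analytic structure; your construction via a Weinstein chart produces a submanifold that is real analytic for a real analytic structure on $T^*L$, which is not obviously compatible with that of $M$ since the Weinstein identification is only smooth. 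The approximation should be carried out directly in $M$.
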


In fact, the moduli space of $J_{\alg}$ holomorphic discs in class $k \beta$ is automatically empty if $k < 0$ since $\omega$ integrates to a negative number on curves of this class; the existence of holomorphic curves in such classes would contradict positivity of energy.  The same argument shows that $J_{\alg}$ holomorphic discs  in the trivial homotopy class are constant, and constant discs are automatically regular.

Since $\beta$ is not divisible, all holomorphic discs in class $\beta$ are necessarily simple (see \cites{OK,lazzarini}).  In \cite{oh-perturb-boundary}, Oh proved that the moduli space of simple holomorphic discs is regular for a generic choice of $L$.  Since the space of real analytic embeddings of a Lagrangian submanifold is dense in the space of smooth embeddings, the second property can be simultaneously achieved.

We've assumed real analyticity only in order to conclude the following result, which can likely be proved under the weaker conditions (see Lemma 4.3.3 in \cite{MS})
\begin{cor} \label{cor:totally_geodesic_hermitian_metric}
There exists a metric $g$ on $M$ satisfying the following conditions 
\begin{enumerate}
\item $L$ is totally geodesic with respect to $g$, and
\item $g$ is a hermitian metric for the standard complex structure $J_{\alg}$, and
\item There exists a neighbourhood of $L$ which admits an isometric anti-holomorphic involution preserving $L$.
\end{enumerate}
\end{cor}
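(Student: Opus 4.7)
The plan is to obtain the desired metric $g$ by first producing an anti-holomorphic involution near $L$, and then averaging a Hermitian metric by it; total geodesicity will then follow from the general fact that fixed loci of isometries are totally geodesic.

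\emph{Step 1: Construct the anti-holomorphic involution.} Since $L$ is Lagrangian with respect to the Kähler form associated to $J_{\alg}$, the tangent space $T_{p}L$ is totally real in $(T_{p}M, J_{\alg})$ at every point $p \in L$. I would invoke the standard fact that a real analytic, maximally totally real submanifold of a complex manifold is locally the fixed point set of a unique germ of an anti-holomorphic involution. Concretely, around each point of $L$ choose real analytic coordinates $(x_{1},\dots,x_{n})$ on $L$, complexify them to holomorphic coordinates $(z_{1},\dots,z_{n})$ on a neighborhood in $M$ (using the fact that any real analytic map between real analytic manifolds extends to a holomorphic map between complexifications, and that $L$ sits as the $\operatorname{Im}(z_{i})=0$ locus up to a holomorphic change of coordinates since $L$ is totally real and of maximal dimension), and let the local involution be $(z_{i}) \mapsto (\bar z_{i})$. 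Uniqueness forces these local involutions to glue, giving a globally defined anti-holomorphic involution $\sigma$ on some open neighborhood $U$ of $L$ with fixed locus exactly $L$.

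\emph{Step 2: Average and extend the metric.} Take the standard Kähler metric $g_{\mathrm{alg}}$ on $M$ associated to $J_{\alg}$, and form
\begin{equation*}
g_{1} = \tfrac{1}{2}\bigl(g_{\mathrm{alg}} + \sigma^{*}g_{\mathrm{alg}}\bigr)
\end{equation*}
on a possibly smaller neighborhood $U' \subset U$ of $L$ (shrunk so that $\sigma(U') \subset U$). Since $\sigma$ is anti-holomorphic, $\sigma^{*}g_{\mathrm{alg}}$ is again Hermitian for $J_{\alg}$, so $g_{1}$ is Hermitian for $J_{\alg}$. By construction $\sigma$ is an isometry of $(U', g_{1})$. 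Then glue $g_{1}$ to $g_{\mathrm{alg}}$ via a partition of unity $\{\rho,1-\rho\}$ subordinate to $\{U', M \setminus \overline{V}\}$ where $V \Subset U'$ is a smaller neighborhood of $L$ chosen so that $\rho \equiv 1$ on $V$. The convex combination of two Hermitian metrics (for the same $J_{\alg}$) is again Hermitian, so the resulting global metric $g$ is Hermitian for $J_{\alg}$, and agrees with $g_{1}$ on $V$.

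\emph{Step 3: Total geodesicity.} On $V$, the Lagrangian $L$ is the fixed point set of the isometry $\sigma$. The fixed point locus of any smooth isometry of a Riemannian manifold is a totally geodesic submanifold: a geodesic starting on $L$ with velocity tangent to $L$ coincides with its image under $\sigma$ by uniqueness, hence stays in the fixed set $L$. Thus $L$ is totally geodesic in $(M,g)$, and all three conditions are satisfied on the neighborhood $V$, which is what the statement demands for condition (3).

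The main obstacle is Step 1: producing the anti-holomorphic involution globally along $L$. Locally this is a direct consequence of real analyticity and the totally real condition; the content is that uniqueness of the germ of such an involution allows the local pieces to be patched. Once $\sigma$ exists, Steps 2 and 3 are routine averaging and the classical argument that fixed loci of isometries are totally geodesic.
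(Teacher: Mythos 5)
Your proof is correct and follows essentially the same route as the paper: construct the anti-holomorphic involution near $L$ from real analyticity of the embedding, produce a Hermitian metric for which it is an isometry, and deduce that $L$ is totally geodesic because it is the fixed locus of that isometry. The only difference is in the middle step: the paper obtains the invariant metric by extending a real analytic embedding $L \hookrightarrow \bR^N$ to an equivariant holomorphic embedding of a neighbourhood into $\bC^N$ and pulling back the flat metric, whereas you average the ambient Hermitian metric over the $\bZ/2\bZ$-action — a minor variation that changes nothing essential.
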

\begin{proof}
First, note that $M$ has holomorphic charts near every point of $L$ which map real points to $L$ (See, for example, Lemma 2.2 in \cite{wells}), so that $M$ admits a holomorphic anti-involution defined near $L$.  Consider a real analytic embedding of $L$ in $\bR^{N}$ for some sufficiently large $N$, which automatically extends uniquely to a holomorphic embedding of a neighbourhood of $L$.  The uniqueness of the extension implies that the embedding of this neighbourhood  is equivariant with respect to the $\bZ/2\bZ$ action which acts anti-holomorphically on the source and the target. The pull-back of the standard metric gives a metric $g$ in a neighbourhood of $L$ satisfying the desired conditions.  Indeed, local geodesics must be invariant under the holomorphic anti-involution, so must lie on $L$, which implies that $L$ is totally geodesic.  One can then extend $g$ arbitrarily to a hermitian metric on the rest of $M$.\end{proof}
\begin{rem}
Let us briefly explain the reasons behind this choice of metric:  Consider a map $u \co D^2 \to M$ which takes the boundary to $L$, and $X$ a $TM$-valued vector field along $u$ with boundary conditions on $TL$.  The first property satisfied by $g$ implies that the exponential map applied to $X$ gives a new map with boundary on $L$.  This will be used to construct charts for a Banach manifold  $\cF^{1,p}(L)$ of maps from $D^2$ to $M$ with $L$ as boundary conditions, replacing $\cF(L)$.

To study holomorphic discs, we will also have to construct $\cE^p(L)$, a Banach bundle over $\cF^{1,p}(L)$ with fibres $L^p$ sections of $u^{*}(TM) \otimes_{\bC} \Omega^{0,1} D^2$ and eventually a  linearisation of the $\dbar$ operator
\begin{equation}D \co  T\cF^{1,p}(L) \to \cE^p(L).\end{equation}
This entails choosing a connection which preserves the almost complex structure.  Since the Levi-Civita connection of an almost hermitian connection preserves the almost complex structure, the choice of metric above determines a choice of linearization $D$ which can be expressed in a relatively simple form in term of the Levi-Civita connection.

Finally, the fact that a neighbourhood of $L$ admits an anti-holomorphic involution implies that we may double maps with boundary on $L$, whose image is contained in this neighbourhood, in order to obtain closed holomorphic curves.  In particular, decay estimates for the $C^1$-norm of holomorphic cylinders of small energy will easily imply the analogous result for holomorphic strips with Lagrangian boundary conditions.
\end{rem}

\subsection{Varying almost complex structures.}
Recall that $\sJ$ is the space of almost complex structures compatible with the symplectic form $\omega$, and which agree to infinite order with $J_{\alg}$ on the boundary of $M$.  Let $\bfepsilon = \{ \epsilon_k \}_{k =0}^{\infty}$ be a sequence of positive numbers, and define $\sJ_{\bfepsilon}(D^2)$ to be the space of maps from $D^{2}$ to $\sJ$ which can be written as
\begin{equation} \label{eq:complex_deformation} J(z) = J_{\alg} \exp(-J_{\alg} K(z))\end{equation}
where $J_{\alg}$ is the standard complex structure on $M$ and $K$ is a map from $D^{2}$ to the space of smooth endomorphisms of the tangent bundle of $M$
\begin{equation} K \co D^{2} \to End(TM) \end{equation}
satisfying the following conditions
\begin{align} \label{eq:disc_support} & \parbox{35em}{$K(z)$ vanishes if $|z| \geq 1/2$} \\
\label{eq:boundary_condition} & \parbox{35em}{$K(z)$ vanishes to infinite order on the boundary of $M$} \\
\label{eq:complex_structure} &\parbox{35em}{$K(z)$ anti-commutes with $J_{\alg}$ and $\omega(K(z)v,w) = \omega(v,K(z)w)$} \\
\label{eq:boundedness_varying} &\parbox{35em}{$K$ is bounded  in Floer's $\| \_ \|_{\bfepsilon}$ norm}  \\
\label{eq:tameness} &\parbox{35em}{$\omega$ is positive on every $J(z)$ complex plane.}
\end{align}
For the penultimate condition, we now think of $K$ as a section of a bundle over $D^2 \times M$, and  require the convergence of the sum
\begin{equation} \sum_{k =0}^{\infty} \epsilon_{k} \left|D^{k}  K\right|,\end{equation}
where $ \left|D^{k}  K \right|$ is the supremum of the norm of the higher co-variant derivatives of $K$ thought of as sections of bundles over $D^2 \times M$. These derivatives can be taken, for example, with respect to the Levi Civita connection of the metric $g$ on $M$ introduced in the previous section. 
\begin{rem}
The requirement that $K(z)$ vanish for points whose norm is larger than $1/2$ is completely arbitrary; vanishing outside any open pre-compact subset of the open disc would have served us just as well for our later purposes.
\end{rem}

The space of sections $K$ satisfying Conditions  \eqref{eq:disc_support}-\eqref{eq:boundedness_varying}  is a vector space on which $\| \_ \|_{\bfepsilon}$ defines a complete norm.  Since $\| \_ \|_{\bfepsilon}$ is stronger than the pointwise $C^0$-norm, the remaining condition \eqref{eq:tameness} is an open condition, proving the fact that $\sJ_{\bfepsilon}(D^2)$ is a Banach manifold.

A slight generalization of Floer's Lemma 5.1 in \cite{floer} implies:

\begin{lem} \label{lem:floer_banach_manifold_dense_L2} If $\bfepsilon$ decays sufficiently fast, then $\sJ_{\bfepsilon}(D^2)$ is a Banach manifold (in fact, an open subset of a Banach space) which is dense  in an $L^2$ neighbourhood of the constant almost complex structure $J_{\alg} \in \sJ(D^{2})$. \noproof
\end{lem}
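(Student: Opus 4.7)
The plan is to follow Floer's original argument \cite{floer}, with the single extra complication that our perturbations are additionally parametrized by $z \in D^{2}$. The two claims split naturally: the Banach-manifold statement is a matter of verifying that the defining conditions cut out an open set in a Banach space, while the density statement is a diagonal argument that tailors $\bfepsilon$ to a preselected countable dense family of smooth perturbations.

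For the Banach manifold structure, I would first observe that the algebraic conditions \eqref{eq:disc_support}, \eqref{eq:boundary_condition}, \eqref{eq:complex_structure} are each closed linear conditions on the space of sections $K$ of $\mathrm{End}(TM)$ over $D^{2}$, so they cut out a closed linear subspace. The boundedness requirement \eqref{eq:boundedness_varying} restricts this subspace to those $K$ for which the norm $\| \_ \|_{\bfepsilon}$ is finite, which is a genuine Banach space by completeness of $\| \_ \|_{\bfepsilon}$ (finiteness of each partial sum together with the standard diagonal completeness argument for weighted $C^{\infty}$-norms). Finally, condition \eqref{eq:tameness} is an open condition in the $C^{0}$ topology, and since $\| \_ \|_{\bfepsilon}$ dominates $C^{0}$, it is open in $\| \_ \|_{\bfepsilon}$; the exponential parametrization \eqref{eq:complex_deformation} then identifies $\sJ_{\bfepsilon}(D^{2})$ with this open subset.

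For density, the key construction is diagonal. First I would fix a countable family $\{ K_{n} \}_{n \geq 1}$ of smooth sections of $\mathrm{End}(TM)$ over $D^{2}$ satisfying the algebraic conditions \eqref{eq:disc_support}, \eqref{eq:boundary_condition}, \eqref{eq:complex_structure}, chosen to be $L^{2}$-dense in a small ball around $0$ within the space of sections satisfying those conditions (such a family exists because smooth, compactly supported sections are $L^{2}$-dense in the relevant closed subspace of measurable sections). Next, define $M_{n,k} = \left|D^{k} K_{n}\right|$; these are finite because each $K_{n}$ is smooth and has compact support in $D^{2} \times M$. Then set
\begin{equation}
 \epsilon_{k} = \frac{2^{-k}}{1 + \max_{1 \leq n \leq k} M_{n,k}}.
\end{equation}
For each fixed $n$, the tail of $\sum_{k} \epsilon_{k} M_{n,k}$ from $k = n$ onwards is bounded by $\sum_{k \geq n} 2^{-k}$, so $\| K_{n} \|_{\bfepsilon} < \infty$ and every $K_{n}$ lies in the Banach space underlying $\sJ_{\bfepsilon}(D^{2})$ (condition \eqref{eq:tameness} holds automatically for $K_{n}$ sufficiently close to $0$ in $C^{0}$, which we can arrange by passing to a subfamily).

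To conclude, any $J$ in a sufficiently small $L^{2}$-neighbourhood of $J_{\alg}$ can be written uniquely as $J_{\alg} \exp(-J_{\alg} K)$ for $K$ small in $L^{2}$; approximating $K$ by the $K_{n}$ in $L^{2}$ and exploiting the $L^{2}$-continuity of the exponential map on a bounded set produces a sequence in $\sJ_{\bfepsilon}(D^{2})$ converging in $L^{2}$ to $J$. The main obstacle is the diagonal choice of $\bfepsilon$, which must simultaneously be fast enough to give every $K_{n}$ finite norm yet not so restrictive as to exclude the approximating sequence; the explicit formula above resolves this.
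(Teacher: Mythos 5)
Your proof is correct and follows the standard diagonal argument behind Floer's Lemma 5.1, which is precisely what the paper invokes here (it supplies no proof of its own, deferring to that reference), and your treatment of the Banach-space structure matches the paragraph preceding the lemma in the paper. No issues to flag.
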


For any class $k\beta \in \pi_{2}(M,L)$, we replace $\cF(L;k \beta)$ with $\cF^{1,p}(L;k \beta)$, the space of $W^{1,p}$ maps in  homotopy class $k \beta$ from $D^2$ to $M$ with boundary values on $L$ for an integer $p$ which is sufficiently large.   The Sobolev embedding theorem implies that this class of maps is continuous, so that the requirement that the boundary lie on $L$ makes sense.  The tangent space is naturally a Banach bundle whose fibre at a map $u$ are $W^{1,p}$ sections of the pullback of $TM$ whose values are restricted to $TL$ on the boundary, i.e
\begin{equation} T_{u} \cF^{1,p}(L;k \beta) = W^{1,p} \left((D^2,S^1), (u^{*}TM, u^{*} TL) \right) .\end{equation}
As in Equation \eqref{eq:parametrized_smooth_space}, we write $\cF^{1,p}_{\cP}(L;k \beta) $ for the product of $\cF^{1,p}(L;k \beta)$ with $[0,+\infty)$.

We consider the Banach bundle  
\begin{equation} \label{universal-space} \xymatrix{ \cE^{p}_{\sJ_{\bfepsilon}(D^2) \times \cP}  \ar[d] \\   \sJ_{\bfepsilon}(D^2) \times \cF^{1,p}_{\cP}(L;k \beta).} \end{equation}
whose fibre at a map $(\bfJ,R,u)$ are $L^p$ sections of the space of complex anti-linear $1$-forms on $D^{2}$ valued in $TM$ equipped with varying almost complex structure $J(z)$:
\begin{equation} \label{eq:L-p-sections} L^{p} \left(u^{*}(TM) \otimes_{\bC, J} \Omega^{0,1} D^2 \right). \end{equation}
To describe $\cE^{p}_{\sJ_{\bfepsilon}(D^2) \times \cP}$  as a Banach bundle, we must identify the vector spaces \eqref{eq:L-p-sections} for nearby maps and nearby points of $ \sJ_{\bfepsilon}(D^2)$.  In the simplest case, starting with a deformation $J(z)$ of $J_{\alg}$ as in Equation \eqref{eq:complex_deformation} and  a vector field $X$ along a map $u \in \cF^{1,p}(L;k \beta)$, we define 
\begin{align} \label{eq:trivalisation_L_p_anti-holomorphic}
L^{p} \left(u^{*}(TM) \otimes_{\bC, J_{\alg}} \Omega^{0,1} D^2 \right) & \tilde{\to} L^{p} \left(u_{X}^{*}(TM) \otimes_{\bC,J} \Omega^{0,1} D^2 \right) \\
Y & \mapsto (\tilde{\Pi}_{u}^{u_X} Y) \exp(J_{\alg} K(z))
\end{align}
Here, $\tilde{\Pi}$ denotes parallel transport,  from $u$ to $u_X$, with respect to the $J_{\alg}$-complex linear connection determined by $g$.   We multiply by $\exp(J_{\alg} K(z))$ because the complex tensor product in the right hand side is performed with respect to $J(z)$, rather than $J_{\alg}$.

The $\dbar_{\cP}$ operator of Equation \eqref{dbar-operator} extends to a section $  \dbar_{\sJ_{\bfepsilon}(D^2) \times \cP}$ of \eqref{universal-space} where the $(0,1)$ part of $du - \gamma_{R} \otimes X_H$ is taken with respect to the varying almost complex structure $J(z)$.  By using the connection $\nabla$ induced by the metric $g$,  we may write  the linearisation of this operator at a zero of the form $( J_{\alg}, R, u)$ as an operator $D_{\sJ_{\bfepsilon}(D^2) \times \cP  } $ given by:
\begin{align} \label{eq:linearisation_dbar_perturb_complex} 
T \sJ_{\bfepsilon}(D^2)  \times  T \cF_{\cP}^{1,p}(L)  & \to \cE^{p}_{\sJ_{\bfepsilon}(D^2) \times \cP} \\ \notag
(K,0,0) & \mapsto K \circ \left(du - \gamma_{R} \otimes X_{H}\right) \circ j \\ \notag
(0,\partial_{R},0) & \mapsto \left( \frac{d \gamma_{R}}{dR} \otimes X_{H} \right)^{0,1} \\ \notag
(0,0,X) & \mapsto \left( \nabla X - \gamma_{R} \otimes \nabla_{X} X_{H} \right)^{0,1} - \frac{1}{2} J_{\alg} \nabla_{X} J_{\alg} \partial_{\cP} (R,u),
\end{align}
where the operator $ \partial_{\cP} $ is
\begin{equation}  \partial_{\cP} (R,u) = \left( du - \gamma_{R} \otimes X_{H} \right)^{1,0}.\end{equation}

Given a point $z \in D^{2}$ such that $|z|< 1/2$ and a positive number $\rho \leq 1/2-|z|$, we introduce the subspace $\sJ_{\bfepsilon}(B_{\rho}(z))  \subset \sJ_{\bfepsilon}(D^2)$ consisting of deformations supported in a disc of radius $\rho$ about $z$.  

\begin{lem} \label{lem:surjectivity_parametrized_moduli_space}
The restriction of $D_{\sJ_{\bfepsilon}(D^2) \times \cP  } $ to
 \begin{equation}  T \sJ_{\bfepsilon}(B_{\rho}(z)) \times  T \cF_{\cP}^{1,p}(L)  \end{equation}
is surjective at every zero of $ \dbar_{\sJ_{\bfepsilon}(D^2) \times \cP}$.  In particular, the universal moduli space
\begin{equation}  \cP_{k \beta}(L, \sJ_{\bfepsilon}(D^2) ) \end{equation}
is a smooth Banach manifold. \noproof
\end{lem}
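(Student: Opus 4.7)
The plan is to use the standard universal-moduli-space transversality technique (cf.~\cite{MS}, \cite{floer}). First I would observe that restricting $D_{\sJ_{\bfepsilon}(D^2) \times \cP}$ to the factor $T \cF_{\cP}^{1,p}(L)$ yields a Fredholm operator: the third line of \eqref{eq:linearisation_dbar_perturb_complex} is a zero-order perturbation of the standard linearised Cauchy-Riemann operator with totally real boundary conditions. In particular the image of the full operator is closed of finite codimension, so surjectivity is equivalent to density of the image.

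Next, suppose for contradiction that the image is not dense. By Hahn-Banach there exists a non-zero $\eta \in L^q(u^*TM \otimes_\bC \Omega^{0,1} D^2)$ (with $q$ conjugate to $p$) annihilating the image. Pairing $\eta$ against the $(0, \partial_R, X)$ directions presents it as a weak solution of the formal adjoint of the linearised Cauchy-Riemann operator; interior elliptic regularity (together with boundary regularity, for which the totally-geodesic property of Corollary \ref{cor:totally_geodesic_hermitian_metric} is convenient) makes $\eta$ continuous, and Aronszajn-type unique continuation forbids $\eta$ from vanishing on any open subset unless $\eta\equiv 0$.

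Pairing $\eta$ against the complex-structure variations $(K,0,0)$ supported in $B_\rho(z)$ yields the integral identity
\begin{equation*}
\int_{B_\rho(z)} \bigl\langle \eta(w), \, K(w) \circ (du - \gamma_R \otimes X_H)(w) \circ j \bigr\rangle \, dw = 0
\end{equation*}
for every admissible $K$. The crux of the argument is to produce a point $w_0 \in B_\rho(z)$ with $\eta(w_0) \neq 0$ and $(du - \gamma_R \otimes X_H)(w_0) \neq 0$. The former is automatic on an open dense set; for the latter, if $(du - \gamma_R \otimes X_H)$ vanished identically on $B_\rho(z)$ then unique continuation applied to the perturbed Cauchy-Riemann equation that $u$ satisfies would force $du = \gamma_R \otimes X_H$ on all of $D^2$; combined with the fact that $\gamma_R$ vanishes near $\pm 1$, this would make the restriction of $u$ to the boundary a time-independent arc from a point of $L$ to its image under $\phi$, contradicting the displacement hypothesis.

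At such a $w_0$, the pointwise linear-algebra construction of \cite{MS} produces an $\omega$-symmetric, $J$-anti-commuting endomorphism $K_{w_0}$ of $T_{u(w_0)}M$ for which $K_{w_0} \circ (du - \gamma_R \otimes X_H)(w_0) \circ j$ pairs non-trivially with $\eta(w_0)$. Multiplying by a bump function supported in a neighbourhood of $w_0$ inside $B_\rho(z)$, and invoking Lemma \ref{lem:floer_banach_manifold_dense_L2} to realise the resulting section inside Floer's Banach space $\sJ_{\bfepsilon}(B_\rho(z))$, yields a $K$ violating the integral identity. This proves surjectivity; the smoothness of $\cP_{k\beta}(L, \sJ_{\bfepsilon}(D^2))$ then follows from the implicit function theorem applied to $\dbar_{\sJ_{\bfepsilon}(D^2) \times \cP}$. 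The main obstacle is the displacement-based step excluding the identical vanishing of $(du - \gamma_R \otimes X_H)$: this is the one place where the specific geometry of the set-up, rather than purely generic perturbation theory, is used.
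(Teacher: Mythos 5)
Your strategy --- Fredholm-plus-density, Hahn--Banach, unique continuation for the annihilator $\eta$, and a pointwise construction of a variation $K$ cut off near a good point $w_0$ --- is the same standard route the paper's sketch follows, but there is one substantive omission. You never verify that the endomorphism you build lies in the admissible perturbation space $\sJ_{\bfepsilon}(B_{\rho}(z))$: besides the conditions you do impose ($\omega$-symmetry, anti-commutation with $J_{\alg}$, domain support in $B_{\rho}(z)$), condition \eqref{eq:boundary_condition} requires $K$ to vanish to infinite order along $\partial M$. Your bump-function construction therefore only produces an admissible $K$ if $u(w_0) \in \inte M$, so that $K$ can be supported in a small ball around $u(w_0)$ away from $\partial M$. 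This is exactly the point the paper's proof devotes its first paragraph to: a strong maximum principle (using that every element of $\sJ$ agrees with $J_{\alg}$ to infinite order along $\partial M$, and that the boundary values of $u$ lie on $L$, well inside $M$) shows that no solution of \eqref{deformedCR} meets $\partial M$. This is the one genuinely non-generic geometric input, and without it the final step of your argument does not close.

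A second, smaller gap: your concluding dichotomy fails at some zeros. At $R=0$ one has $\gamma_0 = 0$, and in the trivial homotopy class the solutions are exactly the constant discs, for which $du - \gamma_R \otimes X_H \equiv 0$; there is then no point $w_0$ at which the $K$-variations contribute, and no contradiction with displacement. (For $R>0$ your propagation argument is sound in substance but the conclusion is misdescribed: since $\xi^{*}\gamma_R$ is a multiple of $dt$, the identity $du = \gamma_R \otimes X_H$ forces $\partial_s u \equiv 0$ and hence, using the vanishing of $\gamma_R$ for $|s|$ large, that $u$ is globally constant with $X_{H_t}(u)\equiv 0$, so that $\phi$ fixes a point of $L$ --- that is the displacement contradiction, not an arc traced along $S^1$.) At the residual zeros with $R=0$ and $u$ constant, surjectivity must instead be checked directly, which is easy: the linearisation there is the standard $\dbar$-operator on a trivial bundle over $D^2$ with constant Lagrangian boundary conditions $T_pL \subset T_pM$, which is already surjective on $T\cF^{1,p}_{\cP}(L)$ alone. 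With these two repairs the proof goes through.
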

\begin{cor} \label{cor:parametrized_moduli_manifold_boundary}
If $\cS(L; k \beta,  J_{\alg})$ is regular, then a generic family $\bfJ = \{\bfJ_{R} \in \sJ_{\bfepsilon}(D^2) \}_{R \in [0,+\infty)}$ starting at $\bfJ_{0} \equiv J_{\alg}$ defines a parametrized moduli space
\begin{equation}  \cP(L; k \beta, \bfJ) \end{equation}
which is a smooth manifold of the expected dimension with boundary $\cS(L; k \beta, J_{\alg})$. 
\end{cor}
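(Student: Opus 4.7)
The plan is to apply Sard-Smale to a universal moduli space fibered over a Banach manifold of paths of almost complex structures, with fibrewise surjectivity for $R > 0$ coming from Lemma \ref{lem:surjectivity_parametrized_moduli_space} and regularity at the boundary $R = 0$ coming from the hypothesis on $\cS(L; k\beta, J_{\alg})$.

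First I would construct a separable Banach manifold $\mathcal{B}$ of paths $\bfJ \co [0, +\infty) \to \sJ_{\bfepsilon}(D^2)$ pinned at $\bfJ_0 = J_{\alg}$. Following Floer's construction applied to the extra $R$-variable, one fixes a second weight sequence $\{\delta_k\}$ and defines $\mathcal{B}$ via $\sum_k \delta_k \|\partial_R^k \bfJ\|_{\bfepsilon} < \infty$; this is the $R$-parameter analogue of Lemma \ref{lem:floer_banach_manifold_dense_L2}. The weights must be calibrated so that bump-function cutoffs in $R$, localising a variation to any small interval $[R_0 - \eta, R_0 + \eta] \subset (0, +\infty)$, give tangent vectors in $T\mathcal{B}$.

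Next, form the universal moduli space
\begin{equation*}
\tilde{\cP}_{k\beta} = \left\{ (R, u, \bfJ) \in [0, +\infty) \times \cF^{1,p}(L; k\beta) \times \mathcal{B} \mid \left(du - \gamma_R \otimes X_H\right)^{0,1} = 0 \right\},
\end{equation*}
where the $(0,1)$-projection is taken with respect to $\bfJ_R$. At any zero with $R > 0$, a tangent variation $\partial \bfJ \in T_{\bfJ} \mathcal{B}$ can be cut off in the $R$-variable to concentrate at the given parameter; it then acts on the fiber via \eqref{eq:linearisation_dbar_perturb_complex}, and Lemma \ref{lem:surjectivity_parametrized_moduli_space} delivers surjectivity of the total derivative. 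Hence $\tilde{\cP}_{k\beta}$ is a smooth Banach manifold over $R > 0$, and the projection $\pi \co \tilde{\cP}_{k\beta} \to \mathcal{B}$ is Fredholm (its vertical linearisation on $T\cF^{1,p}(L; k\beta) \oplus \bR\partial_R$ is a compact perturbation of the standard Cauchy-Riemann operator coupled with $\partial_R$) of the expected index $2n(1+k)$. Sard-Smale then yields a residual set of $\bfJ \in \mathcal{B}$ over which $\cP(L; k\beta, \bfJ) \cap \{R > 0\}$ is a smooth manifold of dimension $2n(1+k)$.

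The boundary is handled by the hypothesis. At $R = 0$ we have $\gamma_0 = 0$ and $\bfJ_0 = J_{\alg}$, so the equation collapses to $\dbar_{J_{\alg}} u = 0$ and the $R = 0$ slice is exactly $\cS(L; k\beta, J_{\alg})$. By hypothesis this moduli space is regular, and together with the unconstrained horizontal $\partial_R$-direction the total linearisation on the Banach half-space is surjective at $R = 0$ as well. A standard implicit function theorem for Fredholm sections over a half-space (see e.g.\ \cite{MS}) then furnishes collar charts $[0, \epsilon) \times \cS(L; k\beta, J_{\alg}) \hookrightarrow \cP(L; k\beta, \bfJ)$, exhibiting $\cP(L; k\beta, \bfJ)$ as a smooth manifold with boundary $\cS(L; k\beta, J_{\alg})$.

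The main obstacle is the joint calibration of the weight sequences $\bfepsilon$ and $\{\delta_k\}$: they must decay slowly enough that $\mathcal{B}$ admits bump-function localisations in $R$ (so that fibrewise surjectivity from Lemma \ref{lem:surjectivity_parametrized_moduli_space} can be transported to a universal statement), yet fast enough that $\mathcal{B}$ is separable and $L^2$-dense in the space of all continuous paths (so that Sard-Smale genuinely produces a generic family). Once this is arranged, the remaining ingredients are standard ingredients of the pseudo-holomorphic curve transversality package.
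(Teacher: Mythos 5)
Your proposal is correct and follows essentially the same route as the paper: the paper phrases the argument as the fibre product of $[0,+\infty)$ with the universal moduli space $\cP(L;k\beta,\sJ_{\bfepsilon}(D^2))$ over $\sJ_{\bfepsilon}(D^2)\times[0,+\infty)$ (whose Banach manifold structure is exactly Lemma \ref{lem:surjectivity_parametrized_moduli_space}) and applies Sard--Smale, with the regularity of $\cS(L;k\beta,J_{\alg})$ interpreted as saying that $J_{\alg}$ is a regular point of the projection, so that the $R=0$ slice is the stated boundary. Your explicit Banach manifold of paths and the cutoff-in-$R$ localisation are the details the paper leaves implicit in the phrase ``generic family,'' and for that purpose a space of $C^k$ paths (as the paper uses later in Lemma \ref{lem:one_exceptional_solution_at_a_time}) would suffice in place of a second Floer norm.
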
 
\begin{proof} Thinking of $\bfJ$ as a smooth map $[0,+\infty) \to \sJ_{\bfepsilon}(D^2)$, we see that $\cP(L; k \beta, \bfJ)$ is a fibre product
\begin{equation} [0,+\infty) \times_{ \sJ_{\bfepsilon}(D^2) \times [0,+\infty)}  \cP(L; k \beta,  \sJ_{\bfepsilon}(D^2) ) .\end{equation}
The regularity assumption on $\cS(L;k \beta,  J_{\alg})$ is equivalent to the fact that $J_{\alg}$ is a regular point of the projection $\cP(L; k \beta,  \sJ_{\bfepsilon}(D^2) ) \to \sJ_{\bfepsilon}(D^2) \times [0,+\infty) $, so the result is a standard application of Sard-Smale. 
\end{proof}

In preparation for the proof that $\Phat(L;0, \bfJ)$  is a smooth manifold with corners, we must know that the  other strata of $\Pbar(L; 0, \bfJ)$ are also smooth manifolds  for a generic choice of $\bfJ$, the.  Again, the results we need are standard, and some form thereof already appears in the literature (see, e.g. \cite{FHS} or Lemma 2.5 of \cite{seidel-LOS}):

\begin{lem} \label{lem:transverse_evaluation_parametrized}\label{lem:one_exceptional_solution_at_a_time}
For a generic path $\bfJ$ starting at the constant almost complex structure $J_{\alg}$, the evaluation map
\begin{equation} \cP_{0,1}(L ;-\beta,\bfJ) \to L  \end{equation}
 is transverse to $ \cS_{0,1}(L;\beta, J_{\alg}) \to L$ and the maps
\begin{align} \cP(L; - \beta , \bfJ)  & \to [0,+\infty) \\
\cP(L; - \beta , \bfJ)  & \to \cF^{1,p}(L; -\beta) \end{align} are injective. \noproof
\end{lem}
Note that this result says that an exceptional solution is uniquely determined by the underlying map, or by the parameter $R$ at which it occurs.

The last transversality result concerns sphere bubbles for which we will need more control  in later arguments.  We begin with the elementary result:
\begin{lem} \label{lem:moduli_space_alg_discs_submersion}
The moduli space of $J_{\alg}$ holomorphic spheres in classes $k \alpha$ is regular if $k \leq 1$.  Moreover, the evaluation map
\begin{equation} \cM_{1}(M; \beta , J_{\alg}) \to M \end{equation}
is a submersion with fibres diffeomorphic to $\bC \bP^{n-2}$.   \noproof
\end{lem}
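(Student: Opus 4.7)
The plan is to reduce everything to an explicit algebro-geometric description of the moduli space, since for the standard complex structure $J_{\alg}$ there is essentially no analysis to do. The key observation is that since $J_{\alg}$ restricted to the $\bC^n$ factor is the conjugate of the standard complex structure, a $J_{\alg}$-holomorphic sphere $u = (u_1, u_2) \colon \bC\bP^1 \to M$ must have $u_1$ anti-holomorphic into $\bC^n$ in the usual sense, hence constant (any holomorphic map $\bC\bP^1 \to \bC^n$ is constant). The second factor $u_2 \colon \bC\bP^1 \to \bC\bP^{n-1}$ is then a genuine holomorphic sphere, and positivity of energy with respect to the Fubini-Study form forces its class to lie in $k \alpha$ with $k \geq 0$, with equality only for constants. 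For $k = 1$, the map $u_2$ parametrises a projective line in $\bC\bP^{n-1}$, so the unparametrised moduli space of $\alpha$-spheres in $M$ is naturally identified with the product of the (interior of the) ball in $\bC^n$ with the Grassmannian $Gr(2,n)$ of lines in $\bC\bP^{n-1}$.

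For the regularity statement, I would invoke the standard fact that $\bC\bP^{n-1}$ is a convex manifold: the pullback of $T\bC\bP^{n-1}$ along a line is isomorphic, by a Grothendieck splitting computation, to $\cO(2) \oplus \cO(1)^{\oplus (n-2)}$, which has vanishing first cohomology. Since the $\bC^n$ factor contributes a trivial bundle of rank $n$ (and $\cO \otimes \Omega^{0,1}$ also has vanishing $H^1$ for the anti-holomorphic structure), the linearised Cauchy-Riemann operator along any $\alpha$-sphere is surjective. The cases $k \leq 0$ are automatic: no curves for $k < 0$, and for $k = 0$ the only maps are constants, which are trivially regular in this (somewhat degenerate, but unproblematic) sense.

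For the evaluation map, once one quotients by $PSL(2,\bC)$, the moduli space with one marked point $\cM_1(M; \alpha, J_{\alg})$ can be identified with the product of the ball in $\bC^n$ and the partial flag variety $\mathrm{Fl}(1,2;n) = \{(p, \ell) : p \in \ell \subset \bC^n\}$, which is an $S^2$-bundle over $Gr(2,n)$. Under this identification, the evaluation map at the marked point is simply the product of the identity on the ball with the projection $(p,\ell) \mapsto p$ to $\bC\bP^{n-1}$, and the latter is visibly a submersion whose fibre over $p$ is the projective space of lines through $p$, i.e. $\bC\bP^{n-2}$. The main (minor) step to verify is that this set-theoretic identification is a diffeomorphism compatible with the evaluation map defined through the $\dbar$-section setup; this is straightforward since the Banach manifold structure on the moduli space at $J_{\alg}$ is given by the same linearised operator whose surjectivity we already established, so the identification is smooth by the implicit function theorem.
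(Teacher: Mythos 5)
Your proof is correct, and the paper in fact gives no argument at all here (the lemma is marked as immediate), so your write-up simply supplies the standard facts being implicitly invoked: the first factor of a $J_{\alg}$-sphere is constant because $J_{\alg}$ is conjugate-standard on $\bC^{n}$, lines in $\bC\bP^{n-1}$ are regular by the splitting $\cO(2)\oplus\cO(1)^{\oplus(n-2)}$, and the one-pointed moduli space is the flag variety $\{(p,\ell):p\in\ell\}$ whose projection to $p$ has fibre $\bC\bP^{n-2}$. (Note the $\beta$ in the displayed evaluation map is a typo for $\alpha$, as you correctly assumed.)
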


 Over the Banach manifold with boundary 
\begin{equation} D^2 \times \sJ_{\bfepsilon}(D^2) \end{equation}
there is a Banach manifold which we denote $\cM_{1}\left(M;  \beta , D^2 \times \sJ_{\bfepsilon}(D^2) \right)$ whose fibres at a fixed $\bfJ_{R} \in \sJ_{\bfepsilon}(D^2)$, is the union
\begin{equation} \coprod_{z \in D^{2}} \cM_{1}(M; \beta,  J_{z, R}) \end{equation}
of the moduli spaces of holomorphic spheres with one marked point for one of the almost complex structures $J_{ z, R}$.  The space $\cM_{1}\left(M;  \beta , D^2 \times \sJ_{\bfepsilon}(D^2) \right)$ carries an evaluation map 
\begin{equation} \label{eq:evaluation_map_spheres} \cM_{1}\left(M;  \beta , D^2 \times \sJ_{\bfepsilon}(D^2) \right) \to D^2 \times \sJ_{\bfepsilon}(D^2) \times M. \end{equation}
\begin{lem} \label{lem:codim_2_strata_diffeo_type}
The evaluation map \eqref{eq:evaluation_map_spheres}  is a  proper submersion in a neighbourhood of $D^{2} \times \{ J_{\alg} \} \times M$.  In particular, if we choose $\bfJ$ to be sufficiently close to the constant family  $J_{\alg}$, then for any $R \in [0, +\infty)$, and any map $u \co D^2 \to M$, we have a diffeomorphism 
\begin{equation} \cM_{1}(M; \beta , J_{\alg}) \times_{M} D^{2} \cong \coprod_{z \in D^{2}} \cM_{1}(M; \beta , J_{z, R}) \times_{M \times D^2} D^{2} \end{equation}
which is canonical up to diffeotopy.
\end{lem}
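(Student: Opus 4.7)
The plan is to apply Ehresmann's fibration theorem to the evaluation map \eqref{eq:evaluation_map_spheres}, which reduces the claim to two assertions: (i) the map is a submersion in an open neighbourhood of $D^{2} \times \{J_{\alg}\} \times M$, and (ii) its restriction to that neighbourhood is proper. Granted (i) and (ii), Ehresmann gives a local trivialisation of $\cM_{1}(M;\alpha,D^2 \times \sJ_{\bfepsilon}(D^2))$ over a neighbourhood of $D^{2} \times \{J_{\alg}\} \times M$. The asserted diffeomorphism is then obtained by pulling back this trivialisation along the map $D^{2} \to D^{2} \times \sJ_{\bfepsilon}(D^2) \times M$ defined by $z \mapsto (z, \bfJ_{R}, u(z))$, which lies in the chosen neighbourhood as soon as $\bfJ$ is $\| \_ \|_{\bfepsilon}$-close enough to the constant family $J_{\alg}$.

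For the submersion property, I would start from Lemma \ref{lem:moduli_space_alg_discs_submersion}, which already shows that the fibre $\cM_{1}(M;\alpha, J_{\alg}) \to M$ is a submersion with $\CP^{n-2}$ fibres. The same is therefore true of the product $D^{2} \times \cM_{1}(M;\alpha, J_{\alg}) \to D^{2} \times \{ J_{\alg} \} \times M$, which is precisely the restriction of \eqref{eq:evaluation_map_spheres} to the locus $D^{2} \times \{ J_{\alg} \}$. To extend this to the normal directions in $\sJ_{\bfepsilon}(D^2)$, I would apply a universal transversality argument in the spirit of Lemma \ref{lem:surjectivity_parametrized_moduli_space}: since varying $K(z)$ inside a small disc in $D^{2}$ surjects onto the cokernel of the linearised $\dbar$-operator at any simple holomorphic sphere (and every curve in class $\alpha$ is necessarily simple, as $\alpha$ is primitive), the universal moduli space $\cM_{1}(M;\alpha, D^{2} \times \sJ_{\bfepsilon}(D^2))$ is a smooth Banach manifold and the linearisation of the universal evaluation map is surjective at $J_{\alg}$. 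Since surjectivity is an open condition on Fredholm maps, the submersion property persists on a neighbourhood.

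Properness near $J_{\alg}$ follows from Gromov compactness together with the observation that $\cM_{1}(M;\alpha,J)$ admits no Gromov limits other than itself: any bubble configuration in class $\alpha$ would split $\alpha$ as a sum of positive multiples $k_i \alpha$ with $k_i > 0$, forcing a single non-constant component, while any ghost component would need at least three special points, which is impossible with only one marked point. Hence $\cM_{1}(M;\alpha,J)$ is already compact for every $J$ in some $\| \_ \|_{\bfepsilon}$-neighbourhood of $J_{\alg}$, and the argument of Gromov compactness gives the uniform compactness required for properness of \eqref{eq:evaluation_map_spheres} in a neighbourhood of $D^{2} \times \{J_{\alg}\} \times M$. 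The canonicity up to diffeotopy of the resulting diffeomorphism then follows because the set of admissible families $\bfJ$ is convex (hence contractible), and Ehresmann trivialisations of a proper submersion are unique up to isotopy over contractible parameter spaces.

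The main obstacle is the first step: carrying out the universal transversality argument for spheres, where one must verify that the linearised $\dbar$-operator together with deformations of the almost complex structure in a small interior disc surjects onto the target. This is standard but relies crucially on simplicity of curves in class $\alpha$ and on Floer's density lemma \ref{lem:floer_banach_manifold_dense_L2}. A secondary subtlety is ensuring that the family $\bfJ$, parametrised by the non-compact interval $[0,+\infty)$, remains within the chosen neighbourhood for all $R$; this is harmless since the hypothesis of the lemma permits us to fix $\bfJ$ as close as we wish to the constant family in Floer's Banach norm, uniformly in $R$.
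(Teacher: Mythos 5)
Your proposal is correct and follows essentially the same route as the paper: properness via Gromov compactness combined with the impossibility of bubbling in the primitive sphere class, and the submersion property deduced from the regularity of $\cM_{1}(M;\alpha,J_{\alg})\to M$ (Lemma \ref{lem:moduli_space_alg_discs_submersion}) together with the implicit function theorem, with the trivialisation then pulled back along $z\mapsto(z,\bfJ_{R},u(z))$. The paper leaves Ehresmann, the universal transversality underlying the Banach-manifold structure, and the ``canonical up to diffeotopy'' clause implicit, all of which you supply correctly.
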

\begin{proof}
Consider a sequence $v_{i}$ of $\bfJ_i$-holomorphic curves in class $\beta$  where $\bfJ_{i}$ converges to $J_{\alg}$ in $\sJ_{\bfepsilon}$, hence in the $C^{\infty}$-topology.  By the Gromov compactness theorem, a subsequence must converge to a configuration of $J_{\alg}$-holomorphic spheres the sum of whose homology classes is $\beta$.  Since the moduli spaces of holomorphic spheres are empty for homology classes $k \beta$ with $k <0$, the only possible limit is an honest $J_{\alg}$-holomorphic sphere in class $\beta$, which proves properness.  The fact that the map is a submersion in a neighbourhood of $\{ J_{\alg}  \} \times M$  is now a simple consequence of the implicit function theorem and  Lemma \ref{lem:moduli_space_alg_discs_submersion}. 
\end{proof}

\section{Preliminaries for gluing}

In the remainder of this paper, we construct manifolds with corners from the Gromov-Floer compactifications of various moduli spaces.  The analytic ingredients are gluing theorems, and our approach follows as closely as possible the one used in Fukaya, Oh, Ohta, and Ono's book \cite{FOOO}; the only significant difference is that they use obstruction bundles to reduce their gluing theorem to one where all marked points are fixed.  Having to allow a varying marked point is the main reason why we have not sought to prove that the Gromov-Floer compactification itself is a smooth manifold with corners. 

In order not to overwhelm the reader with long proofs, we have opted for one of three different approaches in justifying results:  Whenever we believe that our point of view differs from the one usually taken in the literature, or that the proof is helpful to understand the overall structure of our argument, we present a proof in the text proper.  If we believe that a ``standard technique'' can be adapted to our setting, then the proof is  relegated to Section \ref{sec:proof-results-pre-gluing}, and usually is focused on showing that these techniques are not affected by the differences between our setup and the one used, for example in \cite{FOOO}.  Finally, when a result follows from previously explained ideas, we leave the proof to the reader.

\subsection{Sobolev spaces with exponential weights}
Fix a Riemann surface $\Sigma$ with $k$ interior marked point $p_i$ and $l$ boundary marked points $q_j$.  We choose positive strip-like or cylindrical ends at the marked points, i.e. embeddings
\begin{align} \xi_{p_i} \co [0,+\infty) \times [-1,+1] & \to \Sigma - \{ p_i \}   \\
\xi_{q_j} \co [0,+\infty) \times S^1 & \to \Sigma  - \{ q_j \}
\end{align}
which at infinity converge to the labeling marked points.  When discussing gluing, it is useful to allow negative ends, but since the analytic treatment is the same, we restrict to positive ends for now.  We begin by introducing Sobolev spaces of functions with exponential decay along these ends.

\begin{defin}[Definition 7.1.3 of \cite{FOOO}]
Given a constant $\delta >0$,  let 
\[  \cF^{1,p,\delta}_{\left(\Sigma, \{p_i \}, \{ q_j\}\right) }(L)  \]
denote the space of continuous maps $u$ from $\Sigma$ to $M$ mapping $\partial \Sigma$ to $L$ which are locally in $W^{1,p}$ and such that the integral
\begin{equation} \label{eq:W_1_p-bounded_function} \int \left(|d(u \circ \xi_r)|^p + \dist\left(u(1), u(\xi_r(s,t)) \right)^p \right) e^{\delta p|s|} ds dt \end{equation}
 is finite for each end $\xi_r$ converging to an interior or boundary marked point $r$.
\end{defin}

\begin{rem} Note that the class of maps satisfying \eqref{eq:W_1_p-bounded_function}  is independent of the chosen metric on the target.  This follows from the fact that the image of any such map lies in a compact subset of $M$, and that the distortion between two metrics in a compact domain is bounded.
\end{rem}

Our first goal is to prove that this is a Banach manifold.  If $\Sigma$ is a strip or a cylinder (or more generally a half-strip or half-cylinder), we consider the norm
\begin{equation}|X|_{1,p,\delta}^{p} =  \int|X|^{p} e^{\delta p|s|} ds dt + \int|\nabla X|^{p} e^{\delta p| s|} ds dt  \end{equation}
on vector fields which vanish sufficiently fast at infinity. 

More generally, we pick a metric on the complement of the marked points 
\begin{equation} \Sigma - \coprod \{ p_i \}  - \coprod \{ q_i \} \end{equation} which on the image of each end $\xi$ agrees with the push-forward of the standard metric on the half-strip or the half-cylinder. 

\begin{defin}[See Lemma 7.1.5 of \cite{FOOO}] \label{def:exponential_decay_sobolev_space_vector_fields}

If $u \in \cF^{1,p,\delta}_{\left(\Sigma, \{p_i \}, \{ q_j\}\right)}(L)$, we define 
\begin{equation} W^{1,p,\delta}_{\left(\Sigma, \{p_i \}, \{ q_j\}\right)} (u^*TM, u^*TL) \end{equation}
to be the space of locally $W^{1,p}$ sections of $u^*TM$ whose values on $\partial \Sigma$ lie in $u^* TL$ and which are bounded with respect to the norm
\begin{equation} \label{eq:definition_weighted_sobolev_ends}| X|_{1,p,\delta}^{p} = \left| X{|}_{\Sigma - \cup_{r}\Im(\xi_{r})} \right|_{1,p}^{p} + \sum_{r \in \{p_i\} \cup \{q_j\}} |X(r)|^{p}+ \left| X \circ \xi_{r} - \Pi_{u(r)}^{u \circ \xi_{r}} X(r)  \right|_{1,p,\delta}^{p} \end{equation}
where $\Pi_{u(r)}^{u \circ \xi_{r}}$ is the parallel transport map with respect to the Levi-Civita connection of the metric on $M$ from the constant map with value $u(r)$ to $u(\xi_r(s,t))$ along the image of horizontal lines on the strip or the cylinder.
\end{defin}

In upcoming discussions, it will be useful to have a shorthand notation $\I{S}{S'}$ for the domains $[S,S') \times [-1,+1]$.  The proof of the next result is given in Section \ref{sec:proof-results-pre-gluing}.

\begin{lem} \label{lem:weighted_1_p_implies_c_0_exponential_decay}
Vector fields in $W^{1,p,\delta}_{\left(\Sigma, \{p_i \}, \{ q_j\}\right)} \left(u^*TM, u^*TL\right)$ satisfy a uniform decay condition towards their value at the marked points, e.g. if $r$ is a boundary marked point with a positive strip-like end, there exists a constant $C$ independent of $X$ and $S$ such that
\begin{equation} \sup_{\I{S}{+\infty}}  \left| X \circ \xi_{r} - \Pi_{u(r)}^{u \circ \xi_{r}} X(r)  \right| \leq C e^{-\delta S} | X|_{1,p,\delta} \end{equation}
\end{lem}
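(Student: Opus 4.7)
The plan is to reduce to a standard weighted Sobolev embedding statement on a half-strip, by introducing the reweighted vector field $Z = e^{\delta s} Y$ where $Y = X \circ \xi_r - \Pi_{u(r)}^{u \circ \xi_r} X(r)$ is the piece of $X$ appearing in the last term of the norm \eqref{eq:definition_weighted_sobolev_ends}. Once one pulls the weight inside, $Z$ lives in an unweighted $W^{1,p}$ space on $\I{0}{+\infty}$, and the required pointwise decay becomes a uniform Sobolev inequality.

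First, I would restrict attention to a strip end near $r$. Since $u$ is continuous and $u \circ \xi_r(s,t) \to u(r)$ as $s \to +\infty$ (this is built into the finiteness of \eqref{eq:W_1_p-bounded_function} together with the definition via $\dist(u(1),\cdot)$), we may choose $S_0$ so that for $s \geq S_0$ the image of $u \circ \xi_r$ lies in a normal neighbourhood of $u(r)$ in which parallel transport along horizontal lines trivialises $(u \circ \xi_r)^* TM$ uniformly, independently of $s$. In this trivialisation the pointwise norm of $Y$ and its covariant derivative are comparable to the flat norms of the trivialised components, with constants depending only on the target geometry.

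Next, set $Z(s,t) = e^{\delta s} Y(s,t)$ on $\I{0}{+\infty}$. A direct computation gives
\begin{equation*}
|Z(s,t)| = e^{\delta s}|Y(s,t)|, \qquad |\nabla Z(s,t)| \leq \delta\, e^{\delta s}|Y(s,t)| + e^{\delta s}|\nabla Y(s,t)|,
\end{equation*}
so that, after taking $p$-th powers and integrating, $\|Z\|_{W^{1,p}(\I{0}{+\infty})} \leq C\,|Y|_{1,p,\delta} \leq C\,|X|_{1,p,\delta}$. Because $p$ is taken large enough that $W^{1,p} \hookrightarrow C^0$ in two real dimensions, the restriction of $Z$ to each unit strip $\I{S}{S+1}$ satisfies a Sobolev embedding which is uniform in $S$ (all unit strips are isometric, so the embedding constant is the same one). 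Hence
\begin{equation*}
\sup_{\I{S}{S+1}} |Z| \;\leq\; C\,\|Z\|_{W^{1,p}(\I{S}{S+1})} \;\leq\; C\,\|Z\|_{W^{1,p}(\I{0}{+\infty})} \;\leq\; C\,|X|_{1,p,\delta}.
\end{equation*}

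Finally, I would convert back: on $\I{S}{S+1}$ one has $|Y(s,t)| = e^{-\delta s}|Z(s,t)| \leq e^{-\delta S}\sup_{\I{S}{S+1}}|Z|$, hence $\sup_{\I{S}{S+1}}|Y| \leq C\,e^{-\delta S}\,|X|_{1,p,\delta}$. The same estimate with $S$ replaced by any $S' \geq S$ gives $\sup_{\I{S'}{S'+1}}|Y| \leq C\,e^{-\delta S'}|X|_{1,p,\delta} \leq C\,e^{-\delta S}|X|_{1,p,\delta}$, so taking the supremum over $S' \geq S$ yields the claim. The only genuinely technical step is the uniform Sobolev embedding on unit strips, which forces the standing assumption that $p$ is large (i.e. $p>2$); once this is in place the rest is bookkeeping with the exponential weight and with the trivialisation near the marked point.
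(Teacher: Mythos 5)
Your proposal is correct and follows essentially the same route as the paper: reweight the tail of $X$ by $e^{\delta s}$, observe that the unweighted $W^{1,p}$-norm of the reweighted field is controlled by $|X|_{1,p,\delta}$, apply a Sobolev embedding with a constant uniform in $S$ (the paper uses the isometric half-strips $\I{S}{+\infty}$ where you use unit strips, and it justifies uniformity via boundedness of the curvature of the pulled-back connection rather than an explicit trivialisation — an immaterial difference), and then undo the weight.
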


The closed finite codimension subspace of $W^{1,p,\delta}_{\left(\Sigma, \{p_i \}, \{ q_j\}\right)} (u^*TM, u^*TL)$ consisting of vector fields which vanish at all marked points is likely to be more familiar to the reader, and is what most people would refer to as the ``Sobolev space with exponential weights."  Such spaces are used to model moduli spaces of holomorphic maps which take the marked points $\{p_i\}$ and $\{q_j\}$ to fixed points in $M$ and $L$ respectively.

It is easy to use the finite codimensionality of this inclusion to study the vector space $W^{1,p,\delta}_{\left(\Sigma, \{p_i \}, \{ q_j\}\right)} (u^*TM, u^*TL)$.  In particular, as with the usual Sobolev spaces with exponential weights, $W^{1,p,\delta}_{\left(\Sigma, \{p_i \}, \{ q_j\}\right)}(u^*TM, u^*TL)$, as a topological vector space, is independent of the choice of metric.  More precisely, the norms coming from two different metrics are equivalent.  This is particularly useful for the next result, in which we assume that we've chosen a metric for which $L$ is a totally geodesic submanifold.  A sketch of the proof  is given in Section \ref{sec:proof-results-pre-gluing}.
\begin{lem} \label{lem:1_p_delta_maps_Banach_manifold}
$\cF^{1,p,\delta}_{\left(\Sigma, \{p_i \}, \{ q_j\}\right)}(L)$ is a Banach manifold locally modeled after the Banach space $W^{1,p,\delta}_{\left(\Sigma, \{p_i \}, \{ q_j\}\right)}(u^*TM, u^*TL)$.
\end{lem}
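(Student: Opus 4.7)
The plan is to construct charts via the exponential map in the usual way, and verify that the weighted exponential decay is preserved; the metric chosen in Corollary \ref{cor:totally_geodesic_hermitian_metric} makes $L$ totally geodesic, which is exactly what makes the boundary condition compatible with $\exp$.

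First, fix $u \in \cF^{1,p,\delta}_{(\Sigma, \{p_i\}, \{q_j\})}(L)$ and a sufficiently small $W^{1,p,\delta}$-neighbourhood $\scrU_u$ of $0$ in $W^{1,p,\delta}_{(\Sigma, \{p_i\}, \{q_j\})}(u^*TM, u^*TL)$. By Lemma \ref{lem:weighted_1_p_implies_c_0_exponential_decay}, combined with the usual Sobolev embedding on the compact core $\Sigma - \cup_r \Im(\xi_r)$, any $X \in \scrU_u$ is bounded in the $C^0$ norm by a universal multiple of $|X|_{1,p,\delta}$. Shrinking $\scrU_u$ if necessary, I may assume $X(z)$ lies within the injectivity radius of $g$ at $u(z)$ for every $z$. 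Define
\begin{equation}
\Phi_u \co \scrU_u \to \cF^{1,p,\delta}_{(\Sigma, \{p_i\}, \{q_j\})}(L), \qquad \Phi_u(X)(z) = \exp_{u(z)}(X(z)).
\end{equation}
Because $L$ is totally geodesic and $X|_{\partial \Sigma}$ takes values in $TL$, $\Phi_u(X)$ maps $\partial \Sigma$ into $L$.

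Second, I need to verify $\Phi_u(X) \in \cF^{1,p,\delta}$, i.e.\ the integral \eqref{eq:W_1_p-bounded_function} is finite for $\Phi_u(X)$. On each end $\xi_r$, Lemma \ref{lem:weighted_1_p_implies_c_0_exponential_decay} yields exponential pointwise decay
\begin{equation}
\bigl|(X\circ\xi_r)(s,t) - \Pi_{u(r)}^{u\circ\xi_r}X(r)\bigr| \leq Ce^{-\delta s} |X|_{1,p,\delta},
\end{equation}
while by hypothesis $u\circ\xi_r$ already satisfies the weighted decay defining $\cF^{1,p,\delta}$. A direct estimate using $\mathrm{dist}(\Phi_u(X)(\xi_r(s,t)), \exp_{u(r)}(X(r))) \lesssim \mathrm{dist}(u(\xi_r(s,t)), u(r)) + |X\circ\xi_r(s,t) - \Pi X(r)|$ (valid within the injectivity radius) then controls the distance integrand in \eqref{eq:W_1_p-bounded_function}; similarly $|d(\Phi_u(X)\circ\xi_r)| \lesssim |d(u\circ\xi_r)| + |\nabla(X\circ\xi_r)|$. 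Both right-hand sides are in $L^p$ with weight $e^{\delta p|s|}$, so $\Phi_u(X) \in \cF^{1,p,\delta}$.

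Third, I check that $\Phi_u$ is a homeomorphism onto a neighbourhood of $u$. Its set-theoretic inverse is $v \mapsto \exp_{u(\cdot)}^{-1}(v(\cdot))$, defined whenever $v$ is pointwise close enough to $u$; the same kind of estimates, now using the smoothness of $\exp^{-1}$ and the matching exponential decay of $u$ and $v$ at each marked point (the asymptotic values $u(r)$ and $v(r)$ can be made arbitrarily close in $M$ by Lemma \ref{lem:weighted_1_p_implies_c_0_exponential_decay}), show that this inverse map lands in $W^{1,p,\delta}$ and is continuous.

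Finally, smoothness of transition maps $\Phi_{u'}^{-1} \circ \Phi_u$ reduces, in local coordinates provided by the exponential map on $M$, to the claim that postcomposition with a smooth map between open subsets of vector bundles induces a smooth map between the corresponding weighted $W^{1,p}$ spaces. This is a standard fact once the weighted norm is split as in Definition \ref{def:exponential_decay_sobolev_space_vector_fields} into a value at the marked point plus a genuinely decaying piece (handled by the usual $W^{1,p}$ argument on the strip, since all terms that remain after subtracting the asymptotic values decay exponentially).

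The main technical point is the estimate in the second step: verifying that applying $\exp$ pointwise does not destroy the exponential decay encoded by the weight $e^{\delta p|s|}$. Once this is established, the rest follows the standard template for constructing Banach manifolds of maps, exactly as in \cite{FOOO}*{Lemma 29.5}.
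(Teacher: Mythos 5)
Your proof is correct, but it takes a different route from the paper's at two points worth noting. First, the paper does not work intrinsically on $M$: it fixes an embedding $(M,L)\subset (\bC^N,\bR^N)$ with $M\cap\bR^N = L$ and realizes $\cF^{1,p,\delta}_{\left(\Sigma, \{p_i \}, \{ q_j\}\right)}(L)$ as a closed subset of the honest Banach space $W^{1,p,\delta}_{\left(\Sigma, \{p_i \}, \{ q_j\}\right)}(\bC^N,\bR^N)$ (using that condition \eqref{eq:W_1_p-bounded_function} is metric-independent); this supplies the topology a priori, whereas in your intrinsic construction the topology is only defined by the atlas, so the burden of checking chart compatibility is slightly heavier. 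Second, and more substantively, the paper chooses the ambient metric to be \emph{totally flat near the images of all marked points} (while keeping $M$ and $L$ totally geodesic), so that on the ends the exponential map is literally affine and the preservation of the weighted decay condition \eqref{eq:W_1_p-bounded_function} is immediate, with no curvature-correction terms. You instead use the metric $g$ of Corollary \ref{cor:totally_geodesic_hermitian_metric} and verify decay preservation by hand via Lipschitz estimates on $\exp$ together with Lemma \ref{lem:weighted_1_p_implies_c_0_exponential_decay}; this works (the constants are controlled since the image of $u$ lies in a compact set), but it is exactly the computation the paper's flatness trick is designed to avoid. One small caveat in your second step: the parallel transport $\Pi_{u(r)}^{u\circ\xi_r}$ in the definition of the norm is along the images of horizontal lines, not along minimal geodesics, so your Lipschitz estimate for $\exp$ picks up a holonomy discrepancy between the two transports; this is harmless (it is controlled by $\dist(u(\xi_r(s,t)),u(r))\cdot|X|_{C^0}$, which already decays), but it should be acknowledged rather than absorbed silently into the $\lesssim$.
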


Given a family of almost complex structures $\bfJ = \{ J_q \}_{q \in \Sigma}$ on $M$ parametrized by point in $\Sigma$, together with a map $u \in \cF^{1,p,\delta}_{\left(\Sigma, \{p_i \}, \{ q_j\}\right)}(L)$, we let 
\begin{equation} L^{p,\delta}_{\left(\Sigma, \{p_i \}, \{ q_j\}\right)}(u^*TM \otimes_{\bC} \Omega^{0,1} D^2) \end{equation}
denote the space of section of $u^*TM \otimes_{\bC} \Omega^{0,1} D^2$ which are bounded in an $L^{p,\delta}$ norm which we will define presently.  It is convenient to first introduce the function
\begin{equation} \kappa_{\Sigma, \delta} \co \Sigma - \coprod \{ p_i \}  - \coprod \{ q_i \} \to [1,+\infty) \end{equation}
which is identically equal to $1$ on the complement of the ends, and which is given on each end by
 \begin{equation} \kappa_{\Sigma, \delta} \circ \xi_{r}(s,t) =  e^{\delta p|s|}. \end{equation}
 With this bit of notation, we set
\begin{equation}| Y|_{p,\delta}^{p} =  \int_{\Sigma }|Y|^p  \kappa_{\Sigma, \delta}    = \left| Y {|}_{\Sigma - \cup_{r} \Im(\xi_{r})} \right|^{p}_{p} + \sum_{r}|Y \circ \xi_{r}|_{p,\delta}^{p} \end{equation}
where the integral is performed with respect to the previously chosen metric on $\Sigma$ that is cylindrical on the ends, and the pointwise norm of $Y$ is induced from the metric on $M$ and (again) the metric on $\Sigma$.

Choose a family of metrics $g_{q}$ on $M$ parametrized by $q \in \Sigma$ which are almost hermitian for the almost complex structures $J_q$, and for which $L$, whenever $q \in \partial \Sigma$, is totally geodesic.  Given $X \in W^{1,p,\delta}_{\left(\Sigma, \{p_i \}, \{ q_j\}\right)}\left(u^*TM, u^*TL\right)$, we obtain an identification
\begin{align} \label{eq:complex_parallel_transport}
L^{p,\delta}_{\left(\Sigma, \{p_i \}, \{ q_j\}\right)}(u^{*}(TM) \otimes_{\bC} \Omega^{0,1} D^2) & \to L^{p,\delta}(u_{X}^{*}(TM) \otimes_{\bC} \Omega^{0,1} D^2) \\
Y & \mapsto \tilde{\Pi}_{u}^{u_X} Y,
\end{align}
where $\tilde{\Pi}_{u}^{u_X}$ is, at each point $q \in \Sigma$, given by parallel transport along the image of the exponential map $u(q)$ to $u_{X}(q)$, with respect to the complex linear connection
\begin{equation} \label{eq:formula_complex_linear_connection} \tilde{\nabla} = \nabla - \frac{1}{2} J_{q} \nabla J_{q} \end{equation}
where $\nabla$ is the Levi-Civita connection associated to $g_q$.  We omit the proof that $\tilde{\Pi}_{u}^{u_X}$ respects the exponential decay condition, as well as that of the following result: 
 \begin{lem} \label{lem:banach_bundle_0_1_forms}
The isomorphisms \eqref{eq:complex_parallel_transport} define a  Banach bundle
\begin{equation} \label{eq:weighted_sobolev_curve} \cE^{p,\delta}_{\left(\Sigma, \{p_i \}, \{ q_j\}\right)}(M) \to \cF^{1,p,\delta}_{\left(\Sigma, \{p_i \}, \{ q_j\}\right)}(L) \end{equation}
whose fibre at $u$ is \begin{equation} L^{p,\delta}_{\left(\Sigma, \{p_i \}, \{ q_j\}\right)}(u^*TM \otimes_{\bC} \Omega^{0,1} \Sigma). \end{equation}  \noproofe
\end{lem}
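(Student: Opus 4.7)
The plan is to verify the three standard requirements for a Banach bundle structure on \eqref{eq:weighted_sobolev_curve}: that each fibre is a Banach space, that the trivializations \eqref{eq:complex_parallel_transport} are topological linear isomorphisms on a neighborhood of $u$, and that the transition maps between overlapping charts are smooth maps of Banach spaces. Completeness of each fibre is a routine check: multiplication by $\kappa_{\Sigma,\delta}^{1/p}$ identifies the weighted space isometrically with an ordinary $L^p$ space of sections over a smooth finite-type surface, and the standard $L^p$ theory applies.

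The decisive observation for the trivializations is that $\tilde\nabla$ defined by \eqref{eq:formula_complex_linear_connection} is complex linear for the chosen family $J_q$ and that $g_q$ is $J_q$-hermitian. Consequently $\tilde\Pi_u^{u_X}$ acts pointwise as a $g_q$-isometry $T_{u(z)}M \to T_{u_X(z)}M$ commuting with $J_q$, and the induced action on $(0,1)$-forms is also a pointwise isometry. Since the weight $\kappa_{\Sigma,\delta}$ is a function on the source only, I obtain the identity $|\tilde\Pi_u^{u_X}Y|_{p,\delta} = |Y|_{p,\delta}$; the analogous identity for $\tilde\Pi_{u_X}^u$ shows that $\tilde\Pi_u^{u_X}$ is an isometric Banach space isomorphism onto the asserted fibre over $u_X$. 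The smooth dependence of the family $X \mapsto \tilde\Pi_u^{u_X}$ on $X \in W^{1,p,\delta}(u^*TM,u^*TL)$ follows by expressing parallel transport as the time-one solution of a linear ODE along the geodesic family $t \mapsto \exp_u(tX)$ and invoking the standard smooth dependence of ODE solutions on parameters. The transition maps between charts centered at $u_1$ and $u_2 = \exp_{u_1}(X_0)$ are compositions of such operators and inherit the smoothness.

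The principal technical obstacle is uniform control of these smoothness estimates along the strip-like and cylinder-like ends: one must verify that each Fr\'echet derivative of $X \mapsto \tilde\Pi_u^{u_X}$ is a bounded operator on $L^{p,\delta}$ with bound uniform in the end coordinate. Here I would rely on the exponential $C^0$ decay of $X$ towards its value at the marked point given by Lemma \ref{lem:weighted_1_p_implies_c_0_exponential_decay}, together with the corresponding decay of $u$ itself, to confine the geodesic segments $t \mapsto \exp_u(tX)$ to a fixed compact neighborhood of each $u(r)$. On such a neighborhood the connection coefficients of $\tilde\nabla$ and all their derivatives are uniformly bounded, so the smoothness estimates from the compact-domain case transfer to the ends and to their union with the compact core of $\Sigma$, completing the Banach bundle verification.
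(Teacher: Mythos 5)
The paper gives no proof of this lemma: it is marked \noproofe, and the preceding sentence explicitly says ``We omit the proof that $\tilde{\Pi}_{u}^{u_X}$ respects the exponential decay condition, as well as that of the following result.'' Your argument is correct and supplies exactly the two points the paper flags as needing verification, namely that the parallel transport maps are bounded (in fact isometric) isomorphisms of the weighted spaces and that they depend smoothly on the chart variable $X$, with uniform control along the ends coming from Lemma \ref{lem:weighted_1_p_implies_c_0_exponential_decay}. The only assertion worth spelling out a little further is that $\tilde{\Pi}_u^{u_X}$ is a pointwise $g_q$-isometry: this requires checking that the correction term $-\frac{1}{2}J_q\nabla J_q$ in \eqref{eq:formula_complex_linear_connection} is $g_q$-skew-adjoint, which does follow from $g_q$ being almost hermitian for $J_q$ (so $\tilde\nabla$ preserves both $g_q$ and $J_q$, and the induced action on $(0,1)$-forms preserves anti-linearity and the pointwise norm). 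With that remark your proof is complete and is the standard one the paper has in mind.
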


Given any holomorphic map 
\begin{equation} u \co (\Sigma, \partial \Sigma) \to (M,L) \end{equation}
there exists some $\delta$ such that $u \in  \cF^{1,p,\delta}_{\left(\Sigma, \{p_i \}, \{ q_j\}\right)}(L)$.  In fact, since $u$ is smooth on the boundary (see Theorem B.1 of \cite{lazzarini}), any $\delta$ smaller than $1$ will do.  The $\dbar$ operator defines a section of \eqref{eq:weighted_sobolev_curve} which is always Fredholm, and whose zero set is the moduli space of parametrized maps.    Moreover, if $\delta <1$, we have an identification between the kernels and cokernels of the linearisation of $\dbar$ as an operator on the weighted Sobolev spaces \eqref{eq:weighted_sobolev_curve} and on the standard unweighted Sobolev spaces.   For the kernel, this follows from the inclusion
\begin{equation} W^{1,p,\delta}_{\left(\Sigma, \{p_i \}, \{ q_j\}\right)}(u^*TM, u^*TL)  \subset W^{1,p}_{\Sigma}(u^*TM, u^*TL)  \end{equation}
and the fact that the kernel of $D_{\dbar}$ on $W^{1,p}_{\Sigma}(u^*TM, u^*TL)$ consists only of smooth functions; i.e. functions which decay in any $C^k$-norm at least as fast as $e^{-s}$ along the strip-like end.  The same argument, applied to the formal adjoint, proves the result for the cokernel.   We conclude:

\begin{lem} \label{lem:surjectivity_survives} 
If $D_{\dbar}$ is surjective on the unweighted Sobolev spaces, then it is surjective as an operator on \eqref{eq:weighted_sobolev_curve} for every $\delta <1$. \noproof
\end{lem}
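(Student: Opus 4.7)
Given $\eta \in L^{p,\delta}_{(\Sigma, \{p_i\}, \{q_j\})}(M)$, the inequality $e^{\delta p |s|} \geq 1$ yields $L^{p,\delta} \subset L^{p}$, so the hypothesis of surjectivity on unweighted spaces produces some $X \in W^{1,p}$ with $D_{\dbar} X = \eta$. Surjectivity in the weighted setting therefore reduces to the regularity assertion that any such $X$ in fact lies in $W^{1,p,\delta}_{(\Sigma, \{p_i\}, \{q_j\})}$.

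The verification is local at each marked point $r$, where I would analyse the restriction of $X$ to the image of the end $\xi_{r}$. On this end, the linearisation $D_{\dbar}$ converges exponentially to a translation-invariant model $\partial_{s} + A_{r}$, where $A_{r}$ is the self-adjoint first-order operator on sections of $T_{u(r)} M$ over $[-1,1]$ or $S^{1}$ (with Lagrangian boundary conditions given by $T_{u(r)} L$ in the boundary case). A direct spectral calculation, using that $T_{u(r)} L$ is totally real, identifies the spectrum of $A_{r}$ as $\tfrac{\pi}{2} \bZ$ for boundary marked points and $\bZ$ for interior ones; the kernel consists of constant sections valued in the relevant asymptotic tangent space, and this is precisely the space of asymptotic values $X(r)$ appearing in Definition \ref{def:exponential_decay_sobolev_space_vector_fields}. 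The smallest nonzero eigenvalue has absolute value at least $1$ in either case, with equality in the cylindrical (interior) case.

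Setting $Y_{r} = X \circ \xi_{r} - \Pi_{u(r)}^{u \circ \xi_{r}} X(r)$, the deviation satisfies an equation of the form
\begin{equation*}
(\partial_{s} + A_{r}) Y_{r} = \eta \circ \xi_{r} + R_{r},
\end{equation*}
where $R_{r}$ absorbs the difference between $D_{\dbar}$ and its asymptotic model applied to $X$, together with the action of $D_{\dbar}$ on the parallel-transported constant section. Since $u \circ \xi_{r}$ converges exponentially to $u(r)$ (smoothness at $r$ following from removal of singularities, cf.\ Theorem B.1 of \cite{lazzarini}), the remainder $R_{r}$ decays at a rate strictly greater than $\delta$. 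The right-hand side therefore lies in $L^{p,\delta}$ on the end. Applying the standard weighted elliptic regularity for asymptotically translation-invariant operators---an $L^{p}$ solution whose right-hand side lies in $L^{p,\delta}$ with $\delta$ strictly below the spectral gap of $A_{r}$ is automatically in $W^{1,p,\delta}$---then yields the required regularity of $Y_{r}$, hence of $X$.

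\textbf{Main obstacle.} The crux is keeping the weight $\delta$ strictly below the cylindrical spectral gap $1$, which is the source of the hypothesis $\delta < 1$ in the statement; this leaves no slack at interior marked points and is what forces the strict inequality. Everything else is by-now-classical elliptic theory: exponential convergence of $D_{\dbar}$ to its translation-invariant model near each puncture, and the Lockhart--McOwen weighted regularity for asymptotically cylindrical operators. An equivalent packaging, sketched in the paragraph preceding the lemma, runs the same asymptotic-decay argument once for the kernel of $D_{\dbar}$ and once for its formal adjoint, concluding equality of Fredholm indices together with vanishing of the cokernel on the weighted space.
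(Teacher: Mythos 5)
Your argument is correct, and it takes a somewhat different route from the paper's. You argue forwards: the inclusion $L^{p,\delta}\subset L^{p}$ lets you solve the equation on the unweighted spaces, and you then upgrade the solution to $W^{1,p,\delta}$ by inhomogeneous weighted elliptic regularity near each puncture. The paper instead identifies kernels and cokernels of the weighted and unweighted operators outright: the kernel identification uses exactly the exponential decay of homogeneous solutions that you invoke, the cokernel identification is the same decay argument run for the formal adjoint, and surjectivity then transfers because the cokernels coincide. What your version buys is that it avoids introducing the formal adjoint and the duality between the cokernel and the adjoint's kernel on (oppositely) weighted spaces; what it costs is that you need the inhomogeneous regularity statement (solution in $W^{1,p}$, right-hand side in $L^{p,\delta}$ with $\delta$ below the spectral gap, implies solution in $W^{1,p,\delta}$) rather than only decay of homogeneous solutions. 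Two small imprecisions in your sketch, neither fatal: the remainder $R_{r}$ contains a term $a\cdot\nabla X$ with $a=\rO(e^{-|s|})$ and $\nabla X$ a priori only in $L^{p}$, so the correct claim is $R_{r}\in L^{p,\delta}$ (which holds since $e^{-(1-\delta)p|s|}$ is bounded for $\delta<1$), not a pointwise decay rate; and the asymptotic value $X(r)$ is not given in advance for a general $W^{1,p}$ solution --- its existence, as the limit of the zero-eigenmode component, is part of what the weighted regularity theorem delivers, so the argument should be phrased as producing $X(r)$ rather than subtracting it at the outset.
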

From now on, we shall assume that we're working with a fixed $\delta$ which is extremely small; say smaller than $1/4$. 
\subsection{Pre-gluing near the codimension $1$ stratum}
Recall that the codimension $1$ strata of $\Pbar(L;0)$ are the components of
\begin{equation}  \cP_{0,1}(L; - \beta) \times_{L} \cS_{0,1}(L; \beta) .\end{equation}
We will respectively write $\pi_{\cP}$, $\pi_{\cS}$, and $\pi_L$ for the projections to $ \cP_{0,1}(L; - \beta)$, $ \cS_{0,1}(L; \beta)$, and $L$.

We first show that there are embeddings
\begin{align} \label{eq:include_discs}  \cS_{0,1}(L; \beta) & \subset   \cF^{1,p,\delta}_{(D^2,-1)}(L) \\
\label{eq:include_parametrized_moduli_banach} \cP_{0,1}(L;-\beta) & \subset \cF^{1,p,\delta}_{\cP_{0,1}}(L) \equiv [0,+\infty) \times  S^1 \times \cF^{1,p,\delta}_{(D^2,1)}(L).
\end{align}
In order to do this, we must pick strip-like ends.  For concreteness, we fix 
\begin{equation} \xi_{-1} \co \I{-\infty}{0} \to D^{2} \end{equation}
such that $\xi_{-1} (0,0) = -1/2$, which determines the map uniquely if we require it to extend to an isomorphism
\begin{equation} \label{eq:rational_strip} \I{-\infty}{+\infty} \to D^2 - \{\pm 1 \}.\end{equation}
We also fix a positive strip-like end  at $1$ 
\begin{equation} \label{eq:positive_strip_at_1} \xi_{1} \co  \I{0}{+\infty} \to D^{2} ,\end{equation}
which we will require to map the origin to $1/2$, and again to extend as in \eqref{eq:rational_strip}.

The first embedding can be constructed as follows: the $\dbar$ operator for $J_{\alg}$ defines a section of 
\begin{equation} \label{eq:banach_bundle_L_p_domain_fixed} \cE^{p,\delta}_{(D^2,-1) }(M) \to  \cF^{1,p,\delta}_{(D^2,-1)}(L) .\end{equation}  
By Lemma \ref{lem:surjectivity_survives}, the zero locus of $\dbar$ is the set of (parametrized) holomorphic maps from the disc; those in homotopy class $\beta$ form an $\Aut(D^2,-1)$ bundle over $\cS_{0,1}(L; \beta) $.  Since $\Aut(D^2,-1)$ is contractible, this principal bundle admits a section.  In Section \ref{sec:degree_gluing}, we shall specify the behaviour of this section away from a compact subset of $\cS_{0,1}(L; \beta)$.  The composition gives the desired embedding claimed in \eqref{eq:include_discs}.

Concerning \eqref{eq:include_parametrized_moduli_banach}, we have a Banach bundle  \begin{equation} \label{eq:banach_bundle_L_p_domain_vary} \cE^{p,\delta}_{\cP_{0,1}}(M) \to  \cF^{1,p,\delta}_{\cP_{0,1}}(L) ,\end{equation}
 by pullback from $\cF^{1,p,\delta}_{(D^2,1)}(L)$, whose fibre at $(R,\theta,u)$ consists of $TM$-valued $1$-forms on $D^2$ which are anti-holomorphic with respect to the family of almost complex structures
\begin{equation} J_{\theta, R} \equiv  \{ J_{r_{\theta} z,R} \}_{z \in D^{2}} \end{equation} 
where $r_{\theta}$ denotes the map which rotates $D^2$ by angle $\theta$.

We construct a section of \eqref{eq:banach_bundle_L_p_domain_vary}  by composing $\dbar_{\cP}$ with rotation by $\theta$:
\begin{equation} \label{eq:section_theta_depend} \dbar_{\cP_{0,1}} \co (R, \theta, u)  \mapsto (du - \gamma_{\theta,R} \otimes X_{H})^{0,1} ,\end{equation}
where the $(0,1)$ part is taken with respect to $J_{\theta, R}$ and $\gamma_{\theta,R}$ is  the pullback of $\gamma_{R}$ by $r_{\theta}$.   

We have a bijection
\begin{align} \label{eq:rotate_by_theta} \cP_{0,1}(L;-\beta) & \cong \cP(L;-\beta) \times S^1 \\
\notag (R,\theta,u) & \mapsto (R, u \circ r_{-\theta}, \theta) . \end{align}
where $\cP_{0,1}(L;-\beta) $ is the zero set of $\dbar_{\cP_{0,1}}$.  Embedding the source and the target within the space of all smooth maps by elliptic regularity, we conclude that this map must be a diffeomorphism whenever both spaces are regular.  Since the bundle \eqref{eq:banach_bundle_L_p_domain_vary} is defined by pullback from the factor $\cF^{1,p,\delta}_{(D^2,-1)}(L)$, and the section $\dbar_{\cP_{0,1}}$ differs from the pullback of $\dbar_{\cP}$ only by reparametrization,  we conclude:
\begin{lem} If $\cP(L;-\beta)$ is regular, then $\dbar_{\cP_{0,1}}$ is a transverse Fredholm section of \eqref{eq:banach_bundle_L_p_domain_vary} and \eqref{eq:rotate_by_theta} is a diffeomorphism. \noproof \end{lem}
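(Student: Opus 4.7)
The plan is to reduce both assertions to the transversality of $\dbar_{\cP}$ --- which is exactly the hypothesis that $\cP(L;-\beta)$ is regular --- by exploiting the fact that $\dbar_{\cP_{0,1}}$ was constructed from $\dbar_{\cP}$ by precomposition with a family of rotations $r_{\theta}$, as is visible from the definitions of $\gamma_{\theta,R} = r_{\theta}^{*}\gamma_{R}$ and $J_{\theta,R}(z) = J_{r_{\theta}z, R}$.

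The first step is to define a smooth diffeomorphism of Banach manifolds
\begin{equation*}
\Phi \co \cF^{1,p,\delta}_{\cP_{0,1}}(L) \longrightarrow S^{1} \times \cF^{1,p,\delta}_{\cP}(L), \qquad (R,\theta,u) \longmapsto \bigl(\theta,\, R,\, u \circ r_{-\theta}\bigr),
\end{equation*}
with smooth inverse $(\theta,R,v)\mapsto (R,\theta, v\circ r_{\theta})$. Well-definedness reduces to the observation that $r_{\theta}$ is a conformal isometry of $D^{2}$ permuting the strip-like ends at $\pm 1$, so precomposition with it preserves the weighted $W^{1,p,\delta}$-norm up to a uniform constant. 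I then cover $\Phi$ by a bundle isomorphism that sends a form $Y$ in the fibre of $\cE^{p,\delta}_{\cP_{0,1}}(M)$ at $(R,\theta,u)$ to $r_{\theta}^{*}Y$ in the fibre of $\cE^{p,\delta}_{\cP}(M)$ at $(R,\, u\circ r_{-\theta})$. Because pulling back by $r_{\theta}$ converts $J_{\theta,R}$-antilinearity into $J_{R}$-antilinearity and takes $\gamma_{\theta,R}$ to $\gamma_{R}$, the defining expression \eqref{eq:section_theta_depend} transforms via $r_{\theta}^{*}$ into the expression defining $\dbar_{\cP}(R,\, u\circ r_{-\theta})$. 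In other words, under this identification $\dbar_{\cP_{0,1}}$ equals the $\Phi$-pullback of the section of the trivial $S^{1}$-family obtained by applying $\dbar_{\cP}$ fibrewise.

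With this intertwining in hand, both conclusions are formal. Fredholmness of $\dbar_{\cP_{0,1}}$ follows from Fredholmness of $\dbar_{\cP}$, with the Fredholm index increased by $\dim S^{1} = 1$. The projection $S^{1}\times\cF^{1,p,\delta}_{\cP}(L)\to\cF^{1,p,\delta}_{\cP}(L)$ is a submersion, so the pullback of a section transverse to the zero section is itself transverse; since the regularity of $\cP(L;-\beta)$ is by definition the transversality of $\dbar_{\cP}$, the transversality of $\dbar_{\cP_{0,1}}$ follows. The zero locus $\cP_{0,1}(L;-\beta)$ is then a smooth manifold, and $\Phi$ restricts to a smooth bijection $\cP_{0,1}(L;-\beta) \to S^{1}\times\cP(L;-\beta)$ whose tangent map at each point agrees with the restriction of $d\Phi$ to the kernel of the linearised $\dbar$, hence is an isomorphism. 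Up to the harmless reordering of factors, this restriction is precisely \eqref{eq:rotate_by_theta}.

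The only genuine point of care is the analytic bookkeeping in the first step: one must confirm that the rotation action is compatible with the basepoint convention for $\cF^{1,p,\delta}_{(D^{2},1)}(L)$ and with the placement of the strip-like ends used to define the weighted norms. Since $r_{\theta}$ is an isometry for the round metric (conformally equivalent to the flat cylindrical metric on the strip), it preserves the analytic framework up to uniform constants in $\theta$, and moving the basepoint along $S^{1}$ only changes charts on the Banach manifold by a smooth action. Once this is verified, no further analytic input is required.
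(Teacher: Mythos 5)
Your argument is essentially the paper's: the section $\dbar_{\cP_{0,1}}$ is, by construction, the pullback of $\dbar_{\cP}$ under rotation by $\theta$, so Fredholmness, transversality, and the identification of zero sets follow formally from the regularity of $\cP(L;-\beta)$; the paper compresses exactly this into the sentences preceding the lemma, handling the basepoint issue by embedding both zero sets into the space of smooth maps via elliptic regularity. The one caveat for your global Banach-level diffeomorphism $\Phi$ is that precomposition with $r_{-\theta}$ moves the point at which the exponential weight of $\cF^{1,p,\delta}_{(D^2,1)}(L)$ is anchored, so $\Phi$ is really a map into an $S^1$-family of weighted spaces rather than a fixed product; but since the comparison is only needed along the zero locus, where all maps are smooth, this does not affect the conclusion.
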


Combining the linearisations of the two Cauchy-Riemann operators with the evaluation map to $L$, we define a Fredholm map of Banach bundles
\begin{equation} \label{eq:product_dbar_gluing} \xymatrix{ \pi_{\cS}^{*} T \cF^{1,p,\delta}_{(D^2,-1)}(L) \oplus \pi_{\cP}^* T \cF^{1,p,\delta}_{\cP_{0,1}}(L) \ar[dd]^{\left(
\begin{array}{cc}
D_{\dbar} &  0 \\
-d \ev_{-}& d \ev_{+}  \\
 0 & D_{\cP_{0,1}}
\end{array}
\right)} \\
\\
\pi_{\cS}^{*} \cE^{p}_{(D^2,-1) }(M) \oplus \pi_{L}^* TL \oplus  \pi_{\cP}^* \cE^{p}_{\cP_{0,1}}(M).}  \end{equation}
Here, $D_{\dbar} $ and $D_{\cP_{0,1}}$ stand respectively for the linearisations of $\dbar$ and $\dbar_{\cP_{0,1}}$, and the evaluation maps $\ev_{\pm}$ take place at the marked points $\pm 1 \in D^2$.
\begin{lem}  \label{lem:surjectivity_differential_map}
The requirements imposed in Equations \eqref{ass:exceptional_discs_regular}, \eqref{ass:moduli_discs_regular}, and \eqref{ass:transverse_evaluation}, imply that the map \eqref{eq:product_dbar_gluing}  is surjective. 
\end{lem}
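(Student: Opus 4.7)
The plan is to prove surjectivity of the block operator by solving the three rows in sequence, using the three imposed assumptions in turn. Given a target $(a, b, c)$ in the codomain, first exploit assumption \eqref{ass:moduli_discs_regular}, which states that $\cS(L;\beta, J_{\alg})$ is regular, to produce $X_{0} \in \pi_{\cS}^{*} T \cF^{1,p,\delta}_{(D^2,-1)}(L)$ with $D_{\dbar} X_{0} = a$; here one must invoke Lemma \ref{lem:surjectivity_survives} to transfer surjectivity of $D_{\dbar}$ from the unweighted to the weighted Sobolev setting. Similarly, assumption \eqref{ass:exceptional_discs_regular} guarantees regularity of $\cP(L;-\beta)$, hence of the reparametrised operator $D_{\cP_{0,1}}$, which furnishes $Y_{0}$ with $D_{\cP_{0,1}} Y_{0} = c$. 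These partial solutions convert the problem into finding corrections $X_{1} \in \ker D_{\dbar}$ and $Y_{1} \in \ker D_{\cP_{0,1}}$ satisfying
\begin{equation*}
-d\ev_{-}(X_{1}) + d\ev_{+}(Y_{1}) = b + d\ev_{-}(X_{0}) - d\ev_{+}(Y_{0}) \in TL.
\end{equation*}

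Next I would identify these kernels with familiar tangent spaces. The zero set of $\dbar$ in $\cF^{1,p,\delta}_{(D^2,-1)}(L)$ is the $\Aut(D^2,-1)$-bundle of parametrised $J_{\alg}$-holomorphic discs in class $\beta$ lying over $\cS_{0,1}(L;\beta)$, so at a solution $u_{\cS}$ one has a natural decomposition
\begin{equation*}
\ker D_{\dbar} \cong \aut(D^2,-1) \oplus T_{u_{\cS}}\cS_{0,1}(L;\beta).
\end{equation*}
Because infinitesimal automorphisms in $\aut(D^2,-1)$ fix $-1$, the evaluation map $d\ev_{-}$ vanishes on this summand, so the image of $d\ev_{-}$ on $\ker D_{\dbar}$ coincides with the image of the differential of the geometric evaluation $\cS_{0,1}(L;\beta) \to L$. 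On the parametrised side the diffeomorphism \eqref{eq:rotate_by_theta} identifies $\ker D_{\cP_{0,1}}$ with $T\cP_{0,1}(L;-\beta)$, and under this identification $d\ev_{+}$ is the differential of the evaluation map from $\cP_{0,1}(L;-\beta)$ to $L$.

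Finally, assumption \eqref{ass:transverse_evaluation} asserts precisely that these two evaluation differentials have images spanning $TL$, so the right-hand side displayed above can be written as $-d\ev_{-}(X_{1}) + d\ev_{+}(Y_{1})$ with $(X_{1},Y_{1}) \in \ker D_{\dbar} \oplus \ker D_{\cP_{0,1}}$. Then $(X_{0}+X_{1}, Y_{0}+Y_{1})$ is the desired preimage, proving surjectivity.

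I do not expect any serious obstacle: the lemma is an elementary consequence of organising the three previously imposed transversality hypotheses one per row of the block operator. The only mild subtlety is the observation that the $\aut(D^2,-1)$-directions in $\ker D_{\dbar}$ contribute nothing to $d\ev_{-}$, which is what makes transversality of the \emph{unparametrised} evaluation maps sufficient; this is the place where the choice of a marked point coinciding with the strip-like end pays off.
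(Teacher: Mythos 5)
Your proof is correct and follows essentially the same route as the paper's (much terser) argument: regularity from \eqref{ass:exceptional_discs_regular} and \eqref{ass:moduli_discs_regular} gives surjectivity of $D_{\dbar}$ and $D_{\cP_{0,1}}$, and \eqref{ass:transverse_evaluation} ensures the images of $d\ev_{\pm}$ restricted to the kernels span $TL$. Your added remarks — invoking Lemma \ref{lem:surjectivity_survives} for the weighted spaces and noting that the $\aut(D^2,-1)$ directions do not contribute to $d\ev_{-}$ — are correct elaborations of steps the paper leaves implicit.
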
 
\begin{proof}
Conditions \eqref{ass:exceptional_discs_regular} and \eqref{ass:moduli_discs_regular} imply that $D_{\dbar}$ and $D_{\cP_{0,1}}$ are surjective, while Condition \eqref{ass:transverse_evaluation} implies that the images of $d\ev_{\pm}$, restricted to the kernels of $D_{\dbar}$ and $D_{\cP_{0,1}}$, span $TL$.
\end{proof}

Restricting \eqref{eq:product_dbar_gluing} to the kernel of the map to $TL$, i.e. to those pairs of vector fields whose values agree at the marked points, we have a surjective Fredholm operator
\begin{equation} \label{eq:fibre_product_dbar_gluing}  \xymatrix{ \pi_{\cS}^{*} T \cF^{1,p,\delta}_{(D^2,-1)}(L) \oplus_{TL} \pi_{\cP}^* T \cF^{1,p,\delta}_{\cP_{0,1}}(L) \ar[dd]^{ D_{\cod{1}{L}} = \left(
\begin{array}{cc}
D_{\dbar} &  0 \\
 0 & D_{\cP_{0,1}}  
\end{array}
\right)  }\\
\\
\pi_{\cS}^{*} \cE^{p}_{(D^2,-1) }(M) \oplus  \pi_{\cP}^* \cE^{p}_{\cP_{0,1}}(M)} \end{equation}

So simplify the notation, we write $T \cF^{1,p,\delta}_{\cod{1}{L}}(L)$ for the source of \eqref{eq:fibre_product_dbar_gluing}, as well as $ \cE^{p,\delta}_{\cod{1}{L}}(M)$  for the target.

For the purpose of proving the gluing theorem, we shall consider a bounded right inverse
\begin{equation}   \label{eq:fibre_product_right_inverse} Q_{\cod{1}{L}} \co \cE^{p,\delta}_{\cod{1}{L}}(M) \to T \cF^{1,p,\delta}_{\cod{1}{L}}(L) \end{equation}
to \eqref{eq:fibre_product_dbar_gluing}.  Such a right inverse exists because \eqref{eq:fibre_product_dbar_gluing} is a Fredholm map.

\begin{rem}
In Equations \eqref{eq:constrain_right_inverse} and \eqref{eq:right_inverse_constraint_basepoints} we constrain our choice of right inverse $Q_{\cod{1}{L}}$ on certain subsets of $\cod{1}{L}$, but for now, our choice of a right inverse is only required to be smooth.
\end{rem}

\subsection{Pre-gluing maps} \label{sec:pre-gluing_maps}
\begin{figure} 
   \centering
 \input{disc_pre_gluing.pstex_t}
   \caption{}
   \label{fig:disc_pre_gluing}
\end{figure}

Given $S \in [0, \infty)$, we consider the surface (see Figure \ref{fig:disc_pre_gluing}) 
\begin{equation} \label{eq:definition_Sigma_S} \Sigma_{S} =  (D^2,1) \#_{S}  (D^2,-1)\end{equation}
obtained by removing  $\xi_{1}(\I{4S}{+\infty})$ and $\xi_{-1}(\I{-\infty}{-4S})$ from two different copies of the disc, and identifying the remaining parts of the strip-like ends using the map
\begin{equation} \label{eq:glue_surfaces_isometry} (s,t) \mapsto (s-4S , t) .\end{equation}
For each $S_0 \leq 4S$, we shall write
\begin{equation} \label{eq:notation_inclusion_two_halves_gluing} \iota^{\cP,S}_{S_0} \co D^2  -  \xi_{1}(\I{S_0}{+\infty}) \to \Sigma_{S} \textrm{ and }  \iota^{\cS,S}_{S_0} \co D^2  -  \xi_{-1}(\I{-\infty}{S_0}) \to \Sigma_{S}  \end{equation}
for the two inclusions into $\Sigma_S$.  The images of $ \iota^{\cP,S}_{4S}$ and  $\iota^{\cS,S}_{4S}$ cover $\Sigma_{S}$.

Note that $ \iota^{\cP,S}_{4S}$ extends to a unique biholomorphism from $D^2$ to $\Sigma_{S}$, whose inverse we denote by
\begin{equation} \label{eq:identify_glued_surface_disc} \phi_{S} \co  \Sigma_{S} \to D^2. \end{equation}
Assuming that $S_0 \leq 4S$, we restrict $\xi_{1}$ to $\I{0}{S_0}$, and compose with $ \iota^{\cP,S}_{4S}$, to obtain an inclusion
\begin{equation} \xi_{1}^{S_0} \co \I{0}{S_0} \to \Sigma_{S}  .\end{equation}
Symmetrically, we shall also consider the composition of $ \iota^{\cS,S}_{S_0}$ with the restriction of $\xi_{-1}$ to  $\I{-S_0}{0}$ which we denote by
\begin{equation} \xi_{-1}^{S_0} \co \I{-S_0}{0} \to \Sigma_{S}  . \end{equation}
The most important case occurs when $S_0 = 4S$, and it will be convenient to shift the domain of $ \iota^{\cP,S}_{4S} \circ \xi_{1}^{4S}$  by the translation
\begin{equation} \tau_{2S}(s,t) = (s+2S,t) \end{equation} to obtain a map
\begin{equation} \xi_{S,\neck} \equiv  \xi_{1}^{4S} \circ \tau_{2S} \co \I{-2S}{2S} \to \Sigma_{S} \end{equation}
whose image we shall refer to as {\bf the neck} of $\Sigma_{S}$.

Having glued the domains, we now define a ``pre-gluing" of maps.  First, we fix a smooth cutoff function 
\begin{equation} \chi \co   \bR \to [0,1] \end{equation} which vanishes for $s>1$ and equals $1$ for $s < -1$.  Given real numbers $S_+$ and $S_-$ and $S_0 \in [S_-,S_+]$, we shall slightly abuse notation and write $\chi_{S_0}$ for the function
\begin{align} \chi_{S_0} & \co \I{S_-}{S_+} \to [0,1]  \\
\chi_{S_0}(s,t) & = \chi(s-S_0)
\end{align}
without keeping track of the domain.

For a compact subset 
\begin{equation} \label{eq:compact_subset_codim_1} K \subset \cod{1}{L} ,\end{equation}
 there exists a positive real number $S_K$ sufficiently large so that for any pair $(u,\theta,v) \in K$, the images of $ u\circ \xi_1(\I{S_K}{+\infty} ) $ and $v \circ \xi_{-1}(\I{-\infty}{S_K} )$
are both contained in a geodesically convex neighbourhood of $u(1) = v(-1)$.  Moreover, writing $R_u$ for the image of $u$ under the projection of $\cP_{0,1}(L;-\beta)$ to $[0,+\infty)$, we may also assume that $S_K$ is so large that the image of $\xi_1( \I{S_K}{+\infty} )$ does not intersect the union of the disc of radius $1/2$ about the origin  with the support of $\gamma_{\theta,R_u}$ for any $\theta \in S^1$.  In particular, the almost complex structure and the metric on $M$ are independent of the point on the neck, and agree respectively with $J_{\alg}$ and $g$. 
\begin{lem} \label{lem:large_S_K}
The $\dbar_{\cP_{0,1}}$ equation on the image of $\xi_1( \I{S_K}{+\infty} )$ reduces to the usual $\dbar$ equation with respect to the complex structure $J_{\alg}$. \noproof
\end{lem}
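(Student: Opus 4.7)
The plan is straightforward: verify that on the region $\xi_1(\I{S_K}{+\infty})$, both of the features that distinguish $\dbar_{\cP_{0,1}}$ from the standard Cauchy-Riemann operator with respect to $J_{\alg}$ degenerate. Recall from \eqref{eq:section_theta_depend} that
\[ \dbar_{\cP_{0,1}}(R,\theta,u) = (du - \gamma_{\theta,R} \otimes X_H)^{0,1}, \]
where the $(0,1)$ projection uses the family $\{J_{r_\theta z, R}\}_{z \in D^2}$ and $\gamma_{\theta,R}$ is the pullback of $\gamma_R$ by $r_\theta$. Two things need to be checked: the inhomogeneous term must vanish, and the almost complex structure used for the projection must reduce to $J_{\alg}$.

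For the first point, I would invoke the defining property of $S_K$ recorded in the paragraph preceding the lemma: the image $\xi_1(\I{S_K}{+\infty}) \subset D^2$ was chosen to avoid the support of $\gamma_{\theta, R_u}$ for every $\theta \in S^1$, so $\gamma_{\theta,R}$ vanishes identically on this region and the term $\gamma_{\theta,R} \otimes X_H$ drops out. For the second point, I would appeal to condition \eqref{eq:disc_support}, which forces every section $K(z)$ defining elements of $\sJ_{\bfepsilon}(D^2)$, and hence every member of the family $\bfJ$, to equal $J_{\alg}$ once $|z| \geq 1/2$. Since $S_K$ was also taken large enough that $\xi_1(\I{S_K}{+\infty})$ lies outside the disc of radius $1/2$, and since the rotation $r_\theta$ preserves that disc, one has $J_{r_\theta z, R} = J_{\alg}$ for every $z$ in this strip-like end, every $\theta$, and every $R$.

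Combining the two observations, the $(0,1)$ projection with respect to the family $J_{\theta,R}$ applied to $du$ reduces to the standard $J_{\alg}$-antiholomorphic projection of $du$, which is precisely the ordinary $\dbar$ operator. I do not foresee any real obstacle here: the lemma is a direct bookkeeping consequence of the support conditions built into the definitions of $\gamma_R$ and $\sJ_{\bfepsilon}(D^2)$, imposed precisely so that sufficiently long strip-like ends carry the unperturbed Cauchy-Riemann equation -- a property that will be indispensable in the subsequent gluing analysis, since it allows one to treat the two halves of the neck as honest $J_{\alg}$-holomorphic maps and to import the corresponding exponential decay estimates.
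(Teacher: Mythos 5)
Your argument is correct and is exactly the one the paper intends: the lemma carries no separate proof because it follows immediately from the choice of $S_K$ in the preceding paragraph (avoiding the disc of radius $1/2$, hence condition \eqref{eq:disc_support} forces $J_{r_\theta z,R}=J_{\alg}$, and avoiding the support of $\gamma_{\theta,R_u}$, hence the inhomogeneous term vanishes). Nothing is missing.
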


If $S>S_K$ we define a map
\begin{equation}u \#_{S} v \co \Sigma_{S} \to M \end{equation}
as follows
\begin{itemize}
\item The compositions $ u \#_{S} v \circ \iota^{\cP,S}_{2S-1} $ and $ u \#_{S} v \circ \iota^{\cS,S}_{2S-1} $ respectively agree with the restrictions of $u$ and $v$.
\item On the image of $\xi_{1}^{2S}$, we define $u \#_{S} v$ by considering its composition with $\xi_{1}^{2S}$:
\begin{equation} u \#_{S} v \circ \xi_{1}^{2S} \equiv \exp_{u(1)} \left(  \chi_{2S-1} \cdot \exp^{-1}_{u(1)} (u \circ \xi_{1} ) \right), \end{equation}
where $\exp_{u(1)}$ is the exponential map from the tangent space of $L$ at $u(1)$.  Note, in particular, that the image of $\{ 2S \} \times  [-1,1]$ under this map is constant, and agrees with $u(1)$. In other words, we restrict $u \circ \xi_{1}$ to $\I{0}{2S}$,  use the exponential map to pull back to the tangent space of $u(1)$, then rescale the corresponding image by the cutoff function $\chi(s-2S+1)$, and finally push back using the exponential map. 
\item On the image of $\xi^{2S}_{-1}$, we analogously define $u \#_{S} v$ as follows:
\begin{equation} u \#_{S} v \circ \xi_{-1}^{S}\equiv \exp_{v(-1)} \left( ( 1- \chi_{-2S+1} ) \cdot \exp^{-1}_{v(-1)} (v \circ \xi_{-1}) \right), \end{equation}
remembering, of course that $v(-1) = u(1)$.
\end{itemize}

Defining
\begin{equation} \label{eq:W^1,p-maps-parametrized_circle} \cF^{1,p}_{\cP_{0,1}} (L) \equiv  S^1 \times \cF^{1,p}_{\cP} (L) \equiv [0,+\infty) \times S^1 \times  \cF^{1,p}(L) ,\end{equation}
we consider a pre-gluing map which records the angle $\theta \in S^1$
\begin{align} \label{eq:defin_pre-gluing_extended}  \preGext \co K \times [S_K, +\infty) & \to  \cF^{1,p}_{\cP_{0,1}} (L) \\
\preGext_{S} (u,\theta,v) & = \left(R_u, \theta ,   (u \#_{S} v)  \circ \phi_{S}^{-1} \right) \end{align}

By composing this map with rotation by $\theta$ and projecting to $\cF^{1,p}(L)$ in the product decomposition of Equation \eqref{eq:W^1,p-maps-parametrized_circle},  we obtain a map
\begin{align} \label{eq:defin_pre-gluing}  \preG \co K \times [S_K, +\infty) & \to  \cF^{1,p}(L) \\
\preG_{S} (u,\theta,v) & = \left(R_u ,   (u \#_{S} v)  \circ \phi_{S}^{-1}  \circ r_{-\theta}\right) .\end{align}

\begin{lem} \label{lem:pre-gluing_injective}
The pre-gluing map $\preG$ is a smooth embedding.
\end{lem}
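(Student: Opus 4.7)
The plan is to establish three properties of $\preG$: smoothness, injectivity, and injectivity of the derivative. Combined with a properness argument at the $S=+\infty$ end, these suffice for $\preG$ to be a smooth embedding. Smoothness is essentially bookkeeping: the biholomorphism $\phi_S$ varies smoothly with $S$ through the explicit translation used to glue $\Sigma_S$, and the cutoff $\chi$, the exponential maps, parallel transport, and the rotation $r_{-\theta}$ all depend smoothly on their arguments; the requisite $W^{1,p}$ continuity of the composition is routine.

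For injectivity, I would first arrange (using Lemma \ref{lem:large_S_K}) that $S_K$ is large enough that on the complement in $D^2$ of a small neighbourhood of $e^{i\theta}$ the pre-glued map $\preG_S(u,\theta,v)$ coincides exactly with $u\circ r_{-\theta}$, with no cutoff interpolation. Given an equality $\preG_S(u,\theta,v) = \preG_{S'}(u',\theta',v')$, projecting to the $[0,+\infty)$ factor first yields $R_u = R_{u'}$. The angle $\theta$ is then determined by the location of the neck (characterised, for instance, as the region where the map fails to satisfy the unperturbed $\dbar$ equation). Once $\theta$ is recovered, $u$ is read off from the restriction to the complement of a neighbourhood of $e^{i\theta}$, using the fact that parametrised curves in $\cP(L;-\beta)$ form a finite set and are determined by their values on any open set via unique continuation. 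A parallel analysis on the small neighbourhood of $e^{i\theta}$, where $\preG_S(u,\theta,v)$ is $v$ composed with the explicit conformal map induced by $\phi_S^{-1}\circ r_{-\theta}$, recovers $v$ and the parameter $S$.

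For injectivity of the derivative at $(u,\theta,v,S)$, decompose the tangent space as $T_{(u,\theta)} \cP_{0,1}(L;-\beta) \oplus T_v \cS_{0,1}(L;\beta) \oplus \bR\partial_S$. Variations in the first summand produce variations of $\preG_S$ supported, up to exponentially small errors in $S$, on $D^2$ away from the neck; variations in the second summand are supported near $e^{i\theta}$; and $\partial_S$ acts on neither $u$ nor $v$, only on the conformal identification $\phi_S$, producing a variation supported on the neck itself. Linear independence of the three kinds of variations is geometrically transparent from this support decomposition, once one checks by direct computation that $\partial_S\phi_S^{-1}$ is a non-trivial vector field on the neck.

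The main obstacle I anticipate is a quantitative issue at large $S$: the variation $\partial_S \preG_S(u,\theta,v)$ has $W^{1,p}$-norm tending to zero as $S\to\infty$, since the neck region in $D^2$ shrinks, so injectivity of the differential requires a uniform lower bound on the norm of $\partial_S \preG_S$ modulo the images of the other variations. Compactness of $K$, together with the exponential decay of the pre-gluing errors, should reduce this to a pointwise estimate on the conformal distortion contributed by varying $S$ in the explicit formula for $\phi_S$, along the same lines as the standard gluing derivative estimates in \cite{FOOO}. The same compactness, combined with the fact that as $S\to\infty$ the pre-glued maps converge in $W^{1,p}$ to a broken configuration that is not itself in the image of $\preG$, will give properness onto the image and hence the full embedding property.
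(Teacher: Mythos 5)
Your overall strategy --- recover $S$ and $\theta$ from the location of the neck, then recover $u$ and $v$ by unique continuation --- is the same as the paper's, and the injectivity part goes through essentially as you describe. (The paper locates the neck slightly more robustly: the arc $\phi_S\left(\xi_{S,\neck}(\{0\}\times[-1,1])\right)$ is where the pre-glued map is constant, whereas away from the neck its critical points are isolated by unique continuation. Your characterisation via failure of the ``unperturbed $\dbar$ equation'' is delicate, since the exceptional solution $u$ already fails to be $J_{\alg}$-holomorphic on the support of $\gamma_R$, and the central portion of the neck is constant, hence holomorphic.)

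The soft spot is the immersion argument. The claimed support decomposition does not separate your second and third summands: a variation $X_v\in T_v\cS_{0,1}(L;\beta)$ and the variation $\partial_S$ both produce vector fields concentrated on the $v$-part of the glued surface near $e^{i\theta}$ --- indeed $d\preG(\partial_S)$ agrees, up to an exponentially small error, with the pre-gluing of $dv(\partial_s)$, where $\partial_s$ is the translation field of the strip, and this lives exactly where the $X_v$'s do, not ``on the neck itself.'' What separates them is not support but the decomposition $\ker D_{\dbar}|_v \cong \aut(D^2,-1)\oplus T_v\cS_{0,1}(L;\beta)$ coming from the chosen slice for the reparametrisation action: $dv(\partial_s)$ lies in the first summand and is nonzero because a holomorphic vector field cannot vanish on an open set and $v$ is non-constant --- which is precisely the one-line justification the paper gives for the immersion property. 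Finally, the quantitative worry about $\partial_S\preG$ having small norm is immaterial for this lemma: the domain $K\times[S_K,+\infty)$ is finite dimensional, so injectivity of the differential at each fixed point is independent of the norm on the target; uniform lower bounds only become relevant for the honest gluing map $\G$, where the paper supplies them via the $S$-dependent norms (Lemma \ref{lem:pre-gluing_uniform_bilipschitz}).
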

\begin{proof}
Our  construction using smooth cut off functions, and the fact that a holomorphic vector field cannot vanish on any open set, readily imply that $\preG$ is an immersion.  Given an element $(u, \theta,v) \in K$, the unique continuation theorem for holomorphic maps implies that, in the complement of the disc of radius $\frac{1}{2}$, $\preG_{S}(u, \theta,v)$ has isolated critical points away from a unique semi-circle with endpoints on the boundary, which is the image of the arc $\{ 2S \} \times  [-1,1]$ under the composition of the strip $\xi_{S,\neck}$ with the bi-holomorphism $\phi_{S}$ and rotation by angle $r_{-\theta}$.  The position of this semi-circle uniquely determines $S$ and $\theta$.

Consider another element $(u',\theta,v') \in K$ such that $\preG_{S}(u',\theta,v') = \preG_{S}(u, \theta,v)$.  In particular, the pairs $\{ u, u' \}$ agree on an open set which includes the support of  the perturbation $\bfK$ of $J_{\alg}$ and of the forms $\gamma_{\theta,R}$ and  $\gamma_{\theta,R'}$, so by the unique continuation theorem applied to the complement of this support $u$ and $u'$ agree.

Note that the images of the pairs $\{ v, v' \}$ also agree on an open set, so by the unique continuation theorem, the two holomorphic maps must agree up to parametrization. Moreover, the previous result implies that $v(-1) = u(1) = v'(-1)$.  Having chosen unique representatives under the equivalence class induced by the action of $\Aut(D^2, -1)$, $v$ and $v'$ must in fact agree on the nose.
\end{proof}

The fact that $u$ and $v$ are smooth on $D^2$ implies that the distances to $v(-1) = u(1)$, as well as the norms of $d u$ and $dv$, decay exponentially along the strip-like ends of $u$ and $v$.  The definition of $u \#_{S} v$ using cutoff functions therefore implies 
\begin{lem} \label{lem:exponential_decay_bounds}
The $C^1$-norm of the restriction of $u \#_{S} v$ to the neck decays exponentially: 
\begin{equation}
\left| u \#_{S} v  \circ \xi_{S,\neck} | \I{-S-1}{S+1} \right|_{C^1}  = \rO \left( e^{-S} \right) .
\end{equation} \noproofe
\end{lem}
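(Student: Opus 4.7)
The proof is a direct calculation combining the pointwise definition of $u \#_S v$ with exponential decay of $u$ and $v$ at the marked points, together with uniform bounds afforded by compactness of $K$. The key input is the following: by Lemma \ref{lem:large_S_K} and the defining property of $S_K$, the maps $u$ and $v$ are genuinely $J_{\alg}$-holomorphic in neighbourhoods of the marked points $1$ and $-1$ and therefore extend smoothly across them (boundary regularity of $J_{\alg}$-holomorphic discs, cf.\ Theorem B.1 of \cite{lazzarini}). Since each strip-like end is the restriction of a biholomorphism $\I{-\infty}{+\infty} \to D^2 - \{\pm 1\}$, pulling smooth data back through these conformal coordinates converts smoothness at $\pm 1$ into exponential decay. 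Concretely,
\begin{equation*}
|u \circ \xi_1(s,t) - u(1)|_{C^1} = O(e^{-\pi s/2}), \qquad |v \circ \xi_{-1}(s'',t) - v(-1)|_{C^1} = O(e^{\pi s''/2}),
\end{equation*}
and the matching condition is $u(1) = v(-1)$. Compactness of $K$ lets one take the implicit constants uniform over $(u,\theta,v) \in K$.

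Next I would translate into neck coordinates. Under the identification $\xi_{S,\neck} = \xi_1^{4S} \circ \tau_{2S}$, a point $s \in [-S-1,S+1]$ of the neck corresponds to $s' = s + 2S \in [S-1, 3S+1]$ in the $\xi_1^{4S}$ coordinates on $\Sigma_S$. By the gluing convention, for $s' \in [S-1, 2S]$ we are on the $u$-side where
\begin{equation*}
u \#_S v \circ \xi_1^{2S}(s',t) = \exp_{u(1)} \bigl( \chi_{2S-1}(s') \cdot X_u(s',t) \bigr),\quad X_u(s',t) := \exp^{-1}_{u(1)}(u \circ \xi_1(s',t)),
\end{equation*}
and the analogous formula using $v$ and the cutoff $1-\chi_{-2S+1}$ holds for $s' \in [2S, 3S+1]$ after applying the identification $s''=s'-4S$.

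To conclude, there are two regimes to check on each side. On the region where $\chi_{2S-1} \equiv 1$ (namely $s' \leq 2S-2$) the composition equals $u \circ \xi_1$, whose $C^1$-distance to the constant $u(1)$ is directly controlled by the decay estimate above, giving $O(e^{-\pi(S-1)/2}) = O(e^{-S})$ since $\pi/2 > 1$. On the transition region $s' \in [2S-2, 2S]$, one differentiates $\exp_{u(1)}(\chi_{2S-1} X_u)$ via the chain rule; the factors $|\chi'_{2S-1}|_{C^0}$ and $|d\exp|$ are bounded uniformly in $S$ (since $X_u$ stays in a fixed geodesically convex neighbourhood of $0 \in T_{u(1)}M$), while $|X_u|_{C^1} = O(e^{-\pi(2S-2)/2}) = O(e^{-S})$. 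The symmetric argument on the $v$-side finishes the bound. There is no substantive obstacle here; the main care required is in tracking which parameterization governs each interval of the neck and in verifying that the weakest decay occurs at the outermost parameter $s = \pm(S+1)$, which still yields the claimed $O(e^{-S})$ rate.
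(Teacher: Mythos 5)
Your proposal is correct and follows essentially the same route as the paper, whose entire justification is the sentence preceding the lemma: smoothness of $u$ and $v$ on $D^2$ converts, via the conformal strip-like coordinates, into exponential decay of the distance to $u(1)=v(-1)$ and of the differentials along the ends, and the cutoff construction preserves this. Your write-up simply supplies the coordinate bookkeeping and the uniform constants (via compactness of $K$) that the paper leaves implicit.
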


\section{Construction of an extended gluing map} \label{sec:codim1_gluing}
Let $K$ denote a compact subset of  $\cod{1}{L}$ as in Equation \eqref{eq:compact_subset_codim_1}. In this section, we prove the existence of a gluing map from $K \times [S,+\infty)$ to $\cP_{0,1}(L;0)$ for $S$ sufficiently large, which will be shown in later sections to satisfy the required surjectivity and injectivity properties.  More precisely, we shall embed $K \times [S,+\infty)$ as the zero section of the pull back of the kernel of $D_{\cP_{0,1}}$ under the pre-gluing map $\preGext$.  After restricting to vectors of norm bounded by a constant $\epsilon$ we shall construct a map
\begin{equation}   \Gext_{\epsilon} \co \preGext^{*}  \ker_{\epsilon} D_{\cP_{0,1}} \to \cP_{0,1}(L;0),\end{equation}
by applying an implicit function theorem to the operator $D_{\cP_{0,1}}$ on $\cF^{1,p}_{\cP_{0,1}}(L;0)$ equipped with appropriate norms.  The usual procedure of constructing a gluing map is performed chart by chart; this is essentially equivalent to restricting to one of the fibres of $\preGext^{*}  \ker_{\epsilon} D_{\cP_{0,1}}$.  The new problems to consider concern the study of the behaviour of this map with respect to the base variable.   Since this is usually not addressed in the literature, we shall give an essentially complete proof of the gluing theorem.  It turns out that $ \Gext_{\epsilon}$ is evidently smooth with respect to directions along the manifold $K$.  However, we have only proved that the map is continuous in the $S$ direction.
\begin{rem}
It is unlikely that smoothness in the $S$ direction is an essential problem; we expect that $C^1$-differentiability would follow if one were willing to bound one more derivative.  However, our failure to define a gluing map by applying an implicit function theorem in a Banach space completion of $\cF_{\cP}(L;0)$  rather than a completion of the product of this space with $S^1$  seems to point to a more fundamental problem in gluing holomorphic curves at varying marked points.  This might be an artefact of a poor choice of metric, but we suspect that there is a deeper problem caused by the well known fact that quotients of Sobolev spaces by diffeomorphisms in the source are not naturally equipped with the structure of a smooth Banach manifold.  We hope that the polyfold theory of \cite{HZW}, applied to this relatively simple setting, would give a clean way of circumventing this problem.
\end{rem}

Equation \eqref{eq:section_theta_depend} defines a Fredholm section
\begin{equation}  \dbar_{\cP_{0,1}} \co \cF^{1,p}_{\cP_{0,1}} (L) \to \cE^{p}_{\cP_{0,1}}(M) .\end{equation}
For the purposes of the gluing theorem, we need to explicitly write down an operator
\begin{equation} \label{eq:extension_linearisation_dbar_parametrized} D_{\cP_{0,1}}  \co T\cF^{1,p}_{\cP_{0,1}} (L) \to \cE^{p}_{\cP_{0,1}}(M)\end{equation}
which extends the linearisation of $\dbar_{\cP_{0,1}}$ away from the zero section.  Recall that  the fibres of $\cE^{p}_{\cP_{0,1}}(M)$ at $( R,\theta, u)$ are $J_{\theta,R}$-anti-holomorphic $TM$ valued $1$-forms, where $J_{\theta,R} $ is the family of almost complex structures $J_{ r_{\theta} z, R}$ depending on the point $z \in D^2$.   We shall write $x^{\natural} = (R, \theta, x)$ for an element of 
\begin{equation} M^{\natural} \equiv  [0,+\infty) \times  S^1 \times M \end{equation}
to simplify matters.  For each pair $(z,R) \in D^2 \times [0,+\infty)$, choose a metric $g_{z,R}$ which is (almost) hermitian for the almost complex structure $J_{z, R}$, and agrees with the metric $g$ of Corollary \ref{cor:totally_geodesic_hermitian_metric} whenever $z$ is close to the boundary.  We equip $D^{2} \times M^{\natural}$ with a metric which is given pointwise by the direct sum of the flat metric in the $D^{2} \times S^1 \times [0,+\infty)$ direction, and the metric $g_{\theta,R} \equiv \{ g_{r_{\theta}z,R} \}_{z \in D^{2}}$ along $M$.  Note that the tangent space of $M$ is preserved by parallel transport with respect the Levi-Civita connection of this metric on $D^2 \times M^{\natural}$. We shall write $\nabla$ for the restriction of this connection to $TM$ as a bundle over $D^2 \times  M^{\natural}$, and $\tilde{\nabla}$ for the corresponding complex linear connection given by the formula that already appeared in Equation \eqref{eq:formula_complex_linear_connection}.   The key property of this connection is that the family of almost complex structures $J_{r_{\theta} z, R}$ is flat with respect to it.

In the product decomposition \eqref{eq:include_parametrized_moduli_banach}  of $T\cF^{1,p}_{\cP_{0,1}} (L)  $ at $(R,\theta, u)$, an elementary computation shows that the linearisation $D_{\cP_{0,1}}$ of $\dbar_{\cP_{0,1}}$ with respect to the connection $\tilde{\nabla}$ can be written in terms the Levi-Civita connection $\nabla$  as follows (compare with Equation \eqref{eq:linearisation_dbar_perturb_complex}):
\begin{align}   \label{eq:expression_linearisation_circle_varying_parametrized}
\partial_{R} &  \mapsto  \left( \frac{1}{2} J_{R, \theta} \left( \nabla_{\partial_{R}}  J_{R, \theta} \right) \left( du -  \gamma_{\theta,R} \otimes X_{H}  \right) \right)^{0,1} + \\  \notag & \qquad \left( \nabla_{\partial_R} du  - \frac{d\gamma_{R, \theta}}{dR}   \otimes X_{H}  - \gamma_{\theta,R} \otimes \nabla_{\partial_R} X_{H}  \right)^{0,1} \\ \notag
\partial_{\theta} &   \mapsto    \left( \frac{1}{2} J_{r_{\theta} z, R} \left( \nabla_{\partial_\theta}J_{R, \theta} \right) \left( du -  \gamma_{\theta,R} \otimes X_{H}  \right) \right)^{0,1}   \\ \notag
& \qquad \left( \nabla_{\partial_\theta} du  - \frac{d  \gamma_{R, \theta} }{d\theta} \otimes X_{H}  - \gamma_{\theta,R} \otimes \nabla_{\partial_\theta} X_{H}   \right)^{0,1} 
\\ \notag
X & \mapsto \left( \nabla X - \gamma_{\theta,R} \otimes \nabla_{X} X_{H}  \right)^{0,1} - \frac{1}{2} J_{R, \theta} \left( \nabla_{X}J_{R, \theta}  \right) \partial_{\cP_{0,1}} (R, \theta,u)  
\end{align}
 In the last equation, 
\begin{equation} \partial_{\cP_{0,1}}  ( R,\theta, u)  = \left( du - \gamma_{\theta, R} \otimes X_{H} \right)^{1,0} .\end{equation}

Given an element $w^{\natural} = (R, \theta, w)$ of $\cF^{1,p}_{\cP_{0,1}} (L) $, and a tangent vector  $X^{\natural} =(r,\lambda,X)$, we write
\begin{equation} w^{\natural}_{X^{\natural}} = (R + r, \theta+ \lambda, \exp_{w}(X)) ,\end{equation}
keeping in mind that the exponential map in the third component is performed with respect to a metric which depends on $(R,\theta)$ as well as the point $z \in D^2$.

Having introduced this additional notation, we define a non-linear map $ \sF $ from  $T\cF^{1,p}_{\cP_{0,1}} (L)$  to  $\cE^{p}_{\cP_{0,1}}(M)$
\begin{equation} \sF_{w^{\natural}}(X^{\natural}) =  \tilde{\Pi}_{w^{\natural}_{X^{\natural}}}^{w^{\natural}}\left(\dbar_{\cP_{0,1}}  w^{\natural}_{X^{\natural}}\right)  \end{equation}
where $ \tilde{\Pi}_{w^{\natural}_{X^{\natural}}}^{w^{\natural}}$ is the parallel transport map with respect to the connection $\tilde{\nabla}$ along the image of the exponential map.  
\begin{lem} \label{lem:bounds_for_implicit_function_theorem}
The right inverse $Q_{\cod{1}{L}}$ to the operator \eqref{eq:fibre_product_right_inverse} determines a right inverse 
\begin{equation} Q_{\preGext} \co \preGext^{*} \cE^p_{\cP_{0,1}} (M)  \to  \preGext^{*}  T\cF^{1,p}_{\cP_{0,1}} (L) , \end{equation}
to the restriction of $D_{\cP_{0,1}}$ to the image of $\preGext$.  Moreover, there are Sobolev norms $| \_|_{1,p, S}$  and $| \_|_{p,S}$ on the Banach bundles $\preGext^{*}( T\cF^{1,p})$  and $\preGext^{*}( \cE^p)$, as well a positive real number $C$ which is independent of $S$ such that the following properties hold:
\begin{equation}  \label{eq:norm_dbar_exponentially_decay}  \parbox{35em}{ The norm of $\dbar_{\cP_{0,1}} \preGext_{S}(u,\theta,v)$ decays exponentially with $S$
\begin{equation*} \left|\dbar_{\cP_{0,1}} \preGext_{S}(u,\theta,v) \right|_{p, S}  = \rO \left( e^{-2(1-\delta)S} \right) .\end{equation*}}    \end{equation}  
\begin{equation}  \label{eq:quadratic_inequality} \parbox{35em}{If $\left|X^{\natural}_i\right| < 1/C$, for $i=1,2$, then we have a uniform bound} \end{equation}
\begin{equation*} \left| \sF_{\preGext_{S} ( u , \theta ,v)}(X_1^{\natural}) - \sF_{ \preGext_{S} ( u , \theta ,v)}(X_2^{\natural}) - D_{\cP_{0,1}}(X_1^{\natural} - X_2^{\natural})  \right| \leq C  \left|X_1^{\natural} + X_2^{\natural} \right| \left|X_1^{\natural} - X_2^{\natural}\right| .\end{equation*}
\begin{equation} \label{eq:Q_bounded} \parbox{35em}{ The norm of $Q_{\preGext}$ is uniformly bounded:
\begin{equation*}   \left\|Q_{\preGext}\right\| \leq C. \end{equation*}}  \end{equation}
\begin{equation} \label{eq:continuity_properties_right_inverse} \parbox{35em}{ $Q_{\preGext}$ is a strongly continuous map of Banach bundles.  Moreover, the restriction of $Q_{\preGext}$ to a slice $K \times \{S\}$ is a smooth uniformly continuous map of Banach bundles.}   \end{equation}
\end{lem}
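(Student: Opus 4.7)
The plan is to construct $Q_{\preGext}$ as a small perturbation of a naturally defined approximate right inverse, following the standard gluing framework. First I would define the weighted Sobolev norms $|\cdot|_{1,p,S}$ and $|\cdot|_{p,S}$ on sections over $\Sigma_S$ by combining, via a partition of unity adapted to the decomposition $\Sigma_S = \text{neck} \cup \iota^{\cP,S}_{2S-1}(D^2) \cup \iota^{\cS,S}_{2S-1}(D^2)$, the weighted norms of Definition 3.1.2 pulled back from each of the two discs. The weights are arranged so that, up to constants independent of $S$, the norm of a section splits as the sum of its $W^{1,p,\delta}$ (respectively $L^{p,\delta}$) norms on the two halves; in particular, pulling back along $\iota^{\cP,S}_{4S}$ and $\iota^{\cS,S}_{4S}$ is uniformly bounded, and vector fields in the kernel of $D_{\cP_{0,1}}$ pulled back under $\preGext$ have norms controlled uniformly in $S$ by the corresponding norms on $\cod{1}{L}$.

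For property \eqref{eq:norm_dbar_exponentially_decay}, observe that by construction and by Lemma \ref{lem:large_S_K}, the pre-glued map $\preGext_S(u,\theta,v)$ agrees with a genuine solution of $\dbar_{\cP_{0,1}} = 0$ off the neck region $\xi_{S,\neck}(\I{-2S+1}{2S-1})$, since on the $\cP$-side the image avoids the support of $\bfK$ and $\gamma_{\theta,R}$. On the neck, Lemma \ref{lem:exponential_decay_bounds} gives pointwise $C^1$-bounds of order $e^{-S}$, and $\dbar$ of the explicit exponential-cutoff formula for $u \#_S v$ vanishes in the fiber directions because $u$, $v$ are holomorphic, leaving only the quadratic Taylor remainder and the derivative of the cutoff; integrating these against the weight $e^{p\delta|s|}$ over the neck of length $\sim 4S$ gives a bound of order $e^{-2(1-\delta)S}$, as claimed.

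For property \eqref{eq:Q_bounded} I would build $Q_{\preGext}$ as follows. Given $\eta \in \cE^p_{\cP_{0,1}}(M)$ at $\preGext_S(u,\theta,v)$, split $\eta = \beta_\cP \eta + \beta_\cS \eta$ using cutoff functions localized on either side of the neck, and pull each piece back along $\iota^{\cP,S}$, $\iota^{\cS,S}$ to obtain $\tilde\eta_\cP$, $\tilde\eta_\cS$ on the two discs, extended by zero; these elements lie in $\cE^p_{\cod{1}{L}}(M)$, so $Q_{\cod{1}{L}}(\tilde\eta_\cP, \tilde\eta_\cS)$ returns a pair of vector fields whose boundary-evaluations at $1$ and $-1$ agree. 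Glue this pair via an exponential-cutoff construction on the neck, analogous to the one defining $\preGext$ itself, to produce a vector field $T(\eta)$ on $\Sigma_S$. Exponential decay of $Q_{\cod{1}{L}}\eta$ along the strip-like ends (which follows from elliptic regularity in the weighted spaces, since $1 > \delta$) and boundedness of $Q_{\cod{1}{L}}$ together imply that $T$ is uniformly bounded in $S$ and that $D_{\cP_{0,1}} \circ T = \operatorname{Id} + E_S$ with $\|E_S\| \le 1/2$ for $S$ large enough. Then $Q_{\preGext} := T \circ (\operatorname{Id} + E_S)^{-1}$ is the desired right inverse, with norm bound inherited from $T$ and a convergent Neumann series. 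Property \eqref{eq:quadratic_inequality} is a pointwise Taylor expansion of $\sF$ around $\preGext_S(u,\theta,v)$: the difference between $\sF$ and its linearization is quadratic in the vector field, with constants controlled by the $C^2$-sizes of the metric and almost complex structure on a fixed compact region of $M^\flat$ containing the images (this compact region exists because $(u,\theta,v)$ ranges over the compact set $K$). Property \eqref{eq:continuity_properties_right_inverse} follows from inspecting the construction: each cutoff, pullback, application of $Q_{\cod{1}{L}}$, and re-gluing depends continuously on $(u,\theta,v,S)$, and along slices $K \times \{S\}$ the translation by $-4S$ is constant so the dependence on $(u,\theta,v)$ is smooth.

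The main obstacle will be the uniform-in-$S$ control of the error $E_S$: one must verify that the commutators of $D_{\cP_{0,1}}$ with the cutoff functions used both to split $\eta$ and to re-glue $Q_{\cod{1}{L}}(\tilde\eta_\cP,\tilde\eta_\cS)$ produce terms whose weighted $L^{p,\delta}$ norms are $O(e^{-cS})$ for some $c>0$, so that the error shrinks with $S$; this is where the interplay between the exponential weight $e^{\delta p |s|}$ on the ends and the translation $(s,t)\mapsto(s-4S,t)$ used to glue the surfaces must be carefully balanced, and it is also where the discrepancy hinted at in the opening remark of the section arises, since differentiating the translation-dependent gluing map in the $S$-variable does not remain bounded in the relevant Banach space topology, blocking smoothness (as opposed to continuity) of $Q_{\preGext}$ in that direction.
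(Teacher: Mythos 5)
Your construction follows the paper's route exactly: break the $(0,1)$-form over $\Sigma_S$ into forms over the two discs, apply $Q_{\cod{1}{L}}$, re-glue the resulting vector fields by the cutoff construction $\predGext$, and correct the approximate inverse by a Neumann series once the error is shown to decay in $S$; properties \eqref{eq:norm_dbar_exponentially_decay} and \eqref{eq:quadratic_inequality} are handled by direct estimate and Taylor expansion respectively. Two of your estimates, however, are not justified as written. For \eqref{eq:norm_dbar_exponentially_decay}, a pointwise bound of order $e^{-S}$ over the whole neck, integrated against a weight that reaches $e^{2\delta p S}$ over a region of length $\sim 4S$, gives only $\rO\left(S^{1/p}e^{-(1-2\delta)S}\right)$, which is weaker than the claimed rate and degenerates as $\delta \to 1/2$. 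The point is that $\dbar_{\cP_{0,1}} \preGext_{S}(u,\theta,v)$ is supported only in the fixed-width band $\xi_{S,\neck}(\I{-2}{2})$ where the cutoff derivatives are nonzero; there the pointwise $C^1$-norm is $\rO(e^{-2S})$ (the decay rate at distance $2S$ from the node) while the weight, which is centered at the middle of the neck rather than growing with $|s|$, is $\rO(e^{2\delta p S})$, and only this localization yields $\rO(e^{-2(1-\delta)S})$.

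The more serious gap is the quadratic inequality. It is not a routine pointwise Taylor expansion with constants read off from a compact region of $M^{\flat}$: the constant must be independent of $S$ while the norms $|\_|_{1,p,S}$ and $|\_|_{p,S}$ are $S$-dependent and weight $|X|$ and $|\nabla X|$ differently, and the $|\_|_{1,p,S}$-norm does not control $|\nabla X|_{C^0}$, so a remainder bounded by products of $C^1$-norms cannot be converted into the stated estimate. One must organize the pointwise error so that each term contains at most one factor controlled only in $L^p$ (namely $\nabla X_i^{\flat}$, $d(u\#_S v)$ or $\gamma_{\theta,R}$), with all remaining factors controlled in $C^0$ via a Sobolev constant uniform in $S$; one then needs $|d(u \#_{S} v)|_{p,S} = \rO(1)$, which holds only because $\delta<1$ beats the exponential decay of $du$ and $dv$ along the strip-like ends. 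This is precisely the content of the paper's two appendices, and your argument skips it. Relatedly, the exponential decay of $Q_{\cod{1}{L}}\eta$ along the ends is not a consequence of elliptic regularity (the input $\eta$ is an arbitrary $L^p$ element, not the image of a holomorphic datum): all one has is the rate-$\delta$ $C^0$-decay of $W^{1,p,\delta}$-sections towards their value at the puncture, which is why the dominant error of the approximate inverse is $\rO(e^{-2\delta S})$ rather than anything faster; your generic "$\rO(e^{-cS})$" is enough for the Neumann series, but the mechanism you cite for it is not the right one.
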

\begin{rem}
Recall that the topology of point-wise convergences on the space of bounded linear operators is called the strong topology.  In particular, a map between trivial Banach bundles over a parameter space $X$
\begin{equation} F \co X \times B_1 \to X \times B_2 \end{equation}
is called strongly continuous if the family of operators $F_{x} \co B_1 \to B_2$ satisfy the property that $F_{x_i}(b)$ converges to $F_{x}(b)$ whenever $x_i$ converges to $x$ and $b \in B_1$.   Such a map is uniformly continuous if the rate of convergence depends linearly in the norm of $b$.
\end{rem}
\begin{proof}[Proof of Lemma \ref{lem:bounds_for_implicit_function_theorem}]
The existence of $Q_{\preGext}$, the bound \eqref{eq:Q_bounded}, and the continuity properties \eqref{eq:continuity_properties_right_inverse} are stated in Corollary \ref{cor:uniformly_bounded_inverse}, and the quadratic inequality \eqref{eq:quadratic_inequality} is proved in Section  \ref{sec:proof_quadratic}.  To prove the bound \eqref{eq:norm_dbar_exponentially_decay}, we observe that the support of $\dbar_{\cP_{0,1}} \preGext_{S}(u,\theta,v)$ is contained in the image of $\xi_{S,\neck}(\I{-2}{2})$ where Lemma \ref{lem:exponential_decay_bounds} implies that the $C^1$-norm of $\dbar_{\cP_{0,1}} \preGext_{S}(u,\theta,v)$ is bounded by a constant multiple of $e^{-2S}$.  Since the weight for the $| \_|_{p,S}$-norm, which we will define in Equation \eqref{eq:weighted_norms_differential_S},  is bounded by $e^{2Sp\delta}$ the result follows.
\end{proof}
Estimate \eqref{eq:quadratic_inequality} is a version of the quadratic inequality.  It is sufficient for the proof of the implicit function theorem, but we shall need the following stronger version in later arguments:
\begin{prop}[c.f. Proposition 3.5.3 of \cite{MS}] \label{prop:quadratic_inequality} There exists a constant $c >0$ independent of $S$ with the following property: given a tangent vector
\begin{equation} Z^{\natural} = (r, \lambda, Z) \in T \cF_{\cP_{0,1}}(L) \end{equation} which is bounded by $c$ in the $C^0$-norm, 
\begin{equation} \label{eq:bound_C_0-for_quadratic}|Z^{\natural}|_{\infty} = |Z|_{\infty} +|\lambda| +|r| < c \end{equation} the difference between the linearisation of $\dbar_{\cP_{0,1}}$ at the origin and at $Z^{\natural}$ is bounded by a constant multiple of $|Z^{\natural}|_{1,p,S}  $ :
\begin{equation} \label{eq:quadratic_inequality_first_statement} \left\| d \sF_{\preGext_{S}(u,\theta,v)} (Z^{\natural}) - D_{\cP_{0,1}} \right\| \leq c |Z^{\natural}|_{1,p,S}  = c \left(|\lambda| +|r|+|Z|_{1,p,S}  \right) .\end{equation}
\end{prop}
The proof of this result is postponed until Section \ref{sec:proof_quadratic}.

As a consequence of these estimates, we can apply a quantitative version of the implicit function theorem.  In the statement of the next Corollary, 
\begin{equation}\preGext^{*}  \ker_{\epsilon} D_{\cP_{0,1}} \end{equation}
refers to the open subset of $\preGext^{*}  \ker D _{\cP_{0,1}}  \subset \preGext^{*}    T\cF^{1,p}_{\cP_{0,1}} (L)$ consisting of vectors whose norm is smaller than a constant $\epsilon$, which is assumed to be smaller than  $\frac{1}{4C^2}$.  We continue writing $X^{\natural}$ for such a tangent vector. 
\begin{prop} \label{prop:statement_implicit_function_theorem}
If $S_K$ is sufficiently large, there exists a map
\begin{equation}  \sol \co \preGext^{*}  \ker_{\epsilon} D_{\cP_{0,1}} \to  \preGext^{*} T\cF^{1,p}_{\cP_{0,1}} (L) \end{equation}
which is uniquely determined by the following two conditions
\begin{equation} \label{eq:sol_in_image_inverse} \parbox{35em}{$\sol$ factors through the right inverse $Q_{\preGext}$.} \end{equation}
\begin{equation} \label{eq:adding_sol_gives_holomorphic}  \parbox{35em}{If $X^{\natural}$ is an element  of $ \ker_{\epsilon} D_{\cP_{0,1}}$, then 
\begin{equation*} \dbar_{\cP_{0,1}}\left( \exp_{\preGext_{S}(u,\theta,v)} \left( X^{\natural} + \sol_{(u,\theta, v,S)} X^{\natural}  \right) \right) = 0 .\end{equation*}  } \end{equation}

Moreover, the following properties are satisfied 
\begin{equation} \label{eq:image_of_implicit_function_thm_surjective}
\parbox{35em}{The map \\
\begin{equation*} \id + \sol \co   \preGext^{*} \ker_{\epsilon} D_{\cP_{0,1}} \to T\cF^{1,p}_{\cP_{0,1}} (L)   \end{equation*} 
is a surjection onto the set of tangent vectors of norm bounded by $\epsilon$ whose image under the exponential map is a zero of $\dbar_{\cP_{0,1}}$.} \end{equation}
\begin{equation}  \label{eq:exponential_decay_solution} \parbox{35em}{The $C^1$-norm of the restriction of $\sol$ to the fibres over a point $(u,\theta,v,S)$ decays exponentially,  
\begin{equation*} \left| \sol|_{(u,\theta,v,S)} \right|_{C^1}  = \rO\left(e^{-2(1-\delta)S}\right). \end{equation*}}  \end{equation}
\begin{equation}  \parbox{35em}{The map $\sol$ is continuous, and its restriction $\sol_{S}$ to  $ \preGext_{*}  \ker_{\epsilon} D_{\cP_{0,1}} | K \times \{S \}$  is smooth for any $S$.}  \end{equation}
\end{prop}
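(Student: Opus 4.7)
The plan is a fiberwise Picard iteration. Given $(u,\theta,v,S)$ and $X^{\flat}\in\ker_{\epsilon}D_{\cP_{0,1}}$, write $w^{\flat}:=\preGext_{S}(u,\theta,v)$ and seek $Z^{\flat}$ in the image of $Q_{\preGext}$ with $\sF_{w^{\flat}}(X^{\flat}+Z^{\flat})=0$; then define $\sol_{(u,\theta,v,S)}(X^{\flat}):=Z^{\flat}$. The natural iteration is
\[ T(Z^{\flat}):=Z^{\flat}-Q_{\preGext}\bigl(\sF_{w^{\flat}}(X^{\flat}+Z^{\flat})\bigr). \]
Because $Q_{\preGext}$ is a right inverse of $D_{\cP_{0,1}}$ (and hence injective), the fixed-point equation $T(Z^{\flat})=Z^{\flat}$ is equivalent to $\sF_{w^{\flat}}(X^{\flat}+Z^{\flat})=0$, which is \eqref{eq:adding_sol_gives_holomorphic}; since iterates stay in $\operatorname{im}Q_{\preGext}$, condition \eqref{eq:sol_in_image_inverse} is automatic.

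To run the argument I would combine the three quantitative ingredients of Lemma \ref{lem:bounds_for_implicit_function_theorem}. The quadratic estimate \eqref{eq:quadratic_inequality} and the operator bound \eqref{eq:Q_bounded} yield $|T(Z_{1}^{\flat})-T(Z_{2}^{\flat})|\leq C^{2}\bigl(|X^{\flat}|+|Z_{1}^{\flat}|+|Z_{2}^{\flat}|\bigr)|Z_{1}^{\flat}-Z_{2}^{\flat}|$, so $T$ is a contraction of the closed $\epsilon$-ball once $\epsilon<1/(4C^{2})$. For invariance, I expand $\sF_{w^{\flat}}(X^{\flat})=\dbar_{\cP_{0,1}}(w^{\flat})+D_{\cP_{0,1}}(X^{\flat})+O(|X^{\flat}|^{2})$; since $X^{\flat}\in\ker D_{\cP_{0,1}}$, the estimate \eqref{eq:norm_dbar_exponentially_decay} gives $|T(0)|\leq C\bigl(O(e^{-2(1-\delta)S})+O(\epsilon^{2})\bigr)$, which sits below $\epsilon/2$ once $S_{K}$ is large and $\epsilon$ is small. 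Banach fixed point then produces a unique $Z^{\flat}$ in $\operatorname{im}Q_{\preGext}\cap B_{\epsilon}$, which is the unique-determination clause of the proposition.

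The decay bound \eqref{eq:exponential_decay_solution} at $X^{\flat}=0$ is immediate from the geometric-series estimate $|\sol(0)|\leq 2|T(0)|=O(e^{-2(1-\delta)S})$. For the $C^{1}$-part I differentiate the fixed-point equation along the fiber: at $X^{\flat}=0$ the derivative satisfies an equation of the form $D\sol|_{0}(Y)=-Q_{\preGext}\bigl[O(|\sol(0)|)(Y+D\sol|_{0}(Y))\bigr]$, because $Q_{\preGext}D_{\cP_{0,1}}$ is the identity on $\operatorname{im}Q_{\preGext}$ and the linearisation of $\sF$ at $\sol(0)$ differs from $D_{\cP_{0,1}}$ by a term of size $|\sol(0)|$; solving for $D\sol|_{0}$ gives $|D\sol|_{0}|=O(e^{-2(1-\delta)S})$. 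Surjectivity \eqref{eq:image_of_implicit_function_thm_surjective} follows from the splitting $T\cF^{1,p}_{\cP_{0,1}}(L)=\ker D_{\cP_{0,1}}\oplus\operatorname{im}Q_{\preGext}$ furnished by $Q_{\preGext}$: any $Y^{\flat}$ in the $\epsilon$-ball with $\exp_{w^{\flat}}(Y^{\flat})$ annihilated by $\dbar_{\cP_{0,1}}$ decomposes as $X^{\flat}+Z^{\flat}$, and the zero equation forces the pair to be the Picard fixed point, so $Z^{\flat}=\sol(X^{\flat})$.

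The main obstacle, flagged in the remark preceding the proposition, is regularity across $S$. Smoothness of $\sol_{S}$ on each slice $K\times\{S\}$ follows from the parameter-smooth version of the contraction mapping together with the smoothness clause of \eqref{eq:continuity_properties_right_inverse}, because within a single slice the Banach spaces and $Q_{\preGext}$ are fixed and smoothly parametrized. For joint continuity of $\sol$ in $(u,\theta,v,S)$, the bundles carry $S$-dependent norms and $Q_{\preGext}$ is only strongly continuous in $S$; I would handle this by fixing $X^{\flat}$, realising $\sol(X^{\flat})$ as the limit of the iterates $T^{n}(0)$, and pushing strong continuity through the iteration using the uniform-in-$S$ contraction rate and ball radius established above. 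This yields pointwise and hence uniform-on-compacts continuity in $S$ — precisely what is asserted, with no claim of differentiability in the $S$-direction.
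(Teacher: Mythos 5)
Your core argument is the same as the paper's: existence, uniqueness, the decay bound and surjectivity are all extracted from the Newton--Picard iteration, using exactly the three inputs \eqref{eq:norm_dbar_exponentially_decay}, \eqref{eq:quadratic_inequality} and \eqref{eq:Q_bounded} (the paper simply outsources this to Floer's Picard Lemma, Proposition 24 of \cite{floer-monopole}). Where you genuinely diverge is the continuity in the gluing parameter: the paper argues by contradiction, extracting a convergent subsequence of the projections of $X_i^{\flat}+\sol_{S_i}X_i^{\flat}$ to $\ker D_{\cP_{0,1}}$ via Fredholmness and then identifying the limit through the uniqueness clause; you instead propagate strong continuity of $Q_{\preGext}$ through the iterates $T^n_S(0)$ and use the uniform contraction rate to pass to the limit. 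Your route is more direct and avoids the Fredholm input, but it does require the bookkeeping the paper performs with the parallel-transport maps $\Pi_i$ (the fibres at different $S$ are different Banach spaces), together with the standard fact that $A_i y_i \to Ay$ when $A_i \to A$ strongly with uniform bounds and $y_i \to y$; you should say this explicitly, since "strong continuity" alone does not survive composition with a varying argument.

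One point falls short of the literal statement. Your verification of \eqref{eq:exponential_decay_solution} only treats $X^{\flat}=0$: for general $X^{\flat}$ in the $\epsilon$-ball of $\ker D_{\cP_{0,1}}$ the same Picard estimate gives $|\sol(X^{\flat})| = \rO\left(e^{-2(1-\delta)S}\right) + \rO\left(|X^{\flat}|^2\right)$ and $\|d\sol|_{X^{\flat}}\| = \rO\left(e^{-2(1-\delta)S}\right) + \rO\left(|X^{\flat}|\right)$, since the linearisation of $\sF$ at $X^{\flat}+\sol(X^{\flat})$ differs from $D_{\cP_{0,1}}$ by a term of size $\rO(|X^{\flat}+\sol(X^{\flat})|)$, not $\rO(|\sol(0)|)$. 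These extra terms do not decay with $S$ for fixed $\epsilon$, so the claimed fibrewise $C^1$ bound is only obtained at (and near) the zero section. This is where the paper's own one-line citation is also thin, and it is worth either restricting the statement to the zero section or tracking the $\rO(|X^{\flat}|)$ dependence explicitly, since later sections invoke the bound in both forms. Finally, in the surjectivity step, note that the bounded projections $\id - Q_{\preGext}D_{\cP_{0,1}}$ and $Q_{\preGext}D_{\cP_{0,1}}$ only place the components $X^{\flat}$, $Z^{\flat}$ of a given $Y^{\flat}$ in a ball of radius $C'\epsilon$ rather than $\epsilon$, so the uniqueness argument must be run on the slightly larger ball; this is a routine adjustment of constants but should be recorded.
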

\begin{proof}
The existence and uniqueness of $\sol$, the bound  \eqref{eq:exponential_decay_solution}, as well as the surjectivity of \eqref{eq:image_of_implicit_function_thm_surjective} follow from a quantitative version of the implicit function theorem.  The standard reference in symplectic geometry is Floer's version of the Picard Lemma which is proved in Proposition 24 in  \cite{floer-monopole}.    The reader is invited to check that the boundedness of $Q$ stated in \eqref{eq:Q_bounded}  and the quadratic inequality \eqref{eq:quadratic_inequality} together imply Estimate (156) of Floer's paper.

Smoothness in the  $K \times \{ S \}$ direction follows from the fact that $Q_{\preGext}$ is smooth in that direction. To prove continuity in the gluing parameter direction, we fix maps $u$ and $v$.  Given a gluing parameter $S$, and a tangent vector $X^{\natural}$ to $\preGext_{S}(u, \theta,v)$, we consider sequences converging to these data
\begin{equation}  \lim_{i \to +\infty} S_i =  S \textrm{ and } \lim_{i \to +\infty} \Pi_{i}  X^{\natural}_i = X^{\natural}, \end{equation}  where we're writing $\Pi_{i}$ for the parallel transport map from $\preGext_{S_i}(u,\theta,v)$  to $\preGext_{S}(u,\theta,v)$. Our goal is to prove that 
\begin{equation} \label{eq:convergence_sequence_solutions} \lim_{i \to +\infty}  \Pi_{i} \sol_{S_i} X_i^{\natural} = \sol_{S} X^{\natural}, \end{equation}
where we've dropped most of the subscripts from $\sol$ since the meaning should be clear.
From property \eqref{eq:adding_sol_gives_holomorphic}, we know that
\begin{equation} \lim_{i \to +\infty} \sF\left( \Pi_{i}  \left( X_{i}^{\natural} + \sol_{S_i} X_i^{\natural} \right) \right) = 0. \end{equation}
Let us assume by contradiction that \eqref{eq:convergence_sequence_solutions}  does not converge.  Since $X_{i}^{\natural} + \sol_{S_i} X_i^{\natural}$ is bounded, and $D_{\cP_{0,1}}$ is Fredholm, the projection of this sequence to the kernel $D_{\cP_{0,1}}$ along the right inverse $Q_{\preGext}$
\begin{equation}  \left(\id - Q_{\preGext} \circ  D_{\cP_{0,1}} \right) \left( X_{i}^{\natural} + \sol_{S_i} X_i^{\natural} \right) \end{equation} 
admits a convergent subsequence. Upon passing to this convergent subsequence, the fact  that $\sF$ is $C^1$-close to $D_{\cP_{0,1}}$ implies that its tangent space is transverse at every point to the kernel of $D_{\cP_{0,1}}$, so that 
\begin{equation} \lim_{i \to +\infty} \Pi_{i}  \left( X_{i}^{\natural} + \sol_{S_i} X_i^{\natural} \right) = X^{\natural} + X' \textrm{ for some } X'. \end{equation}
We shall show that $X' = \sol_{S} X^{\natural}$.  Since $\sF(X^{\natural} + X')$ vanishes, this follows immediately from the uniqueness statement of the first part of the Lemma once we know that $X'$ is in the image of $Q_{\preGext}$.

By property \eqref{eq:sol_in_image_inverse}, we may write $\sol_{S_i} X_i^{\natural} = Q_{\preGext} Y_i$ for a unique sequence $Y_i$.  Let $\tilde{\Pi}_{i}$ denote the map
\begin{equation}  L^{p} \left( u \#_{S_i} v ^{*}(TM) \otimes \Omega^{0,1} D^2 \right) \to  L^{p} \left( u \#_{S} v ^{*}(TM) \otimes \Omega^{0,1} D^2 \right)  \end{equation}
coming from a trivialization of the bundle $\cE^{p,\delta}_{\cP_{0,1}}(M)$ using parallel transport with respect to the connection $\tilde{\nabla}$, and observe that the uniform continuity of $D_{\cP_{0,1}} $ implies that
\begin{align*}  \lim_{i \to +\infty}  \tilde{\Pi}_{i}  Y_i  & =   \lim_{i \to +\infty}  \tilde{\Pi}_{i}  D_{\cP_{0,1}} \sol_{S_i} X_i^{\natural} \\
 & = \lim_{i \to +\infty} D_{\cP_{0,1}}  \Pi_{i} \sol_{S_i} X_i^{\natural} \\
 & =  D_{\cP_{0,1}} X' \equiv Y  .  \end{align*} 
Using the fact that the operators $ \Pi_{i}  Q_{\preGext} \Pi^{-1}_{i}$ are uniformly bounded to see that
\begin{equation} \left| \Pi_{i}  Q_{\preGext} \Pi^{-1}_{i}  Y_{i} -  \Pi_{i}  Q_{\preGext} \Pi^{-1}_{i}  Y \right| = \rO(|Y_i -Y|),  \end{equation}and applying the pointwise convergence at $Y$ of $ \Pi_{i}  Q_{\preGext} \Pi^{-1}_{i}$   to $Q$, we conclude that
\begin{equation} \lim_{i \to +\infty} \Pi_{i}  Q_{\preGext}  Y_{i} =  Q_{\preGext} Y, \end{equation}
which implies that the limit of $ \Pi_{i}  \sol_{S_i} X_i^{\natural}$ lies in the image of $Q_{\preGext}$.  
\end{proof}

In particular, we obtain the gluing map
\begin{equation} \label{eq:extended_gluing_map_with_epsilon} \Gext_{\epsilon} \co \preGext^{*}  \ker_{\epsilon} D_{\cP_{0,1}} \to \cP_{0,1}(L;0)\end{equation}
promised at the beginning of this section by exponentiating $\id +\sol$.  The proof of the following Corollary is given in Section \ref{sec:proof_surjectivity_extended_gluing}
\begin{cor} \label{cor:surjectivity_extended_map}
For any $\epsilon \geq 0$, the composition of $ \Gext_{\epsilon}$ with the projection to $\cP(L;0)$ is a surjection onto a neighbourhood of $K$.\end{cor}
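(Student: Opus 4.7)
The plan is to argue by contradiction: suppose the composition of $\Gext_{\epsilon}$ with the projection to $\cP(L;0)$ fails to cover a neighbourhood of some point of $K$, so that there is a sequence $u_n \in \cP(L;0)$ which Gromov-converges to a triple $(u_\infty,\theta_\infty,v_\infty) \in K$ while no $u_n$ lies in this image. Lifting each $u_n$ to an element $w_n \in \cP_{0,1}(L;0)$ by pairing it with a suitable angular parameter $\theta_n \to \theta_\infty$ and rotating, the goal becomes to show that $w_n$ lies in the image of $\Gext_{\epsilon}$ itself for all $n$ large, since projecting back recovers $u_n$.

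Gromov compactness, combined with the enumeration of the strata of $\Pbar(L;0) - \cP(L;0)$ and the fact that strata (ii)--(iii) have higher codimension than $\cod{1}{L}$, forces the following geometric picture after passing to a subsequence: there exist gluing parameters $S_n \to +\infty$ and pairs $(u_n',v_n') \in \cod{1}{L}$ converging to $(u_\infty,v_\infty)$, together with biholomorphic identifications of the domain of $w_n$ with $\Sigma_{S_n}$, such that the restrictions of $w_n$ to the images of $\iota^{\cP,S_n}_{2S_n-1}$ and $\iota^{\cS,S_n}_{2S_n-1}$ are $C^k$-close for every $k$ to $u_n'$ and to $v_n'$ respectively, while on the neck the energy and $C^1$-diameter of $w_n$ tend to zero.

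Using this decomposition I would write $w_n = \exp_{\preGext_{S_n}(u_n',\theta_n,v_n')}(X_n^\flat)$ for a uniquely determined tangent vector $X_n^\flat$. Away from the neck the pre-gluing coincides with $u_n'$ or $v_n'$, so the $|\cdot|_{1,p,S_n}$-norm of $X_n^\flat$ there is controlled by the $C^k$-convergence; on the neck the almost complex structure equals $J_{\alg}$ by Lemma \ref{lem:large_S_K}, so $w_n$ restricted to the neck is a $J_{\alg}$-holomorphic strip with boundary on $L$ whose energy tends to zero, and the standard exponential decay estimates for small-energy strips produce a pointwise bound that beats the weight $e^{\delta p|s|}$ since $\delta < 1/4$. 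Once $|X_n^\flat|_{1,p,S_n} < \epsilon$, property \eqref{eq:image_of_implicit_function_thm_surjective} of Proposition \ref{prop:statement_implicit_function_theorem} produces $X_n \in \ker_{\epsilon} D_{\cP_{0,1}}$ with $(\id + \sol)(X_n) = X_n^\flat$, so that $\Gext_{\epsilon}(X_n) = w_n$, contradicting the hypothesis.

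The main obstacle is the third paragraph, namely converting the qualitative Gromov convergence into a uniform quantitative bound in the specific weighted Sobolev norm used to run the implicit function theorem. This requires matching $\delta$ to the decay rate of small-energy holomorphic strips, and also arranging the biholomorphic identification with $\Sigma_{S_n}$ so that $w_n$ is realized on the nose rather than up to a small automorphism of the disc; essentially one needs to promote the abstract Gromov convergence picture to a family of pre-gluing data whose exponential distance from $w_n$ is controlled in $|\cdot|_{1,p,S_n}$ uniformly in $n$.
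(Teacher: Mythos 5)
Your proposal follows essentially the same route as the paper: extract gluing parameters $S_i\to+\infty$ from Gromov convergence, write the solution as $\exp$ of a vector field along the pre-glued curve, control that vector field away from the neck by $C^k$-convergence and on the neck by the exponential decay estimate for small-energy holomorphic strips (after using monotonicity/doubling to localize the neck near $u(1)=v(-1)$, with decay rate $\mu>\delta$ beating the weight), and then invoke the surjectivity clause \eqref{eq:image_of_implicit_function_thm_surjective} of Proposition \ref{prop:statement_implicit_function_theorem}. The technical step you flag as the "main obstacle" is exactly the content of the paper's Lemma \ref{lem:surjectivity_codim_1_gluing}, and is resolved there by the argument you sketch; the only cosmetic differences are your contradiction framing and your use of a varying sequence $(u_n',v_n')$ where the paper pre-glues the fixed limits $u$ and $v$.
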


\subsection{A family of Sobolev norms}
We begin by fixing metrics on $D^{2} - \{1\}$ and $D^2 - \{-1\}$ which agree with the pullback of the standard metrics on the strip-like ends.  Since the identification of Equation \eqref{eq:glue_surfaces_isometry} used to define $\Sigma_{S}$ is an isometry of the metric on the strip-like ends, $\Sigma_S$ carries a metric with respect to whose volume form the integrals below will be taken.

Next, we consider the function
\begin{equation} \label{eq:delta_weight_function} \kappa_{\delta, S} \co \Sigma_{S} \to [1,+\infty) \end{equation}
defined piece-wise as follows
\begin{enumerate}
\item $\kappa \equiv 1$ on the complement of the neck of $\Sigma_{S}$
\item On $\Im (\xi_{S,\neck})$, we have
\begin{equation} \kappa_{\delta, S} \circ  \xi_{S,\neck}  = e^{(2S-|s|) p \delta} .\end{equation}
\end{enumerate}

We define a norm $| \_|_{p,S}$ on $L^{p} \left( (u \#_{S} v)^{*} TM \otimes \Omega^*  D^2 \right) $ by weighting  the usual  $L^p$-norm  using $\kappa_{\delta, S}$
\begin{equation} \label{eq:weighted_norms_differential_S} |Y|^p_{p,S} = \int_{\Sigma_S}|Y|^{p} \kappa_{\delta, S} .\end{equation}
The same formula defines norms on the vector spaces $L^{p} \left( (u \#_{S} v)^{*} TM \otimes_{\bC} \Omega^{0,1} D^2 \right)$ and $L^{p} \left( (u \#_{S} v)^{*} TM \otimes_{\bC} \Omega^{1,0} D^2 \right)$.

Following Definition \ref{def:exponential_decay_sobolev_space_vector_fields}, we next define a norm $| \_|_{1,p,S,\delta}$ on vector fields along on the strip $\I{-2S}{2S}$
\begin{equation} \label{eq:weight_S_varying_sobolev_strip}| X_\bI|^p_{1,p,S,\delta} = \int_{\I{-2S}{2S}} \left(|X_\bI|^p +|\nabla X_\bI|^p \right) e^{(2S-|s|)p \delta} ds dt \end{equation}
With this notation, we define a norm on $W^{1,p}\left( (u \#_{S} v)^{*}(TM),  (u \#_{S} v)^{*}(TL) \right)$:
\begin{equation} \label{eq:weighted_norms_tangent_vectors_S}| X|^p_{1,p,S}  = \left| X|_{\Sigma_{S} - \Im(\xi_{S,\neck} )} \right|^p_{1,p} + |X(\xi_{S,\neck}(0,0))|^p +| X \circ  \xi_{S,\neck} - \Pi_{u(1)}^{u \#_{S} v} (X(\xi_{S,\neck}(0,0))|^{p}_{1,p,S,\delta}\end{equation}
where $\Pi_{u(1)}^{u \#_{S} v}$ is the parallel transport with respect to the Levi-Civita connection $\nabla$ from $u \#_{S} v \circ \xi_{S,\neck}(s,t)$ to $u(1)=u \#_{S} v \circ \xi_{S,\neck}(0,t)$ along the image of the horizontal segment on the strip (we shall later prune the subcript and superscript from $\Pi$ to simplify the notation).

\begin{rem}
We write $| \_|_{1,p,\theta,S}$ for the norm obtained on $D^{2}$ by identifying it with $\Sigma_{S}$ using the bi-holomorphism $r_{\theta} \circ \phi_{S}$.  This is the metric one would use were one to study a gluing problem occurring at the boundary point $e^{i\theta}$.  We shall abuse notation and write $| \_|_{1,p,S}$ for $| \_|_{1,p,0,S}$.
\end{rem}
The proof of the next result is discussed in Section \ref{sec:proof-results-gluing}:
\begin{lem} \label{lem:Sobolev_complete_norm}
The norm $| \_|_{1,p,S}$ defines a complete norm on the Sobolev space of vector fields $W^{1,p} \left( (u \#_{S} v)^{*} TM ,  (u \#_{S} v)^{*} TL \right)$.
\end{lem}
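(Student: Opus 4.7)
The plan is to show that, for each fixed $S$, the norm $|\_|_{1,p,S}$ is equivalent, up to constants depending on $S$, to the standard Sobolev norm $|\_|_{1,p}$ on $W^{1,p}\left((u\#_{S}v)^{*}TM,(u\#_{S}v)^{*}TL\right)$. Since the latter is a complete Banach space by classical Sobolev theory on the compact surface with boundary $\Sigma_{S}$, this equivalence will imply completeness of $|\_|_{1,p,S}$.

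First I would verify the norm axioms. Homogeneity and the triangle inequality follow from the corresponding properties of each constituent summand in \eqref{eq:weighted_norms_tangent_vectors_S}. For positive definiteness, if $|X|_{1,p,S}=0$, then the first term forces $X\equiv 0$ on $\Sigma_{S}-\Im(\xi_{S,\neck})$, the second forces $X(\xi_{S,\neck}(0,0))=0$, and then the third forces $X\circ\xi_{S,\neck}\equiv 0$ on the neck (since the parallel-transport correction vanishes), so $X\equiv 0$.

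Next I would establish the two-sided comparison with the standard norm. Since $\Sigma_{S}$ is compact for fixed $S$, the weight $\kappa_{\delta,S}$ is bounded between $1$ and $e^{2Sp\delta}$, so the weighted $L^{p}$ integrals appearing in \eqref{eq:weight_S_varying_sobolev_strip} are comparable to the unweighted $L^{p}$ integrals with multiplicative constants depending on $S$. Because $p$ is large, the Sobolev embedding $W^{1,p}\hookrightarrow C^{0}$ makes the point evaluation $X\mapsto X(\xi_{S,\neck}(0,0))$ a bounded linear functional. The vector field $\Pi X(\xi_{S,\neck}(0,0))$ is obtained by parallel-transporting a single tangent vector along smooth horizontal segments of a smooth map, hence is smooth with $W^{1,p}$ norm bounded by a constant times $|X(\xi_{S,\neck}(0,0))|$, the constant controlled by the connection coefficients of the Levi-Civita connection along the compact image of the neck. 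Applying the triangle inequality to the decomposition $X\circ\xi_{S,\neck}=\Pi X(\xi_{S,\neck}(0,0))+\bigl(X\circ\xi_{S,\neck}-\Pi X(\xi_{S,\neck}(0,0))\bigr)$ in both directions gives the sought equivalence of norms.

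The main obstacle, though mild, is the bookkeeping around the parallel transport: one must verify that $\nabla\bigl(\Pi X(\xi_{S,\neck}(0,0))\bigr)$ is pointwise dominated by $|X(\xi_{S,\neck}(0,0))|$ with a constant uniform over the neck (for each fixed $S$), so that the third term in \eqref{eq:weighted_norms_tangent_vectors_S} genuinely controls, together with the second term, the unweighted $W^{1,p}$ norm on the neck. This reduces to pointwise boundedness of the connection one-form of $\nabla$ along the smooth map $u\#_{S}v\circ\xi_{S,\neck}$ restricted to the compact rectangle $\I{-2S}{2S}$, which is immediate. With the equivalence in hand, completeness of $|\_|_{1,p,S}$ follows from the standard completeness of $W^{1,p}$ spaces of sections of bundles over compact manifolds with boundary.
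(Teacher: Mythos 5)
Your proof is correct and follows essentially the same route as the paper: the paper likewise establishes a two-sided comparison between $|\_|_{1,p,S}$ and the standard Sobolev norm $|\_|_{1,p,g_S}$ induced by the metric on $\Sigma_S$, using the Sobolev embedding for the point evaluation and a bound (its Lemma on $|\nabla X|_{p,S}$) for the parallel-transported constant vector field, exactly as you outline. The only addition on your part is the explicit check of the norm axioms, which the paper leaves implicit.
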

\begin{rem} \label{rem:L_p_norm_tangent_vectors} 
In Section \ref{sec:injectivity_derivative} we shall have occasion to use an $L^{p}$ norm on tangent vector fields.  For a vector field on a finite strip, it is defined by
\begin{equation} \label{eq:weight_S_varying_sobolev_strip_L_p}\left|X_\bI\right|^p_{p,S,\delta} = \int_{\I{-2S}{2S}} \left|X_\bI\right|^p e^{(2S-|s|) p \delta} ds dt \end{equation}
which we extend to a norm $| \_|_{p,S}$ on vector fields  on $\Sigma_{S}$ valued in $TM$ in exactly the same way as in Equation  \eqref{eq:weighted_norms_tangent_vectors_S}.  In particular, the appearance of the parallel transport of the value at the origin distinguishes this norm from the weighted $L^p$ norm of Equation  \eqref{eq:weighted_norms_differential_S} (of course, the two norms are on different Sobolev spaces, but it is easy to forget this since the tangent bundle of $D^2$ is trivial).
\end{rem}

\subsection{Construction of the right inverse} \label{sec:right_inverse}
Recall that the Banach bundle $T \cF^{1,p,\delta}_{\cod{1}{L}}(L)$, which controls the tangent space of the codimension $1$ boundary strata of $\Pbar(L,0)$, was defined following Equation \eqref{eq:fibre_product_dbar_gluing}. We begin by defining maps 
\begin{equation} \label{eq:pre-gluing_tangents} \predGext_{S} \co T \cF^{1,p,\delta}_{\cod{1}{L}}(L) |K \to \preGext^{*}_{S}  T\cF^{1,p}_{\cP_{0,1}} (L),  \end{equation}
which we think of as pre-gluings of tangent vector fields.  We write $  \predGext $ for the union of these maps over all $S$,  formally yielding a map of Banach bundles over $K \times [0,+\infty)$.

Explicitely, the fibre of $T \cF^{1,p,\delta}_{\cod{1}{L}}(L)$ at $(u,\theta, v)$ consists of deformation of $R$ and $\theta$, as well as vector fields $X_u$ and $X_v$ along $u$ and $v$ which agree at the evaluation point
\begin{equation}  X_{u}(1) - X_{v}(-1) =0  . \end{equation}

The first step is to define a vector field $X_{u} \#_{S} X_{v}$ along $u \#_{S} v$.  As usual, this will be defined using the decomposition
\begin{equation} \Sigma_{S} = \Im(\iota^{\cP,S}_{0})  \cup \Im(\xi_{S,\neck}) \cup \Im(\iota^{\cS,S}_{0}) . \end{equation}
On the two components of the complement of the neck, $X_{u} \#_{S} X_{v}$ agrees respectively with $X_{v}$ and $X_{u}$:
\begin{align} X_{u} \#_{S} X_{v} \circ \iota^{\cP,S}_{0} = X_{u}  & \text{ on $\Im(\iota^{\cP,S}_{0})$} \\
 X_{u} \#_{S} X_{v} \circ \iota^{\cS,S}_{0} = X_{v} & \text{ on $ \Im(\iota^{\cS,S}_{0})$.}  \end{align}
Along the neck, $X_{u} \#_{S} X_{v}$ interpolates between these two vector fields:
\begin{align} \notag
X_{u} \#_{S} X_{v} \circ \xi_{S,\neck} & = \Pi_{u(1)}^{u \#_{S} v \circ \xi_{S,\neck} } X_{u}(1) + \chi_{S}  \left( \Pi_{u \circ \xi_{1} \circ \tau_{2S} }^{u \#_{S} v \circ \xi_{S,\neck} } X_{u} \circ  \xi_{1} \circ \tau_{2S} - \Pi_{u(1)}^{u \#_{S} v \circ \xi_{S,\neck} } X_{u}(1)  \right)   \\  
\label{eq:formula_gluing_vector_fields} & 
+ (1-\chi_{-S}) \left(  \Pi_{v \circ \xi_{-1} \circ \tau_{-2S} }^{u \#_{S} v \circ \xi_{S,\neck} }  X_{v} \circ  \xi_{-1} \circ \tau_{-2S}  - \Pi_{u(1)}^{u \#_{S} v \circ \xi_{S,\neck} } X_{u}(1) \right) 
.\end{align}
Here, $\Pi$ stands either for parallel transport from $u(1) = v(-1)$ along the images of horizontal lines under $u \#_{S} v$, or for parallel transport along minimal geodesics from the image of $u$ (or $v$) to the image of $u \#_{S} v$.    We are following the conventions of \cite{FOOO} (see in particular Equation (7.1.23)), so that the cutoff for the vector fields $X_u$ and $X_v$ takes place in a region where $u \#_{S} v$ agrees with one of the maps $u$ and $v$.   In particular, if $s < -S-1$, $\chi_{S}(s,t) = \chi(s-S)$ is equal to $1$, whereas $1-\chi_{-S}$ vanishes; in this region, we recover $X_{u}$.  Similarly, the vector field we constructed agrees with $X_{v}$ when $s>S+1$.

We now state our first estimate concerning the behaviour of $\predGext_{S}$ as $S$ grows.  The proof is given in Section \ref{sec:proof-results-gluing}:
\begin{lem} \label{lem:pre-gluing_tangent_vectors_bounded} 
The pre-gluing map \eqref{eq:pre-gluing_tangents}, given for fixed $S$ by \begin{equation} \label{eq:definition_pre-gluing_vector_fields} \predGext_{S} (X_{v}, r \partial_{R}, \lambda \partial_{\theta},  X_{u})  = \left(r \partial_{R} , \lambda \partial_{\theta}, X_{u} \#_{S} X_{v} \circ \phi^{-1}_S\right) ,\end{equation}
is a strongly continuous map of Banach bundles over $K \times [S_K , +\infty)$, whose restriction to any fibre is uniformly bounded, and whose restriction to a slice $K \times \{ S \}$ is uniformly continuous and smooth.   Moreover if $X_u$ and $X_v$ extend to smooth vector fields on $D^2$, then 
\begin{equation} \label{eq:pre-gluing_converges_toisometry} \lim_{S \to +\infty} \left|  \predGext_{S}(X_{v}, r \partial_{R}, \lambda \partial_{\theta},  X_{u}) \right|_{1,p,S} = \left| \left(X_{v}, r \partial_{R}, \lambda \partial_{\theta},  X_{u}\right)  \right|_{1,p,\delta} .\end{equation}
\end{lem}

We now define an approximate inverse
\begin{equation}  \label{eq:approximate_inverse} \tilde{Q} \co \preGext^{*} \cE^p_{\cP_{0,1}} (M)  \to  \preGext^{*}  T\cF^{1,p}_{\cP_{0,1}} (L) .\end{equation}

Given a $TM$-valued anti-holomorphic $1$-form along the image of $u \#_{S} v$,  
\begin{equation*} Y \in L^{p}\left((u \#_{S} v)^{*} TM \otimes \Omega^{0,1} D^2\right), \end{equation*}  
we use parallel transport to construct such a $1$-form along the image of $v$:
\begin{align} Y_{v} &   \in  L^{p}\left(v^{*}(TM) \otimes \Omega^{0,1} D^2 \right) \\ Y_{v} (p)& = \begin{cases} \tilde{\Pi}_{u \#_{S} v \circ  \iota^{\cS,S}_{2S}}^{v}(Y(p)) & \text{on $D^{2} -  \xi_{-1}(\I{-\infty}{-2S})$} \\
0 & \text{otherwise.} \end{cases} \end{align}
Here, the parallel transport goes from $u \#_{S} v \circ  \iota^{\cS,S}_{2S}  (p)$ to $v(p)$ along the minimal geodesic between them (recall that $S$ has been chosen large enough for this to make sense). Similarly, we define 
\begin{align}
Y_{u} & \in  L^{p}(v^{*}(TM) \otimes \Omega^{0,1}) \\
Y_{u} (p) & =  \begin{cases} \tilde{\Pi}_{u \#_{S} v \circ  \iota^{\cP,S}_{2S} }^{v}(Y(p)) & \text{on $D^{2} -  \xi_{-1}(\I{2S}{+\infty})$} \\
0 & \text{otherwise.} \end{cases} 
\end{align}
Composing with $\phi_{S}$, these two maps define a map of bundles over $K \times [S, +\infty)$ 
\begin{equation} \label{eq:breaking_1_forms_holomorphic} \B \co  \preGext^{*} \cE^p_{\cP_{0,1}} (M)  \to  \cE^{p,\delta}_{\cod{1}{L}}(M)  .\end{equation}
which we call the {\bf breaking map}.  Recall that outside of $\xi_{S,\neck}(\I{-1}{1})$, the map  $u \#_{S} v$ agrees with $u$ or $v$, so the parallel transport maps appearing in the definition of $B$ are the identity away from this region.  Since we've assumed $S$ to be sufficiently large, and the metric $g$ to be hermitian, $\tilde{\Pi}$ preserves the norm of vector fields whenever $z$ lies in the image of $\xi_{S,\neck}| \I{-1}{1}$. 
\begin{lem}
The breaking map is a strongly continuous map of Banach bundles over $K \times [S_K,+\infty)$ whose restriction to a slice is uniformly continuous and smooth.  Moreover, the restriction of the breaking map $B$ to any fibre is an isometric embedding, i.e.
\begin{equation}|Y_u|_{p,\delta} + |Y_v|_{p,\delta} =|Y|_{p,S}. \end{equation}
 \noproofe
\end{lem}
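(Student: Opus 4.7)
The plan is to reduce the isometric embedding identity (interpreted as $|Y_u|_{p,\delta}^p + |Y_v|_{p,\delta}^p = |Y|_{p,S}^p$, the natural meaning of ``isometric embedding'' for $L^p$-type maps) to a pointwise matching of weights, and then to handle the continuity and smoothness assertions by a standard approximation argument.

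First I would unpack the geometry of the decomposition. The supports of $Y_u$ and $Y_v$, transported via the relevant inclusions to $\Sigma_S$, cover $\Sigma_S$ and meet only along the arc $\xi_{S,\neck}(\{0\} \times [-1,1])$, a null set; hence $\Sigma_S$ splits, up to measure zero, into two pieces corresponding respectively to $Y_u$ and $Y_v$.

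Next I would verify that the weights line up. Away from the neck, $u \#_{S} v$ agrees with $u$ or $v$, so $\tilde{\Pi}$ is the identity; both weights equal $1$ and the integrands coincide tautologically. On the $u$-half of the neck, the identification $\xi_{S,\neck}(s,t) = \xi_1^{4S}(\tau_{2S}(s,t))$ gives
\[ \kappa_{\delta, S}(\xi_{S,\neck}(s,t)) = e^{(2S - |s|) p \delta} = e^{(s+2S) p \delta} \]
for $s \in [-2S, 0]$, which matches the weight $e^{\delta p s'}$ at $s' = s+2S \in [0, 2S]$ used in the $|\cdot|_{p,\delta}$ norm on $D^2$ along the end $\xi_1$; a symmetric calculation using $s'' = s - 2S$ and the end $\xi_{-1}$ handles the $v$-half. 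Since each $g_{z,R}$ is hermitian for $J_{z,R}$, the connection $\tilde{\nabla} = \nabla - \frac{1}{2} J \nabla J$ is unitary on the complex tangent bundle, so $\tilde{\Pi}$ preserves pointwise norms fibrewise. Integrating then yields the equality.

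For the continuity claims, smoothness of $B|_{K \times \{S\}}$ is immediate from the definition: the only $K$-dependent ingredients are $u$, $v$, $\theta$, and the basepoints used in $\tilde{\Pi}$, and all the operations composing $B$ depend smoothly on these. Strong continuity in $S$ follows by first establishing it for $Y$ smooth and compactly supported in the interior (where dominated convergence handles the varying domains of integration), and then extending to arbitrary $Y \in L^p$ using the uniform norm bound provided by the isometric embedding; the same uniform bound also gives uniform continuity along slices. The main subtle point throughout is the bookkeeping of the $\pm 2S$ shifts between the various strip-like coordinates: these must be tracked with care to see that the weights cancel on the nose on the neck. Once that is in hand the rest of the argument is routine.
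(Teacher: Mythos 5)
Your proof is correct and follows the route the paper takes implicitly (the lemma is stated there without proof, resting on the two sentences preceding it): off the neck the parallel transports are the identity, on the neck $\tilde{\Pi}$ is a pointwise isometry because $\tilde{\nabla} = \nabla - \tfrac{1}{2}J\nabla J$ is a metric connection for the hermitian metrics $g_{z,R}$, and the exponential weights agree under the $\pm 2S$ shifts relating the neck coordinate to the strip-like ends $\xi_{\pm 1}$. Your reading of the identity with $p$-th powers is the honest one given the integral definitions of the norms; the paper conflates summing piece-norms with summing their $p$-th powers, and since only the uniform boundedness of $B$ is ever used, nothing is affected.
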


Together with Lemma \ref{lem:pre-gluing_tangent_vectors_bounded}, this immediately implies
\begin{lem}
The map $\tilde{Q}$ of \eqref{eq:approximate_inverse} which is defined by
\begin{equation} \tilde{Q} = \predGext \circ Q_{\cod{1}{L}} \circ B  \end{equation}
is a strongly continuous map of Banach bundles over $K \times [S_K , +\infty)$, whose restriction to any fibre is uniformly bounded, and whose restriction to a slice $K \times \{ S \}$ is uniformly continuous and smooth.\noproof
\end{lem}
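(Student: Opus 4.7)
The plan is to verify each of the three conclusions separately, by observing that each of the three constituent maps $\predGext$, $Q_{\cod{1}{L}}$, and $B$ satisfies the corresponding property, and that these properties are preserved under composition so long as each factor is uniformly bounded.

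For the fibre-wise uniform bound, three facts combine: the breaking map $B$ is a fibre-wise isometric embedding, hence of norm at most one; the right inverse $Q_{\cod{1}{L}}$ is a fixed bounded section over the compact subset $K$, hence has uniformly bounded operator norm; and $\predGext$ is uniformly bounded on fibres by Lemma \ref{lem:pre-gluing_tangent_vectors_bounded}. The uniform bound for $\tilde{Q}$ is then at most the product of the three.

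For smoothness and uniform continuity on each slice $K \times \{S\}$, the same sources apply. The slice $B_{S}$ depends smoothly on $(u,\theta,v)$ through parallel transport along minimal geodesics and a fixed cutoff function, with derivatives uniformly bounded on the compact set $K$; $Q_{\cod{1}{L}}$ is smooth and uniformly bounded over $K$ by construction; and $\predGext_{S}$ has the stated properties by Lemma \ref{lem:pre-gluing_tangent_vectors_bounded}. The composition of uniformly continuous, uniformly bounded, smooth maps of Banach bundles inherits both properties.

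The main step, and the principal obstacle, is strong continuity in the full two-parameter base $K \times [S_K,+\infty)$, particularly in the gluing parameter $S$. Given a convergent sequence $(u_i,\theta_i,v_i,S_i) \to (u,\theta,v,S)$ and a target fibre element $Y$, I would pull back $Y$ along parallel transport to obtain elements $Y_i$ in each preceding fibre, and propagate the convergence through the composition in order. Strong continuity of $B$ on Banach bundles, already established, gives $B(Y_i) \to B(Y)$ in $\cE^{p,\delta}_{\cod{1}{L}}(M)|_{(u,\theta,v)}$; smoothness of $Q_{\cod{1}{L}}$ over $K$, which is independent of $S_i$, then yields $Q_{\cod{1}{L}} B(Y_i) \to Q_{\cod{1}{L}} B(Y)$; and strong continuity of $\predGext$ produces the desired final limit. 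The uniform operator-norm bounds at each intermediate stage prevent any accumulation of error in these passages to the limit.
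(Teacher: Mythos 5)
Your proposal is correct and follows exactly the route the paper intends: the lemma is stated there with no proof, as an immediate consequence of the properties of the breaking map $B$ (strongly continuous, fibre-wise isometric) and of $\predGext$ (Lemma \ref{lem:pre-gluing_tangent_vectors_bounded}), combined with the fixed smooth bounded right inverse $Q_{\cod{1}{L}}$ over the compact set $K$. Your observation that the uniform operator-norm bounds are what allow strong continuity to pass through the composition is precisely the point that makes the "immediate" implication work.
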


To justify our designation of $\tilde{Q}$ as an approximate inverse, we shall prove the next result in Section \ref{sec:proof-results-gluing}:
\begin{lem}[Compare Lemma 7.1.32 of \cite{FOOO}] \label{lem:approximate_right_inverse}
The norm of the operator
\begin{equation} D_{\cP_{0,1}} \circ \tilde{Q} - \id \co \preGext^{*} \cE^p_{\cP_{0,1}} (M) \to \preGext^{*} \cE^p_{\cP_{0,1}} \end{equation}
is bounded by a constant multiple of $e^{-2 \delta S}$. 
\end{lem}
\begin{cor} \label{cor:uniformly_bounded_inverse}
The composition $D_{\cP_{0,1}} \circ \tilde{Q}$ defines an isomorphism of Banach bundles
\begin{equation} \preGext^{*} \cE^{p}_{\cP_{0,1}}(M) \to \preGext^{*}\cE^{p}_{\cP_{0,1}}(M)  \end{equation}
with respect to the strong topology.  Its restriction to a slice $K \times \{ S \}$ is a diffeomorphism of Banach bundles such that the composite
\begin{equation} Q_{\preGext} = \tilde{Q}  \circ (D_{\cP_{0,1}}  \circ \tilde{Q} )^{-1}  \end{equation}  
is a uniformly bounded right inverse to $D_{\cP_{0,1}} $. \noproof
\end{cor}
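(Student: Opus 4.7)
The plan is to deduce this corollary as a formal consequence of Lemma \ref{lem:approximate_right_inverse}, together with the already-established continuity and boundedness properties of $\tilde{Q}$. The strategy is a Neumann series argument, performed uniformly in $S$.

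First I would fix $S$ large enough (depending only on the constants from Lemma \ref{lem:approximate_right_inverse}) so that the operator norm of $D_{\cP_{0,1}} \circ \tilde{Q} - \id$ on the fibre over any $(u,\theta,v,S) \in K\times[S_K,+\infty)$ is bounded by, say, $1/2$. Then on every fibre the operator $D_{\cP_{0,1}}\circ \tilde{Q}$ is invertible with inverse given by the convergent Neumann series $\sum_{k\geq 0}(\id - D_{\cP_{0,1}}\circ\tilde{Q})^k$, whose operator norm is uniformly bounded by $2$. This gives a fibrewise bijective bounded operator on $\preGext^{*}\cE^{p}_{\cP_{0,1}}(M)$ whose inverse is uniformly bounded. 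Setting $Q_{\preGext} = \tilde{Q}\circ(D_{\cP_{0,1}}\circ \tilde{Q})^{-1}$ then gives a right inverse to $D_{\cP_{0,1}}$ by construction, and its operator norm is bounded by the product of the bound \eqref{eq:Q_bounded} on $\|\tilde{Q}\|$ and the Neumann bound on $\|(D_{\cP_{0,1}}\circ \tilde{Q})^{-1}\|$, both of which are uniform in $(u,\theta,v,S)$.

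Next I would address the topological statements. Strong continuity of $D_{\cP_{0,1}}\circ \tilde{Q}$ as a map of Banach bundles follows from the strong continuity of $\tilde{Q}$ (stated just after \eqref{eq:approximate_inverse}) together with the uniform boundedness of $D_{\cP_{0,1}}$ proved in Corollary \ref{cor:linearisation_dbar_bounded}; strong continuity of the Neumann-series inverse then follows from a standard argument: if $A_i \to A$ strongly and all $A_i$, $A$ are invertible with uniformly bounded inverses, then $A_i^{-1}\to A^{-1}$ strongly, since for any vector $y$ one writes $A_i^{-1}y - A^{-1}y = A_i^{-1}(A-A_i)A^{-1}y$ and uses $\|A_i^{-1}\|\leq C$ together with $(A-A_i)(A^{-1}y)\to 0$. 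For smoothness along a slice $K\times\{S\}$, the restriction of $\tilde{Q}$ and of $D_{\cP_{0,1}}$ are smooth maps of Banach bundles, so $D_{\cP_{0,1}}\circ \tilde{Q}$ is smooth there; inversion of linear isomorphisms of Banach spaces is a smooth operation in the operator-norm topology, and the restriction to a slice is in fact uniformly continuous by Corollary \ref{cor:linearisation_dbar_bounded} and the corresponding statement for $\tilde{Q}$, yielding uniformly continuous smoothness of $(D_{\cP_{0,1}}\circ \tilde{Q})^{-1}$ and hence of $Q_{\preGext}$ on slices.

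The only subtle point is that we are working with the strong topology rather than the operator-norm topology across different slices, because the norms $|\cdot|_{1,p,S}$ and $|\cdot|_{p,S}$ change with $S$. I expect this to be the main (minor) obstacle: one has to be careful that the Neumann-series bound is uniform in $S$ and hence yields the uniform bound on $Q_{\preGext}$, and that the identification of fibres used in \eqref{eq:continuity_properties_right_inverse} is compatible with the one used to define strong continuity of $D_{\cP_{0,1}}\circ \tilde{Q}$. Once these compatibilities are in hand, the argument above goes through, and the statement follows formally from Lemma \ref{lem:approximate_right_inverse} and the boundedness established in Corollary \ref{cor:linearisation_dbar_bounded}.
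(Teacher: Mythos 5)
Your proposal is correct and is precisely the argument the paper intends: the corollary is stated as an immediate consequence of Lemma \ref{lem:approximate_right_inverse}, and the implicit proof is exactly your Neumann series inversion of $D_{\cP_{0,1}}\circ\tilde{Q}$ for $S$ large, with the uniform bound, strong continuity of the inverse, and smoothness on slices following by the standard arguments you give.
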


\section{Local injectivity of the gluing map}
In Equation \eqref{eq:extended_gluing_map_with_epsilon}, we defined $ \Gext_{\epsilon} $ to be the gluing map resulting from applying the version of the implicit function theorem stated in Proposition \ref{prop:statement_implicit_function_theorem}. Let $\Gext$ denote the restriction of $\Gext_{\epsilon}$ to the zero-section, and define the gluing map 
\begin{equation} \label{eq:gluing_map} \G \co K \times [S,+\infty) \to \cP(L;0) \end{equation}
to be the composition of $\Gext$ with the fibre bundle 
\begin{align} \label{eq:projection_of_boundary_marked_point}  \cP_{0,1}(L;0) & \to  \cP(L;0)  \\
(R, \theta,w) & \mapsto (R, w \circ r_{-\theta}) \end{align}
with fibres $S^1$.

The main result of this section is
\begin{prop} \label{prop:local_injectivity_gluing}
For $S$ sufficiently large, the restriction  $\G_{S}$ of \eqref{eq:gluing_map} to a slice $K \times \{ S \}$ is a smooth immersion.  Moreover, each point of $\cod{1}{L}$ has a neighbourhood independent of $S$ on which the restriction of $\G_{S}$ is injective.
\end{prop}
We shall find a vector in $\ker D_{\cod{1}{L}}$ but not in the tangent space to $K$ whose image under $\predGext_{S}$ becomes arbitrarily close to the tangent space of the fibres of \eqref{eq:projection_of_boundary_marked_point}.  Moreover, we shall prove that the difference between the restriction of $\predGext_{S}$ to $TK$ and the differential of $\Gext_{S}$ decays with $S$.  Using Equation \eqref{eq:pre-gluing_converges_toisometry}, we conclude that the differential of $\Gext_{S}$ is injective for large $S$.  This will imply that the image of a tangent vector to $K$ under the differential of $\Gext_{S}$ cannot lie in the tangent space to the fibres of \eqref{eq:projection_of_boundary_marked_point}, and hence that $d\G_{S}$ is also injective.

In order to be more precise, let us introduce the notation $u^{\sharp} = (u,\theta,v)$ for an element of $\cod{1}{L}$, and write
\begin{align}
w^{\natural}_{S} & \equiv \preGext_{S}(u^{\sharp})  \\
w^{\flat}_{S} & \equiv \Gext_{S}(u^{\sharp}) 
\end{align}
At $  w^{\flat}_{S}  $, the tangent space to the fibres of \eqref{eq:gluing_map} is spanned by the vector
\begin{equation} \label{eq:tangent_vector_circle} dw^{\flat}_{S} \left( \partial_{\theta} \right)   = \left(0, \partial_{\theta}, dw_{S} \left( \partial_{\theta} \right)\right).\end{equation}
where we use $ \partial_{\theta}$ to denote the infinitesimal generator of rotation on both $S^1$ and $D^2$.
Recall that $\partial_{p}$  stands for  the unique holomorphic vector field on $D^2$ which vanishes at $-1$ and agrees with $\partial_{\theta}$ at $1$, and that the kernel of $D_{\cod{1}{L}}$ at any point $(R,u,\theta,v)$ as a direct sum decomposition as in Equation \eqref{eq:decomposition_kernel}
\begin{equation} \label{eq:decomposition_kernel_again} \aut(D^2,-1) \oplus T \cod{1}{L}   \tilde{\to} \ker D_{\cod{1}{L}}, \end{equation}
where $\aut(D^2,-1)$ is spanned by the vector fields $\partial_{p}$ and $\partial_{s}$, whose image in the right hand side is obtained by applying the differential of $v$.  The next result, proved in Section \ref{sec:injectivity_derivative}, identifies the vector field in the image of $\predGext_{S}$ which is close to the kernel of the projection map to $\cP(L;0)$:
\begin{lem} \label{lem:pre-gluing_parabolic_close_to_rotation}
There is an exponential decay bound
\begin{equation}\label{eq:decay_difference_rotation_parabolic} 
\left| e^{-4S} dw^{\flat}_{S} \left( \partial_{\theta} \right) - \Pi_{w^{\natural}_{S}}^{w^{\flat}_{S}}  \predGext_{S} \left( dv\left(  \partial_{p} \right) \right)  \right|_{p,S} = \rO(e^{- S}) \end{equation}
\end{lem}

In order to prove the injectivity of the derivative of the gluing map, we shall also have to control its behaviour in the other directions of the tangent space of the source.  This can be done by comparing it with the map $ \predGext_S $, although we have to apply the appropriate parallel transport maps (and projections) in order to do this.  Note that the quadratic inequality implies that the composition
\begin{equation} \Pi_{w^{\natural}_{S}}^{w^{\flat}_{S}}   \circ Q_{\preGext_{S}} \circ    \Pi_{w^{\flat}_{S}}^{w^{\natural}_{S}}   \end{equation}
is arbitrarily close to a right inverse to $D_{\cP_{0,1}}$ along the image of $\Gext_{S}$.    In particular, there is a unique right inverse $Q_{\Gext_{S}} $ whose image agrees with the image of this map, and which is moreover uniformly bounded.  We write
\begin{equation} \kappa_{\Gext_{S}}  \equiv \id - Q_{\Gext_{S}} \circ D_{\cP_{0,1}}  \end{equation}
for the projection to $\ker D_{\cP_{0,1}}$ along $Q_{\Gext_{S}}$.    By composing with  $  \Pi_{w^{\natural}_{S}}^{w^{\flat}_{S}} \circ \predGext_S$, we obtain a map
\begin{equation} \label{eq:map_gluing_kernels} \kappa_{\Gext_{S}} \circ  \Pi_{w^{\natural}_{S}}^{w^{\flat}_{S}}  \circ  \predGext_S \co   \ker D_{\cod{1}{L}}|K \to \Gext_{S}^{*} T \cP_{0,1} (L;0). \end{equation}
Our main bound on the derivative of $ \Gext_{S}$ follows from this result whose proof also appears in Section \ref{sec:injectivity_derivative}:
\begin{lem} \label{lem:image_differential_gluing_transverse_circle}
The restriction of \eqref{eq:map_gluing_kernels} to $ TK \oplus \langle dv \left( \partial_{s} \right) \rangle  $ is transverse to the subspace spanned by $ d w^{\flat}_{S}  \left( \partial_{\theta} \right)$. Moreover, its restriction to the tangent space of $K$ satisfies
\begin{equation} \left\| d\Gext_{S} - \kappa_{\Gext_{S}}  \circ  \Pi_{w^{\natural}_{S}}^{w^{\flat}_{S}}  \circ   \predGext_S \right\| = \rO\left( e^{-2\delta S} \right). \end{equation}
\end{lem}

None of the proofs in this section require the use of an implicit function theorem except in quoting results  from the previous section.  Moreover, all the important operators we need to bound have sources which are finite dimensional vector spaces of smooth vector fields.  As we shall see, this implies that we can often work with the $L^p$-norm in the target, and a stronger norm on the source, which simplifies computations.

\subsection{The derivative pre-gluing of tangent vectors}
Our first observation concerns the restriction of pre-gluing to the kernel of the operator \eqref{eq:fibre_product_dbar_gluing}.
\begin{lem} \label{lem:pre-gluing_uniform_bilipschitz}
The restriction of $\predGext_{S}$ to 
\begin{equation} \ker D_{\cod{1}{L}}  \subset T \cF^{1,p,\delta}_{\cod{1}{L}}(L) \end{equation}
is  bi-lipschitz (uniformly in $S$) with respect to both (i) the $W^{1,p}$ norms $| \_ |_{1,p,\delta}$ on the source and  $| \_ |_{1,p,S}$ on the target and (ii) the $L^p$ norms $| \_ |_{p,\delta}$ and $| \_|_{p,S}$.  Moreover, the bi-lipschitz constant converges to $1$.
\end{lem}
\begin{proof}
This follows immediately from elliptic regularity, Equation \eqref{eq:pre-gluing_converges_toisometry}, and the fact that $\ker D_{\cod{1}{L}}$  is finite-dimensional.
\end{proof}

Given such a vector field $X^{\sharp} \in  \ker D_{\cod{1}{L}}$, we observe that $D_{\cP_{0,1}} \predGext_{S}(X^{\sharp} )$ is supported on the image of $\xi_{S,\neck} \left(  \I{-S-1}{S+1} \right)$.  Expressing  $\predGext_{S}(X^{\sharp} )$ as the sum of the two vector fields coming from the two sides of the gluing, it is easy to check that whenever one of the vector fields is not holomorphic, its $C^1$-norm is bounded by $\rO\left( e^{-2S} \right)$.    Since the weight is bounded by $e^{-2 \delta S}$ we conclude an  analogue of Equation \eqref{eq:norm_dbar_exponentially_decay}  for the pre-gluing of tangent vectors.
\begin{lem}  \label{lem:tangent_pre-gluing_almost_holomorphic}
The composition of $D_{\cP_{0,1}}$ with $\predGext_{S}$ is an operator
\begin{equation}  \ker D_{\cod{1}{L}}  \to \preGext_{S}^{*} \cE_{\cP_{0,1}}^{p}(M)\end{equation}
whose operator norm with respect to $| \_ |_{1,p,S}$  is bounded by a constant multiple of $e^{-2 (1-\delta) S}$.   \noproof
\end{lem}

Note that we have an inclusion
\begin{equation} TK \subset \ker D_{\cod{1}{L}}|K \end{equation}
which we extend to
\begin{equation} TK \times [S, +\infty) \hookrightarrow \ker D_{\cod{1}{L}}|K \end{equation}
by identifying the tangent space to the gluing parameter $S$ with the vector field in the kernel of $D_{\cod{1}{L}}$  spanned by translation along the strip.  The next Lemma compares the derivative of pre-gluing curves with the result of pre-gluing vector fields.

\begin{lem} \label{eq:differential_pre-gluing_preluing_tangents_almost_equal}
The difference
\begin{equation} d \preGext_{S} - \predGext_{S}  \co  TK \oplus \langle \partial_{s} \rangle \to \preGext_{S}^{*} T \cF^{1,p}_{\cP_{0,1}}(L) \end{equation}
is bounded by a constant multiple of $e^{-2(1-\delta)S}$ in the $| \_ |_{1,p,S}$  norm.
\end{lem}
\begin{proof}
Given such a tangent vector field $X$, the difference between $ d \preGext_{S} (X)$ and $\predGext_{S}(X)$ is supported in $\xi_{S,\neck}(\I{-S-1}{S+1})$.  However, since $X$ is smooth, the pointwise $C^1$-norm of this difference is bounded above by a constant multiple of $e^{-2S}$. Since the integral of the weight is bounded by $e^{2\delta S}$, the result follows.
\end{proof}
This Lemma, together with Lemma \ref{lem:tangent_pre-gluing_almost_holomorphic}, immediately implies:
\begin{cor} \label{eq:image_derivative_pre_gluing_dbar_bounded}
The restriction of $D_{\cP_{0,1}}$ to $\Im(d \preGext_{S})$ is uniformly bounded in $| \_ |_{1,p,S}$   by a constant multiple of $e^{-2 (1-\delta) S}$. \noproof  
\end{cor}

\subsection{The derivative of the gluing map} \label{sec:derivative_gluing}
Consider the composition of $\predGext$ with $ \Pi_{w^{\natural}_{S}}^{w^{\flat}_{S}} $, the parallel transport map from the image of $\preGext$ to the image of the gluing map.  The goal of this section is to prove:
\begin{lem} \label{lem:derivative_gluing_close_to_parallel_transport_pre-gluing}
The operator
\begin{equation} d \Gext_{S}  -  \Pi_{w^{\natural}_{S}}^{w^{\flat}_{S}}  \circ \predGext_{S}|  TK \end{equation}
is bounded in the $| \_|_{p,S}$-norm by a constant multiple of $e^{-2(1-\delta)S}$.
\end{lem}
Since $\Gext$ is defined as the composition of $\preGext$ with $\sol$, using Lemma \ref{eq:differential_pre-gluing_preluing_tangents_almost_equal}, we shall see that  it suffices to prove that the derivative of $\sol$ is bounded.  A straightforward application of Floer's version of the Picard Lemma as stated in Proposition \ref{prop:statement_implicit_function_theorem} yields $C^0$-bounds on $\sol$, and on its derivatives in the direction of vectors in the kernel of $D_{\cP_{0,1}}$, but we're missing a bound on derivatives in directions tangent to $K$.

In particular, let $u^{\sharp} = (u,\theta,v)$ and $u_{X}^{\sharp}$ denote of elements of $K$, and assume that 
\begin{equation} u_{X}^{\sharp} = \exp_{u^{\sharp}} X^{\sharp} \end{equation}
for an element of $X^{\sharp} \in T \cF^{1,p,\delta}_{\cod{1}{L}}(L)$.   We write $w_{S}^{\natural}$ and $w_{S,X}^{\natural}$ for the images of $u^{\sharp}$ and $u_{X}^{\sharp}$ under $\preGext$ for some gluing parameter $S$, while  $ w^{\flat}_{S}$  and $ w^{\flat}_{S,X}$ stand for the images under $\Gext_{S}$.  If $S$ is sufficiently large, we have a vector field $X^{\natural}$ along $w^{\natural}_{S}$ such that  $w^{\natural}_{S,X} = \exp_{w^{\natural}_{S}} X^{\natural}$.  

Combining  Lemmas \ref{lem:pre-gluing_uniform_bilipschitz} and Equation \ref{eq:differential_pre-gluing_preluing_tangents_almost_equal} we find that we may choose $| X^{\sharp}|_{1,p,\delta}$ small enough so  that $| X^{\natural}|_{1,p,S}$ is uniformly bounded by a constant multiple of  $| X^{\sharp}|_{1,p,\delta}$.  In particular, we may assume that it is smaller than the constant $\epsilon$ of Proposition \ref{prop:statement_implicit_function_theorem}, so that we can work in a chart about $ w^{\natural}_{S}$ where the quadratic inequality holds.  In such a chart, Lemma \ref{lem:derivative_gluing_close_to_parallel_transport_pre-gluing} is equivalent to the bound
\begin{equation} \left|  \Pi_{w^{\flat}_{S}}^{w^{\natural}_{S}}\exp^{-1}_{ w^{\flat}_{S}} \left( w^{\flat}_{S,X}  \right) - \predGext_{S} X^{\sharp} \right|_{p,S} = \rO(e^{-2(1-\delta)S})|X^{\sharp}| + \rO(1)|X^{\sharp}|^2 \end{equation}

By applying Lemma \ref{lem:error_add_vector_fields_L_p_S} twice, we find that the first term can be approximated by a sum of vector fields:
\begin{equation} \label{eq:replace_inverse_exponential} \left|   \Pi_{w^{\flat}_{S}}^{w^{\natural}_{S}} \exp^{-1}_{ w^{\flat}_{S}} \left( w^{\flat}_{S,X}  \right)  -  \exp^{-1}_{w^{\natural}_{S}} \left( w^{\flat}_{S,X}    \right) - \sol_{w^{\natural}_{S}}(0) \right|_{p,S} = \rO(1)|X^{\sharp}|^2 \end{equation}
According to Lemma \ref{eq:differential_pre-gluing_preluing_tangents_almost_equal}, we have
\begin{equation} \left|  \predGext_{S} X^{\sharp}  - X^{\natural} \right|_{p,S}  =\rO \left(e^{-2(1- \delta)S} \right)|X^{\sharp}| +  \rO(1)|X^{\sharp}|^2 \end{equation}
These two bounds imply that we have to prove
\begin{equation} \label{eq:estimate_inverse_image_nearby_glued_map_original_inverse} \left|\exp^{-1}_{ w^{\natural}_{S}} \left( w^{\flat}_{S,X}  \right) - \left( X^{\natural} + \sol_{w^{\natural}_{S}}(0) \right) \right|_{p,S} =\rO \left(e^{-2(1- \delta)S} \right) |X^{\sharp}| + \rO(1) |X^{\sharp}|^{2}
\end{equation}

Because the vector field $\sol$ is constructed implicitly, we shall obtain \eqref{eq:estimate_inverse_image_nearby_glued_map_original_inverse} in a round about way.  Recall that we have a direct sum decomposition
\begin{equation}{w^{\natural}_{S}}^*  T \cF^{1,p}_{\cP_{0,1}}(L) \cong \Im Q_{\preGext} \oplus \ker D_{\cP_{0,1}},\end{equation}
which induces a uniformly bounded  projection map
\begin{equation} \kappa_{w^{\natural}_{S}}  \co  \preGext_{S}^{*} T \cF^{1,p}_{\cP_{0,1}}(L) \to \preGext_{S}^{*}  \ker D_{\cP_{0,1}} .\end{equation}
By the defining properties of $\sol$, Equations \eqref{eq:sol_in_image_inverse} and \eqref{eq:derivative_gluing_close_to_parallel_transport_pre-gluing_chart}, the restriction of $ \kappa_{w^{\natural}_{S}}$ to the inverse image of $\cP_{0,1}(L;0)$ under the exponential map is the inverse diffeomorphism to $\sol + \id$.   This will allow us to prove:
\begin{lem}
The restriction of  $\sol_{w^{\natural}_{S}} \circ \kappa_{w^{\natural}_{S}} $ to  $\exp_{w^{\natural}_{S}}^{-1}( \Im(\preGext_{S}))$, satisfies a uniform bound:
 \begin{equation}  \left| X^{\natural}  + \sol_{w^{\natural}_{S}}(0) -
 \left( \kappa_{w^{\natural}_{S}}  X^{\natural} + \sol_{w^{\natural}_{S}}( \kappa_{w^{\natural}_{S}}  X^{\natural}) \right)  \right|_{p,S}  =  \rO(e^{-2(1- \delta) S})|X^{\natural}| + \rO(1)|X^{\natural}|^2  \end{equation}
In particular, the restriction of $(\id + \sol_{w^{\natural}_{S}}) \circ \kappa_{w^{\natural}_S} $ to $\exp_{w^{\natural}_{S}}^{-1}( \Im(\preGext_{S}))$ is a diffeomorphism near the origin, and the derivative has uniformly bounded inverse.
\end{lem}
\begin{proof}
Note that the distance between $\Im( d \preGext_{w^{\natural}_{S}})$ and $\exp_{w^{\natural}_{S}}^{-1}( \Im(\preGext_{S}))$ is bounded by a constant multiple of $|X^{\natural}|^2$.  The proof that this constant can be chosen independently of $S$ follows from the exponential decay estimates of Lemma \ref{lem:exponential_decay_bounds}, and the fact that the tangent space of $K$ consists of smooth vector fields whose point-wise norm decays exponentially along the neck.

It suffices therefore to prove that
\begin{equation} \label{eq:bound_extend_sol_pre-gluing_chart} \left|X^{\natural} - 
\left( \kappa_{w^{\natural}_S}  X^{\natural} + \sol_{w^{\natural}_{S}}( \kappa_{w^{\natural}_S}  X^{\natural})  - \sol_{w^{\natural}_{S}}(0) \right) \right| =  \rO(e^{-2(1-\delta) S})|X^{\natural}|  \end{equation}
whenever $X^{\natural}  \in \Im( d \preGext_{w^{\natural}_{S}})$.  To prove this, we observe that Corollary \ref{eq:image_derivative_pre_gluing_dbar_bounded} implies that
\begin{equation}|X^{\natural} - \kappa_{w^{\natural}_S}  X^{\natural}| = \left| Q_{\preGext} \circ D_{\cP_{0,1}} X^{\natural}  \right| =  \rO\left(e^{-2(1- \delta) S} \right)|X^{\natural}| \end{equation}

In addition, by Equation \eqref{eq:exponential_decay_solution},  the $C^1$-norm of $\sol$ is  bounded by $\rO(e^{-2(1-\delta)S})$.  Composing this inequality with $\id + \sol$, we obtain the desired result.
\end{proof}

Returning to Equation \eqref{eq:estimate_inverse_image_nearby_glued_map_original_inverse}, we find that the missing estimate is
\begin{equation} \label{eq:derivative_gluing_close_to_parallel_transport_pre-gluing_chart}   \left| \kappa_{w^{\natural}_S}  X^{\natural} + \sol_{w^{\natural}_{S}}( \kappa_{w^{\natural}_{S}}  X^{\natural})   -  \Pi^{w^{\natural}_{S}}_{w^{\flat}_{S}}   \exp^{-1}_{w^{\flat}_{S}}\left( w^{\flat}_{S,X} \right) \right|_{p,S} \\ =\rO \left(e^{-2(1- \delta)S} \right)|X^{\sharp}| +  \rO(1)|X^{\sharp}|^2 \end{equation}

We shall obtain such a bound by bounding the derivatives of the right inverse $Q_{\preGext}$. First, since $K$ is compact, all reasonable norms on $TK$ are comparable. In particular, we have a universal bound
\begin{equation}| X^{\sharp}|_{C^1} \leq c |X^{\sharp}|_{1,p,\delta}, \end{equation}
where we shall use the $C^1$-norm coming from the standard metric on $D^2$.  Moreover, since $\dbar_{\cod{1}{L}} u^{\sharp}_{X^{\sharp}} = 0$, there is a constant $c_0$ independent of $X^{\sharp}$ such that
\begin{equation}| D_{\cod{1}{L}} X^{\sharp} |_{p,\delta} \leq  c_0|X^{\sharp}|_{1,p,\delta}^{2}.\end{equation}
Also, since $Q_{\cod{1}{L}}$ is a smooth family of right inverses on a compact region, we have a uniform bound 
 \begin{equation} \label{eq:Q_source_bounded_derivatives} \left\| Q_{\cod{1}{L}}  - \Pi^{u^{\sharp}}_{u_{X}^{\sharp}}  \circ Q_{\cod{1}{L}} \circ  \tilde{\Pi}_{u^{\sharp}}^{u_{X}^{\sharp}} \right\| \leq c_1|X^{\sharp}|  \end{equation}
where $\tilde{\Pi}$ is parallel transport with respect to the connection preserving the complex structure.  This operator is acting on the fibre of $\cE^{p,\delta}_{\cod{1}{L}}(M)$ at $u^{\sharp}$ with target $T\cF^{1,p,\delta}_{\cod{1}{L}}(L)$.

Our goal is to obtain a bound similar to \eqref{eq:Q_source_bounded_derivatives} for the right inverse $Q_{\preGext}$ evaluated at $w^{\natural}_{S}$ and $w_{S,X}^{\natural}$:
\begin{lem} \label{lem:uniform_bound_difference_nearby_right_inverse}
If both the source and the target are equipped with the $L^p$-norms $| \_ |_{p,S}$, there is a uniform constant $C$ such that 
\begin{equation} \left\| Q_{\preGext, w^{\natural}_{S}} - \Pi_{w_{S,X}^{\natural}}^{w^{\natural}_{S}} Q_{\preGext, w_{X}^{\natural}} \tilde{\Pi}^{w_{S,X}^{\natural}}_{w^{\natural}_{S}} \right\| \leq C|X^{\sharp}|. \end{equation}
\end{lem}
\begin{proof}
Lemma  \ref{lem:approximate_right_inverse} implies that the difference between  $Q_{\preGext}$ and the approximate inverse $\tilde{Q}$ is uniformly bounded by a constant multiple of $e^{-2\delta S}$.  In particular it shall suffice to prove analogue of \eqref{eq:Q_source_bounded_derivatives}  for the approximate inverses.

Now $\tilde{Q}$ is defined as the composition $\predGext \circ Q_{\cod{1}{L}} \circ B$, so  the desired result follows from estimates
\begin{align} \left\| \tilde{\Pi}_{u^{\sharp}}^{u_{X}^{\sharp}} \circ B  -   B \circ \tilde{\Pi}^{w_{S,X}^{\natural}}_{w^{\natural}_{S}} \right\| & = \rO\left(e^{-2(1-\delta)S} \right)|X^{\sharp}| \\ \label{eq:error_parallel_transport_pre-gluing}
\left\| \predGext \circ \Pi^{u^{\sharp}}_{u_{X}^{\sharp}}  -   \Pi_{w_{S,X}^{\natural}}^{w^{\natural}_{S}}  \circ \predGext  \right\| & = \rO\left(e^{-2(1-\delta)S} \right)|X^{\sharp}|.
 \end{align} 
To prove the first estimate, consider $Y \in L^{p}(w^{*} TM \otimes \Omega^{0,1} D^2)$, and write $(Y_u,Y_v)$ for $B(Y)$.  The main point that $\left( \tilde{\Pi}^{w_{S,X}^{\natural}}_{w^{\natural}_{S}} Y \right)_{u}$ differs from $ \tilde{\Pi}_{u}^{u_{X}}  Y_{u}$ only in $\xi_{1} ( \I{2S-2}{2S})$.  In this region, the parallel transport  between $u$ and $w$ takes place along paths of length bounded by $e^{-2S}$.  The first estimate therefore follows by applying Equation \eqref{eq:parallel_transport_bound_error_family},  and using the fact that the weight is bounded by $e^{-2\delta p S}$.

The second estimate is of a similar nature.  The terms whose difference we're bounding differ only on $\xi_{S,\neck} \left( \I{-S-1}{S+1} \right)$, where the distance between $w$ and $u$ or $v$, as well as the norm of the differential of these functions is bounded by $\rO \left( e^{-S} \right)$. Moreover, the $C^0$-norm of $\nabla X^{\sharp}$  and $\nabla X^{\natural}$ are also bounded by  $ \rO\left(e^{-S} \right)|X^{\sharp}|$.    Applying   Equation \eqref{eq:parallel_transport_bound_error_family} yields the desired inequality.
\end{proof}

In particular there is a unique right inverse $Q^{X}_{w^{\natural}_{S}}$ whose image agrees with the image of $\Pi_{w_{S,X}^{\natural}}^{w_{S}^{\natural}} Q_{\preGext, w_{S,X}^{\natural}} \tilde{\Pi}^{w_{S,X}^{\natural}}_{w^{\natural}_{S}} $.  Moreover, we have a uniform bound
\begin{equation} \label{eq:two_right_inverses_close} \left\| Q_{\preGext, w^{\natural}_{S}} - Q^{X}_{w^{\natural}_{S}} \right\| \leq c_3|X^{\sharp}| \end{equation}
To deduce the desired $C^{1}$ bound on $\sol$, let  $\kappa^{X}_{w^{\natural}_{S}}$ denote the projection to $\ker D_{\cP_{0,1}}$ along the image of $Q^{X}_{\preGext, w^{\natural}_{S}}$, and let
\begin{equation}  \sol^{X}_{w^{\natural}_{S}} \co \preGext_{*}  \ker_{\epsilon} D_{\cP_{0,1}} \to T_{ w^{\natural}_{S}}\cF^{1,p}_{\cP_{0,1}}(L) \end{equation}
denote the map given by Proposition \ref{prop:statement_implicit_function_theorem} if we use the right inverse $Q^{X}_{w^{\natural}_{S}}$ rather than $ Q_{\preGext, w^{\natural}_{S}}$. 

\begin{lem}
There is a uniform bound
\begin{equation} \left| \kappa_{w^{\natural}_{S}} X^{\natural} +  \sol_{w^{\natural}_{S}}(\kappa_{w^{\natural}_{S}} X^{\natural})  - \left( \kappa^{X}_{w^{\natural}_{S}} X^{\natural} +  \sol^{X}_{w^{\natural}_{S}}(\kappa^{X}_{w^{\natural}_{S}} X^{\natural})  \right) \right|_{p,S} = \rO\left(1\right) \left|X^{\natural}\right|^2 \end{equation}
\end{lem}
\begin{proof}
Note that the restrictions of $\kappa_{w^{\natural}_{S}}$ and $\kappa^{X}_{w^{\natural}_{S}}$ to $\exp^{-1}_{w^{\natural}_{S}}$ are respectively the inverses of $\id + \sol_{w^{\natural}_{S}}$ and $\id + \sol^{X}_{w^{\natural}_{S}}$.  Since the previous Lemma implies that
\begin{equation}  \left| \kappa_{w^{\natural}_{S}}  - \kappa^{X}_{w^{\natural}_{S}}   \right| = \rO \left(1\right) \left|X^{\natural}\right|, \end{equation}
the desired result follows easily from the fact that the $C^1$ norms of $\sol_{S}$ and $\sol^{X}_{S}$ are both bounded by $\rO\left(e^{-2(1-\delta)S}\right)$.
\end{proof}

We have established the Lemmas necessary to prove the main result of this subsection 

\begin{proof}[Proof of Lemma \ref{lem:derivative_gluing_close_to_parallel_transport_pre-gluing}]
Note that we've already reduced the proof of the Lemma to the proof of Equation \eqref{eq:estimate_inverse_image_nearby_glued_map_original_inverse}.  The previous Lemma shows that this is equivalent to
\begin{align} \left| \Pi_{w^{\flat}_{S}}^{w^{\natural}_{S}} \exp^{-1}_{ w^{\flat}_{S}}   \left( w_{S,X}^{\flat} \right) - \kappa'_{w^{\natural}_{S}} X^{\natural} +  \sol'_{w^{\natural}_{S}}(\kappa'_{w^{\natural}_{S}} X^{\natural}) \right| & = \rO\left(e^{-2(1-\delta)S} \right)|X^{\natural}|
\end{align}

Applying $\kappa'_{w^{\natural}_{S}}$, this is equivalent to proving
\begin{align} \label{eq:apply_kappa_prime_gluing_compare_kappa_prime_pre-gluing}
\left| \kappa'_{w^{\natural}_{S}}  \exp^{-1}_{w^{\natural}_{S}}\left( w^{\flat}_{S,X} \right) -\kappa'_{w^{\natural}_{S}} X^{\natural}  \right| & = \rO\left(e^{-2(1-\delta)S} \right) \left|X^{\natural}\right|,
\end{align}
which we shall do presently.

Since $ w^{\flat}_{S,X}$ is obtained by exponentiating $\sol_{w^{\natural}_{S,X}}(0) $ at $w^{\natural}_{S,X}$, Lemma \ref{lem:error_add_vector_fields_L_p_S} implies the existence of an $L^p$-bound
\begin{equation}  \left| X^{\natural} + \Pi_{w^{\natural}_{S,X}}^{w^{\natural}_{S}} \sol_{w^{\natural}_{S,X}}(0)   - \exp^{-1}_{w^{\natural}_{S}} \left( w^{\flat}_{S,X} \right)   \right| =\rO\left(e^{-2(1-\delta)S} \right)|X^{\natural}|  .\end{equation}
In particular, Equation \eqref{eq:two_right_inverses_close}  implies
\begin{equation}\label{eq:apply_kappa_prime_failure_commutativity_exponential}
\left| \kappa'_{w^{\natural}_{S}} \left( X^{\natural} + \Pi_{w^{\natural}_{S,X}}^{w^{\natural}_{S}} \sol_{w^{\natural}_{S,X}}(0)   - \exp^{-1}_{w^{\natural}_{S}} \left( w^{\flat}_{S,X} \right) \right)  \right| = \rO\left(e^{-2(1-\delta)S} \right)|X^{\natural}| .  \end{equation}
From the definition of $  \kappa'_{w^{\natural}_{S}}$, we also have a uniform bound
\begin{equation} \label{eq:difference_kappa_prime_parallel_transport_kappa_bounded}
\left\| \kappa'_{w^{\natural}_{S}}\Pi_{w_{S,X}^{\natural}}^{w_{S}^{\natural}}   - \Pi_{w_{S,X}^{\natural}}^{w_{S}^{\natural}}  \kappa_{w^{\natural}_{S,X}} \right\| = \rO(1)|X^{\natural}|  .\end{equation}
Using the fact that $ \kappa_{w^{\natural}_{S,X}}\sol_{w^{\natural}_{S,X}}(0)  $, vanishes, and that $| \sol_{w^{\natural}_{S,X}}(0)| = \rO(e^{-2(1-\delta)S})$, we see that Equations \eqref{eq:apply_kappa_prime_failure_commutativity_exponential} and \eqref{eq:difference_kappa_prime_parallel_transport_kappa_bounded} imply \eqref{eq:apply_kappa_prime_gluing_compare_kappa_prime_pre-gluing}, which proves the Lemma.
\end{proof}

\subsection{Injectivity of the derivative of the gluing map} \label{sec:injectivity_derivative}
Lemma \ref{lem:derivative_gluing_close_to_parallel_transport_pre-gluing} implies that the derivative of $\Gext_{S}$ is injective.  Our goal in this subsection is to show that the derivative of $\G_{S}$ is also injective. As explained at the beginning of this section, it suffices to prove that the image of $d \G$ cannot be tangent to the fibres of \eqref{eq:gluing_map}.

\begin{lem} \label{lem:norm_rotation_vector_field}
Given $\mu <1$, the $| \_|_{p,S}$-norm of the pull back of  $e^{-4S} dw_{S} \left( \partial_{\theta} \right)$ by $\iota^{\cP,S}_{3S}$ is bounded by a constant multiple of $e^{-S}$.
\end{lem}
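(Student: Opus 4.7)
The plan is to exploit that on the pullback region, $w_S$ is essentially determined by the pre-glued map $(u \#_S v) \circ \phi_S^{-1}$: by Proposition \ref{prop:statement_implicit_function_theorem} we have $w_S = \exp_{\preGext_S(u,\theta,v)}(\sol_S(0))$ with the correction controlled in $C^1$ by $O(e^{-2(1-\delta)S})$, which is well below the claimed bound. Via $\phi_S$, the image of $\iota^{\cP,S}_{3S}$ in $\Sigma_S$ is identified with $D^2 - \xi_1(\I{3S}{+\infty}) \subset D^2$, and I would split this into: (a) the $u$-disc bulk together with the $u$-side of the neck $\xi_1(\I{0}{2S-1})$ on which $u\#_S v = u$; (b) the cutoff transition $\xi_1(\I{2S-1}{2S+1})$ of bounded conformal length; and (c) the portion $\xi_1(\I{2S+1}{3S})$ of the $v$-side of the neck on which $u\#_S v \circ \xi_1(s,t) = v\circ \xi_{-1}(s-4S,t)$.

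On piece (a), the rotation vector field $V$ on $D^2$ satisfies $|V| \leq 1$ pointwise, so $|dw_S(\partial_\theta)| = |du \cdot V| \leq \|du\|_{C^0}$ uniformly in $S$; integrating against $\kappa_{\delta,S} \leq e^{2Sp\delta}$ the contribution to $|dw_S(\partial_\theta)|_{p,S}$ is $O(e^{2S\delta})$, and after multiplication by $e^{-4S}$ we obtain $O(e^{(2\delta - 4)S})$, comfortably smaller than $e^{-S}$ for $\delta$ small. Piece (b) has bounded conformal length and $|dw_S(\partial_\theta)|$ is dominated by the uniform bounds inherited from both sides of the cutoff. On piece (c), I would compute $|dw_S(\partial_\theta)(\xi_1(s,t))| = |d(v\circ\xi_{-1})(s-4S,t) \cdot W(s,t)|$ where $W = \xi_1^{-1}_*V$ is the pulled-back rotation field on the strip; using the exponential decay estimate $|d(v\circ\xi_{-1})(s'',t)| \leq C_\mu e^{\mu s''}$ for $s'' \leq 0$ (with rate $\mu<1$) together with the asymptotic growth rate of $|W|$ derived from the explicit formula $\xi(\zeta) = \tanh(\pi\zeta/4)$, one obtains a pointwise bound on $|dw_S(\partial_\theta)|$ in the region of interest.

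The main obstacle lies in piece (c), where the exponents must be balanced with care: the decay rate $\mu$ of $v$ near $-1$, the growth of $|W|$ on the strip, the weight $\kappa_{\delta,S} = e^{(4S-s)p\delta}$, and the prefactor $e^{-4S}$ all interact. The factor $e^{-4S}$ morally compensates for the conformal compression induced by the biholomorphism $\psi_S^{-1} = \xi_{-1}\circ\tau_{-4S}\circ\xi_1^{-1}$, which identifies the tiny neighbourhood of $1 \in D^2$ with the $v$-disc; the length of the neck being $4S$ controls the magnitude of this compression. After integration over $s \in [2S+1, 3S]$ and $t \in [-1,1]$ and taking the $p$-th root, the hypothesis $\mu<1$ ensures that the combined exponent lies below $-1$, giving the claimed bound $C_\mu e^{-S}$, with the constant depending on $\mu$. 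The contributions from $\sol$ and from the cutoff transitions are strictly dominated and require only routine bookkeeping.
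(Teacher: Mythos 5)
Your overall picture — that $e^{-4S}$ compensates for the conformal compression of the neck, and that the bound comes from balancing this against the weight — is right, but the proof has two concrete gaps, one of which concerns the very norm being estimated. Most importantly, $dw_S(\partial_\theta)$ is a section of $w_S^*TM$, so the relevant $|\_|_{p,S}$-norm is the one of Remark \ref{rem:L_p_norm_tangent_vectors}: in addition to the weighted integral you compute, it contains the term $\left|X(\xi_{S,\neck}(0,0))\right|$ and the weighted $L^p$-norm of $X\circ\xi_{S,\neck}-\Pi_{u(1)}^{w_S}X(\xi_{S,\neck}(0,0))$ along the neck. Since the parallel transport of the central value has constant pointwise norm, integrating it against the weight $e^{(2S-|s|)p\delta}$ costs a factor of order $e^{2\delta S}$, so one needs a genuinely small pointwise bound on $\left|dw_S(e^{-4S}\partial_\theta)\right|$ at the single point $\xi_{S,\neck}(0,0)$. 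This is precisely where the hypothesis $\mu<1$ enters in the paper: the energy of $w_S$ on the middle of the neck is arbitrarily small, so the long-strip decay estimate \eqref{eq:exponential_decay_long_strip_small_energy} gives $|dw_S|=\rO(e^{-2\mu S})$ at the centre, while $e^{-4S}|\partial_\theta|$ there is $\rO(e^{-2S})$; the product easily beats $e^{-(1+2\delta)S}$. Your $\mu$ is instead attached to the decay of the fixed disc $v$ near $-1$, which is not the issue ($v$ is smooth up to the boundary, so its derivative decays at rate $1$ on its strip-like end), and your argument never addresses the parallel-transport terms at all.

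Second, your pointwise control of $dw_S$ in regions (a) and (c) is not justified by what you cite: $\sol_S(0)$ is bounded in the $|\_|_{1,p,S}$-norm, hence in $C^0$ by the uniform Sobolev embedding, but \eqref{eq:exponential_decay_solution} is a bound on $\sol$ as a map of Banach spaces, not a pointwise $C^1$ bound on the vector field $\sol_S(0)$; so you may not write $|dw_S(\partial_\theta)|=|du\cdot V|$ on region (a), nor dominate $dw_S$ pointwise by $d(v\circ\xi_{-1})$ on region (c). The paper avoids both the pointwise control and the three-region bookkeeping in one stroke: on the image of $\iota^{\cP,S}_{3S}$ the $\xi_{1}$-coordinate is at most $3S$, so in the cylindrical metric $e^{-4S}|\partial_\theta|\leq Ce^{3S-4S}=Ce^{-S}$ pointwise, and since $|dw_S|_{p,S}$ is uniformly bounded (which needs only the $W^{1,p}$ bound on $\sol$ from Proposition \ref{prop:statement_implicit_function_theorem}), the weighted $L^p$ contribution is $\rO(e^{-S})$ immediately. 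I would adopt that route and then supply the missing estimate at $\xi_{S,\neck}(0,0)$ as above.
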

\begin{proof}
The norm of $\partial_{\theta} $ on a strip-like end is bounded above by a constant multiple of $e^{s}$, which is why we're rescaling by $e^{-4S}$ to obtain a vector field whose point wise norm under pullback by $\xi_{S,\neck}$ is bounded by $\rO(e^{-4S-s})$.  Since $\sol_{S}$ is bounded by $\rO\left( e^{-2(1-\delta)S} \right)$ in the $W^{1,p}$ norm $| \_ |_{1,p,S}$, the norm of $dw_{S}$ is uniformly bounded in the $L^p$ norm $| \_ |_{p,S}$. We conclude that the $L^p$ norm of the restriction of $e^{-4S} dw_{S} \left( \partial_{\theta} \right)$ to  $\iota^{\cP,S}_{3S}$ decays like $\rO(e^{-S})$.

It remains to bound the contribution of the parallel transport of the value of this vector field at $\xi_{S,\neck}(0,0)$.  Observe that, for $S_0$ large enough, the energy of the restriction of $w_{S}$ to $\xi_{S,\neck} \left( \I{-2S+S_0}{2S+S_0} \right)$ is arbitrarily small, so the exponential decay estimate on long strips of small energy implies that 
\begin{equation} \left| dw_{S} |\xi_{S,\neck}(0,0)\right| = \rO\left(e^{-2\mu S} \right). \end{equation}  As the norm of $e^{-4S} \partial_{\theta}$ at this point is bounded by $\rO\left( e^{-2S} \right)$, the result easily follows.
\end{proof}

In order to proceed, we need an additional estimate on $\predGext$:
\begin{lem} \label{eq:pre-gluing_tangent_vectors_almost_commutes_dbar}
The norm of the composition of $D_{\cP_{0,1}}$ with $ \Pi_{w^{\natural}_{S}}^{w^{\flat}_{S}} \circ \predGext_S$ is bounded by a constant multiple of $e^{-2 (1-\delta) S}$.  
\end{lem}
\begin{proof}
This is a consequence of Lemma \ref{lem:tangent_pre-gluing_almost_holomorphic} and the quadratic inequality \eqref{eq:quadratic_inequality_first_statement}, applied to $Z^{\flat} = Q \circ \sol_{S}(u,\theta,v)$, whose norm by a constant multiple of $e^{-2(1-\delta)S}$ as in Equation \eqref{eq:exponential_decay_solution}, which is smaller than $e^{-2 \delta S}$ for $S$ large enough.
\end{proof}

We are ready to give the proof of the two Lemmas stated at the beginning of the section:
\begin{proof}[Proof of Lemma \ref{lem:pre-gluing_parabolic_close_to_rotation}]
Note that $\partial_{p}$ extends to a vector field on $S^{2} = \bC \cup \{ \infty \}$ with a unique zero at $-1$, so that the vector field $\partial_{p}$ vanishes to first order at $-1$.  In particular, its pointwise norm along the strip-like end $\xi_{-1}$ is bounded by $\rO(e^{-s})$.

Moreover, given a sequence of points in $D^2$ converging to $-1$, the corresponding sequence of $1$-periodic holomorphic vector fields which vanish at these points converges in the complement of any compact subset of $D^2 - \{ -1 \}$, after rescaling and reparametrisation, to the vector field $\partial_{p}$.  The relevant estimate is
\begin{equation} \label{eq:exponential_decay_difference_rotation_parabolic} \left| e^{-4S} (\iota^{\cS,S}_{S})^{*} \partial_{\theta}  - \partial_{p}| D^{2} - \xi_{-1} \I{-\infty}{-S} \right|_{\infty} = \rO(e^{-S}) ,\end{equation}
where $\iota^{\cS,S}_{S}$ is the inclusion map of  $D^{2} - \xi_{-1} \I{-\infty}{-S}$ into $\Sigma_S$.   The proof is an elementary computation left to the reader.  

From the proof of Lemma \eqref{lem:norm_rotation_vector_field}, we know that the contribution to  the $| \_|_{p,S}$-norm of the parallel transport of the value of $  dw^{\flat}_{S}\left(  \partial_{\theta}\right) $  at the image of the origin under $\xi_{S,\neck}$ is bounded by $\rO(e^{-2(1+\mu-\delta)S} ) $.  Moreover, in $\xi_{S,\neck} \I{-S}{0}$ both vector fields are respectively bounded pointwise by $ e^{-(1+\mu)(|s|-2S)}$, so the contribution of $\xi_{S,\neck} \I{-S}{0}$ to \eqref{eq:decay_difference_rotation_parabolic}  is bounded by $ \rO( e^{-(1+\mu - \delta)S} ) $.

It remains to bound the difference between these two vector fields on the image of $ \iota^{\cS,S}_{S}$.  Since $w^{\flat}_{S} \circ \iota^{\cS,S}_{S}$ and $v| D^{2} - \xi_{-1} \I{-\infty}{-S}$  differ by a vector field whose $| \_|_{1,p,S}$-norm is bounded by a constant multiple  of $e^{-2(1-\delta) S}$, the pointwise bound \eqref{eq:exponential_decay_difference_rotation_parabolic} implies that difference between $ \Pi_{w^{\natural}_{S}}^{w^{\flat}_{S}}  \predGext_{S} \left( dv \left( \partial_{p} \right) \right) $ and $e^{-4S} dw^{\flat}_{S} \left(  \partial_{\theta} \right)$ is bounded, in this subset of $\Sigma_{S}$, by $\rO(e^{-S})$ in the $|\_|_{p,S}$-norm. 
\end{proof}

\begin{proof}[Proof of Lemma \ref{lem:image_differential_gluing_transverse_circle}]
Assuming that there is an element $X^{\sharp}$ of $ TK \oplus \langle dv \left(  \partial_{s} \right) \rangle $ whose image under  \eqref{eq:map_gluing_kernels} is $ e^{-4S} dw^{\flat}_{S} \left( \partial_{\theta} \right)$, Lemmas \ref{lem:pre-gluing_parabolic_close_to_rotation} and \ref{eq:pre-gluing_tangent_vectors_almost_commutes_dbar}  imply that \begin{equation}  \left|   \Pi^{w^{\flat}_{S}}_{ w^{\natural}_{S}}  \predGext_{S} \left( X^{\sharp} -  dv \left( \partial_{p} \right) \right)  \right| =\rO( e^{-(1-\delta)S}  ) \end{equation}
For $S$ large enough, this contradicts Lemma \ref{lem:pre-gluing_uniform_bilipschitz} since the norm of  $X^{\sharp} -  dv \left( \partial_{p} \right)$ is bounded above by some multiple of $dv \left( \partial_{p} \right)$ whenever  $X^{\sharp}$ lies in $ TK \oplus \langle dv \left(  \partial_{s} \right)  \rangle $.   Closeness to $d\Gext_{S}$ is implied by Lemmas \ref{lem:derivative_gluing_close_to_parallel_transport_pre-gluing} and \ref{eq:pre-gluing_tangent_vectors_almost_commutes_dbar} \end{proof}

We also need a bound on the distance between the parallel transports of the images of the tangent spaces at various points under $d\G_{S}$.   First, we  bound the distance between the values of $ \predGext_{S}$ at nearby points.  The proof of the following Lemma runs along the same arguments are Lemma \ref{lem:uniform_bound_difference_nearby_right_inverse}, especially Equation \eqref{eq:error_parallel_transport_pre-gluing} and is omitted:
\begin{lem}
There is a uniform bound
\begin{equation} \left\| \Pi_{w^{\natural}_{S}}^{w^{\flat}_{S}}\circ  \predGext_{S}|u^{\sharp} - \Pi_{w^{\natural}_{S,X}}^{w^{\flat}_{S}}  \circ \predGext_{S}|u^{\sharp}_X \circ \Pi_{u^{\sharp}}^{u^{\sharp}_X}  \right\| = \rO(1)|X|^{\sharp} \end{equation} \noproof
\end{lem}

From this Lemma and Lemma \ref{lem:derivative_gluing_close_to_parallel_transport_pre-gluing}, we conclude
\begin{equation} \label{eq:bound_difference_parallel_transport_gluing} \left\| d\Gext_{S}|u^{\sharp} -  \Pi_{w^{\flat}_{S,X}}^{w^{\flat}_{S}}   \circ d\Gext_{S}|u^{\sharp}_X \circ \Pi_{u^{\sharp}}^{u^{\sharp}_X}  \right\| = \rO\left( e^{-2(1- \delta) S}\right) + \rO(1)|X|^{\sharp} \end{equation}

\begin{proof}[Proof of Proposition  \ref{prop:local_injectivity_gluing}]
The fact that $\G_{S}$ is an immersion is immediate from the proof of Lemma \ref{lem:image_differential_gluing_transverse_circle} and Lemma \ref{lem:derivative_gluing_close_to_parallel_transport_pre-gluing}, since they imply that $d \Gext_{S}$ is injective and does not intersect the kernel of the projection map to $\cP(L;0)$. The second statement is simply a quantitative version implied by the following argument:  Given $(u,\theta,v) \in K$, we can fix a neighbourhood about $(u,\theta,v)$ whose image under $\Gext_{S}$ is contained in ball of radius $\epsilon/2$ of $w^{\flat}_{S} = \Gext_{S}(u,\theta,v) \in \cP_{0,1}(L;0)$ with respect to our chosen norm on $T \cF^{1,p}_{\cP_{0,1}}(L;0)$, i.e. can be obtained by exponentiating vector fields $X^{\flat}$ of $| \_|_{1,p,S}$-norm bounded by $\epsilon/2$.

Let us write
\begin{equation} w^{\flat}_{S} = (R(w^{\flat}_{S}), \theta(w^{\flat}_{S}), w_{S}) ,\end{equation}
with $R(w^{\flat}_{S})-R_u$ and $\theta(w^{\flat}_{S})-\theta$ decaying exponentially with $S$.  By elliptic regularity, we may choose $S$ sufficiently large so that the restriction of $w_{S}$  to the images of $\iota_{S}^{S,\cS}$ and $\iota_{S}^{S,\cP}$ are arbitrarily close in $C^1$-norm to $u$ and $v$ in the respective domains, while the restriction to $\xi_{S,\neck} \I{-S}{S}$ is bounded point-wise in $C^1$-norm by $\rO(e^{-\mu S})$.    We conclude that for $S$ large enough, if $|\lambda| \leq \epsilon/2$, every point of the form
\begin{equation} \label{eq:orbit_w_flat_rotation} \left( R(w^{\flat}_{S}), \theta+\lambda, w_{S} \circ r_{\lambda} \right)   \end{equation}
lies in the ball of radius $\epsilon$ in the $| \_|_{1,p,S}$-norm  about $w^{\flat}_{S}$.  In other words, a ball of radius $\epsilon$ about $w^{\flat}_{S}$ contains a segment of the fibre of the projection map to $\cP(L;0)$ which passes through $w^{\flat}_{S}$,  whose size is uniform.

Note that the norm of $e^{-4S}  w^{\flat}_{S} \left( \partial_{\theta} \right)$ is uniformly bounded above and below by constants independent of $S$.  In particular, we can choose $\epsilon$ small enough so that 
\begin{equation} e^{-4S}   \left( \Pi_{w^{\flat}_{S}}^{w^{\flat}_{S,X}}dw^{\flat}_{S} \left( \partial_{\theta} \right)   - dw^{\flat}_{S,X} \left( \partial_{\theta}  \right) \right) \end{equation}
is arbitrarily small in the $L^p$-norm whenever $|X^{\flat}|_{1,p,S} \leq \epsilon$.

Since $\exp^{-1}_{w^{\flat}_{S}} \Im  \Gext_{S}$ is $C^1$-close in the $| \_|_{p,S}$-norm to a linear subspace by Equation \eqref{eq:bound_difference_parallel_transport_gluing},   we conclude that no element of $\exp_{w^{\flat}_{S}}^{-1} \Gext_{S}(u',\theta',v')$ can lie on a path of the form \eqref{eq:orbit_w_flat_rotation}, whenever $ (u',\theta',v') $ is in some fixed neighbourhood of $(u,\theta,v)$.  This proves injectivity is a small neighbourhood of uniform size about every point. 
\end{proof}

\section{Gluing near higher codimension strata} \label{sec:more_gluing}

Having proved the existence of a gluing map from a compact subset of the codimension $1$ stratum of $\Pbar(L;0)$ to the interior of $\cP(L;0)$, we shall proceed to describe the analogous results for the other boundary strata, which requires gluing sphere bubbles.  It turns out that we're working in a special situation where the moduli space of sphere bubbles passing through a fixed point is regular.  This implies that gluing can be done without ``moving the attaching point," thereby reducing the analytic results to standard ones.   We shall therefore concentrate on describing the geometric parts of the construction, including the choice of right inverse, leaving all estimates to the reader.

\begin{rem}
In the discussion that follows, we shall often use the expression ``a neighbourhood of $X$ in $Y$," even though $X$ is not a subset of $Y$.  In all such situations, the Gromov-Floer compactification $Y$ will contain $X$ as a stratum, so such a neighbourhood is defined to be the subset of $Y$ obtained by removing all strata of virtual codimension greater than $0$ from a neighbourhood of $X$ in the Gromov-Floer compactification of $Y$.
\end{rem}

For the codimension $3$ stratum, our main result, discussed in Section \ref{sec:codim_3_to_0_gluing} is:
\begin{lem} \label{lem:codim_3_to_parametrized_space_diffeo_gluing}
There exists a gluing map
\begin{equation}  \label{eq:gluing_codim_3} \Gcod{3}{0} \co \codt{3}{L} \times [S, +\infty)^{2} \to \cP(L;0) \end{equation}
which is a diffeomorphism onto a neighbourhood of $\codt{3}{L}$. \noproof
\end{lem}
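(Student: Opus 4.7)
The plan is to construct $\Gcod{3}{0}$ as the composition of two gluing maps, each associated with one of the two geometric necks present at the codimension-$3$ corner: the neck between the sphere bubble and the (ghost) holomorphic disc, and the neck between that disc and the exceptional solution. Concretely, I would first construct an intermediate gluing map
\begin{equation*}
\Gcod{3}{1} \co \codt{3}{L} \times [S,+\infty) \to \cod{1}{L}
\end{equation*}
and then define $\Gcod{3}{0}(x,S_1,S_2) = \G\bigl(\Gcod{3}{1}(x,S_1),S_2\bigr)$, where $\G$ is the gluing map from Lemma \ref{lem:codim_3_to_parametrized_space_diffeo_gluing}'s predecessors in Section \ref{sec:codim1_gluing} (specifically Corollaries \ref{cor:surjectivity_extended_map} and \ref{cor:local_injectivity_gluing}). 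The first map $\Gcod{3}{1}$ is the sphere-to-disc gluing: given $(u,\theta,[\tilde v]) \in \codt{3}{L}$, I would use the canonical parametrization of $[\tilde v]$ determined by the hypersurface $\N^{\bC^{n}}_{(u,\theta,[\tilde v])}$ as in \eqref{eq:condition_parametrization_sphere_passing_hypersurface}, pre-glue the constant disc at $u(\theta)$ with this parametrized sphere at scale $S_1$, and apply a standard implicit function theorem to perturb the pre-glued map into a genuine $J_{\alg}$-holomorphic disc in class $\beta$ passing through $u(\theta)$. The phase of the gluing is not an additional parameter because it has already been absorbed into the circle bundle structure of $\codt{3}{L}$; only the scale $S_1 \in [S,+\infty)$ remains.

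For the analytic input to $\Gcod{3}{1}$, regularity is built in: the moduli space $\cM_1(M;\alpha,J_{\alg})$ is regular with submersive evaluation map to $M$ by Lemma \ref{lem:moduli_space_alg_discs_submersion}, and $\cS_{0,1}(L;\beta,J_{\alg})$ is regular by assumption \eqref{ass:moduli_discs_regular}. Because the attaching point is constrained to $u(\theta) \in L$ and the sphere-through-a-point moduli space is regular, the gluing problem does not require moving any marked point, so the construction reduces to a standard sphere-bubble gluing (of the kind treated in McDuff--Salamon or \cite{FOOO}) and I would import the existence and local diffeomorphism statement in that form. The output lies in a neighbourhood of $\codt{3}{L}$ inside $\cod{1}{L}$, and $\Gcod{3}{1}$ is a smooth local diffeomorphism whose image exhausts such a neighbourhood as $S_1$ varies, by the usual surjectivity of sphere-gluing via Gromov compactness.

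Composing with $\G$ yields $\Gcod{3}{0}$. Injectivity on a uniform neighbourhood follows because each factor is a local diffeomorphism (Corollary \ref{cor:local_injectivity_gluing} handles the outer map $\G$) and because the two gluing parameters control distinguishable geometric features of the resulting maps in $\cP(L;0)$: $S_1$ governs the size of the neck at the interior node where the sphere bubble is forming, while $S_2$ governs the length of the boundary strip connecting the exceptional solution to the glued disc. These are independently detectable from the image curve (e.g.\ by the energy concentrated in a small disc around the sphere attaching point versus the length of the thin strip near the boundary matching point). Surjectivity onto a neighbourhood of $\codt{3}{L}$ in $\cP(L;0)$ follows by a two-step Gromov-compactness argument: any sequence $w_i \in \cP(L;0)$ Gromov-converging to a configuration in $\codt{3}{L}$ must, for $i$ large, first be representable as $\G$ applied to a sequence in $\cod{1}{L}$ (by the proof scheme of Lemma \ref{lem:surjectivity_codim_1_gluing} adapted using Corollary \ref{cor:surjectivity_extended_map}), and that sequence in $\cod{1}{L}$ must itself, for $i$ large, be in the image of $\Gcod{3}{1}$ by the standard surjectivity of sphere-bubble gluing.

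The main obstacle is the joint regularity at the corner $(S_1,S_2) = (\infty,\infty)$, i.e.\ ensuring that $\Gcod{3}{0}$ is jointly smooth in the two gluing parameters and extends compatibly with the smooth structure on $\codt{3}{L}$ when both parameters are finite. As already noted in the discussion preceding Proposition \ref{prop:statement_implicit_function_theorem}, smoothness in a gluing parameter is delicate; however, as for the codimension-$1$ case, what is needed here is only that $\Gcod{3}{0}$ is a smooth embedding on each slice together with continuity in the parameters, which is enough to exhibit $\codt{3}{L} \times [S,+\infty)^2$ as a corner chart for the manifold-with-corners $\Phat(L;0)$ being built in Section \ref{sec:construction_manifold_corner}. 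The slicewise smoothness of each factor map is inherited from the corresponding implicit function theorem, and continuity in the parameters follows from the same argument as in the proof of Proposition \ref{prop:statement_implicit_function_theorem}.
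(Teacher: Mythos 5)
Your overall architecture — factor $\Gcod{3}{0}$ as $\G\circ(\Gcod{3}{1}\times\id)$ — is not what the paper does, and it runs into a genuine obstruction. The codimension-$1$ gluing map $\G(\cdot,S_2)$ is only constructed on a \emph{compact} subset $K\subset\cod{1}{L}$, with a threshold $S_K$ chosen so that $v\circ\xi_{-1}(\I{-\infty}{S_K})$ lies in a geodesically convex ball around $v(-1)$. But the points $\Gcod{3}{1}(x,S_1)$ escape every compact subset of $\cod{1}{L}$ as $S_1\to+\infty$: the disc component degenerates precisely by concentrating energy at its boundary marked point, so the required threshold $S_{K}$ blows up with $S_1$. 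Consequently your composition is only defined on a region of the form $\{S_2\geq S(S_1)\}$ with $S(S_1)\to\infty$, not on a product $[S,+\infty)^2$, and its image misses part of every neighbourhood of the corner $\codt{3}{L}$ in $\cP(L;0)$. A second, independent problem is smoothness: the lemma asserts a \emph{diffeomorphism}, hence joint smoothness in $(x,S_1,S_2)$, but $\G$ is only known to be continuous (not smooth) in its gluing parameter, and your composite would inherit that defect. Your closing paragraph concedes this and retreats to ``slicewise smooth plus continuous,'' which proves a weaker statement than the one claimed.

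The paper avoids both problems by gluing the three-component configuration (exceptional solution, ghost bubble, sphere) in a \emph{single} step: the pre-glued map $\preGext_{S_1,S_2}(u,\theta,[\tilde v])$ is built from fixed smooth maps with exponential decay on their ends, so the estimates are uniform in $(S_1,S_2)$ over the whole quadrant; the right inverse comes from the fibre-product isomorphism \eqref{eq:banach_bundle_codim_three_stratum} constrained by the hypersurfaces $\N^{\bC^{n}}_{(u,\theta)}$ and $\N^{\bC^{n}}_{(u,\theta,\tilde n)}$. Smoothness and bijectivity are then obtained not from the implicit function theorem in the parameter directions but from an explicit left inverse \eqref{eq:inverse_gluing_codim_3}: the intersection points of the glued curve with these hypersurfaces recover $(\theta,S_1,S_2)$ smoothly, and a degree-at-infinity argument (using the extension $\Gext^{\leq\eta}$ and Condition \eqref{eq:condition_hypersurfaces}) gives surjectivity. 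Your appeal to ``independently detectable geometric features'' gestures at this but does not supply the mechanism. Note also that the paper's logical order is the reverse of yours: $\Gcod{3}{0}$ is constructed first, and the commutativity you posit is later arranged by \emph{defining} the extension of $\G$ near the end of $\cod{1}{L}$ to be $\Gcod{3}{0}\circ(\Gcod{3}{1})^{-1}$ (Lemma \ref{lem:commutative_gluing_codim_3_to_codim_1}).
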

In Section \ref{sec:corner-structure-at-codim-3}, we explicitly define a gluing map
\begin{equation} \Gcod{3}{2} \co \codt{3}{L} \times [S, +\infty) \to  \codt{2}{L}. \end{equation}
The main result for the codimension $2$ stratum, discussed in Section \ref{sec:codim-2-strat} is:
\begin{lem} \label{lem:gluing_commute_codim_3_codim_2_top_stratum}
There exists a gluing map
\begin{equation} \label{eq:codimension_2_gluing_map} \Gcod{2}{0} \co \codt{2}{L} \times [S, +\infty) \to \cP(L;0) \end{equation}
which is a diffeomorphism onto a neighbourhood of $  \codt{2}{L}$ in $ \cP(L;0)  $ and such that the diagram
\begin{equation} \label{eq:commutative_gluing_codim_2} \xymatrix{ \codt{3}{L} \times [S,+\infty)^2 \ar[dr] \ar[r] & \codt{2}{L} \times [S,+\infty) \ar[d] \\ & \cP(L;0) } \end{equation}
 commutes for $S$ large enough. \noproof
\end{lem}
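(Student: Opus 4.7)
The plan is to mimic the codimension $3$ construction of Section \ref{sec:codim_3_to_0_gluing} with a single gluing parameter, and then to deduce the commutativity of \eqref{eq:commutative_gluing_codim_2} from the compatibility result already proved in Lemma \ref{lem:gluing_codim_3_to_codim_2_commutes}. Explicitly, given $(u,z,[\tilde{v}]) \in \codt{2}{L}$, the unit tangent vector encoded by the circle parameter determines a canonical cylindrical end at $z \in D^2$ in the form of \eqref{eq:varying_cylindrical_end}, while the parametrization of $[\tilde{v}]$ provides the cylindrical end at $\infty \in \bC \bP^1$. For each $S_2 \in [S,+\infty)$, pre-gluing $u$ and $v$ along these ends yields an approximate solution $\preGext_{S_2}(u,z,[\tilde{v}])$ to \eqref{dbar-operator} in the trivial homotopy class, which is equipped with two distinguished points: the point $z$ coming from the disc, and a point $z_1(S_2)$ coming from $1 \in \bC \bP^1$.

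Using hypersurfaces $\N_{(u,z)}$ and $\N_{(u,z,\tilde{n})}$ in $\bC \bP^{n-1}$ chosen as in Section \ref{sec:codim_3_to_0_gluing} to be transverse to every $J_{\alg}$-holomorphic sphere through $u(z)$, I would restrict the linearised operator $D_{\cP}$ at $\preGext_{S_2}(u,z,[\tilde{v}])$ to vector fields whose $\bC \bP^{n-1}$-component vanishes at $z$ and which are tangent to $T\N^{\bC^{n}}_{(u,z,\tilde{n})}$ at $z_1(S_2)$. By the same argument that produced the isomorphism preceding Lemma \ref{lem:gluing_codim_3_to_codim_2_commutes}, this restricted operator becomes an isomorphism with uniformly bounded inverse once $S_2$ is large enough. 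Applying Floer's Picard Lemma in a chart at $\preGext_{S_2}(u,z,[\tilde{v}])$ produces $\Gcod{2}{0}$. To show that $\Gcod{2}{0}$ is a diffeomorphism onto a neighbourhood of $\codt{2}{L}$, I would construct an explicit a priori left inverse exactly as in \eqref{eq:inverse_codim_3_gluing}: a solution $w \in \cP(L;0)$ sufficiently close to $\codt{2}{L}$ is $C^1$-close to a unique exceptional $u$ away from a small disc and to a reparametrized holomorphic sphere near that disc, and the inverse images of $\N^{\bC^n}_{(u,z)}$ and $\N^{\bC^n}_{(u,z,\tilde{n})}$ under $w$ then recover $z$, $\tilde{n}$ and $S_2$. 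A degree-at-infinity argument parallel to the proof of Lemma \ref{lem:diffeo_end_near_codim_3} then promotes the smooth injective immersion to a surjection onto a neighbourhood of $\codt{2}{L}$.

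Commutativity of \eqref{eq:commutative_gluing_codim_2} is essentially built into the construction. Indeed, the gluing map $\Gcod{3}{2}$ of \eqref{eq:gluing_codim_3_to_codim_2} sends $(u,\theta,[\tilde{v}],S_1)$ to the triple $(u, z_0(S_1,\theta), [\tilde{v}]')$ with $[\tilde{v}]'$ equipped with the parametrization forced by \eqref{eq:condition_parametrization_sphere_passing_hypersurface}; the cylindrical end \eqref{eq:varying_cylindrical_end} at $z_0(S_1,\theta)$ inherited from the ghost-bubble pre-gluing coincides with the one selected above by the diffeomorphism \eqref{eq:diffeomorphism_codim_3_stratum_circle_bundle}. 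Consequently, the right inverse used to build $\Gcod{2}{0}$ at the image of $\Gcod{3}{2}$ agrees with the right inverse used in \eqref{eq:glue_compact_region_codim_2_near_codim_3}, and Lemma \ref{lem:gluing_codim_3_to_codim_2_commutes} identifies the latter composition with $\Gcod{3}{0}$ near $\codt{3}{L}$. The uniqueness statement in Floer's Picard Lemma forces the two glued curves to coincide, which gives \eqref{eq:commutative_gluing_codim_2}. I expect the main obstacle to be not the analysis itself, which is a routine variant of the codimension $3$ gluing, but rather the bookkeeping needed to ensure that the cylindrical ends, marked points, and hypersurface constraints chosen on $\codt{2}{L}$ restrict correctly to those induced from $\codt{3}{L}$ via $\Gcod{3}{2}$; once this matching is pinned down, the commutativity follows formally.
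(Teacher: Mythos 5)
Your overall strategy---hypersurface constraints at the two distinguished points, Floer's Picard Lemma applied at the pre-glued curve, an explicit left inverse recovering the gluing data from intersections with the hypersurfaces, a degree-at-infinity argument, and commutativity of \eqref{eq:commutative_gluing_codim_2} deduced from Lemma \ref{lem:gluing_codim_3_to_codim_2_commutes}---is exactly the paper's. But you dismiss as ``a routine variant of the codimension $3$ gluing'' precisely the two points that the paper singles out as the non-routine ones, and one of them bears directly on whether the map you describe exists in the form stated.

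First, $\codt{2}{L}$ is not compact (its closure adds $\codt{3}{L}$), so a fibrewise application of the Picard Lemma does not by itself produce a single constant $S$ valid over all of $\codt{2}{L}$: the uniform bounds on the right inverse are obtained by compactness, and they are not automatic as $z$ approaches $\partial D^2$, where a second degeneration is taking place. The paper resolves this by defining the gluing near $\codt{3}{L}$ as the composition \eqref{eq:compose_inverse_gluing_codim_3_gluing} of $\Gcod{3}{0}$ with the inverse of $\Gcod{3}{2}$---for which uniform constants are already established---and using the direct Picard-Lemma construction only on the complementary compact piece; your invocation of Lemma \ref{lem:gluing_codim_3_to_codim_2_commutes} for commutativity alone does not supply this. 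Second, unlike every other gluing in the paper, the neck here sits at an \emph{interior} point of $D^2$, hence possibly inside the support of $\gamma_{R}$ and of the perturbation of $J_{\alg}$, so the equation does not reduce to the standard $\dbar$-equation on the gluing region. One must check that the estimates of Section \ref{sec:codim1_gluing} and Appendix \ref{ap:pointwise_estimates} use only the uniform $L^p$-boundedness of $\gamma_{R}$ rather than its vanishing. A smaller bookkeeping point: the cylindrical end \eqref{eq:varying_cylindrical_end} is defined only at points of the form $z_0(S_1,\theta)$, so for general interior $z$ one must extend the family of ends over all of $D^2$ compatibly with the boundary choices (the paper does this by noting that the $S_1=0$ end is independent of $\theta$). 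None of these observations overturns your outline, but they are the actual content of the proof rather than afterthoughts.
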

\subsection{Gluing the codimension $3$ stratum into the top stratum} \label{sec:codim_3_to_0_gluing}

Recall that $\cod{3}{L}$ consists of complex lines passing through the boundary of an exceptional disc.  For each $u \in \cP(L;-\beta)$ and $\theta \in S^1$, choose a complex hyperplane $ \N_{(u,\theta)}$ in $\bC \bP^{n-1}$ which does not intersect the projection of $u(\theta)$ to $\bC \bP^{n-1}$ and take the product with $\bC^{n}$ to obtain a hypersurface
\begin{equation}  \N^{\bC^{n}}_{(u,\theta)}  \equiv  \bC^{n} \times \N_{(u,\theta)}  \cong \bC^{n} \times \bC \bP^{n-2}.\end{equation}
We shall assume that these hyperplanes satisfy the following property
\begin{equation} \label{eq:condition_hyperplanes} \parbox{35em}{ The hyperplanes $ \N_{(u,\theta)}$ vary smoothly and are independent of the angular parameter $\theta$ if it is sufficiently close to $0$.} \end{equation}

Such a family of hyperplanes may be easily obtained by first choosing the complex hyperplane in $\bC \bP^{n-1}$ to be the unique one orthogonal to the projection of $u(\theta)$ to this factor, then modifying this choice near $\theta =0$ using cutoff functions.

 Note that every $J_{\alg}$ holomorphic sphere passing through $u(\theta)$ intersects  $\N^{\bC^{n}}_{(u,\theta)}$ at a unique point and is moreover determined by the projection of this point to $\N_{(u,\theta)}$.  The data of this intersection point gives a diffeomorphism from the moduli space of holomorphic spheres passing through $u(\theta)$ to $\N_{(u,\theta)}$, which yields a diffeomorphism
\begin{equation} \cod{3}{L} \to \N(\cP_{0,1}(L;-\beta)). \end{equation}
where $ \N(\cP_{0,1}(L;-\beta))$ is the bundle over $ \cP_{0,1}(L;-\beta)$ whose fibre at $(u,\theta)$ is $\N_{(u,\theta)}$.  Note that this is a sub-bundle of the product $ \cP_{0,1}(L;-\beta)) \times \bC \bP^{n-1}$.  If $\tilde{\N}_{(u,\theta)}$ is the unit normal bundle of $\N_{(u,\theta)}$, and $ \tilde{\N}(\cP_{0,1}(L;-\beta))$ is the corresponding bundle over $ \cP_{0,1}(L;-\beta)$, we can extend this map to a diffeomorphism 
\begin{align} \label{eq:diffeomorphism_codim_3_stratum_circle_bundle}  \codt{3}{L} & \to \tilde{\N}(\cP_{0,1}(L;-\beta)) \\
(u, \theta, [\tilde{v}]) & \mapsto (u, \theta, \tilde{n}(u, \theta, [\tilde{v}])
 \end{align}
whose construction we explain presently.  It is convenient to first give an alternative description of  $\cL \cM_{1}(M; \alpha) $: Consider the subgroup $ \Aut_{\bR^{+}} (\bC \bP^1,\infty)$ of  $\Aut(\bC \bP^1,\infty)$ consisting of automorphisms of  $\bC \bP^1$ which fix $\infty$ and act on the tangent space of $\bC \bP^1$ at $\infty$ by multiplication with a positive real number.  In terms of Mobius transformations, this is simply the group of automorphism
\begin{equation}
  z \mapsto a z + b \quad a \in \bR^{+}, b \in \bC. 
\end{equation}
The quotient of the moduli space of parametrized holomorphic spheres by $ \Aut_{\bR^{+}} (\bC \bP^1,\infty)$ is naturally diffeomorphic to $\cL \cM_{1}(M; \alpha) $.  In particular, given an element $[v]$ of $\cM_{1}(M; \alpha) $, the choice of a parametrization determines a lift $[\tilde{v}]$ to $\cL \cM_{1}(M; \alpha)$.

With this in mind, we write down the inverse diffeomorphism to \eqref{eq:diffeomorphism_codim_3_stratum_circle_bundle} as follows:  The normal bundle of $\N_{(u,\theta)}$ is $\cO(1)$ so there is a unique section of the normal bundle, passing through every element $\tilde{\n}$ of $\tilde{\N}_{(u,\theta)}$ lying over $\n \in \N_{(u,\theta)}$, which vanishes exactly on the orthogonal complement of $\n$.  Using the exponential map to identify a neighbourhood of the zero section in the normal bundle of $\N_{(u,\theta)}$ with a neighbourhood in $\bC \bP^{n-1}$, and rescaling the section determined by $\tilde{n}$ by a uniform constant so that it is contained in such a neighbourhood,  we obtain a hyperplane in $\bC \bP^{n-1}$ which we denote by $\N_{(u,\theta,\tilde{n})}$.   Given an element $( u, \theta, \tilde{n})$ in the right hand side of \eqref{eq:diffeomorphism_codim_3_stratum_circle_bundle}, its image under the inverse diffeomorphism is the triple $(u,\theta, [\tilde{v}]) \in \codt{3}{L}$ where $[\tilde{v}]$ is the lift determined by the parametrization 
\begin{equation} \label{eq:condition_parametrization_sphere_passing_hyperplane} v(\infty) = u(\theta) \textrm{, } v(0) \in \bC^{n} \times \{ \n \} \textrm{ and } v(1) \in  \N^{\bC^{n}}_{(u,\theta,\tilde{n})}. \end{equation}
From the data of the diffeomorphism of \eqref{eq:diffeomorphism_codim_3_stratum_circle_bundle}, we associate to each element $(u, \theta, [\tilde{v}]) \in \codt{3}{L}$ hyperplanes
\begin{equation} \N_{ (u, \theta, [\tilde{v}]) } \equiv \N_{(u, \theta, \tilde{n}(u, \theta, [\tilde{v}])} \subset \bC \bP^{n-1} \textrm{ and }  \N^{\bC^{n}}_{ (u, \theta, [\tilde{v}]) } \equiv  \bC^{n} \times \N_{ (u, \theta, [\tilde{v}]) } . \end{equation}

\begin{rem}
When we were proving the existence of a gluing map with source $\cod{1}{L}$, we considered the family of equations obtained by rotation by the parameter $\theta$, so that our matching condition was  on $u(1)$ (in other words, $u$ here corresponds to $u \circ r_{-\theta}$ in the previous section).  The reason for introducing a family of equations parametrized by the circle was that the moduli space of discs passing through a fixed point in $L$ may not be regular.  As regularity holds for the moduli space of spheres passing through a point in $L$, we shall use the simpler setup here. 
\end{rem}

Setting aside issues of notation, the conceptual part of the proof is simpler than the gluing result introduced in Section \ref{sec:codim1_gluing}  because we shall be able to construct an explicit inverse to the gluing map after introducing an additional parameter corresponding to the tangent space of $\cP_{0,1}(L;-\beta)$.  

The first step to proving Lemma \ref{lem:codim_3_to_parametrized_space_diffeo_gluing}, is setting up a Banach complex controlling the tangent space of $\codt{3}{L}$ at each point.  Because of the way we're treating the variable $\theta$, this will not be a complex of Banach bundles (in the uniform topology) over $\codt{3}{L}$, but our arguments will be insensitive to this problem since we will prove smoothness by constructing an inverse which will be proved to be a diffeomorphism.

Equip $(D^2, e^{i\theta})$ with the strip-like end $\xi_{\theta}$ at $e^{i \theta}$ obtained by composing the strip-like end $\xi_{1}$ of \eqref{eq:positive_strip_at_1} with rotation by angle $\theta$.   The linearisation of  the operator $\dbar_{\cP}$ on the Sobolev space of $W^{1,p}$ maps with an exponential weight $\delta$ along the strip $\xi_{\theta}$ is a Fredholm map
\begin{equation} \label{eq:linearisation_dbar_parametrized_dbar_fix_theta} D_{\cP} \co T [0,+\infty) \oplus W^{1,p,\delta}_{(D^2,e^{i\theta})}\left(u^{*} TM, u^{*} TL\right) \tilde{\to} L^{p,\delta}_{(D^2,e^{i\theta})}\left( u^{*} TM \otimes \Omega^{0,1} D^2\right) \end{equation}
which is an isomorphism since $\cP(L;-\beta)$ is regular and zero-dimensional.  Moreover, the factor $W^{1,p,\delta}_{(D^2,e^{i\theta})}(u^{*} TM, u^{*} TL) $ is also equipped with an evaluation map to $TL$ which is the value of the vector field at $e^{i\theta}$.

Next, we equip $\bC \bP^{1}$ with a cylindrical end near $\infty$ coming from the unique isomorphism
\begin{equation} \label{eq:cylindrical_end_infinity_sphere} S^1 \times (-\infty, +\infty) \to \bC \bP^{1} - \{0,\infty\}  \end{equation} 
taking $(1,0)$ to $1$, which we use to define a Banach manifold $\cF_{\cM_{1}}^{1,p,\delta}(M)$ modeled after
\begin{equation} W^{1,p,\delta}_{(\bC \bP^{1},\infty)}(v^{*} TM)  \end{equation}
where $\bC \bP^{1}$ equipped with a cylindrical metric near $\infty$.  The direct sum of the linearisation of the $\dbar$ operator and the evaluation map to $TM$ at $\infty$ is a Fredholm map
\begin{equation} \label{eq:linearisation_dbar_sphere_bubble_and_evaluation_to_L} W^{1,p,\delta}_{(\bC \bP^{1},\infty)}(v^{*} TM) \tilde{\to} L^{p,\delta}_{(\bC \bP^{1},\infty)}(v^{*} TM) \oplus TM. \end{equation}
The above discussion implies that for each $(u,\theta, [\tilde{v}]) \in \codt{3}{L}$, the restriction of \eqref{eq:linearisation_dbar_sphere_bubble_and_evaluation_to_L} to those $W^{1,p}$ section of $v^{*}(TM)$ whose projection to $\bC \bP^{n-1}$ vanishes at $0$ and takes value in $T\N_{(u, \theta, [\tilde{v}]) }$ at $1$ is an isomorphism: 
\begin{equation} \label{eq:linearisation_dbar_sphere_bubble_and_evaluation_to_L_restrict_tangents} W^{1,p,\delta}_{(\bC \bP^{1},\infty)}\left((\bC \bP^{1},0,1),\left(v^{*} TM, T \bC^{n} \times \{ \n \}, T  \N^{\bC^{n}}_{(u, \theta, [\tilde{v}]) }\right) \right) \tilde{\to} L^{p,\delta}(v^{*} TM) \oplus TM. \end{equation}

Finally, the moduli space of holomorphic discs in the trivial homotopy class mapping $-1$ to $u(1)$ and $0$ to $v(0)$ is rigid and consists of a unique ``ghost bubble" $\gh(u,v)$. Consider the cylindrical end at $0$ 
\begin{align} \label{eq:varying_cylindrical_end_ghost_bubble} \xi_{0,\theta} \co S^1 \times (-\infty,0] & \to D^2 - \{0\} \\
(\phi, S) & \mapsto e^{S+ i (\phi - \theta)}, 
\end{align}
which varies according to the parameter $\theta$, and fix the strip-like end $\xi_{-1}$ at $-1$.  The fact that the moduli space of constant discs is regular implies that the direct sum of the linearisation of the $\dbar$ operator with the evaluation map at the boundary marked point defines an isomorphism
\begin{equation} \label{eq:linearisation_dbar_ghost_bubble_and_evaluation_to_L_and_M} W^{1,p,\delta}_{(D^2,0,-1)}\left(\gh(u,v)^{*} TM, \gh(u,v)^{*} TL\right) \tilde{\to} L^{p,\delta} \left( \gh(u,v)^{*} TM \right)  \oplus TL. \end{equation}

The tangent space of $\codt{3}{L}$ is controlled by the operators \eqref{eq:linearisation_dbar_parametrized_dbar_fix_theta}, \eqref{eq:linearisation_dbar_sphere_bubble_and_evaluation_to_L}, and \eqref{eq:linearisation_dbar_ghost_bubble_and_evaluation_to_L_and_M}.  Taking a fibre product over the evaluation maps to $TL$ and $TM$ at the marked points, we obtain an isomorphism
\begin{equation} \label{eq:banach_bundle_codim_three_stratum}
\begin{array}{ccc} T [0,+\infty) \oplus  W^{1,p,\delta}_{(D^2,e^{i\theta}))}\left(u^* TM, u^{*} TL \right)  & & \hspace{-20pt} L^{p,\delta}_{(D^2,e^{i\theta}))}\left(u^* TM \otimes \Omega^{0,1}D^2\right) \\
 \oplus_{TL}  & &  \hspace{-20pt} \oplus  \\
 W^{1,p,\delta}_{(D^2,0,-1)}\left(\gh(u,v)^{*} TM, \gh(u,v)^{*} TL\right) & \hspace{-40pt} \tilde{\longrightarrow}    & \hspace{-20pt} L^{p,\delta}_{(D^2,0,1)}\left(\gh(u,v)^* TM \otimes \Omega^{0,1} D^2\right) \\
  \oplus_{TM}  & & \hspace{-20pt} \oplus  \\
 W^{1,p,\delta}_{(\bC \bP^{1},\infty)}\left((\bC \bP^{1},0,1),(v^{*} TM, T \bC^{n} \times \{ \n \},v^{*} T \N^{\bC^{n}}_{(u, \theta, [\tilde{v}]) })\right) & &  \hspace{-20pt} L^{p,\delta}_{(\bC \bP^{1},\infty)}\left(v^* TM \otimes \Omega^{0,1}\bC \bP^{1}\right) .\end{array}
\end{equation}

As in Equation \eqref{eq:definition_Sigma_S}, a choice of parameters $(S_1,S_2)$ determines a surface $\Sigma_{S_1,\theta, S_2}$ defined by gluing $\bC \bP^{1}$ to $\Sigma_{S_1, \theta}$ (itself obtained by gluing $(D^2, e^{i\theta})$ to $(D^2,-1)$) along the chosen cylindrical ends, which moreover carries a distinguished bi-holomorphism to $D^2$ coming from the marked disc $(D^2,\theta)$.  If we choose $S_2$ large enough, the points $0$ and $1$ on $\bC \bP^{1}$ lie away from the gluing region, and survive as distinguished points on $\Sigma_{S_1,\theta, S_2}$ whose images in $D^2$ are denoted by
\begin{equation} z_{0}(S_1, \theta) \textrm{ and }  z_1(S_1,S_2,\theta).\end{equation}
\begin{lem}
Under the unique automorphism $ \tau_{S_1}(\theta)$ of $D^2$ fixing $e^{i\theta}$ and mapping $z_{0}(S_1, \theta)$ to the origin,  the image of $ z_1(S_1,S_2,\theta)$ lies on the positive real axis. \noproof
\end{lem}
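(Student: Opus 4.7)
The approach combines three ingredients: an approximate formula for $\phi_{S_1,\theta,S_2}$ restricted to the sphere bubble, a hidden anti-holomorphic symmetry of $\Sigma_{S_1,\theta}$, and an explicit description of $\tau_{S_1}(\theta)$.

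First, I will use the gluing identification between the ghost's cylindrical end $\xi_{0,\theta}(\phi,S)=e^{S+i(\phi-\theta)}$ and the sphere's end $\psi$ (normalised by $\psi(1,0)=1$) to show that a sphere point $z$ inside the neck region corresponds to the ghost point $w=e^{-i\theta}e^{-cS_2}z$ for a positive constant $c$ dictated by conventions. Conformal rigidity will then force $\phi_{S_1,\theta,S_2}$ restricted to the sphere bubble to be, up to corrections decaying exponentially in $S_2$, the composition $z \mapsto \phi_{S_1,\theta}(e^{-i\theta}e^{-cS_2}z)$. In particular $z_0(S_1,\theta) = \phi_{S_1,\theta}(0)$ (independent of $S_2$), and $z_1(S_1,S_2,\theta) \approx \phi_{S_1,\theta}(e^{-i\theta}e^{-cS_2})$.

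Second, I will exploit anti-holomorphic symmetry. The complex conjugations $z \mapsto e^{2i\theta}\bar z$ on the exceptional disc and $z \mapsto \bar z$ on the ghost disc are both compatible with the strip-like ends $\xi_\theta = r_\theta \circ \xi_1$ and $\xi_{-1}$ (each inducing the involution $(s,t)\mapsto(s,-t)$ on the strip), so they glue to an anti-holomorphic involution of $\Sigma_{S_1,\theta}$. Under $\phi_{S_1,\theta}$ this corresponds to $z \mapsto e^{2i\theta}\bar z$ on $D^2$; its fixed diameter must therefore contain $\tilde z_0 := \phi_{S_1,\theta}(0)$, so $\tilde z_0 = r_0 e^{i\theta}$ for some $r_0 \in (-1,1)$, and the derivative $\phi'_{S_1,\theta}(0)$ is a real multiple of $e^{i\theta}$.

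Third, since $\tilde z_0$ lies on the diameter through $e^{i\theta}$, the unit-modulus phase in the Möbius formula for $\tau_{S_1}(\theta)$ collapses, giving the explicit form $\tau(z) = (z - \tilde z_0)/(1 - \bar{\tilde z}_0 z)$ with $\tau'(\tilde z_0) = 1/(1-r_0^2) > 0$. Combining with the approximation of Step~1, the leading term becomes
\[
\tau(z_1) \approx \tau'(\tilde z_0)\, \phi'_{S_1,\theta}(0)\, e^{-i\theta}\, e^{-cS_2},
\]
which is real by Step~2. Realness of the full $\tau(z_1)$, as well as the positive sign, will follow from the fact that $\tau \circ \phi_{S_1,\theta}$ intertwines the ghost's conjugation with the standard conjugation of $D^2$ after the rotation normalisation built into the cylindrical end, together with careful bookkeeping of the orientation of the gluing. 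The main obstacle will be justifying the Step~1 approximation with enough precision to ensure the higher-order corrections do not disturb the real-axis conclusion; this requires a delicate conformal comparison between $\phi_{S_1,\theta,S_2}$ and $\phi_{S_1,\theta}$ using the Riemann mapping theorem together with decay estimates for the attached sphere bubble.
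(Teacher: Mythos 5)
Your Steps~2 and~3 identify the right mechanism (the glued anti-holomorphic involution and the explicit Blaschke form of $\tau_{S_1}(\theta)$), and Step~1 is actually in much better shape than you think: no Riemann-mapping comparison or decay estimate is needed, because the identity is \emph{exact}. The glued domain $\Sigma_{S_1,\theta,S_2}$ is assembled by identifying annuli in the two cylindrical ends via the holomorphic map $z\mapsto e^{-i\theta}e^{-cS_2}z$, so it literally contains the ghost disc with its small round disc about $0$ replaced by the corresponding neighbourhood of $0\in\bC\bP^1$; the distinguished biholomorphism to $D^2$ is therefore the identity on the ghost part and exactly $z\mapsto\phi_{S_1,\theta}(e^{-i\theta}e^{-cS_2}z)$ on the sphere part, with no correction terms. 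Your ``main obstacle'' is not an obstacle.

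The genuine gap is the last step. Write $F=\tau_{S_1}(\theta)\circ\phi_{S_1,\theta}|_{\mathrm{ghost}}$. Your involution argument gives exactly $F(\bar w)=e^{2i\theta}\overline{F(w)}$, i.e.\ $F(w)=e^{i\theta}\sum_{k\ge1}p_kw^k$ with all $p_k$ real and $F(0)=0$, whence
\[
\tau_{S_1}(\theta)\bigl(z_1(S_1,S_2,\theta)\bigr)=F\bigl(e^{-i\theta}e^{-cS_2}\bigr)=\sum_{k\ge1}p_k\,e^{-kcS_2}\,e^{-i(k-1)\theta}.
\]
Only the $k=1$ term is real; for $\theta\notin\{0,\pi\}$ and generic $S_1$ the $k=2$ term already has non-zero imaginary part, so the intertwining relation does \emph{not} yield ``realness of the full $\tau(z_1)$'' as you assert --- it yields only that the arc $S_2\mapsto\tau(z_1)$ is tangent to the positive real axis at the origin, with transverse deviation of order $e^{-2cS_2}$ against $|\tau(z_1)|\sim e^{-cS_2}$. (The positivity of $p_1$, which you also defer, does follow from your symmetry: $\phi_{S_1,\theta}$ carries the ghost's segment $[0,1]$ onto the segment from $z_0(S_1,\theta)$ to $e^{i\theta}$, so $F$ is increasing along the positive reals.) An exact membership statement would require the transition constant $e^{-i\theta}e^{-cS_2}$ to be a positive real number, which the twist built into $\xi_{0,\theta}$ arranges only to first order, and no further symmetry of the glued surface is available to kill the higher terms. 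So either the lemma is to be read asymptotically/to leading order --- in which case your argument, with Step~1 made exact, does prove it and suffices for the way the statement is used (locating the intersection with $\N^{\bC^n}_{(u,\theta,\tilde{\n})}$ and recovering $S_2$) --- or an additional idea beyond the ones you list is required, and none of your three ingredients supplies it.
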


Pre-gluing the maps $u$ and $v$ together with the ghost bubble gives a map
\begin{equation} \preGext_{S_1,S_2}( u ,\theta, [\tilde{v}])  \co D^2 \to M \end{equation}
which is an approximate solution  to Equation \eqref{dbar-operator} in the trivial homotopy class.  To prove that there is a solution near $\preGext_{S_1,S_2}( u ,\theta, [\tilde{v}]) $, we  shall consider Sobolev spaces on $D^2$ equipped with the metric $g_{S_1, \theta , S_2}$ coming from $\Sigma_{S_1,\theta, S_2}$.

Using breaking of $TM$-valued $1$-forms and pre-gluing of vector fields, the inverse to \eqref{eq:banach_bundle_codim_three_stratum} defines an approximate right inverse to $D_{\dbar_{\cP}}$ at  $\preGext_{S_1,S_2}( u ,\theta, [\tilde{v}]) $.  As in Corollary \ref{cor:uniformly_bounded_inverse}, we conclude the existence of a right inverse whose image consists of vector fields which are obtained by parallel transporting (and cutting off) vector fields in the left hand side to Equation \eqref{eq:banach_bundle_codim_three_stratum}. For $S_1$ and $S_2$ large enough our two marked points $z_{0}(S_1, \theta) $ and $z_1(S_1,S_2,\theta)$ lie in the region where the cutoff functions vanish, and the parallel transport is the identity.  In this case, the image of the right inverse consists of vector fields along the disc whose component in the $\bC \bP^{n-1}$-direction vanishes at $z_{0}(S_1, \theta)$ and which take value in the tangent space to $\N^{\bC^{n}}_{(u, \theta, [\tilde{v}]) }$ at  $z_1(S_1,S_2,\theta)$.  Abusing notation a little bit by dropping the pullbacks from the notation, we conclude
\begin{lem}
The operator
\begin{equation}  \label{eq:restriction_dbar_vanish_1_point_hyperplane_1_point}  \begin{array}{c} T [0,+\infty) \oplus W^{1,p}\left( (D^2, S^1,  z_{0}(S_1, \theta) ,  z_1(S_1,S_2,\theta)), ( TM,   TL,  T \bC^{n} \times \{ \n \}, T \N^{\bC^{n}}_{(u, \theta, [\tilde{v}]) } ) \right) \\ 
\downarrow \\ 
L^{p}( TM \otimes \Omega^{0,1} D^2) \end{array} \end{equation}
is an isomorphism with uniformly bounded inverse. \noproof
\end{lem}

After checking the quadratic inequality with respect to the metrics on $\Sigma_{S_1,\theta,S_2}$, and proving an estimate on the norm of $\dbar_{\cP} \preGext_{S_1,S_2}( u ,\theta, [\tilde{v}]) $, we may apply the implicit function theorem at the origin of a chart centered on $ \preGext_{S_1,S_2}( u ,\theta, [\tilde{v}]) $ to conclude:

\begin{lem} There exists a constant $\epsilon$ such that if $S_1$ and $S_2$ are sufficiently large, then there exists a unique element $\sol_{S_1,S_2}$ of the source of \eqref{eq:restriction_dbar_vanish_1_point_hyperplane_1_point} at $\preGext_{S_1,S_2}( u ,\theta, [\tilde{v}])$ whose norm is bounded by $\epsilon$ and whose image under the exponential map is a solution to \eqref{dbar-operator}. \noproof
 \end{lem}

We define the codimension $3$ gluing map $\Gcod{3}{0}$ to be the composition of pre-gluing with the exponentiation of the vector field $\sol_{S_1,S_2}$.

It remains to prove that \eqref{eq:gluing_codim_3} is a diffeomorphism onto a neighbourhood of $\cod{3}{L}$ in $ \cP(L;0)$.  Our arguments rely on extending the gluing map to a map:
\begin{align} \label{eq:extended_gluing_codim_3} \Gext^{\leq \eta} \co \codt{3}{L} \times [0,+\infty)^{2} \times [-2\eta,2\eta] & \to \cP(L;0) \times S^1 \\
(u,\theta,[\tilde{v}],S_1,S_2,\lambda) & \mapsto \left( \G_{S_1,S_2}^{\lambda}(u,\theta,[\tilde{v}]), \theta + \lambda\right).
\end{align}
Here $\eta$ is a sufficiently small real number, and $ \G_{S_1,S_2}^{\lambda}(u,\theta,[\tilde{v}])$ is the gluing map constructed with respect to the right inverse induced by the hyperplanes $\N_{(u,\theta+\lambda)}$ and $\N_{(u,\theta+\lambda,[\tilde{v}] )}$ rather than $\N_{(u,\theta)}$ and $\N_{(u,\theta,[\tilde{v}] )}$.  In particular, the image of this right inverse consists of vector fields which are tangent to the $\bC^{n}$ factor at $z_0(S_1,\theta)$ and take value in $T\N^{\bC^{n}}_{(u,\theta+\lambda,[\tilde{v}] )}$ at $z_1(S_1, \theta, S_2) $.

For $\lambda$ small enough, the hyperplane $\N^{\bC^{n}}_{(u,\theta+\lambda)}$ still intersects all spheres passing through $u(\theta)$ transversely, and this is the only property we used.    We shall write $\Gext^{\lambda}$ for the restriction of \eqref{eq:extended_gluing_codim_3} to a fixed $\lambda$. Note that the composition of  $\Gext^{\lambda}$ for $\lambda=0$ with the projection to $\cP(L;0)$ is the previously constructed gluing map $\Gcod{3}{0}$.

We shall now define an a priori left inverse
\begin{equation} \label{eq:inverse_gluing_codim_3}   \Uthree(\cP(L;0) \times S^1) \to \codt{3}{L} \times [0,+\infty)^{2} \times (-\eta,\eta) \end{equation}
from a neighbourhood $\Uthree(\cP(L;0) \times S^1)$ of the image of $\Gext^{\leq \eta}$  in $\cP(L;0)$.

The neighbourhood $\Uthree(\cP(L;0) \times S^1)$ consists  of pairs $(w,\phi)$ such that $w$ is  $C^1$ close to a unique solution $u$ to \eqref{dbar-operator}  in homotopy class $-\beta$, away from a small disc centered on the boundary, and  a re-parametrization of $w$ is $C^1$ close (again, away from  a small disc) to a holomorphic sphere $v'$ passing through some point $u(\theta')$. In addition, the angular parameter is required so satisfy $| \phi - \theta'| \leq 2\eta$.  Choosing $\eta$ to be sufficiently small, these properties  are sufficient to ensure that $w^{-1}\left(\N^{\bC^{n}}_{(u,\phi)}\right)$ contains a unique point lying in a neighbourhood of $\phi$.  This point can be uniquely written as $z_0(S_1, \theta)$, determining the parameters $S_1$ and $\theta$, and hence defining a  map
 \begin{align}  \Uthree(\cP(L;0)\times S^1) & \to \N(\cP_{0,1}(L;-\beta)) \times [0,+\infty) \times (-\eta,\eta) \\
 (w,\phi) & \mapsto \left(w(z_0(S_1, \theta)), u,\theta, S_1, \phi - \theta\right) .
  \end{align}
The point $w(z_0(S_1, \theta))$ already determines a holomorphic sphere $v$ passing through $u(\theta)$ up to parametrization.  We shall now explain how to recover the lift $[\tilde{v}]$ and the second gluing parameter.

Since the composition of $w$ with $\tau_{S_1}(\theta)$ is $C^1$-close, away from a neighbourhood of the boundary of $D^2$, to a re-parametrization of some holomorphic sphere $v$, the image of the positive real axis under this map intersects a unique hyperplane of the form $\N^{\bC^{n}}_{(u, \phi, [\tilde{v}])}$.  The image of this intersection point under the automorphism $\tau_{S_1}(\theta)^{-1}$ can be uniquely written as $z_1(S_1, \theta, S_2)$, so  we obtain a map
\begin{equation} \label{eq:inverse_codim_3_gluing}  \Uthree(\cP(L;0)\times S^1) \to \tilde{\N}(\cP_{0,1}(L;-\beta)) \times [0,+\infty)^{2} \times (-\eta,\eta) ,\end{equation}
which completes the construction of \eqref{eq:inverse_gluing_codim_3}.

The fact that the map described in Equation \eqref{eq:inverse_gluing_codim_3} is a left inverse to the gluing map follows straightforwardly from the fact that, for a given $\lambda$, our chosen right inverse along a pre-glued curve consists of vector fields whose projection to $T \bC \bP^{n-1}$ vanishes at $z_0(S_1, \theta)$, and which take value along $T \N^{\bC^{n}}_{(u, \theta+\lambda,[\tilde{v}])}$ at $z_1(S_1, \theta, S_2)$.  Since hyperplanes are geodesically convex for the standard metric on $\bC \bP^{n-1}$, applying the implicit function theorem yields a glued curve that has exactly the same intersection properties with $\N^{\bC^{n}}_{(u,\theta)}$ and $ \N^{\bC^{n}}_{(u,\theta+\lambda,[\tilde{v}])}$;  i.e, the pair of points of $D^2$ mapping to $\bC^{n} \times \{ \n \}$ and $ \N^{\bC^{n}}_{(u,\theta+\lambda,[\tilde{v}])}$ do not change.

Note that the construction of a left inverse automatically implies that the extended gluing map \eqref{eq:extended_gluing_codim_3} is injective.  As to surjectivity onto the open set $\Uthree(\cP(L;0) \times S^1)$, observe that it suffices to prove surjectivity onto the fibers of \eqref{eq:inverse_gluing_codim_3}, so long that the gluing parameter in $[0,+\infty)^2$ is sufficiently large.  The idea, as implemented in Lemma \ref{lem:surjectivity_codim_1_gluing} for the gluing result near the codimension $1$ stratum, is that as the gluing parameters go to infinity, the distance  (in an appropriate Sobolev norm) between any two solutions in the same fiber of \eqref{eq:inverse_gluing_codim_3} must go to zero (this uses the definition of Gromov compactness and an exponential estimate for the $C^1$ norm of small energy annuli and strips).  However, the implicit function theorem implies that there is a unique such solution in a uniformly sized neighbourhood of the image of the gluing map.  We conclude:
\begin{lem}
The composition of the extended gluing map \eqref{eq:extended_gluing_codim_3} with the projection to $\cP(L;0)$ is a proper surjection onto a neighbourhood of $\cod{3}{L}$.
\noproof
\end{lem}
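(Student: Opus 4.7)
The plan is to handle properness and surjectivity separately, using the left inverse \eqref{eq:inverse_gluing_codim_3} as the main tool, and reducing both to a quantitative implicit function theorem argument in the spirit of Lemma \ref{lem:surjectivity_codim_1_gluing}.

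For properness, I would take any sequence $(u_i, \theta_i, [\tilde{v}_i], S_{1,i}, S_{2,i}, \lambda_i)$ whose image under $\Gext^{\leq \eta}$ projects to a convergent sequence in $\cP(L;0)$. Since $\cP(L;-\beta)$ is a finite set by Lemma \ref{lem:one_exceptional_solution_at_a_time}, $\codt{3}{L}$ is compact, and $\lambda_i \in [-2\eta,2\eta]$, after passing to a subsequence the only way properness could fail is if either $S_{1,i}$ or $S_{2,i}$ escapes to infinity. But Gromov compactness applied to the images would then force a sphere or disc bubble to split off in the limit, contradicting the assumption that the limit lies in the top stratum $\cP(L;0)$. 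Hence the gluing parameters stay in a compact set and properness follows.

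For surjectivity, I would show that after possibly shrinking $\Uthree(\cP(L;0) \times S^1)$ and requiring $S_1, S_2$ to be sufficiently large, every $(w,\phi) \in \Uthree(\cP(L;0) \times S^1)$ lies in the image. Given such a pair, the left inverse \eqref{eq:inverse_gluing_codim_3} produces candidate parameters $(u,\theta,[\tilde{v}], S_1, S_2, \lambda)$, and it suffices to show $\Gext^{\lambda}_{S_1,S_2}(u,\theta,[\tilde{v}]) = w$. By construction, $w$ already satisfies the two marked-point constraints (passing through $\bC^n \times \{\mathfrak{n}\}$ at $z_0(S_1,\theta)$ and through $\N^{\bC^n}_{(u,\theta+\lambda,[\tilde{v}])}$ at $z_1(S_1,\theta,S_2)$) that define the affine subspace in which $\sol_{S_1,S_2}$ takes values. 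Thus $w$ and $\Gext^{\lambda}_{S_1,S_2}(u,\theta,[\tilde{v}])$ both sit in this affine subspace through the pre-glued curve.

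The main obstacle will be proving that, once $S_1, S_2$ are large enough, $w$ is close to $\preGext_{S_1,S_2}(u,\theta,[\tilde{v}])$ in the Sobolev norm adapted to $\Sigma_{S_1,\theta+\lambda,S_2}$, so that the quantitative implicit function theorem forces uniqueness. This step mirrors Lemma \ref{lem:surjectivity_codim_1_gluing}: one localizes the image of $w$ over the two neck regions into geodesically convex charts by combining Gromov convergence away from the neck with the monotonicity lemma (as in \cite{sikorav}), then applies the exponential decay estimate for long cylinders and strips of small energy to bound $w$ uniformly close to the constant map at $u(\theta)$ on the middle of each neck, and finally integrates to a $W^{1,p,\delta}$ bound in the glued norm. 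Once this closeness is established, the uniqueness statement of the implicit function theorem (Proposition \ref{prop:statement_implicit_function_theorem} adapted to the present setup) identifies $w$ with $\Gext^{\lambda}_{S_1,S_2}(u,\theta,[\tilde{v}])$, and combined with the left inverse this yields surjectivity onto a neighbourhood of $\cod{3}{L}$.
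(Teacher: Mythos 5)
Your proposal follows essentially the same route as the paper: reduce surjectivity to the fibres of the left inverse \eqref{eq:inverse_gluing_codim_3}, show via Gromov compactness and the exponential decay estimate for long necks of small energy that any solution in such a fibre is close to the pre-glued curve in the adapted Sobolev norm, and then invoke the uniqueness clause of the quantitative implicit function theorem. Your explicit properness argument (escape of the gluing parameters forces bubbling, contradicting convergence in the top stratum) is the standard one the paper leaves implicit, so there is nothing further to add.
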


Next, we show that the extended gluing map  \eqref{eq:inverse_gluing_codim_3} is a diffeomorphism onto its image for $S_1$ and $S_2$ sufficiently large. First, we prove that the map
\begin{equation} \label{eq:smooth_projection_to_hyperplane}  \Uthree(\cP(L;0) \times S^1) \to \N(\cP_{0,1}(L;-\beta)) \end{equation}
is smooth with no critical point.  To see this, consider the restriction of $D_{\cP}$ at $\G^{\lambda}_{S_1,S_2}(u, \theta, [\tilde{v}])$ to those vector fields whose projection to $\bC \bP^{n-1}$ vanishes at $z_0(S_1,\theta)$.  This operator is surjective by parallel transport of the analogous result from $\preGext_{S_1,S_2}(u, \theta , [\tilde{v}])$.  In particular, the evaluation map
\begin{align} \Uthree(\cP(L;0) \times S^1)  \times D^2 &  \to M  \times \cP_{0,1}(L;0) \\
(w,\phi,z) & \mapsto (w(z), u,\phi)
\end{align}
is a submersion at 
\begin{equation} \left(\G^{\lambda}_{S_1,S_2}(u, \theta, [\tilde{v}]),  \theta +\lambda , z_0(S_1,\theta) \right), \end{equation}
 so that the component of the inverse image of 
 \begin{equation} \N^{\bC^{n}} (\cP_{0,1}(L;-\beta))  \equiv  \bC^{n} \times  \N (\cP_{0,1}(L;-\beta))  \end{equation}
 passing through this point is a smooth submanifold.  Using Gromov compactness, we know that for $S_1$ and $S_2$ sufficiently large, $\G^{\lambda}_{S_1,S_2}(u, \theta, [\tilde{v}])$ is $C^1$-close to $v$, which implies that the tangent space of the image of $\G^{\lambda}_{S_1,S_2}(u, \theta, [\tilde{v}])$ at $z_0(S_1,\theta)$ is transverse to $T\N^{\bC^{n}}_{(u,\theta+\lambda)}$.  In particular, this component of the inverse image of $\N^{\bC^{n}}_{(u,\theta+\lambda)}$ in $\Uthree(\cP(L;0) \times S^1)  \times D^2 $ is transverse to the $D^2$ fibres, and is therefore defined by a smooth section.  The reader may easily check that the map defined in Equation \eqref{eq:smooth_projection_to_hyperplane} is the composition of this section with the evaluation map to $M$, which establishes its smoothness.

By composing this smooth section with the projection to the $D^2$-direction, we conclude the  smoothness of the map
\begin{equation}  \Uthree(\cP(L;0) \times S^1) \to  \N(\cP_{0,1}(L;-\beta))  \times [0,+\infty) \times S^1 \end{equation}
where the last two coordinates are the pair $(\theta,S_1)$ determined by the intersection point $z_0(S_1,\theta)$.  An argument along the same lines  shows that the inverse to the gluing map described in Equation \eqref{eq:inverse_gluing_codim_3} is a bijective smooth map with no critical points.  This implies that the extended gluing map $\Gext^{\leq \eta}$ also satisfies these properties, i.e. that it is a diffeomorphism.

We shall need to know only one quantitative property of the differential of  \eqref{eq:inverse_gluing_codim_3}.   Consider a pair of sequences  $w_i$ converging to a point $(u',\theta', v')$ in $\cod{3}{L}$, and $\phi_i$ converging to $\theta'$:
\begin{lem} \label{lem:derivative_inverse_gluing_preserve_extra_factor}
The $\phi$-derivative at $(w_i, \phi_i)$ of the third component of the inverse \eqref{eq:inverse_gluing_codim_3} of the extended gluing map $\Gext^{\leq \eta}$  converges to $1$.
\end{lem}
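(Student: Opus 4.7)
From the defining relation $(w,\phi) = \Gext^{\leq\eta}(u,\theta,[\tilde v],S_1,S_2,\lambda) = (\G^{\lambda}_{S_1,S_2}(u,\theta,[\tilde v]),\, \theta+\lambda)$, the $S^1$-coordinate of the image is $\theta+\lambda$. Writing the inverse as $(w,\phi) \mapsto (u(w,\phi),\theta(w,\phi),[\tilde v(w,\phi)],S_1(w,\phi),S_2(w,\phi),\lambda(w,\phi))$, the identity $\phi = \theta(w,\phi) + \lambda(w,\phi)$ yields $\partial \lambda/\partial\phi = 1 - \partial\theta/\partial\phi$. So the claim reduces to showing that $\partial\theta/\partial\phi \to 0$ along $(w_i,\phi_i)$.

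From the description of the inverse \eqref{eq:inverse_codim_3_gluing}, $\theta$ is extracted by the equation $w(z_0(S_1,\theta)) \in \N^{\bC^n}_{(u,\phi)}$, where $z_0(S_1,\theta) \in D^2$ is the image of $0 \in \bC\bP^{1}$ under the biholomorphism $\Sigma_{S_1,\theta,S_2} \cong D^2$. Denoting by $p(w,\phi) \in D^2$ the unique point of $w^{-1}(\N^{\bC^n}_{(u,\phi)})$ in a neighbourhood of $e^{i\phi}$, the pair $(S_1,\theta)$ is determined by $z_0(S_1,\theta) = p(w,\phi)$. Differentiating at fixed $w$ gives the linear system
\[
\frac{\partial z_0}{\partial \theta}\,d\theta + \frac{\partial z_0}{\partial S_1}\,dS_1 \;=\; \frac{\partial p}{\partial \phi}\,d\phi.
\]
As $S_1 \to \infty$ the two vectors $\partial z_0/\partial\theta$ and $\partial z_0/\partial S_1$ are uniformly linearly independent (the first converges to $ie^{i\theta'}$ along $\partial D^2$, the second points radially inward), so this system is uniformly invertible and it suffices to bound $|\partial p/\partial \phi|$ along the sequence.

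The key observation is that in the sphere region of the pre-glued surface, $w = \exp_{\preGext_{S_1,S_2}(u,\theta,[\tilde v])}(\sol_{S_1,S_2})$ is, up to the $\epsilon$-small correction $\sol_{S_1,S_2}$, a reparametrisation of the sphere bubble $v$ at a scale $\sim e^{-c(S_1+S_2)}$ for some $c>0$: a unit tangent vector at $z_0(S_1,\theta) \in D^2$ is mapped by $dw$ to a vector of norm comparable to $e^{c(S_1+S_2)}|dv(0)|$. By construction and \eqref{eq:condition_parametrization_sphere_passing_hypersurface}, $v$ meets $\N^{\bC^n}_{(u,\theta')}$ transversely at $v(0)$, so the component of $dw|_{z_0}$ normal to $\N^{\bC^n}_{(u,\phi_i)}$ has norm bounded below by a constant multiple of $e^{c(S_1+S_2)}$. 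On the other hand, the family of hypersurfaces varies smoothly in $\phi$ by \eqref{eq:condition_hypersurfaces}, so $\partial\N^{\bC^n}_{(u,\phi)}/\partial\phi$ is uniformly bounded. Implicitly differentiating the equation $w(p(w,\phi)) \in \N^{\bC^n}_{(u,\phi)}$ at fixed $w$ therefore yields $|\partial p/\partial\phi| = \rO(e^{-c(S_1+S_2)})$. Since Gromov convergence $w_i \to (u',\theta',v')$ forces $S_1^i, S_2^i \to \infty$, we conclude $\partial\theta/\partial\phi \to 0$ and hence $\partial\lambda/\partial\phi \to 1$.

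\textbf{Main obstacle.} The analytic content is entirely in quantifying the scaling $|dw|_{z_0} \sim e^{c(S_1+S_2)}$ and the smallness of the corresponding normal-direction projection of $\partial p/\partial\phi$. This requires tracking the conformal biholomorphism $\Sigma_{S_1,\theta,S_2}\cong D^2$ through both the disc--disc gluing of parameter $S_1$ and the disc--sphere gluing of parameter $S_2$ (with the $\theta$-dependent cylindrical end \eqref{eq:varying_cylindrical_end_ghost_bubble} for the ghost bubble), and verifying that the implicit function correction $\sol_{S_1,S_2}$ — which is bounded in the $W^{1,p}$-norm adapted to the glued metric $g_{S_1,\theta,S_2}$ — contributes a perturbation of $dw$ at $z_0$ that is of strictly lower order than the leading rescaling term. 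These are standard features of the gluing construction of Section \ref{sec:codim_3_to_0_gluing}, but they need to be verified with the scaling conventions used to define $z_0(S_1,\theta)$.
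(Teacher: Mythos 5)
Your argument is correct and is essentially the paper's: both reduce the claim to showing $\partial\theta/\partial\phi \to 0$, and both obtain this from transversality of the sphere bubble to the moving hypersurface $\N^{\bC^{n}}_{(u,\phi)}$ combined with the exponential rescaling between the $D^2$-coordinates and the bubble-scale coordinates (you phrase this as $dw|_{z_0}$ having exponentially large normal component, the paper as $C^2$-boundedness of $w_i\circ\tau_{S_i}(\phi)$ followed by exponential decay of the derivative of $\tau_{S_i}(\theta_i)^{-1}$ near the origin). The one imprecision is your assertion that the linear system for $(d\theta, dS_1)$ is uniformly invertible --- $\partial z_0/\partial S_1$ degenerates as $z_0$ approaches the boundary --- but since $\partial z_0/\partial\theta$ remains tangential of unit size and only the $\theta$-component is needed, the conclusion is unaffected.
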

\begin{proof}
The third component is a difference $\theta - \phi_i$, where the angle $\theta$ depends on $\phi$ as the angular parameter of the point of intersection between $w_i$ and $\N^{\bC^{n}}_{(u,\phi_i)}$.  Assuming $i$ to be large enough, we may choose a gluing parameter $S_i$ very large such that $w_i \circ \tau_{S_i}(\phi)$ is $C^2$-close to a parametrization $v_i$ of a holomorphic sphere mapping $0$ to $\N^{\bC^{n}}_{(u,\phi_i)}$ and $1$ to a hyperplane $\N^{\bC^{n}}_{(u,\phi_i, \tilde{n})}$.  In particular, the $C^2$-norm of  the restriction of $w_i \circ \tau_{S_i}(\phi)$ to this region is uniformly bounded for $i$ large enough, so the inverse image of a nearby hyperplane $\N^{\bC^{n}}_{(u,\phi)}$ under $w_i \circ \tau_{S_i}(\phi)$ is a point whose norm is bounded by a constant multiple of $|\phi - \phi_i|$.  Reparametrising the disc by the inverse to $\tau_{S_i}(\theta_i)$, whose derivative near the origin decays exponentially with $S_i$, we reach the desired conclusion.
\end{proof}

We shall now prove surjectivity of the gluing map by proving that the gluing map has degree $1$ at infinity.  Let $\G^{\leq \eta}$ denote the composition of $\Gext^{\leq \eta}$ with the projection to $\cP(L;0)$.  We pick an open neighbourhood $\End_{3}(\cP(L;0))$ of $\cod{3}{L}$ in $\cP(L;0)$ lying in the image of $\G^{\leq \eta}$, and with the additional property that it does not intersect the image of the set
\begin{equation}  \codt{3}{L} \times \{S\} \times [S,+\infty) \times [-2\eta, 2\eta] \cup \codt{3}{L} \times [S,+\infty) \times \{S\} \times [-2\eta, 2\eta].\end{equation}
Note that this choice is possible because the definition of Gromov's topology  implies that the images of the above sets, as $S_1$ or $S_2$ in $[S,+\infty)$ go to infinity, do not converge to $\cod{3}{L}$.  To prove that this map has a well-defined degree, consider any two points of  $\End_{3}(\cP(L;0))$  which lie in the same component.  Choosing a generic path between them, we find that the inverse image under $\G^{\leq \eta}$ is a smooth $2$-dimensional submanifold of $ \codt{3}{L} \times [S,+\infty)^{2} \times [-2\eta, 2\eta] $ with boundary on the ``horizontal boundary"
\begin{equation} \codt{3}{L} \times (S,+\infty)^{2} \times \{ -2\eta \} \cup \codt{3}{L} \times (S,+\infty)^{2} \times \{ 2\eta \}  .\end{equation}
In particular, the intersection of this surface with
\begin{equation} \codt{3}{L} \times (S,+\infty)^{2} \times \{0  \} \end{equation}
is a compact $1$-manifold with boundary for generic paths.  Interpreting this manifold as a cobordism between the inverse images of the two points under the gluing map, we conclude
\begin{lem}  The number of inverse images of a point in $\End_{3}(\cP(L;0))$ under $\Gcod{3}{0}$, counted with signs, depends only on the connected component  in $\End_{3}(\cP(L;0))$.  \noproof \end{lem}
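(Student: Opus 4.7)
The plan is a standard cobordism argument that leverages the fact, established just above, that the extended gluing map $\Gext^{\leq \eta}$ is a diffeomorphism onto an open subset of $\cP(L;0) \times S^1$. Given two points $w_0, w_1 \in \End_{3}(\cP(L;0))$ in the same component, I would choose a generic smooth path $\gamma \co [0,1] \to \End_{3}(\cP(L;0))$ between them and study the preimage
\begin{equation*}
X = (\G^{\leq \eta})^{-1}(\gamma([0,1]))
\end{equation*}
inside the $(2n+1)$-dimensional domain $\codt{3}{L} \times [S,+\infty)^{2} \times [-2\eta,2\eta]$. The proof will show that the slice $Y = X \cap \{\lambda = 0\}$ is a compact oriented $1$-manifold whose boundary is the disjoint union $(\Gcod{3}{0})^{-1}(w_0) \sqcup (\Gcod{3}{0})^{-1}(w_1)$ with appropriate signs, which immediately gives the result.

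First I would verify that $X$ is a smooth compact $2$-manifold with corners. Smoothness at interior points follows from the generic choice of $\gamma$ together with the fact that $\G^{\leq \eta}$ is a submersion: since $\Gext^{\leq\eta}$ is a diffeomorphism, the kernel of $d\G^{\leq \eta}$ is one-dimensional, and in fact Lemma \ref{lem:derivative_inverse_gluing_preserve_extra_factor} gives enough control on the $\lambda$-component of the tangent space to confirm transversality of the $\lambda=0$ slice. Compactness of $X$ comes from Gromov compactness: a sequence in $X$ with $\max(S_1,S_2) \to \infty$ would force the associated glued curves to converge in $\Pbar(L;0)$ to an element of $\cod{3}{L}$, contradicting that the path $\gamma([0,1])$ is compact in $\cP(L;0)$. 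The remaining potential non-compactness at $S_1 = S$ or $S_2 = S$ is precisely what the choice of $\End_{3}(\cP(L;0))$ was arranged to exclude.

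Consequently $\partial X$ decomposes into four pieces: the two fiber preimages $(\G^{\leq \eta})^{-1}(w_i)$ for $i = 0,1$, plus the two ``horizontal'' faces $X \cap \{\lambda = \pm 2\eta\}$. Since $\{\lambda = 0\}$ is disjoint from $\{\lambda = \pm 2\eta\}$, the transverse slice $Y = X \cap \{\lambda = 0\}$ is a compact $1$-manifold with
\begin{equation*}
\partial Y = \bigl( (\G^{\leq\eta})^{-1}(w_0) \cap \{\lambda = 0\} \bigr) \sqcup \bigl( (\G^{\leq\eta})^{-1}(w_1) \cap \{\lambda = 0\} \bigr).
\end{equation*}
The restriction of $\Gext^{\leq\eta}$ to $\{\lambda = 0\}$ equals $\Gcod{3}{0}$ composed with $(\theta, \cdot) \mapsto (\cdot, \theta)$, so this boundary is precisely $(\Gcod{3}{0})^{-1}(w_0) \sqcup (\Gcod{3}{0})^{-1}(w_1)$.

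The only genuinely delicate point is orientations, and I expect this to be the main (though still routine) obstacle. I would fix orientations on $\codt{3}{L}$, $[S,+\infty)^{2}$ and $[-2\eta,2\eta]$, giving the domain of $\Gext^{\leq\eta}$ a product orientation, and transport these through the diffeomorphism $\Gext^{\leq\eta}$ to compare with the orientation on $\cP(L;0) \times S^{1}$. The sign contribution of a point $p \in (\Gcod{3}{0})^{-1}(w_i)$ to $\partial Y$ is then the product of the local degree of $\Gcod{3}{0}$ at $p$ with a fixed universal sign coming from the orientation of the $\lambda$-direction and the boundary convention; this universal sign is constant across preimage points, so the signed counts at $w_0$ and $w_1$ agree. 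Since a compact oriented $1$-manifold has signed boundary summing to zero, this yields the invariance claim.
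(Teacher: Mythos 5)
Your proposal is correct and follows essentially the same route as the paper: a generic path between the two points, the $2$-dimensional preimage under $\G^{\leq \eta}$ inside $\codt{3}{L} \times [S,+\infty)^{2} \times [-2\eta,2\eta]$ with boundary confined to the horizontal faces $\lambda = \pm 2\eta$ (compactness being guaranteed by the choice of $\End_{3}(\cP(L;0))$), and the slice at $\lambda = 0$ interpreted as a compact $1$-dimensional cobordism between the two fibres of $\Gcod{3}{0}$. Your additional remarks on Gromov compactness and on orientations fill in details the paper leaves implicit, but the argument is the same.
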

We denote this integer count of inverse images the {\bf degree at infinity} of the gluing map.

We shall now use Condition \eqref{eq:condition_hyperplanes} to prove that this degree is $1$, thereby proving surjectivity of the gluing map.  Let us choose a constant $\rho$ such that the hyperplane $\N_{(u,\theta)}$ agrees with  $\N_{(u,0)}$ whenever $|\theta|\leq 2\rho$, and assume that $\eta \leq \rho$.   Using Gromov compactness, we note that if the gluing parameters are large enough
\begin{equation} \label{eq:gluing_near_1_isolated} \G^{\lambda}_{S_1,S_2}(u,1,[v]) = \G^{\lambda'}_{S'_1,S'_2}(u',\theta',[v']) \implies u=u' \textrm{ and }|\theta'| \leq \rho. \end{equation}

\begin{lem}
The gluing map has degree $1$ at infinity.
\end{lem}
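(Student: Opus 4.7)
The strategy is to reduce the degree computation to a single reference point, exploiting the $\theta$-independence of the hypersurfaces near $1 \in S^1$ to express $\Gcod{3}{0}$ locally as a factor of the diffeomorphism $\Gext^{\leq \eta}$. Fix $(u_0, 1, [\tilde{v}_0]) \in \codt{3}{L}$ and large gluing parameters $(S_1^*, S_2^*)$, and set $w_0 := \Gcod{3}{0}(u_0, 1, [\tilde{v}_0], S_1^*, S_2^*)$, which lies in $\End_3(\cP(L;0))$ for sufficiently large parameters. By \eqref{eq:gluing_near_1_isolated}, every preimage $(u', \theta', [\tilde{v}'], S'_1, S'_2)$ of $w_0$ under $\Gcod{3}{0}$ satisfies $u' = u_0$ and $|\theta'| \leq \rho$, so attention may be restricted to this range.

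The key observation is the following factorization. In the constant-hypersurface range $|\theta|, |\theta + \lambda| \leq 2\rho$, the hypersurfaces $\N_{(u_0, \theta + \lambda)}$ and $\N_{(u_0, \theta + \lambda, [\tilde{v}])}$ governing the right inverse for $\G^{\lambda}_{S_1, S_2}$ are independent of $\lambda$, so the implicit function solution — and hence the glued curve itself — is independent of $\lambda$. Combining this with the definition \eqref{eq:extended_gluing_codim_3} yields
\begin{equation*} \Gext^{\leq \eta}(u_0, \theta, [\tilde{v}], S_1, S_2, \lambda) \;=\; \bigl( \Gcod{3}{0}(u_0, \theta, [\tilde{v}], S_1, S_2),\; \theta + \lambda \bigr). \end{equation*}

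Uniqueness of the preimage of $w_0$ then follows from the injectivity of $\Gext^{\leq \eta}$. Shrinking $\rho$ if necessary so that $\rho \leq \eta$ (which only tightens the conclusion of \eqref{eq:gluing_near_1_isolated}), and given two preimages $(u_0, \theta_i, [\tilde{v}_i], S_1^i, S_2^i)$ for $i = 1, 2$, the midpoint $\phi := (\theta_1 + \theta_2)/2$ satisfies $|\phi - \theta_i| \leq \rho \leq \eta$. Both extended tuples $(u_0, \theta_i, [\tilde{v}_i], S_1^i, S_2^i, \phi - \theta_i)$ then lie in the domain of $\Gext^{\leq \eta}$ and, by the factorization, are both sent to $(w_0, \phi)$; injectivity forces them to coincide. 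For the sign, the factorization gives $d\Gext^{\leq \eta}$ a block-triangular form with $\partial_{\lambda}(\theta + \lambda) = 1$ in the lower-right corner — consistent with Lemma \ref{lem:derivative_inverse_gluing_preserve_extra_factor} — so $\det d\Gcod{3}{0} = \det d\Gext^{\leq \eta}$ at the unique preimage, and the latter is positive since $\Gext^{\leq \eta}$ is an orientation-preserving diffeomorphism for the natural orientations on source and target (coming from the complex structures, the orientation of $[0,+\infty)^2$, and that of the $\lambda$-interval). Hence the degree at infinity equals $+1$; the main conceptual step is the factorization in the constant-hypersurface range, made possible by the independence condition \eqref{eq:condition_hypersurfaces}.
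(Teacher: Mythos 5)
Your proposal is correct and follows essentially the same route as the paper: restrict to the range $|\theta|\leq\rho$ where the hypersurfaces are $\theta$-independent via \eqref{eq:gluing_near_1_isolated}, factor the extended gluing map there as $\Gext^{\leq \eta}(u,\theta,[\tilde{v}],\lambda) = (\Gcod{3}{0}(u,\theta,[\tilde{v}]), \theta+\lambda)$, and deduce injectivity of $\Gcod{3}{0}$ from the fact that $\Gext^{\leq\eta}$ is a diffeomorphism. Your midpoint trick for placing both preimages in the domain of $\Gext^{\leq\eta}$ and the explicit sign check are just slightly more detailed versions of what the paper leaves implicit.
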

\begin{proof}
By the Equation \eqref{eq:gluing_near_1_isolated}, it suffices to prove the gluing map is injective when restricted to a neighbourhood of $\theta=0$.  In such a neighbourhood, the extended gluing map is essentially independent of $\lambda$:
\begin{equation} \Gext^{\leq \eta} \left( u, \theta, [\tilde{v}], \lambda \right) = \left( \Gcod{0}{3} ( u, \theta, [\tilde{v}]), \theta - \lambda \right) .\end{equation} 
The proof that $ \Gext^{\leq \eta}$ is a diffeomorphism by constructing a left inverse implies that the gluing map $ \Gcod{0}{3}$ is also a diffeomorphism, and hence is injective.
\end{proof}

\begin{lem} \label{lem:diffeo_end_near_codim_3}
The gluing map \eqref{eq:gluing_codim_3} is a diffeomorphism onto a neighbourhood of $\cod{3}{L}$ in $\cP(L;0)$.
\end{lem}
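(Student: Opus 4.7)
The plan is to deduce the statement from the already-established fact that the extended gluing map $\Gext^{\leq \eta}$ is a diffeomorphism onto the open set $\Uthree(\cP(L;0) \times S^1)$. Observe that the gluing map $\Gcod{3}{0}$ factors as
\[
\codt{3}{L} \times [S,+\infty)^{2} \xrightarrow{\iota_{0}} \codt{3}{L} \times [S,+\infty)^{2} \times [-2\eta,2\eta] \xrightarrow{\Gext^{\leq \eta}} \Uthree(\cP(L;0) \times S^1) \xrightarrow{\pi_{\cP}} \cP(L;0),
\]
where $\iota_{0}$ is inclusion at $\lambda = 0$ and $\pi_{\cP}$ is projection to the first factor. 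Because $\Gext^{\leq \eta}$ is a diffeomorphism, proving the lemma reduces to showing that $\pi_{\cP}$ restricted to the codimension-one submanifold $\{\lambda = 0\} \subset \Uthree$ is a diffeomorphism onto a neighbourhood of $\cod{3}{L}$ in $\cP(L;0)$.

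First I would view $\lambda$ as a smooth real-valued function on $\Uthree$, namely the composition of $(\Gext^{\leq \eta})^{-1}$ with projection to the $[-2\eta,2\eta]$ factor. By Lemma \ref{lem:derivative_inverse_gluing_preserve_extra_factor}, for any sequence $(w_{i},\phi_{i}) \to (u',\theta',v') \in \cod{3}{L} \times S^{1}$ the partial derivative $\partial_{\phi} \lambda$ converges to $1$. Combined with smoothness of $\lambda$ and compactness of the $S^1$ factor after restricting to small relatively compact pieces of $\cod{3}{L}$, this gives a neighbourhood $V$ of $\cod{3}{L} \times S^1$ inside $\Uthree$ on which $\partial_{\phi}\lambda \geq \tfrac{1}{2}$.

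Next I would apply the implicit function theorem to the equation $\lambda(w,\phi) = 0$ on $V$: the transversality $\partial_\phi \lambda \neq 0$ produces, for each $w$ in a neighbourhood $U$ of $\cod{3}{L}$ in $\cP(L;0)$, a unique $\phi(w) \in S^1$ with $(w,\phi(w)) \in V$ and $\lambda(w,\phi(w)) = 0$; moreover $w \mapsto \phi(w)$ is smooth. Thus $\{\lambda = 0\} \cap V$ is the graph of $\phi$, and $\pi_{\cP}$ restricted to this graph is a diffeomorphism onto $U$ with smooth inverse $w \mapsto (w,\phi(w))$. Composing with the inverse of $\Gext^{\leq \eta}|_{\lambda = 0}$ produces a smooth two-sided inverse to $\Gcod{3}{0}$ on the neighbourhood $U$ of $\cod{3}{L}$, as required.

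The heavy analytic work has already been done in building $\Gext^{\leq \eta}$ and in verifying Lemma \ref{lem:derivative_inverse_gluing_preserve_extra_factor}, so no new estimate is needed here. The only point requiring some care is to check that the set $U \subset \cP(L;0)$ obtained from the implicit function theorem is a genuine open neighbourhood of the entire stratum $\cod{3}{L}$, not merely of a compact piece. This follows because $\Uthree$ is by construction a neighbourhood of $\cod{3}{L} \times S^{1}$ and because the transversality estimate $\partial_{\phi}\lambda \to 1$ is uniform near $\cod{3}{L}$; the implicit function solution patches consistently over the whole stratum.
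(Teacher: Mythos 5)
Your route differs from the paper's at the surjectivity step, and that is exactly where the argument has a gap. The paper's proof has two ingredients: it shows $\Gcod{3}{0}$ has no critical points by checking that the $\lambda=0$ slice is transverse to the circle fibres of $\cP(L;0)\times S^1\to\cP(L;0)$ via Lemma \ref{lem:derivative_inverse_gluing_preserve_extra_factor} (your transversality of $\{\lambda=0\}$ to the $\phi$-direction is essentially the same computation), and it establishes surjectivity onto a neighbourhood of $\cod{3}{L}$ by the separate degree-at-infinity argument, which is where Condition \eqref{eq:condition_hypersurfaces} and the cobordism count enter. You replace the degree argument by the claim that the implicit function theorem ``produces, for each $w$ in a neighbourhood $U$ of $\cod{3}{L}$, a unique $\phi(w)$ with $\lambda(w,\phi(w))=0$.'' But the implicit function theorem only yields such a $\phi(w)$ for $w$ near points where a zero of $\lambda(w,\cdot)$ is already known to exist, i.e.\ near the image of $\Gcod{3}{0}$; the assertion that the resulting open set $U$ is a neighbourhood of the entire stratum is precisely the surjectivity statement being proved, so as written this step is circular. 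Note also that $\Uthree(\cP(L;0)\times S^1)$ is not a neighbourhood of $\cod{3}{L}\times S^1$: by construction the angular coordinate satisfies $|\phi-\theta'|\leq 2\eta$, so for fixed $w$ the function $\lambda(w,\cdot)$ is only defined on a short arc, and existence of a zero on that arc is not automatic.

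The gap is fixable, and your graph argument then does give a genuinely more elementary proof than the degree computation, but you must supply the existence statement. For $w$ Gromov-close to a broken configuration with attaching angle $\theta'$, the intersection point of $w$ with $\N^{\bC^{n}}_{(u,\theta')}$ converges to $e^{i\theta'}$, hence $\lambda(w,\theta')\to 0$; combined with the lower bound $\partial_\phi\lambda\geq \tfrac{1}{2}$ on the whole arc $|\phi-\theta'|\leq 2\eta$ and the intermediate value theorem, this produces the required zero for every $w$ sufficiently close to $\cod{3}{L}$, and monotonicity gives uniqueness. (Alternatively, quote the already-proved proper surjectivity of $\pi_{\cP}\circ\Gext^{\leq\eta}$ to obtain some $(w,\phi_1)$ in the image with $|\lambda(w,\phi_1)|<\eta$, and slide $\phi$ using the same monotonicity.) You should also note that the uniform bound $\partial_\phi\lambda\geq\tfrac{1}{2}$ on $V$ requires a compactness argument on top of Lemma \ref{lem:derivative_inverse_gluing_preserve_extra_factor}, which is stated only for sequences with $\phi_i\to\theta'$; the proof of that lemma does yield the uniform version over $|\phi-\theta'|\leq 2\eta$, but this needs to be said.
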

\begin{proof}
Since a degree $1$ map with no critical point is a local diffeomorphism, it remains to show that $\G$ is an immersion.  Since the extended gluing map is a diffeomorphism, it suffices to show that its restriction to $\lambda =0$ is transverse to the fibers of the projection map to $\cP(L;0)$, which is itself equivalent to the statement that the image of $\partial_{\phi}$ (the tangent direction to the circle fibre in $\cP(L;0) \times S^1$) under the differential of the inverse gluing map, does not lie in the tangent space of $\codt{3}{L} \times [S,+\infty)^2$.  Lemma \eqref{lem:derivative_inverse_gluing_preserve_extra_factor} implies precisely this.
\end{proof}

\subsection{The corner structure at the codimension $3$ stratum} \label{sec:corner-structure-at-codim-3}
Next, we consider the moduli space $\cod{2}{L}$, with its universal circle bundle $\codt{2}{L}$ which can be compactified to a manifold with boundary $\codt{2}{L}$.  In particular, we may define a gluing map
\begin{equation} \label{eq:gluing_codim_3_to_codim_2} \Gcod{3}{2} \co \codt{3}{L} \times [S, +\infty) \to  \codt{2}{L} \end{equation}
which is a diffeomorphism onto a neighbourhood of $ \codt{3}{L}$.   We can define such a map using the implicit function theorem as before, but it is also possible to do so explicitly: consider $S$ large enough so that holomorphic spheres passing through $u(z_0(S_1, \theta))$ for $S \leq S_1$ are transverse to the hyperplanes $\N^{\bC^{n}}_{(u,\theta)}$ and $\N^{\bC^{n}}_{(u,\theta,\tilde{n})}$ introduced in the previous section.  We have a smooth map
\begin{align} \tilde{\N}(\cP_{0,1}(L;-\beta)) \times   [S, +\infty) & \to  \codt{2}{L}  \\
(u,  \theta, \tilde{\n}, S_1) & \mapsto \left(u, z_0(S_1, \theta), [\tilde{v}] \right), 
\end{align}
where the parametrization on $[\tilde{v}]$ is fixed by the properties
\begin{equation}  v(\infty) = u(z_0(S_1, \theta) ) \textrm{, } v(0) \in \bC^{n} \times \{ \n \} \textrm{ and } v(1) \in  \N^{\bC^{n}}_{(u,\theta,\tilde{n})}. \end{equation} 
It is not hard to check that this is a diffeomorphism onto a neighbourhood of $\codt{3}{L}$ in $ \codt{2}{L} $.  After composition with the diffeomorphism of Equation \eqref{eq:diffeomorphism_codim_3_stratum_circle_bundle}, we obtain the desired gluing map.

Let $U_{S}^{S'}( \codt{2}{L})$ denote the image of the restriction of $\Gcod{3}{2}$ to $\codt{3}{L} \times [S, S')$.  By inverting $\Gcod{3}{2}$  in this domain, and composing with $\Gcod{3}{0}$,  we obtain a map
\begin{equation} \label{eq:compose_inverse_gluing_codim_3_gluing} U_{S}^{S'}( \codt{2}{L}) \times [S, +\infty) \to   \codt{3}{L} \times [S, +\infty)^2 \to \cP(L;0). \end{equation}
As a prelude to defining a gluing map on $\codt{2}{L}$, we shall realize this composition by applying the implicit function theorem to an appropriate right inverse along pre-glued curves.  More precisely, consider a pair $(u,z,[\tilde{v}])$, where $z = z_0(S_1,\theta)$. We have a cylindrical end at $z_0(S_1,\theta)$ induced by pre-gluing from the cylindrical end  $\xi_{0,\theta}$ of Equation \eqref{eq:varying_cylindrical_end_ghost_bubble} on the ghost bubble.  Explicitly, the cylindrical end at $z_0(S_1,\theta)$ is \begin{align} \notag  S^1 \times (-\infty,0] & \to D^2 - \{0\} \\
(\phi, S) & \mapsto \left( \tau_{S_1}(\theta) \right)^{-1} \left( e^{S+ i \phi} \right), \label{eq:varying_cylindrical_end}
\end{align}
Next, we consider the parametrization of $[\tilde{v}]$ which maps $0$ to $\N^{\bC^{n}}_{(u,\theta)}$ and $1$ to $\N^{\bC^{n}}_{(u,\theta,\tilde{n})}$.  A choice of parameter $S_2$ gives an approximate solution to $\dbar_{\cP}$ in the trivial homotopy class obtained by pre-gluing $[\tilde{v}]$ and $u$ along their chosen cylindrical ends.  Note that $z_0(S_1,\theta)$ survives as a distinguished point on the domain of the pre-gluing, which is also equipped with a marked point $z_1(S_1,\theta,S_2)$ coming from $1 \in \bC \bP^1$.  If $S_1$ and $S_2$ are sufficiently large,  the operator $D_{\cP}$ at this pre-glued curve becomes an isomorphism, with an inverse that is uniformly bounded with respect to the gluing parameters, when restricted to those vector fields whose projection to $\bC \bP^{n-1}$ vanishes at $z_0(S_1,\theta)$ and which take value in the tangent space of the appropriate hyperplane $T \N^{\bC^{n}}_{(u,\theta,\tilde{n})}$ at $z_1(S_1,\theta,S_2)$.

Using this right inverse, we can define a gluing map
\begin{equation} \label{eq:glue_compact_region_codim_2_near_codim_3}  U_{S}^{S'}( \codt{2}{L})  \times [S, +\infty) \to \cP(L;0) \end{equation}
for $S$ large enough.  The image of $\left( (u,z_0(S_1,\theta),[\tilde{v}]), S_2 \right)$ is a solution to \eqref{dbar-operator} such that $z_0(S_1,\theta)$ projects  to $\n \in \N_{(u,\theta)}$, and  $z_1(S_1,\theta,S_2)$ maps to the hyperplane $\N^{\bC^{n}}_{(u,\theta,\tilde{n})}$.  Lemma \ref{lem:diffeo_end_near_codim_3}  implies that there is a unique such  curve in the neighbourhood of $\cod{3}{L}$ in $\cP(L;0)$, which is moreover the image of  $\left( (u,z_0(S_1,\theta),[\tilde{v}]), S_2 \right)$ under the composition  of gluing maps \eqref{eq:compose_inverse_gluing_codim_3_gluing}.  We conclude

\begin{lem} \label{lem:gluing_codim_3_to_codim_2_commutes}
The gluing map \eqref{eq:glue_compact_region_codim_2_near_codim_3} agrees with the restriction of the composition \eqref{eq:compose_inverse_gluing_codim_3_gluing} to $ U_{S}^{S'}( \codt{2}{L}) \times [S, +\infty)$ .  \noproof
\end{lem}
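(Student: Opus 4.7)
The plan is to prove the lemma by a uniqueness argument: both maps produce a solution of the deformed Cauchy--Riemann equation in $\cP(L;0)$ lying in a neighbourhood of $\cod{3}{L}$ and having identical hypersurface intersection data, and by Lemma \ref{lem:diffeo_end_near_codim_3} such a solution is unique.

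First I would unpack what the composition \eqref{eq:compose_inverse_gluing_codim_3_gluing} does on input $\bigl((u, z_0(S_1,\theta), [\tilde v]), S_2\bigr)$. Inverting $\Gcod{3}{2}$ returns the quadruple $(u, \theta, [\tilde v], S_1)$, and then $\Gcod{3}{0}$ produces a curve $w$ by applying the implicit function theorem to the pre-glued approximate solution $\preGext_{S_1,S_2}(u,\theta,[\tilde v])$ with the right inverse whose image consists of vector fields whose $\bC\bP^{n-1}$-projection vanishes at $z_0(S_1,\theta)$ and which take values in $T\N^{\bC^{n}}_{(u,\theta,\tilde n)}$ at $z_1(S_1,\theta,S_2)$. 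By the geodesic convexity of complex hyperplanes in $\bC\bP^{n-1}$ already invoked in verifying that \eqref{eq:inverse_gluing_codim_3} is a genuine left inverse, the resulting $w$ satisfies $w(z_0(S_1,\theta)) \in \N^{\bC^{n}}_{(u,\theta)}$ and $w(z_1(S_1,\theta,S_2)) \in \N^{\bC^{n}}_{(u,\theta,\tilde n)}$.

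Next I would observe that the direct gluing map \eqref{eq:glue_compact_region_codim_2_near_codim_3} is built from the pre-gluing of $[\tilde v]$ and $u$ along the cylindrical end \eqref{eq:varying_cylindrical_end}, which agrees with the cylindrical end used in $\Gcod{3}{0}$: the ghost bubble appearing in the latter construction is constant, so it contributes nothing to the underlying map into $M$, only to the bookkeeping of the cylindrical end on the domain, and that is exactly \eqref{eq:varying_cylindrical_end}. Moreover the right inverse prescribed in the codimension-$2$ setup is defined by the same two conditions as above, so the same geodesic convexity argument shows that the solution produced by \eqref{eq:glue_compact_region_codim_2_near_codim_3} has the same intersection properties with $\N^{\bC^{n}}_{(u,\theta)}$ at $z_0(S_1,\theta)$ and with $\N^{\bC^{n}}_{(u,\theta,\tilde n)}$ at $z_1(S_1,\theta,S_2)$.

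Finally I would invoke Lemma \ref{lem:diffeo_end_near_codim_3} together with the explicit form of its inverse \eqref{eq:inverse_codim_3_gluing}: a solution in the neighbourhood of $\cod{3}{L}$ in $\cP(L;0)$ is uniquely recovered from the data of the points at which it meets $\N^{\bC^{n}}_{(u,\theta)}$ and $\N^{\bC^{n}}_{(u,\theta,\tilde n)}$ together with the limiting triple $(u,\theta,[\tilde v])$. Since both maps produce solutions attached to the same $(u,\theta,[\tilde v])$ and hitting these hypersurfaces at the same points $z_0(S_1,\theta)$ and $z_1(S_1,\theta,S_2)$, they must coincide. The only mild subtlety, which I do not expect to be a serious obstacle, is making sure that the two implicit function theorem setups both land in the neighbourhood where the uniqueness of Lemma \ref{lem:diffeo_end_near_codim_3} applies; this is ensured by taking $S$ large enough so that both pre-glued curves Gromov-converge to the same broken configuration in $\cod{3}{L}$, placing them in the domain of validity of \eqref{eq:inverse_codim_3_gluing}. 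Because the argument proceeds entirely through the geometric uniqueness, we avoid any delicate pointwise comparison of the two right inverses.
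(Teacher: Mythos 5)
Your argument is correct and is essentially the paper's own: both gluing constructions produce a solution meeting $\N^{\bC^{n}}_{(u,\theta)}$ at $z_0(S_1,\theta)$ and $\N^{\bC^{n}}_{(u,\theta,\tilde{n})}$ at $z_1(S_1,\theta,S_2)$, and the uniqueness furnished by Lemma \ref{lem:diffeo_end_near_codim_3} forces them to coincide. The points you flag — the ghost bubble only contributing the cylindrical end \eqref{eq:varying_cylindrical_end}, and the geodesic convexity of the hypersurfaces preserving the intersection data under the implicit function theorem — are exactly the ones the paper relies on.
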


There is also a smooth embedding
\begin{equation} \label{eq:glue_codim_3_to_codim_1} \Gcod{3}{1} \co \codt{3}{L} \times [S, +\infty) \to \cod{1}{L}  \end{equation}
for $S$ large enough, defined by the choices of right inverses and strip-like ends as in the previous section.  Indeed, starting with a parametrized sphere bubble $[\tilde{v}]$ passing through $u(\theta)$, a choice of a sufficiently large gluing parameter $ S_1 $ defines a pre-glued curve which is an approximately holomorphic disc $ \preGext_{S} \left( [\tilde{v}], \gh(u(\theta))\right)$.  The pre-glued disc is equipped with two distinguished interior points $z_0(S_1)$ and $z_1(S_1)$ coming from $\{0,1\} \in \bC \bP^1$, and, as in the previous section, the restriction of the linearisation of the $\dbar$ operator becomes an isomorphism with uniformly bounded inverse once its domain is restricted to vector fields which vanish at $-1$ and $z_0(S_1)$  and take value along some hyperplane $T \N_{(u,\theta)}$ at $z_1(S_1)$.  Using this isomorphism to define the right inverse to $D_{\dbar}$, we apply Floer's Picard Lemma to construct $ \Gcod{3}{1}$, and the existence of a smooth inverse to $\Gcod{3}{1}$ in order to conclude that $\Gcod{3}{1}$ is a diffeomorphism onto a neighbourhood of $\codt{3}{L} $ in $\cod{1}{L}$.  In particular, the gluing map \eqref{eq:glue_codim_3_to_codim_1}  is compatible with the projection map to the $\bC \bP^{n-1}$ bundle $\N(\cP_{0,1}(L;-\beta))$ over $\cP_{0,1}(L;-\beta)$ in the sense that we have a commutative diagram
\begin{equation} \label{eq:glue_codim_3_to_codim_1_evaluate_point} \xymatrix{ \codt{3}{L} \times [S, +\infty) \ar[r] \ar[rd] &  \cod{1}{L} \ar[d] \\
& \N(\cP_{0,1}(L;-\beta)) .}   \end{equation}
\begin{lem} \label{lem:smooth_structure_compactification_codim_1_stratum} The gluing map $\Gcod{3}{1}$ determines a smooth structure on the manifold with boundary
\begin{equation} \codb{1}{L} = \cod{1}{L} \cup \codt{3}{L}. \end{equation}
 \noproofe
\end{lem}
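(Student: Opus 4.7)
The plan is to use $\Gcod{3}{1}$ to furnish a collar neighbourhood of $\codt{3}{L}$ in $\codb{1}{L}$, and then to declare the smooth structure on $\codb{1}{L}$ to be the one obtained by gluing this collar to the pre-existing smooth structure on $\cod{1}{L}$.

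First I would reparametrise the gluing parameter. Fix a decreasing diffeomorphism $\rho\co [0,1)\to (S,+\infty]$ with $\rho(0)=+\infty$ (for instance $\rho(t) = S+1/t$), and define
\begin{equation*}
 \Psi \co \codt{3}{L}\times [0,1) \longrightarrow \codb{1}{L}
\end{equation*}
by setting $\Psi(x,0)=x$ for $x\in\codt{3}{L}$, and $\Psi(x,t)=\Gcod{3}{1}(x,\rho(t))$ for $t>0$. Because $\Gcod{3}{1}$ is by hypothesis a diffeomorphism onto a neighbourhood of $\codt{3}{L}$ in $\cod{1}{L}$, the map $\Psi$ is a bijection from $\codt{3}{L}\times[0,1)$ onto an open subset of $\codb{1}{L}$, smooth on $\codt{3}{L}\times(0,1)$. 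I would then use $\Psi$ to transport the product smooth structure on $\codt{3}{L}\times[0,1)$ (as a manifold with boundary) to an open neighbourhood of $\codt{3}{L}$ in $\codb{1}{L}$.

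The remaining step is to check that this collar structure is compatible with the given smooth structure on $\cod{1}{L}$. On the overlap, i.e. over $t\in(0,1)$, the transition map is precisely $\Gcod{3}{1}\circ(\id\times\rho)$, which is smooth and a diffeomorphism onto its image by the second half of the statement preceding the Lemma. Thus the two atlases assemble into a single smooth atlas on $\codb{1}{L}$ making it a manifold with boundary $\codt{3}{L}$; continuity of $\Psi$ at $t=0$ follows from Gromov convergence of $\Gcod{3}{1}(x,S)$ to $x$ as $S\to+\infty$, in the sense built into the Gromov topology used to define $\codb{1}{L}$.

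The only substantive issue is whether the smooth structure so defined depends on the choices (the reparametrisation $\rho$, the auxiliary hypersurfaces $\N_{(u,\theta)}$ used to build $\Gcod{3}{1}$, the strip-like ends, and the right inverse). The ambiguity in $\rho$ is harmless because any two such reparametrisations differ by a diffeomorphism of $[0,1)$ fixing the boundary. The potentially harder point is independence from the analytic choices entering the gluing construction, but one expects this to follow from the now-standard principle that two gluing maps constructed from different right inverses differ by a diffeomorphism which, in the reparametrised coordinate $t$, extends smoothly up to $t=0$ as the identity on $\codt{3}{L}$; the decay estimates of the form $\rO(e^{-2(1-\delta)S})$ that permeate Section \ref{sec:codim1_gluing} (in particular \eqref{eq:exponential_decay_solution}) translate under the substitution $t=e^{-S}$ (or any polynomial $\rho$) into smoothness of the transition at $t=0$. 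This is the one point I would expect to be the main technical obstacle, but it is of the same nature as the comparison arguments already used to prove that $\Gcod{3}{0}$ is a diffeomorphism via a left inverse in Section \ref{sec:codim_3_to_0_gluing}, and can be handled by the same implicit-function-theorem uniqueness argument.
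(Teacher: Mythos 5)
Your proposal is correct and takes essentially the same approach the paper intends: the lemma is stated without proof, and the paper carries out exactly this construction explicitly for $\Chat(L)$ in Section \ref{sec:compactify_capping_mod_space}, attaching $\codt{3}{L}\times(S,+\infty]$ along the gluing map and using a reparametrisation of $(S,+\infty]$ that is differentiable in the interior to induce the smooth structure. Your final paragraph on independence of choices is not actually required by the statement, which only asserts that $\Gcod{3}{1}$ \emph{determines} a smooth structure; the paper deliberately avoids claiming canonicity and, in Lemma \ref{lem:boundary_smooth_structure_diffeo}, compares the two structures arising from the two different gluing constructions only up to a diffeomorphism (via a nontrivial argument), rather than showing they coincide.
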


\subsection{The codimension $2$ stratum} \label{sec:codim-2-strat}

In this Section, we discuss Lemma \ref{lem:gluing_commute_codim_3_codim_2_top_stratum}.  The proof is essentially the same as that of Lemma \ref{lem:codim_3_to_parametrized_space_diffeo_gluing}, so our discussion shall be brief.  The key step is a careful construction of the gluing map.  First, we consider a family of hyperplanes $\N_{(u,z)} \subset \bC \bP^{n-1}$ smoothly parametrized by $z \in \inte D^2$ and $u \in \cP(L;-\beta)$, which agrees with $\N_{(u,\theta)}$ near the boundary. In addition, we require that 
\begin{equation} u(z) \notin \N_{(u,z)}\end{equation}
whenever $z \in D^2$, and $u \in \cP(L; -\beta)$.  As in the discussion following \eqref{eq:condition_hyperplanes}, one can easily construct such a family starting with the family of hyperplanes ``orthogonal" to each point, then modifying this family by a cutoff function near the boundary to ensure that the surfaces are independent of the radial direction.   We obtain a bundle
\begin{equation}\xymatrix{  \N( \cP_{1,0}(L;-\beta)) \ar[r] \ar[dr] &  \bC \bP^{n-1} \times \cP_{1,0}(L;-\beta) \ar[d]  \\
&  \cP_{1,0}(L;-\beta)  \cong \cP(L; -\beta) \times D^2 }
\end{equation}
whose fibres are the hyperplanes $\N_{(u,z)}$.  We denote the normal bundle of $\N( \cP_{1,0}(L;-\beta))$ in the product by $ \tilde{\N}( \cP_{1,0}(L;-\beta)) $, and associate, as before, hyperplanes
\begin{equation} \N_{(u,z, \tilde{\n})} \subset \bC \bP^{n-1} \text{ and } \N^{\bC^{n}}_{(u,z, \tilde{\n})} \subset \bC^{n} \times \bC \bP^{n-1}  \end{equation}
to every $\tilde{\n} \in \tilde{\N}_{(u,z)}$.  Moreover, we can associate to each such $\tilde{\n}$ a unique holomorphic sphere $[\tilde{v}]$ satisfying the analogue of Equation \eqref{eq:condition_parametrization_sphere_passing_hyperplane}, yielding a diffeomorphism
\begin{equation} \tilde{\N}( \cP_{1,0}(L;-\beta)) \to \codt{2}{L}. \end{equation}

We fix the choice of cylindrical end near $\infty \in \bC \bP^{1}$ as in \eqref{eq:cylindrical_end_infinity_sphere}, so the next step is to choose a cylindrical end for each $z \in D^2$ which we require to be compatible with the choice of cylindrical end determined by pre-gluing whenever $z$ is sufficiently close to the boundary.  Note that setting $S_1=0$ in \eqref{eq:varying_cylindrical_end} defines a cylindrical end which is independent of $\theta$.  In particular, the previously chosen family of cylindrical ends extends from a neighbourhood of $D^2$ to the entire disc.

We have now made all the necessary choices to carry through the same procedure as in Section \ref{sec:codim_3_to_0_gluing}, and define a gluing map $\Gcod{2}{0}$.
\begin{rem}
Since $\cod{2}{L}$ is not compact, there is no a priori reason to believe that there is a uniform constant for which the gluing map is defined.  However, Lemma \ref{lem:gluing_codim_3_to_codim_2_commutes} implies that if we restrict to a neighbourhood of $\codt{3}{L}$ in $\codt{2}{L}$, we can define gluing using the composition with the gluing map $\Gcod{3}{0}$ with inverse of the gluing map $\Gcod{3}{2}$, as in \eqref{eq:compose_inverse_gluing_codim_3_gluing}.  The existence of a global constant $S$ for which the gluing map  $\Gcod{2}{0}$ is defined follows immediately.
\end{rem}

It remains to prove that the gluing map \eqref{eq:codimension_2_gluing_map} is a diffeomorphism.   Unlike all our other arguments, the gluing procedure in this setting takes place in the region where the $1$-form $\gamma$ does not vanish, which could a priori affect the analytical parts of the argument.  However, the reader may check that the estimates in Appendix \ref{ap:pointwise_estimates} and Section \ref{sec:codim1_gluing} relied on the fact that $\gamma$ was uniformly bounded in the $L^{p}$-norm, rather than its vanishing.  It is not hard to see  that this is still true even with the weighted $L^p$-norms we would use when gluing at interior nodes.  With this in mind, the proof that $\Gcod{2}{0}$ is a diffeomorphism onto a neighbourhood of $\codt{2}{L}$ is entirely analogous to the strategy deployed in Section \ref{sec:codim_3_to_0_gluing} and is left to the reader.   We simply note that the analogue of Equation \eqref{eq:extended_gluing_codim_3}  is a map
\begin{equation} \label{eq:codim_2_gluing_add_parameter}  \codt{2}{L} \times [S, +\infty)  \times B_{2 \eta} \to \cP(L;0) \times \bC \end{equation}
where $B_{2 \eta}$ is a disc of radius $2 \eta$, and the map to the second component is given by the addition:
\begin{equation}  (u,z,[\tilde{v}],S_2, z_0) \mapsto z + z_0.\end{equation}

\subsection{Compactifying the capping moduli spaces to manifolds with corners} \label{sec:compactify_capping_mod_space}
In this Section, we prove Lemma \ref{lem:compactification_capping_moduli_spaces_corners}.
\begin{lem} \label{lem:gluing_capping_spheres_to_discs}
There exists a smooth embedding 
\begin{equation} \label{eq:gluing_capping_spheres_to_discs} \G^{\cC} \co \codtC{2}{L} \times [S, +\infty) \to \cC(L) \end{equation}
onto a neighbourhood of $\codC{2}{L}$ in $ \cC(L)$.  Moreover, on the boundary $ \codt{3}{L}$ of $\codtC{2}{L}$, $ \G^{\cC}$ restricts to a gluing map with target $\partial \cC(L) \cong \cod{1}{L}$ which satisfies the commutative diagram given in Equation \eqref{eq:glue_codim_3_to_codim_1_evaluate_point}.
\end{lem}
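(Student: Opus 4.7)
The plan is to mimic the gluing construction for $\codt{2}{L}$ of Lemma \ref{lem:gluing_commute_codim_3_codim_2_top_stratum}, with two modifications: first, we glue a sphere bubble in class $\alpha$ to a \emph{ghost} disc bubble with image $c_u(z)$ rather than to an exceptional solution $u$; second, the output is a genuine $J_\alg$-holomorphic disc in $\cS_{0,1}(L;\beta)$, paired tautologically with the exceptional solution and the marked point.  Fix the family of hypersurfaces $\N_{(u,z)} \subset \bC\bP^{n-1}$ from Section \ref{sec:codim_3_to_0_gluing}, arranged so that $c_u(z) \notin \bC^n \times \N_{(u,z)}$ and so that near $\partial D^2$ the family agrees with the boundary family already used in constructing $\Gcod{3}{1}$.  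An element of $\codtC{2}{L}$ is equivalent to the datum $(u, z, \tilde{\n})$, where $\tilde{\n}$ lies in the unit normal bundle of $\N_{(u,z)}$ and canonically determines a sphere $v$ in class $\alpha$ through $c_u(z)$ parametrised by $v(\infty) = c_u(z)$, $v(0) \in \bC^n \times \{\n\}$, $v(1) \in \N^{\bC^n}_{(u,z,\tilde{n})}$.

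For each gluing parameter $S_2$, pre-glue $v$ to the ghost disc at $c_u(z)$ along cylindrical ends extending those used in the $\codt{2}{L}$ construction, producing an approximately holomorphic map $\preGext_{S_2}(u,z,\tilde{\n}) \co (D^2,S^1) \to (M,L)$ with two distinguished interior marked points $z_0(S_2)$, $z_1(S_2)$ inherited from $0,1 \in \bC\bP^1$.  By assumption \eqref{ass:moduli_discs_regular}, the Fredholm operator $D_\dbar$ at holomorphic discs in class $\beta$ is surjective, and by Lemma \ref{lem:moduli_space_alg_discs_submersion} the same holds for $\alpha$-spheres.  Combining these via the breaking map of Section \ref{sec:right_inverse}, then restricting to vector fields whose $\bC\bP^{n-1}$-component vanishes at $z_0(S_2)$ and takes values in $T\N^{\bC^n}_{(u,z,\tilde{n})}$ at $z_1(S_2)$, yields an isomorphism $D_\dbar|_{\preGext_{S_2}}$ with uniformly bounded inverse.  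Floer's Picard Lemma now produces, for $S_2$ sufficiently large, a unique nearby $J_\alg$-holomorphic disc $w_{S_2}(u,z,\tilde{\n})$ in class $\beta$; pairing with $(u,z)$ defines
\begin{equation*}
\G^{\cC}_{S_2}(u,z,\tilde{\n}) = \bigl( w_{S_2}(u,z,\tilde{\n}),\, (u,z) \bigr) \in \cS_{0,1}(L;\beta) \times_L (\cP(L;-\beta) \times D^2).
\end{equation*}
Since by construction $w_{S_2}(-1) = c_u(z)$, this lies in $\cC(L)$.

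To prove that $\G^{\cC}$ is a smooth embedding onto a neighbourhood of $\codC{2}{L}$, I would build an explicit left inverse exactly as in Section \ref{sec:codim_3_to_0_gluing}: any disc $w$ Gromov-close to a sphere bubble at $c_u(z)$ meets $\bC^n \times \N_{(u,z)}$ and $\bC^n \times \N_{(u,z,\tilde{n})}$ transversely at unique points near the bubbling node, and reading off these intersection points under the automorphism of $D^2$ fixing $-1$ and sending the first to $0$ recovers the gluing parameter $S_2$ and the unit normal $\tilde{\n}$.  Extending the map to an auxiliary parameter (cf.\ \eqref{eq:codim_2_gluing_add_parameter}) and running the same degree-at-infinity argument as in Lemma \ref{lem:diffeo_end_near_codim_3} shows that the extended map is a diffeomorphism, whence $\G^{\cC}$ itself is an embedding onto a full neighbourhood.

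For the boundary compatibility, observe that when $z$ approaches $e^{i\theta} \in S^1$, the hypersurfaces $\N_{(u,z)}$ were chosen to reduce to $\N_{(u,\theta)}$ and the cylindrical ends to those used in defining $\Gcod{3}{1}$; moreover the ghost disc bubble at $c_u(e^{i\theta}) = u(e^{i\theta})$ is precisely the ghost used in \eqref{eq:glue_codim_3_to_codim_1}.  The pre-gluing and right inverse thus agree with those of $\Gcod{3}{1}$ on $\codt{3}{L} \subset \partial \codtC{2}{L}$, so the uniqueness clause of Floer's Picard Lemma forces $\G^{\cC}|_{\codt{3}{L} \times [S,+\infty)} = \Gcod{3}{1}$, yielding the commutative diagram \eqref{eq:glue_codim_3_to_codim_1_evaluate_point}.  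The main technical obstacle is orchestrating all these choices—cylindrical ends, hypersurfaces, right inverses—so that the two independent gluing procedures ($\Gcod{3}{1}$ into $\cod{1}{L}$ and $\G^{\cC}$ into $\cC(L)$) restrict to literally the same map on $\codt{3}{L}$, which is what is needed for the smooth structures on $\codbC{1}{L}$ and $\codb{1}{L}$ to match up in Lemma \ref{lem:boundary_smooth_structure_diffeo}.
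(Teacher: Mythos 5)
Your construction of $\G^{\cC}$ itself — pre-gluing the $\alpha$-sphere to a ghost disc at $c_u(z)$, cutting down the kernel by requiring vector fields to vanish in the $\bC\bP^{n-1}$-direction at $z_0$ and to be tangent to $\N^{\bC^n}_{(c_u,z,\tilde{\n})}$ at $z_1$, applying the Picard lemma, and inverting via the intersection points with the hypersurfaces — is the paper's approach (the paper uses the family $\N_{(c_u,z)}$ indexed by the capping disc, which is what your transversality condition $c_u(z)\notin\bC^n\times\N$ actually requires, but that is a notational matter).

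There is, however, a genuine gap in your boundary-compatibility argument. You claim the cylindrical ends at the interior marked point of the ghost bubble can be chosen over all of $D^2$ so as to restrict on $\partial D^2$ to the $\theta$-dependent ends $\xi_{0,\theta}$ of Equation \eqref{eq:varying_cylindrical_end_ghost_bubble} used in defining $\Gcod{3}{1}$, and you then invoke uniqueness in the Picard lemma to conclude $\G^{\cC}|_{\codt{3}{L}\times[S,+\infty)} = \Gcod{3}{1}$ on the nose. This extension does not exist: the end $\xi_{0,\theta}$ differs from $\xi_{0,0}$ by rotation through $\theta$, so a continuous extension over the disc would amount to an $S^1$-valued function on $D^2$ restricting to the identity on $\partial D^2$, i.e.\ a retraction of $D^2$ onto its boundary. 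The paper explicitly notes this and instead uses the constant end $\xi_0\equiv\xi_{0,0}$ for the capping gluing. Consequently the restriction of $\G^{\cC}$ to $\codt{3}{L}$ is only \emph{a} gluing map satisfying the compatibility diagram \eqref{eq:glue_codim_3_to_codim_1_evaluate_point} (both constructions normalize via the same intersection points, so the projections to $\N(\cP_{0,1}(L;-\beta))$ agree); it is a priori a \emph{different} map from $\Gcod{3}{1}$, and the two induced smooth structures on $\cod{1}{L}\cup\codt{3}{L}$ need not coincide. Reconciling them is precisely the content of Lemma \ref{lem:boundary_smooth_structure_diffeo}, which requires a separate argument (trivializing a torus bundle via the Earle--Eells theorem) rather than following from uniqueness in the implicit function theorem as you assert.
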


To define $ \G^{\cC}$, we proceed in exactly the same way as before, and consider a smooth family $\N_{(c_u,z)}$ of hyperplanes in $\bC \bP^{n-1}$ parametrized by $z \in D^2$ and $c_u$ a capping disc for $u \in \cP(L;-\beta)$ as in Equation \eqref{eq:choose_capping_disc}. We require that the projection of $c_{u}(z)$ to $\bC \bP^{n-1}$ not lie in $\N_{(c_u,z)}$, and that $\N_{(c_u,re^{i\theta})} \equiv \N_{(u,\theta)}$ whenever $r$ is sufficiently close to $1$.   The union of these hyperplanes defines a smooth submanifold
\begin{equation} \N( \tilde{\partial}^{2} \cC(L) )  \subset \bC \bP^{n-1} \times \cP(L;0) \times D^2 \end{equation}
Following the strategy used in the previous sections, we consider the unit normal bundle $\tilde{\N}( \tilde{\partial}^{2} \cC(L) ) $ of $\N( \tilde{\partial}^{2} \cC(L) )$, which projects submersively onto $ \cP(L;0) \times D^2$ with fibre at $(u,z)$ the unit normal bundle $\tilde{\N}_{(c_u,z)}$.  Again, each point $\tilde{\n} \in  \tilde{\N}_{(c_u,z)}$  determines a hyperplane $ \N^{\bC^{n}}_{(c_u,z,\tilde{\n})}$.  As $z$ and $c_u$ vary, we conclude

\begin{lem} There exists a diffeomorphism
\begin{align} \tilde{\N}( \tilde{\partial}^{2} \cC(L) ) & \to  \codtC{2}{L} 
\\ (u,z, \tilde{\n}) & \mapsto (u,z, [\tilde{v}]) 
\end{align}
where the class of $[\tilde{v}]$ is determined by a parametrization mapping $\infty$ to $c_u(z)$, $0$ to $\n$, and $1$ to the hyperplane $ \N^{\bC^{n}}_{(c_u,z,\tilde{\n})}$. \noproof
\end{lem}

Using this identification, we can construct the gluing map of Lemma \ref{lem:gluing_capping_spheres_to_discs}.  Since the gluing map is defined by first pre-gluing the sphere bubble to a ghost bubble, the only remaining choice is that of a cylindrical end at the origin of the disc.  Because the choice \eqref{eq:varying_cylindrical_end_ghost_bubble} of cylindrical ends on ghost bubbles passing through the boundary of $c_u$  does not extend to the disc, we shall work with a constant the cylindrical end $\xi_{0} \equiv \xi_{0,0}$ which is independent of $z$.  With this in mind, the rest of the construction is left to the reader.

We can now construct a manifold with corners $\Chat(L)$ as asserted in Lemma \ref{lem:compactification_capping_moduli_spaces_corners}
 \begin{proof}[Proof of Lemma \ref{lem:compactification_capping_moduli_spaces_corners}]
 We define $\Chat(L)$  to be the union of $\cC(L)$ and $\codtC{2}{L} \times (S, +\infty]$ along $\codtC{2}{L} \times (S, +\infty)$ mapping to $\cC(L)$ by the gluing map  \eqref{eq:gluing_capping_spheres_to_discs}.  Any choice of homeomorphism from $ (S, +\infty] $ to $(0,1]$ which is differentiable in the interior determines a smooth corner structure on $\codtC{2}{L} \times (S, +\infty]$, and hence on $\Chat(L)$. \end{proof}

Note that this construction induces the structure of a smooth manifold with boundary on 
\begin{equation} \codbC{1}{L} = \cod{1}{L} \cup \codt{3}{L}.\end{equation}
We constructed a smooth structure on this very space in Lemma \ref{lem:smooth_structure_compactification_codim_1_stratum}, and denoted the resulting manifold with boundary $\codb{1}{L}$.  Since the choice of cylindrical ends on the ghost discs is constant on one side and depends on the angular parameter in the other, the gluing maps are a priori different, so the smooth structure may also be different.  The defect is resolved by the following result:
\begin{lem} \label{lem:boundary_smooth_structure_diffeo} There exists a diffeomorphism 
\begin{equation} \codbC{1}{L}  \to\codb{1}{L} \end{equation}
which agrees with the identity on $\cod{3}{L}$ and away from a neighbourhood of $\cod{3}{L}$.
\end{lem}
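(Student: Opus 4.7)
The plan is to identify precisely the discrepancy between the two gluing charts and then remove it by a collar diffeomorphism. Both smooth structures are built from gluing near $\codt{3}{L}$, and the only difference between the charts $\Gcod{3}{1}$ (used for $\codb{1}{L}$) and the restriction of $\G^{\cC}$ to $\codt{3}{L} \times [S,+\infty)$ (used for $\codbC{1}{L}$) lies in the choice of cylindrical end at the origin of the ghost disc: the former uses the $\theta$-dependent end $\xi_{0,\theta}$ of \eqref{eq:varying_cylindrical_end_ghost_bubble}, while the latter uses the constant end $\xi_{0,0}$. Both smooth structures automatically agree on the open stratum $\cod{1}{L}$ (where the gluing parameter is finite and any chart extends the interior structure) and on $\codt{3}{L}$ itself.

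First I would make the discrepancy explicit. Since $\xi_{0,\theta}=\xi_{0,0}\circ r_{-\theta}$ on the cylindrical coordinate, changing the cylindrical end amounts to replacing the chosen parametrised representative of $[\tilde v]$ by its pre-composition with rotation by $\theta$. Chasing this substitution through the pre-gluing formula and through the right inverse constructed from \eqref{eq:linearisation_dbar_sphere_bubble_and_evaluation_to_L_restrict_tangents} (for which the image of $0\in\bC\bP^1$ under $[\tilde v]$ also rotates accordingly), I would show that
\[
\Gcod{3}{1}^{-1}\circ\G^{\cC}\bigl|_{\codt{3}{L}\times[S,+\infty)}
\;=\;\Psi\;+\;\rO(e^{-\mu S})
\]
in $C^\infty$, where
\[
\Psi\co (u,\theta,[\tilde v],T)\longmapsto(u,\theta,\,e^{i\theta}\cdot[\tilde v],\,T)
\]
and $e^{i\theta}$ denotes the natural $S^1$-action on the circle fibre of $\codt{3}{L}\to\cod{3}{L}$ (coming from rotation of the unit tangent vector at $\infty$). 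The error estimate here follows from the exponential decay of $\sol$ in the implicit function theorem, in the same style as Lemma \ref{lem:derivative_gluing_close_to_parallel_transport_pre-gluing} but adapted to the sphere-bubble gluing of Section \ref{sec:codim_3_to_0_gluing}.

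Next I would construct the diffeomorphism. Pick a smooth cutoff $\rho\co[S,+\infty)\to[0,1]$ that equals $1$ in a neighbourhood of $S$ and equals $0$ outside a compact interval, and define the collar self-map
\[
\Psi_\rho\co (u,\theta,[\tilde v],T)\longmapsto(u,\theta,\,e^{i\rho(T)\theta}\cdot[\tilde v],\,T).
\]
This is a diffeomorphism of $\codt{3}{L}\times[S,+\infty)$, agrees with $\Psi$ near $\codt{3}{L}$, and is the identity outside a compact collar. Transporting $\Psi_\rho$ through $\Gcod{3}{1}$ and extending by the identity on the complement gives a candidate homeomorphism $f_0\co\codbC{1}{L}\to\codb{1}{L}$ which is the identity on $\codt{3}{L}$ and outside a collar. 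By construction $f_0$ differs from the transition between the two smooth atlases by $\rO(e^{-\mu S})$, so $f_0$ is a $C^\infty$-small perturbation of a genuine diffeomorphism on the overlap region.

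The main obstacle is to correct this approximate diffeomorphism to a true one. For this I would use a standard straightening argument: on the collar the difference $f_0-(\text{true transition map})$ is, after both charts are identified with $\codt{3}{L}\times[S,+\infty)$, a smooth family of diffeomorphisms of $\codt{3}{L}$ parametrised by $T$, equal to the identity on $T=S$ and exponentially close to the identity as $T\to\infty$. Interpolating along this family by the time-$1$ map of a compactly supported vector field whose magnitude is again cut off by a function of $T$ (chosen so as not to disturb $\codt{3}{L}$ or the region outside the collar) yields the required genuine diffeomorphism. All the analytic inputs — uniform boundedness of the right inverses, exponential decay of the correction term in the implicit function theorem, and smoothness of the extended gluing maps \eqref{eq:extended_gluing_codim_3} and \eqref{eq:codim_2_gluing_add_parameter} — are already available from the previous sections, so the only real work is the bookkeeping to carry them over to this slightly different geometric setup.
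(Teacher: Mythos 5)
Your strategy---compute the chart transition explicitly as a fibrewise rotation $\Psi$ of the circle bundle $\codt{3}{L}$ and then cancel it by a cut-off collar isotopy---is genuinely different from the paper's. The paper never identifies the transition map: it observes that the image of $\G^{\cC}_{S}$ is a smooth hypersurface in $\codb{1}{L}$ cobounding with $\codt{3}{L}$, glues the two ends of that cobordism $X^S$ to obtain a bundle $Y^S$ over $\N(\cP_{0,1}(L;-\beta))$ with $T^2$ fibres (using the commutativity of diagram \eqref{eq:glue_codim_3_to_codim_1_evaluate_point} for both gluing maps), deduces topological triviality of $Y^S$ from Gromov compactness, and invokes Earle--Eells \cite{EE} to upgrade this to a smooth product structure on $X^S$. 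That soft argument is designed precisely to avoid any regularity statement about the gluing maps in the gluing-parameter direction.

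The gap in your proposal is the step you assert without proof: the estimate $\Gcod{3}{1}^{-1}\circ\G^{\cC}=\Psi+\rO(e^{-\mu S})$ \emph{in $C^\infty$} up to the boundary. The inputs you cite---uniform boundedness of the right inverses and exponential decay of $\sol$---give $C^0$ and fibrewise-$C^1$ control, but no control of derivatives in the gluing-parameter direction; the paper explicitly proves only continuity there (see the remark opening Section \ref{sec:codim1_gluing}), and the smooth structures at the boundary are declared via an auxiliary reparametrization of $[S,+\infty]$, so even a genuine $\rO(e^{-\mu T})$ error need not vanish to first order, let alone to all orders, in the boundary coordinate $t=t(T)$. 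Without this, your $f_0$ is only a homeomorphism and the ``straightening'' step has no smooth family of small diffeomorphisms to interpolate. (There is also a bookkeeping slip: with $\rho\equiv 1$ near $T=S$ and $\rho\equiv 0$ for large $T$, the map $\Psi_\rho$ equals $\Psi$ at the inner edge of the collar, so the extension by the identity is consistent only if $f_0$ is taken to be $\Gcod{3}{1}\circ\Psi_\rho\circ(\G^{\cC})^{-1}$ rather than a conjugation by a single chart; but that is minor compared to the missing derivative estimate.)
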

\begin{proof}
Let us write $\G^{\cC}_{S}$ for the gluing map 
\begin{equation} \codt{3}{L} \to \cod{1}{L} \end{equation}
which is the restriction of Equation \eqref{eq:codimension_2_gluing_map} to the boundary of $\codtC{2}{L}$ and to a gluing parameter $S$.  Since $\codb{1}{L}$ and  $\codbC{1}{L}$ are homeomorphic, and diffeomorphic away from the boundary, the image of $\G^{\cC}_{S}$ is a smoothly embedded submanifold $\codb{1}{L}$ which is cobordant to the boundary.  We write $X^{S}$ for this cobordism.

Note that both components of the boundary of $X^{S}$ are equipped with diffeomorphisms from $\codt{3}{L}$ (one is $\G^{\cC}_{S}$, and the other is essentially the identity).  Our construction would be complete if we were to know the existence of a diffeomorphism
\begin{equation} \label{eq:cobordism_trivialization} \codt{3}{L} \times [0,1] \to  X^{S}, \end{equation}
whose restriction to $\codt{3}{L}  \times \{ 1 \}$ is $\G^{\cC}_{S}$, and whose restriction to $\codt{3}{L}  \times \{ 0 \}$ is the identity.  Indeed, $\codt{3}{L} \times [0,1]$ is also diffeomorphic (upon choosing an identification of $[0,1]$ with $[S,+\infty]$ with the cobordism in $\codbC{1}{L} $ between the image of $\G^{\cC}_{S}$ and $\codt{3}{L}$, so \eqref{eq:cobordism_trivialization} defines a diffeomorphism from a neighbourhood of the boundary in $\codb{1}{L}$ to a similar neighbourhood in $\bar{\partial} \cC(L) $, which is the identity on the boundary component $\codt{3}{L}  \times \{ 0 \}$.  The condition that this map restricts to $\G^{\cC}_{S}$ on the slice $\codt{3}{L}  \times \{ 1 \}$ implies, as desired, that this map extends to the identity outside this neighbourhood.

It remains therefore to prove \eqref{eq:cobordism_trivialization}.  To do this, we consider the manifold $Y^S$ obtained by gluing the two boundaries of $X^S$ together along the map $\G^{\cC}_{S}$.  Note that the diagram \eqref{eq:glue_codim_3_to_codim_1_evaluate_point} is commutative for the gluing maps used to define both $\codb{1}{L}$ and  $\bar{\partial} \cC(L) $.  This implies that we have a fibration
\begin{equation} Y^S \to \N(\cP_{0,1}(L;-\beta))  \end{equation}
whose fibres are diffeomorphic to the $2$-torus.  We would like to check that this bundle is trivial.  Since Gromov compactness implies that we have a  trivialization of $X^{S}$ as a topological cobordism, we know that $Y^{S}$ is a trivial fibration in the topological category. A classical result, see \cite{EE}, asserts that the inclusion of the group of  diffeomorphisms into  homeomorphisms of $T^2$ is a homotopy equivalence, so we conclude the desired result.
\end{proof}
 
\section{Construction of a smooth manifold with corners} \label{sec:construction_corners}
In this section, we prove Lemma \ref{lem:existence_manifold_corner} by constructing the desired manifold with corners $\Phat(L;0)$. The only missing technical point for that Lemma is the proof that the gluing map $\G_{S}$ of Equation \eqref{eq:gluing_map} has degree $1$; this shall be facilitated by choosing the right inverse to $D_{\cod{1}{L}}$ in such a way that the following property holds:
\begin{equation} \label{eq:right_inverse_constraint_basepoints}
\parbox{35em}{there exists an open subset $U_{\star}$ of $\cod{1}{L}$ intersecting each component non-trivially, and a (possibly disconnected) complex hypersurface $\N_\star$ such that  $v$ intersects $\N_ \star $ orthogonally at $0$ whenever $(u,\theta,v)$ lies in $U_{\star}$, and the image of right inverse to $D_{\cod{1}{L}}$, restricted to curves in $U_{\star}$, is contained in the subspace of vector fields whose value at the origin $0$ lies in $T \N_\star$.} 
\end{equation}

The notion of degree makes sense because of the properness asserted in the next Proposition 
\begin{prop} \label{prop:good_approximate_inverse_gluing}
There exists a proper smooth map 
\begin{equation}  F \co \cP(L;0) \cup \codt{2}{L}  \to [0,+ \infty) \end{equation}
and a sequence $S^i \to \infty$ such that each $S^i$ is a regular value of $F$ with
\begin{equation} F^{-1}(S^i) =  \Im \G_{S^i}.  \end{equation}
Moreover, there exists a subset  $V_{\star} \subset U_{\star}$ of $\cod{1}{L}$ intersecting each component non-trivially such that the diagram
\begin{equation}
 \xymatrix{ V_\star \times [S, +\infty) \ar[r] \ar[d] & \cP(L;0) \cup \codt{2}{L}   \ar[d] \\
[S, +\infty) \ar[r] & [0,+\infty] } \end{equation}
commutes.
\end{prop}

We can now define a manifold with corners $\Phat^{S^i}(L;0)$ for each sufficiently large $S^i$ as the inverse image under $F$
\begin{equation}
\Phat^{S^i}(L;0) = F^{-1}(-\infty,S^i]
\end{equation}
The reader may easily verify that such a manifold satisfies all the desired conditions stated in Lemma \ref{lem:existence_manifold_corner}.  For example, the inclusions of $\cod{1}{L}$ and $\codt{3}{L}$ are given by the gluing maps $\G_{S^i}$ and $\G_{S_i,+\infty}$.

\begin{rem}
It seems more than plausible that the diffeomorphism type of $\Phat^{S^i}(L;0)$ is independent of $S^i$.  Since we do not need this property, we have not attempted to prove it.
\end{rem}
\subsection{The degree of the gluing map} \label{sec:degree_gluing}
Recall that the inclusion $\cod{1}{L}$ into $\cF_{\cod{1}{L}}(L)$ requires the choice of a section of an $\Aut(D^2,-1)$ bundle.  The gluing theorem of Lemma \ref{lem:smooth_structure_compactification_codim_1_stratum}  determines such a section away from a compact subset in $\cod{1}{L}$, which we extend globally  (there are no obstructions since   $\Aut(D^2,-1)$ is contractible).  Moreover, we choose the compact subset $K \subset \cod{1}{L}$ to be sufficiently large so that its interior together with the image of $\Gcod{3}{1}$ form a cover of $\cod{1}{L}$.  Finally, we assume that the right inverse chosen in  \eqref{eq:fibre_product_right_inverse} satisfies the following property:
\begin{equation} \label{eq:constrain_right_inverse}
\parbox{35em}{On the image of $\Gcod{3}{1}$, $Q_{\cod{1}{L}}$ agrees with the right inverse induced by the gluing map.} 
\end{equation}

Let us briefly explain the construction of this induced right inverse: for each pair $(u, \theta, v)$ in this neighbourhood, $v$ comes equipped with two marked points obtained as the images of the two marked point which stabilize a sphere bubble.  The image of $Q_{\cod{1}{L}}$ is the set of tangent vectors whose projection to $\bC \bP^{n-1}$ vanishes at one of the two points, and lies on a prescribed hyperplane at the other.

\begin{lem} \label{lem:commutative_gluing_codim_3_to_codim_1}
There exists a continuous gluing map
\begin{equation} \label{eq:codimension_1_gluing_map}\G \co \cod{1}{L} \times [S, +\infty) \to \cP(L;0) \end{equation}
whose restriction to each slice $ \cod{1}{L} \times \{ S_1 \}$  is a smooth immersion  and such that the diagram
\begin{equation} \label{eq:commutative_gluing_codim_1} \xymatrix{ \codt{3}{L} \times [S,+\infty)^2 \ar[dr] \ar[r] & \cod{1}{L} \times [S,+\infty) \ar[d] \\ & \cP(L;0) } \end{equation}
commutes. \noproof
\end{lem}
Note that there is a slight abuse of notation since we are writing $\G$ in Equation \eqref{eq:codimension_1_gluing_map} for what should more consistently be written as $\Gcod{1}{0}$.  The proof of this result is essentially the same as that of Lemma \ref{lem:gluing_commute_codim_3_codim_2_top_stratum}:  In a neighbourhood of $\codt{3}{L}$, we define $\G $ as the composition of the  inverse to $\Gcod{3}{1}$, with the gluing map $\Gcod{3}{0}$.  On $K \times [S_1, +\infty)$, the gluing map $\G$  is defined as in Equation \ref{eq:gluing_map}.  The condition we imposed in Equation \eqref{eq:constrain_right_inverse} guarantees that these two maps agree whenever  both are defined, which implies the commutativity of the diagram \eqref{eq:commutative_gluing_codim_1}.

We shall now prove that this map has degree $1$ onto a neighbourhood of $\cod{1}{L}$.  Our starting point is the construction of a function on pairs of points in $\cF_{\cP}(L)$ which will play the role of a distance even though  it does not satisfy the triangle inequality.  Given two maps $w$ and $w'$ in $\cF(L)$ we define
\begin{equation}  \tilde{d}\left((R,w),(R',w')\right) =  \left| R-R' \right| + \min_{\theta} \left(|\theta| + \max_{z} d(w(ze^{i \theta}), w'(z)) \right),
\end{equation}
where $d$ on the right hand side is the ordinary distance with respect to a fixed metric which is smaller than all $g_{z,R}$ metrics.  
\begin{lem} The map $\tilde{d}$ is continuous and  vanishes only on the diagonal.  \noproof \end{lem}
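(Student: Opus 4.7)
The plan is to handle the two assertions separately, starting with the easier vanishing statement and then turning to continuity, with the key technical point in both being that the minimum over $\theta$ is taken on the compact space $S^1$ (with $|\theta|$ the angular distance from $0$), so that the minimum is realized and inherits continuity from the function being minimized.

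For the vanishing statement, suppose $\tilde{d}\bigl((R,w),(R',w')\bigr)=0$. Since both summands are non-negative, we have $|R-R'|=0$, hence $R=R'$. For the second summand, continuity of $(\theta,z)\mapsto d(w(ze^{i\theta}),w'(z))$ on the compact set $S^1\times D^2$ shows that the expression $F(\theta)=|\theta|+\max_{z}d(w(ze^{i\theta}),w'(z))$ is continuous in $\theta\in S^1$, so a minimizer $\theta^{*}$ exists. From $F(\theta^{*})=0$ and non-negativity of each term, $|\theta^{*}|=0$ and $\max_{z}d(w(z),w'(z))=0$, i.e.\ $w=w'$.

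For continuity, it suffices to check continuity in each of the four arguments $(R,R',w,w')$. Continuity in $(R,R')$ is immediate from continuity of $|R-R'|$. For the second summand, consider the jointly continuous map
\begin{equation*}
\Phi\co S^1\times \cF(L)\times \cF(L)\longrightarrow \bR, \qquad
\Phi(\theta,w,w')=|\theta|+\max_{z\in D^2}d(w(ze^{i\theta}),w'(z)),
\end{equation*}
where joint continuity follows from the $C^0$-continuity of the evaluation map $(z,w)\mapsto w(z)$ (any reasonable topology on $\cF(L)$ dominates the $C^0$-topology) and from compactness of $D^2$ which ensures that the $\max$ is a continuous function of its arguments. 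The minimum of a jointly continuous function over a compact space is continuous in the remaining variables, so $(w,w')\mapsto \min_{\theta}\Phi(\theta,w,w')$ is continuous. Adding this to $|R-R'|$ gives continuity of $\tilde{d}$.

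The only point that requires a brief comment is the continuity of $(\theta,w,w')\mapsto \max_{z\in D^2}d(w(ze^{i\theta}),w'(z))$: given sequences $\theta_i\to\theta$, $w_i\to w$, $w_i'\to w'$, one uses equicontinuity on the compact set $D^2$ together with a standard $\epsilon/3$ argument to pass the limit inside the $\max$. I expect no real obstacle here; the entire lemma is a routine compactness-plus-continuity exercise, and the only mild subtlety is remembering that $\theta$ ranges over the compact group $S^1$ (with $|\theta|$ the induced angular distance from the identity) rather than over $\bR$, without which neither attainment of the minimum nor continuity of the min would hold.
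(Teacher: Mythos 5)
Your proof is correct, and the paper in fact states this lemma with no proof at all (it is marked as requiring none), so your argument simply supplies the routine compactness-and-continuity verification the author takes for granted: attainment and continuity of the minimum over the compact circle, plus joint continuity of $(\theta,w,w')\mapsto \max_{z}d(w(ze^{i\theta}),w'(z))$. The only cosmetic remark is that you do not need equicontinuity of the approximating sequence in the last step — uniform ($C^0$) convergence $w_i\to w$ together with uniform continuity of the fixed limit map $w$ on the compact disc already carries the $\epsilon/3$ argument.
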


We shall compare this function to the product metric
\begin{equation} d((R,u,\theta,v, S), (R',u',\theta', v', S' )) =|R-R'| +|S -S'| + d(\theta, \theta') + d(u,u') + d(v,v')  \end{equation}
of $\cod{1}{L}$.  The following Lemma is a quantitative version of the injectivity property of $\preG$ implied by Lemma \ref{lem:pre-gluing_injective}.

\begin{lem} \label{lem:diameter_inverse_image_decays}
For each $\rho \geq 0$, there exists a constant $\tilde{\epsilon}(\rho) \geq 0$ such that
\begin{equation*}  \tilde{d}( \preG_{S}(u,\theta,v), \preG_{S'}(u',\theta',v') ) \leq \tilde{\epsilon}(\rho)   \implies  d((R,u,\theta,v, S), (R',u',\theta', v', S' )) \leq \rho \end{equation*}
whenever $(R,u,\theta,v ,S)$ and $(R',u',\theta', v', S' )$ are elements of $K$ with $u(1) = v(-1)$ and $u'(1) = v'(-1)$.
\end{lem}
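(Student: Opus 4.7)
\emph{Plan.} The plan is to argue by contradiction and extract convergent subsequences using the compactness of $K$ together with the injectivity of $\preG$ from Lemma~\ref{lem:pre-gluing_injective}. Assume the conclusion fails: there exist $\rho > 0$ and sequences $x_n = (R_n,u_n,\theta_n,v_n,S_n)$ and $x_n' = (R_n',u_n',\theta_n',v_n',S_n')$ in $K\times[S_K,+\infty)$ with the stated matching conditions, such that $\tilde d(\preG_{S_n}(u_n,\theta_n,v_n),\preG_{S_n'}(u_n',\theta_n',v_n'))\to 0$ while $d(x_n,x_n')\geq\rho$. Compactness of $K$ lets me pass to a subsequence so that $(u_n,\theta_n,v_n)\to(u_\infty,\theta_\infty,v_\infty)$ and $(u_n',\theta_n',v_n')\to(u_\infty',\theta_\infty',v_\infty')$ in $K$, and so that $S_n,S_n'$ both converge in the compactification $[S_K,+\infty]$. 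Unwinding the definition of $\tilde d$ produces rotations $\lambda_n^*\in S^1$ with $|\lambda_n^*|\to 0$ such that, using the identity $\preG_S(u,\theta,v)\circ r_\lambda=\preG_S(u,\theta-\lambda,v)$, the two sequences $\preG_{S_n}(u_n,\theta_n-\lambda_n^*,v_n)$ and $\preG_{S_n'}(u_n',\theta_n',v_n')$ have uniform distance tending to $0$.

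First I would handle the case where both $S_n,S_n'$ remain bounded, hence converge to finite limits $S_\infty,S_\infty'\in[S_K,+\infty)$. Smoothness of $\preG$ in all its arguments, combined with $\lambda_n^*\to 0$, gives $\preG_{S_\infty}(u_\infty,\theta_\infty,v_\infty)=\preG_{S_\infty'}(u_\infty',\theta_\infty',v_\infty')$ in the limit. Lemma~\ref{lem:pre-gluing_injective} then identifies $S_\infty=S_\infty'$, $\theta_\infty=\theta_\infty'$, $u_\infty=u_\infty'$, and $v_\infty=v_\infty'$, and the matching conditions force $R_\infty=R_\infty'$ via the exceptional solution to which $v_\infty$ belongs. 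This gives $d(x_\infty,x_\infty')=0$, contradicting $d(x_n,x_n')\geq\rho$ for large $n$.

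The main obstacle is the case where at least one of $S_n,S_n'$ escapes to $+\infty$. The quantitative picture I will use is that, after applying $\phi_S$, the image of the $v$-copy in $\preG_S(u,\theta,v)$ sits in a half-disc neighborhood of $r_{-\theta}(1)\in\partial D^2$ of radius comparable to $e^{-4S}$, while the exponential decay estimate of Lemma~\ref{lem:exponential_decay_bounds} ensures $C^\infty_{\mathrm{loc}}$-convergence of the pre-glued map to $u\circ r_{-\theta}$ on the complement of this shrinking neighborhood. In the mixed case $S_n\to\infty$ with $S_n'$ bounded, the primed pre-glued map converges to a smooth limit while the unprimed one develops a persistent bubble of positive diameter, ruling out uniform convergence: the unprimed map ranges throughout the image of $v_\infty$ on every neighborhood of $r_{-\theta_n}(1)$, whereas the primed map takes values near $u_\infty'(1)$ on shrinking such neighborhoods. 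When both $S_n,S_n'\to\infty$, $C^\infty_{\mathrm{loc}}$-convergence away from the bubbling points, together with $\lambda_n^*\to 0$ and the identification of rescaled limits near the bubble, identifies the $K$-data limits. The most delicate remaining step, which I expect to be the real technical obstacle, is to show $|S_n-S_n'|\to 0$: if, say, $S_n\leq S_n'-c$ for some $c>0$, then on the annulus where $e^{-4S_n'}\ll|z-r_{-\theta_n}(1)|\ll e^{-4S_n}$ the unprimed map still ranges throughout a full copy of $v_\infty(D^2)$ while the primed has settled within $o(1)$ of $u_\infty(1)=v_\infty(-1)$, yielding a uniform distance bounded below by a constant proportional to $\mathrm{diam}\,v_\infty(D^2)$. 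This contradicts $\tilde d\to 0$, and the previously established identification of the remaining data then contradicts $d(x_n,x_n')\geq\rho$.
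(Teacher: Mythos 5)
Your proposal follows essentially the same route as the paper's proof: argue by contradiction, extract limits using the compactness of $K$, dispose of the case of bounded gluing parameters via the injectivity of $\preG$ (Lemma \ref{lem:pre-gluing_injective}), and use the persistence of the non-constant bubble $v$ at an exponentially small scale near $r_{-\theta_n}(1)$ to force both gluing parameters to infinity and to identify the limiting data. You are in fact more explicit than the paper about the need to prove $|S_n-S'_n|\to 0$, a point the paper folds into ``the final step \ldots is left to the reader.''

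One sub-case of that step fails as literally stated. The annulus $e^{-4S'_n}\ll|z-r_{-\theta_n}(1)|\ll e^{-4S_n}$ is nonempty, with both $\ll$'s meaningful, only when $S'_n-S_n\to+\infty$; in that regime your description is correct. But to conclude $|S_n-S'_n|\to 0$ you must also exclude $S'_n-S_n\to c\in(0,\infty)$, and there the annulus has bounded modulus: on it the unprimed map covers only a bounded annular piece of $v_\infty(D^2)$ rather than all of it, and the primed map is $v'_n$ evaluated near the outer edge of its own bubble region, not yet settled near $v_\infty(-1)$. The argument needed in this regime is different: matching the two bubble scales shows that $\tilde{d}\to 0$ would force $v_\infty=v_\infty\circ\psi$, where $\psi$ is the nontrivial automorphism of $D^2$ fixing $\pm 1$ given in the strip coordinates of \eqref{eq:rational_strip} by translation by $4c$; since the orbits of $\langle\psi\rangle$ accumulate at $+1$, this would make $v_\infty$ constant, contradicting $[v_\infty]=\beta\neq 0$. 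With that substitution your argument closes; the same rigidity statement is what the paper implicitly relies on for its unproved claims that $v=v'$ and that the gluing parameters agree in the limit.
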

\begin{proof}
Fix $\rho \geq 0$, and assume, by contradiction, that there exists a sequence of pairs $(R_i,u_i,\theta_i,v_i,S_i)$ and $(R_i,u'_i,\theta'_i,v'_i,S'_i)$ such that
\begin{align} d ((R_i,u_i,\theta_i,v_i,S_i),(R'_i, u'_i,\theta'_i,v'_i,S'_i)) & \geq \rho \\ \label{eq:limit_strange_distance_0}
\lim_{i \to \infty}  \tilde{d}( \preG_{S_i}(u_i,\theta_i,v_i) , \preG_{S'_i}(u'_i,\theta'_i,v'_i) ) & = 0
 \end{align}
We write $(R_i,w_i)$ and $(R'_i,w'_i)$ for $\preG_{S_i}(u_i,\theta_i,v_i) $ and $\preG_{S'_i}(u'_i,\theta'_i,v'_i) $ respectively.  First, we observe that the finiteness of the moduli space $\cP(L; -\beta)$ and Lemma \ref{lem:one_exceptional_solution_at_a_time} imply that we may assume that for sufficiently large $i$, we have an identity
\begin{equation}  (R_i, u \circ r_{-\theta_i}) =  (R'_i, u' \circ r_{-\theta'_i}) = (R,u) \end{equation}
for some fixed $(R,u)$ that is independent of $i$.

Using the compactness of $K$, we may assume that the sequences $(\theta_i,v_i)$ and $(\theta'_i,v'_i)$ converge respectively to $(\theta,v)$ and $(\theta',v')$.  In addition, if $S_i$ converges to $S$ and $S'_i$ to $S'$, then the fact, proved  in Lemma \ref{lem:pre-gluing_injective},  that $\preG$ is an embedding,  contradicts our assumptions.   

One of $S_i$ and $S'_i$ must therefore converge to $+\infty$.  To see that the other must satisfy the same property, we observe that the fact that $v$ is not in the trivial homotopy class implies that for each $(u,\theta,v) \in K$ there is a point $z_{(u,\theta,v)}$  in the interior of $D^2$  such that
\begin{equation} d\left(v(z_{(u,\theta,v)}), L\right) \geq 2 \rho_0 \end{equation}
for a uniform constant $\rho_0$; we assume that $\rho \leq \rho_0$.  The point $z_{(u,\theta'_i,v')}$ determines a marked point $z_{S_i,w_i}$ in the domain of $w_i$, which, for an unbounded sequence $S_i$, converges to the boundary.  Assuming that $S_i$ converges to $+\infty$ but $S'_i$ does not, we find that there is a neighbourhood of $\partial D^2$ which is mapped to the $\rho_0$-sized neighbourhood of $L$ by $w'_i$, but whose image under $w_i$ lies away from this set.  We conclude that $S_i$ and $S'_i$ both converge to $+\infty$.

A similar analysis shows that $\tilde{d}(w_i,w'_i)$ is necessarily bounded above by $\theta - \theta'/2$ as $S_i$ and $S'_i$ converge to  infinity, so we must assume $\theta=\theta'$ in order for \eqref{eq:limit_strange_distance_0} to be satisfied.   The final step of showing that $v=v'$ is left to the reader.
\end{proof}

Next, we consider a map $\G_{\epsilon}$ given as the composition of the gluing map $\Gext_{\epsilon}$ with the projection to $\cP(L;0)$.  Pick $\rho$ much smaller than the injectivity radius of $\cod{1}{L}$, and let the constant $\epsilon$ in Proposition \ref{prop:statement_implicit_function_theorem} be smaller  than $\frac{\tilde{\epsilon}(\rho)}{8c_p}$, where $\tilde{\epsilon}(\rho)$ is given by the previous Lemma and $c_p$ is  a uniform Sobolev constant (see Lemma \ref{lem:uniform_sobolev}):
 \begin{lem} \label{cor:inverse_image_extended_gluing_small} The inverse image of a point under $\G_{\epsilon}$ has radius smaller than $\rho$. 
 \end{lem}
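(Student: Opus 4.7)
My plan is to bound the pseudo-distance $\tilde{d}$ between the pre-glued curves underlying two preimages of the same point, then invoke Lemma~\ref{lem:diameter_inverse_image_decays} to translate this into a bound on the gluing data in $\cod{1}{L} \times [S,+\infty)$. Take two points $p_i = ((u_i,\theta_i,v_i), S_i, X_i^\flat)$ in $\preGext^* \ker_\epsilon D_{\cP_{0,1}}$ with common image $(R^*, w^*) = \G_\epsilon(p_i) \in \cP(L;0)$, and write $Y_i := X_i^\flat + \sol(X_i^\flat) \in T\cF_{\cP_{0,1}}^{1,p}(L)$ in the product decomposition as $(r_i, \lambda_i, Z_i)$. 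Unwinding the definitions of $\Gext_\epsilon$ and of the projection $\cP_{0,1}(L;0) \to \cP(L;0)$ gives $R^* = R_{u_i} + r_i$ and $w^* = \exp_{P_i}(Z_i) \circ r_{-(\theta_i + \lambda_i)}$, where $P_i = (u_i \#_{S_i} v_i) \circ \phi_{S_i}^{-1}$. For $S$ large, property \eqref{eq:exponential_decay_solution} of Proposition~\ref{prop:statement_implicit_function_theorem} makes $|\sol(X_i^\flat)|_{1,p,S_i}$ exponentially small, so $|Y_i|_{1,p,S_i}$ is only slightly larger than $\epsilon$, and the uniform Sobolev embedding of Lemma~\ref{lem:uniform_sobolev} yields the pointwise bound $d(P_i(z), \exp_{P_i}(Z_i)(z)) \le c_p\, \epsilon\, (1+o(1))$.

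The next step is to estimate $\tilde{d}(\preG_{S_i}(u_i,\theta_i,v_i), (R^*, w^*))$. The $R$-contribution is $|r_i|$. Choosing $\alpha = -\lambda_i$ in the minimum defining $\tilde{d}$ and substituting $z \mapsto z e^{-i(\theta_i + \lambda_i)}$, the map-contribution collapses to $|\lambda_i| + \max_z d(P_i(z), \exp_{P_i}(Z_i)(z))$, giving a total of at most $(2 + c_p)\epsilon(1+o(1))$. I would then verify that $\tilde{d}$ is symmetric (via the substitution $(\theta,z) \mapsto (-\theta, z e^{i\theta})$) and satisfies the triangle inequality (by adding optimal angles $\theta_{12} + \theta_{23}$ and making the corresponding change of variable in $z$ before applying the ordinary triangle inequality for $d$ on $M$). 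Combining these steps,
\[
\tilde{d}\!\left(\preG_{S_1}(u_1,\theta_1,v_1),\, \preG_{S_2}(u_2,\theta_2,v_2)\right) \;\le\; 2(2 + c_p)\epsilon(1+o(1)) \;\le\; 8\, c_p\, \epsilon \;<\; \tilde{\epsilon}(\rho),
\]
where the middle inequality uses $c_p \ge 1$ and the hypothesis $\epsilon < \tilde{\epsilon}(\rho)/(8 c_p)$. Lemma~\ref{lem:diameter_inverse_image_decays} then bounds the distance between the base data $(R_{u_i}, u_i, \theta_i, v_i, S_i)$ by $\rho$; the fibre contribution is trivially controlled by $|X_i^\flat| \le \epsilon$.

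The main obstacle I expect is verifying the triangle inequality and symmetry for $\tilde{d}$. Since $\tilde{d}$ is not a genuine metric, the angular minimum couples the three distinct rotations in play — the pre-gluing angle $\theta$, the infinitesimal angle $\lambda$ arising from the $S^1$-component of $X^\flat + \sol X^\flat$, and the free variable $\alpha$ entering the definition of $\tilde{d}$ — and some care is required to see how these compose. Once the bookkeeping is carried out, the estimate reduces to a routine combination of the uniform Sobolev embedding of Lemma~\ref{lem:uniform_sobolev} with Lemma~\ref{lem:diameter_inverse_image_decays}.
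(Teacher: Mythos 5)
Your proof is correct and follows essentially the same route as the paper: express each preimage's image as the exponential of a $C^0$-small vector field (small by the uniform Sobolev constant of Lemma \ref{lem:uniform_sobolev} and the exponential decay of $\sol$) over the corresponding pre-glued curve, bound $\tilde{d}$ between the two pre-glued curves, and feed this into Lemma \ref{lem:diameter_inverse_image_decays}. The only point where you diverge is in asserting symmetry and the triangle inequality for $\tilde{d}$, which the paper explicitly disclaims (``even though it does not satisfy the triangle inequality'') and instead sidesteps by chaining three intermediate maps with direct pointwise estimates. Your verification is in fact sound --- the substitution $z \mapsto z e^{i\theta_2}$ converts the term at angle $\theta_1 + \theta_2$ into the sum of the two optimal terms, and $\max_z(f+g) \le \max_z f + \max_z g$ --- so this is not a gap, but be aware of the discrepancy with the paper's stated caveat; in any case the specific two-term chain you need can be checked by hand exactly as you indicate, without appealing to a general metric axiom.
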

 \begin{proof} Assume that $\G_{\epsilon,S}(u,\theta,v,Y)$ and  $\G_{\epsilon,S'}(u',\theta',v',Y')$ are equal, and note that the projections of $\Gext_{\epsilon,S}(u,\theta,v,Y)$ and $\Gext_{\epsilon,S'}(u',\theta',v',Y')$ to $S^1$ are respectively $\frac{\tilde{\epsilon}}{8c_p}$-close to $\theta$ and $\theta'$.

Expressing $\G_{\epsilon,S}(u,\theta,v,Y)$ and $\G_{\epsilon,S'}(u',\theta',v',Y')$ as the composition of this small rotation with the exponential of a vector field along the pre-gluing map, we find that  
\begin{equation}  \tilde{d} ( \exp_{w}(Y \circ r_{-\theta} + \sol(Y)\circ r_{-\theta}), \exp_{w'}(Y'\circ r_{-\theta'} + \sol(Y')\circ r_{-\theta'} )) \leq \frac{\tilde{\epsilon}}{4 c_p} \leq \frac{\tilde{\epsilon}}{4}   \end{equation}
where  $w=u \#_{S} v  \circ \phi_{S} \circ r_{-\theta}$ and $w' = u' \#_{S'} v' \circ \phi_{S'} \circ r_{-\theta'}$.  Since $|Y+ \sol(Y)|_{C^0} \leq  \frac{\tilde{\epsilon}}{4}$ (similarly for $Y'$), we conclude that 
\begin{equation}  \tilde{d}( \preG_{S}(u,\theta,v),  \preG_{S'}(u',\theta',v')) \leq \tilde{\epsilon}(\rho) ,\end{equation}
which implies the desired result by the previous Lemma.
\end{proof}

Consider the smooth manifolds with boundary
\begin{equation}  \cP(L;0) \cup \codt{2}{L} \textrm{ and } \codb{1}{L} \equiv \cod{1}{L} \cup \codt{3}{L}  \end{equation}
obtained respectively  by gluing $\codt{2}{L} \times (S, + \infty]$ to $\cP(L;0)$ along $\Gcod{2}{0}$, and $\codt{3}{L} \times (S, + \infty]$ to $\cod{1}{L}$ along $\Gcod{3}{1}$.   Combining Lemmas \ref{lem:codim_3_to_parametrized_space_diffeo_gluing},  \ref{lem:gluing_commute_codim_3_codim_2_top_stratum}, and \ref{lem:commutative_gluing_codim_3_to_codim_1} we obtain a map
\begin{equation}\codb{1}{L} \times [S, +\infty)  \to   \cP(L;0) \cup \codt{2}{L}  \end{equation}
whose restriction to each slice $\codb{1}{L} \times \{ S_1 \} $ is an immersion which is an embedding near the boundary of $\codb{1}{L}$, and is moreover transverse to the boundary of $ \cP(L;0) \cup \codt{2}{L} $.

\begin{lem} \label{lem:existence_proper_map}
There exists a proper continuous map 
\begin{equation}  F^{\rho} \co  \cP(L;0) \cup \codt{2}{L}  \to [0,+ \infty) \end{equation} and a constant $S$ sufficiently large such that the diagram
\begin{equation}
 \xymatrix{ \preGext^{*} \ker_{\epsilon} D_{\cP_{0,1}} \times [S, +\infty) \ar[r] \ar[d] &\cP(L;0) \ar[d] \\
[S, +\infty) \ar[r] & [0,+\infty] } \end{equation}
commutes up to the universal constant $2 \rho$ for $S$ sufficiently large.
\end{lem}
\begin{proof}
By Lemmas \ref{lem:codim_3_to_parametrized_space_diffeo_gluing} and \ref{lem:commutative_gluing_codim_3_to_codim_1}, the restriction of $\G$ to the image of $\Gcod{3}{1}$ is  a diffeomorphism onto a neighbourhood of $\cod{3}{L}$ in $\cP(L;0)$.  We define $F^{\rho}$ in this neighbourhood to agree with the gluing parameter, and we let it vanish identically away from the image of $\G_{\epsilon}$.  Our main task is to extend this map to the remainder of $\cP(L;0)$.

Given $w \in \Im(\G_{\epsilon})$ we consider a sequence of compact neighbourhoods with non-empty interior whose intersection is $w$.  The inverse images are compact subsets of $\preGext^{*} \ker_{\epsilon} \times [S, +\infty)$ whose intersection is $\G_{\epsilon}^{-1}(w)$.  By Lemma \ref{cor:inverse_image_extended_gluing_small}, the diameter of $\G_{\epsilon}^{-1}(w)$ is bounded by $\rho$, so we may choose a neighbourhood of $w$ whose inverse image has diameter  bounded by $2\rho$.   By fixing a triangulation of $\cP(L;0)$ which is subordinate to this cover, it is easy to construct the desired function (simply by averaging over simplices).  We omit the details but the reader may consult the proof of the next Lemma, and adapt it here. 
\end{proof}

In particular, the map $\G$ has a well-defined degree at infinity; a path between points in ${F^{\rho}}^{-1}([S_1,+\infty)$ does not intersect the image of the boundary of $ \cod{1}{L} \times [S, +\infty)$ if $S_1 \geq S + 2 \rho$.

\begin{lem}
The gluing map $\G$ has degree $1$ at infinity, in particular it is a surjection onto a neighbourhood of $\cod{1}{L}$ in $\cP(L;0)$. 
\end{lem}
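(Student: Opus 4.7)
The strategy is to compute the degree of $\G$ at infinity by choosing test points very close to $\cod{3}{L}$, where the behaviour of $\G$ is rigidly controlled by the diffeomorphism $\Gcod{3}{0}$ of Lemma \ref{lem:diffeo_end_near_codim_3} via the commutative square of Lemma \ref{lem:commutative_gluing_codim_3_to_codim_1}. Concretely, I would fix a connected component of the relevant end in $\cP(L;0)$, pick an element $(u,\theta,[\tilde v]) \in \cod{3}{L}$ whose image under $\Gcod{3}{1}$ (after a further gluing) lies in that component, and for very large $S_1, S_2$ consider the test point $w := \Gcod{3}{0}(u,\theta,[\tilde v],S_1,S_2)$. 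Lemma \ref{lem:commutative_gluing_codim_3_to_codim_1} immediately produces one preimage, namely $\bigl(\Gcod{3}{1}(u,\theta,[\tilde v],S_1),\, S_2\bigr)$, contributing $+1$ to the signed count.

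The heart of the argument is to show this is the only preimage. By the properness statement in Lemma \ref{lem:existence_proper_map}, if $S_1$ and $S_2$ are taken large, $F^{\rho}(w)$ is large, so any preimage $(u',\theta',v',S')$ under $\G$ is forced to have $S' \geq S_1 + 2\rho$. Suppose for contradiction that there is a sequence of test points $w_i$ converging to $\cod{3}{L}$ and preimages $(u'_i,\theta'_i,v'_i,S'_i)$ outside a fixed neighbourhood of $\codt{3}{L}$ in $\cod{1}{L}\times[S,+\infty)$. Gromov compactness then forces a subsequential limit that breaks a disc along $\partial D^{2}$ (producing the codimension $1$ bubbling encoded by $\cod{1}{L}$) and simultaneously bubbles a holomorphic sphere on the interior neck (producing an element of $\cod{3}{L}$); after passing to this limit, the diffeomorphism property of $\Gcod{3}{1}$ (Lemma \ref{lem:smooth_structure_compactification_codim_1_stratum}) places all sufficiently distant preimages in a neighbourhood of $\codt{3}{L}$ anyway. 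Inside that neighbourhood, the commutative diagram of Lemma \ref{lem:commutative_gluing_codim_3_to_codim_1} lets me transport the preimage count under $\G$ into a preimage count under $\Gcod{3}{0}$, and uniqueness is then immediate from Lemma \ref{lem:diffeo_end_near_codim_3}. This yields degree $1$ at infinity.

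For the surjectivity conclusion, Corollary \ref{cor:local_injectivity_gluing} shows $\G$ is an immersion which is locally injective on neighbourhoods of uniform size, hence a local diffeomorphism for $S$ large; its image is therefore open. The properness provided by $F^{\rho}$ (Lemma \ref{lem:existence_proper_map}) shows that the image of $\G$ restricted to $\{F^\rho \geq S_1+2\rho\}$ is closed in the subset $\{F^{\rho} \geq S_{1}+2\rho\} \subset \cP(L;0)$, and the constancy of the degree-at-infinity on connected components, combined with its non-vanishing computed above, rules out a non-empty open complement. The main obstacle I anticipate is the Gromov-compactness step when both gluing parameters tend to infinity simultaneously: one has to check that the only possible limit of preimages $(u'_i,\theta'_i,v'_i,S'_i)$ of points $w_i \to \cod{3}{L}$ is a configuration with exactly one neck disc bubble carrying exactly one sphere bubble, ruling out more degenerate simultaneous bubbling, which requires carefully tracking the $\omega$-energy distribution between the two gluing regions as in the proof of Lemma \ref{lem:surjectivity_codim_1_gluing}.
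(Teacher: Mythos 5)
Your strategy---compute the degree by testing at points close to $\cod{3}{L}$, where $\G$ is identified with the diffeomorphism $\Gcod{3}{0}$ through the commutative square of Lemma \ref{lem:commutative_gluing_codim_3_to_codim_1}---is not the route the paper takes, and it has a genuine gap. The degree at infinity is only \emph{locally} constant: it is defined separately on each connected component of ${F^{\rho}}^{-1}([S_1,+\infty))$, and these components correspond to connected components of $\cod{1}{L}$. Your test points live only in those end-components whose corresponding component of $\cod{1}{L}$ contains $\codt{3}{L}$ in its closure. Nothing guarantees this adjacency: $\codb{1}{L} = \cod{1}{L}\cup\codt{3}{L}$ is a compact manifold with boundary, and it may perfectly well have closed components, i.e.\ components of the fibre product $\cS_{0,1}(L;\beta)\times_{L}\cP_{0,1}(L;-\beta)$ whose Gromov compactification contains no sphere-bubble configurations at all. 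On the end of $\cP(L;0)$ lying over such a component your argument computes nothing, so surjectivity of $\G$ there---which is exactly what is needed to build $\Phat(L;0)$---is not established. (The paper is visibly careful about this elsewhere, e.g.\ condition \eqref{eq:right_inverse_constraint_basepoints} is imposed ``on each component''.)

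The paper's own proof is global and sidesteps the issue: by Lemma \ref{cor:inverse_image_extended_gluing_small} (resting on Lemma \ref{lem:diameter_inverse_image_decays}) the fibres of $\G_{\epsilon}$ have diameter at most $2\rho$, so one triangulates the image finely, sends each vertex to an arbitrary preimage, extends over simplices by geodesic interpolation, and obtains a map $I$ with $I\circ\G$ within $2\rho$ of the identity, hence homotopic to it; a homotopy left inverse forces degree $1$ on every end-component at once. Your localization idea is essentially the argument the paper uses for the degree of $\Gcod{3}{0}$ itself (where no adjacency problem arises because one works directly in a neighbourhood of $\cod{3}{L}$), but for $\G$ it must be supplemented either by proving that every component of $\cod{1}{L}$ abuts $\codt{3}{L}$ or by an argument of the paper's global type on the remaining components. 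Your Gromov-compactness step (all preimages of points near $\cod{3}{L}$ eventually lie in the image of $\Gcod{3}{1}$) is, by contrast, sound in principle and correctly flagged as the delicate analytic point.
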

\begin{proof}
We shall construct a map from a neighbourhood of $\cod{1}{L}$ in $\cP(L;0)$ to the product $\cod{1}{L} \times [S, +\infty)$ which will serve as a homotopy left inverse to the gluing map $\G$.  Pick a triangulation of $\cP(L;0)$ as in the previous Lemma such all simplices have inverse images under $ \G_{\epsilon} $  whose diameter is smaller than $2 \rho$.

Construct a map $I$ from the image of $\Gext_{\epsilon}$ to $\cod{1}{L}  \times [S_1, +\infty)$ as follows:  In a neighbourhood of $\cod{3}{L}$, this map is the honest inverse to $\G$.  Away from $\cod{3}{L}$, pick an arbitrary inverse image in $\preGext^{*}  \ker_{\epsilon} D_{\cP_{0,1}}$ for each vertex of the chosen triangulation, and let the image of a vertex be the projection of this pre-image to $\cod{1}{L} \times [S, +\infty)$.   We extend this map to a cell of the triangulation by observing that the image of all vertices is contained in a geodesically convex ball, and using geodesics to define an analogue of linear interpolation.

To prove that $I$ is indeed a homotopy left inverse to $\G$, note that the composition  $I \circ \G$ maps every point to a point which is at most $ 2 \rho$ away. In particular, every point and its image are connected by a unique shortest geodesic.  Following this geodesic defines the desired isotopy. 
\end{proof}

\subsection{Construction of $\Phat(L;0)$} \label{sec:construction_manifold_corner}
Returning to the proof of Lemma \ref{cor:inverse_image_extended_gluing_small} in the case where $Y$ vanishes, we find that since $\sol_{S}(0)$ is bounded above in norm by a constant multiple of $ e^{-2(1-\delta)S}$, the assumption that \begin{equation} \label{eq:images_gluing_agree}  \G_{S}(u,\theta,v) = \G_{S'}(u,\theta',v'),  \end{equation}
implies that the $\tilde{d}$-distance between $\preGext(u,\theta,v,S)$ and $\preGext(u,\theta',v',S')$ is also bounded  by a constant multiple of $ e^{-2(1-\delta)S} +  e^{-2(1-\delta)S'}$.  Applying Lemma \ref{lem:diameter_inverse_image_decays}, we conclude: 
\begin{lem} \label{lem:gluing_better at preserving_distances} For any $\rho$, we may choose $S$ large enough so that Equation \eqref{eq:images_gluing_agree} implies that  $(u,\theta,v,S) $ and $(u,\theta',v',S')$ are within $\rho$ of each other. \noproof
\end{lem}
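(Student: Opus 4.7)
The plan is to follow the scheme of the proof of Lemma \ref{cor:inverse_image_extended_gluing_small} essentially verbatim, but with the arbitrary chart-size constant $\tilde\epsilon$ replaced by the decaying bound on $\sol_S(0)$ that is provided by Proposition \ref{prop:statement_implicit_function_theorem}. Unpacking definitions, $\G_{S}(u,\theta,v)$ is the composition of $\Gext_{S}(u,\theta,v)$ with the fibration \eqref{eq:projection_of_boundary_marked_point}, and
\[
\Gext_{S}(u,\theta,v) = \bigl(R_u,\,\theta,\,\exp_{\preGext_{S}(u,\theta,v)} \sol_{(u,\theta,v,S)}(0)\bigr).
\]
By \eqref{eq:exponential_decay_solution}, $|\sol_{(u,\theta,v,S)}(0)|_{1,p,S}$ is bounded by a universal constant times $e^{-2(1-\delta)S}$, and an analogous bound holds at $(u,\theta',v',S')$. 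The uniform Sobolev constant supplied by Lemma \ref{lem:uniform_sobolev} converts these into $C^0$-bounds of the same exponential order.

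Next, I would repeat the computation from the proof of Lemma \ref{cor:inverse_image_extended_gluing_small} with $Y=Y'=0$. The hypothesis \eqref{eq:images_gluing_agree} asserts that the two images agree after projecting to $\cP(L;0)$; the $S^1$-components of $\Gext_{S}(u,\theta,v)$ and $\Gext_{S'}(u,\theta',v')$ are therefore within $\rO(e^{-2(1-\delta)S}+e^{-2(1-\delta)S'})$ of $\theta$ and $\theta'$. Writing $w = u\#_S v\circ\phi_S\circ r_{-\theta}$ and $w' = u\#_{S'}v'\circ\phi_{S'}\circ r_{-\theta'}$ and peeling off the small rotation exactly as in that earlier proof, one obtains
\[
\tilde d\bigl(\exp_w\!\bigl(\sol(0)\circ r_{-\theta}\bigr),\ \exp_{w'}\!\bigl(\sol(0)\circ r_{-\theta'}\bigr)\bigr) \;=\; \rO\bigl(e^{-2(1-\delta)S}+e^{-2(1-\delta)S'}\bigr),
\]
and hence, absorbing the $C^0$-small corrections into the definition of $\tilde d$,
\[
\tilde d\bigl(\preG_{S}(u,\theta,v),\ \preG_{S'}(u,\theta',v')\bigr) \;=\; \rO\bigl(e^{-2(1-\delta)S}+e^{-2(1-\delta)S'}\bigr).
\]
Note that the identity of the $\cP(L;-\beta)$-component on both sides (so that both four-tuples really do start with the same $u$) is forced, for $S,S'$ large enough, by Gromov compactness together with Lemma \ref{lem:one_exceptional_solution_at_a_time}, exactly as in Lemma \ref{lem:diameter_inverse_image_decays}.

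Finally, given $\rho$, let $\tilde{\epsilon}(\rho)$ be the constant supplied by Lemma \ref{lem:diameter_inverse_image_decays} and choose $S_0$ so large that the right-hand side of the displayed estimate is bounded by $\tilde{\epsilon}(\rho)$ for all $S,S'\geq S_0$. Then Lemma \ref{lem:diameter_inverse_image_decays} gives $d((R_u,u,\theta,v,S),(R_u,u,\theta',v',S'))\leq\rho$, which is the desired conclusion. There is no serious obstacle; the only points requiring care are tracking the circle-rotation through the forgetful projection $\cP_{0,1}(L;0)\to\cP(L;0)$ and handling the fact that $\tilde d$ is not a genuine metric, and both of these were already dealt with in Lemma \ref{cor:inverse_image_extended_gluing_small}. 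The novelty here is merely that the constant $\tilde\epsilon$ from that lemma is now replaced by a quantity that can be made arbitrarily small by taking $S$ large, which is precisely the content of the exponential decay estimate \eqref{eq:exponential_decay_solution}.
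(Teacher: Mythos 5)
Your proposal is correct and follows essentially the same route as the paper: the paper likewise revisits the argument of Lemma \ref{cor:inverse_image_extended_gluing_small} with $Y=0$, uses the exponential decay $|\sol_S(0)| = \rO(e^{-2(1-\delta)S})$ to bound $\tilde d(\preGext(u,\theta,v,S),\preGext(u,\theta',v',S'))$ by a constant multiple of $e^{-2(1-\delta)S}+e^{-2(1-\delta)S'}$, and then concludes via Lemma \ref{lem:diameter_inverse_image_decays}. The extra observations you make (the Sobolev conversion to $C^0$ and the identification of the $\cP(L;-\beta)$-component) are harmless and already implicit in the cited lemmas.
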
 
Since the distance in $\cod{1}{L} \times [S,+\infty)$ is bounded below by the difference between the gluing parameters, we conclude
\begin{cor} \label{cor:surfaces_disjoint_large_S}
If a sequence $S_i$ converges to infinity, and
\begin{equation} \Im \G_{S_i} \cap \G_{S'_i} \neq \emptyset \end{equation}
then $\lim_{i \to +\infty}|S_i - S'_i| = 0$. \noproof
\end{cor}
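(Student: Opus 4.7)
The plan is to derive this corollary directly from Lemma \ref{lem:gluing_better at preserving_distances}, using Gromov compactness to handle the case where $S'_i$ might a priori remain bounded. By hypothesis, for each $i$ there exist $(u_i,\theta_i,v_i)$ and $(u'_i,\theta'_i,v'_i)$ in $\cod{1}{L}$ (which we may assume lies in the fixed compact set $K$) with
\[
\G_{S_i}(u_i,\theta_i,v_i) = \G_{S'_i}(u'_i,\theta'_i,v'_i) =: w_i \in \cP(L;0),
\]
and we wish to show $|S_i-S'_i| \to 0$.

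First I would argue that $S'_i \to \infty$. Suppose not; after passing to a subsequence $S'_i \to S'_\infty \in [S,+\infty)$, and by compactness of $K$ also $(u'_i,\theta'_i,v'_i) \to (u'_\infty,\theta'_\infty,v'_\infty)$. Since $\G$ is continuous, $w_i$ converges in $\cP(L;0)$ to $\G_{S'_\infty}(u'_\infty,\theta'_\infty,v'_\infty)$, which is an honest solution to \eqref{deformedCR} in the trivial homotopy class. On the other hand, $S_i \to +\infty$ forces $w_i$ to Gromov-converge to a stratified curve in $\cod{1}{L}$ (the very behavior encoded by the gluing map at its ``infinite'' end, as in Lemma \ref{lem:surjectivity_codim_1_gluing}); in particular the energy on any collar of the neck appearing in the pre-gluing tends to a positive constant, while the corresponding energy for $\G_{S'_\infty}(u'_\infty,\theta'_\infty,v'_\infty)$ is spread out continuously over the domain. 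This is a contradiction, so $S'_i \to +\infty$.

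Now fix $\rho > 0$. By Lemma \ref{lem:gluing_better at preserving_distances}, there exists $S_\rho \geq S$ such that whenever $S, S' \geq S_\rho$ and $\G_S(u,\theta,v)=\G_{S'}(u',\theta',v')$, the points $(u,\theta,v,S)$ and $(u',\theta',v',S')$ lie within distance $\rho$ of each other in the product metric on $\cod{1}{L}\times [S,+\infty)$. Since $S_i,S'_i \to +\infty$, for $i$ sufficiently large both parameters exceed $S_\rho$, and the lemma applies to the pairs constructed above. Because the product metric satisfies
\[
d\bigl((u_i,\theta_i,v_i,S_i),(u'_i,\theta'_i,v'_i,S'_i)\bigr) \geq |S_i - S'_i|,
\]
we conclude $|S_i - S'_i|\leq \rho$ for all sufficiently large $i$. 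As $\rho$ was arbitrary, $|S_i-S'_i|\to 0$.

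The only nontrivial point is the exclusion of bounded $S'_i$; this is where the distinction between the Gromov boundary stratum $\cod{1}{L}$ and the interior $\cP(L;0)$ is crucial. All subsequent estimates are bookkeeping, as the lower bound of the product metric by $|S_i-S'_i|$ is tautological. I do not anticipate any analytical subtlety beyond invoking the already-established Lemma \ref{lem:gluing_better at preserving_distances} and basic Gromov compactness.
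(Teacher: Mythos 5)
Your proof is correct and follows the route the paper intends: the corollary is stated with no proof, as an immediate consequence of Lemma \ref{lem:gluing_better at preserving_distances} together with the tautological bound $d \geq |S_i - S'_i|$, and that is exactly the skeleton of your argument. You rightly observe, however, that the lemma's derivation requires \emph{both} gluing parameters to be large (the $\tilde{d}$-estimate between the pre-glued curves is $\rO(e^{-2(1-\delta)S_i} + e^{-2(1-\delta)S'_i})$, which is only small when $\min(S_i,S'_i)\to\infty$), so the case of bounded $S'_i$ must be excluded separately; this case is genuinely present, since the corollary is later invoked with $S'_i$ equal to a fixed $S^j$. Your Gromov-compactness argument for this step is sound and parallels the step in the paper's proof of Lemma \ref{lem:diameter_inverse_image_decays} showing that the two parameters diverge together. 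The one imprecision is the phrase ``the energy on any collar of the neck tends to a positive constant'': the energy carried by the neck itself tends to zero (Equation \eqref{eq:limit_energy_strip_zero}); what actually obstructs $C^1$-convergence to the interior solution $\G_{S'_\infty}(u'_\infty,\theta'_\infty,v'_\infty)$ is that an amount $\omega(\beta)>0$ of energy concentrates in the shrinking region of the domain carrying the $v$-component near a boundary point (equivalently, one may use the paper's device of an interior point $z_{(u,\theta,v)}$ at which $v$ is uniformly far from $L$). This does not affect the validity of the argument.
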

We can also specialize Lemma \ref{lem:gluing_better at preserving_distances} to a fixed $S$.  For $S$ large enough, it implies that no two points that are a bounded distance away from each other can have the same image.  On the other hand, Proposition \ref{prop:local_injectivity_gluing} implies that $\G_{S}$ is injective in sufficiently small balls in $\cod{1}{L}$.  We conclude:
\begin{lem}  \label{lem:gluing_map_S_embedding}
If $S$ is large enough, the gluing map $\G_{S}$ is a smooth codimension $1$ embedding. Moreover, for each $\rho$, we may choose $S$ large enough so that the inverse images of points under $\G| [S,+\infty)$ are bounded in diameter by $\rho$.\noproof
\end{lem}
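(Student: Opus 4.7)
I would combine two injectivity estimates already established in the text: Corollary \ref{cor:local_injectivity_gluing}, which gives injectivity of $\G_{S}$ on balls of a uniform size $\rho_{0}>0$ in $\cod{1}{L}$ (independent of $S$), and Lemma \ref{lem:gluing_better at preserving_distances}, which forces any two preimages of a point under $\G_{S}$ to lie arbitrarily close to one another once $S$ is sufficiently large. Picking $\rho < \rho_{0}$ and applying Lemma \ref{lem:gluing_better at preserving_distances} with this $\rho$ yields $S_{0}$ such that for all $S \geq S_{0}$, whenever $\G_{S}(x_{1}) = \G_{S}(x_{2})$, the distance between $x_{1}$ and $x_{2}$ in $\cod{1}{L}$ is less than $\rho_{0}$. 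Corollary \ref{cor:local_injectivity_gluing} then forces $x_{1} = x_{2}$, establishing global injectivity of $\G_{S}$. Smoothness of $\G_{S}$ is part of the construction in Lemma \ref{lem:commutative_gluing_codim_3_to_codim_1}, while the codimension one immersion property is the content of Corollary \ref{cor:local_injectivity_gluing} together with the dimension count $\dim\cod{1}{L} = 2n-1$ versus $\dim\cP(L;0) = 2n$.

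To upgrade the smooth injective immersion to an embedding one needs properness. I would argue by contradiction: suppose $K \subset \cP(L;0)$ is compact and $\G_{S}^{-1}(K)$ contains a sequence $x_{n}$ with no convergent subsequence. By Lemma \ref{lem:smooth_structure_compactification_codim_1_stratum}, $\codb{1}{L} = \cod{1}{L} \cup \codt{3}{L}$ is a compact manifold with boundary, so after passing to a subsequence $x_{n}$ accumulates on $\codt{3}{L}$. Writing $x_{n} = \Gcod{3}{1}(y_{n}, T_{n})$ with $y_n$ convergent and $T_{n} \to \infty$, the commutative triangle of Lemma \ref{lem:commutative_gluing_codim_3_to_codim_1} identifies $\G_{S}(x_{n}) = \Gcod{3}{0}(y_{n}, S, T_{n})$. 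By the triangle of Lemma \ref{lem:gluing_commute_codim_3_codim_2_top_stratum}, the latter sequence approaches the stratum $\codt{2}{L}$ in the smooth manifold $\cP(L;0) \cup \codt{2}{L}$, and hence leaves every compact subset of $\cP(L;0)$, contradicting $\G_{S}(x_{n}) \in K$.

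The second assertion is a direct reformulation of Lemma \ref{lem:gluing_better at preserving_distances}: given $\rho$, choose $S$ large enough that any pair $(x_{1}, S_{1}), (x_{2}, S_{2}) \in \cod{1}{L} \times [S, +\infty)$ with $\G_{S_{1}}(x_{1}) = \G_{S_{2}}(x_{2})$ satisfies the product-metric estimate $d((x_{1}, S_{1}), (x_{2}, S_{2})) \leq \rho$, which is exactly the desired diameter bound. The main substantive point in the whole proof is the properness step in the second paragraph; injectivity and the immersion property are available more or less immediately from earlier results, but without properness a sequence in the non-compact manifold $\cod{1}{L}$ could in principle escape to infinity while its image remained in a compact set. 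The control ruling this out is provided precisely by the compatibility of $\G_{S}$ with the codimension three gluing $\Gcod{3}{0}$ recorded in the commutative diagrams of Lemmas \ref{lem:commutative_gluing_codim_3_to_codim_1} and \ref{lem:gluing_commute_codim_3_codim_2_top_stratum}.
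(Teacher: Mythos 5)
Your argument for injectivity is exactly the paper's: the lemma is stated with no proof immediately after the observation that Lemma \ref{lem:gluing_better at preserving_distances}, specialized to fixed $S$, forces any two preimages of a point to be arbitrarily close, while Corollary \ref{cor:local_injectivity_gluing} rules out nearby distinct preimages; the diameter statement is likewise just a restatement of Lemma \ref{lem:gluing_better at preserving_distances}. Your extra properness paragraph addresses a point the paper leaves implicit (the map extends to the compact manifold with boundary $\codb{1}{L}$, where an injective immersion is automatically an embedding), and your route to it via the commutative diagrams of Lemmas \ref{lem:commutative_gluing_codim_3_to_codim_1} and \ref{lem:gluing_commute_codim_3_codim_2_top_stratum} is a correct way to see the same thing.
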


The image of $\G_{S}$ bounds a compact submanifold with corners included as a codimension $0$ submanifold of $\cP(L;0) \cup \codt{2}{L}$ and can therefore serve as a model of $\Phat(L;0)$.  However, since we have not shown the gluing map to be smooth in the direction of the gluing parameter, we shall perform a slightly more complicated construction in order to obtain appropriate control of the normal bundle of the boundary of  $\Phat(L;0)$.  Using Assumption \eqref{eq:right_inverse_constraint_basepoints}, we can recover the gluing parameter from the intersection point with  $\N_\star$, as in Section \ref{sec:codim_3_to_0_gluing}:
\begin{lem}
There exists a smooth map
\begin{equation} \cP(L;0) \to \bR \end{equation}
whose restriction to the image of $U_{\star} \times [S,+\infty)$ is the inverse to the gluing parameter. \noproof
\end{lem}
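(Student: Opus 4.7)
The plan is to mimic the strategy of Section~\ref{sec:codim_3_to_0_gluing}: use the transverse intersection of glued curves with $\N_\star$ to produce a smooth map from a neighbourhood of $\G(U_\star\times[S,+\infty))$ in $\cP(L;0)$ to $D^2$, which on the image of $\G$ depends only on the gluing parameter, then post-compose with a smooth function on $D^2$ realising $S_1 \mapsto 1/S_1$, and extend globally by a cutoff.

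For each $S_1 \in [S,+\infty)$ let $z_v(S_1) = \phi_{S_1}(\iota^{\cS,S_1}_{0}(0)) \in D^2$, the image in the standard disc of the origin of the $v$-factor of $\Sigma_{S_1}$; this depends only on $S_1$, varies smoothly, and $z_v \co [S,+\infty) \to D^2$ is a smooth proper embedding with image limiting onto $1 \in \partial D^2$. I claim that for every $(u,\theta,v) \in U_\star$, the glued curve $w = \G_{S_1}(u,\theta,v)$ satisfies $w(z_v(S_1)) \in \N_\star$ on the nose. The pre-glued curve $\preG_{S_1}(u,\theta,v)$ already satisfies this because $v(0) \in \N_\star$, and the correction $\sol_{S_1}$ furnished by Proposition~\ref{prop:statement_implicit_function_theorem} lies in the image of the right inverse $Q_{\preGext}$, which by \eqref{eq:right_inverse_constraint_basepoints} is tangent to the Banach submanifold of maps sending $\iota^{\cS,S_1}_{0}(0)$ into $\N_\star$; applying the implicit function theorem within this submanifold produces a solution that lies in it, and by the uniqueness clause of Proposition~\ref{prop:statement_implicit_function_theorem} this must coincide with $\sol_{S_1}$.

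Next, I would construct a smooth map $z \co U \to D^2$ on a neighbourhood $U$ of $\G(U_\star\times[S,+\infty))$ by declaring $z(w)$ to be the unique intersection of the image of $w$ with $\N_\star$ lying near $z_v(S_1)$; existence and smoothness follow from the implicit function theorem applied to the evaluation map $(w,\zeta) \mapsto w(\zeta)$, since transversality to $\N_\star$ holds because each such $w$ is $C^1$-close to $v$ (after identification of the disc with $\Sigma_{S_1}$) and $v$ meets $\N_\star$ orthogonally at $0$. By the previous paragraph, $z(\G_{S_1}(u,\theta,v)) = z_v(S_1)$. Finally, since $z_v$ is a smooth proper embedding, there exists a smooth function $g \co D^2 \to \bR$ with $g(z_v(S_1)) = 1/S_1$ for every $S_1 \geq S$; multiplying $g \circ z$ by a bump function on $\cP(L;0)$ supported in $U$ and equal to $1$ on $\G(U_\star \times [S+1,+\infty))$ then yields the required smooth function.

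The main obstacle is the exactness of the identity $w(z_v(S_1)) \in \N_\star$: were it to hold only to leading order $e^{-(2-\delta)S_1}$, the resulting function would differ from $1/S_1$ on the image by an exponentially small but nonzero amount, violating the statement of the lemma. The key point is that \eqref{eq:right_inverse_constraint_basepoints} is not merely a linearised condition; because the image of the right inverse is tangent to the Banach submanifold of constrained maps and $\preG_{S_1}(u,\theta,v)$ already lies in it, the entire implicit function theorem iteration remains inside, so the identity is genuine rather than approximate.
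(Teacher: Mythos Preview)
Your strategy matches the paper's one-line indication (``As in Section~\ref{sec:codim_3_to_0_gluing}''), but there are two gaps in the execution. First, you have forgotten the rotation built into $\G$: by \eqref{eq:defin_pre-gluing} the curve component of $\preG_{S_1}(u,\theta,v)$ is $(u\#_{S_1}v)\circ\phi_{S_1}^{-1}\circ r_{-\theta}$, so the domain point mapping to $v(0)$ is $r_\theta(z_v(S_1))$, not $z_v(S_1)$; the same shift (now by $\theta+\lambda_{\sol}$) persists for the glued curve $\G_{S_1}(u,\theta,v)$. Hence your $z(w)$ depends on $\theta$ as well as on $S_1$, and $g\circ z$ does not recover $S_1$ alone. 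The repair is easy: use a rotation-invariant quantity such as $|z(w)|$, which equals $|z_v(S_1)|$ and depends only on $S_1$.

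Second, your final paragraph conflates the linear subspace $\Im Q_{\preGext}$ with the nonlinear Banach submanifold $\{w:w(z_0)\in\N_\star\}$. The Picard iteration of Proposition~\ref{prop:statement_implicit_function_theorem} certainly stays inside $\Im Q_{\preGext}$, so $\sol$ has value at the marked point lying in $T_{v(0)}\N_\star$; but exponentiating gives $\exp_{v(0)}(\sol(z_0))$, which lands in $\N_\star$ only if $\N_\star$ is totally geodesic for the metric used in the chart. This is precisely what the paper invokes in Section~\ref{sec:codim_3_to_0_gluing} (``hyperplanes are geodesically convex for the standard metric on $\bC\bP^{n-1}$''), and the same must be arranged here---e.g.\ by taking $\N_\star$ of the form $\bC^n\times H$ with $H\subset\bC\bP^{n-1}$ a hyperplane and ensuring the metric is the standard product metric near the relevant points (which is permissible since Corollary~\ref{cor:totally_geodesic_hermitian_metric} constrains $g$ only near $L$). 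Without this, the identity $w(z_0)\in\N_\star$ holds only to second order in $\sol$, exactly the failure you were trying to rule out. As a minor note, ``inverse to the gluing parameter'' here means the function recovering $S_1$, not its reciprocal (cf.\ Lemma~\ref{lem:good_approximate_inverse_gluing}).
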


In particular, the images of $U_{\star}$ under the gluing map for different values of the gluing parameter are disjoint.  In addition:

\begin{lem} \label{lem:injectivity_near_basepoint} There exists an open subset $V_\star \subset U_\star $ which intersects each component of $\cod{1}{L}$ non-trivially such that
\begin{equation} \G^{-1}( \G_{S}(u,\theta,v)) = (u, \theta,v,S) \end{equation}
whenever $S$ is large enough and $ (u, \theta,v) \in V_\star $. \end{lem}
\begin{proof}
Since each map $\G_{S}$ is injective, it suffices to show that upon shrinking $U_\star$, we can avoid the images of points in the complement of the region where Condition \eqref{eq:right_inverse_constraint_basepoints} holds.  This is an immediate consequence of the second part of Lemma \ref{lem:gluing_map_S_embedding}.
\end{proof}

This allows us to refine the function $F^{\rho}$, and hence establish the main result of this section:
\begin{proof}[Proof of Proposition \ref{prop:good_approximate_inverse_gluing}]
Corollary \ref{cor:surfaces_disjoint_large_S}, implies that we may pick a sequence $S^i$ of positive real numbers such that 
\begin{equation} \lim_{i \to +\infty} S^i = + \infty  \textrm{ and } \Im \G_{S} \cap \Im \G_{S^i} = \emptyset  \text{ whenever $ S \geq S^{i+1}$.} \end{equation}
 This implies that $\Im \G_{S^{i+1}}$ separates $\Im \G_{S^{i}}$ from the end.  In particular, a neighbourhood of $\codb{1}{L}$ can be written as union of cobordisms between $\Im \G_{S^{i}}$ and $\Im \G_{S^{i+1}}$. We set the function $F$ to agree with $S^{i}$ on the image of the map $\G_{S_i}$, and with the value of the gluing parameter on the image of $\G| V_\star \times [S, +\infty) $.  Lemma \ref{lem:injectivity_near_basepoint} implies that these two conditions are compatible.   Trivializing a neighbourhood of  $\Im \G_{S^{i}}$ using a map to $\bR$ which agrees with the $[S,+\infty)$ coordinate near $\G_{S_i}\left( V_\star \right)$, we obtain collar neighbourhoods of $\Im \G_{S^{i}}$ in the two cobordism for which it appears as a boundary, so we can extend $F$ linearly with respect to the normal direction.  Any extension of $F$ to a function on the interior of the cobordism which agrees with this linear function near the boundary, agrees with the gluing parameter on $\G \left(  V_\star \times [S^i, S^{i+1}) \right)$, and takes values in the open interval $(S^i, S^{i+1})$ will satisfy the desired conclusion. 
\end{proof}

\section{Triviality of the tangent space of the cobordism} \label{sec:triviality_tangent_space}
In this section, we prove Lemma \ref{lem:iso_stabilizations_bundles}.  We first explain the construction of a compact CW pair $\left( \cX(L), \partial \cX(L) \right)$ with an inclusion of $(\Chat(L), \codbC{1}{L})$, and an extension of the gluing map $\G^{\cC}$ to $ \partial \cX(L) $.  In the last part of this section, we construct the CW complex $\cY(L)$.

\subsection{Constructing a finite-dimensional replacement for $\cF_{\cC}(L)$ } 

We begin by recalling that the projection map
\begin{equation} \label{eq:project_to_exceptional_solutions} \pi \co \partial \cF_{\cC}(L) \to \cP_{0,1}(L;-\beta) \end{equation} is a weak homotopy equivalence. The choice of a section of 
\begin{equation} \codt{3}{L} \to \cP_{0,1}(L;-\beta) \cong  \cP(L;-\beta) \times S^1  \end{equation}
together with the gluing map $\Gcod{3}{1}_S$ for a sufficiently large $S$ gives an inclusion
\begin{equation} \label{eq:gluing_include_exceptional_solutions} \iota \co \cP_{0,1}(L;-\beta) \to   \partial \cC(L) \to \partial \cF_{\cC}(L) \end{equation}
such that the composite with the projection map \eqref{eq:project_to_exceptional_solutions} is the identity.  Moreover, if we consider the closure of the codimension $1$ stratum $ \codbC{1}{L} \subset  \Chat(L) $ (see the paragraph following Lemma \ref{lem:compactification_capping_moduli_spaces_corners}), then the choice of such a gluing parameter also determines up to a contractible choice an inclusion
\begin{equation} \codbC{1}{L} \to \partial \cC(L)  \end{equation}
which is the identity away from a collar of the boundary.

We now fix a triangulation of $ \codbC{1}{L} $.  Given a vertex $v \in  \codbC{1}{L}$,  pick a path in 
\begin{equation}  \pi^{-1}(\pi(v)) \subset \partial \cF_{\cC}(L) \end{equation}
 connecting  $v$ to $\iota \pi(v)$. Note that this is possible because the fibres of $\pi$ are connected (in fact, they have the weak homotopy type of a point).  Next, we observe that every edge in our triangulation of $\codbC{1}{L}$ gives rise to a loop in $ \partial \cF_{\cC}(L) $ given as the concatenation of the given edge with the paths running from its endpoints to $ \iota \left( \cP_{0,1}(L;-\beta) \right)$, and with the image of the edge under the composition of the projection \eqref{eq:project_to_exceptional_solutions}  with the section \eqref{eq:gluing_include_exceptional_solutions}.  The image of this loop in $\cP_{0,1}(L;-\beta)$  is the composition of a path with its inverse, hence obviously contractible, so we can extend the loop in $ \partial \cF_{\cC}(L) $  from $S^1 = \partial D^2$ to the interior of the disc.  Note that we've now constructed a $2$-dimensional CW complex which contains the $1$-skeleton of $ \codbC{1}{L}$ and which retracts to $\cP_{0,1}(L;-\beta)$.

Proceeding inductively, and using at every step the fact that $\pi$ is a weak homotopy equivalence, we obtain a CW complex that we will denote $\partial \cX(L; \beta)$ which still  retracts to $\cP_{0,1}(L;-\beta)$, and contains all of $\codbC{1}{L} $ More precisely, 
\begin{lem} \label{lem:homeo_type_X}
$\partial \cX(L; \beta)$ is homeomorphic to the cone on the projection map
\begin{equation} \pi \co \codbC{1}{L}  \to \cP_{0,1}(L;-\beta) ,\end{equation} 
and is equipped with a map
\begin{equation} \iota_{\partial \cX} \co \partial \cX(L; \beta) \to \partial \cF_{\cC}(L) \end{equation}
which is a weak homotopy equivalence.
\noproof
\end{lem}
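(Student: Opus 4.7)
The plan is to establish both assertions by careful inspection of the inductive construction preceding the statement. First, I will argue that $\partial \cX(L;\beta)$ is homeomorphic to the mapping cylinder $M_\pi$ of the projection $\pi$; the author's terminology ``cone on $\pi$'' presumably refers to this, since the section $\iota$ induced by the gluing map $\Gcod{3}{1}_S$ exhibits $M_\pi$ as an iterated cone on the fibres of $\pi$ (note that $\cP_{0,1}(L;-\beta)$ is a disjoint union of circles, on which $\iota$ is a genuine section). Concretely, the inductive procedure attaches, for each $n$-cell $e^n$ of the chosen triangulation of $\codbC{1}{L}$, a single $(n+1)$-cell realising the prism $e^n \times [0,1] \subset M_\pi$, whose attaching sphere decomposes into $e^n$ itself (top face), $\pi(e^n)$ embedded via $\iota$ (bottom face), and the prism cells on $\partial e^n$ from previous stages (side faces). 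The resulting CW complex has precisely the cell decomposition of $M_\pi$ relative to $\codbC{1}{L}$.

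Second, the map $\iota_{\partial \cX}$ is built simultaneously with $\partial \cX(L;\beta)$: on $\codbC{1}{L}$ it is the already-fixed inclusion into $\partial \cF_\cC(L)$; on a $1$-cell joining a vertex $v$ to $\iota \pi (v)$ it is the chosen path in $\pi^{-1}(\pi(v))$; on a higher $(n+1)$-cell attached along a map $S^n \to \partial \cX(L;\beta)^{(n)}$, it is any extension to $D^{n+1}$ of the resulting loop in $\partial \cF_\cC(L)$. Such extensions exist because the fibres of $\pi \co \partial \cF_\cC(L) \to \cP_{0,1}(L;-\beta)$ are weakly contractible: given $(u,\theta)$, the fibre is the space of discs $v \in \cF(L;\beta)$ with the evaluation $v(1) = u(\theta)$ fixed, and Lemma \ref{lem:homotopy_theory} identifies $\cF(L;\beta) \to L$ as a weak equivalence, so fixing the evaluation gives a weakly contractible space. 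Hence the obstructions in $\pi_n(\partial \cF_\cC(L))$ to performing each extension vanish and the construction goes through at every stage.

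To conclude that $\iota_{\partial \cX}$ is a weak homotopy equivalence, I will use the commutative triangle formed by $\iota_{\partial \cX}$, $\pi$, and the canonical retraction $\partial \cX(L;\beta) \to \cP_{0,1}(L;-\beta)$ of the mapping cylinder onto its target: the retraction is a homotopy equivalence by the cylinder description, and $\pi$ is a weak equivalence by the fibre contractibility just noted, so two-out-of-three delivers the result. The main technical point will be the bookkeeping in the inductive step to ensure the cell structure of $\partial \cX(L;\beta)$ matches that of $M_\pi$ on the nose, rather than merely up to homotopy equivalence; once that is secured, the rest is standard obstruction theory applied to a map with weakly contractible fibres.
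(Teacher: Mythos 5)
Your proposal is correct and follows essentially the same route as the paper: the lemma is stated with no separate proof precisely because the preceding inductive construction already exhibits $\partial\cX(L;\beta)$ cell-by-cell as the mapping cylinder of $\pi$ (the paper's ``cone on the projection map''), with $\iota_{\partial\cX}$ built by extending over each prism cell using the weak contractibility of the fibres of $\pi \co \partial\cF_{\cC}(L) \to \cP_{0,1}(L;-\beta)$, and the weak-equivalence claim then follows by two-out-of-three against the cylinder retraction. Your identification of the fibres via Lemma \ref{lem:homotopy_theory} and your reading of ``cone'' as mapping cylinder both match the intended argument.
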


Using the fact that the projection map
\begin{equation} \cF_{\cC}(L) \to \cP(L;-\beta) \times D^2  \end{equation}
is also a weak homotopy equivalence, the same procedure constructs a CW complex $\cX(L; \beta)$ containing $\partial \cX(L; \beta)$ as a subcomplex, together with a diagram
 \begin{equation} \xymatrix{\left( \cP(L;-\beta)\times D^{2} ,  \cP(L;-\beta)\times  S^{1} \right) \ar@{^(->}[rd]^{\cong} \ar@{^(->}[r] & \left(\Chat(L), \codbC{1}{L} \right) \ar[r]   & \left( \cX(L) ,  \partial \cX(L) \right) \ar[ld]^{\cong}_{i_{\cX}} \\
& \left( \cF_{\cC}(L),  \partial \cF_{\cC}(L) \right) &   }  \end{equation}
where the top row consists of inclusions of pairs of CW complexes.  Recall that $\cF_{\cC}(L)$  carries a Banach bundles $\cE_{\cC}$ whose fibres are smooth anti-holomorphic $1$-forms on the disc with values in $TM$. 

\begin{lem}
There exists a finite dimensional trivial vector bundle $V_{\cX}$ over $\cX(L)$ and a map of vector bundles
\begin{equation} \label{eq:preturbations_to_surject_CW} V_{\cX} \to i_{\cX}^{*} \cE_{\cC} \end{equation}
whose image consists of $1$-forms supported away from $-1$, and whose direct sums with $D_{\cC}$ and $D_{\partial \cC}$ give surjective maps of bundles
\begin{align} \label{eq:surjection_dbar_CW_smooth} D_{\cC}^{\cX} \co V_{\cX} \oplus i_{\cX}^{*} T \cF_{\cC}(L) & \to  i_{\cX}^{*} \cE_{\cC} \\
D_{\partial \cC}^{\cX} \co V_{\cX} \oplus i_{\partial \cX}^{*} T \partial  \cF_{\cC}(L) & \to  i_{\cX}^{*} \cE_{\cC}.
 \end{align}
 Moreover, we may assume that the map \eqref{eq:preturbations_to_surject_CW} vanishes on $\Chat(L) \times \{0 \}  \subset \cX(L) $ .
\end{lem}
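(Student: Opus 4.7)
The plan is a standard Fredholm stabilization: produce finitely many smooth sections of $\cE_{\cC}$ whose images, together with those of $D_{\cC}$ and $D_{\partial \cC}$, span the cokernels pointwise over the compact CW complex $\cX(L)$. I will do this in three steps.

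First, I would work locally. For each $p \in \partial \cX(L)$, the Fredholm property gives a finite-dimensional $\operatorname{coker}(D_{\partial \cC}|_p)$, consisting of smooth forms by elliptic regularity; by unique continuation applied to the formal adjoint, no nonzero element vanishes on any open set, so there exist finitely many smooth representatives $\sigma_{p,j} \in \cE_{\cC}|_p$ supported in a fixed open subset of $D^2$ bounded away from $-1$ whose classes span $\operatorname{coker}(D_{\partial \cC}|_p)$. Extending the $\sigma_{p,j}$ to local sections of $\cE_{\cC}$ over a neighborhood of $p$ (by parallel transport in the trivialization of $\cE_{\cC}$ already used in Lemma \ref{lem:banach_bundle_0_1_forms}), upper semi-continuity of cokernel dimension implies that the same sections span the cokernel of $D_{\partial \cC}$ over an open neighborhood $U_p$ of $p$. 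Since $T\partial \cF_{\cC}(L)$ is a subspace of $T\cF_{\cC}(L)$ restricted to the boundary, these sections also surject onto the cokernel of $D_{\cC}$ there. The analogous construction using $D_{\cC}$ itself is carried out for points of $\cX(L) \setminus \partial \cX(L)$.

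Next, I would globalize. By compactness of $\cX(L)$, finitely many neighborhoods $U_{p_1},\ldots,U_{p_k}$ cover $\cX(L)$. Set $V_{\cX} = \bigoplus_{i=1}^{k} \bR^{N_{p_i}}$ (a trivial bundle, since it is a constant vector space), and choose a smooth partition of unity $\{\rho_i\}$ subordinate to the cover. The formula $e_{i,j} \mapsto \rho_i \cdot \sigma_{p_i,j}$ then defines a bundle map $V_{\cX} \to i_{\cX}^{*} \cE_{\cC}$ whose image consists of $1$-forms supported away from $-1$, and both $D_{\cC}^{\cX}$ and $D_{\partial \cC}^{\cX}$ are surjective by construction.

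To arrange vanishing on $\Chat(L)$, I would use the fact that on $\Chat(L)$ both $D_{\cC}$ and $D_{\partial \cC}$ are already surjective: the interior holds by the transversality underlying the manifold structure on $\cC(L)$, and the boundary holds by the regularity hypotheses \eqref{ass:exceptional_discs_regular}, \eqref{ass:moduli_discs_regular}, and \eqref{ass:transverse_evaluation} together with the fact that $\codbC{1}{L}$ is cut out transversely. By openness of the surjectivity condition for Fredholm maps, both $D_{\cC}$ and $D_{\partial \cC}$ remain surjective on an open neighborhood $\cN$ of $\Chat(L)$ in $\cX(L)$. I would then choose the initial cover so that every $U_{p_i}$ contributing a nonzero $\sigma_{p_i,j}$ is disjoint from a smaller open neighborhood $\cN' \subset \overline{\cN'} \subset \cN$ of $\Chat(L)$, while the remaining $U_{p_i}$ (which cover $\cN'$) contribute no sections; this is possible precisely because no new sections are needed on $\cN$ where surjectivity already holds without any perturbation. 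The resulting bundle map vanishes identically on $\Chat(L)$.

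The main obstacle is the simultaneous stabilization of both $D_{\cC}^{\cX}$ and $D_{\partial \cC}^{\cX}$ by a single $V_{\cX}$. The cokernel of $D_{\partial \cC}$ is a priori larger than that of $D_{\cC}$ at boundary points, because $T\partial \cF_{\cC}(L)$ has positive codimension in $T\cF_{\cC}(L)|_{\partial \cF_{\cC}(L)}$. This is resolved by carrying out the local construction at boundary points using $D_{\partial \cC}$, so that surjectivity of $D_{\cC}^{\cX}$ at boundary points follows automatically, and then filling in the interior of $\cX(L)$ with further sections as needed.
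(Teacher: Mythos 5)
Your proof is correct and follows essentially the same strategy as the paper: a local finite-dimensional stabilization of the cokernel by smooth $1$-forms supported away from $-1$, globalized over the compact complex $\cX(L)$ by a patching argument, with the vanishing on $\Chat(L)$ arranged because the operators are already surjective there. The only divergence is in one sub-step: you justify the support condition via unique continuation for the formal adjoint, whereas the paper takes an arbitrary smooth complement $V_\infty$ and truncates it along the strip-like end at $-1$, using the exponential decay of smooth forms to see that the cut-off subspace remains a complement; both routes are standard and valid.
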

\begin{proof}
By a standard patching argument, it suffices to prove the result in a neighbourhood of a single point.  Given a smooth map $v \co (D^2, S^1) \to (M,L)$, the fact that the Cauchy-Riemann operator is elliptic implies that the cokernel of
\begin{equation}  D_{\dbar} \co C^{\infty} ( (D^2, S^1), (v^{*} TM, v^{*} TL)) \to C^{\infty}(v^{*} TM \otimes \Omega^{0,1} D^2) \end{equation}
is finite dimensional.  Starting with any finite-dimensional subspace $V_\infty$ of $C^{\infty}(v^{*} TM \otimes \Omega^{0,1} D^2) $ which is transverse to the image of $D_{\dbar}$, we pass to our favourite Sobolev space completion and consider the surjective map with bounded inverse
\begin{equation} \label{eq:surjection_using_smooth_functions} V_\infty  \oplus  W^{1,p,\delta}( (D^2, S^1), (v^{*} TM, v^{*} TL))  \to L^{p,\delta} ( v^{*} TM \otimes \Omega^{0,1} D^2) . \end{equation}
Since elements of $V_\infty$ are smooth on $D^2$, they decay like $e^{-s}$ on the strip, so, for sufficiently large $S$, the difference between an element of $V_0$ and its product with a cutoff function that vanishes when $s \geq S$ has arbitrarily small $| \_|_{p,\delta}$-norm.   In particular, starting with a right inverse to \eqref{eq:surjection_using_smooth_functions}, we can construct a right inverse to  
\begin{equation} V_S \oplus  W^{1,p,\delta}( (D^2, S^1), (v^{*} TM, v^{*} TL))  \to L^{p,\delta} ( v^{*} TM \otimes \Omega^{0,1} D^2),  \end{equation}
where $V_{S}$ stands for those $1$-forms obtained by cutting off $V_\infty$ at $S$.  It follows immediately that $V_{S}$ surjects onto the cokernel of $D_{\dbar}$ in a neighbourhood of $v$ as well.
\end{proof}

\subsection{Pre-gluing smooth maps} \label{sec:pre-gluing_smooth}

Recall that for each $S$ sufficiently large, we constructed a manifold with corners $ \Phat^{S}(L;0)$ in the previous section together with an inclusion 
\begin{equation} \codbC{1}{L} \cong \codb{1}{L} \to \partial \Phat^{S}(L;0)\end{equation}
as a union of the closure of some top boundary strata.  Moreover, this map factors through  an inclusion
\begin{equation} \label{eq:gluing_map_into_smooth_functions} \codbC{1}{L}  \to \cF_{\cP_{0,1}}(L:0) \cong S^1 \times \cF_{\cP}(L;0),  \end{equation}
which we shall extend to $\partial \cX(L; \beta)$.

We first define a map 
\begin{equation} \preGext^{\partial \cX}_{S} \co \partial \cX(L) \to  \cF_{\cP_{0,1}}(L;0)\end{equation}
for each $S$ sufficiently large by pre-gluing, as in Section \ref{sec:pre-gluing_maps}, the chosen map in $\partial \cX(L)$ with the exceptional solution whose boundary it intersects.  For convenience, we require that the restriction of the map $\iota_{\cX}$ defined in Lemma \ref{lem:homeo_type_X} to \begin{equation} \codbC{1}{L} \times [0,1/4)  \subset \partial \cX(L) .\end{equation} factor through the projection to $\codbC{1}{L}$.  In particular, the same property holds for  $ \preGext^{\partial \cX}_{S}$.

Next, we interpret the vector field $\sol_{S}$ introduced in Proposition \ref{prop:statement_implicit_function_theorem} as a vector field valued in 
\begin{equation} {\preGext^{\partial \cX}_{S}}^{*} T \cF_{\cP_{0,1}}(L;0)| \codbC{1}{L} \times \{0\}  .\end{equation} 
Note that this is possible since we've chosen an inclusion of $\codbC{1}{L}$ as a compact subset of $\partial \cC(L)$, which we might as well assume is contained in the subset $K$ discussed in Proposition \ref{prop:statement_implicit_function_theorem} since this subset was an arbitrary compact subset.  By linear interpolation, we extend this vector field to $\codbC{1}{L} \times [0,1/4] $ so that it vanishes on  $\codbC{1}{L}  \times \{ 1/4 \} $, which allows us to further extend $\sol_{S}$  to a section 
\begin{equation} \sol_{S}^{\partial \cX} \co \partial \cX(L) \to  \preGext^{\partial \cX}_{S} T \cF_{\cP_{0,1}}(L;0)   \end{equation}
which vanishes away from the aforementioned neighbourhood of $\codbC{1}{L} $.  Exponentiating $\sol_{S}^{\partial \cX}$ defines the desired map
\begin{equation} \Gext_{S}^{\partial \cX} \equiv \exp_{\preGext^{\partial \cX}_{S}}\left(\sol_{S}^{\partial \cX}\right) \co \partial \cX(L) \to \cF_{\cP_{0,1}}(L;0).\end{equation}

Note that the pre-gluing map of vector fields defined in Equation \eqref{eq:definition_pre-gluing_vector_fields} extends to this setting to give a map
\begin{equation} \predGext^{\partial \cX}_{S} \co \iota_{\partial \cX}^{*} T^{ext}  \partial \cF^{1,p,\delta}_{\cC}(L;\beta) \to \preGext^{\partial \cX}_{S} T \cF^{1,p}_{\cP_{0,1}}(L;0),  \end{equation} 
which is uniformly bounded by the argument given in Lemma \ref{lem:pre-gluing_tangent_vectors_bounded}.

As in Section \ref{sec:right_inverse}, we can construct a right inverse to the $\dbar$-operator
\begin{lem}
If the gluing parameter $S$ is sufficiently large, then there exists a map
\begin{equation} \label{eq:extend_perturbations_pre-gluing} V_{\cX}| \partial \cX \to   { \Gext^{\partial \cX} }^{*} \cE^{p,\delta}_{\cP_{0,1}} ,\end{equation} 
whose restriction to $\cod{1}{L}$ vanishes, and such that its sum with $D_{\cP_{0,1}}$ is a surjective operator 
\begin{equation} \label{eq:surjective_X_dbar_operator_gluing} D_{\cP_{0,1}}^{\partial \cX} \co V_{\cX} \oplus  {\Gext^{\partial \cX}}^{*} T \cF^{1,p,\delta}_{\cP_{0,1}}(L;0) \to {\Gext^{\partial \cX}}^{*} \cE^{p,\delta}_{\cP_{0,1}} .\end{equation}
Moreover, as the gluing parameter $S$ converges to  $+ \infty$, we may choose a uniformly bounded right inverse  $Q_{\Gext^{\partial \cX}}$ to this operator.
\end{lem}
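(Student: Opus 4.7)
My plan is to follow the template of Section~\ref{sec:right_inverse}, promoting the breaking/pre-gluing construction there to accommodate the finite-dimensional stabilization $V_{\cX}$. Given $K \in V_{\cX}|_{v}$ with $v \in \partial\cX(L)$, the previous lemma produces a $(0,1)$-form $\eta(K)$ along $\iota_{\partial\cX}(v) \in \partial\cF_{\cC}(L)$, which I may arrange (by including a fixed interior cutoff in the definition of $V_{\cX}$) to be supported on the capping-disc factor and away from the marked boundary point being glued. I transport $\eta(K)$ to a form along the pre-glued curve $\preGext^{\partial\cX}_{S}(v)$ using the inverse of the breaking map of~\eqref{eq:breaking_1_forms_holomorphic}, and then further to a form along $\Gext^{\partial\cX}_{S}(v)$ by parallel transport in the complex-linear connection~\eqref{eq:formula_complex_linear_connection} along the exponential image of $\sol^{\partial\cX}_{S}(v)$; this defines the required extension. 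The vanishing hypothesis in~\eqref{eq:preturbations_to_surject_CW} on $\Chat(L) \times \{0\}$ includes the stratum $\cod{1}{L}$, so the extension vanishes there.

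I then construct an approximate right inverse $\tilde Q$ to $D_{\cP_{0,1}}^{\partial\cX}$ by composing, for $Y \in (\Gext^{\partial\cX}_{S})^{*}\cE^{p,\delta}_{\cP_{0,1}}$: parallel transport back to $\preGext^{\partial\cX}_{S}(v)$; the breaking map $\B$ of~\eqref{eq:breaking_1_forms_holomorphic} yielding a pair $(Y_{\partial\cC}, Y_{\cP})$; application of the fixed right inverse to $D_{\partial\cC}^{\cX}$ (supplied by the previous lemma) on the capping-side component, producing $(K, X_{\partial\cC}) \in V_{\cX} \oplus T^{ext}\partial\cF_{\cC}(L)$, paired with the isomorphism inverse of $D_{\cP_{0,1}}$ on the exceptional-solution component, producing $X_{\cP}$ (this inverse exists by regularity of $\cP(L;-\beta)$); and finally pre-gluing of $(X_{\partial\cC}, X_{\cP})$ via $\predGext^{\partial\cX}_{S}$ followed by parallel transport to $\Gext^{\partial\cX}_{S}(v)$. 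The output is the pair consisting of $K$ and the pre-glued tangent vector. Uniform $S$-bounds for each of the steps follow from Lemma~\ref{lem:pre-gluing_tangent_vectors_bounded} and the isometry property of $\B$.

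The discrepancy $D_{\cP_{0,1}}^{\partial\cX} \circ \tilde Q - \id$ is supported on the gluing neck together with the collar $\codbC{1}{L} \times [0,1/4)$ in which $\sol^{\partial\cX}_{S}$ is linearly interpolated. A line-by-line replay of the proof of Lemma~\ref{lem:approximate_right_inverse}, invoking the quadratic inequality Corollary~\ref{cor:quadratic_inequality_basic_case}, the parallel-transport estimate~\eqref{eq:parallel_transport_bound_error}, and the exponential decay of Lemma~\ref{lem:exponential_decay_bounds}, bounds its operator norm by $O(e^{-2\delta S})$; the $V_{\cX}$-contribution is trivial on the neck by the support condition, and the collar contribution is controlled by the estimate~\eqref{eq:exponential_decay_solution} on $\sol^{\partial\cX}_{S}$. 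For $S$ large, a Neumann series inverts $D_{\cP_{0,1}}^{\partial\cX} \circ \tilde Q$, and the composition $Q_{\Gext^{\partial\cX}} = \tilde Q \circ (D_{\cP_{0,1}}^{\partial\cX} \circ \tilde Q)^{-1}$ is the desired uniformly bounded right inverse. The main technical point, and the only part that goes beyond a direct transplant of Section~\ref{sec:right_inverse}, is arranging that the right inverse to $D_{\partial\cC}^{\cX}$ is smooth in $v$ and compatible with the collar interpolation of $\sol^{\partial\cX}_{S}$; having done so, all relevant operators are uniformly bounded and the estimates go through unchanged.
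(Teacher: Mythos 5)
Your proposal is correct and follows essentially the same route as the paper: extend the $V_{\cX}$-perturbations by the (inverse) breaking map using their compact support away from the gluing point, build the approximate right inverse as breaking $\circ$ (right inverse of the extended operator over $\partial\cF_{\cC}(L)$) $\circ$ pre-gluing of vector fields, bound the discrepancy by rerunning Lemma \ref{lem:approximate_right_inverse} (which needs only smoothness, not holomorphicity), and invert by a Neumann series as in Corollary \ref{cor:uniformly_bounded_inverse}. The only cosmetic difference is that you carry the parallel transport between $\preGext^{\partial\cX}$ and $\Gext^{\partial\cX}$ explicitly everywhere, whereas the paper inserts it only on the collar where $\sol^{\partial\cX}_{S}$ is nonzero.
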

\begin{proof}
For simplicity, we only explain the construction in $\partial \cX(L)  - \left( \partial \cC(L) \times [0,1/4) \right)$, where $\Gext^{\partial \cX}$ agrees with the pre-gluing map $\preGext^{\partial \cX}$.  To extend the construction to $ \codbC{1}{L} \times [0,1/4)$, we repeat all arguments, inserting parallel transport from $\preGext^{\partial \cX}$ to $\Gext^{\partial \cX}$.  Since parallel transport will take place only in a neighbourhood of the compact set $\codbC{1}{L}$, Proposition \ref{prop:quadratic_inequality} can be applied to bound all error terms that it produces.

Let us fix a right inverse $Q_{\partial \cX}$ to the extension  of \eqref{eq:surjection_dbar_CW_smooth} to the Banach spaces
\begin{equation} V_{\cX} \oplus \iota_{\partial \cX}^{*} T^{ext}  \partial \cF^{1,p,\delta}_{\cC}(L;\beta) \to  i_{\partial \cX}^{*} \cE^{ext, p,\delta}_{\cC},\end{equation}
where we're using the notation introduced in Equations \eqref{eq:extended_CR} and \eqref{eq:extended_CR-boundary}.

First, we define the map \eqref{eq:extend_perturbations_pre-gluing} as a composition
\begin{equation}V_{\cX} \to  i_{\partial \cX}^{*} \cE_{\cC} \to {\Gext^{\partial \cX}}^{*} \cE^{p,\delta}_{\cP_{0,1}}\end{equation}
where the second arrow is defined by pre-gluing of $1$-forms.  This is made particularly easy by the fact that the image of $V_{\cX} $ in $ i_{\partial \cX}^{*} \cE_{\cC}$ consists of compactly supported forms, so the pre-gluing map is simply an extension by $0$ with no cutoff needed.

Next, we define an approximate right inverse $\tilde{Q}_{\Gext^{\partial \cX}}$ again as a composition
\begin{multline} {\Gext^{\partial \cX}}^{*} \cE^{p,\delta}_{[0,+\infty)} \to i_{\partial \cX}^{*} \cE^{ext, p,\delta}_{\cC} \stackrel{Q_{\partial \cX}}{\longrightarrow} \\
V_{\cX} \oplus i_{\partial \cX}^{*} T^{ext}  \partial \cF^{1,p,\delta}_{\cC}(L;\beta) \to V_{\cX} \oplus {\Gext^{\partial \cX}}^{*} T \cF^{1,p,\delta}_{\cP_{0,1}}(L;0) \end{multline}
where the first map is the breaking map of \eqref{eq:breaking_1_forms_holomorphic}, and the last map is the direct sum of the identity on $V_{\cX}$ with the pre-gluing map of vector fields $\predGext^{\partial \cX}_{S} $.

Lemma \ref{lem:approximate_right_inverse} didn't require holomorphicity of the curves (just smoothness), so that the norm of the operator 
\begin{equation}\lim_{S \to +\infty} \left\| \id - D^{\partial \cX}_{\cP_{0,1}} \circ \tilde{Q}_{\Gext^{\partial \cX}_{S}}\right\|  \to 0 \end{equation}
converges to $0$ as $S$ grows.  We've made things particularly simple by requiring that  the image of the map $V_{\cX} \to  i_{\partial \cX}^{*} \cE_{\cC}$  consist of compactly supported $1$-forms.  In particular, choosing $S$ large enough so that their support is disjoint from $\xi_{1}(\I{S}{+\infty})$, we know that $\tilde{Q}_{\Gext^{\partial \cX}}$ is an honest inverse to $D^{\partial \cX}_{\cP_{0,1}}$ on the image of $V_{\cX}$.  We can then construct a right inverse as in Corollary \ref{cor:uniformly_bounded_inverse}.
\end{proof}

The next step in the proof of Lemma \ref{lem:iso_stabilizations_bundles} is the construction of the map between kernels of operators asserted in Equation \eqref{eq:map_kernels}.    

\begin{cor} \label{cor:injective_gluing_kernel}
If $S$ is sufficiently large, the composition of pre-gluing of tangent vector fields, parallel transport from $\preGext_{S}^{\partial \cX}$ to $\Gext_{S}^{\partial \cX}$, and the projection to the kernel of $D_{\cP_{0,1}}^{\partial \cX}$ along the inverse $Q_{\Gext^{\partial \cX}}$
\begin{equation} \label{eq:iso_on_kernels} \left( \id  - Q_{\Gext^{\partial \cX}} \circ D_{\cP_{0,1}}^{\partial \cX} \right) \circ \Pi_{\preGext^{\partial \cX}}^{\Gext^{\partial \cX}} \circ \predGext^{\partial \cX}  \end{equation}
restricts to an isomorphism between the kernels
\begin{equation} \label{eq:iso_on_kernels_expression_extended} \mathring{\Psi}_{\partial \cX} \co \ker D_{\partial \cC}^{\partial \cX} \to  \ker D_{\cP_{0,1}}^{\partial \cX}  \end{equation}
which is uniformly bi-lipschitz.
\end{cor}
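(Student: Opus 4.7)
My plan is to prove the three properties in sequence: the image of $\mathring{\Psi}_{\partial \cX}$ lies in $\ker D_{\cP_{0,1}}^{\partial \cX}$, the map is uniformly bounded above, and it has uniformly bounded inverse. The first point is automatic since $\id - Q_{\Gext^{\partial \cX}} \circ D_{\cP_{0,1}}^{\partial \cX}$ is by construction the projection onto $\ker D_{\cP_{0,1}}^{\partial \cX}$ along $\Im Q_{\Gext^{\partial \cX}}$. For the uniform upper bound I would compose the individual bounds: the pre-gluing of tangent vectors $\predGext^{\partial \cX}$ is uniformly bounded by an extension of Lemma \ref{lem:pre-gluing_tangent_vectors_bounded} to $\partial \cX(L)$, the parallel transport map $\Pi_{\preGext^{\partial \cX}}^{\Gext^{\partial \cX}}$ is an isometry to leading order since $\sol_{S}^{\partial \cX}$ is exponentially small by Proposition \ref{prop:statement_implicit_function_theorem}, and the preceding lemma provides a uniform bound on $Q_{\Gext^{\partial \cX}}$ together with $D_{\cP_{0,1}}^{\partial \cX}$.

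For the uniform lower bound I would show that the correction $Q_{\Gext^{\partial \cX}} \circ D_{\cP_{0,1}}^{\partial \cX}$ applied to $\Pi_{\preGext^{\partial \cX}}^{\Gext^{\partial \cX}} \predGext^{\partial \cX} \xi$ is small whenever $\xi \in \ker D_{\partial \cC}^{\partial \cX}$. The key point is that $D_{\cP_{0,1}}^{\partial \cX} \circ \predGext^{\partial \cX} \xi$ is an error supported essentially on the neck: on the $V_{\cX}$-summand the defining map \eqref{eq:extend_perturbations_pre-gluing} reproduces exactly the breaking inverse of the original extension, so no error appears there; on the tangent vector summand the analogue of Lemma \ref{lem:tangent_pre-gluing_almost_holomorphic} gives a bound $O(e^{-2(1-\delta)S})\|\xi\|$. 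The parallel transport introduces only an additional exponentially small error via the quadratic inequality \eqref{eq:quadratic_inequality} applied to the small vector field $\sol_{S}^{\partial \cX}$. On the other hand, the extension of Equation \eqref{eq:pre-gluing_converges_toisometry} of Lemma \ref{lem:pre-gluing_tangent_vectors_bounded} implies that $\|\Pi_{\preGext^{\partial \cX}}^{\Gext^{\partial \cX}} \predGext^{\partial \cX} \xi\|$ converges to $\|\xi\|$ uniformly in $\xi$. Together these yield $\|\mathring{\Psi}_{\partial \cX} \xi\| \geq \frac{1}{2}\|\xi\|$ for $S$ sufficiently large.

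The final step is to upgrade injectivity to bijectivity by matching dimensions. Both operators $D_{\partial \cC}^{\partial \cX}$ and $D_{\cP_{0,1}}^{\partial \cX}$ are surjective Fredholm operators by construction (see \eqref{eq:surjective_X_dbar_operator_gluing}), so the kernel dimension equals the Fredholm index. The indices in turn equal $\dim V_{\cX}$ plus the index of the underlying Cauchy--Riemann operator, and these two latter indices agree by a one-parameter family argument: the pre-gluing construction provides a continuous path of Fredholm operators between the two setups (with parameter the gluing length), and the Fredholm index is invariant along such a path. Once the kernel dimensions match, $\mathring{\Psi}_{\partial \cX}$ is an injective linear map between finite-dimensional spaces of equal dimension, hence an isomorphism whose inverse is automatically bounded by the same uniform constants, yielding the bi-lipschitz conclusion.

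I expect the main obstacle to be extending Lemma \ref{lem:pre-gluing_tangent_vectors_bounded} and Lemma \ref{lem:tangent_pre-gluing_almost_holomorphic} from the compact smooth base $\codbC{1}{L}$ to the CW complex $\partial \cX(L)$, and to the full kernel $\ker D_{\partial \cC}^{\partial \cX}$ which includes contributions from the perturbation space $V_{\cX}$. For the $V_{\cX}$-contribution the compactly supported nature of \eqref{eq:preturbations_to_surject_CW} makes the estimates straightforward, since these perturbations have support disjoint from the neck once $S$ is large enough. For the extension to $\partial \cX(L)$, the fact that I arranged in Section \ref{sec:pre-gluing_smooth} for $\iota_{\cX}$ to factor through the projection to $\codbC{1}{L}$ on the collar $\codbC{1}{L} \times [0, 1/4)$ and for $\sol_{S}^{\partial \cX}$ to interpolate linearly to zero outside it means that all uniform estimates reduce by elementary interpolation arguments to those already proved over $\codbC{1}{L}$.
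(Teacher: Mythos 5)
Your proposal follows essentially the same route as the paper: show that the parallel-transported pre-gluing is an approximate isometry on the kernel, that the correction $Q_{\Gext^{\partial \cX}} \circ D_{\cP_{0,1}}^{\partial \cX}$ applied to it decays with $S$ (via the analogue of Lemma \ref{lem:tangent_pre-gluing_almost_holomorphic} and the quadratic inequality), conclude injectivity, and then compare kernel dimensions to upgrade to an isomorphism. Your extra detail on the index/rank comparison and on extending the estimates over the collar of $\partial \cX(L)$ fills in exactly what the paper leaves implicit, so this is correct.
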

\begin{proof}
It is not hard to check, using Lemma \ref{lem:pre-gluing_uniform_bilipschitz} and the estimates stated in Appendix \ref{ap:pointwise_estimates}, that the restriction of $\Pi_{\preGext_{S}^{\partial \cX}}^{\Gext_{S}^{\partial \cX}}  \circ \predGext_{S}^{\partial \cX} $ to $\ker D_{\partial \cC}^{\partial \cX}$  is an injection which distorts the norm by an arbitrarily small amount if $S$ is sufficiently large.  The arguments given to justify Lemma \ref{lem:tangent_pre-gluing_almost_holomorphic} and the quadratic inequality of Proposition \ref{prop:quadratic_inequality} show that the norm of 
\begin{equation}  D_{\cP_{0,1}}^{\partial \cX} \circ \Pi_{\preGext^{\partial \cX}}^{\Gext^{\partial \cX}} \circ \predGext^{\partial \cX}  \end{equation} decays with $S$. Using in addition the fact that the norm of $ Q_{\Gext^{\partial \cX}} $ is uniformly bounded,  we conclude that \eqref{eq:iso_on_kernels} is arbitrarily close to $\Pi_{\preGext^{\partial \cX}}^{\Gext^{\partial \cX}} \circ \predGext^{\partial \cX}$  as $S \to +\infty$, and hence it is also an injection.  Comparing ranks implies that it is an isomorphism.
\end{proof}
\begin{rem}
From now on, we shall not have to change the gluing parameter $S$ anymore in our construction of $\Phat^{S}(L;0)$, and hence of the manifold $\hat{W}(L)$.
\end{rem}

The projection map \eqref{eq:iso_on_kernels} is the key step in the construction of the stabilization of $T\hat{W}(L)$ which we shall use to prove triviality of the tangent space of $\hat{W}(L)$.  Upon restricting \eqref{eq:iso_on_kernels} to $\codbC{1}{L}$, we need to control the image of the subspace $\aut(D^2,-1)$.  Let $(u,\theta,v)$ denote an element of $ \partial \cC(L) $ in the image of the previously fixed inclusion $\codbC{1}{L} \to \partial \cC(L)$, and let $w^{\flat}_{S}$ denote its image under $\Gext_{S}$.

Assume that $(u,\theta,v)$ lies in the open set $U_{\star}$ of Equation \eqref{eq:right_inverse_constraint_basepoints}.  Even though we do not know that the path $\G_{S}(u,\theta,v)$ is smooth as $S$ varies, Lemma \ref{prop:good_approximate_inverse_gluing} implies that the derivative of $F$ with respect to $S$ along the path $\G_{S}(u,\theta,v)$ is equal to $1$.   In particular, this path is obviously outwards pointing; we shall show that it has, in an appropriate sense,  an approximate derivative in directions normal to $N_{\star}$, and that $\mathring{\Psi}_{\partial \cX} \left(dv \left( \partial_{s}  \right)\right)$  is close to this derivative, which will imply that it is  also outwards pointing.

Fix a map $\pi_{\star}$ from a neighbourhood of $v(0)$  in $\bC^{n} \times \bC \bP^{n-1}$ to $D^2$ whose fibre at $0$ is the intersection of this neighbourhood with $\N_{\star}$.  Since the tangent space of $F^{-1}(S) = \Im \G_{S}$ consists of vector fields whose value at $z_{0}(S)$ lies in $\N_{\star}$, the kernel of $dF$ and $d \pi_{\star}$ agree:
\begin{lem} \label{eq:factor_inverse_gluing_parameter_evaluation_at_hyperplane} The differential of $F$ factors through the composition of evaluation at $z_{0}(S)$ with the differential of $ \pi_{\star}$. \noproof \end{lem}
We must now identify a tangent vector to $D^2$ at the origin which yields an outwards pointing vector upon gluing.  The reader may easily check that the map
\begin{equation} \label{eq:derivative_gluing_at_a_point_candidate} d \pi_{\star} \circ d  \Gext_{S}(u,\theta,v)\left( \partial_{s} \right)|_{z_{0}(S)} \end{equation}
where $\partial_{s} $ is the vector field along $D^2$ which is the infinitesimal translation on  $D^2 - \{ \pm 1\}$ under the identification with strip $\I{-\infty}{+\infty}$ coming from our choices of strip-like ends, serves as an approximate derivative at $s=0$ to 
\begin{align}  U_{\star} \times [0,+\infty) \times (-1,1) & \to D^2 \\
\left( (u,\theta,v), S, s \right) & \mapsto \pi_{\star} \left(   \Gext_{S+s}(u,\theta,v) \left( z_{0}(S) \right) \right) \end{align}
if $S$ is sufficiently large.  The precise statement which can be proved using Gromov compactness is that
\begin{equation} \label{eq:derivative_gluing_at_a_point}  \lim_{S \to +\infty} \limsup_{s \to 0}  \frac{\left| \pi_{\star} \left(   \Gext_{S+s}(u,\theta,v) \left( z_{0}(S) \right) \right) - d \pi_{\star} \circ d  \Gext_{S}(u,\theta,v)\left( \partial_{s} \right)|_{z_{0}(S)} \right|}{s} = 0 .\end{equation}
We conclude that \eqref{eq:derivative_gluing_at_a_point_candidate} is arbitrarily close to an outwards pointing vector for $S$ sufficiently large.

\begin{lem} \label{lem:gluing_extra_vectors_close_to_where_they_should_be}
If $S$ is sufficiently large, the image of $dv \left( \partial_{s} \right)$ under the composition of $ \mathring{\Psi}_{\partial \cX} $ with the projection
\begin{equation} \label{eq:projection_forget_marked_point_moduli_space} T \cP_{0,1}(L;0) \to T \cP(L;0) \end{equation}
is a vector which points outwards along the boundary of $\Phat^{S}(L;0)$.  The image of $dv\left(\partial_{p}\right)$ under $ \mathring{\Psi}_{\partial \cX} $ is  bounded below in norm by a uniform constant independent of $S$ and is arbitrarily close in the $L^p$-norm to a positive multiple of the generator $dw^{\flat}_{S}\left(\partial_{\theta}\right)$ of the kernel of \eqref{eq:projection_forget_marked_point_moduli_space}.
\end{lem}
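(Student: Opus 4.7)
The plan is to treat the two generators $\partial_p$ and $\partial_s$ of $\aut(D^2,-1)$ separately, using pre-gluing of vector fields together with the quantitative control supplied by the implicit function theorem.

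For $dv(\partial_p)$: the key observation is that the $S^1$-action on $\cF^{1,p}_{\cP_{0,1}}(L;0)$ given by $(R,\theta,w) \mapsto (R,\theta-\lambda, w\circ r_{-\lambda})$ preserves Equation \eqref{eq:section_theta_depend}, so its infinitesimal generator $dw^\flat_S(\partial_\theta)$ lies in $\ker D_{\cP_{0,1}}$ and is fixed by the projection $\kappa_{\Gext^{\partial\cX}} := \id - Q_{\Gext^{\partial\cX}} \circ D^{\partial\cX}_{\cP_{0,1}}$ defining $\mathring{\Psi}_{\partial\cX}$. Applying $\kappa_{\Gext^{\partial\cX}}$ to the estimate of Lemma \ref{lem:pre-gluing_parabolic_close_to_rotation} and invoking the uniform bound on $Q_{\Gext^{\partial\cX}}$ from Corollary \ref{cor:uniformly_bounded_inverse} will yield
\begin{equation*}
\left|\mathring{\Psi}_{\partial\cX}(dv(\partial_p)) - e^{-4S}\, dw^\flat_S(\partial_\theta)\right|_{p,S} = \rO(e^{-S}),
\end{equation*}
which is the claimed closeness to a positive multiple of $dw^\flat_S(\partial_\theta)$. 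The uniform lower bound on the norm follows at once from the uniform bi-lipschitz property of $\mathring{\Psi}_{\partial\cX}$ in Corollary \ref{cor:injective_gluing_kernel}, together with the fact that $\left|dv(\partial_p)\right|_{1,p,\delta}$ is bounded below uniformly as $v$ varies in the finite set $\cP(L;-\beta)$.

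For $dv(\partial_s)$: by Lemma \ref{eq:factor_inverse_gluing_parameter_evaluation_at_hypersurface}, $dF$ at $w^\flat_S$ factors as $df \circ d\pi_\star \circ \ev_{z_0(S_1,\theta)}$, so outwardness on $F^{-1}(S)$ reduces to positivity of $df(d\pi_\star(\mathring{\Psi}_{\partial\cX}(dv(\partial_s))(z_0(S_1,\theta))))$. The pre-gluing formula \eqref{eq:formula_gluing_vector_fields} with $X_u = 0$ and $X_v = dv(\partial_s)$ shows that $\predGext_S(dv(\partial_s))$ agrees with $dv(\partial_s)$ on the $v$-side away from the neck; since $z_0(S_1,\theta) = \iota^{\cS,S}_0(0)$, its pointwise value there is exactly $dv(\partial_s)|_0$. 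The remaining two steps in the definition of $\mathring{\Psi}_{\partial\cX}$ -- parallel transport from $\preGext$ to $\Gext$, and the projection $\kappa_{\Gext^{\partial\cX}}$ -- each contribute corrections of $|\_|_{1,p,S}$-norm $\rO(e^{-2(1-\delta)S})$ by the $C^1$-decay \eqref{eq:exponential_decay_solution} of $\sol_S$, Lemma \ref{lem:tangent_pre-gluing_almost_holomorphic}, and the uniform boundedness of $Q_{\Gext^{\partial\cX}}$; the uniform Sobolev inequality of Lemma \ref{lem:uniform_sobolev} then upgrades these to $C^0$-bounds at $z_0(S_1,\theta)$, giving
\begin{equation*}
\mathring{\Psi}_{\partial\cX}(dv(\partial_s))\bigl(z_0(S_1,\theta)\bigr) = dv(\partial_s)|_0 + \rO\bigl(e^{-2(1-\delta)S}\bigr)
\end{equation*}
after parallel transport to $\Gext_S(z_0(S_1,\theta))$.

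It remains to verify that $d\pi_\star(dv(\partial_s)|_0)$ has nonzero image under $df$, with the correct sign for outwardness; this is where I expect the principal obstacle to lie. The strategy is to interpret $\partial_s$ as the infinitesimal reparametrisation $v \mapsto v \circ \tilde\tau_t$ along the strip-like end at $-1$, and to establish that, modulo a reparametrisation of the glued domain fixing $z_0(S_1,\theta)$ to first order, this is equivalent to an infinitesimal change of the gluing parameter $S$. Granting an identity of the form $\preGext_{S \pm t}(u,\theta,v) = \exp_{\preGext_S(u,\theta,v)}\bigl(t \cdot \predGext_S(dv(\partial_s))\bigr) + o(t)$ uniformly in $S \to \infty$, and combining with the approximate-derivative formula \eqref{eq:derivative_gluing_at_a_point} and the identity $F \circ \G_S = S$ on $U'_\star$ from Lemma \ref{lem:good_approximate_inverse_gluing}, will force $df(d\pi_\star(dv(\partial_s)|_0)) = \pm 1 + o(1)$ as $S \to \infty$; the sign, and hence outwardness, can then be read off from the convention identifying $\partial_s$ with one of the two translations on the strip. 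The hard part is the rigorous proof of the displayed identity, which, as in Section \ref{sec:derivative_gluing}, requires careful control of the biholomorphisms $\phi_S \co \Sigma_S \to D^2$ and their dependence on $S$.
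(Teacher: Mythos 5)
Your treatment of $dv\left(\partial_{p}\right)$ matches the paper's (which simply cites Lemmas \ref{lem:pre-gluing_uniform_bilipschitz} and \ref{lem:pre-gluing_parabolic_close_to_rotation}), and your overall strategy for $dv\left(\partial_{s}\right)$ -- factor $dF$ through $d\pi_{\star}\circ \ev_{z_0(S)}$ and compare $\mathring{\Psi}_{\partial \cX}\left(dv\left(\partial_{s}\right)\right)$ against the approximate derivative \eqref{eq:derivative_gluing_at_a_point} of the gluing path -- is also the paper's. But there is a genuine gap: the hypersurface $\N_\star$, the map $\pi_\star$, and the factorization of Lemma \ref{eq:factor_inverse_gluing_parameter_evaluation_at_hypersurface} exist only over the open set $U_\star$ of Condition \eqref{eq:right_inverse_constraint_basepoints}, whereas the lemma asserts outwardness at \emph{every} point of $\codbC{1}{L}$. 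As written, your argument establishes outwardness only over $U_\star$. The paper closes this by first noting (from Lemmas \ref{lem:pre-gluing_uniform_bilipschitz} and \ref{lem:image_differential_gluing_transverse_circle}) that the vector in question is transverse to $\partial \Phat^{S}(L;0)$ at every point, so by continuity it suffices to check outwardness at a single point of each component of $\codbC{1}{L}$, chosen inside $U_\star$. You need this reduction step.

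Two smaller remarks. Where you estimate the correction term $Q_{\Gext^{\partial \cX}} \circ D_{\cP_{0,1}}^{\partial \cX}(\cdots)$ by $\rO\left(e^{-2(1-\delta)S}\right)$ and upgrade to $C^0$ via the uniform Sobolev constant, the paper's step is exact rather than asymptotic: $d\pi_{\star} \circ Q_{\Gext^{\partial \cX}} = 0$ identically, because by \eqref{eq:right_inverse_constraint_basepoints} the right inverse takes values in vector fields tangent to $\N_\star$ at $z_0(S)$ -- this is precisely why that condition was imposed. Your quantitative route appears to work as well, but it forgoes the payoff of the construction. Finally, the identity you single out as the principal obstacle is essentially the content of the approximate-derivative statement \eqref{eq:derivative_gluing_at_a_point}, which the paper establishes (via Gromov compactness) in the discussion preceding the lemma, so it can be quoted rather than reproved; and the uniform lower bound on $\left|dv\left(\partial_{p}\right)\right|$ should be attributed to compactness of $\codbC{1}{L}$ -- the disc $v$ varies in $\cS_{0,1}(L;\beta)$, not in the finite set $\cP(L;-\beta)$.
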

\begin{proof}
The statement about the image of $dv \left( \partial_p \right)$ follows from the results proved in Section \ref{sec:injectivity_derivative}, in particular, Lemmas \ref{lem:pre-gluing_uniform_bilipschitz} and \ref{lem:pre-gluing_parabolic_close_to_rotation}, so it suffices to prove the first statement.  We also know from Section \ref{sec:injectivity_derivative}, in particular Lemmas \ref{lem:pre-gluing_uniform_bilipschitz}, and \ref{lem:image_differential_gluing_transverse_circle}, that the projection of the image of $dv \left( \partial_{s} \right)$ under the compositions of \eqref{eq:iso_on_kernels} and \eqref{eq:projection_forget_marked_point_moduli_space}  is transverse to the boundary of $\Phat^{S}(L;0)$ at every point.   Therefore, it suffices to prove that on each component  of $\codbC{1}{L}$, there is some point for which it is outwards pointing; we choose this point to lie in $U_\star$.  By Equation \eqref{eq:derivative_gluing_at_a_point} and Lemma \ref{eq:factor_inverse_gluing_parameter_evaluation_at_hyperplane}, we simply have to prove that the image of the vector field $ \mathring{\Psi}_{\partial \cX}  \left( dv \left( \partial_{s} \right) \right)$ evaluated at $z_0(S)$ under the differential of $\pi_{\star} $ is sufficiently close to $d \pi_{\star} \circ d  \Gext_{S}(u,\theta,v)\left( \partial_{s} \right)|_{z_{0}(S)} $.

Gromov compactness implies that the norm of \eqref{eq:derivative_gluing_at_a_point} is uniformly bounded from below.  More precisely,
\begin{equation}  \lim_{S \to +\infty} \Big{|} d  \pi_{\star} \circ d \Gext_{S}(u,\theta,v) \left( \partial_{s} \right)|{z_{0}(S)} -   d \pi_{\star} \circ   \mathring{\Psi}_{\partial \cX}  \left( dv \left( \partial_{s} \right) \right) |{z_{0}(S)}  \Big{|} = 0, \end{equation}
and the norm of $d\pi_{\star} \circ \Pi_{\preGext}^{\Gext} \circ \predGext_{S}\left(dv \left( \partial_{s} \right) \right) $ is independent of $S$ (a priori, there might be an error term coming from parallel transport in the $\theta$ and $R$ directions, but our choice of metric is independent of these two variables in this region).  It suffices therefore to prove that
\begin{equation}  d \pi_{\star} \circ \mathring{\Psi}_{\partial \cX} \left( dv \left( \partial_{s} \right) \right)  | z_{0}(S) \end{equation}
is close to $ d\pi_{\star} \circ \Pi_{\preGext}^{\Gext} \circ \predGext_{S} \left(dv \left( \partial_{s} \right)  \right) |{z_{0}(S)} $, which is the same as bounding the norm of
\begin{equation} d\pi_{\star} \circ Q_{\Gext^{\partial \cX}} \circ D_{\cP_{0,1}}^{\partial \cX} \circ \Pi_{\preGext^{\partial \cX}}^{\Gext^{\partial \cX}} \circ \predGext^{\partial \cX} \end{equation} 
applied to $ dv \left( \partial_{s} \right)  $, and evaluated at $z_0(S)$.  However, the composition
\begin{equation} d\pi_{\star} \circ Q_{\Gext^{\partial \cX}} \end{equation}
vanishes since $Q_{\Gext^{\partial \cX}}$ takes values in vector fields which, at $z_0(S)$, lie in the tangent space of $\N_{\star}$.  
\end{proof}

\subsection{Construction of the isomorphism of kernels}
The isomorphism asserted in Equation \eqref{eq:map_kernels} is constructed starting with the isomorphism defined in Equation \eqref{eq:iso_on_kernels}.  Since   \eqref{eq:map_kernels} only refers to  $  \cF_{\cP}(L;0) $, we start with the projection
\begin{align} \label{eq:project_rotation_theta} \cF_{\cP_{0,1}}(L;0) & \to  \cF_{\cP}(L;0) \\
(R,\theta,w) & \mapsto (R, w \circ r_{-\theta}).
\end{align}
which gives a short exact sequence on tangent spaces
\begin{equation} \label{eq:short_exact_sequence_forget_marked} 0 \to TS^1 \to  T\cF_{\cP_{0,1}}(L;0)  \to  T \cF_{\cP}(L;0) \to 0
\end{equation}
Note this decomposition differs, by $\theta$-rotation, from the decomposition of $ T\cF_{\cP_{0,1}}(L;0) $ induced by the definition of the space $\cF_{\cP_{0,1}}(L;0)$ as a product $S^1 \times \cF_{\cP}(L;0)$.

Our goal is to describe the operator $D_{\cP_{0,1}}$, and hence also $D_{\cP_{0,1}}^{\partial \cX}$ using only the space $\cF_{\cP}(L;0)$.  To do this, we fist consider a rank $1$ vector space spanned by a vector $\partial_{\theta}$ which we think of as the infinitesimal generator of rotations on the disc.  By compositing the inclusion of  $\cF_{\cP}(L;0)$ into $\cF_{\cP_{0,1}}(L;0)$ at angle $\phi$ with rotation by $\phi$, we consider the composition of maps
\begin{equation} \label{eq:composition_extend_differential_to_rotation} 
\xymatrix{ \langle  \partial_{\theta}  \rangle \ar[rr]^(.35){(\partial_{\theta}, dw(\partial_{\theta})) } & & T_{\phi} S^1 \oplus   T_{(R,w)} \cF_{\cP}(L;0) \ar[r]^(.45){(\id,r_{\phi})} &  T_{\phi} S^1 \oplus T_{(R,w \circ r_{\phi})} \cF_{\cP}(L;0) \ar[d]^{\cong} \\  
\cE_{\cP,(R,  w)} &  & \cE_{\cP,(R,  w \circ r_{\phi} )} \ar[ll]^{r_{-\phi}} &  T_{(R, \phi, w \circ r_{\phi})} \cF_{\cP_{0,1}}(L;0)  \ar[l]^{D_{\cP_{0,1} } }   }\end{equation}
By inspecting our construction of $D_{\cP_{0,1}}$, we find that it is $S^1$-equivariant since all our choices (of almost complex structures $J_{\theta,R}$, metrics $g_{\theta,R}$, and $1$-form $\gamma_{\theta,R}$) are pulled back from a fixed choice at $\theta=0$ by the rotation $r_{\theta}$.  We conclude
\begin{lem} \label{lem:add_rotation_direction_to_P} The map 
\begin{equation}  \partial_{\theta} \to \cE_{\cP,(R,  w)} \end{equation}
obtained by composing all the arrows in Equation \eqref{eq:composition_extend_differential_to_rotation} is independent of $\phi$, and vanishes if $w$ lies in $\cP(L;0)$. \noproof  \end{lem}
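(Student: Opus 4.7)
The plan is to reduce both claims to the $S^{1}$-equivariance of $D_{\cP_{0,1}}$ that is already implicit in the construction of the data $J_{\theta,R}$, $g_{\theta,R}$, $\gamma_{\theta,R}$. Concretely, introduce the $S^{1}$-action
\begin{equation*}
\sigma_{\phi} \co \cF_{\cP_{0,1}}(L;0) \to \cF_{\cP_{0,1}}(L;0), \qquad (R,\theta,w) \mapsto (R, \theta+\phi, w\circ r_{\phi}),
\end{equation*}
covered on the fibres of $\cE^{p}_{\cP_{0,1}}(M)$ by the rotation $r_{-\phi}$. Because each of $J_{\theta,R}$, $g_{\theta,R}$, and $\gamma_{\theta,R}$ is by construction the $r_{\theta}$-pullback of its $\theta=0$ counterpart, direct inspection of the formula in Equation \eqref{eq:expression_linearisation_circle_varying_parametrized} yields the intertwining identity $r_{-\phi}\circ D_{\cP_{0,1}}\circ d\sigma_{\phi}=D_{\cP_{0,1}}$ at the basepoint $(R,0,w)$.

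The first thing I would verify is that the composition in Equation \eqref{eq:composition_extend_differential_to_rotation} is exactly $r_{-\phi}\circ D_{\cP_{0,1}}\circ d\sigma_{\phi}$ applied to the vector $(0,\partial_{\theta}, dw(\partial_{\theta}))$ at $(R,0,w)$. This amounts to the computation
\begin{equation*}
\frac{d}{ds}\Big|_{s=0}(w\circ r_{\phi+s}) = \bigl(dw(\partial_{\theta})\bigr)\circ r_{\phi},
\end{equation*}
which shows that $d\sigma_{\phi}(0,\partial_{\theta},dw(\partial_{\theta}))=(0,\partial_{\theta},(dw(\partial_{\theta}))\circ r_{\phi})$, precisely the output of the first two arrows in the diagram. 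Invoking the equivariance then collapses the full composition to $D_{\cP_{0,1}}(0,\partial_{\theta},dw(\partial_{\theta}))$ at $(R,0,w)$, which is manifestly independent of $\phi$. This proves the first claim.

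For the vanishing, suppose $w \in \cP(L;0)$. The same $S^{1}$-equivariance applied to $\dbar_{\cP_{0,1}}$ itself implies that the entire orbit $\phi \mapsto (R,\phi,w\circ r_{\phi})$ lies in the zero locus of $\dbar_{\cP_{0,1}}$. Hence its tangent vector $(0,\partial_{\theta},dw(\partial_{\theta}))$ at $\phi=0$ is annihilated by the honest linearisation of $\dbar_{\cP_{0,1}}$. Since the extended operator $D_{\cP_{0,1}}$ is defined so as to coincide with the honest linearisation on the zero locus (see the discussion preceding Equation \eqref{eq:extension_linearisation_dbar_parametrized}), the composition vanishes at $\phi=0$, and then at every $\phi$ by the first part of the lemma.

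The only place where any care is required is the bookkeeping step identifying the diagram with $r_{-\phi}\circ D_{\cP_{0,1}}\circ d\sigma_{\phi}$; once that identification is in place, both assertions are formal consequences of equivariance, so there is no substantive analytic obstacle.
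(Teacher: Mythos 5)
Your proposal is correct and is essentially the paper's own argument: the paper justifies the lemma precisely by the $S^1$-equivariance of $D_{\cP_{0,1}}$, which holds because $J_{\theta,R}$, $g_{\theta,R}$, and $\gamma_{\theta,R}$ are all obtained by $r_{\theta}$-pullback from their $\theta=0$ counterparts. Your additional bookkeeping (identifying the composition with $r_{-\phi}\circ D_{\cP_{0,1}}\circ d\sigma_{\phi}$ applied to the orbit tangent vector, and observing that the orbit lies in the zero locus when $w\in\cP(L;0)$) simply makes explicit what the paper leaves to inspection.
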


We denote the  direct sum of Equation \eqref{eq:composition_extend_differential_to_rotation} with $D_{\cP}$ by:
\begin{equation} \label{eq:add_rotation_to_linearisation_dbar_operator_parametrized} D_{\cP}^{\langle \partial_{\theta} \rangle} \co \langle \partial_{\theta} \rangle \oplus T \cF_{\cP}(L;0) \to \cE_{\cP}. \end{equation}
\begin{lem}
Given $(R,\theta,w) \in  \cF_{\cP_{0,1}}(L;0)$, and $\theta \in S^1$, rotation by $\theta$ defines a commutative diagram
\begin{equation} \label{eq:recover_circle_from_base}  \xymatrix{ \langle \partial_{\theta} \rangle \oplus T_{(R,w)}\cF_{\cP}(L;0) \ar[d] \ar[r] &  T_{(R,\theta, w \circ r_{\theta})}  \cF_{\cP_{0,1}}(L;0) \ar[d] \\
\cE_{\cP,(R,w)}  \ar[r]   &  \cE_{\cP_{0,1},(R,\theta, w \circ r_{\theta})} } \end{equation}
whose horizontal arrows are isomorphism, and whose top arrow, upon identifying $\partial_\theta$ with the generator of $TS^1$ gives a splitting of the short exact sequence \eqref{eq:short_exact_sequence_forget_marked}. \noproof
\end{lem}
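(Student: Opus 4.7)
The plan is to verify the lemma by exhibiting the two horizontal arrows explicitly from the rotation action and then checking commutativity on each summand of the direct sum, reducing everything to the $S^1$-equivariance already built into the construction of $D_{\cP_{0,1}}$ and to Lemma \ref{lem:add_rotation_direction_to_P}.

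First I would describe the horizontal arrows. For the bottom arrow, rotation by $\theta$ gives a diffeomorphism $r_{\theta}^{\ast}\colon D^{2}\to D^{2}$ which, combined with the fact that $J_{\theta,R}$, $g_{\theta,R}$ and $\gamma_{\theta,R}$ were defined as the $r_{\theta}$-pullbacks of $J_{0,R}$, $g_{0,R}$ and $\gamma_{0,R}$, yields a tautological isometry between anti-holomorphic $TM$-valued $1$-forms along $w$ and along $w\circ r_{\theta}$; this is the bottom arrow. For the top arrow, I would use the product identification $\cF_{\cP_{0,1}}(L;0)\cong S^{1}\times\cF_{\cP}(L;0)$ to write an element of $T_{(R,\theta,w\circ r_{\theta})}\cF_{\cP_{0,1}}(L;0)$ as a triple $(r\partial_{R},\lambda\partial_{\theta},Y)$ with $Y$ a vector field along $w\circ r_{\theta}$. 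On the $T_{(R,w)}\cF_{\cP}(L;0)$ factor the map sends $(r\partial_{R},X)$ to $(r\partial_{R},0,r_{\theta}^{\ast}X)$, while on the $\langle\partial_{\theta}\rangle$ factor it sends $\partial_{\theta}$ to $(0,\partial_{\theta},dw(\partial_{\theta}))$, exactly the splitting read off from the top row of diagram \eqref{eq:composition_extend_differential_to_rotation}. That the top arrow is an isomorphism is immediate from the product decomposition and the invertibility of $r_{\theta}^{\ast}$; that the bottom arrow is an isomorphism is equally immediate.

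Next I would verify commutativity. On the summand $T_{(R,w)}\cF_{\cP}(L;0)$ this amounts to checking that
\begin{equation*}
D_{\cP_{0,1}}|_{(R,\theta,w\circ r_{\theta})}\bigl(r\partial_{R},0,r_{\theta}^{\ast}X\bigr) \;=\; r_{\theta}^{\ast}\,D_{\cP}|_{(R,w)}(r\partial_{R},X),
\end{equation*}
which is precisely the assertion that the family $(J_{\theta,R},g_{\theta,R},\gamma_{\theta,R})$ is $r_{\theta}$-equivariant; this can be read off term by term from the explicit formula \eqref{eq:expression_linearisation_circle_varying_parametrized} for $D_{\cP_{0,1}}$, noting that the $r\partial_{R}$ direction is unaffected by the rotation. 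On the summand $\langle\partial_{\theta}\rangle$ the claim is that $D_{\cP_{0,1}}\bigl(0,\partial_{\theta},dw(\partial_{\theta})\bigr)$ at $(R,\theta,w\circ r_{\theta})$ equals the bottom-arrow transport of $D_{\cP}^{\langle\partial_{\theta}\rangle}(\partial_{\theta})$ at $(R,w)$; but the composite in \eqref{eq:composition_extend_differential_to_rotation} defining $D_{\cP}^{\langle\partial_{\theta}\rangle}(\partial_{\theta})$ is by construction obtained from $D_{\cP_{0,1}}$ applied to this very vector and then rotated back by $r_{-\phi}$, and Lemma \ref{lem:add_rotation_direction_to_P} tells us that the answer is independent of $\phi$, so taking $\phi=\theta$ delivers exactly the desired identity.

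Finally I would deduce the splitting claim. The short exact sequence \eqref{eq:short_exact_sequence_forget_marked} is induced by the projection \eqref{eq:project_rotation_theta}, so I need to check that the top arrow, composed with this projection, is the canonical map $\langle\partial_{\theta}\rangle\oplus T_{(R,w)}\cF_{\cP}(L;0)\to T_{(R,w)}\cF_{\cP}(L;0)$ that kills $\partial_{\theta}$ and is the identity on the second factor. The projection sends $(r\partial_{R},\lambda\partial_{\theta},Y)$ to the derivative of $(R',\theta',w')\mapsto(R',w'\circ r_{-\theta'})$ applied to this vector, which equals $(r\partial_{R},r_{-\theta}^{\ast}Y - \lambda\,d(w\circ r_{\theta})(\partial_{\theta})\circ r_{-\theta})$. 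Plugging in the image of $(r\partial_{R},X)$ we get back $(r\partial_{R},X)$, and plugging in the image of $\partial_{\theta}$ the two terms in the second slot exactly cancel; this is the only non-trivial computation in the proof, and it is a purely formal chain-rule calculation once the top arrow has been written down. I do not foresee any analytic difficulty; the entire lemma is a formal consequence of $S^{1}$-equivariance, and the main point to get right is the bookkeeping that distinguishes the product splitting $T\cF_{\cP_{0,1}}\cong TS^{1}\oplus T\cF_{\cP}$ from the Cauchy-Riemann splitting provided by the top arrow.
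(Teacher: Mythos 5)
Your proof is correct and follows exactly the route the paper intends: the paper states this lemma with no proof, regarding it as an immediate consequence of the $S^1$-equivariance of the choices $(J_{\theta,R}, g_{\theta,R}, \gamma_{\theta,R})$ noted just before Lemma \ref{lem:add_rotation_direction_to_P}, and your proposal simply spells out that equivariance on each summand together with the chain-rule cancellation for the splitting. No gaps.
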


We define the map
\begin{equation}  \G^{\partial \cX}_{S} \co \partial \cX \to \cF_{\cP}(L;0) \end{equation}
to be composition $\Gext^{\partial \cX}_{S}$  with the projection \eqref{eq:project_rotation_theta}.    Returning to Equation \eqref{eq:surjective_X_dbar_operator_gluing}, and using the previous Lemma, we find that we have a surjective operator
\begin{equation} D_{\cP}^{\langle \partial_{\theta} \rangle, \cX} \co V_{\cX} \oplus  \langle \partial_{\theta} \rangle \oplus {\G_{S}^{\partial \cX}}^{*} T\cF_{\cP}(L;0)  \to {\G^{\partial \cX}_{S}}^{*} \cE_{\cP} ,\end{equation}
where we can pass to spaces of smooth maps using the fact that all curves in the image of $\G^{\partial \cX}$ are smooth, as well as elliptic regularity.  Moreover, we have an isomorphism of bundles
\begin{equation} \ker D_{\cP}^{\langle \partial_{\theta} \rangle, \cX} \cong  \ker D_{\cP_{0,1}}^{\cX} , \end{equation}
which, when composed with the isomorphism $ \mathring{\Psi}_{\partial \cX}$ of Equation \eqref{eq:iso_on_kernels_expression_extended}, gives an isomorphism
\begin{equation} \label{eq:iso_on_kernels_correct} \Psi_{\partial \cX} \co \ker D_{\partial \cC}^{\cX} \to \ker D_{\cP}^{\langle \partial_{\theta} \rangle, \cX} .  \end{equation}
Finally, using Lemma \ref{lem:gluing_extra_vectors_close_to_where_they_should_be}, we conclude:
\begin{lem}
The restriction of \eqref{eq:iso_on_kernels_correct} to $\codbC{1}{L}$ decomposes as a direct sum of the identity on $V_{\cX}$ with a map
\begin{equation}  \aut(D^2,-1) \oplus T\codbC{1}{L} \to T \cP(L;0) \end{equation} 
satisfying the conditions listed in Lemma \ref{lem:iso_stabilizations_bundles}.  
\end{lem}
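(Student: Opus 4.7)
My plan is to analyze the map $\Psi_{\partial \cX}$ of Equation~\eqref{eq:iso_on_kernels_correct} piece by piece on $\codbC{1}{L}$, separating the contributions from $V_{\cX}$, from $T\codbC{1}{L}$, and from the two-dimensional summand $\aut(D^2,-1) = \langle \partial_s \rangle \oplus \langle \partial_p \rangle$.

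The direct sum decomposition with the identity on $V_{\cX}$ follows almost immediately from construction: the map $V_{\cX} \to \Gext^{\partial \cX *}\cE^{p,\delta}_{\cP_{0,1}}$ of Equation~\eqref{eq:extend_perturbations_pre-gluing} vanishes along $\codbC{1}{L}$ by the choice we made in constructing it. Consequently, the restrictions of both $D_{\partial \cC}^{\cX}$ and $D_{\cP}^{\langle \partial_\theta \rangle,\cX}$ to $V_{\cX}$ are zero on $\codbC{1}{L}$, and the kernel projection preserves this summand identically. The interesting content of the Lemma is therefore about the restriction of $\Psi_{\partial \cX}$ to $\aut(D^2,-1) \oplus T\codbC{1}{L}$, understood up to an isotopy through isomorphisms into a map landing in $\langle \partial_\theta \rangle \oplus T\cP(L;0)$.

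To verify Condition~\eqref{eq:tangent_spaces_boundary_agree}, I would argue that for $X \in T\codbC{1}{L}$ the pre-glued vector field $\predGext^{\partial \cX}_S(X)$ is close in $|\cdot|_{p,S}$-norm to the differential $d\G_S(X)$, by an essentially verbatim repetition of the estimates in Section~\ref{sec:derivative_gluing} (in particular Lemma~\ref{lem:derivative_gluing_close_to_parallel_transport_pre-gluing}); all of those estimates used smoothness of $u$ and $v$ but not the fact that they solved the Cauchy--Riemann equation, so they adapt to the extension $\Gext^{\partial \cX}$. The kernel projection sends $d\G_S(X)$ to a vector in $T\cP_{0,1}(L;0)$, and projecting via \eqref{eq:project_rotation_theta} lands in $T\cP(L;0)$, where by the construction of $\Phat^S(L;0)$ in Section~\ref{sec:construction_manifold_corner} the gluing map $\G_S$ is precisely the inclusion $\codbC{1}{L} \hookrightarrow \Phat^S(L;0) \hookrightarrow \cP(L;0)$. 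For $\partial_p$ and $\partial_s$ the two assertions follow directly from Lemma~\ref{lem:gluing_extra_vectors_close_to_where_they_should_be}: that Lemma identifies $\mathring{\Psi}_{\partial \cX}(dv(\partial_p))$ as $L^p$-close to a positive multiple of $dw^{\flat}_S(\partial_\theta)$, which under the isomorphism \eqref{eq:recover_circle_from_base} corresponds to the generator of the $\langle \partial_\theta \rangle$ summand, and it shows that the projection to $T\cP(L;0)$ of $\mathring{\Psi}_{\partial \cX}(dv(\partial_s))$ is outward-pointing along $\partial \Phat^S(L;0)$.

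The main technical obstacle is that the desired conditions do not hold on the nose: $\Psi_{\partial \cX}(dv(\partial_s))$ may have a nonzero component in the $\langle \partial_\theta \rangle$ factor, $\Psi_{\partial \cX}(dv(\partial_p))$ may have a nontrivial component in $T\cP(L;0)$, and the positive multiple relating $\Psi_{\partial \cX}(dv(\partial_p))$ to $\partial_\theta$ is not exactly~$1$. The statement therefore only claims the conditions hold after an isotopy through isomorphisms, and the key point for carrying out that isotopy is that for $S$ sufficiently large all these deviations can be made as small as we please, while the properties we want to preserve (outward-pointingness and the nonvanishing image in the $\langle \partial_\theta \rangle$ direction) are open conditions. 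Thus a straightforward linear interpolation in the finite-dimensional fibre produces the required isotopy without ever leaving the open set of isomorphisms, completing the verification of Lemma~\ref{lem:iso_stabilizations_bundles}.
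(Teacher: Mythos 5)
Your proposal is correct and follows essentially the same route as the paper: the identity on $V_{\cX}$ comes from the vanishing of the stabilizing perturbations along $\codbC{1}{L}$, the assertions about $\partial_{s}$ and $\partial_{p}$ are exactly the content of Lemma \ref{lem:gluing_extra_vectors_close_to_where_they_should_be}, and the final isotopy is justified by the smallness of the deviations for large $S$ together with the uniform bi-Lipschitz control on $\mathring{\Psi}_{\partial \cX}$. You spell out in more detail than the paper does why condition \eqref{eq:tangent_spaces_boundary_agree} holds and why the interpolation stays within isomorphisms, but the underlying argument is identical.
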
 
\begin{proof}
That we get $V_{\cX}$ as a summand follows from the fact that $D_{\cP}^{\langle \partial_{\theta} \rangle, \cX} $ vanishes on $V_{\cX}$ when restricted to $\codbC{1}{L}$.  The only isotopy we need to perform is constant on $ \langle \partial_{s} \rangle \oplus T\codbC{1}{L} $ and deforms the image of $\partial_{p}$ to be a positive multiple of $\partial_{\theta}$.  Lemma \ref{lem:gluing_extra_vectors_close_to_where_they_should_be}, and the fact that $ \mathring{\Psi}_{\partial \cX}$ is uniformly bi-lipschitz implies one can perform this deformation through a family of isomorphism.
\end{proof}

\subsection{Completion of the proof of Lemma \ref{lem:iso_stabilizations_bundles}} \label{sec:construct_Y}

We now construct the CW complex $\cY(L)$ by choosing a triangulation of $\Phat^{S}(L;0)$ extending the triangulation of $\codbC{1}{L}$.  We start with the union
\begin{equation} \Phat^{S}(L;0) \cup_{\codbC{1}{L}} \partial \cX(L) \end{equation}
as a CW complex equipped with a map to $\cF_{\cP} (L)$.  Consider the weak homotopy equivalences
\begin{equation} L \ \to \cF_{\cP} (L) \to L ,\end{equation}
where the first map is the inclusion of constant maps, and the second is the evaluation map to $L$.  The argument presented in the previous section to construct $\cX$ and prove Lemma \ref{lem:homeo_type_X} implies:
\begin{lem}
There exists a CW complex $\cY(L)$ including $ \partial \cX(L)$, $\Phat^{S}(L;0)$, and $L$ as subcomplexes, with a map
\begin{equation} \iota_{\cY}  \co \cY(L) \to \cF_{\cP} (L)  \end{equation} 
which is a homotopy equivalence. \noproof
\end{lem}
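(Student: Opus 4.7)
The plan is to run the same skeleton-by-skeleton cell-attachment argument that produced $\cX(L)$ in Lemma \ref{lem:homeo_type_X}, but now relative to the pair of weak equivalences
\[ L \xhookrightarrow{\mathrm{const}} \cF_{\cP}(L;0) \xrightarrow{\ev_{1}} L, \]
where the second map is a weak equivalence by Lemma \ref{lem:homotopy_theory} (applied to the component $\beta_{0}=0$, which is the only one containing our constructions).

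First I would extend the chosen triangulation of $\codbC{1}{L}$ to a triangulation of the compact manifold with corners $\Phat^{S}(L;0)$, and form the pushout CW complex
\[ Z_{0} = \Phat^{S}(L;0) \cup_{\codbC{1}{L}} \partial \cX(L), \]
equipped with the evident map to $\cF_{\cP}(L;0)$: on $\Phat^{S}(L;0)$ this is the inclusion coming from the embedding $\Phat^{S}(L;0) \hookrightarrow \cP(L;0) \hookrightarrow \cF_{\cP}(L;0)$ built in Section \ref{sec:construction_manifold_corner}, while on $\partial \cX(L)$ it is the composition of $\Gext^{\partial \cX}_{S}$ with the projection \eqref{eq:project_rotation_theta}. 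These two maps agree on $\codbC{1}{L}$ by construction, so they glue to a continuous map $Z_{0} \to \cF_{\cP}(L;0)$. Next, I would form the disjoint union $Z_{1} = Z_{0} \sqcup L$, with $L$ mapped to $\cF_{\cP}(L;0)$ as constant discs, so that both $Z_{0}$ and $L$ sit as subcomplexes of $Z_{1}$.

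Then I would enlarge $Z_{1}$ to the desired $\cY(L)$ by attaching cells exactly as in Lemma \ref{lem:homeo_type_X}. For each vertex $v$ of $Z_{0}$, pick a path in the fibre of $\ev_{1}$ (which is contractible by Lemma \ref{lem:homotopy_theory}) from $v$ to the constant disc at $\ev_{1}(v) \in L \subset Z_{1}$, and add it as a $1$-cell. For each $1$-cell of $Z_{0}$, concatenating with these two paths and the path in $L$ obtained by applying $\ev_{1}$ gives a loop in $\cF_{\cP}(L;0)$; this loop is nullhomotopic since $\cF_{\cP}(L;0)$ is homotopy equivalent to $L$ and the loop projects to a null-homotopic loop in $L$, so one can fill it in by a $2$-cell. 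Continuing inductively dimension by dimension, one kills the relative homotopy groups $\pi_{k}(\cF_{\cP}(L;0), Z_{1})$, which vanish because the inclusion $L \hookrightarrow \cF_{\cP}(L;0)$ is a weak equivalence. The procedure terminates in finitely many dimensions since $Z_{1}$ is a finite CW complex of bounded dimension ($\Phat^{S}(L;0)$ and $\partial \cX(L)$ are compact, hence so are their skeleta).

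The result is a finite CW complex $\cY(L)$ containing $Z_{1}$, and hence $\partial \cX(L)$, $\Phat^{S}(L;0)$, and $L$, as subcomplexes, together with a map $\iota_{\cY} \co \cY(L) \to \cF_{\cP}(L;0) \hookrightarrow \cF_{\cP}(L)$ whose image lands in the component $\beta_{0}=0$. By the cell-attachment construction, the inclusion $L \hookrightarrow \cY(L)$ becomes a weak equivalence and factors $\iota_{\cY}$ through $\ev_{1}$ up to homotopy, so $\iota_{\cY}$ itself is a weak homotopy equivalence. The only point requiring attention, rather than being a genuine obstacle, is checking that the map from $Z_{0}$ to $\cF_{\cP}(L;0)$ is actually continuous across the gluing $\codbC{1}{L}$ and lands in the single component $\beta_{0}=0$; both follow immediately from the construction of $\Gext^{\partial \cX}_{S}$ and the fact that pre-gluing preserves homotopy class.
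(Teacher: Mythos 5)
Your proposal is correct and follows essentially the same route as the paper: the paper also starts from the union $\Phat^{S}(L;0) \cup_{\codbC{1}{L}} \partial \cX(L)$, invokes the weak homotopy equivalences $L \to \cF_{\cP}(L) \to L$ given by constant discs and evaluation, and then re-runs the skeleton-by-skeleton cell-attachment argument of Lemma \ref{lem:homeo_type_X}. The only cosmetic difference is that you first form the disjoint union with $L$ and then connect it by cells, whereas the paper phrases this directly as the cone-type construction; the content is identical.
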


We extend $V_{\cX}$ to a trivial vector bundle over $\cY(L)$, equipped with a map 
 \begin{equation} V_{\cX} \to \iota_{\cY}^{*}  \cE^{p,\delta}_{\cP}  \end{equation} 
extending \eqref{eq:extend_perturbations_pre-gluing}.   The surjectivity property of Equation \eqref{eq:surjective_X_dbar_operator_gluing} may not hold globally, but it holds in some neighbourhood of $ \partial \cX(L)$ in $\cY(L)$.   Moreover, using the same local construction as in the proof of Lemma \ref{eq:preturbations_to_surject_CW}, we can also produce a trivial vector bundle $V_{\cY}$ over $\cY(L)$ equipped with a map
 \begin{equation} V_{\cY} \to \iota_{\cY}^{*}  \cE^{p,\delta}_{\cP}  \end{equation} 
which vanishes near $ \partial \cX(L)$, and such that the direct sum
\begin{equation} V_{\cY} \oplus V_{\cX} \oplus \langle \partial_{\theta} \rangle \oplus \iota_{\cY}^{*} T \cF_{\cP}(L;0) \to \iota_{\cY}^{*} \cE_{\cP} \end{equation}
is surjective.  Extending $V_{\cY}$ trivially to $\cX(L)$ and defining the map
\begin{equation} V_{\cY} \to \iota_{\cX}^{*}  \cE_{\cC}  \end{equation}
to vanish, we let $V_{\cZ} = V_{\cX}  \oplus  V_{\cY}$, and conclude: 
\begin{lem} \label{lem:construct_Z}
There exists a trivial vector bundle $V_{\cZ}$ over $\cZ(L)$ satisfying equation \eqref{eq:dbar_capping_surjective} and \eqref{eq:dbar_para_surjective}, and equipped with an isomorphism map as in \eqref{eq:map_kernels} \noproof
\end{lem}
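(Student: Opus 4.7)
The plan is to set $V_{\cZ} = V_{\cX} \oplus V_{\cY}$, trivially extending each summand by the zero map on the side of $\cZ(L)$ where it was not originally defined. This is possible because the map $V_{\cX} \to \iota_{\cY}^{*} \cE^{p,\delta}_{\cP}$ has already been constructed in a neighbourhood of $\partial \cX(L)$ inside $\cY(L)$ using the $\Gext^{\partial \cX}$ pre-gluing together with the surjection \eqref{eq:surjective_X_dbar_operator_gluing}, and can be extended by zero to the rest of $\cY(L)$, while $V_{\cY}$ was built to vanish in a neighbourhood of $\partial \cX(L)$ and can therefore be extended by zero to all of $\cX(L)$.

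First, I would verify the three surjectivity claims.  On $\cX(L)$, the summand $V_{\cY}$ contributes trivially to $\cE_{\cC}$, so the operator $D_{\cX}$ of \eqref{eq:dbar_capping_surjective} collapses to the extension to $V_{\cX}$ of the operator of \eqref{eq:surjection_dbar_CW_smooth}, whose surjectivity is the conclusion of the preceding lemma.  On $\cY(L)$ the operator $D_{\cY}$ of \eqref{eq:dbar_para_surjective} is the direct sum of $D_{\cP}^{\langle \partial_{\theta} \rangle, \cX}$ (which already surjects from $V_{\cX} \oplus \langle \partial_{\theta} \rangle \oplus \iota_{\cY}^{*} T\cF_{\cP}(L;0)$ near $\partial \cX(L)$) with the map built from $V_{\cY}$ filling in the cokernel elsewhere; by construction the resulting operator is surjective on all of $\cY(L)$.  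For $D_{\partial \cX}$, I would use that $V_{\cY}$ vanishes identically near $\partial \cX(L)$, so on this subcomplex $V_{\cZ}$ contributes only through $V_{\cX}$ and surjectivity follows from the restriction to $\partial \cX$ of the operator on $\cX(L)$.

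For the isomorphism of kernels \eqref{eq:map_kernels}, I would take the map $\Psi_{\partial \cX}$ produced in Equation \eqref{eq:iso_on_kernels_correct} and extend it by the identity on the $V_{\cY}$ factor, which sits in both $\ker D_{\partial \cX}$ and $\ker D_{\cY}|_{\partial \cX(L)}$ as a trivial direct summand because $V_{\cY}$ maps to zero there.  The further description of the restriction to $\codbC{1}{L}$ (decomposition as identity on $V_{\cZ}$ plus a map $\aut(D^2,-1)\oplus T\codbC{1}{L} \to \langle \partial_{\theta}\rangle \oplus T\cP(L;0)$ satisfying \eqref{eq:tangent_spaces_boundary_agree}--\eqref{eq:normal_directions_agree}) is then inherited verbatim from the analysis of $\Psi_{\partial \cX}$ carried out in Corollary \ref{cor:injective_gluing_kernel} and Lemma \ref{lem:gluing_extra_vectors_close_to_where_they_should_be}.

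The only nontrivial point is arranging that the enlargement $V_{\cY}$ needed to surject globally on $\cY(L)$ can be chosen to vanish in a neighbourhood of $\partial \cX(L)$ without destroying surjectivity there; this is handled because the surjectivity of $D^{\partial \cX, \cX}_{\cP}$ is already in force on that neighbourhood from Equation \eqref{eq:surjective_X_dbar_operator_gluing}, so $V_{\cY}$ only needs to fill in cokernels on a compact set disjoint from $\partial \cX(L)$, which is achieved by the usual patching argument using compactly-supported perturbations as in the proof of Lemma \ref{eq:preturbations_to_surject_CW}.  With this separation of supports in place, the assembly into $V_{\cZ}$ is formal and the three required surjectivities together with the isomorphism \eqref{eq:map_kernels} follow.
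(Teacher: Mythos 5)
Your proposal is correct and follows essentially the same route as the paper: set $V_{\cZ}=V_{\cX}\oplus V_{\cY}$, extend $V_{\cX}$ over $\cY(L)$ where its surjectivity persists near $\partial\cX(L)$, build $V_{\cY}$ by the local patching construction so that it vanishes near $\partial\cX(L)$ and fills in the cokernel elsewhere on $\cY(L)$, extend $V_{\cY}$ by zero to $\cX(L)$, and inherit the kernel isomorphism from $\Psi_{\partial\cX}$. No gaps.
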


 \section{Analytic estimates for gluing}  \label{sec:analyt-results-gluin}

In this final Section, we collect the proof of some results which are needed for the gluing Theorem.

\subsection{Pointwise estimates} \label{ap:pointwise_estimates}

We start by collecting a series of pointwise estimates which are used for gluing.  All the results are elementary in nature.  The only reason for including them is that our chosen metric $| \_ |_{1,p,S}$ treats the norm of a vector field and of its covariant derivative differently as $S$ varies.  In order to obtain uniform estimates that are independent of $S$, it does not therefore seem to be sufficient, at least if we work only with our family of metrics $g_{S}$, to bound an expression in terms of the $C^1$-norm of a vector field $X$.  Rather, one needs to know its separate behaviour with respect to $|X|$ and $|\nabla X|$, and ensure that the terms multiplying $|X|$ can themselves be bounded in the $L^p$-norm.

Let $\Sigma$ be a Riemann surface (in our example, either a strip or a disc), $w$ a map from $\Sigma$ to $M$, and $Z$ a vector field along $w$.  Let $w_{Z} = \exp_{w} Z$, and let $\Pi_{w}^{w_Z}$ denote the parallel transport map along the image of the exponential map from $w$ to $w_Z$.  Our first estimate is

\begin{lem} \label{lem:jacobi_field_estimate}
There exists a constant $C$ depending only the metric on $M$ such that
\begin{equation} \left| dw_Z \right| \leq (|dw| +|\nabla Z| )  e^{C|Z|}.  \end{equation}
In particular, if $|Z|$  is sufficiently small,
\begin{equation} \label{eq:bound_exponential} \left| dw_Z \right| \leq  (|dw|  +|\nabla Z| )(1+ C|Z|).  \end{equation}
Moreover,
\begin{equation} \label{eq:bound_first_derivative_exponential} \left| \nabla_{\Pi_{w}^{w_{Z}} Z}  dw_{Z} \right| \leq  C|Z| (|dw|  +|\nabla Z| )  .  \end{equation}
\end{lem}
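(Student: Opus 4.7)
The plan is to reduce all three estimates to standard Jacobi field analysis applied to the one-parameter family of geodesics $s \mapsto \exp_{w(q)}(sZ(q))$ parametrized by $q \in \Sigma$.

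First I would fix $p \in \Sigma$ and $v \in T_p\Sigma$, choose a curve $\gamma$ in $\Sigma$ with $\gamma(0) = p$ and $\dot\gamma(0) = v$, and introduce the variation
\[ f(s,t) = \exp_{w(\gamma(t))}(s Z(\gamma(t))). \]
For each fixed $t$ the curve $s \mapsto f(s,t)$ is a geodesic of speed $|Z(\gamma(t))|$, with $f(0,t) = w(\gamma(t))$ and $f(1,t) = w_Z(\gamma(t))$. The vector field $J(s) := \partial_t f(s,0)$ is therefore a Jacobi field along the geodesic $\alpha(s) := f(s,0)$ with initial data $J(0) = dw(v)$ and $\nabla_s J(0) = \nabla_v Z$, and $J(1) = dw_Z(v)$. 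Thus bounds on $|dw_Z|$ reduce to bounds on $|J(1)|$, and bounds on $|\nabla_{\Pi_w^{w_Z} Z} dw_Z|$ reduce to bounds on $|\nabla_s J(1)|$.

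Second I would apply the Jacobi equation $\nabla_s^2 J + R(J,\dot\alpha)\dot\alpha = 0$. Since $|\dot\alpha| = |Z|$ is constant along $\alpha$ and since all relevant images lie in a compact subset of $M$ on which the Riemann curvature is bounded by some $K$, one has the pointwise inequality $|\nabla_s^2 J| \leq K |Z|^2 |J|$. Setting $\Phi(s) := |J(s)| + |\nabla_s J(s)|$, a routine Gronwall argument yields $\Phi(s) \leq \Phi(0)\exp(C|Z|\cdot s)$, from which (A.1) follows at $s = 1$ and (A.2) follows by Taylor expansion of the exponential when $|Z|$ is small.

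Finally, for (A.3), I would note that $\Pi_w^{w_Z} Z$ at $w_Z(p)$ coincides with $\dot\alpha(1)$ since the tangent to a geodesic is parallel along the geodesic. Extending $dw_Z$ off its base by the family $u_s(q) := \exp_{w(q)}(sZ(q))$, and using that mixed partials commute in the absence of torsion, gives $\nabla_{\Pi_w^{w_Z} Z} dw_Z(v) = \nabla_s(du_s(v))\big|_{s=1} = \nabla_s J(1)$. Integrating the Jacobi equation from $0$ to $1$ and estimating using $|R(J,\dot\alpha)\dot\alpha| \leq K|Z|^2 |J|$ together with the exponential bound on $|J|$ from the previous step produces (A.3) after tracking how the initial data $\nabla_v Z$ is absorbed into the $|\nabla Z|$ term on the right-hand side. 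The one place requiring care is precisely this bookkeeping at $s=1$, pinning down the various parallel-transport and connection conventions so that the final bound appears in the form claimed; the underlying analytic content is textbook Jacobi field theory on a region of bounded curvature.
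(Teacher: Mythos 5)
Your proof is the same as the paper's: both identify $dw_{sZ}(v)$ as a Jacobi field along the geodesic $s\mapsto\exp_{w}(sZ)$, bound $|\nabla_s^2 J|\le K|Z|^2|J|$ using bounded curvature, and read off all three estimates from the initial data $J(0)=dw(v)$, $\nabla_sJ(0)=\nabla_vZ$. One caution: with the unweighted functional $\Phi=|J|+|\nabla_sJ|$, the differential inequality is $\Phi'\le\max(1,K|Z|^2)\,\Phi$, whose Gronwall bound is $e^{Cs}$ rather than $e^{C|Z|s}$; that constant factor would destroy \eqref{eq:bound_exponential}, which needs the prefactor to be $1+O(|Z|)$. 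The fix is to integrate the system directly ($|\nabla_sJ(s)|\le|\nabla_sJ(0)|+K|Z|^2\int_0^s|J|$, then $|J(s)|\le|J(0)|+|\nabla_sJ(0)|+K|Z|^2\int_0^s|J|$ for $s\le1$, and apply Gronwall to $|J|$ alone), which yields the factor $e^{K|Z|^2s}\le e^{C|Z|s}$ for $|Z|$ bounded; the same integration gives \eqref{eq:bound_first_derivative_exponential}.
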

\begin{proof}
Fix a point $z \in D^2$ and a tangent vector $\partial_{s}$.  By definition, $Y(y) = dw_{yZ}(\partial_{s})$ is a Jacobi vector field with initial conditions $Y(0) = dw(\partial_{s})$ and
\begin{equation} Y'(0) \equiv \nabla_{Z}  dw(\partial_{s}) = \nabla_{dw (\partial_{s})} Z .\end{equation}
The equation for a Jacobi field is given by
\begin{equation} Y''(y) \equiv  \nabla^{2}_{ \Pi_{w}^{w_{yZ}} Z}  dw(\partial_{s})  = R\left(\Pi_{w}^{w_{yZ}} Z, Y(y)\right) \Pi_{w}^{w_{yZ}} Z ,\end{equation}
(see e.g. Equation 6.11 of \cite{berger}), so that
\begin{equation}|Y''(y)|  \leq C| Z|^{2}|Y(y)|\end{equation}
which readily implies that
\begin{equation}| Y(y)| \leq a e^{y b|Z|} \end{equation}
for $(a,b)$ independent of $y$.  Using the initial conditions, we obtain the desired results.
\end{proof}
From now on, we shall assume that $|Z|$ is bounded (say by $1$).  Next, let $w_{y} \co \Sigma \to M$ be a family of maps smoothly parametrized by $y \in [0,1]$.  Let $X$ be a vector field along the image of $w$, and write $\Pi_{y} X$ for the image of $X$ by  parallel transport along the path $w_{y}$.  We shall be interested in comparing the norm of $\nabla X$ to that of $\nabla \Pi_{y} X$:
\begin{lem} \label{lem:error_parallel_transport_derivative}
There exists a constant $C$ depending only on the metric on $M$ such that\begin{equation} \label{eq:parallel_transport_bound_error} \left|\Pi_{y}   \nabla X - \nabla \Pi_{y} X \right| \leq C|X|    \max_{y}|d w_{y}|   \int_{y}|\partial_{y} w_{y}| . \end{equation}
In particular, if $w_{y} = \exp_{w} yZ$ for some vector field $Z$, then
\begin{equation} \label{eq:parallel_transport_bound_exponential}  \left|\Pi_{w}^{w_1}   \nabla X - \nabla \Pi_{w}^{w_1} X\right|  \leq C|X||Z| (|dw| +|\nabla Z|) \end{equation}
\end{lem}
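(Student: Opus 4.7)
The plan is to view the family $w_y$ as a single map $W\colon \Sigma \times [0,1] \to M$, with $W(\cdot,y) = w_y$, and to extend $X$ to a vector field along $W$ by $\tilde X(\cdot,y) = \Pi_0^{y} X$, the $y$-parallel transport of $X$. By construction $\nabla_{\partial_y}\tilde X = 0$, so the standard curvature identity for a connection pulled back along a smooth map,
\begin{equation*}
\nabla_{\partial_y}\nabla_{\partial_s}\tilde X - \nabla_{\partial_s}\nabla_{\partial_y}\tilde X = R\bigl(\partial_y W,\partial_s W\bigr)\,\tilde X,
\end{equation*}
reduces for a fixed tangent direction $\partial_s$ on $\Sigma$ to
\begin{equation*}
\nabla_{\partial_y}\nabla_{\partial_s}\tilde X = R\bigl(\partial_y w_y,\,dw_y(\partial_s)\bigr)\,\tilde X.
\end{equation*}

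Now I would consider the quantity $F(y) = \Pi_y^{0}\,\nabla_{\partial_s}\tilde X(y)$, which lives in the single fibre $T_{w_0(z)}M$ and satisfies $F(0) = \nabla_{\partial_s} X$. Applying the identity above and integrating from $0$ to $y$ gives
\begin{equation*}
F(y) - F(0) = \int_0^{y} \Pi_{y'}^{0}\,R\bigl(\partial_{y'}w_{y'},\,dw_{y'}(\partial_s)\bigr)\,\Pi_0^{y'}X \, dy'.
\end{equation*}
Parallel transporting both sides back to $w_y$ by $\Pi_0^{y}$ turns the left-hand side into exactly $\nabla \Pi_y X - \Pi_y \nabla X$ (applied to $\partial_s$), and taking pointwise norms together with $\|R\|_{C^0(M)} \leq C$ and the fact that parallel transport is an isometry yields
\begin{equation*}
\bigl|\Pi_y \nabla X - \nabla \Pi_y X\bigr| \leq C\,|X|\,\max_{y'\in[0,y]}|dw_{y'}|\int_0^{y}|\partial_{y'}w_{y'}|\,dy',
\end{equation*}
which is the first claimed inequality.

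For the specialization to $w_y = \exp_w(yZ)$, I would note two facts. First, $y \mapsto w_y(z)$ is, for each $z$, a geodesic with initial velocity $Z(z)$, so $\partial_y w_y$ is parallel along it and $|\partial_y w_y| \equiv |Z|$; integrating from $0$ to $1$ gives the factor $|Z|$. Second, Equation \eqref{eq:bound_exponential} from Lemma \ref{lem:jacobi_field_estimate} bounds $|dw_y|$ uniformly in $y \in [0,1]$ by a constant multiple of $|dw| + |\nabla Z|$ once $|Z|$ is small enough. Plugging these into the first estimate produces \eqref{eq:parallel_transport_bound_exponential}.

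I do not expect any genuine obstacle: everything is a pointwise computation on $M$, the compactness of $M$ ensures a uniform curvature bound, and the only input beyond the curvature identity is the Jacobi-field bound already established in Lemma \ref{lem:jacobi_field_estimate}. The one place to be careful is keeping the distinction between $|dw|$ and $|\nabla Z|$ separate (rather than collapsing them into a $C^1$-norm), since the weighted norms $|\_|_{1,p,S}$ later treat these contributions with different weights — but this is automatic from the Jacobi bound as stated.
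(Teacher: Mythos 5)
Your argument is correct and is essentially the paper's proof: both differentiate in $y$, use the curvature commutator identity $\nabla_{\partial_y}\nabla_{\partial_s}\tilde X = R(\partial_y w_y, dw_y(\partial_s))\tilde X$ (exploiting that the $y$-parallel extension of $X$ kills one term), and integrate, then specialize via the geodesic speed $|\partial_y w_y|=|Z|$ and the Jacobi-field bound \eqref{eq:bound_exponential}. Writing the conclusion as an integral identity conjugated by parallel transport, rather than as a differential inequality on the norm, is only a cosmetic difference.
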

\begin{proof}
Fixing a chart on $\Sigma$ with coordinates $s$ and $t$, we bound the $y$-derivative of the norm of the left hand side of Equation \eqref{eq:parallel_transport_bound_error}:
\begin{align*} \frac{d}{dy}   \left| \Pi_{y}   \nabla_{dw_{y}(\partial_s)} X - \nabla_{dw_{y}(\partial_s)} \Pi_{y} X \right| & \leq \left| \nabla_{\partial_{y} w_y}  \Pi_{y}   \nabla_{dw_{y}(\partial_s)} X \right|   +  \left| \nabla_{\partial_{y} w_y} \nabla_{dw_{y}(\partial_s)} \Pi_{y} X \right| \\
& \leq  C | \partial_{y} w_{y}| |dw_{y}(\partial_s)||X|,\end{align*}
where $C$ is the norm of the curvature of the metric.  This yields Equation \eqref{eq:parallel_transport_bound_error} by integration.  Using Lemma \ref{lem:jacobi_field_estimate}, we conclude the bound \eqref{eq:parallel_transport_bound_exponential}.
\end{proof}
Given a two-parameter family of maps $w_{x,y}$, with $(x,y) \in [0,1]^2$, we shall consider the difference between parallel transport along the two sides of the square.
\begin{lem}  \label{lem:bound_derivative_parallel_transport}
There exists a constant $C$ depending only on the metric on $M$ such that 
\begin{align} \label{eq:parallel_transport_bound_error_family} \left|\Pi_{w_{0,1}}^{w_{1,1}}  \Pi_{w_{0,0}}^{w_{0,1}}   X - \Pi_{w_{1,0}}^{w_{1,1}}  \Pi_{w_{0,0}}^{w_{1,0}}  X \right| & \leq C  \int_{x,y}|\partial_{y} w_{x,y}||\partial_{x} w_{x,y}| |X| \end{align}
\begin{multline} \label{eq:parallel_transport_bound_error_family_derivative} 
\left| \nabla \left( \Pi_{w_{0,1}}^{w_{1,1}}  \Pi_{w_{0,0}}^{w_{0,1}}   X - \Pi_{w_{1,0}}^{w_{1,1}}  \Pi_{w_{0,0}}^{w_{1,0}}  X \right) \right|  \leq C| \nabla X|  \int_{x,y}|\partial_{y} w_{x,y}||\partial_{x} w_{x,y}| \\
+ C|X| \big( \max_{x}|d w_{x,0}|\int_{x}|\partial_{x} w_{x,0}| + \max_{x}|d w_{x,1}| \int_{x}|\partial_{y} w_{x,1}| + \\ \max_{y}|d w_{0,y}|  \int_{y}|\partial_{y} w_{0,y}|   + \max_{y}|d w_{1,y}|  \int_{y}|\partial_{y} w_{1,y}| \big) 
\end{multline}
\end{lem}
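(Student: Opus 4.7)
The plan is to realize both inequalities as expressions of the fact that parallel transport around the boundary of a small region differs from the identity by an amount controlled by the integral of the Riemann curvature tensor over the region. I would introduce the auxiliary vector fields
\begin{align*}
A(x,y) &= \Pi_{w_{0,y}}^{w_{x,y}} \Pi_{w_{0,0}}^{w_{0,y}} X, \\
B(x,y) &= \Pi_{w_{x,0}}^{w_{x,y}} \Pi_{w_{0,0}}^{w_{x,0}} X,
\end{align*}
each a vector field along $w_{x,y}$, and note that $A = B$ on the two axes $\{x=0\}$ and $\{y=0\}$. The quantity to estimate in \eqref{eq:parallel_transport_bound_error_family} is precisely $|A(1,1) - B(1,1)|$. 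By construction $\nabla_{\partial_x w} A \equiv 0$ along each horizontal path (fixed $y$) and $\nabla_{\partial_y w} B \equiv 0$ along each vertical path (fixed $x$).

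For \eqref{eq:parallel_transport_bound_error_family}, I would use the definition $[\nabla_{\partial_x w}, \nabla_{\partial_y w}] = R(\partial_x w, \partial_y w)$ together with the boundary conditions $\nabla_{\partial_y w} A|_{x=0} = 0$ to write, after one integration in $x$,
\begin{equation*}
\nabla_{\partial_y w} A(1,y) = \int_0^1 \Pi_{w_{x,y}}^{w_{1,y}} R(\partial_x w, \partial_y w) A(x,y)\, dx,
\end{equation*}
and then integrating along the vertical segment at $x = 1$, using $A(1,0) = B(1,0)$ and $B(1,y) = \Pi_{w_{1,0}}^{w_{1,y}} B(1,0)$, yields the integral representation
\begin{equation*}
A(1,1) - B(1,1) = \int_0^1\!\!\int_0^1 \Pi_{w_{x,y}}^{w_{1,1}} R(\partial_x w, \partial_y w) A(x,y)\, dx\, dy.
\end{equation*}
Since the Levi-Civita connection's parallel transport is an isometry, $|A(x,y)| = |X|$, so the pointwise bound from the sup-norm of $R$ gives \eqref{eq:parallel_transport_bound_error_family} with $C$ proportional to the curvature bound on $M$.

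For \eqref{eq:parallel_transport_bound_error_family_derivative}, I would apply $\nabla$ (in a direction $\xi$ tangent to $\Sigma$) directly to the original expressions $A(1,1) = \Pi_{w_{0,1}}^{w_{1,1}} \Pi_{w_{0,0}}^{w_{0,1}} X$ and $B(1,1) = \Pi_{w_{1,0}}^{w_{1,1}} \Pi_{w_{0,0}}^{w_{1,0}} X$ by the product rule. The terms where $\nabla$ hits $X$ combine exactly like in the first part, contributing $C|\nabla X|$ times the interior integral by the integral formula proved above, now applied to $\nabla X$ in place of $X$. The remaining four terms have the form $\Pi \cdot (\nabla \Pi_{w_{a}}^{w_{b}}) \cdot \Pi X$ where the pair $(a,b)$ runs over one of the four edges of $\partial[0,1]^2$. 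Each such term is bounded, by Lemma \ref{lem:error_parallel_transport_derivative} applied to the one-parameter family of maps along the corresponding edge, by $C|X|$ times $\max |dw|$ along the edge times the length-type integral along that edge, producing exactly the four boundary summands listed in \eqref{eq:parallel_transport_bound_error_family_derivative}.

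The main obstacle is bookkeeping in the last step: each of the two composed parallel transport maps in the expressions for $A(1,1)$ and $B(1,1)$ is associated with a single edge, but when differentiated via Lemma \ref{lem:error_parallel_transport_derivative} some care is needed to organize the eight potential error contributions into exactly the four edges of $\partial[0,1]^2$ without double-counting or losing a factor. A secondary point is to ensure that the $\nabla X$ contribution is cleanly separated from terms of the form $|X|\cdot|\nabla (\text{something})|$ arising when $\nabla_\xi$ hits the factor $\partial_x w$ or $\partial_y w$ inside the curvature; this is handled by observing that such mixed terms are absorbed into the boundary summands after an integration by parts in $x$ or $y$ across the square, since the integrand is a pure curvature expression multiplying the isometrically-transported $A$.
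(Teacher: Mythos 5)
Your proof is correct and follows essentially the same route as the paper's (sketched) argument: the double-integral curvature formula for the holonomy discrepancy gives \eqref{eq:parallel_transport_bound_error_family}, and for \eqref{eq:parallel_transport_bound_error_family_derivative} one commutes $\nabla$ past each of the four parallel transports via Lemma \ref{lem:error_parallel_transport_derivative} (producing the four edge terms) and applies the first estimate with $X$ replaced by $\nabla X$ (producing the interior term). Your closing worry about $\nabla$ hitting $\partial_x w$ or $\partial_y w$ inside the curvature integrand is a non-issue in your own argument: the integral formula is only ever invoked with the fixed vector $\nabla X$ in place of $X$, so no differentiation of the integrand and no integration by parts is required.
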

\begin{proof}[Idea of proof:]
The first result follows from a straightforward computation along the lines of Lemma \ref{lem:error_parallel_transport_derivative}.  To prove the second estimate, we commute $\nabla$ past the parallel transport map twice in each term,  introducing  error terms bounded as in Equation \eqref{eq:parallel_transport_bound_error} which account for the first term of \eqref{eq:parallel_transport_bound_error_family_derivative}.  The bound then follows from the first estimate applied to $\nabla {X}$.
\end{proof}

Next, we prove a pointwise analogue of the quadratic inequality.  Let $\gamma$ be a $1$-form on $\Sigma$, $X_H$ a Hamiltonian vector field on $M$ and $J_z$ an almost complex structure on $M$ depending on $z \in \Sigma$.  Given a map $w \co \Sigma \to M$, and a vector field $X$ along $w$ we define
\begin{equation} D_{\gamma} (X)  = \left( \nabla X -  \gamma \otimes \nabla_{X} X_{H}  \right)^{0,1} - \frac{1}{2} J_{z} \left( \nabla_{X}J_{z} \right) \partial_{\gamma} w \end{equation}
where $ \partial_{\gamma} w = \left( dw - \gamma \otimes X_{H} \right)^{1,0}$, and $\nabla$ stands for the Levi-Civita connection of a $z$-dependent metric $g_z$ on $M$ which is almost hermitian for $J_{z}$.  Let $Z$ be another vector field along $w$.  Consider the exponential of the vector fields $yZ$ for $y \in [0,1]$, and write $w_y$ for $w_{yZ}$ and $\Pi_{y}$ for $\Pi_{w}^{w_{y}}$.  The first estimate we shall need concerns the failure of the complex linear connection $\tilde{\Pi}$ to be an isometry
\begin{lem} \label{lem:bound_difference_parallel_transports}
There is a uniform constant $C$ such that
\begin{equation}  \left| \tilde{\Pi}_{w}^{w_{Z}} Y  -{\Pi}_{w}^{w_{Z}} Y \right| \leq  C|Y||Z|.\end{equation}
\end{lem}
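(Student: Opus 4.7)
The plan is to solve a first-order ODE along the geodesic ray $y \mapsto w_y = \exp_w(yZ)$ for $y \in [0,1]$ and compare the two parallel transports pointwise. Fix $z \in \Sigma$ and set $Y_y = \Pi_w^{w_y} Y$ and $\tilde{Y}_y = \tilde{\Pi}_w^{w_y} Y$, both viewed as sections of $w_y^*TM$ along the one-parameter family in $M$. By definition of the two connections, $\nabla_{\partial_y w_y} Y_y = 0$ while
\begin{equation*}
\nabla_{\partial_y w_y} \tilde{Y}_y \;=\; \tfrac{1}{2} J_z \bigl(\nabla_{\partial_y w_y} J_z\bigr) \tilde{Y}_y,
\end{equation*}
so the two transports differ by a first-order error governed by $\nabla J_z$.

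Next, I would write $\tilde{Y}_y = \Pi_w^{w_y} B(y)$ for a path $B \co [0,1] \to T_{w(z)} M$ with $B(0) = Y$. Since Levi-Civita parallel transport is an isometry, $|B(y)| = |\tilde{Y}_y|$, and the ODE above becomes
\begin{equation*}
B'(y) \;=\; \tfrac{1}{2}\, \Pi_{w_y}^{w} \!\left[ J_z \bigl(\nabla_{\partial_y w_y} J_z\bigr) \Pi_w^{w_y} B(y)\right].
\end{equation*}
The tensor $\nabla J_z$ is uniformly bounded on $M$ (since the family $J_z$ agrees with $J_{\alg}$ near $\partial M$ and $\bfJ$ is smooth in $z$), and Lemma~\ref{lem:jacobi_field_estimate} applied to the Jacobi field $\partial_y w_y$ gives $|\partial_y w_y| \leq c\,|Z|$ once $|Z|$ is bounded. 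Hence there is a uniform constant $C$ with $|B'(y)| \leq C|Z|\,|B(y)|$.

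Integrating via Gr\"onwall yields $|B(y)| \leq |Y|\,e^{C|Z|y}$, and then a second integration gives
\begin{equation*}
|B(y) - Y| \;\leq\; \int_0^y |B'(s)|\,ds \;\leq\; C|Z|\cdot|Y|\cdot e^{C|Z|}.
\end{equation*}
Since $|Z|$ may be assumed bounded, $e^{C|Z|}$ is absorbed into the constant. Finally, using that $\Pi_w^{w_y}$ is an isometry,
\begin{equation*}
\bigl|\tilde{\Pi}_w^{w_y} Y - \Pi_w^{w_y} Y\bigr| \;=\; \bigl|\Pi_w^{w_y}(B(y) - Y)\bigr| \;=\; |B(y) - Y| \;\leq\; C|Y|\,|Z|.
\end{equation*}
Setting $y = 1$ gives the desired inequality. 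There is no real obstacle here; the only thing to verify is that $|\nabla J_z|$ admits a bound uniform in $z \in \Sigma$, which is immediate from the fact that elements of $\sJ_{\bfepsilon}(D^2)$ are uniformly smooth and agree with $J_{\alg}$ to infinite order near $\partial M$.
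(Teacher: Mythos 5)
Your argument is correct and is essentially the paper's proof: both differentiate along the geodesic $y \mapsto \exp_w(yZ)$, observe that the two transports differ by the term $\tfrac{1}{2}J(\nabla_{\partial_y w_y}J)$, bound this by $C|Z|$ times the transported vector, and integrate with the common initial condition at $y=0$. The only cosmetic point is that $\partial_y w_y$ is the velocity of a geodesic, so $|\partial_y w_y| = |Z|$ by constancy of geodesic speed — no appeal to the Jacobi field estimate of Lemma \ref{lem:jacobi_field_estimate} is needed there.
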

\begin{proof}
We take the derivative with respect to $y$ of the of the norm of the above expression replacing $w_{Z}$ by $w_{yZ}$:
\begin{align*}  \frac{d}{dy}   \left| \tilde{\Pi}_{y} Y  -{\Pi}_{y} Y \right|  & \leq \left| J \left( \nabla_{\Pi_{y} Z} J \right) \tilde{\Pi}_{y} Y \right| \\
& \leq C|Z|  \left| \tilde{\Pi}_{y} Y \right|.
\end{align*}
We then use the initial condition that $ \tilde{\Pi}_{y}$ and $ \Pi_{y}$ agree when $y=0$.
\end{proof}

\begin{lem} \label{eq:pointwise_quadratic_inequality}
There exists a constant $C$ depending only on $M$ such that
\begin{equation} \left| \tilde{\Pi}_{w}^{w_{Z}} D_{\gamma} X -  D_{\gamma} (\Pi_{w}^{w_{Z}} X) \right| \leq C|Z| \left(|\nabla X|  +|X| \left(|\gamma|  +|dw| +|\nabla Z| \right) \right) \end{equation}
\end{lem}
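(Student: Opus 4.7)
The plan is to decompose the operator $D_{\gamma}(X) = (\nabla X)^{0,1} - (\gamma \otimes \nabla_{X} X_H)^{0,1} - \tfrac{1}{2} J_{z}(\nabla_X J_z) \partial_\gamma w$ into its three summands and bound $|\tilde\Pi D_\gamma X - D_\gamma (\Pi X)|$ term by term. The crucial initial observation is that $\tilde\nabla$ is complex linear, so $\tilde\Pi_w^{w_Z}$ intertwines the $J_z$-actions (at fixed $z$) on $T_w M$ and $T_{w_Z} M$; consequently $\tilde\Pi$ commutes with both the $(0,1)$-projection and left multiplication by $J_z$, and we may write
\[ \tilde\Pi D_\gamma X = (\tilde\Pi \nabla X)^{0,1} - (\gamma \otimes \tilde\Pi \nabla_X X_H)^{0,1} - \tfrac{1}{2} J_z\, \tilde\Pi(\nabla_X J_z)(\partial_\gamma w). \]

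For the first summand I would insert and subtract $\Pi \nabla X$. Then $|\tilde\Pi \nabla X - \Pi \nabla X|$ is bounded by $C|Z||\nabla X|$ via Lemma \ref{lem:bound_difference_parallel_transports} (applied with $Y = \nabla X$), while $|\Pi \nabla X - \nabla \Pi X|$ is bounded by $C|X||Z|(|dw|+|\nabla Z|)$ by \eqref{eq:parallel_transport_bound_exponential} of Lemma \ref{lem:error_parallel_transport_derivative}.

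For the second summand, viewing $\nabla X_H$ as a smooth $(1,1)$ tensor on $M$ and writing $\tilde\Pi(\nabla_X X_H) = (\tilde\Pi (\nabla X_H)_w \tilde\Pi^{-1})(\tilde\Pi X)$, the difference with $(\nabla_{\Pi X} X_H)_{w_Z}$ splits into a change-of-basepoint for the endomorphism (bounded by $C|Z||X|$ after integrating $\tilde\nabla(\nabla X_H)$ along the geodesic $y \mapsto \exp_w(yZ)$) and the same endomorphism applied to $\tilde\Pi X - \Pi X$ (bounded by $C|X||Z|$ by Lemma \ref{lem:bound_difference_parallel_transports}); the factor of $\gamma$ yields the required $C|\gamma||X||Z|$. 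The third summand is the most involved: expanding $\tilde\Pi(\nabla_X J_z)(\partial_\gamma w) = [\tilde\Pi (\nabla J_z)_w(X,\cdot)\tilde\Pi^{-1}](\tilde\Pi \partial_\gamma w)$ and comparing against $(\nabla_{\Pi X} J_z)(\partial_\gamma w_Z)$, the conjugated endomorphism at $w_Z$ differs from $\nabla_{\Pi X} J_z$ by $C|Z||X|$ by the same integration trick combined with Lemma \ref{lem:bound_difference_parallel_transports}. The remaining vector-factor discrepancy $|\tilde\Pi \partial_\gamma w - \partial_\gamma w_Z|$ splits as $|\tilde\Pi dw - dw_Z| + |\gamma||\tilde\Pi X_H(w) - X_H(w_Z)|$; the first piece is controlled by \eqref{eq:bound_first_derivative_exponential} of Lemma \ref{lem:jacobi_field_estimate} together with Lemma \ref{lem:bound_difference_parallel_transports}, giving $C|Z|(|dw|+|\nabla Z|)$, and the second is $C|\gamma||Z|$ because $X_H$ is a smooth vector field on the compact set where the problem lives.

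Summing the three pieces produces exactly the claimed bound $C|Z|(|\nabla X| + |X|(|\gamma|+|dw|+|\nabla Z|))$. The main technical obstacle is bookkeeping: one must consistently distinguish between $J_z$ (a complex structure on $TM$, parallel under $\tilde\Pi$ at fixed $z$) and the tensor field $\nabla J_z$ on $M$ (which is not parallel), and ensure that every time $\tilde\Pi$ is swapped for $\Pi$ the error is absorbed via Lemma \ref{lem:bound_difference_parallel_transports} (which contributes only $|{\cdot}||Z|$, with no $|\nabla Z|$). In this way the $|\nabla Z|$-dependent terms come exclusively from \eqref{eq:parallel_transport_bound_exponential} or \eqref{eq:bound_first_derivative_exponential}, and therefore always carry a compensating factor of $|Z|$, preventing the estimate from degenerating when $|\nabla Z|$ is large compared to $|Z|$.
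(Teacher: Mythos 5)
Your proposal is correct, and it reaches the estimate by a genuinely different organization than the paper. The paper's proof differentiates the quantity $\bigl| \tilde{\Pi}_{y} D_{\gamma} X - D_{\gamma}(\Pi_{y}X)\bigr|$ with respect to the interpolation parameter $y$ along $w_{yZ}$, splits the resulting derivative into ``lowest order'' terms (where a derivative of $J$ or $\gamma$ appears) and ``higher order'' terms (handled via the curvature identity $\nabla_{\Pi_y Z}\nabla_{dw_y(\partial_s)}\Pi_y X = R(\Pi_y Z, dw_y(\partial_s))\Pi_y X$ and the Jacobi-field bounds), and then integrates in $y$. You instead decompose $D_{\gamma}$ into its three summands, use the $J_z$-complex-linearity of $\tilde{\Pi}$ to commute it past the $(0,1)$-projection, and telescope each summand through intermediate comparison terms, delegating all the analytic content to Lemmas \ref{lem:jacobi_field_estimate}, \ref{lem:error_parallel_transport_derivative} and \ref{lem:bound_difference_parallel_transports}. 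The substance is the same --- your appeal to Equation \eqref{eq:parallel_transport_bound_exponential} is where the curvature computation that the paper performs inline actually lives --- but your packaging makes two things more transparent: first, exactly which previously proved estimate is responsible for each piece of the final bound, and second, that $|\nabla Z|$ enters only through the Jacobi-field and parallel-transport-commutator estimates and hence always arrives multiplied by $|Z|$, which is the point you correctly flag at the end. The paper's route, by contrast, is more self-contained at this spot in the appendix but requires re-deriving the $R(\cdot,\cdot)\cdot$ identity and tracking the $\nabla J$ terms by hand. Both arguments produce the same constant structure and the same final inequality.
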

\begin{proof}
Using the definition of the connection $\tilde{\Pi}$, we obtain
\begin{equation*}  
\frac{d}{dy} \left| \tilde{\Pi}_{y} D_{\gamma} X -  D_{\gamma} (\Pi_{y} X) \right| \leq \left| \nabla_{\Pi_{y} Z} D_{\gamma} (\Pi_{y} X) \right| + \left|  \frac{1}{2} J_{z}  \left(\nabla_{\Pi_{y} Z}J_{z} \right) D_{\gamma} X \right| \end{equation*}
The lowest order terms of the right hand side (where we take a derivative of $J$ with respect to the parallel transport of $Z$ in either term) are bounded by a constant multiple of 
\begin{equation} \label{eq:lowest_order_terms_vector_field}|Z| \left(|\nabla \left(\Pi_{y} X \right)| +|X||\gamma| +|X| | \partial_{\gamma} w_{y}|  +|\nabla X| \right) .\end{equation}
Using the expression for $\partial_{\gamma} w_{y} $, and Equation \eqref{eq:parallel_transport_bound_exponential}, we simplify this to a constant multiple of
\begin{equation} \label{eq:better_bound_lower_order}  |Z| \left(|\nabla X| +  |X|  (|Z| |\nabla Z| + |\gamma| +|dw|) \right) \end{equation} To bound the remaining terms, we must bound
\begin{multline} \label{eq:higher_order_terms_vector_field}
\left| \nabla_{\Pi_{y} Z}  \nabla \Pi_{y} X \right| +|\gamma| \left| \nabla_{\Pi_{y} Z}  \nabla_{\Pi_{y} X} X_H \right|  + \left|  \left( \nabla_{\Pi_{y} Z}  \left( \nabla_{\Pi_{y} X} J_{z} \right) \right)\partial_{\cP_{0,1}} w   \right| \\ + \left|    \nabla_{\Pi_{y} X} J_{z} \right| \left|   \nabla_{\Pi_{y} Z}  \partial_{\cP_{0,1}} w  \right| 
\end{multline}
We observe that the defining property of the Levi-Civita connection implies that
\begin{equation} \nabla_{\Pi_{y} Z}  \nabla_{dw_{y}(\partial_{s} )} \Pi_{y} X  = R\left(\Pi_{y} Z, d w_{y}(\partial_s) \right) \Pi_{y} X , \end{equation}
so that the first term is bounded by a constant multiple of
\begin{equation} \label{eq:bound_second_derivative} |Z||X||d  w_{y}| \leq|Z||X| (|dw|  +|\nabla Z| ) ,\end{equation}
using Equation \eqref{eq:bound_exponential} to bound $|d  w_{y}| $.  Since we can write $\nabla_{\Pi_{y} X} J_{z}  = \langle \Pi_{y} X, \nabla  J_{z} \rangle $, and similarly for $\nabla_{\Pi_{y} X} X_H $, we find  that the second and third term are bounded by a constant multiple of 
\begin{equation} \label{eq:bound_derivative_J_X} |Z||X|(|\gamma| +|dw_{y}|) \leq|Z||X| (|\gamma| +|dw| +|\nabla Z|) ,\end{equation}
Finally, since
\begin{equation}  \left|   \nabla_{\Pi_{y} Z}  \partial_{\gamma} w  \right|  \leq C \left( |Z|  \left( \left| dw_{y} - \gamma \otimes X_H  \right| +|\gamma| \left| \nabla X_H \right| \right) + \left| \nabla_{  \Pi_{y} Z} dw_{y} \right|  \right) \end{equation}
we can apply Equation \eqref{eq:bound_first_derivative_exponential} to conclude that the last term is bounded by a constant multiple of
\begin{equation} \label{eq:bound_derivative_dw} |Z||X| (|\gamma| +|dw| +|\nabla Z|) .\end{equation}
The sum of equations \eqref{eq:better_bound_lower_order}, \eqref{eq:bound_second_derivative}, \eqref{eq:bound_derivative_J_X}, and \eqref{eq:bound_derivative_dw}  gives the desired bound.
\end{proof}
Let $\kappa_{\Sigma}$ denote any function on $\Sigma$, and consider the a norm $| \_|_{p,\Sigma}$, weighted by $\kappa_{\Sigma}$, on $L^p$ functions:
\begin{cor} \label{cor:quadratic_inequality_basic_case}
There exists a constant $C$ depending only on the metric on $M$ and the Hamiltonian $H$ such that
\begin{multline} \left| \tilde{\Pi}_{w}^{w_{Z}} D_{\gamma} X -  D_{\gamma} (\Pi_{w}^{w_{Z}} X) \right|_{p,\Sigma} \leq \\ C|Z|_{C^0}  \left(  |\nabla X|_{p,\Sigma}  +|X|_{C^0} \left(|\gamma|_{p,\Sigma} +|dw|_{p,\Sigma} +|\nabla Z|_{p,\Sigma} \right)  \right) .\end{multline}
\noproof
\end{cor}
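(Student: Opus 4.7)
The plan is to derive the corollary as a direct consequence of the pointwise inequality established in Lemma \ref{eq:pointwise_quadratic_inequality}; no new geometric input is required. Starting from that pointwise bound
\[ \left| \tilde{\Pi}_{w}^{w_{Z}} D_{\gamma} X -  D_{\gamma} (\Pi_{w}^{w_{Z}} X) \right| \leq C|Z| \left(|\nabla X|  +|X| \left(|\gamma|  +|dw| +|\nabla Z| \right) \right), \]
I would simply replace $|Z|$ and $|X|$ by their $C^0$-norms on the right, which is legitimate because both factors appear as multipliers and $|\_|_{C^0}$ dominates the pointwise value. This yields a pointwise inequality in which only $|\nabla X|$, $|\gamma|$, $|dw|$, and $|\nabla Z|$ remain as functions on $\Sigma$.

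Next I would raise both sides to the $p$-th power, multiply by the weight $\kappa_\Sigma$, and integrate over $\Sigma$. The elementary inequality $(a+b+c+d)^p \leq 4^{p-1}(a^p+b^p+c^p+d^p)$ splits the right-hand side into a sum of four terms, each of which, upon integration against $\kappa_\Sigma$ and extraction of the constant $C^0$-factors, becomes exactly $|Z|_{C^0}^p$ times one of $|\nabla X|_{p,\Sigma}^p$, $|X|_{C^0}^p |\gamma|_{p,\Sigma}^p$, $|X|_{C^0}^p |dw|_{p,\Sigma}^p$, or $|X|_{C^0}^p |\nabla Z|_{p,\Sigma}^p$. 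Taking $p$-th roots and absorbing the constant $4^{(p-1)/p}$ into $C$ gives the stated bound.

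There is no real obstacle; the only thing to watch is making sure that $C$ indeed depends only on the geometry of $M$ and on $H$, which was already verified at the pointwise level in Lemma \ref{eq:pointwise_quadratic_inequality} (the curvature of $g$, the derivatives of $J_z$, and the derivatives of $X_H$ are all controlled by these data). Since the passage from the pointwise bound to the weighted $L^p$ bound introduces only the combinatorial constant from the elementary inequality above, the constant in the corollary can be taken to depend only on $M$, $H$, and $p$ — and, by adjusting notation, only on $M$ and $H$ as stated.
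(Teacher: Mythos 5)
Your argument is correct and is exactly the intended (omitted) proof: bound $|Z|$ and $|X|$ pointwise by their $C^0$-norms in the inequality of Lemma \ref{eq:pointwise_quadratic_inequality}, then take the weighted $L^p$-norm of both sides. The only cosmetic remark is that Minkowski's inequality for $|\_|_{p,\Sigma}$ lets you avoid the $4^{p-1}$ constant entirely, but since $p$ is fixed throughout the paper this makes no difference.
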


Next, we prove a Lemma that shall be used in Section \ref{sec:derivative_gluing}.  Consider the case $\Sigma=D^2$, equipped with the $L^p$-metric $| \_|_{p,S}$ on vector fields introduced in Remark \ref{rem:L_p_norm_tangent_vectors}, and let $X$ and $Z$ be a pair of vector fields along a map $w \co D^2 \to M$.
\begin{lem} \label{lem:error_add_vector_fields_L_p_S}
There exists a constant $C$ depending only the diameter of $w$ and the metric on $M$ such that
\begin{equation} \left| \exp^{-1}_{w}\left( \exp_{w_{X}}  \Pi_{w}^{w_X} Z \right)  - X -Z \right|_{p,S} \leq C|X|_{C^0}|Z|_{p,S}\end{equation}
\end{lem}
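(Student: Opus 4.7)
The plan is to prove this by first establishing a pointwise bilinear bound on the error vector field, and then upgrading it to the $|\_|_{p,S}$ norm through a careful decomposition that matches the three pieces in the definition of that norm.

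First, I would treat the pointwise estimate. Fix $z \in D^2$ and regard the map $\Psi_z : T_{w(z)}M \times T_{w(z)}M \to T_{w(z)}M$ defined by $\Psi_z(A,B) = \exp^{-1}_{w(z)}(\exp_{\exp_{w(z)}(A)}(\Pi_{w(z)}^{\exp_{w(z)}(A)}B)) - A - B$. Direct verification shows that $\Psi_z(0,B) = 0$ (since $\exp_{w}^{-1}(\exp_{w}(B)) = B$) and $\Psi_z(A,0) = A - A = 0$ (since $\Pi$ applied to the zero vector is zero). As $\Psi_z$ is smooth on the bounded ball $\{|A|,|B| \leq M\}$ determined by the diameter of $w$, writing
\begin{equation*}
\Psi_z(A,B) = \int_0^1 \int_0^1 \partial_s \partial_t \Psi_z(sA, tB)\, ds\, dt
\end{equation*}
and bounding the mixed partial using Lemma \ref{lem:jacobi_field_estimate} together with the parallel transport estimate \eqref{eq:parallel_transport_bound_error} produces the bilinear bound $|\Psi_z(A,B)| \leq C|A||B|$, with $C$ depending only on the curvature of $M$ and on the $C^0$-diameter of $w$.

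Once the pointwise bound is in hand, the $|\_|_{p,S}$ estimate decomposes along the three contributions making up the norm. The $L^p$ piece on the complement of the neck satisfies $\int |V|^p \leq C^p |X|_{C^0}^p \int |Z|^p$ by integrating the pointwise bound, and the evaluation at the neck origin is immediate. For the delicate weighted neck term $|V \circ \xi_{S,\neck} - \Pi V(\xi_{S,\neck}(0,0))|_{p,S,\delta}$, I would split
\begin{equation*}
V(z) - \Pi_0^z V(0) = \bigl[\Psi_z(X(z), Z(z)) - \Psi_z(X(z), \Pi_0^z Z(0))\bigr] + \bigl[\Psi_z(X(z), \Pi_0^z Z(0)) - \Pi_0^z \Psi_0(X(0), Z(0))\bigr].
\end{equation*}
Applying the bilinear pointwise bound to the difference of inputs in the first bracket gives $\leq C|X|_{C^0}|Z(z) - \Pi_0^z Z(0)|$, which is exactly the integrand appearing in the $|Z - \Pi Z(0)|_{p,S,\delta}$ contribution to $|Z|_{p,S}$. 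The second bracket compares $\Psi$ at two different base points applied to parallel-transported inputs; its analysis uses the smoothness of $\Psi_z$ in $z$ together with a refined application of Lemma \ref{lem:error_parallel_transport_derivative} to trade the base point variation for additional factors of $|Z(0)|$ that are absorbed into $|Z|_{p,S}$.

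The main obstacle will be the control of the second bracket in the decomposition. A naive bound scales like $|X|_{C^0}|Z(0)|$ (without the needed $|Z - \Pi Z(0)|$ structure), and so would blow up against the weight $e^{(2S-|s|)p\delta}$ as $S \to \infty$. Recovering the correct estimate requires exploiting that both $\Psi_z(X(z), \Pi_0^z Z(0))$ and $\Pi_0^z \Psi_0(X(0), Z(0))$ vanish to second order jointly in their inputs, and that their difference factors through the failure of parallel transport to commute with the bilinear operation $\Psi$, to which \eqref{eq:parallel_transport_bound_error_family} applies; the resulting error is then seen to be dominated by the $|Z|_{p,S}$ norm.
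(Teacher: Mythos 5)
Your overall strategy --- a pointwise bilinear bound on $\Psi_z(A,B)$ from its vanishing on $\{A=0\}\cup\{B=0\}$, followed by a term-by-term estimate of the three pieces of the $|\_|_{p,S}$ norm, with the neck handled by comparing $V(z)$ to $\Pi_0^z V(0)$ --- is exactly the paper's, and your first bracket, giving $|\Psi_z(X(z),Z(z))-\Psi_z(X(z),\Pi_0^z Z(0))|\le C|X|_{C^0}|Z(z)-\Pi_0^z Z(0)|$, is the right way to produce the $|Z-\Pi Z(0)|_{p,S,\delta}$ contribution. The gap is in the second bracket, and your proposed resolution does not close it. Two separate effects hide there. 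First, $\Psi_z(X(z),\Pi Z(0))-\Psi_z(\Pi X(0),\Pi Z(0))$ is controlled only by $C|X(z)-\Pi_0^z X(0)|\,|Z(0)|$; with nothing but $|X|_{C^0}$ available this is $O(|X|_{C^0}|Z(0)|)$ pointwise, and its weighted $L^p$ norm over the neck is of order $e^{2\delta S}|X|_{C^0}|Z(0)|$, which is not dominated by $|X|_{C^0}|Z|_{p,S}$, since $|Z|_{p,S}$ contains $|Z(0)|$ only with weight one. ``Vanishing to second order jointly'' does not help here: this difference is genuinely first order in the variation of $X$ along the neck. Second, the base-point comparison $\Psi_z(\Pi X(0),\Pi Z(0))-\Pi\Psi_0(X(0),Z(0))$ is bounded by $|X(0)||Z(0)|$ times the length of $w$ restricted to the horizontal segment from $(0,t)$ to $(s,t)$ (this is what \eqref{eq:parallel_transport_bound_error_family}-type estimates give); to beat the weight $e^{(2S-|s|)p\delta}$ one must use that this length is $O(e^{-2S+|s|})$ for the glued maps to which the lemma is applied (Lemma \ref{lem:exponential_decay_bounds}), not merely that it is bounded by the diameter of $w$.

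For comparison, the paper's proof is your outline compressed into two asserted pointwise inequalities: the bilinear bound, and the statement that $\Psi_r(\Pi_q^r X,\Pi_q^r Z)-\Pi_q^r\Psi_q(X,Z)$ is bounded by $C_{\ell(\gamma)}|X||Z|$ with constant depending on the length of the connecting path, after which the neck term is dispatched in one line. It is every bit as terse as your sketch at the crux, and in particular it too silently replaces $X(z)$ by $\Pi_0^z X(0)$. What actually saves the argument in the applications of Section \ref{sec:derivative_gluing} is that the fields $X$ in play are smooth preglued fields or the exponentially small $\sol(0)$, for which $|X(z)-\Pi_0^z X(0)|$ also decays exponentially along the neck. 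To make your proof complete you should either record such a hypothesis on $X$, or carry the two extra terms explicitly and invoke the exponential decay of $|\partial_s(w\circ\xi_{S,\neck})|$ to absorb the weight.
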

\begin{proof}
Given a point $q$ in $M$ and its image $q_X$ under the exponential map applied to a vector $X$,  we have a bound
\begin{equation} \left| \exp^{-1}_{q}\left( \exp_{q_{X}}  \Pi_{q}^{q_X} Z \right)  - X -Z \right| \leq C|X| |Z|,  \end{equation}
since the left handside is a smooth function that vanishes whenever $X$ or $Z$ vanish.  More generally, given a path $\gamma$ with end points $q$ and $r$, the same argument shows that there is a constant $C_{\ell(\gamma)}$ depending only on the length of $\gamma$ and the metric such that:
\begin{equation} \left|  \exp^{-1}_{r}\left( \exp_{r_{X}}  \Pi_{r}^{r_{X}} Z \right)  - X -Z  - \Pi_{q}^{r} \left(\exp^{-1}_{q}\left( \exp_{q_{X}}  \Pi_{q}^{q_{X}} Z \right)  - X -Z  \right) \right| \leq C_{\ell(\gamma)}  |X| |Z|.\end{equation}
The result follows immediately by using the first inequality to bound the $L^p$ norm of $\exp^{-1}_{w}\left( \exp_{w_{X}}  \Pi_{w}^{w_X} Z \right)  - X -Z$ in the complement of the neck, and the second inequality to bound the $L^p$ norm, on the image of the neck, of the difference between this vector field and the parallel transport of its value at $\xi_{S,\neck}(0,0)$.
\end{proof}

\subsection{Proof of results from Section \ref{sec:pre-gluing_maps}}  \label{sec:proof-results-pre-gluing}

\begin{proof}[Proof of Lemma \ref{lem:weighted_1_p_implies_c_0_exponential_decay}]
For specificity, we assume that $r$ is a boundary marked point.  A straightforward computation shows that there is a constant $c_0$ depending only on $u$ such that for any vector field $X_{Z}$ in $W^{1,p}_{\I{S}{+\infty}} \left(  (u\circ \xi_{r})^*TM, ( u\circ \xi_{r})^*TL) \right) $, 
\begin{equation} \label{eq:equivalence_two_weighted_norms} \left| e^{\delta|s|} X_Z \right|_{1,p} \leq c_0  \left| X_Z \right|_{1,p,\delta}   . \end{equation}
Note that all strips $\I{S}{+\infty}$ are isometric, and moreover, the curvature of the Levi-Civita connection $\nabla$ on $u \circ \xi_{r}(\I{S}{+\infty})$ is uniformly bounded.  In particular, the Sobolev embeddings
\begin{equation} W^{1,p}_{\I{S}{+\infty}} \left(  (u\circ \xi_{r})^*TM, ( u\circ \xi_{r})^*TL) \right) \to C^0_{\I{S}{+\infty}} \left(  (u\circ \xi_{r})^*TM, ( u\circ \xi_{r})^*TL) \right) \end{equation}
admit a uniform Sobolev constant $c_p$ that is independent of $S$.  Combining this with \eqref{eq:equivalence_two_weighted_norms}, we conclude that
\begin{equation}  \left| e^{\delta|s|} X_{Z} \right|_{\infty} \leq c c_{p} \left| X_Z \right|_{1,p,\delta} \end{equation}
which proves the Lemma upon setting
\begin{equation} X_{Z} = X \circ \xi_{r} - \Pi_{u(r)}^{u \circ \xi_{r}} X(r).  \end{equation}
\end{proof}

\begin{proof}[Sketch of the proof of Lemma \ref{lem:1_p_delta_maps_Banach_manifold}]
We omit the elementary proof that $W^{1,p,\delta}_{\left(\Sigma, \{p_i \}, \{ q_j\}\right)}(u^*TM, u^*TL)$ is a Banach space.  To prove the lemma, we embed $(M,L) \subset (\bC^N,\bR^N)$ for a sufficiently large $N$ such that $M \cap \bR^N = L$.  Since Condition \eqref{eq:W_1_p-bounded_function} is independent on the target's metric, and $W^{1,p,\delta}_{\left(\Sigma, \{p_i \}, \{ q_j\}\right)}(u^*TM, u^*TL)$ embeds into $C^0$ by the Sobolev Lemma, $\cF^{1,p,\delta}_{\left(\Sigma, \{p_i \}, \{ q_j\}\right)}(L)$ is the closed subset of the Banach space $W^{1,p,\delta}_{\left(\Sigma, \{p_i \}, \{ q_j\}\right)} (\bC^N,\bR^N)$ consisting of maps whose image lies in $M$.

To build the chart associated to a map $u \in \cF^{1,p,\delta}_{\left(\Sigma, \{p_i \}, \{ q_j\}\right)}(L)$ we pick a metric on $\bC^N$, which is totally flat near the image of all marked points,  and such that $M$, and $L$ are both totally geodesic. Consider the exponential map
\begin{align}
W^{1,p,\delta}_{\left(\Sigma, \{p_i \}, \{ q_j\}\right)}(u^*TM, u^*TL) & \to \cF^{1,p,\delta}_{\left(\Sigma, \{p_i \}, \{ q_j\}\right)}(L)\subset W^{1,p,\delta}_{\left(\Sigma, \{p_i \}, \{ q_j\}\right)} (\bC^N,\bR^N) \\
X & \mapsto u_{X} \equiv \exp_{u}(X).
\end{align}
It is easy to check using the flatness of the metric near the image of all marked points that Condition \eqref{eq:W_1_p-bounded_function} holds for $u_{X}$ if the norm of $X$ is sufficiently small; i.e. that the map is indeed well defined.  To prove that the restriction of the exponential map to vector fields of sufficiently small norm is a smooth bijection onto a neighbourhood of $u$, one can easily combine the estimates that go into proving the analogous results for compact manifolds  with elementary computations near $u(p_i)$ and $u(q_j)$ for interior and boundary marked points.
\end{proof}

\subsection{Sobolev bounds from Section \ref{sec:codim1_gluing}} \label{sec:proof-results-gluing}
We begin by proving that the spaces we work with are indeed Banach spaces:
\begin{proof}[Proof of Lemma \ref{lem:Sobolev_complete_norm}]
It suffices to compare $| \_|_{1,p,S}$ to a more familiar Sobolev norm. The most convenient such norm comes from the induced metric $g_{S}$ on $\Sigma_{S}$; we denote it $|\_|_{1,p,g_{S}}$.

One direction follows from the Sobolev embedding theorem.  Indeed, the first term of Equation \eqref{eq:weighted_norms_tangent_vectors_S} is obviously bounded by the Sobolev norm $|X|_{1,p,g_{S}}$.  For the third term, we have
\begin{align*}  \left| X \circ  \xi_{S,\neck} - \Pi(X(\xi_{S,\neck}(0,0)) \right|_{1,p,S, \delta} & \leq  \left| X \circ  \xi_{S,\neck} \right|_{1,p,S,\delta}  +  \left| \Pi(X(\xi_{S,\neck}(0,0)) \right|_{1,p,S, \delta} \\
& \leq e^{2\delta S}  \big{(} \left| X \circ  \xi_{S,\neck} \right|_{1,p} +  \left| \Pi(X(\xi_{S,\neck}(0,0)) \right|_{p} + \\
& \qquad +   \left| \nabla \Pi(X(\xi_{S,\neck}(0,0)) \right|_{p} \big{)} \\
& \leq e^{2\delta S}  \left| X \circ  \xi_{S,\neck} \right|_{1,p}  + e^{2\delta S} C_{u \#_{S} v} \left|X(\xi_{S,\neck}(0,0) \right|
\end{align*} where $C_{u \#_{S} v}$ is some constant depending on $S$ but not on $X$ (see Lemma \ref{lem:derivative_p-norm_bounded_by_1,p}).  By the Sobolev embedding theorem, $| X(\xi_{S,\neck}(0,0))|$ is itself bounded by some multiple of $|X|_{1,p,g_{S}}$, proving the existence of a constant $C$ such that
\begin{equation}| \_|_{1,p,S} \leq  C| \_|_{1,p,g_{S}} . \end{equation}

In the other direction, we compute that the usual $W^{1,p}$ norm of a vector field along the neck is bounded as follows
\begin{align*} \left| X \circ  \xi_{S,\neck}\right|_{1,p} & \leq \left| X \circ  \xi_{S,\neck} - \Pi(X(\xi_{S,\neck}(0,0)) \right|_{1,p} + \left|\Pi(X(\xi_{S,\neck}(0,0)) \right|_{1,p} \\
& \leq  \left| X \circ  \xi_{S,\neck} - \Pi(X(\xi_{S,\neck}(0,0)) \right|_{1,p, \delta} + C_{u \#_{S} v} \left|X(\xi_{S,\neck}(0,0)\right|. \end{align*}
The result follows immediately.
\end{proof}

In order to proceed with the proof of the assumptions of Floer's Picard Lemma, we must establish that our chosen Sobolev norms $| \_|_{1,p,S}$ and $| \_|_{1,p}$ satisfy reasonable properties with respect to each other, and the $C^0$-norm. 
\begin{lem} \label{lem:uniform_sobolev} If the Sobolev space $W^{1,p}$ is equipped with the norm $| \_|_{1,p,S}$, then the inclusion
\begin{equation} \label{eq:sobolev_emebdding} W^{1,p} \left( (u \#_{S} v)^{*} TM,  (u \#_{S} v)^{*} TL \right) \to C^{0} \left( (u \#_{S} v)^{*} TM,  (u \#_{S} v)^{*} TL \right) \end{equation}
admits a Sobolev constant $c_p$ that is independent of $S$  (and hence of $u$ and $v$ since $K$ is compact).   \end{lem}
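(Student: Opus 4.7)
The plan is to split $\Sigma_S$ into the neck $\Im(\xi_{S,\neck})$ and its complement, and prove a uniform Sobolev embedding on each piece separately. The three terms in \eqref{eq:weighted_norms_tangent_vectors_S} are precisely tailored to this decomposition.

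On the complement $\Sigma_S - \Im(\xi_{S,\neck})$, the underlying Riemannian surface is (isometrically) a disjoint union of two copies of $D^2$ with a half-strip removed, equipped with a metric that is independent of $S$. Since $K$ is compact, the pulled-back metrics $(u \#_S v)^* g$ form a uniformly bounded family, so the classical Sobolev embedding theorem provides a constant $c_p^{(0)}$, independent of $(u,\theta,v,S)$, such that $\sup_{\Sigma_S - \Im(\xi_{S,\neck})} |X| \leq c_p^{(0)} \bigl| X|_{\Sigma_S - \Im(\xi_{S,\neck})} \bigr|_{1,p}$. This handles the first term of \eqref{eq:weighted_norms_tangent_vectors_S}.

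On the neck, I would decompose $X \circ \xi_{S,\neck}(s,t) = Y(s,t) + \Pi\,X(\xi_{S,\neck}(0,0))$, where $Y = X \circ \xi_{S,\neck} - \Pi\,X(\xi_{S,\neck}(0,0))$. The second summand is pointwise bounded by $|X(\xi_{S,\neck}(0,0))|$ because $\Pi$ is parallel transport with respect to a metric connection and hence an isometry on fibres. For $Y$, given any $(s_0, t_0) \in \I{-2S}{2S}$, apply the standard Sobolev embedding on the unit neighbourhood $U_{s_0} = ([s_0 - 1/2, s_0 + 1/2] \cap [-2S, 2S]) \times [-1,1]$, whose geometry is fixed independently of $S$ and of $s_0$. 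This yields a universal constant $c_p^{(1)}$ with $|Y(s_0,t_0)| \leq c_p^{(1)} |Y|_{W^{1,p}(U_{s_0})}$. The crucial observation is that the weight $e^{(2S-|s|)p\delta} \geq 1$ throughout $|s| \leq 2S$, so
\[ |Y|^p_{W^{1,p}(U_{s_0})} = \int_{U_{s_0}} (|Y|^p + |\nabla Y|^p)\, ds\, dt \leq \int_{\I{-2S}{2S}} (|Y|^p + |\nabla Y|^p) e^{(2S-|s|)p\delta} ds\, dt = |Y|^p_{1,p,S,\delta}. \]
Combining, $|X(\xi_{S,\neck}(s_0,t_0))| \leq |X(\xi_{S,\neck}(0,0))| + c_p^{(1)} |Y|_{1,p,S,\delta}$, which bounds the $C^0$-norm on the neck by the last two terms of \eqref{eq:weighted_norms_tangent_vectors_S}.

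Adding the two estimates, and taking the maximum of the resulting $C^0$-bounds on the two overlapping regions (these agree automatically at the matching locus $|s| = 2S$, where the weight equals $1$), gives the desired uniform Sobolev constant $c_p = \max(c_p^{(0)}, c_p^{(1)}, 1)$. The main obstacle is conceptual rather than technical: one needs to believe that the weight function $\kappa_{\delta,S}$ was designed exactly so that the neck, whose length grows without bound, nonetheless admits the same pointwise Sobolev control as a compact unit domain. This works because the weight $\geq 1$ makes the weighted norm stronger than the unweighted one on unit subdomains, while the subtraction of $\Pi\,X(\xi_{S,\neck}(0,0))$ absorbs the non-decaying ``mean value'' of $X$ along the neck into a single scalar term.
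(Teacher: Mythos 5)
Your proof is correct and follows essentially the same route as the paper: split off the neck, peel off the parallel transport of the value at $\xi_{S,\neck}(0,0)$ (bounded since parallel transport for a metric connection is a fibrewise isometry), and control the remainder by a Sobolev constant for the strip that is uniform in $S$ together with the observation that the weight $\kappa_{\delta,S}\geq 1$ makes $|\_|_{1,p,S,\delta}$ dominate the unweighted norm. The only cosmetic difference is that you obtain the uniform strip constant by working on unit subdomains, while the paper invokes the uniform cone condition from Adams for $\I{-2S}{2S}$ directly.
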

\begin{proof}
Note that surfaces obtained by removing the neck from $\Sigma_{S}$ are independent of $S$.   Moreover, the standard Sobolev norm on functions $\I{-2S}{2S}$ has a Sobolev constant that is independent of $S$ (the strips satisfy a uniform cone condition, see the discussion preceding Theorem 5.4 in \cite{adams}).  Since the curvature of the metric is uniformly bounded on the image of $u \#_{S} v \circ \xi_{S,\neck}$, the same property holds for $TM$-valued vector fields along $u \#_{S} v \circ \xi_{S,\neck}$.  Let $C$ denote a constant, larger than $1$, and larger than the Sobolev constants for the strip $\I{-2S}{2S}$ and for the complement of the neck in $\Sigma_{S}$.  We compute
\begin{align*}|X|_{\infty} & \leq\left| X|_{\Sigma_{S} - \Im(\xi_{S,\neck})} \right|_{\infty}  +|X \circ \xi_{S,\neck}|_{\infty}   \\
& \leq C  \left|X|_{\Sigma_{S} - \Im(\xi_{S,\neck})} \right|_{1,p} +|X \circ \xi_{S,\neck} - \Pi(X(\xi_{S,\neck}(0,0))|_{\infty}   +| \Pi(X(\xi_{S,\neck}(0,0))|_{\infty}  \\
& \leq  C \left| X|_{\Sigma_{S} - \Im(\xi_{S,\neck})} \right|_{1,p} + C |X \circ \xi_{S,\neck} - \Pi(X(\xi_{S,\neck}(0,0))|_{1,p,g_{S}} + | X(\xi_{S,\neck}(0,0)| \\
& \leq 3C|X|_{1,p,S}  \end{align*}
\end{proof}

We will use this estimate for $u \#_{S} v$, and exploit the fact that $\partial_{s} u$ and $\partial_{s} v$ decay exponentially along the end. 
\begin{lem} \label{lem:derivative_p-norm_bounded_by_1,p}
There is a universal constant $C$, independent of $S$ such that whenever $X$ is a vector field along $u \#_{S} v$, 
\begin{equation}| \nabla X|_{p,S} \leq C \cdot|X|_{1,p,S}. \end{equation}
\end{lem}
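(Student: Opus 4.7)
The plan is to split $\Sigma_S$ into its neck region $\Im(\xi_{S,\neck})$ and its complement, and to bound the contribution of each region by a distinct summand in the definition \eqref{eq:weighted_norms_tangent_vectors_S} of $|X|_{1,p,S}^p$. Off the neck, the weight $\kappa_{\delta,S}$ equals $1$, so $\int_{\Sigma_S - \Im(\xi_{S,\neck})}|\nabla X|^p$ is controlled directly by the first summand $|X|_{\Sigma_S - \Im(\xi_{S,\neck})}|_{1,p}^p$, with no dependence on $S$.

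On the neck, I would write $X \circ \xi_{S,\neck} = (X \circ \xi_{S,\neck} - \tilde X) + \tilde X$ with $\tilde X \equiv \Pi X(\xi_{S,\neck}(0,0))$, so that $\nabla X = \nabla(X \circ \xi_{S,\neck} - \tilde X) + \nabla \tilde X$. The first contribution is by construction the quantity appearing inside the third summand of \eqref{eq:weighted_norms_tangent_vectors_S}, so its weighted $L^p$ norm over the neck is automatically at most $|X|_{1,p,S}^p$. Hence everything reduces to bounding $\int_{\I{-2S}{2S}}|\nabla \tilde X|^p \kappa_{\delta,S}\,ds\,dt$ by a universal constant times $|X(\xi_{S,\neck}(0,0))|^p$, which is the second summand of \eqref{eq:weighted_norms_tangent_vectors_S}.

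For this, I would exploit the fact that $\tilde X$ is defined by parallel transport along horizontal segments, so $\nabla_s \tilde X = 0$ identically and $\nabla_t \tilde X(0,t) = 0$. Commuting covariant derivatives via curvature gives the ODE $\nabla_s \nabla_t \tilde X = R\bigl((u \#_S v)_*\partial_s, (u \#_S v)_*\partial_t\bigr)\tilde X$, which on integration from $s=0$ yields the pointwise bound
\[
|\nabla_t \tilde X(s,t)| \;\leq\; C\,|X(\xi_{S,\neck}(0,0))|\int_0^{|s|} \bigl|d(u \#_S v)\bigr|^2(\sigma,t)\,d\sigma,
\]
using that parallel transport is isometric and $|R|$ is bounded on the compact $M$. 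I would then establish the pointwise exponential estimate
\[
\bigl|d(u \#_S v)\circ \xi_{S,\neck}\bigr|(s,t) \;\leq\; C\, e^{-(2S - |s|)}
\]
throughout $\I{-2S}{2S}$: on the inner neck $\I{-S-1}{S+1}$ this is Lemma \ref{lem:exponential_decay_bounds}, while on the outer portions $u \#_S v$ coincides with $u\circ\xi_1$ or $v\circ\xi_{-1}$ (up to the cutoff, which is trivial there) and the bound follows from exponential decay of $du$, $dv$ at the strip-like ends of the smooth discs $u$ and $v$. Inserting this gives $|\nabla_t \tilde X(s,t)| \leq C'|X(\xi_{S,\neck}(0,0))|\, e^{-2(2S-|s|)}$.

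Raising to the $p$-th power and integrating against $\kappa_{\delta,S} = e^{p\delta(2S-|s|)}$ produces the integral $\int_{-2S}^{2S} e^{-p(2-\delta)(2S-|s|)}\,ds$, which, since $\delta<1/4<2$, is uniformly bounded in $S$ by $\tfrac{2}{p(2-\delta)}$. Summing this uniform bound with the contributions off the neck and from $X \circ \xi_{S,\neck} - \tilde X$ yields the desired $|\nabla X|_{p,S} \leq C|X|_{1,p,S}$. The main obstacle I foresee is justifying the uniform pointwise bound on $|d(u\#_S v)|$ on the full neck, including the outer regions not covered by Lemma \ref{lem:exponential_decay_bounds}, where one must match the inner exponential decay with the asymptotic decay of $du$ and $dv$ at the boundary punctures.
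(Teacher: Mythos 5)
Your proof is correct and follows essentially the same route as the paper's: both reduce the estimate to the weighted $L^p$ norm of $\nabla \tilde X$ for $\tilde X = \Pi_{u(1)}^{u\#_S v} X(\xi_{S,\neck}(0,0))$ (the off-neck and $X - \tilde X$ contributions being absorbed directly into the first and third summands of $|X|_{1,p,S}$), bound $\nabla\tilde X$ pointwise by curvature times the exponentially decaying differential of $u\#_S v$ along the neck, and then integrate against the weight $e^{\delta p(2S-|s|)}$ using $\delta<1$. The only differences are cosmetic: you rederive the curvature commutation estimate inline (obtaining the harmless extra power $|d(u\#_S v)|^2$ in place of the first power), whereas the paper invokes Equation \eqref{eq:parallel_transport_bound_error} for the interpolating family $w_y(s,t)=\exp_{u(1)}(Z(ys,t))$; and you make explicit the pointwise bound $|d(u\#_S v)\circ\xi_{S,\neck}(s,t)| = \rO(e^{-(2S-|s|)})$ on the full neck — which does follow from the stated exponential decay of $du$, $dv$ at the strip-like ends together with the cutoff construction — where the paper only records its integrated form $\int_0^{s}|\partial_s(u\#_S v\circ\xi_{S,\neck})|\leq c\,e^{-2S+|s|}$.
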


\begin{proof}
This would be an obvious result if we were using the usual Sobolev norm.  Comparing the definitions of the norms $| \_|_{p,S}$ and $|\_|_{1,p,S}$, we see that the only interesting term to bound is
\begin{equation} 
 \left| \nabla \Pi_{u(1)}^{u \#_{S} v} (X(\xi_{S,\neck}(0,0)) \right|_{p,S,\delta}.
\end{equation}  
Writing 
\begin{equation} u \#_{S} v \circ \xi_{S,\neck} = \exp_{u(1)}(Z(s,t)), \end{equation} 
we consider 
\begin{equation} w_{y}(s,t) = \exp_{u(1)} \left( Z(ys, t) \right) \end{equation}
and note that the parallel transport along the images of horizontal lines we used in equation \eqref{eq:weighted_norms_tangent_vectors_S} agrees with parallel transport along $w_{y}$.  The exponential decay estimates of Lemma \ref{lem:exponential_decay_bounds}  imply that $|\nabla  \partial_{y} w_{y}|$ is uniformly bounded, and that
\begin{equation} \int_{0}^{1}| \partial_{y} w_{y} (s,t)| = \int_{0}^{s}|\partial_{s} \left( u \#_{S} v \circ \xi_{S,\neck} \right)| \leq c_{u,\theta, v} e^{-2S+|s|} .\end{equation}
for a constant $ c_{u,\theta, v}$ that is independent of $S$.  Plugging these estimates into Equation \eqref{eq:parallel_transport_bound_error}, and using our assumption that $\delta < 1$, we obtain the desired result.
\end{proof}
\begin{cor} \label{cor:linearisation_dbar_bounded}
The operator 
\begin{equation} D_{\cP_{0,1}}  \co \preGext^{*}  T\cF^{1,p}_{\cP_{0,1}} (L) \to \preGext^{*}   \cE^{p}_{\cP_{0,1}}(M) \end{equation} is uniformly bounded.
\end{cor}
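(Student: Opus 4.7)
The plan is to read off the three rows of the explicit formula \eqref{eq:expression_linearisation_circle_varying_parametrized} for $D_{\cP_{0,1}}$ and bound each summand separately, using the key observation that the zero-order terms (in $r$, $\lambda$, and $X$) are all supported away from the neck, so the exponential weight $\kappa_{\delta,S}$ never inflates them.

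First I would split $\Sigma_{S}$ into the neck $\Im(\xi_{S,\neck})$ and its complement. On the complement the weight $\kappa_{\delta,S}$ equals $1$ and the surfaces $\Sigma_{S} - \Im(\xi_{S,\neck})$ are isometric, as $S$ varies, to two fixed pieces of $D^{2}$; the norms $|\_|_{1,p,S}$ and $|\_|_{p,S}$ restrict to honest $W^{1,p}$ and $L^p$ norms there. Since $K$ is compact, the pointwise norms of $J_{\theta,R}$, $\nabla J_{\theta,R}$, $\gamma_{\theta,R}$, $X_{H}$, and their relevant derivatives, as well as the pointwise norm of $du$ for $u = u \#_{S} v$, are uniformly bounded independently of $(u,\theta,v,S) \in K \times [S_K,+\infty)$. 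A termwise inspection of \eqref{eq:expression_linearisation_circle_varying_parametrized} then gives a uniform bound for $|D_{\cP_{0,1}}(r,\lambda,X)|_{p}$ in terms of $|r| + |\lambda| + |X|_{1,p}$ on this region.

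On the neck, Lemma \ref{lem:large_S_K} guarantees that $\gamma_{\theta,R}$ vanishes identically and that $J_{\theta,R}$ is the constant $J_{\alg}$, so $\nabla J_{\theta,R} = 0$, $d\gamma_{\theta,R}/dR = d\gamma_{\theta,R}/d\theta = 0$, and $\nabla_{X} X_{H} = \nabla_{\partial_{R}} X_{H} = \nabla_{\partial_{\theta}} X_{H} = 0$ there. Consequently, the contributions of the $\partial_{R}$ and $\partial_{\theta}$ directions to $D_{\cP_{0,1}}$ vanish on the neck, as do all the zero-order terms in $X$; the only surviving contribution is the principal part $(\nabla X)^{0,1}$. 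Its $|\_|_{p,S}$ norm is controlled by $|\nabla X|_{p,S}$, which by Lemma \ref{lem:derivative_p-norm_bounded_by_1,p} is bounded by a universal constant times $|X|_{1,p,S}$.

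Summing the bounds from the two regions yields the desired uniform estimate
\[ |D_{\cP_{0,1}}(r,\lambda,X)|_{p,S} \leq C \bigl(|r| + |\lambda| + |X|_{1,p,S}\bigr) \]
with $C$ independent of $S$ and of $(u,\theta,v) \in K$. The only step that requires genuine care is the neck estimate, and it hinges entirely on the vanishing of the zero-order terms guaranteed by Lemma \ref{lem:large_S_K}, together with the weighted derivative bound of Lemma \ref{lem:derivative_p-norm_bounded_by_1,p}; without that vanishing, the factor $\int_{\I{-2S}{2S}} e^{(2S-|s|)p\delta}\,ds\,dt$ would grow like $e^{2Sp\delta}$ and no uniform bound would be possible.
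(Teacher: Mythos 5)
Your decomposition (parameter directions supported off the neck; only the $X$-row survives on the neck) is exactly the paper's, and most of the estimate is fine. But there is one genuine gap: on the neck the zero-order term in $X$ does \emph{not} reduce to nothing. With $\gamma_{\theta,R}\equiv 0$ and $J_{z,R}\equiv J_{\alg}$ there, the $X$-row of \eqref{eq:expression_linearisation_circle_varying_parametrized} still contains
\[ -\tfrac{1}{2}\, J_{\alg}\,\bigl(\nabla_{X} J_{\alg}\bigr)\, \partial_{\cP_{0,1}}(R,\theta, u\#_{S}v), \]
and $\nabla_{X}J_{\alg}$ has no reason to vanish: the metric $g$ of Corollary \ref{cor:totally_geodesic_hermitian_metric} is only almost hermitian for $J_{\alg}$, not parallel for it (the paper carries $\nabla J$ terms throughout, e.g.\ in \eqref{eq:formula_complex_linear_connection} and in Appendix \ref{ap:pointwise_estimates}). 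So your claim that ``the only surviving contribution is the principal part $(\nabla X)^{0,1}$'' would fail as stated, and this is precisely the term on which the weight $e^{(2S-|s|)p\delta}$ threatens to blow up.

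The correct way to absorb it — and the ingredient the paper's proof explicitly cites — is the exponential decay estimate of Lemma \ref{lem:exponential_decay_bounds}: on the neck $|d(u\#_{S}v)\circ\xi_{S,\neck}| = \rO(e^{-(2S-|s|)})$, which beats the weight $e^{(2S-|s|)\delta}$ since $\delta<1$, so $\partial_{\cP_{0,1}}\bigl(\preGext_{S}(u,\theta,v)\bigr)$ is uniformly bounded in the $|\_|_{p,S}$-norm (this is the content of \eqref{eq:uniform_L_p}). Combined with the $S$-independent Sobolev constant of Lemma \ref{lem:uniform_sobolev}, which gives $|X|_{C^0}\leq c_p|X|_{1,p,S}$, the offending term is bounded by $C|X|_{1,p,S}$. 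With that replacement your argument closes; everything else (the off-neck bound by compactness of $K$, and the use of Lemma \ref{lem:derivative_p-norm_bounded_by_1,p} for $(\nabla X)^{0,1}$) matches the paper.
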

\begin{proof}
Note that applying $D_{\cP_{0,1}}$ to a deformation of the parameters $\theta$ and $R$ gives an element of $ \cE^{p}_{\cP_{0,1}}(M) $ which is supported away from the neck, and it is elementary in this case to show that the bound is uniform in $S$.  We may therefore restrict to vectors $X$ coming from $T\cF^{1,p} (L)$ for which the result follows from the previous Lemma, the expression for $D_{\cP_{0,1}}$ given in Equation \eqref{eq:expression_linearisation_circle_varying_parametrized}, and the fact that $\partial_{\cP_{0,1}} \left( \preGext_{S}(u,\theta,v)\right)$ is uniformly bounded in the $| \_|_{p,S}$ norm by the exponential decay estimate of Lemma \ref{lem:exponential_decay_bounds}.
\end{proof}

\subsection{Approximate and  right inverses}
We begin by proving a result from Section \ref{sec:right_inverse} about the approximate inverse:
\begin{proof}[Proof of Lemma \ref{lem:pre-gluing_tangent_vectors_bounded}]
The continuity and smoothness properties of $\predGext$ may be easily checked by the reader; we shall focus on fiberwise boundedness. By comparing the expressions for the two Sobolev norms \eqref{eq:definition_weighted_sobolev_ends} and \eqref{eq:weighted_norms_tangent_vectors_S}, we see that we need to show that the sum
\begin{equation} \label{eq:bound_terms_pregluing}  \left|X_{u} \#_{S} X_{v}  ( \xi_{S,\neck}(0,0))\right|  + \left| X_{u} \#_{S} X_{v}  \circ  \xi_{S,\neck} - \Pi^{u  \#_{S} v \circ \xi_{S,\neck} }_{u(1)} (X_{u} \#_{S} X_{v}(\xi_{S,\neck}(0,0))\right|_{1,p,S}  \end{equation}
is bounded by
\begin{equation*}  |X_{u}(1)|  + \left| X_{u}  \circ  \xi_{1} - \Pi^{u\circ  \xi_{1} }_{u(1)}  X_{u} (1)\right|_{1,p,\delta} +\left|X_{v}  \circ  \xi_{-1} - \Pi^{v \circ  \xi_{-1}}_{u(1)} X_{u} (1)\right|_{1,p,\delta} . \end{equation*}
For the purpose of proving Equation \eqref{eq:pre-gluing_converges_toisometry}, we shall in fact show that the difference between these two expressions decays exponentially, except for an error term which is bounded by a constant in the general case, but decays whenever $X_{u}$ and $X_{v}$ are smooth.

We shall repeatedly use the fact that $u(1) = v(-1) =( u  \#_{S} v ) \xi_{S,\neck}(0,t)$.   To estimate the first term, we use Lemma \ref{lem:weighted_1_p_implies_c_0_exponential_decay} to obtain a bound
\begin{align}\left|X_{u} \#_{S} X_{v}  ( \xi_{S,\neck}(0,0)) - X_{u}(1)\right| & \leq \left|X_{u} ( \xi_{1} (2S,t))  - \Pi^{u( \xi_{1} (2S,t)) }_{u(1)}  X_{u} (1) \right| +  \notag \\
& \qquad \left|X_{v} ( \xi_{-1} (-2S,t))  - \Pi^{v( \xi_{-1} (-2S,t)) }_{u(1)}  X_{u} (1) \right|  \notag \\
& = \rO\left(e^{-2 \delta S}\right) \left|X_{u}(1) \right|  \label{eq:value_glued_vector_field_middle_converges}  \end{align} 
 It remains therefore to bound the second term of equation  \eqref{eq:bound_terms_pregluing} independently of $S$.  Note that this term is an integral performed over $\I{-2S}{2S}$; by symmetry, we shall restrict our attention to the contribution of $\I{0}{2S}$. Using the fact that \eqref{eq:formula_gluing_vector_fields} simplifies in this region, we find that it would be sufficient to bound the following four terms
\begin{align} 
& \left| \Pi_{v \circ \xi_{-1} \circ \tau_{-2S} }^{u \#_{S} v \circ \xi_{S,\neck} }  X_{v} \circ  \xi_{-1} \circ \tau_{-2S}  - \Pi_{u(1)}^{u \#_{S} v \circ \xi_{S,\neck} } X_{u}(1) |_{\I{0}{2S}}   \right|_{1,p,\delta} \label{eq:first_term_bounded_pre-gluing_vector_fields}   \\
&\left|\Pi_{u(1)}^{u \#_{S} v \circ \xi_{S,\neck} } X_{u}(1)- \Pi_{u(1)}^{u \#_{S} v \circ \xi_{S,\neck} } X_{u} \#_{S} X_{v}  ( \xi_{S,\neck}(0,0))  |_{\I{0}{2S}} \right|_{1,p,\delta}
\label{eq:second_term_bounded_pre-gluing_vector_fields} 
\\ &\left|\Pi_{u \circ \xi_{1} \circ \tau_{2S} }^{u \#_{S} v \circ \xi_{S,\neck} } X_{u} \circ  \xi_{1}\circ \tau_{2S} - \Pi_{u(1)}^{u \#_{S} v \circ \xi_{S,\neck} } X_{u}(1)  |_{\I{0}{S+1}} \right|_{1,p,\delta} \label{eq:third_term_bounded_pre-gluing_vector_fields}   \\ \label{eq:fourth_term_bounded_pre-gluing_vector_fields} 
&|\nabla \chi| \cdot\left|\Pi_{u \circ \xi_{1} \circ \tau_{2S} }^{u \#_{S} v \circ \xi_{S,\neck} } X_{u} \circ  \xi_{1}\circ \tau_{2S} - \Pi_{u(1)}^{u \#_{S} v \circ \xi_{S,\neck} } X_{u}(1)  |_{\I{S-1}{S+1}}  \right|_{p,\delta} \end{align}
Note that the last two terms are integrals performed on the proper subsets of $\I{0}{2S}$  where $\chi$ and $\nabla \chi$ respectively do not vanish.  

To bound the expression \eqref{eq:first_term_bounded_pre-gluing_vector_fields}, we use Lemma \ref{lem:associative_up_to_exponential_error} to replace $\Pi_{u(1)}^{u \#_{S} v \circ \xi_{S,\neck} } X_{u}(1) $ by the composition of two parallel transport maps $\Pi_{v \circ \xi_{-1} \circ \tau_{-2S}}^{u \#_{S} v \circ \xi_{S,\neck} } \Pi_{u(1)}^{v \circ \xi_{-1} \circ \tau_{-2S} } X_{u}(1)$, introducing an error term which is bounded by a constant multiple of $ e^{-2(2-\delta)S} |X_{u}(1)|$, so that it suffices to bound
\begin{equation*} \left| \Pi_{v \circ \xi_{-1} \circ \tau_{-2S} }^{u \#_{S} v \circ \xi_{S,\neck} }  \left( X_{v} \circ  \xi_{-1} \circ \tau_{-2S}  -  \Pi_{u(1)}^{v \circ \xi_{-1} \circ \tau_{-2S} } X_{u}(1) X_{u}(1) \right) |_{\I{0}{2S}}   \right|_{1,p,\delta} . \end{equation*}
We apply the estimate \eqref{eq:parallel_transport_bound_error} to $\Sigma = \I{0}{2S}$,  $w = v \circ  \xi_{-1}\circ \tau_{2S}$ and $w_1 = u \#_{S} v \circ \xi_{S,\neck}$, with the family $w_y$ defined by interpolating between these two maps using geodesics of minimal length.  Using the exponential decay results of Lemma \ref{lem:exponential_decay_bounds}, we see that this term is bounded by  
\begin{equation}  \left(1+ \rO\left(e^{-S}\right)\right) \left|X_{v} \circ  \xi_{-1} - \Pi_{v(-1)}^{v \circ  \xi_{-1} }  X_{v}(-1) |_{\I{-2S}{0}} \right|_{1,p,\delta}   .\end{equation}

Concerning the second term (Equation \eqref{eq:second_term_bounded_pre-gluing_vector_fields}), we use Equation \eqref{eq:value_glued_vector_field_middle_converges}, together with the decay estimates of Lemma \ref{lem:exponential_decay_bounds}, and apply \eqref{eq:parallel_transport_bound_error} setting $w$ to be the constant map at $v(-1)$ and $w_{1}$ to be $u \#_{S} v \circ \xi_{S,\neck}$, with $w_y$ again given by minimal geodesics, and conclude that the $p$th power of \eqref{eq:second_term_bounded_pre-gluing_vector_fields} is bounded by
 \begin{equation}  \label{eq:parallel_transport_which_should_decay} \rO(e^{-2 p \delta S})  |X_{v}(-1)|^p \int_{0}^{2S} 2 e^{\delta p (2S-s)} ds  =  \rO(1)|X_{v}(-1)|^p .\end{equation}

The third term is bounded in the same way as the first, applying \eqref{eq:parallel_transport_bound_error}  to $\Sigma = \I{0}{S+1}$,  $w = u \circ  \xi_{1}\circ \tau_{-2S}$ and $w_1 = u \#_{S} v \circ \xi_{S,\neck}$, so that \eqref{eq:third_term_bounded_pre-gluing_vector_fields}  is smaller than
\begin{equation}  \left(1+ \rO\left(e^{-S} \right) \right) \left|X_{u} \circ  \xi_{1} - \Pi_{u(1)}^{u \circ  \xi_{1} }  X_{u}(1) |_{\I{2S}{3S}} \right|_{1,p,\delta} .\end{equation}

Using the estimate \eqref{eq:parallel_transport_bound_error} and exponential decay yet again, we see that \eqref{eq:fourth_term_bounded_pre-gluing_vector_fields}  is bounded by
\begin{equation} \rO \left(e^{-2 \delta S}\right)   \left(1 +\rO \left(e^{-S} \right) \right)   \left| X_{u}  \circ  \xi_{1} -  \Pi_{v(-1)}^{u  \circ  \xi_{1} } (X_{v}(-1))  |_{\I{3S-1}{3S+1}}  \right|_{p,\delta} .\end{equation}
Note the appearance of the factor $e^{- 2\delta S}$ in the above expression.  This is due to the fact that the weight in $\I{S-1}{S+1}$ is approximately $e^{\delta S}$; this is the weight used to compute the norm in Equation \eqref{eq:weight_S_varying_sobolev_strip}.  We are trying to bound this with respect to a norm computed on $\xi_{1}$, and the corresponding domain is shifted by $2S$, and becomes $\I{3S-1}{3S+1}$ as indicated in the notation.  In particular, the weight is approximately given by $e^{3\delta S}$, and the ratio between the two weight is, up a multiplicative constant, equal to $e^{-2\delta S}$

Finally, we briefly sketch of the proof of \eqref{eq:pre-gluing_converges_toisometry} :  First we note that the right hand side of the bound \eqref{eq:parallel_transport_which_should_decay} can be replaced by $\rO(e^{-2p(1-\delta)S})|X_{v}(-1)|^p$ whenever the vector field $X_u$ and $X_v$ are smooth. With this in mind, we observe that for a fixed $S$, the estimates proved above show that the difference between the norms before and after gluing are either given by an exponentially decaying term, or by a constant multiple of the contribution to $|X_u|_{1,p,\delta}$ and  $|X_v|_{1,p,\delta}$ coming from $\xi_{1} \I{S}{+\infty}$ and $\xi_{-1} \I{-\infty}{-S}$; these contribution decays exponentially with $S$.
\end{proof}

Next, we turn to the right inverse. We must first be able to bound $| \_ |_{1,p,S}$-norm of $X_{u} \#_{S} X_{v}$.  To do this, consider the two vector fields along the image of $ u \#_{S} v$ obtained by taking the parallel transport of $ X_{u}(1) = X_{v}(-1)$ along 
\begin{equation} \label{eq:parallel_transport_two_paths} \parbox{35em}{(i) the image of horizontal paths under $ u \#_{S} v$ or (ii) the concatenation of the image of  horizontal paths under $v$ (or $u$) with the minimal geodesic from $v$ (or $u$) to $ u \#_{S} v$.} \end{equation}
 If we restrict attention to $ \xi_{S,\neck} \I{0}{1}$, Lemma \ref{lem:bound_derivative_parallel_transport} implies that the difference between these two vector fields is bounded in $C^1$-norm by 
\begin{equation} \label{eq:bound_parallel_transport_size_1_strip} \rO\left(e^{-4S}\right) |X|. \end{equation}
Since $ u \#_{S} v  \circ \xi_{S,\neck} $ and $v \circ \xi_{-1} \circ \tau_{-2S}$ agree on $\I{1}{2S}$, we use the symmetric nature of our construction to conclude:
\begin{lem} 
The $| \_|_{1,p,S}$-norm of the difference between the vector fields obtained by parallel transport of $X_{u}(1)$ along the two paths described in  Equation \eqref{eq:parallel_transport_two_paths} decays exponentially in $S$:
\begin{equation} \label{lem:associative_up_to_exponential_error} \left| \Pi_{u(1)}^{u \#_{S} v \circ \xi_{S,\neck} } X_{u}(1) - \Pi_{v\circ \xi_{-1} \circ \tau_{-2S}}^{u \#_{S} v \circ \xi_{S,\neck} } \Pi^{v\circ \xi_{-1} \circ \tau_{-2S}}_{u(1) } X_{u}(1)   \right|_{1,p,S} = \rO(e^{-2(2-\delta)S})|X_{u}(1)| \end{equation} \noproofe
\end{lem}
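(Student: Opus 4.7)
The plan is to establish a pointwise $C^1$-bound on the difference vector field
\begin{equation*}
Y = \Pi_{u(1)}^{u \#_{S} v \circ \xi_{S,\neck} } X_u(1) - \Pi_{v\circ \xi_{-1} \circ \tau_{-2S}}^{u \#_{S} v \circ \xi_{S,\neck} } \Pi^{v\circ \xi_{-1} \circ \tau_{-2S}}_{u(1) } X_u(1),
\end{equation*}
and then integrate against the exponential weight defining $|\cdot|_{1,p,S,\delta}$. At $\xi_{S,\neck}(0,t)$ the map $u \#_S v$ takes the constant value $u(1) = v(-1)$ and both parallel transport paths are trivial, so $Y(\xi_{S,\neck}(0,0)) = 0$. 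The constant-subtraction term in Equation \eqref{eq:weighted_norms_tangent_vectors_S} therefore vanishes, and it suffices to bound $|Y \circ \xi_{S,\neck}|_{1,p,S,\delta}$.

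Write $P_i(s)$ and $P_{ii}(s)$ for the two parallel transport operators from $T_{u(1)}M$ to the fibre over $\xi_{S,\neck}(s,t)$, so that $Y(s,t) = (P_i(s) - P_{ii}(s)) X_u(1)$. Split the neck into (a) $|s| \leq 1$, (b) $s \in [1, 2S]$, and (c) $s \in [-2S, -1]$. On region (b), since $u \#_S v \circ \xi_{S,\neck}$ coincides with $v \circ \xi_{-1} \circ \tau_{-2S}$ there (so the minimal geodesic closing off $P_{ii}$ is trivial), the two paths traverse the same curve in $M$ on $s' \in [2,s]$ after reparametrization. Consequently $P_i(s)^{-1} P_{ii}(s)$ is independent of $s$ on region (b), equal to its value at $s = 2$; since $P_i$ is an isometry this makes both $|Y|$ and $|\nabla_{\partial_s} Y|$ constant along horizontal lines there. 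A symmetric statement holds on region (c). Hence it suffices to prove the pointwise $C^1$-bound $|Y| + |\nabla Y| = \rO(e^{-4S})|X_u(1)|$ on region (a), which then propagates to regions (b) and (c).

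On region (a), both paths from $u(1)$ to $u\#_Sv \circ \xi_{S,\neck}(s,t)$ lie in a ball of radius $\rO(e^{-2S})$ about $u(1)$: Lemma \ref{lem:exponential_decay_bounds} bounds the image of $u \#_S v$ on the middle portion of the neck by $\rO(e^{-2S})$, while the horizontal path in $v$ from $v(-1)$ to $v\circ\xi_{-1}(s - 2S, t)$ has length $\rO(e^{s - 2S}) = \rO(e^{-2S})$ and the minimal geodesic closing it off is of comparable length. Together the two paths form a piecewise smooth closed loop based at $u(1)$ of length $\rO(e^{-2S})$, whose holonomy is bounded by the area it encloses times the curvature of $M$, giving $|Y(s,t)| = \rO(e^{-4S})|X_u(1)|$. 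The matching bound on $|\nabla Y|$ is a consequence of Lemma \ref{lem:bound_derivative_parallel_transport}, which is precisely the statement controlling first-order variations of parallel transport under perturbations of the underlying path.

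Finally, integrating the pointwise estimate against the exponential weight gives
\begin{equation*}
|Y|^p_{1,p,S} \leq C e^{-4pS}|X_u(1)|^p \int_{-2S}^{2S} e^{(2S-|s|)p\delta}\, ds \leq C' e^{-p(4 - 2\delta)S}|X_u(1)|^p,
\end{equation*}
and taking $p$-th roots yields the claim. The main obstacle is the derivative part of the pointwise estimate on region (a): controlling $\nabla Y$ requires the first-order form of Lemma \ref{lem:bound_derivative_parallel_transport}, comparing not merely the parallel transport operators but also their derivatives in the loop parameter.
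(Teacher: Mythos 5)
Your argument is correct and follows essentially the same route as the paper: a pointwise $C^1$ bound of $\rO(e^{-4S})|X_u(1)|$ on the middle of the neck via the curvature-times-enclosed-area estimate of Lemma \ref{lem:bound_derivative_parallel_transport}, propagation to the outer parts of the neck where $u\#_S v\circ\xi_{S,\neck}$ coincides with $v\circ\xi_{-1}\circ\tau_{-2S}$ (resp.\ $u\circ\xi_1\circ\tau_{2S}$), and integration against the weight $e^{(2S-|s|)p\delta}$, which contributes the factor $e^{2\delta S}$. One small inaccuracy: at $s=0$ the second path is not trivial but a genuine loop (out along the horizontal path of $v\circ\xi_{-1}$ and back along the minimal geodesic to $u(1)$), so $Y(\xi_{S,\neck}(0,0))$ is only $\rO(e^{-4S})|X_u(1)|$ rather than exactly zero; this does not affect the conclusion, since that term of the norm is then absorbed into the same bound.
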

\begin{proof}
Fix $(u,\theta,v) \in K$ and $S \in [0,+\infty)$.  Given $Y \in L^{p}(u \#_{S} v^{*} (TM) \otimes \Omega_{0,1})$, note that $D_{\cP_{0,1}} \circ \tilde{Q} (Y) - Y$  is supported on  $\xi_{S,\neck}(\I{-S-1}{S+1})$, and, by the symmetry of our construction, it suffices to estimate
\begin{equation} \int_{\I{-S-1}{0}} \left| D_{\cP_{0,1}} \circ \tilde{Q}  (Y) (\xi_{S,\neck}(s,t)) - Y(\xi_{S,\neck}(s,t))\right|^p e^{\delta (2S+s)} ds dt .\end{equation} 
Let us write
\begin{equation} Q_{\cod{1}{L}} \circ B(Y)  =  (\lambda_{\theta} \partial_{\theta}, \lambda_{R} \partial_R, X_u, X_v) \end{equation}
Since the cut off function preceding the third term of \eqref{eq:formula_gluing_vector_fields} is identically equal to $1$ on $\I{-S-1}{0}$,  the formula for $\tilde{Q} $ simplifies to
\begin{align} \notag \tilde{Q} (Y)(\xi_{S,\neck}(s,t)) &   =  \Pi_{u \circ \xi_{1} \circ \tau_{2S} }^{u \#_{S} v \circ \xi_{S,\neck} } X_{u} \circ  \xi_{1}( s+2S , t)    \\    \label{eq:expression_right_inverse} 
 &  + (1-\chi_{-S}) \left(  \Pi_{v \circ \xi_{-1} \circ \tau_{-2S} }^{u \#_{S} v \circ \xi_{S,\neck} }  X_{v} \circ  \xi_{-1} (s-2S,t)  - \Pi_{u(1)}^{u \#_{S} v \circ \xi_{S,\neck} } X_{u}(1) \right) \end{align}
Turning our attention to the first term, the fact that $Q_{\cod{1}{L}}$ is a right inverse to $D_{\cod{1}{L}}$ implies the vanishing of
\begin{equation}  D_{\cP_{0,1}}  X_{u} - Y_{u} .\end{equation}
Since $u \circ \xi_{1} \circ \tau_{2S}$ and $u \#_{S} v \circ \xi_{S,\neck}$  agree on $\I{-S-1}{-2}$, the restriction to this strip vanishes, so it suffices to bound the restriction of the first term in \eqref{eq:expression_right_inverse}  to $
\I{-2}{0}$.  Lemma \ref{lem:exponential_decay_bounds}, implies that  $u \circ \xi_{1} \circ \tau_{2S}$ can be obtained by exponentiating a vector field $Z$ along  $u \#_{S} v \circ \xi_{S,\neck}$ whose $C^1$-norm on $\I{-2}{0}$ is bounded by a constant multiple of $e^{-2S}$.  In particular, the $| \_|_{1,p,S}$-norm of this vector field is bounded by  $\rO \left( e^{-2(1-\delta)S} \right)$.  Applying the basic form of the quadratic inequality stated in Corollary \ref{cor:quadratic_inequality_basic_case}, we conclude that
\begin{equation} \left|    D_{\cP_{0,1}}  \Pi_{v \circ \xi_{-1} \circ \tau_{-2S} }^{u \#_{S} v \circ \xi_{S,\neck} }  X_{v} \circ  \xi_{-1} \circ \tau_{-2S} - Y \circ \xi_{S,\neck}|_{\I{-S-1}{0}} \right|_{p,S} = \rO\left(e^{-2(1-\delta)S}\right)|Y|_{p,S} . \end{equation}

It remains to show that the result of applying $D_{\cP_{0,1}}$ to the last term of \eqref{eq:expression_right_inverse} has norm that decays exponentially with $S$.  Inspecting the definition of $D_{\cP_{0,1}}$, we see that we can bound the contribution of this term by separately bounding
\begin{equation} D_{\cP_{0,1}} \left(  \Pi_{v \circ \xi_{-1} \circ \tau_{-2S} }^{u \#_{S} v \circ \xi_{S,\neck} }  X_{v} \circ  \xi_{-1} \circ \tau_{-2S}  - \Pi_{u(1)}^{u \#_{S} v \circ \xi_{S,\neck} } X_{u}(1) \right) \end{equation}
and the result of differentiating $\chi_{-S}$ 
\begin{equation}  \nabla \chi_{-S} \cdot \left(  \Pi_{v \circ \xi_{-1} \circ \tau_{-2S} }^{u \#_{S} v \circ \xi_{S,\neck} }  X_{v} \circ  \xi_{-1} \circ \tau_{-2S}  - \Pi_{u(1)}^{u \#_{S} v \circ \xi_{S,\neck} } X_{u}(1) \right) . \end{equation}
Since $X_v$ is holomorphic on $\xi_{-1} (\I{-2S}{-\infty})$, and the $C^{1}$-distance between $u \#_{S} v \circ \xi_{S,\neck}|\I{-S-1}{0} $ and $v \circ \xi_{-1}|\I{-3S-1}{-2S}  $ is bounded by $\rO\left( e^{-S} \right)$ by the decay estimate of Lemma  \ref{lem:exponential_decay_bounds}, applying Corollary \ref{cor:quadratic_inequality_basic_case} to the vector field along  $v \circ \xi_{-1} \circ \tau_{-2S}$ whose image is $u \#_{S} v \circ \xi_{S,\neck}$ yields
\begin{equation} \left|  D_{\cP_{0,1}}  \Pi_{v \circ \xi_{-1} \circ \tau_{-2S} }^{u \#_{S} v \circ \xi_{S,\neck} }  X_{v} \circ  \xi_{-1} \circ \tau_{-2S}|_{\I{-S-1}{0}} \right| = \rO( e^{-(1-\delta)S})|Y|_{p,S} .  \end{equation}
Essentially the same argument applied to the constant map with target $u(1)$, equipped with the constant vector field $  X_{u}(1)$ shows that 
\begin{equation} \left|D_{\cP_{0,1}}  \Pi_{u(1)}^{u \#_{S} v \circ \xi_{S,\neck} } X_{u}(1)|_{\I{-S-1}{0}} \right| = \rO(e^{-(1-\delta)S})|Y|_{p,S}  .\end{equation}

Finally, note that $|\nabla \chi_{-S}|$ is bounded independently of $S$, and that it is supported in the interval $s \in [-S-1,-S+1]$, so it suffices to bound
\begin{equation}\left| \Pi_{v \circ \xi_{-1} \circ \tau_{-2S} }^{u \#_{S} v \circ \xi_{S,\neck} }  X_{v} \circ  \xi_{-1} \circ \tau_{-2S}  - \Pi_{u(1)}^{u \#_{S} v \circ \xi_{S,\neck} } X_{u}(1)| \I{-S-1}{-S+1} \right|_{1,p,S}. \end{equation}
Recall that all our parallel transport maps take place along paths of length less than $e^{-S}$, so \eqref{eq:parallel_transport_bound_error} implies that this expression is bounded by
\begin{equation} \label{eq:2_delta_S_decay} e^{- 2 \delta S} (1 + \rO(e^{-S})) |X_v|_{1,p,\delta} =  \rO(  e^{ -2 \delta S} )|Y|_{p,S} .\end{equation} 
Once again, the weight $ e^{- 2 \delta S}$ appears because we're using the gluing and weight conventions of \cite{FOOO}.

Considering all the error terms above, we see that all terms decay at least as fast as $ e^{- 2 \delta S}$, since $\delta$ is assumed to be smaller than $1/4$.  \end{proof}

\subsection{Proof of surjectivity (Corollary \ref{cor:surjectivity_extended_map})} \label{sec:proof_surjectivity_extended_gluing}

Given a sequence $w_i \in \cP(L;0)$ converging $(u,\theta,v) \in K$, we will prove that 
\begin{equation}  (w_i, \theta) \in \cP(L;0) \times S^1 \cong \cP_{0,1}(L;0) \subset \cF^{1,p}_{\cP_{0,1}}(L)\end{equation} lies in the image of $\preGext_{\epsilon}$, whenever $i$ is sufficiently large.  The statement of Proposition \ref{prop:statement_implicit_function_theorem} implies that $\preGext_{\epsilon}$ is surjective onto a neighbourhood of pre-glued curves which has uniform size with respect to the norms $| \_|_{1,p, S}$, so it suffices to prove that for sufficiently large $i$, $(w_i, \theta)$ lies arbitrarily close (in the $| \_|_{1,p, S}$-norm)  to some pre-glued curve.

From Gromov convergence, we extract the existence of a sequence $S_i \to +\infty$ such that we have uniform convergence of all derivatives on compact subsets:
\begin{align}
w_{i} \circ \tau_{-4S_i} \circ r_{-\theta} | D^{2} - \{ 1 \} & \to v  \\
w_i \circ r_{-\theta}  | D^{2} -\{ -1 \} & \to u  .
\end{align}

We conclude that  for each $S \geq 0$, $w_i$ is $C^1$-close  to $u \#_{S_i} v \circ \phi_{S_i}^{-1}$ on the complement of the image of $\xi_{S_{i},\neck}\left( \I{-2(S_i-S)}{2(S_i-S)}\right)$ for $S_i$ sufficiently large.   Section \ref{sec:proof-results-gluing} ends with the proof of this result:
\begin{lem} \label{lem:surjectivity_codim_1_gluing}
There exists a sequence  of positive real numbers $S_i$ and vector fields $X_i$ along $u \#_{S_i} v$ such that
\begin{equation} \exp_{u \#_{S_i} v}(X_i) = w_i \circ r_{-\theta}\end{equation}
whenever $i$ is sufficiently large.  Moreover, the $| \_|_{1,p,S_i}$-norm of $X_i$ converges to $0$.
\end{lem}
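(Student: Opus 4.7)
The plan is to use Gromov convergence to choose the gluing parameters $S_i$, define $X_i$ pointwise as the inverse exponential of $w_i \circ r_{-\theta}$ from the pre-glued map, and then control the weighted Sobolev norm of $X_i$ separately on the neck and its complement.

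First I would refine the sequence $S_i \to +\infty$ supplied by Gromov convergence so that, after identifying $\Sigma_{S_i}$ with $D^2$ via $\phi_{S_i}$, the map $w_i \circ r_{-\theta}$ is $C^1$-close to $u$ on $\Im(\iota^{\cP,S_i}_{2S_i-1})$ and to $v$ on $\Im(\iota^{\cS,S_i}_{2S_i-1})$, while on the neck $\xi_{S_i,\neck}(\I{-2S_i}{2S_i})$ it takes values in a single geodesically convex neighbourhood of $u(1)=v(-1)$. Since $u\#_{S_i}v$ is similarly close to $u$, $v$, and the common point $u(1)$ in each of these regions, the pointwise inverse exponential $X_i \equiv \exp^{-1}_{u\#_{S_i}v}(w_i \circ r_{-\theta})$ is unambiguously defined for $i$ large. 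Off the neck, the weight in \eqref{eq:weighted_norms_tangent_vectors_S} is identically $1$, and $C^1_{\mathrm{loc}}$-convergence upgraded to $W^{1,p}$ by elliptic bootstrapping for Equation \eqref{deformedCR-explicity} yields that the contribution of $\Sigma_{S_i}-\Im(\xi_{S_i,\neck})$ to $|X_i|_{1,p,S_i}$ tends to $0$.

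The main obstacle is controlling $|X_i|_{1,p,S_i}$ on the neck, where the weight $e^{(2S_i-|s|)p\delta}$ blows up with $S_i$. Here Lemma \ref{lem:large_S_K} reduces the equation to the ordinary $\dbar$-equation for $J_{\alg}$, so both $w_i\circ r_{-\theta}\circ\phi_{S_i}\circ\xi_{S_i,\neck}$ and $u\#_{S_i}v\circ\xi_{S_i,\neck}$ are $J_{\alg}$-holomorphic strips with boundary on $L$, and by Gromov convergence their local energy over $\xi_{S_i,\neck}$ tends to $0$. Using the anti-holomorphic involution supplied by Corollary \ref{cor:totally_geodesic_hermitian_metric} to double across $L$, I would reduce to the standard mean value inequality for closed $J_{\alg}$-holomorphic cylinders of small energy, which gives $C^1$-decay
\[ \left| \nabla^{k}\bigl( w_i \circ r_{-\theta} \circ \phi_{S_i} \circ \xi_{S_i,\neck}(s,t) \bigr) \right| + \mathrm{dist}\bigl( w_i \circ r_{-\theta} \circ \phi_{S_i} \circ \xi_{S_i,\neck}(s,t),\, u(1) \bigr) = \rO\!\left( e^{-(2S_i-|s|)} \right) \]
for $k\le 1$; the analogous estimate for $u\#_{S_i}v$ on the neck is a direct consequence of Lemma \ref{lem:exponential_decay_bounds}. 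Subtracting these two estimates gives the same $C^1$-decay for $X_i$ on the neck.

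Finally, I would split $|X_i|_{1,p,S_i}$ on the neck as in \eqref{eq:weighted_norms_tangent_vectors_S}: the pointwise value $|X_i(\xi_{S_i,\neck}(0,0))|$ is $\rO(e^{-2S_i})$ by the $C^0$ decay above, while the contribution of $X_i\circ\xi_{S_i,\neck}-\Pi\,X_i(\xi_{S_i,\neck}(0,0))$ is bounded by integrating the $C^1$-decay $\rO(e^{-(2S_i-|s|)})$ against the weight $e^{(2S_i-|s|)p\delta}$ over $\I{-2S_i}{2S_i}$. Because $\delta<1/4$, the decay rate of $X_i$ strictly dominates the growth of the weight, and the integral tends to $0$ as $i\to\infty$, concluding the argument.
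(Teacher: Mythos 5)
Your overall strategy coincides with the paper's: define $X_i$ as the pointwise inverse exponential, handle the complement of the neck by $C^1_{\mathrm{loc}}$-convergence, and handle the neck by doubling across $L$ and invoking the exponential decay estimate for long holomorphic cylinders of small energy, finally beating the weight $e^{(2S_i-|s|)p\delta}$ using $\delta<1/4$. However, there is one genuine gap at the start of your neck analysis: you assert, as part of ``refining the sequence $S_i$,'' that $w_i\circ r_{-\theta}$ maps the entire neck $\xi_{S_i,\neck}(\I{-2S_i}{2S_i})$ into a single geodesically convex neighbourhood of $u(1)=v(-1)$. Gromov convergence gives you $C^1$-control only on the complement of strips $\I{-2(S_i-S)}{2(S_i-S)}$ for each fixed $S$, plus the vanishing of the energy carried by those strips; it does not by itself prevent the image of the long middle portion from wandering far from $u(1)$ while carrying little energy in any chart. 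This containment is exactly what you need both to define $X_i$ there and to perform the doubling (the anti-holomorphic involution of Corollary \ref{cor:totally_geodesic_hermitian_metric} exists only near $L$), so it cannot be assumed. The paper closes this gap with the monotonicity lemma for holomorphic curves with Lagrangian boundary: if some point of the strip escaped $B_{\epsilon}(v(-1))$, a component of $w_i^{-1}(M-B_{\epsilon/2}(v(-1)))$ of diameter at least $\epsilon/2$ would be trapped in the strip, forcing a definite lower bound on its energy and contradicting \eqref{eq:limit_energy_strip_zero}.

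A secondary, repairable issue: your claimed estimate $\mathrm{dist}(w_i\circ\cdots(s,t),u(1))=\rO(e^{-(2S_i-|s|)})$ uniformly on the neck is too strong. Integrating the gradient decay inward from the edge of the neck only yields closeness to $u(1)$ up to the boundary value at $|s|\approx 2S_i$, which is controlled by the rate of Gromov convergence, not exponentially in $S_i$. The paper's proof is structured as a double limit $\lim_{S\to+\infty}\lim_{i\to+\infty}$ (first fix the width $S$ of the collar where $|X_i|_{C^1}\le e^{-S}$, then let $i\to\infty$, then let $S\to\infty$) precisely to absorb this; your single-limit formulation would need a diagonal argument to be made correct. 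Relatedly, the term $\nabla\bigl(\Pi_{u(1)}^{u\#_{S_i}v}X_i(\xi_{S_i,\neck}(0,0))\bigr)$ appearing in the norm \eqref{eq:weighted_norms_tangent_vectors_S} is not bounded by the $C^1$-decay of $X_i$ alone; it requires the parallel-transport estimate \eqref{eq:parallel_transport_bound_error} combined with the smallness of $|X_i(\xi_{S_i,\neck}(0,0))|$.
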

\begin{proof}
The preceding discussion   implies that for each $S$, $X_i$ is well-defined away from the finite strips $ \xi_{S_i,\neck}\left(\I{-2(S_i-S)}{2(S_i-S)}\right)$ as long as $i$ is sufficiently large. First, we make sure that $X_i$ is defined on the entire domain. The key point is that the energy of the strip satisfies
\begin{equation} \label{eq:limit_energy_strip_zero} \lim_{S \to +\infty}  \lim_{i \to +\infty}  E\left(w_i \circ  \xi_{S_i,\neck}| \I{-2(S_i-S)}{2(S_i-S)}  \right) = 0 \end{equation}
since the complement of this region converges in $C^{1}$ to $u$ or $v$, and hence carries most of the energy.    For each $S$, we assume that $i$ is large enough that
\begin{equation} \label{eq:exponential_decay_sequence}|X_{i}|_{C^{1}} \leq e^{-S}\end{equation}
in this region, where the $C^{1}$-norm is computed with respect to the metric $g_{S_{i}}$.

Fix a ball of some radius $\epsilon$ about $v(-1)$ on which there exists an involution which preserves the complex structure $J_{\alg}$ and fixes $L$ pointwise.  We will first prove using the monotonicity Lemma that for each $S$, $w_i \circ \xi_{S_{i},\neck}| \I{-2(S_i-S)}{2(S_i-S)}$ takes values in this ball for $i$ sufficiently large.  To see this, note that the restriction of $w_i \circ \xi_{S_{i},\neck}$ to the union of the two strips of length $S$
\begin{equation}    \I{-(2S_i-S)}{2S_i-S} - \I{-2(S_i-S)}{2(S_i-S)}\end{equation} 
takes values inside the ball of radius $\epsilon$ about $v(-1) = u(1)$ for $i$ sufficiently large by $C^1$ convergence to $u$ and $v$ in the complement of $  \I{-2(S_i-S)}{2(S_i-S)}  $ .  In particular, assuming by contradiction that there is a point in $\I{-2(S_i-S)}{2(S_i-S)}$ whose image under $w_i \circ \xi_{S_{i},\neck}$ lands outside $B_{\epsilon}(v(-1))$ then there is a component of $w_i^{-1}(M - B_{\epsilon/2}(v(-1)))$ which is contained in $\I{-2(S_i-S)}{2(S_i-S)}$.  Since we've assumed this component to have diameter larger than a fixed $\epsilon/2$, the monotonicity Lemma for holomorphic curves with Lagrangian boundary conditions (see Proposition 4.7.2 of \cite{sikorav}) gives a lower bound for the energy of $w_i$ restricted to the strip, contradicting \eqref{eq:limit_energy_strip_zero} if $S$ and $S_i$ are sufficiently large.

We conclude that $w_i \circ \xi_{S_{i},\neck}| \I{-(2S_i-S)}{(2S_i-S)}$ can be doubled to a holomorphic map from a cylinder 
\begin{equation} S^1 \times [-2(S_i-S), 2(S_i-S)] \to M. \end{equation}  A standard exponential decay estimate, as proved for example in Equation (4.7.13) of \cite{MS}, implies that for any constant $\mu <1$, there is a constant $C_\mu$ independent of $w_i$, $S$ and $S_{i}$ such that
\begin{equation}\label{eq:exponential_decay_long_strip_small_energy} | \partial_{s} w_i (s,t)| \leq C_{\mu} e^{- \mu ((2S_i -S) -|s|)}   E\left(w_i \circ \xi_{S_{i},\neck}\left( \I{-(2S_i-S)}{(2S_i-S)} \right) \right) \end{equation}
whenever $|s| \leq 2S_i-2S$.

Starting with the $C^0$ analogue of the estimate \eqref{eq:exponential_decay_sequence} as a boundary value when $|s| = 2S_i -S$, we integrate the exponential decay estimate \eqref{eq:exponential_decay_long_strip_small_energy} to conclude that  $w_i \circ \xi_{S_i,\neck}| \I{-(2S_i-S)}{2S_i-S}$ is $C^0$-close to $u(1) = v(-1)$, which means that $X_i$ is well-defined and satisfies
\begin{align} |X_{i} \circ \xi_{S_{i},\neck}(s,t)| & = \rO\left(e^{-S}+ \left( (2S_i -S) -|s| \right) e^{- \mu ((2S_i -S) -|s|)} \right) \\
\left|\nabla X_{i} \circ \xi_{S_{i},\neck}(s,t)\right| & = \rO\left(e^{- \mu ((2S_i -S) -|s|)}\right).
\end{align}
In particular, the path from $w_{i} \circ \xi_{S_{i},\neck}(0,t)$ to $w_{i} \circ \xi_{S_{i},\neck}(s,t)$ is bounded in length by $|s| \rO\left(e^{- \mu ((2S_i -S) -|s|)}\right)$.   Applying the bound \eqref{eq:parallel_transport_bound_error}, we see that if $|s| \leq 2S_i-S$ 
\begin{equation} \left| \nabla \Pi_{w_{i}\circ \xi_{S_{i},\neck}(0,0)}^{w_{i}\circ \xi_{S_{i},\neck}(s,0)}X_i \circ \xi_{S_{i},\neck}(0,0)  \right| = \rO\left(e^{-\mu (4S_{i} -2S) -|s|}\right) \end{equation}
Choosing $\mu$ to be larger than $\delta$, and letting $S$ go to infinity, we conclude that
\begin{equation} \lim_{S \to +\infty}  \lim_{i \to +\infty} \left| X_i \circ \xi_{S_i,\neck}| \I{-(S_i+S)}{S_i-S} \right|_{1,p,S_i,\delta} = 0. \end{equation}

The discussion preceding the statement of the Lemma, implies a similar estimate away from this finite strip.  The only part that doesn't follow trivially from $C^1$-convergence is the bound on the derivative of the parallel transport of $X_i \circ \xi_{S_{i},\neck}(0,0)$.  However, we have just shown that the norm of $X_i \circ \xi_{S_{i},\neck}(0,0)$ decays $\mu$-exponentially as $S_{i}$ grows. Using the bound \eqref{eq:parallel_transport_bound_error} to control the error term coming from parallel transporting this vector, we conclude that for each $S$, 
\begin{equation} \lim_{i \to \infty}\left| X_i| D^2 - \xi_{S_i,\neck}\left(\I{-(S_i-S)}{S_i-S}\right) \right|_{1,p,S_i} = 0 ,\end{equation}
thereby proving the Lemma.
\end{proof}

\subsection{Proof of the Quadratic Inequality} \label{sec:proof_quadratic}
In this section, we shall prove Proposition \ref{prop:quadratic_inequality}.  Let us  return to the notation introduced in the discussion preceding Lemma \ref{lem:bounds_for_implicit_function_theorem}, and write $w^{\natural}_{S}$ for $\preGext_{S}(u,\theta,v)$.   The Proposition asserts the existence of a constant $c$ such that given a tangent vector $X^{\natural}$ to $\cF_{\cP_{0,1}}^{1,p}(L)$ at $w^{\natural}_{S}$ we have
\begin{equation} \left| \tilde{\Pi}_{w^{\natural}_{S}}^{w_{S,Z^{\natural}}^{\natural}} D_{\cP_{0,1}} X^{\natural} -  D_{\cP_{0,1}} \left(\Pi_{w^{\natural}_{S}}^{w_{S,Z^{\natural}}^{\natural}} X^{\natural}\right) \right|_{p,\Sigma} \leq c \left|Z^{\natural}\right|_{1,p,S}\left|X^{\natural}\right|_{1,p,S}\end{equation}
whenever the norm of $Z^{\natural}$ is itself bounded as in \eqref{eq:bound_C_0-for_quadratic}.

The first ingredient we need before proving the quadratic inequality is a uniform estimate on the $L^p$-norm of $d(u \#_{S} v)$.  Of course, we need to use the weighted norm
\begin{equation}|d(u \#_{S} v)|^{p}_{p,S} = \int \kappa_{S,\delta} |d(u \#_{S} v)|^{p} . \end{equation}
Applying the exponential decay estimates  of Lemma \ref{lem:exponential_decay_bounds}, we conclude that there is a constant $c_0$, independent of $S$, such that
\begin{equation} \label{eq:uniform_L_p} |d(u \#_{S} v)|_{p,S} \leq c_{0} . \end{equation}
We shall assume that $c_0$ is also larger than the uniform Sobolev constant of Lemma \ref{lem:uniform_sobolev}.

The case of Proposition \ref{prop:quadratic_inequality}  where $\lambda$ and $r$ both vanish is covered by Corollary \ref{cor:quadratic_inequality_basic_case} as long as we restrict to the codimension $2$ subspace $T \cF(L)$ of $T \cF_{\cP_{0,1}}(L)$.  Indeed, considering such a vector field $X$, Lemma \ref{lem:uniform_sobolev} implies that $|Z|_{C^0}$ is bounded by a multiple of $|Z|_{1,p,S} $  that is independent of $S$, while Lemma \ref{lem:derivative_p-norm_bounded_by_1,p} implies that $| \nabla X|_{p,S}$ and $| \nabla Z|_{p,S}$ are respectively bounded by constant multiples of $|X|_{1,p,S}$. Since $\gamma$ vanishes in the neck we obtain a uniform bound on $|\gamma_{\theta,R}|_{p,S}$, which, together with Equation \eqref{eq:uniform_L_p} gives the desired bound on $T \cF(L)$.

To extend this to the subspace $ T[0,+\infty)$ of  $T \cF_{\cP_{0,1}}(L)$, we shall prove that 
\begin{equation} \label{eq:bound_derivative_R_direction_transport_Z} \left|  \tilde{\Pi}_{w^{\natural}_{S}}^{w_{S,Z}^{\natural}} D_{\cP_{0,1}}|_{w^{\natural}_{S}} \partial_{R} -  D_{\cP_{0,1}} |_{w_{S,Z}^{\natural}}  \partial_{R}\right|_{p,S} \leq  C|Z|_{1,p,S}\end{equation}
for some constant $C$ that does not depend on $Z$.  Following the strategy repeatedly adopted in Section \ref{ap:pointwise_estimates}, this follows from a bound on the derivative with respect to parallel transport in the direction of $Z$. Dropping all subscripts from $J$, and writing $\tilde{\Pi}_y$ for $ \tilde{\Pi}_{w^{\natural}_{S}}^{w_{S,yZ}^{\natural}} $, we have to bound two terms;  the first is a simple estimate
\begin{equation*} \left| J \left( \nabla_{ \tilde{\Pi}_{y} Z} J \right)   D_{\cP_{0,1}}|_{w^{\natural}_{S}} \partial_{R}  \right|  \leq C|Z| \left(\left|\gamma_{\theta,R} \right| + \left|  \frac{d \gamma_{\theta,R}}{dR} \right| +|dw_{S}| \right),\end{equation*}
while the second relies on results from Section \ref{ap:pointwise_estimates}:
\begin{align*}\left| \nabla_{\Pi_{y} Z} D_{\cP_{0,1}} \partial_{R} \right| & \leq \left| \nabla_{\Pi_{y} Z}  \left( -\frac{1}{2} J \frac{\partial J}{\partial R} \left( dw_{S,yZ} -  \gamma_{\theta,R} \otimes X_{H}  \right) - \frac{d \gamma_{\theta,R}}{dR}    \otimes X_{H} \right)^{0,1} \right| \\
& \leq C|Z| \left(\left| \gamma_{\theta,R} \right| + \left|  \frac{d \gamma_{\theta,R}}{dR} \right|  +|dw_{S,yZ}| \right) \\
& \leq C|Z| \left(\left|\gamma_{\theta,R} \right| + \left|  \frac{d \gamma_{\theta,R}}{dR} \right| +|dw_{S}| +|\nabla Z|  \right) 
. \end{align*}
In particular,  the last inequality follows from Equation \eqref{eq:bound_exponential}.  With this in mind, it is easy to prove the bound \eqref{eq:bound_derivative_R_direction_transport_Z}.   The proof for the additional parameter $\theta$ is essentially indistinguishable, and hence omitted.  We conclude
\begin{lem} \label{lem:quadratic_inequality_change_Z}
There exists a constant $c$ independent of $S$ such that
\begin{equation} \left| \tilde{\Pi}_{w^{\natural}_{S}}^{w^{\natural}_{S,Z}} D_{\cP_{0,1}} X^{\natural} -  D_{\cP_{0,1}} (\Pi_{w^{\natural}_{S}}^{w^{\natural}_{S,Z}} X^{\natural}) \right|_{p,\Sigma} \leq c |Z|_{1,p,S}|X^{\natural}|_{1,p,S}\end{equation} \noproof
\end{lem}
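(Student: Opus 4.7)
My plan is to prove the lemma by decomposing the test vector $X^{\flat}=(r,\lambda,X)$ along the three factors of $T\cF^{1,p}_{\cP_{0,1}}(L)=T[0,+\infty)\oplus TS^{1}\oplus T\cF(L)$, bounding each contribution separately in the weighted norm $|\cdot|_{p,S}$, and summing. This is natural since the claim is linear in $X^{\flat}$, and all three of the relevant pointwise estimates from Appendix~\ref{ap:pointwise_estimates} have been assembled in the discussion preceding the lemma.

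First I would handle the pure vector-field factor $X\in T\cF(L)$. The pointwise bound of Lemma~\ref{eq:pointwise_quadratic_inequality}, packaged as Corollary~\ref{cor:quadratic_inequality_basic_case}, gives
\[ \bigl|\tilde{\Pi}_{w^{\flat}_{S}}^{w^{\flat}_{S,Z}}D_{\cP_{0,1}}X - D_{\cP_{0,1}}(\Pi_{w^{\flat}_{S}}^{w^{\flat}_{S,Z}}X)\bigr|_{p,S} \leq C|Z|_{\infty}\bigl(|\nabla X|_{p,S}+|X|_{\infty}(|\gamma_{\theta,R}|_{p,S}+|dw^{\flat}_{S}|_{p,S}+|\nabla Z|_{p,S})\bigr). \]
To turn this into the desired bound by $|Z|_{1,p,S}|X^{\flat}|_{1,p,S}$, I would invoke four uniform (in $S$) estimates that have already been assembled: the uniform Sobolev embedding $|\cdot|_{\infty}\leq c_{p}|\cdot|_{1,p,S}$ of Lemma~\ref{lem:uniform_sobolev}, the gradient bound $|\nabla X|_{p,S}\leq C|X|_{1,p,S}$ of Lemma~\ref{lem:derivative_p-norm_bounded_by_1,p}, the uniform $L^{p,S}$ bound $|dw^{\flat}_{S}|_{p,S}\leq c_{0}$ from \eqref{eq:uniform_L_p}, and the observation that $\gamma_{\theta,R}$ is supported away from the neck, so $|\gamma_{\theta,R}|_{p,S}$ is bounded by the ordinary $L^{p}$ norm of $\gamma$, independent of $S$.

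Next I would handle the $r\partial_{R}$ factor, which is exactly the content of the bound \eqref{eq:bound_derivative_R_direction_transport_Z} established in the preceding discussion; the $\lambda\partial_{\theta}$ factor is identical up to relabelling (replace $R$ by $\theta$ and use $\nabla_{\partial_{\theta}}J$, $d\gamma_{\theta,R}/d\theta$ in place of their $R$-analogues), exactly as noted in the text. Both proofs proceed by differentiating the explicit formula \eqref{eq:expression_linearisation_circle_varying_parametrized} in the parallel-transport direction $\tilde{\Pi}_{y}Z$, producing terms controlled pointwise by $|Z|(|\gamma_{\theta,R}|+|d\gamma_{\theta,R}/dR|+|dw^{\flat}_{S}|+|\nabla Z|)$ (and the analogue for $\theta$); integrating against the $|\cdot|_{p,S}$ weight and applying Hölder together with the same four uniform estimates as above yields a bound of the form $C|Z|_{1,p,S}(|r|+|\lambda|)$.

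Summing the three contributions and combining with $|X^{\flat}|_{1,p,S}=|r|+|\lambda|+|X|_{1,p,S}$ produces the claim with a constant $c$ depending only on $M$, $H$ and $\gamma$ (and the uniform Sobolev constant $c_{p}$), none of which depend on $S$. The only genuine obstacle is the bookkeeping needed to ensure that the exponential weight $\kappa_{\delta,S}$ on the neck does not corrupt any of the pointwise bounds when integrated; however, the norms $|\cdot|_{1,p,S}$ and $|\cdot|_{p,S}$ were precisely engineered so that Lemmas~\ref{lem:uniform_sobolev} and~\ref{lem:derivative_p-norm_bounded_by_1,p} hold with $S$-independent constants, and this is exactly what is needed to absorb the weight. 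Once that is checked, the proof is essentially a mechanical assembly of the Appendix~\ref{ap:pointwise_estimates} estimates.
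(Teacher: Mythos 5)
Your proposal is correct and follows essentially the same route as the paper: the paper also splits $X^{\flat}$ into its $T\cF(L)$, $\partial_{R}$, and $\partial_{\theta}$ components, handles the first via Corollary \ref{cor:quadratic_inequality_basic_case} together with Lemmas \ref{lem:uniform_sobolev} and \ref{lem:derivative_p-norm_bounded_by_1,p}, the bound \eqref{eq:uniform_L_p}, and the vanishing of $\gamma$ on the neck, and handles the second via the estimate \eqref{eq:bound_derivative_R_direction_transport_Z}, with the $\theta$-direction treated as an identical relabelling.
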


We shall now extend this result to the case where the additional parameters $\lambda$ and $r$ do not vanish.   Our strategy is to replace the parallel transport from $w^{\natural}_{S} = (R,\theta,w_S)$ to $w^{\natural}_{S,(R+r,\lambda,Z)} = (R+r, \theta + \lambda, w_{S,Z})$ along the image of the exponential map, with parallel transport along a broken geodesic which passes through $w^{\natural}_{S,(0,0,Z)}$.  
\begin{lem} \label{lem:bound_triangle_parallel_transport_error}
There is a constant $C$ that is independent of $S$ such that we have an bound on the commutativity of parallel transport on the fibers of $\cE^{p}(M)$ equipped with the $| \_ |_{p,S}$ norm
\begin{align} \label{eq:error_complex_parallel_transport_commute} \left\| \tilde{\Pi}_{w^{\natural}_{S}}^{ w^{\natural}_{S,(r,\lambda,Z)}} - \tilde{\Pi}_{w^{\natural}_{S,(0,0,Z)}}^{ w^{\natural}_{S,(r,\lambda,Z)}}  \tilde{\Pi}_{w^{\natural}_{S}}^{ w^{\natural}_{S,(0,0,Z)}}  \right\| & \leq C|Z|^{\natural}_{C^0}. \end{align}
In addition, the operator
\begin{equation} T_{w^{\natural}_{S}} \cF^{1,p}(L) \to L^{p}\left( {w^{\natural}_{S,Z^{\natural}}}^{*} \left(TM\right) \otimes \Omega^{*} D^2 \right) \end{equation} 
obtained by composing $\nabla$ with this difference of parallel transport maps is also uniformly bounded:
\begin{align} \left\| \nabla \left( {\Pi}_{w^{\natural}_{S}}^{ w^{\natural}_{S,(r,\lambda,Z)}} - {\Pi}_{w^{\natural}_{S,(0,0,Z)}}^{ w^{\natural}_{S,(r,\lambda,Z)}}  {\Pi}_{w^{\natural}_{S}}^{ w^{\natural}_{S,(0,0,Z)}} \right) \right\| & \leq C|Z^{\natural}|_{C^0}
 \end{align}
\end{lem}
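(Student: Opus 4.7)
The plan is to reduce both estimates to pointwise curvature bounds on the bundle $TM \to D^2 \times M^{\flat}$ equipped with the connection induced by the product metric, and then apply the rectangle-type estimates of Lemma~\ref{lem:bound_derivative_parallel_transport}. At each point $z \in D^2$, consider the smooth rectangle of maps into $D^2 \times M^{\flat}$ whose four corners are the images of $z$ under $w^{\flat}_{S}$, $w^{\flat}_{S,(0,0,Z)}$, $w^{\flat}_{S,(r,\lambda,0)}$, and $w^{\flat}_{S,(r,\lambda,Z)}$, with the sides given by the Levi--Civita geodesics in $M^{\flat}$ in the $Z$ and $(r,\lambda)$ directions respectively. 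The direct geodesic from $w^{\flat}_{S}$ to $w^{\flat}_{S,(r,\lambda,Z)}$ and the broken geodesic through $w^{\flat}_{S,(0,0,Z)}$ bound (together) a ``triangle'' which fits inside this rectangle, so the difference between parallel transport along these two paths admits, at each $z$, the bound provided by Equation \eqref{eq:parallel_transport_bound_error_family}.

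The integrands in that estimate are $|\partial_y w_{x,y}|$ in the $M$-direction and $|\partial_x w_{x,y}|$ in the $(R,\theta)$-direction; these are bounded pointwise by $|Z|$ and by $|r| + |\lambda|$ respectively, so the resulting pointwise operator norm on $TM$ is controlled by $C |Z(z)| \cdot (|r| + |\lambda|)$. Since parallel transport acts fiberwise on sections over $D^2$, taking $L^p$ norm with the weight $\kappa_{\delta, S}$ and using that $|Z(z)| \leq |Z|_{C^0}$ and $|r| + |\lambda| \leq |Z^{\flat}|_{C^0}$, we obtain a bound of $C |Z^{\flat}|_{C^0} \cdot (|r|+|\lambda|) \leq C |Z^{\flat}|_{C^0}^2$. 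The hypothesis $|Z^{\flat}|_{C^0} < c$ lets us absorb one factor to produce the first estimate. To pass from the Levi--Civita parallel transport $\Pi$ to the complex-linear parallel transport $\tilde{\Pi}$, we invoke Lemma \ref{lem:bound_difference_parallel_transports}: $\tilde{\Pi}$ and $\Pi$ differ by a zeroth-order operator whose norm is bounded by $C|Z^{\flat}|_{C^0}$, and a routine commutation of these errors with the $\Pi$-version of the estimate yields the $\tilde{\Pi}$-version.

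For the second inequality, we differentiate and appeal to Equation \eqref{eq:parallel_transport_bound_error_family_derivative}. This produces one term proportional to $|\nabla X|$ multiplied by the area factor $|Z(z)| \cdot (|r|+|\lambda|)$, which is bounded just as above. The remaining terms are multiples of $|X|$ times integrals of $|\partial_x w|$ or $|\partial_y w|$ along the four sides of the rectangle, weighted by $\max|dw|$ on the corresponding side. On each side of the rectangle, the integral of $|\partial_x w|$ or $|\partial_y w|$ is pointwise bounded by $|r|+|\lambda|$ or by $|Z(z)|$, which again contributes a factor of $|Z^{\flat}|_{C^0}$. The factor $\max|dw|$ is controlled, after integration, by the uniform $L^p$ bound \eqref{eq:uniform_L_p} on $|d(u \#_S v)|$ together with the pointwise bound $|dw_{S,yZ}| \leq (|dw_S| + |\nabla Z|)(1 + C|Z|)$ from Equation \eqref{eq:bound_exponential}, both of which survive the weight $\kappa_{\delta,S}$.

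The main obstacle, as with the quadratic inequality of Proposition~\ref{prop:quadratic_inequality} itself, is ensuring that the constants produced by this rectangle-estimate argument are independent of the gluing parameter $S$: the weight $\kappa_{\delta,S}$ grows exponentially along the neck, and one must check that the various $dw_S$ and $\gamma_{\theta,R}$ contributions remain $L^p$-bounded uniformly in $S$. This is exactly what \eqref{eq:uniform_L_p} and the exponential decay estimate of Lemma~\ref{lem:exponential_decay_bounds} provide, via the same bookkeeping used to prove Corollary~\ref{cor:linearisation_dbar_bounded} and Lemma~\ref{lem:quadratic_inequality_change_Z}. Once these uniform pointwise bounds are in hand, both inequalities follow by integrating against the weight.
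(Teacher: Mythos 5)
Your argument is correct and follows essentially the same route as the paper's proof: the first estimate comes from the rectangle estimate \eqref{eq:parallel_transport_bound_error_family} combined with Lemma \ref{lem:bound_difference_parallel_transports} to pass from $\Pi$ to $\tilde{\Pi}$, and the second from \eqref{eq:parallel_transport_bound_error_family_derivative} together with the Sobolev embedding and the uniform $|\_|_{p,S}$-bound on $dw^{\flat}_{S}$. The extra bookkeeping you supply on $S$-uniformity via \eqref{eq:uniform_L_p} is exactly what the paper invokes implicitly, so there is no substantive difference.
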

\begin{proof}
Note that the proof of Lemma \ref{lem:bound_difference_parallel_transports} directly applies to our setting, and shows that the difference between $\tilde{\Pi}$ and $\Pi$ is an operator bounded by a constant multiple of $|Z^{\natural}|$. For the Levi-Civita connection, Equation \eqref{eq:parallel_transport_bound_error_family} implies that
\begin{equation} \left\| {\Pi}_{w^{\natural}_{S}}^{ w^{\natural}_{S,(r,\lambda,Z)}} -  {\Pi}_{w^{\natural}_{S,(0,0,Z)}}^{ w^{\natural}_{S,(r,\lambda,Z)}}  {\Pi}_{w^{\natural}_{S}}^{ w^{\natural}_{S,(0,0,Z)}} \right\| \leq C|Z^{\natural}|_{C^{0}} (|r| +|\lambda|) ,\end{equation}
from which the desired estimate \eqref{eq:error_complex_parallel_transport_commute} follows for fibers of $\cE^{p}(M)$.

To prove the second estimate we apply Equation \eqref{eq:parallel_transport_bound_error_family_derivative} to obtain a pointwise bound:
\begin{multline} \left| \nabla \left( {\Pi}_{w^{\natural}_{S}}^{ w^{\natural}_{S,(r,\lambda,Z)}} X - {\Pi}_{w^{\natural}_{S,(0,0,Z)}}^{ w^{\natural}_{S,(r,\lambda,Z)}}  {\Pi}_{w^{\natural}_{S}}^{ w^{\natural}_{S,(0,0,Z)}} X \right) \right| \\ \leq C|Z^{\natural}| \left(|X| \left(|dw^{\natural}_{S}| +|\nabla Z| \right) +| \nabla X| \right)
\end{multline}
Using the Sobolev embedding for $|X|_{C^0}$, and the boundedness of the $| \_ |_{p,S}$ norm of  $dw^{\natural}_{S}$, we obtain the desired result. 
\end{proof}

This reduces the proof of Proposition \ref{prop:quadratic_inequality} to proving the result in the special case where $Z$ vanishes.   Simplifying the notation further by dropping the parameters $R$ and $r$, we write $w^{\natural}_{S,\lambda}$ for the pair $(\theta+\lambda,w)$.  We first observe that the arguments given in the previous section give the following bounds, which respectively come from Equation \eqref{eq:bound_exponential}, \eqref{eq:bound_first_derivative_exponential}, and \eqref{eq:parallel_transport_bound_exponential}:
\begin{align} 
\label{eq:change_norm_differential_flat}\left| dw^{\natural}_{S,\lambda} \right| & \leq |dw^{\natural}_{S}| (1+ C|\lambda|) \\
\left| \nabla_{\partial_{\theta}}  dw^{\natural}_{S,\lambda} \right| & \leq  C|dw^{\natural}_{S}| \\
\label{eq:change_norm_parallel_transport_flat}
 \left| \nabla  {\Pi}_{w^{\natural}_{S}}^{w^{\natural}_{S,\lambda}}  X  -   {\Pi}_{w^{\natural}_{S}}^{w^{\natural}_{S,\lambda}}  \nabla X \right|  & \leq   C|X||\lambda||dw^{\natural}_{S}|  
\end{align}

Using this, we conclude
\begin{align} \label{eq:change_norm_del_operator_flat}
 \left| \partial_{\cP_{0,1}} w^{\natural}_{S,\lambda} \right|  & \leq C \left( 1 +|\lambda| \right)   \left(|dw^{\natural}_{S}|  + \left|  \gamma_{\theta, R} \right| \right)
\end{align}

We can use \eqref{eq:bound_derivative_R_direction_transport_Z} to prove the desired pointwise result for a tangent vector $X^{\natural} = (\rho, \eta, X)$:
\begin{lem} \label{lem:pointwise_bound_dbar_op_parallel_transport_parameter}
There exists a constant $C$ independent of $S$ such that
\begin{multline}  
\left| \tilde{\Pi}_{w^{\natural}_{S}}^{w^{\natural}_{S,(r,\lambda,0)}} D_{\cP_{0,1}} X^{\natural} - D_{\cP_{0,1}} {\Pi}_{w^{\natural}_{S}}^{w^{\natural}_{S,(r,\lambda,0)}} X^{\natural} \right| \\ \leq C (|r| +|\lambda|) (|\nabla X| +|X| (|\gamma_{\theta,R}| +|\nabla \gamma_{\theta,R}| +|dw^{\natural}_{S}|) )
\end{multline}
\end{lem}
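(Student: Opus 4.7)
The proof will parallel the strategy used for Lemma \ref{lem:quadratic_inequality_change_Z}, with the roles of ``varying the base map by an exponentiated vector field $Z$'' replaced by ``varying the parameters $(R,\theta)$ while keeping the base map fixed.'' My first step is to decouple the $r$ and $\lambda$ contributions. Writing the path from $w^{\flat}_{S}$ to $w^{\flat}_{S,(r,\lambda,0)}$ as the concatenation of a path varying only $R$ (from $R$ to $R+r$) with a path varying only $\theta$ (from $\theta$ to $\theta+\lambda$), Lemma \ref{lem:bound_triangle_parallel_transport_error} bounds the error introduced by commuting the two parallel transports by a constant multiple of $(|r|+|\lambda|)|X^{\flat}|_{1,p,S}$ times the same pointwise quantities already appearing in the statement; so it suffices to prove the estimate separately for the purely $r$-direction and the purely $\lambda$-direction.

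The $r$-direction is handled by specializing the argument behind Equation \eqref{eq:bound_derivative_R_direction_transport_Z} to the case $Z=0$: one differentiates along the path $y \mapsto w^{\flat}_{S,(yr,0,0)}$ and estimates the derivative with respect to $y$ of the expression $\tilde{\Pi}_{w^{\flat}_{S}}^{w^{\flat}_{S,(yr,0,0)}} D_{\cP_{0,1}} X^{\flat} - D_{\cP_{0,1}} \Pi_{w^{\flat}_{S}}^{w^{\flat}_{S,(yr,0,0)}} X^{\flat}$. Using the explicit expression \eqref{eq:expression_linearisation_circle_varying_parametrized} for $D_{\cP_{0,1}}$, the only nontrivial new ingredients are the $R$-derivatives of $J$, $\gamma_{\theta,R}$, and $X_H$, each of which is bounded uniformly since $H$ is compactly supported and $R$ ranges over a compact interval on the support of each glued curve. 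Integrating from $y=0$ to $y=1$ produces the desired factor of $|r|$ in front of $|\nabla X| + |X|(|\gamma_{\theta,R}| + |\nabla\gamma_{\theta,R}| + |dw^{\flat}_{S}|)$.

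For the $\lambda$-direction, I will differentiate along the path $y \mapsto w^{\flat}_{S,(0,y\lambda,0)}$ in exactly the same fashion. The derivative produces three families of terms that must be bounded pointwise: (i) those where $\partial_\theta$ acts on $J_{r_\theta z,R}$ (equivalently on the metric $g_{r_\theta z, R}$ used to define $\tilde\nabla$ and the parallel transport), controlled as in Lemma \ref{eq:pointwise_quadratic_inequality} by $C|\lambda|(|\nabla X| + |X|(|\gamma_{\theta,R}|+|dw^{\flat}_S|))$; (ii) those where $\partial_\theta$ acts on the $1$-form $\gamma_{\theta,R}$ itself, which by the definition $\gamma_{\theta,R} = r_{\theta}^{*}\gamma_R$ are literally spatial derivatives of $\gamma_R$ and hence contribute $C|\lambda||X||\nabla\gamma_{\theta,R}|$; and (iii) the failure-of-commutativity terms between $\nabla$ and parallel transport along the $\theta$-path, which by \eqref{eq:change_norm_parallel_transport_flat} contribute $C|\lambda||X||dw^{\flat}_S|$. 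The estimates \eqref{eq:change_norm_differential_flat}--\eqref{eq:change_norm_del_operator_flat} ensure that all auxiliary quantities stay uniformly bounded along the interpolating path, so the $y$-derivative is itself pointwise bounded by $C(|\nabla X| + |X|(|\gamma_{\theta,R}|+|\nabla\gamma_{\theta,R}|+|dw^{\flat}_{S}|))$, and integrating over $y\in[0,1]$ produces the factor of $|\lambda|$.

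The main point to keep track of is the term involving $\nabla\gamma_{\theta,R}$, which has no analogue in Lemma \ref{lem:quadratic_inequality_change_Z} and is the only reason this quantity appears in the statement; its presence is forced by the $\theta$-dependence of $\gamma_{\theta,R}$ being realized through rotation rather than through a separate family. Since $\gamma_{R}$ is compactly supported, $\nabla \gamma_{\theta,R}$ has uniformly bounded $L^p$-norm on the support of each glued curve, so when this pointwise estimate is ultimately integrated against the weight $\kappa_{\delta,S}$ (as needed for Proposition \ref{prop:quadratic_inequality}), no new difficulty arises. Combining the $r$-direction bound, the $\lambda$-direction bound, and the commutativity error from Lemma \ref{lem:bound_triangle_parallel_transport_error} yields the stated inequality.
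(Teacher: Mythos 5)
Your proposal is correct and follows essentially the same route as the paper: differentiate along the interpolating path in the parameter direction, bound the resulting derivative pointwise using the explicit expression \eqref{eq:expression_linearisation_circle_varying_parametrized} together with the estimates \eqref{eq:change_norm_differential_flat}--\eqref{eq:change_norm_del_operator_flat}, and integrate to produce the factor of $|r|+|\lambda|$. The only organizational difference is that you decouple the $r$- and $\lambda$-directions explicitly via the commutation estimate, whereas the paper treats only the $\lambda$-direction in detail and leaves the rest to the reader.
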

\begin{proof}
We prove this only in the special case where $\rho=\eta=0$, and return to ignoring the variable $R$ (the case where only $\rho$ and $\eta$ are non-zero is left to the reader).  The first step is to control the derivative with respect to $\lambda$ using:
\begin{align*}  
\frac{d}{d\lambda} \left| \tilde{\Pi}_{w^{\natural}_{S}}^{w^{\natural}_{S,\lambda}} D_{\cP_{0,1}} X - D_{\cP_{0,1}} {\Pi}_{w^{\natural}_{S}}^{w^{\natural}_{S,\lambda}} X \right| & \leq  \left| J \left( \nabla_{\partial_{\theta}} J \right) \tilde{\Pi}_{w^{\natural}_{S}}^{w^{\natural}_{S,\lambda}} D_{\cP_{0,1}} X \right| + \left| \nabla_{\partial_{\theta}} D_{\cP_{0,1}}  {\Pi}_{w^{\natural}_{S}}^{w^{\natural}_{S,\lambda}}  X  \right|  \end{align*}
Using the expression for $  D_{\cP_{0,1}} X $ and Lemma \ref{lem:bound_difference_parallel_transports} we find that the first term is bounded by a constant multiple of 
\begin{equation}|\nabla X| +|X|(|\gamma_{\theta,R}| +|\nabla \gamma_{\theta,R}|) \end{equation} 
To bound the second term,  we return to the expression \eqref{eq:expression_linearisation_circle_varying_parametrized}, and first observe that in analogy with \eqref{eq:lowest_order_terms_vector_field}, the terms where a derivative of $J$ or $\gamma$ is taken are bounded by a constant multiple of
\begin{equation} \left| \nabla  {\Pi}_{w^{\natural}_{S}}^{w^{\natural}_{S,\lambda}}  X \right| + |X| \left(|\gamma_{\theta,R}| + \left| \frac{d \gamma_{\theta,R}}{d\theta} \right| + \left| \partial_{\cP_{0,1}} w^{\natural}_{S,\lambda} \right| \right) \end{equation}
Using Equations \eqref{eq:change_norm_parallel_transport_flat} and \eqref{eq:change_norm_del_operator_flat}, we see that this is in turn bounded by a constant multiple of
\begin{equation}|\nabla X| +|X|  \left(\left|\gamma_{\theta,R}\right| + \left| \nabla \gamma_{\theta,R} \right| +|dw^{\natural}_{S}| \right) . \end{equation}
The higher order terms are bounded by
\begin{multline} \left| \nabla_{\partial_{\theta}} \nabla  {\Pi}_{w^{\natural}_{S}}^{w^{\natural}_{S,\lambda}}  X   \right| + \left|\gamma_{\theta,R}\right| \left|\nabla_{\partial_{\theta}}  \nabla_{ {\Pi}_{w^{\natural}_{S}}^{w^{\natural}_{S,\lambda}}  X } X_{H} \right|  \\ + \left| \nabla_{\partial_{\theta}}  \nabla_{ {\Pi}_{w^{\natural}_{S}}^{w^{\natural}_{S,\lambda}}  X } J_{\theta,R} \right| \left| \partial_{\cP_{0,1}} w^{\natural}_{S,\lambda} \right|  +|X| \left|  \nabla_{\partial_{\theta}}  \partial_{\cP_{0,1}} w^{\natural}_{S,\lambda} \right| . \end{multline}
Note that this is essentially the same expression as \eqref{eq:higher_order_terms_vector_field}.  Using \eqref{eq:change_norm_differential_flat}-\eqref{eq:change_norm_parallel_transport_flat}, we adapt the argument we used to bound  \eqref{eq:higher_order_terms_vector_field}, and conclude that the higher order terms are bounded by a constant multiple of
\begin{equation}|X| \left(|dw^{\natural}_{S}| +|\gamma_{\theta,R}| +| \nabla \gamma_{\theta,R}|\right)  \end{equation}
\end{proof}

Finally, we can prove the main result of this section

\begin{proof}[Proof of Proposition \ref{prop:quadratic_inequality}]
We compute that
\begin{multline} \label{eq:bound_L_p_norm_quadratic_proof} \left| \tilde{\Pi}_{w^{\natural}_{S}}^{w_{S,Z^{\natural}}^{\natural}} D_{\cP_{0,1}} X^{\natural} -  D_{\cP_{0,1}} \Pi_{w^{\natural}_{S}}^{w_{S,Z^{\natural}}^{\natural}} X^{\natural} \right|_{p,S} \leq  \\ \left|  \left( \tilde{\Pi}_{w^{\natural}_{S}}^{w_{S,Z^{\natural}}^{\natural}} - \tilde{\Pi}_{w^{\natural}_{S,(0,0,Z)}}^{w_{S,Z^{\natural}}^{\natural}}  \tilde{\Pi}_{w^{\natural}_{S}}^{w^{\natural}_{S,(0,0,Z)}} \right)  D_{\cP_{0,1}} X^{\natural}  \right|  +  
\left| D_{\cP_{0,1}} \left(\Pi_{w^{\natural}_{S,(0,0,Z)}}^{w_{S,Z^{\natural}}^{\natural}} \Pi_{w^{\natural}_{S}}^{w^{\natural}_{S,(0,0,Z)}} - \Pi_{w^{\natural}_{S}}^{w_{S,Z^{\natural}}^{\natural}} \right) X^{\natural} \right| + \\
\left|\tilde{\Pi}_{w^{\natural}_{S,(0,0,Z)}}^{w_{S,Z^{\natural}}^{\natural}}  \tilde{\Pi}_{w^{\natural}_{S}}^{w^{\natural}_{S,(0,0,Z)}} D_{\cP_{0,1}} X^{\natural} - D_{\cP_{0,1}} (\Pi_{w^{\natural}_{S,(0,0,Z)}}^{w_{S,Z^{\natural}}^{\natural}} \Pi_{w^{\natural}_{S}}^{w^{\natural}_{S,(0,0,Z)}} X^{\natural})  \right|  \end{multline}
By Equation \eqref{eq:error_complex_parallel_transport_commute}, and using the fact that the norm of the operator $D_{\cP_{0,1}}$ is bounded independently of $S$ (see Corollary \ref{cor:linearisation_dbar_bounded}), we find that the first term is bounded by a constant multiple of 
\begin{equation}|Z^{\natural}|_{1,p,S}|X^{\natural}|_{1,p,S}  . \end{equation}
Pointwise, the second term is bounded by a constant multiple of
\begin{multline} 
\left| \nabla \left(\Pi_{w^{\natural}_{S,(0,0,Z)}}^{w_{S,Z^{\natural}}^{\natural}} \Pi_{w^{\natural}_{S}}^{w^{\natural}_{S,(0,0,Z)}} - \Pi_{w^{\natural}_{S}}^{w_{S,Z^{\natural}}^{\natural}} \right) X^{\natural} \right| \\ + \left(|\partial_{\cP_{0,1}} w^{\natural}_{S,Z}| +|\gamma| \right) \left|\left(\Pi_{w^{\natural}_{S,(0,0,Z)}}^{w_{S,Z^{\natural}}^{\natural}} \Pi_{w^{\natural}_{S}}^{w^{\natural}_{S,(0,0,Z)}} - \Pi_{w^{\natural}_{S}}^{w_{S,Z^{\natural}}^{\natural}} \right) X^{\natural} \right| \end{multline} 
Applying  Lemma \eqref{lem:bound_triangle_parallel_transport_error} to the first term, and Equations \eqref{eq:parallel_transport_bound_error_family} and \eqref{eq:change_norm_del_operator_flat} to the second, we conclude that the $L^p$-norm of this error term is bounded by a constant multiple of
\begin{equation}|Z^{\natural}|_{1,p,S}|X^{\natural}|_{1,p,S} + (|dw_{S}|_{p} +|\gamma|_{p})|Z^{\natural}|_{C^0} |X^{\natural}|_{C^0} ,\end{equation}
which by the Sobolev lemma implies that the second term of \eqref{eq:bound_L_p_norm_quadratic_proof} is indeed uniformly bounded by a constant multiple of
\begin{equation}|Z^{\natural}|_{1,p,S}|X^{\natural}|_{1,p,S} . \end{equation}

It remains to bound the third term of \eqref{eq:bound_L_p_norm_quadratic_proof}.  We shall leave this step to the reader, and simply note that one should use Lemmas \ref{lem:quadratic_inequality_change_Z}, \ref{lem:bound_triangle_parallel_transport_error}, and \ref{lem:pointwise_bound_dbar_op_parallel_transport_parameter}. 
\end{proof}

We complete this appendix with the proof of a different version of the quadratic inequality:
\begin{proof}[Proof of Equation \eqref{eq:quadratic_inequality}]
Since the norm of a convex combination of $X_1^{\natural}$ and $X_2^{\natural}$ is bounded above by $|X_1^{\natural} + X_2^{\natural}|$, it suffices to combine the estimate \eqref{eq:quadratic_inequality_first_statement} with the expression
\begin{multline}
\sF_{\preGext_{S} ( u , \theta ,v)}\left(X_1^{\natural}\right) - \sF_{  \preGext_{S} ( u , \theta ,v)} \left(X_2^{\natural} \right)  \\ = \int_{0}^{1} d \sF_{\preGext_{S} ( u , \theta ,v)} \left(x X_1^{\natural} + (1-x) X_2^{\natural} \right) \left( X_1^{\natural} - X_2^{\natural} \right) dx.
\end{multline}
\end{proof}

\begin{bibdiv}
\begin{biblist}

\bib{adams}{book}{
   author={Adams, Robert A.},
   title={Sobolev spaces},
   note={Pure and Applied Mathematics, Vol. 65},
   publisher={Academic Press [A subsidiary of Harcourt Brace Jovanovich,
   Publishers], New York-London},
   date={1975},
   pages={xviii+268},
   review={\MR{0450957 (56 \#9247)}},
}

\bib{ASIV}{article}{
   author={Atiyah, M. F.},
   author={Singer, I. M.},
   title={The index of elliptic operators. IV},
   journal={Ann. of Math. (2)},
   volume={93},
   date={1971},
   pages={119--138},
   issn={0003-486X},
   review={\MR{0279833 (43 \#5554)}},
}

\bib{ALP}{article}{
   author={Audin, Mich{\`e}le},
   author={Lalonde, Fran{\c{c}}ois},
   author={Polterovich, Leonid},
   title={Symplectic rigidity: Lagrangian submanifolds},
   conference={
      title={Holomorphic curves in symplectic geometry},
   },
   book={
      series={Progr. Math.},
      volume={117},
      publisher={Birkh\"auser},
      place={Basel},
   },
   date={1994},
   pages={271--321},
   review={\MR{1274934}},
}

\bib{berger}{book}{
   author={Berger, Marcel},
   title={A panoramic view of Riemannian geometry},
   publisher={Springer-Verlag},
   place={Berlin},
   date={2003},
   pages={xxiv+824},
   isbn={3-540-65317-1},
   review={\MR{2002701 (2004h:53001)}},
}

\bib{BC}{article}{
   author={Biran, Paul},
   author={Cieliebak, Kai},
   title={Lagrangian embeddings into subcritical Stein manifolds},
   journal={Israel J. Math.},
   volume={127},
   date={2002},
   pages={221--244},
   issn={0021-2172},
   review={\MR{1900700 (2003g:53151)}},
}

\bib{buh}{article}{
   author={Buhovsky, Lev},
   title={Homology of Lagrangian submanifolds in cotangent bundles},
   journal={Israel J. Math.},
   volume={143},
   date={2004},
   pages={181--187},
   issn={0021-2172},
   review={\MR{2106982 (2005m:53167)}},
   doi={10.1007/BF02803498},
}

\bib{EE}{article}{
   author={Earle, Clifford J.},
   author={Eells, James},
   title={A fibre bundle description of Teichm\"uller theory},
   journal={J. Differential Geometry},
   volume={3},
   date={1969},
   pages={19--43},
   issn={0022-040X},
   review={\MR{0276999 (43 \#2737a)}},
}

\bib{floer}{article}{
   author={Floer, Andreas},
   title={The unregularized gradient flow of the symplectic action},
   journal={Comm. Pure Appl. Math.},
   volume={41},
   date={1988},
   number={6},
   pages={775--813},
   issn={0010-3640},
   review={\MR{948771 (89g:58065)}},
}

\bib{floer-monopole}{article}{
   author={Floer, A.},
   title={Monopoles on asymptotically flat manifolds},
   conference={
      title={The Floer memorial volume},
   },
   book={
      series={Progr. Math.},
      volume={133},
      publisher={Birkh\"auser},
      place={Basel},
   },
   date={1995},
   pages={3--41},
   review={\MR{1362821 (96j:58024)}},
}

\bib{FHS}{article}{
   author={Floer, Andreas},
   author={Hofer, Helmut},
   author={Salamon, Dietmar},
   title={Transversality in elliptic Morse theory for the symplectic action},
   journal={Duke Math. J.},
   volume={80},
   date={1995},
   number={1},
   pages={251--292},
   issn={0012-7094},
   review={\MR{1360618 (96h:58024)}},
}

\bib{fuk}{article}{
   author={Fukaya, Kenji},
   title={Application of Floer homology of Langrangian submanifolds to
   symplectic topology},
   conference={
      title={Morse theoretic methods in nonlinear analysis and in symplectic
      topology},
   },
   book={
      series={NATO Sci. Ser. II Math. Phys. Chem.},
      volume={217},
      publisher={Springer},
      place={Dordrecht},
   },
   date={2006},
   pages={231--276},
   review={\MR{2276953 (2007j:53112)}},
}

\bib{FOOO}{book}{
   author={Fukaya, Kenji},
   author={Oh, Yong-Geun},
   author={Ohta, Hiroshi},
   author={Ono, Kaoru},
   title={Lagrangian intersection Floer theory: anomaly and obstruction.
   Part I},
   series={AMS/IP Studies in Advanced Mathematics},
   volume={46},
   publisher={American Mathematical Society},
   place={Providence, RI},
   date={2009},
   pages={xii+396},
   isbn={978-0-8218-4836-4},
   review={\MR{2553465}},
}

\bib{gromov}{article}{
   author={Gromov, M.},
   title={Pseudoholomorphic curves in symplectic manifolds},
   journal={Invent. Math.},
   volume={82},
   date={1985},
   number={2},
   pages={307--347},
   issn={0020-9910},
   review={\MR{809718 (87j:53053)}},
}

\bib{HZW}{article}{
   author={Hofer, H.},
   author={Wysocki, K.},
   author={Zehnder, E.},
   title={A general Fredholm theory. I. A splicing-based differential
   geometry},
   journal={J. Eur. Math. Soc. (JEMS)},
   volume={9},
   date={2007},
   number={4},
   pages={841--876},
   issn={1435-9855},
   review={\MR{2341834 (2008m:53202)}},
}

\bib{KM}{article}{
   author={Kervaire, Michel A.},
   author={Milnor, John W.},
   title={Groups of homotopy spheres. I},
   journal={Ann. of Math. (2)},
   volume={77},
   date={1963},
   pages={504--537},
   issn={0003-486X},
   review={\MR{0148075 (26 \#5584)}},
}

\bib{OK}{article}{
   author={Kwon, Daesung},
   author={Oh, Yong-Geun},
   title={Structure of the image of (pseudo)-holomorphic discs with totally
   real boundary condition},
   note={Appendix 1 by Jean-Pierre Rosay},
   journal={Comm. Anal. Geom.},
   volume={8},
   date={2000},
   number={1},
   pages={31--82},
   issn={1019-8385},
   review={\MR{1730896 (2001b:32050)}},
}

\bib{lazzarini}{article}{
   author={Lazzarini, L.},
   title={Existence of a somewhere injective pseudo-holomorphic disc},
   journal={Geom. Funct. Anal.},
   volume={10},
   date={2000},
   number={4},
   pages={829--862},
   issn={1016-443X},
   review={\MR{1791142 (2003a:32044)}},
}

\bib{MS}{book}{
   author={McDuff, Dusa},
   author={Salamon, Dietmar},
   title={$J$-holomorphic curves and symplectic topology},
   series={American Mathematical Society Colloquium Publications},
   volume={52},
   publisher={American Mathematical Society},
   place={Providence, RI},
   date={2004},
   pages={xii+669},
   isbn={0-8218-3485-1},
   review={\MR{2045629 (2004m:53154)}},
}

\bib{oh-removal}{article}{
   author={Oh, Yong-Geun},
   title={Removal of boundary singularities of pseudo-holomorphic curves
   with Lagrangian boundary conditions},
   journal={Comm. Pure Appl. Math.},
   volume={45},
   date={1992},
   number={1},
   pages={121--139},
   issn={0010-3640},
   review={\MR{1135926 (92k:58065)}},
}

\bib{oh-perturb-boundary}{article}{
   author={Oh, Yong-Geun},
   title={Fredholm theory of holomorphic discs under the perturbation of
   boundary conditions},
   journal={Math. Z.},
   volume={222},
   date={1996},
   number={3},
   pages={505--520},
   issn={0025-5874},
   review={\MR{1400206 (97g:58024)}},
}

\bib{oh-disjunction}{article}{
   author={Oh, Yong-Geun},
   title={Gromov-Floer theory and disjunction energy of compact Lagrangian
   embeddings},
   journal={Math. Res. Lett.},
   volume={4},
   date={1997},
   number={6},
   pages={895--905},
   issn={1073-2780},
   review={\MR{1492128 (98k:58048)}},
}

\bib{polterovich-monotone}{article}{
   author={Polterovich, Leonid},
   title={Monotone Lagrange submanifolds of linear spaces and the Maslov
   class in cotangent bundles},
   journal={Math. Z.},
   volume={207},
   date={1991},
   number={2},
   pages={217--222},
   issn={0025-5874},
   review={\MR{1109663 (93c:58075)}},
}

\bib{seidel-kronecker}{article}{
   author={Seidel, Paul},
   title={Exact Lagrangian submanifolds in $T\sp *S\sp n$ and the graded
   Kronecker quiver},
   conference={
      title={Different faces of geometry},
   },
   book={
      series={Int. Math. Ser. (N. Y.)},
      volume={3},
      publisher={Kluwer/Plenum, New York},
   },
   date={2004},
   pages={349--364},
   review={\MR{2103000 (2005h:53153)}},
}

\bib{seidel-LOS}{article}{
   author={Seidel, Paul},
   title={A long exact sequence for symplectic Floer cohomology},
   journal={Topology},
   volume={42},
   date={2003},
   number={5},
   pages={1003--1063},
   issn={0040-9383},
   review={\MR{1978046 (2004d:53105)}},
}

\bib{sikorav}{article}{
   author={Sikorav, Jean-Claude},
   title={Some properties of holomorphic curves in almost complex manifolds},
   conference={
      title={Holomorphic curves in symplectic geometry},
   },
   book={
      series={Progr. Math.},
      volume={117},
      publisher={Birkh\"auser},
      place={Basel},
   },
   date={1994},
   pages={165--189},
   review={\MR{1274929}},
}

\bib{wells}{book}{
   author={Wells, R. O., Jr.},
   title={Differential analysis on complex manifolds},
   series={Graduate Texts in Mathematics},
   volume={65},
   edition={2},
   publisher={Springer-Verlag},
   place={New York},
   date={1980},
   pages={x+260},
   isbn={0-387-90419-0},
   review={\MR{608414 (83f:58001)}},
}

\bib{whitehead}{book}{
   author={Whitehead, George W.},
   title={Elements of homotopy theory},
   series={Graduate Texts in Mathematics},
   volume={61},
   publisher={Springer-Verlag},
   place={New York},
   date={1978},
   pages={xxi+744},
   isbn={0-387-90336-4},
   review={\MR{516508 (80b:55001)}},
}

\end{biblist}
\end{bibdiv}

\end{document}